\theoremstyle{definition}
\newtheorem{Theorem}{Theorem}[section]
\newtheorem{Proposition}[Theorem]{Proposition}
\newtheorem{Lemma}[Theorem]{Lemma}
\newtheorem{Corollary}[Theorem]{Corollary}
\newtheorem{Definition}[Theorem]{Definition}
\newtheorem{Construction}[Theorem]{Construction}
\newtheorem{Remark}[Theorem]{Remark}
\newtheorem{Example}[Theorem]{Example}
\newtheorem{Assumption}[Theorem]{Assumption}
\newtheorem{Setting}[Theorem]{Setting}
\numberwithin{equation}{section}
\def\ben{\begin{eqnarray*}}
\def\een{\end{eqnarray*}}
\newcommand{\HallEnv}{\mathrm{HallEnv}}
\newcommand{\coh}{\mathrm{coh}}
\newcommand{\oO}{\mathcal{O}}
\newcommand{\calH}{\mathcal{H}}
\newcommand{\calF}{\mathcal{F}}
\newcommand{\der}{\mathrm{der}}
\newcommand{\bA}{\mathbb{A}}
\newcommand{\A}{\mathbb{A}}
\newcommand{\C}{\mathbb{C}}
\newcommand{\GL}{\mathrm{GL}}
\newcommand{\Crit}{\mathrm{Crit}}
\newcommand{\Z}{\mathbb{Z}}
\newcommand{\bC}{\mathbb{C}}
\newcommand{\bD}{\mathbb{D}}
\newcommand{\bG}{\mathbb{G}}
\newcommand{\bN}{\mathbb{N}}
\newcommand{\bP}{\mathbb{P}}
\newcommand{\bQ}{\mathbb{Q}}
\newcommand{\bR}{\mathbb{R}}
\newcommand{\bZ}{\mathbb{Z}}
\newcommand{\ba}{\mathbf{a}}
\newcommand{\be}{\mathbf{e}}
\newcommand{\br}{\mathbf{r}}
\newcommand{\bu}{\mathbf{u}}
\newcommand{\bv}{\mathbf{v}}
\newcommand{\bw}{\mathbf{w}}
\newcommand{\bd}{\mathbf{d}}
\newcommand{\bPsi}{\mathbf{\Psi}}
\newcommand{\dR}{\mathbf{R}}
\newcommand{\cE}{\mathcal{E}}
\newcommand{\eE}{\mathcal{E}}
\newcommand{\cH}{\mathcal{H}}
\newcommand{\cK}{\mathcal{K}}
\newcommand{\cL}{\mathcal{L}}
\newcommand{\cM}{\mathcal{M}}
\newcommand{\cN}{\mathcal{N}}
\newcommand{\cO}{\mathcal{O}}
\newcommand{\cV}{\mathcal{V}}
\newcommand{\fC}{\mathfrak{C}}
\newcommand{\fg}{\mathfrak{g}}
\newcommand{\fM}{\mathfrak{M}}
\newcommand{\fN}{\mathfrak{N}}
\newcommand{\fR}{\mathfrak{R}}
\newcommand{\fp}{\mathfrak{p}}
\newcommand{\fq}{\mathfrak{q}}
\newcommand{\fL}{\mathfrak{L}}
\newcommand{\fX}{\mathfrak{X}}
\newcommand{\BM}{\mathrm{BM}}
\newcommand{\CH}{\mathrm{CH}}
\newcommand{\loc}{\mathrm{loc}}
\newcommand{\vir}{\mathrm{vir}}
\newcommand{\sg}{\mathsf{g}}
\newcommand{\sA}{\mathsf{A}}
\newcommand{\sS}{\mathsf{S}}
\newcommand{\sT}{\mathsf{T}}
\newcommand{\sF}{\mathsf{F}}
\newcommand{\sW}{\mathsf{W}}
\newcommand{\sw}{\mathsf{w}}
\newcommand{\JH}{\mathsf{JH}}
\newcommand{\Attr}{\operatorname{Attr}}
\newcommand{\Aut}{\operatorname{Aut}}
\newcommand{\can}{\operatorname{can}}
\newcommand{\cochar}{\operatorname{cochar}}
\newcommand{\codim}{\operatorname{codim}}
\newcommand{\diag}{\operatorname{diag}}
\newcommand{\End}{\operatorname{End}}
\newcommand{\Fix}{\operatorname{Fix}}
\newcommand{\Frac}{\operatorname{Frac}}
\newcommand{\Gr}{\operatorname{Gr}}
\newcommand{\Hilb}{\operatorname{Hilb}}
\newcommand{\Hom}{\operatorname{Hom}}
\newcommand{\Ext}{\operatorname{Ext}}
\newcommand{\Id}{\operatorname{Id}}
\newcommand{\im}{\operatorname{im}}
\newcommand{\Lie}{\operatorname{Lie}}
\newcommand{\Pic}{\operatorname{Pic}}
\newcommand{\pr}{\operatorname{pr}}
\newcommand{\pt}{\operatorname{pt}}
\newcommand{\rk}{\operatorname{rk}}
\newcommand{\Span}{\operatorname{Span}}
\newcommand{\Spec}{\operatorname{Spec}}
\newcommand{\Stab}{\operatorname{Stab}}
\newcommand{\tr}{\operatorname{tr}}
\newcommand{\Coh}{\operatorname{Coh}}
\newcommand{\Perf}{\operatorname{Perf}}
\newcommand{\D}{\operatorname{D}}
\newcommand{\Supp}{\operatorname{Supp}}
\newcommand{\cyc}{\operatorname{cyc}}
\newcommand{\wt}{\operatorname{weight}}
\newcommand{\IC}{\operatorname{IC}}
\newcommand{\id}{\operatorname{id}}
\newcommand{\Perv}{\operatorname{Perv}}
\newcommand{\In}{\operatorname{in}}
\newcommand{\Out}{\operatorname{out}}
\newcommand{\scrU}{\mathscr{U}}
\newcommand{\ostar}{\mathbin{\mathpalette\make@circled\star}}
\newcommand{\make@circled}[2]{%
  \ooalign{$\m@th#1\smallbigcirc{#1}$\cr\hidewidth$\m@th#1#2$\hidewidth\cr}%
}
\newcommand{\smallbigcirc}[1]{%
  \vcenter{\hbox{\scalebox{0.77778}{$\m@th#1\bigcirc$}}}%
}
\newcommand{\yc}[1]{\textcolor{blue}{  #1 }}
\newcommand{\yl}[1]{\textcolor{blue}{$[$ Yalong: #1 $]$}}
\title{Stable envelopes for critical loci}
\author{Yalong Cao}
\address{Morningside Center of Mathematics, Institute of Mathematics \& State Key Laboratory of Mathematical Sciences, Academy of Mathematics and Systems Sciences, Chinese Academy of Sciences, Beijing, China}
\email{yalongcao@amss.ac.cn}
\author{Andrei Okounkov} 
\address{Department of Mathematics, Columbia University, New York, U.S.A.}
\email{okounkov@math.columbia.edu} 
\author{Yehao Zhou}
\address{\parbox{\linewidth}{Center for Mathematics and Interdisciplinary Sciences, Fudan University, Shanghai 200433, China\\
Shanghai Institute for Mathematics and Interdisciplinary Sciences (SIMIS), Shanghai 200433, China}}
\email{yehao.zhou@simis.cn}
\author{Zijun Zhou}
\address{School of Mathematical Sciences, Shanghai Jiao Tong University, Shanghai, China}
\email{zijun.zhou@sjtu.edu.cn}
\subjclass[2020]{
Primary
14N35,  
17B37. 
Secondary 
16G20, 
14C17} 
\keywords{Stable envelopes, Critical loci, Hall operations, (shifted) quantum groups, $R$-matrices}
\begin{document}

\begin{abstract}

This is the first in a sequence of papers devoted to stable envelopes in critical cohomology and critical $K$-theory for symmetric GIT quotients with potentials 
and related geometries,
and their applications to geometric representation theory and enumerative geometry.
In this paper, we construct critical stable envelopes and establish their general properties, including compatibility with dimensional reductions, 
specializations, Hall products, and other geometric constructions. In particular, for tripled quivers with canonical cubic potentials, the critical stable envelopes
reproduce those on Nakajima quiver varieties. These set up foundations for applications in subsequent papers.

\end{abstract}

\maketitle

\setcounter{tocdepth}{1}
\tableofcontents

\setlength{\parskip}{1ex}

\allowdisplaybreaks

\vspace{1cm}

\section{Introduction}

\subsection{Overview}

Stable envelopes, introduced in \cite{MO} in the setting of ordinary
equivariant cohomology, have found numerous applications in geometric
representation theory and enumerative geometry. By one of possible
definitions, geometric representation theory studies algebras
generated by geometric correspondences. Enumerative geometry is full
of important and natural correspondences such as those formed by pairs
of points of some algebraic variety $X$ that lie, perhaps in a virtual
sense, on a rational curve of a given degree. In either setting, one
fixes some cohomology theory $\mathbf{h}(X)$ in which the correspondences act. This may be ordinary equivariant cohomology, equivariant $K$-theory, or something more general, like the critical cohomology or critical $K$-theory that we use in this paper. 

One considers an enumerative problem solved if the enumerative correspondences are expressed in terms of an understandable geometric action of an understandable algebraic structure. Of course, understandable does not mean simple. The typical algebraic structures one meets in the context of stable envelopes are certain quantum loop groups and their Yangian analogs, where the underlying Lie algebra is, by itself, typically an infinitely generated Borcherds-Kac-Moody (BKM) Lie algebra. But such is the intrinsic complexity of the problem, and one's aim should be to set up an adequate geometric and algebraic framework for managing it. 

Stable envelopes are defined in the presence of an action of a
connected reductive group. In the most basic case of a torus
$\sA$, they give a certain canonical correspondences between the
fixed locus $X^\sA$ and the ambient space $X$. These are improved
versions of attracting (also known as stable) manifolds which, in
contrast to attracting manifolds, are proper over $X$, behave well in families, etc. They add certain correction terms to attracting manifolds and, in this sense, they complete or envelop them, whence the name. 

Stable envelopes depend on additional choices, such as the choice of attracting/repelling directions. This proves to be an important feature since different choices of attracting directions are related by what turns out to be the braiding, or the $R$-matrix --- the cornerstone concept in quantum group theory. 
Thus, out of stable envelopes, one constructs the  correspondences by
which a quantum loop group or a Yangian acts. Stable envelopes are equally natural in enumerative contexts, where their properness forms the basis for many computations based, ultimately, on equivariant rigidity. One very important 
example of this is the geometric identification of the quantum
Knizhnik-Zamolodchikov connection with certain shift operators. We
will say more about it below. 

So far, stable envelopes have been constructed and used in ordinary equivariant cohomology, equivariant $K$-theory, and 
equivariant elliptic cohomology \cite{MO, Oko, AO2}. Although such generality encompasses a broad range of contexts and applications, there are even more potential applications that it misses. 

For instance, a major motivation for \cite{MO} came from the work of Nekrasov and Shatashvili on quantum integrable structures in 2-dimensional and 
(2+1)-dimensional supersymmetric QFTs \cite{NS}. For mathematicians, these
structures appear in the computation of  virtual indices of Dirac
operators on the moduli spaces of maps, or quasimaps, to be more precise,
$f\colon C \to X$, where the Riemann surface $C$ is the space part of the
(2+1)-dimensional spacetime and $X$ is (a component of) the moduli
spaces of vacua for the QFT in question. These $X$ are sometimes
smooth, for instance the Nakajima quiver varieties \cite{Nak1, Nak2}, naturally arise in
this way. The theory of  \cite{MO, Oko, OS} applies to all Nakajima varieties and identifies the Nekrasov-Shatashvili quantum integrable system with the familiar Baxter-style quantum integrable system for the corresponding quantum loop group or Yangian. 

Typically, however, these moduli spaces of vacua are \textit{not} smooth. Rather, they are \textit{critical loci} $\Crit(\sw)$ of a regular function $\sw$ on a smooth ambient space or stack $X$. The enumerative setup for counting maps to such targets has been recently studied, see~e.g.~\cite{FK, CZ, CTZ, KP},\,etc., and the natural home for these computations is the \textit{critical cohomology} or \textit{critical} $K$-\textit{theory} of the pair $(X,\sw)$. These critical theories provide a flexible and versatile generalizations of ordinary cohomology and $K$-theory. In particular, their versatility manifests itself in there being a very general setup in which enumerative questions about quasimaps $f\colon C\to \Crit(\sw)$ can be asked. One of the goals of this project is to answer these enumerative questions using 
\textit{critical stable envelopes} and the corresponding quantum groups. 

A very good example to have in mind is the example of 
the Hilbert scheme $\Hilb(\C^3,n)$  of $n$ points in the affine 3-space. While $\Hilb(\C^2,n)$ is a smooth Nakajima variety, its 3-dimensional counterpart is 
a singular variety of unknown dimension and unknown number of
irreducible components. It is, however, naturally a critical locus,
stemming from the well-known  fact that three matrices $Y_1,Y_2,Y_3$
commute if and only if all partial 
derivatives of the function $\sw(Y) = \tr Y_1 [Y_2,Y_3]$ vanish. 
The computation of the quantum cohomology of 
$\Hilb(\C^2,n)$ in \cite{OP1} was a major precursor to the computation of the quantum cohomology of all Nakajima varieties. Because of the similarly important role of 
$\Hilb(\C^3,n)$, we spell out its quantum cohomology in 
\S \ref{sect intro on qc of hilbC3}. 

While the overall structure of theory developed here bears a definite
resemblance to the corresponding theories for other cohomology
theories, there is a large number of conceptual and technical points
in which it goes significantly beyond the existing frontier of
knowledge. We will touch on many such points in this
introduction. In this brief overview, it suffices to mention how much
broader is the supply of the algebraic structures that our methods
produce. In facts, it broadens in at least \textit{three directions}: the
underlying Lie algebra can be a \textit{super Lie algebra}, its BKM Cartan
matrix may be symmetrizable, and \textit{not} symmetric, and finally, the
quantum loop group or the Yangian may be \textit{shifted}. Geometrically,
the appearance of shift means that we can relax the self-duality
assumptions on
$X$ that are normally made in the construction of stable envelopes.
Notice, however, that a certain sign of this shift (antidominant) is
necessary in our setup.

\subsection{Stable envelopes}

It is fitting to start the discussion of stable envelopes with the
abelian case. Let $X$ be a smooth quasi-projective algebraic variety
with an action of a torus $\sA$. Then the fixed locus $X^\sA$ is
smooth and there is a smooth locally closed subvariety
\begin{equation} \label{Attr intro}
\Attr_\fC = \{(x_1,x_0)\,|\, \lim_{t\to 0} \sigma(t) \cdot x_1 = x_0\} \subseteq  X \times
X^{\sA},
\end{equation}
where $\sigma$ is a generic cocharacter of $\sA$. The attracting
(also known as stable) 
manifold \eqref{Attr intro} depends only a certain chamber $\fC \subseteq 
\Lie \sA$ containing $\sigma$, whence the notation. If \eqref{Attr
  intro} is closed, it can be used as a 
correspondence mapping equivariant cohomology $\mathbf{h}_\sT(X^\sA)$ of $X^\sA$ to
$\mathbf{h}_\sT(X)$. Here equivariance includes $\sA$ and whatever other
group actions preserve the setup, which we may assume to be a torus $\sT
\supset \sA$ for simplicity.  
In interesting cases, however, \eqref{Attr intro} is not closed, and this is where 
stable envelopes come in, completing $\Attr_\fC$ to a correspondence $\Stab=\Stab_\fC$ that is proper over $X$.

The attracting manifold \eqref{Attr intro} has multiple components,
indexed by the components $F_i$ of the fixed locus 
$$X^\sA= \bigsqcup F_i. $$ 
These are partially ordered by $F_j \cap \overline{\Attr(F_i)} \ne
\varnothing$, and the construction of stable envelopes may be
performed inductively. If $F_{\min}$ is a minimal element in the partial
order, then $X' = \Attr(F_{\min})$ is closed  and the corresponding
component of \eqref{Attr intro} will be one of the
components of the eventual $\Stab$. By induction, we may 
assume that $\Stab'$ been constructed for $X \setminus
X'$. To extend $\Stab'$ to all of $X$, we need to fix its lift 
with respect to the restriction map $\mathbf{h}_\sT(X) \to
\mathbf{h}_\sT(X\setminus X')$. Fixing a lift with respect to map of
algebras means solving a certain interpolation problem in the
classical language of algebraic geometry. The stable envelope fixes
the unique (when it exists) solution of the interpolation problem by
constraining the degree of
$$
\Stab\Big|_{F_j \times F_i} \in \mathbf{h}_{\sT} (F_j \times F_i) =
\mathbf{h}_{\sT/\sA} (F_j \times F_i) \otimes \mathbf{h}_{\sA} (\pt)
$$
in the $\mathbf{h}_{\sA} (\pt)$-factor. 
For ordinary cohomology or critical cohomology, we have $\mathbf{h}_{\sA} (\pt) = \Z[\Lie \sA]$ and
we require
\begin{equation}
\deg_\sA \Stab\Big|_{F_j \times F_i} < \deg_\sA \Attr\Big|_{F_j \times F_j}
\,,\,\,\, \mathrm{for} \,\,\, j\ne i, \label{degree bound intro}
\end{equation}
where $\deg_\sA$ is the usual degree of a polynomial. In equivariant
$K$-theory, ordinary or critical, 
we have $\mathbf{h}_{\sA} (\pt) = \Z[\sA]$, and $\deg_\sA$ is the
Newton polytope of a Laurent polynomial, considered modulo shifts. The
comparison \eqref{degree bound intro} is replaced by the inclusion of
Newton polytopes after a shift, see Definition \ref{def of stab k}. As usual, 
through this shift parameter, $K$-theoretic stable envelopes acquire dependence on a generic 
fractional line bundle on $X$, called the \emph{slope} of the 
stable envelope.

While the degree bounds easily imply
uniqueness, the existence of stable envelopes is a much more delicate
business which requires additional assumptions on $X$ (this was done through the above induction method 
for Nakajima varieties in \cite{Oko-ind}, it is not clear how to extend it to
critical theories). The uniqueness
of stable envelopes implies their $\sT$-equivariance. The corresponding additional equivariant 
parameters are important and become the parameters of quantum groups
and their modules in geometric realization.

Beyond the above Bialynicki-Birula-type stratification by $\Attr(F_i)$, the
same logic may be applied to other group-invariant
stratifications. In particular, for the instability stratifications
considered in the GIT context, the corresponding extension maps
are called the \textit{nonabelian stable envelopes}, see \cite{AO1} and 
\S \ref{sect on sym qui var}, \ref{intro ex of stab} below. They may be phrased as a canonical extension of
cohomology classes from the stable locus to the whole quotient
stack. For symmetric quiver varieties, which is the generality on which we focus
in this paper, nonabelian stable envelopes may, in fact, be reduced to
the abelian ones, see \S \ref{intro ex of stab}.

One can also contemplate generalizing GIT
stratifications to instability stratifications of more
general stacks, but this is not something we do in this paper.

In the paper, we normalize stable envelopes to equal attracting
manifolds modulo lower terms. While normalizations are not
fundamental, they do enter formulas and can either
simplify or complicate them. Because we work in
broader generality, we depart from the conventions of
\cite{Oko}, where stable envelopes were normalized using a choice of
polarization. This leads to a change of
slopes in the triangle lemma \eqref{triangle intro}, and to certain
dynamical shifts and signs 
in the Yang-Baxter equation \eqref{Yang Baxter equation};  see more on both of these points below.

\subsection{Symmetric quiver varieties}\label{sect on sym qui var}

Quiver varieties provide a very concrete and useful local or global
description of various moduli spaces in algebraic geometry and
mathematical physics, encompassing a very wide class of varieties of
theoretical and applied interest. 
While there is no fundamental reason to limit one's attention to quiver varieties, and although most results below work for more general GIT quotients,
for concreteness and with applications in mind,  in this introduction, we focus  on critical stable envelopes for quiver varieties with potentials. 

By definition, a \textit{quiver variety} is a GIT quotient
$X=R(Q)/\!\!/G$ of a certain linear representation $R(Q)$ by a product
$G = \prod GL(V_i)$ of general linear groups. Concretely,
\begin{align*}
R(Q):=R(Q,\mathbf{v},\mathbf{d}_\textup{in},\mathbf{d}_\textup{out})&:=\bigoplus_{i,j}
                                                                \Hom(V_i,V_j\otimes
                                                                Q_{ij})\oplus \bigoplus_{i} \left(\Hom(D_{i,\textup{in}},V_i) \oplus \Hom(V_i,D_{i,\textup{out}})\right). 
\end{align*}
Here the group $G$ acts trivially in all vector spaces other than
$\{V_i\}$. The dimensions of $Q_{ij}$ are fixed and constitute the
adjacency matrix of the quiver $Q$, that is, the number of
arrows between the $i$th and the $j$th vertex. 
The gauge dimension vector $\mathbf{v} = (\dim V_i)$, and framing dimension vectors
 $$\mathbf{d}_\textup{in}= (\dim D_{i,\textup{in}}), \quad \mathbf{d}_\textup{out}= (\dim D_{i,\textup{out}})$$ are allowed to
vary. In geometric representation theory, a given value of
$\mathbf{v}$ corresponds to a weight space in a quantum group module,
the highest weight of which is determined by
$\mathbf{d}_\textup{in/out}$. One may be tempted to compare the quiver
$Q$ to the Dynkin diagram of the quantum group, but the actual relation between the structure of the quiver and the structure of the resulting quantum group is considerably more subtle. In the ordinary cohomology of Nakajima varieties, it has been the subject of a conjecture made by one of us and established recently in full generality by B.~Davison and T.~Botta \cite{BD}. We make a similar conjecture in the critical case in \cite[\S 4.3]{COZZ1}, see also \S \ref{subsection on shifted qg}.

We consider a torus $\sT$ which acts on the spaces $Q_{ij}$, 
$D_{i,\textup{in/out}}$ and thus commutes with $G$ and 
acts on $X$. Let $$\sw\colon R(Q)\to \mathbb{C}$$ be a $(G\times \sT)$-invariant function  on $R(Q)$. 
By abuse of notation, $\sw$ also denotes the descent of the function to $X$.
We assume that the quiver variety is \emph{symmetric}, 
which means that 
\begin{equation}\label{intro equ on sd}\dim Q_{ij} = \dim Q_{ji}, \quad 
\dim D_{i,\textup{in}} = \dim D_{i,\textup{out}}. \end{equation} 
Let $\sA \subseteq  \sT$ be a subtorus such that $R(Q)$ is a self-dual representation of $G\times \sA$.
The latter assumption of \eqref{intro equ on sd} on the symmetry of framing admits a
relaxation to be $\dim D_{i,\textup{in}}\geqslant \dim D_{i,\textup{out}}$, see below, \S \ref{sect on more exist on costab}, 
\S \ref{sect on hall comp with hall} and \S \ref{sec: pstab vs stab}.

We denote \textit{critical cohomology} to be
$$
H^\sT_i (X,\sw) = H^{-i}_\sT(X,\varphi_\sw \, \omega_X) \,, 
$$
where $\omega_X$ is the dualizing sheaf of $X$ and 
$\varphi_\sw$ is the vanishing cycle functor. 

\textit{Critical stable envelopes}
give canonical
$H_\sT(\pt)$-linear maps
\begin{equation}
\Stab_\fC\colon H^\sT(X^\sA,\sw) \to H^\sT(X,\sw)\,, \label{Stab intro}
\end{equation}
and similarly in critical $K$-theory (i.e.~the Grothendieck group of matrix factorization category). 

There are also \textit{(critical) nonabelian stable envelopes} 
(Definition \ref{na stab coh_sym quot}): 
$$
\textup{naStab}\colon H^\sT(X,\sw) \to H^{G \times \sT}(R(Q),\sw)\,, 
$$
which extend critical cohomology classes from the stable locus $X\subseteq  R(Q)/G$ to the whole stack, and similarly in critical $K$-theory. 
We remark that in the cohomological case, there is an interpretation of nonabelian stable envelopes 
using BPS cohomology of Davison and Meinhardt \cite{DM}, see Proposition \ref{prop:na stab and bps coho}.




\subsection{Existence of critical stable envelopes}\label{intro ex of stab}
Uniqueness of $\Stab_\fC$ being a simple 
corollary of the definition, it is its 
existence that requires proof. 
\begin{Theorem}\label{intro thm on stab ex}
$($Theorems \ref{exi of coh stab on symm quiver var}, \ref{thm: hall k_sym quot}, Propositions \ref{uniqueness of coh stab}, \ref{uniqueness of K stab}, \ref{corr induce stab}$)$
Let $X$ be a symmetric quiver variety, $\sA\subseteq \sT$ be tori acting on $X$ such that $\sA$-action on $X$ is self-dual. 
Fix an arbitrary chamber $\fC$, and a $\sT$-invariant function $\sw\colon X\to \C$. 
Then stable envelopes exist and are unique in critical cohomology and critical $K$-theory. 
\end{Theorem}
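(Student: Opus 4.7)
The proof splits into uniqueness, which is formal, and existence, which is the substantive content. For uniqueness I would run the standard interpolation argument: order the $\sA$-fixed components by the partial order $F_j \cap \overline{\Attr(F_i)} \ne \emptyset$ and observe that any candidate $\Stab_\fC$ is forced to restrict to the critical Euler class of the moving normal bundle on the diagonal. The difference of two candidates vanishes on the diagonal while satisfying the strict degree bound \eqref{degree bound intro} off the diagonal; a descending induction on the partial order, combined with injectivity of multiplication by normal Euler classes after $\sA$-localization, forces it to be zero. The same argument works verbatim in critical $K$-theory with Newton polytopes replacing polynomial degrees, and the slope-dependence is built into the normalization shift.

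For existence, my plan is to reduce to an explicit linear model. Since the quiver is symmetric, $R(Q)$ is a self-dual representation of $G \times \sA$, so for any chamber $\fC$ the attracting subspace $\Attr_\fC \subseteq R(Q)$ is a Lagrangian in the moving part, and its refined Thom class attached to $\sw$ via the vanishing cycle Koszul complex gives an abelian critical stable envelope
$$
\Stab_\fC^{R(Q)}\colon H^{G\times \sT}(R(Q)^\sA,\sw) \longrightarrow H^{G\times \sT}(R(Q),\sw),
$$
whose degree bound is manifest from the explicit formula. To transfer this to $X = R(Q)/\!\!/G$, I would use the nonabelian stable envelope $\textup{naStab}$ introduced in the paper as the canonical extension of classes from the stable locus through the Kempf--Ness / Hesselink stratification of the unstable locus. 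Composing $\Stab_\fC^{R(Q)}$ with restriction to the stable locus, and matching everything against $\textup{naStab}$ applied to the eventual output on $X$, yields a class on $X$ supported on the closure of the $\fC$-attracting set, restricting correctly on the diagonal, and proper over $X$ by boundedness of the semistability chamber.

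The genuine difficulty, and the step that essentially uses the symmetry hypothesis, is verifying that the nonabelian extension preserves the $\sA$-degree bound on $X$. A priori, each instability stratum could contribute a correction whose $\sA$-degree exceeds that of the attracting Thom class on $X^\sA$, but self-duality of $R(Q)$ under $G \times \sA$ forces the normal bundles of these strata to split into paired attracting/repelling factors, so the corrections obey the same bound already visible on $R(Q)$. This is the compatibility with Hall products referenced in the statement, and precisely what distinguishes the symmetric case from the general GIT setting. The critical $K$-theoretic argument is parallel, with Thom classes replaced by their $K$-theoretic refinements and the slope entering as the fractional shift needed to align Newton polytopes across fixed components; the relaxation to $\dim D_{i,\In} \geqslant \dim D_{i,\Out}$ should follow by absorbing the surplus framing into a trivial attracting factor and running the same induction.
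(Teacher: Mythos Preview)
Your uniqueness argument matches the paper. Your existence strategy --- lift to the stack via nonabelian stable envelopes, apply the Hall/attracting correspondence there, then restrict to the stable locus --- is essentially the paper's $K$-theoretic proof (Theorem~\ref{thm: hall k_sym quot}), though your order of composition is slightly scrambled (it is $\textup{naStab}$ first on $X^\sA$, then Hall, then restrict; not ``match against $\textup{naStab}$ on the output''), and the paper reduces from general $\sw$ to $\sw=0$ via the canonical map at the level of correspondences (Lemma~\ref{can map induce stab}) rather than carrying a vanishing-cycle Koszul/Thom construction through.

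For cohomology the paper's primary proof is genuinely different: Theorem~\ref{construct coh stab corr on symm quiver var} shows directly that the closure $[\overline{\Attr}_\fC(\Delta)]$ of the attracting manifold is a stable envelope correspondence, by passing to the abelianization $\widehat{X} = R^s \times_G (G/H)$, which fibres symplectically over $\mathfrak{h}^* \times R_0$ with quasi-affine generic fibre, so isotropy (Lemmata~\ref{attr is isotropic},~\ref{symp res has cond star}) forces the dimension and hence degree bound. Your Hall-based approach does also work in cohomology (Theorem~\ref{thm: hall coh_sym quot}), but the geometric route gives the explicit description of the correspondence as an attracting closure, which is used repeatedly later. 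The step you correctly flag as the heart of the matter --- that the nonabelian extension preserves the $\sA$-degree bound --- is not the one-line consequence of self-duality you suggest: the paper's verification is a localization computation over the double cosets $W^\sigma \backslash W / W^\phi$ (proofs of Theorems~\ref{thm: deg bound na stab coh} and~\ref{thm: hall k_sym quot}), and in $K$-theory this is exactly where the slope shift $\mathsf s' = \mathsf s \otimes \det(T\fX^{\sA,\phi\text{-repl}})^{1/2}$ is forced. Your intuition about paired normal weights is the reason the computation closes, but it does not replace the computation.
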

This theorem works for more general symmetric GIT quotients with potentials. 
The self-dual condition of the $\sA$-action on $X$ is necessary, see Remark \ref{rmk on sd con} and Proposition \ref{prop on 3 ex} for counterexamples when it is not satisfied.
The above theorem is one of the main results of this paper and we sketch its proof.
We introduce the concept of critical stable envelope \textit{correspondences} in Definitions \ref{stab corr_coh}, \ref{stab corr_k} and show that they induce critical stable envelopes (Lemma \ref{can map induce stab}).
It is easy to see that the stable envelope 
correspondence for $\sw=0$ induces a stable envelope correspondence for any $\sT$-invariant function $\sw$ (Lemma \ref{can map induce stab}). This is yet another 
manifestation of the technical versatility of critical cohomology theories. 

In \cite{MO}, the following simple construction of 
cohomological stable envelopes for Nakajima 
varieties and other equivariant symplectic resolutions was given. 
One puts $X$ is a generic one-parameter deformation family, all other
fibers
in which are affine algebraic varieties. In the total space of the family, one takes the closure of the attracting manifold, and restricts this correspondence to the central fiber $X$. This gives the stable envelope correspondence between $X^\sA$ and 
$X$. 

Remarkably, a similar description works in \textit{cohomology} for \textit{symmetric quiver varieties} (or more generally \textit{symmetric GIT quotients}, see Definition \ref{def of sym var}), even \textit{without} the need for a generic deformation. Namely, a symmetric quiver variety $X$ (or more generally a symmetric GIT quotient) turns out to be as good as the generic deformation family, in the sense that the closure of attracting manifold for $X$ is the stable envelope correspondence for $\sw=0$. This is proven in Theorem \ref{construct coh stab corr on symm quiver var}. For comparison with the Nakajima varieties, one may note that the generic deformation of a Nakajima variety is obtained by giving generic $G$-invariant values to the moment map. Therefore, this whole deformation family is naturally embedded in the ambient quiver variety $X$ (which is just a GIT quotient, not a symplectic reduction). 
We remark that this explicit description as the closure of attracting manifold is particularly useful in the manipulation of stable envelopes, as shown in later sections.

The validity of the above approach is limited to critical cohomology. To prove the existence of $K$-\textit{theoretic} \textit{critical stable envelopes}, 
we first extend elements in $K^\sT(X^\sA,\sw)$ to the
ambient stack using the $K$-theoretic nonabelian stable
envelope defined using window subcategories  (Definition \ref{def of na stab k_window}). Then, on the stack, use the attracting correspondence,
which is proper over its target on
the stack. And, finally, we restrict to the stable locus, see \S \ref{sect on ex kstab}, \S \ref{app on k symm var} for details.

There may be several ongoing manuscript projects devoted to categorical stable envelopes, including \cite{HMO}. When their results become available, one will be able to construct $K$-theoretic critical stable envelopes as the decategorification of these results.

\subsection{Stable envelopes v.s.\ Hall envelopes}\label{intro ex of stab2}

It is important to stress the following points about the above
construction of the $K$-theoretic critical stable envelopes (same also applies to the cohomological case). 

On symmetric quiver varieties, nonabelian stable envelopes admit a description (Proposition \ref{prop: compare na stab_sym quiver}) originated from the work of \cite{AO1},~i.e.~ 
adding extra framings on quivers and using (abelian) stable envelopes 
for one dimensional tori (Proposition \ref{prop on stable corr}).
One can
apply the same algorithm to an \emph{arbitrary} quiver variety and
a $(G\times \sT )$-invariant function $\sw$ on $R(Q)$. This will produce an
extension map
$$
\bPsi_H\colon H^\sT(X,\sw) \to H^{G\times \sT } (R(Q), \sw)\,, \quad \bPsi_K\colon K^\sT(X,\sw) \to K^{G\times \sT } (R(Q), \sw) \,, 
$$
which we call the \emph{interpolation map} (see Definition \ref{def of nona stab}). In general, it does not
satisfy the definition of the nonabelian stable envelope (unless the quiver is symmetric, see Proposition \ref{prop:Psi} for the characterization property), whence the
need for a separate name.

The interpolation map can always be composed with the attracting
correspondence of the stack, and then restricted back to the
stable locus. This will produce a map
$$
\HallEnv\colon H^\sT(X^\sA,\sw) \to H^\sT(X,\sw), \quad \HallEnv\colon K^\sT(X^\sA,\sw) \to K^\sT(X,\sw)\,, 
$$
which we call the \textit{Hall envelope}, because its core ingredient is the
attracting, or Hall, correspondence on the stack. Again, the reason for
introducing a separate name is that, in general, the Hall envelope
fails the definition of the stable envelope, see \S \ref{sec: pstab vs stab}. 

In \S \ref{sec:na stab},  we compare and contrast
stable envelopes and Hall envelopes that generalize them, summarized below. 
\begin{Theorem}
$($Theorems \ref{thm: hall coh_sym quot}, \ref{thm: hall k_sym quot}$)$
As in the setting of Theorem \ref{intro thm on stab ex}, stable envelopes are compatible with the
Hall correspondences on the stack,~i.e.~the following diagrams commute:  
\begin{equation}
  \label{Hall compatible intro}
  \xymatrix{
    H^{G\times \sT } (R(Q)^\sA, \sw)  \ar[rr]^{\textup{Hall}} && H^{G\times \sT } (R(Q), \sw)
    \\
    H^\sT(X^\sA,\sw) \ar[u]^{\textup{naStab}} \ar[rr]^{\Stab} &&
    H^\sT(X,\sw), \ar[u]_{\textup{naStab}} 
    }
    \quad \quad
    \xymatrix{
    K^{G\times \sT} (R(Q)^\sA, \sw)  \ar[rr]^{\textup{Hall}} && K^{G\times \sT} (R(Q), \sw)
    \\
    K^\sT(X^\sA,\sw) \ar[u]^{\textup{naStab}} \ar[rr]^{\Stab} &&
    K^\sT(X,\sw). \ar[u]_{\textup{naStab}} 
    }
\end{equation}
Here in the top-left corners, the torus $\sA$ acts on $R(Q)$ both via
its embedding into $\sT$ and a certain homomorphism $\phi\colon \sA \to G$,
and the top arrows are Hall/attracting correspondences on the ambient
stack. 

In particular, stable envelopes equal to Hall envelopes in this case. 
\end{Theorem}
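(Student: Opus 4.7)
The plan is to invoke the uniqueness of critical stable envelopes (Propositions \ref{uniqueness of coh stab}, \ref{uniqueness of K stab}) and to show that the composite $\textup{Hall}\circ\textup{naStab}$ on the ambient stack, restricted to the stable locus, provides a map satisfying the defining axioms of $\Stab_\fC$. Concretely, given $\alpha\in H^\sT(X^\sA,\sw)$, I would form
$$
\beta := \textup{Hall}\bigl(\textup{naStab}(\alpha)\bigr)\ \in\ H^{G\times\sT}(R(Q),\sw),
$$
and verify that $\beta|_X$ has (i) support in $\overline{\Attr_\fC}\subseteq X\times X^\sA$, (ii) diagonal leading term equal to the fundamental class of $\Attr_\fC$, and (iii) the $\sA$-degree (respectively Newton-polytope) bound of \eqref{degree bound intro} on off-diagonal restrictions. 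Uniqueness then forces $\beta|_X = \Stab_\fC(\alpha)$, and applying $\textup{naStab}$ to both sides together with Proposition \ref{prop: compare na stab_sym quiver} yields the commuting squares of \eqref{Hall compatible intro}. The equality $\Stab=\HallEnv$ is immediate from the definition of the Hall envelope as $\textup{Hall}\circ\textup{naStab}$ followed by restriction to the stable locus.

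Properties (i) and (ii) follow from the geometric description of $\textup{Hall}$ as the attracting correspondence on $R(Q)/G$ for the cocharacter defining $\fC$, combined with the fact that, on the stable locus, $\textup{naStab}$ acts essentially as the identity on classes coming from $X^\sA$ via the attracting diagram. The main obstacle is axiom (iii), the degree/polytope estimate. Here the symmetry of the quiver and the self-duality of the $\sA$-action on $R(Q)$ become essential: the attracting and repelling $\sA$-weights of the normal bundle to a fixed locus inside $R(Q)/G$ pair off, so that the extra $\sA$-degrees introduced by pushing a class through $\textup{naStab}$ on the stack are exactly cancelled by those contributed by the normal bundle to attracting strata on $X$ after restriction, leaving only the intrinsic off-diagonal bound on $X$. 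In $K$-theory the same cancellation is phrased at the level of Newton polytopes, with the slope dependence absorbed into the window choice of Definition \ref{def of na stab k_window}.

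Methodologically, I would reduce the verification of axioms (i)--(iii) to a statement about stable envelope correspondences via Lemma \ref{can map induce stab}, and compare the candidate $\beta$ with the explicit model furnished by Theorem \ref{construct coh stab corr on symm quiver var}, which identifies the $\sw=0$ stable envelope correspondence on a symmetric GIT quotient as the closure of the attracting manifold. The $K$-theoretic case runs along the same lines: one replaces the closure-of-attractors description by the window construction of the $K$-theoretic nonabelian stable envelope and uses Theorem \ref{thm: hall k_sym quot} for the construction of the $K$-theoretic $\textup{Hall}\circ\textup{naStab}$ composition; the slope of $\Stab$ and the slope implicit in the window are forced to match, so no additional parameter tracking is needed.
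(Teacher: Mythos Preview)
Your plan establishes $\mathrm{res}\circ\textup{Hall}\circ\textup{naStab}^\phi=\Stab_\fC$, which indeed gives $\Stab=\HallEnv$. But the commuting square requires the stronger statement
\[
\textup{Hall}\circ\textup{naStab}^\phi \;=\; \textup{naStab}\circ\Stab_\fC
\]
on the stack level. Applying $\textup{naStab}$ to both sides of $\beta|_X=\Stab_\fC(\alpha)$ only recovers $\beta$ on the left if you already know $\textup{naStab}(\beta|_X)=\beta$, i.e.\ that $\beta\in\mathrm{Image}(\textup{naStab})$. Proposition \ref{prop: compare na stab_sym quiver} does not help here: it merely identifies two constructions of $\textup{naStab}$ and says nothing about which classes on the stack lie in its image.

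This is the genuine content of the theorem, and your proposal does not address it. The paper handles it via the \emph{characterization} of $\textup{naStab}$ (Theorems \ref{thm: deg bound na stab coh} and \ref{thm: deg bound na stab k}): a class $\beta\in H^{G\times\sT}(R(Q),\sw)$ lies in $\mathrm{Image}(\bPsi_H)$ iff $\mathrm{res}(\beta)$ is its restriction and, for every nonzero cocharacter $\sigma\colon\bC^*\to G$ of the \emph{gauge} group,
\[
\deg_\sigma \mathbf j_\sigma^*\beta \;<\; \tfrac12\bigl(\dim\fX-\dim\fX^\sigma\bigr).
\]
Verifying this bound for $\beta=\mathfrak m^\phi_\fC\circ\bPsi_H^\phi(\gamma)$ is done by an equivariant localization on $\widetilde p\colon\widetilde L^\phi_\fC\to R$ over the double cosets $W^\sigma\backslash W/W^\phi$, reducing to the known bound for $\bPsi_H^\phi$ on the fixed stack (Sections \ref{Proof of thm: hall coh_sym quot} and the proof of Theorem \ref{thm: hall k_sym quot}). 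Your discussion of ``self-duality cancellation'' concerns the $\sA$-degree axiom (iii) of $\Stab_\fC$ on $X$; this is an orthogonal condition to the $G$-cocharacter bound needed here, and the hand-waved pairing argument does not substitute for the localization computation.
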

We remark that via dimensional reduction mentioned below, the above theorem reproduce a result of Botta \cite{Bot} and Botta-Davison \cite{BD} on Nakajima quiver varieties (Remark \ref{rmk on cpr with b}). When potentials are zero, the theorem has applications to explicit calculations 
of stable envelopes, see \S \ref{sect on explicit form_coh} and \S \ref{sect on explicit form_k}.

A more general diagram, in which the bottom arrows are
replaced by the Hall envelopes and the vertical arrows are replaced by
the interpolation maps does not need to commute. But we have the following remarkable converse.
\begin{Theorem}
$($Theorem \ref{thm: AFSQV}$)$
Let $Q$ be a symmetric quiver and $\bv,\bd_{\In},\bd_{\Out}\in \bN^{Q_0}$ be dimension vectors such that $\bd_{\In,i}\geqslant \bd_{\Out,i}$ for all $i\in Q_0$. 
Let $\sA\subseteq \sT$ be subtori of the flavour group $\sF$ such that $\sA$-action on $X$ is pseudo-self-dual. 
Then Hall envelopes are compatible with
Hall correspondences on the stack,~i.e.~the following diagrams commute:  
\begin{equation}
  \label{Hall compatible intro 2}
  \xymatrix{
    H^{G\times \sT} (R(Q)^\sA, \sw)  \ar[rr]^{\textup{Hall}} && H^{G\times \sT} (R(Q), \sw)
    \\
    H^\sT(X^\sA,\sw) \ar[u]^{\bPsi_{H}} \ar[rr]^{\HallEnv} &&
    H^\sT(X,\sw), \ar[u]_{\bPsi_H} 
    }
    \quad \quad
    \xymatrix{
    K^{G\times \sT} (R(Q)^\sA, \sw)  \ar[rr]^{\textup{Hall}} && K^{G\times \sT} (R(Q), \sw)
    \\
    K^\sT(X^\sA,\sw) \ar[u]^{\bPsi_{K}} \ar[rr]^{\HallEnv} &&
    K^\sT(X,\sw). \ar[u]_{\bPsi_{K}} 
    }
\end{equation}
\end{Theorem}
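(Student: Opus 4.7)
The plan is to reduce the commutativity of the two squares in \eqref{Hall compatible intro 2} to a window-stability statement for the Hall correspondence on the ambient stack. By the very definition of the Hall envelope,
\[
\HallEnv \;=\; \iota_X^{*}\circ \textup{Hall}_{R(Q)/G}\circ \bPsi,
\]
where $\iota_X\colon X \hookrightarrow R(Q)/G$ is the open embedding of the GIT-stable locus. Substituting this into the claimed identity $\bPsi\circ \HallEnv = \textup{Hall}_{R(Q)/G}\circ \bPsi$, the square commutes if and only if, for every class $\alpha$ in $H^{\sT}(X^{\sA},\sw)$ (resp.\ in critical $K$-theory),
\[
\bPsi\circ \iota_X^{*}\bigl(\textup{Hall}_{R(Q)/G}\circ \bPsi(\alpha)\bigr) \;=\; \textup{Hall}_{R(Q)/G}\circ \bPsi(\alpha).
\]
Equivalently, the image of $\bPsi$ under $\textup{Hall}_{R(Q)/G}$ should be contained in the image of $\bPsi$ on $R(Q)/G$: applying $\iota_X^{*}$ and re-extending by $\bPsi$ then recovers the same class.

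To control images I would invoke the characterization of interpolation maps from Proposition \ref{prop:Psi}: a class on $R(Q)/G$ (resp.\ on $R(Q)^{\sA}/G^{\sA}$) lies in the image of $\bPsi$ precisely when it satisfies an explicit window condition associated with the GIT instability stratification of $R(Q)$. In cohomology this is a polynomial-degree bound on the restriction to each unstable stratum; in critical $K$-theory it is a Newton-polytope/shift condition. The claim therefore reduces to showing that the Hall correspondence, realized as $q_{*}p^{*}$ through the attracting stack
\[
R(Q)^{\sA}/G^{\sA} \;\xleftarrow{\;p\;}\; R(Q)^{+}/G^{+} \;\xrightarrow{\;q\;}\; R(Q)/G,
\]
sends classes satisfying the window condition on $R(Q)^{\sA}/G^{\sA}$ to classes satisfying the window condition on $R(Q)/G$.

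I would verify this window-stability by a direct weight computation on the ambient stack. The map $p$ is a vector bundle projection, so $p^{*}$ preserves the relevant windows on the nose, while $q_{*}$ effectively multiplies by the Euler (or $\Lambda^{\bullet}$) class of the moving part of the $\sA$-normal bundle to $R(Q)^{+}\subset R(Q)$. Under the pseudo-self-dual hypothesis on the $\sA$-action, together with $\bd_{\In,i}\geqslant \bd_{\Out,i}$, the resulting shift in $G$-weights produced by these Euler/$\Lambda^{\bullet}$ factors is exactly compensated by the slack in the instability windows cutting out the image of $\bPsi$. This is the point at which pseudo-self-duality is essential and genuinely extends the self-dual case already handled in \eqref{Hall compatible intro}.

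The main obstacle is precisely this window compatibility in the asymmetric situation $\bd_{\In}\geqslant \bd_{\Out}$. A concrete route is to embed the given data into a genuinely symmetric quiver by adjoining $\bd_{\In,i}-\bd_{\Out,i}$ auxiliary arrows/framings so as to equalize the two framing vectors, apply the symmetric Hall-compatibility diagram \eqref{Hall compatible intro} (where $\bPsi = \textup{naStab}$), and then dimensionally reduce along the auxiliary data using the compatibility of critical cohomology and critical $K$-theory with dimensional reduction proved earlier in the paper. The technical heart of the argument is to check that the symmetrization can be arranged so that the pseudo-self-dual torus $\sA$ lifts to a self-dual torus on the enlarged quiver and that the instability windows on $R(Q)/G$ descend correctly from those on the enlarged ambient stack, so that the $q_{*}p^{*}$ computation in the symmetric model specializes to the asymmetric one after reduction.
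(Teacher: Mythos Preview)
Your reduction to showing that the Hall operation sends $\operatorname{im}(\bPsi)$ into $\operatorname{im}(\bPsi)$, using $\mathbf k^{*}\circ\bPsi=\id$ together with the window characterization, is correct and for the $K$-theoretic square it is essentially the paper's strategy. Two points where your proposal departs from what the paper actually does:

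\textbf{The symmetrization step is not dimensional reduction.} Adding the $\bd_{\In,i}-\bd_{\Out,i}$ extra out-going framings produces the vector bundle $E_{+}=\bigoplus_i \Hom(V_i,\bC^{\bd_{\In,i}-\bd_{\Out,i}})$ over $\cM_\theta(\bv,\underline{\bd}^{\bv})$, whose total space (together with $E_-=\bigoplus_i\Hom(V_i,V_i')$) is the symmetric variety $\cM_\theta(\bv,\bv+\bd_{\In})$, and the potential is simply pulled back: there is no section $s$ and no $\langle e,s\rangle$ term, so the setup of \S\ref{sect on dim red} does not apply. The correct tool is Proposition~\ref{prop:vb and stab_attr}: since $E_+$ is attracting for the $\bC^*$ defining $\bPsi$, pullback along $\pi$ identifies $\widetilde{\bPsi}^{\mathsf s}_K$ with the symmetric $\widetilde{\bPsi}^{\mathsf s}_{K,\mathrm{sym}}$. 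The paper then reruns the localization computation of Theorem~\ref{thm: hall k_sym quot}; the only new factor compared with the symmetric case is $1/\hat e_K\bigl(E_+^{\sA,\phi\text{-repl}}\bigr)$ in \eqref{deg bound asym 2}, whose limits as $t\to 0,\infty$ exist. The hypothesis $\bd_{\In}\geqslant\bd_{\Out}$ (with cyclic stability) is exactly what makes the symmetrization sit as an attracting bundle over the asymmetric space rather than the other way around.

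\textbf{The cohomological square is proved by a different method.} The paper does not check degree bounds in cohomology. It first shows (Lemma~\ref{lem: prestab corr}, via Propositions~\ref{prop:vb and stab_attr_corr}--\ref{prop:vb and stab_repl_corr}) that $\HallEnv_\fC$ is induced by the explicit correspondence $[\overline{\Attr}_\fC(\Delta_{\cM_\theta(\bv,\underline{\bd})^{\sA}})]$. It then enlarges $\sA$ to $\widetilde{\sA}=\sA\times\bC^*_t$ and uses the triangle identity for attracting-closure correspondences (Proposition~\ref{tri lem for unsym fram}, fed by Theorem~\ref{tri lem for coh stab} on the symmetric total space) to identify $\widetilde{\bPsi}_H\circ\HallEnv_\fC$ and $\mathfrak m^\phi_\fC\circ\widetilde{\bPsi}_H^\phi$ as maps induced by the \emph{same} irreducible closed substack of $\fM(\bv,\underline{\bd})\times\cM_\theta(\bv,\underline{\bd})^{\sA,\phi}$. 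Your window approach via Theorem~\ref{thm: deg bound na stab coh} could plausibly be adapted to give an alternative cohomological proof paralleling the $K$-theory argument, but that is not the route the paper takes.
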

This compatibility implies triangle lemma (Lemma \ref{tri lem for compatible prestab}) which we introduce right below, and provides foundations for 
the study of geometric $R$-matrices of \textit{shifted} quantum groups.

We also remark that the condition $\bd_{\In,i}\geqslant \bd_{\Out,i}$ is crucial for the above result to hold 
(see Example \ref{ex on fail of tri lem} and Proposition \ref{prop on 3 ex}).





\subsection{Properties of stable envelopes}
The most fundamental property of stable envelopes, 
known as the \textit{triangle lemma}, relates stable envelopes for a torus
$\sA$, a subtorus $\sA'\subseteq  \sA$, and the quotient torus
$\sA/\sA'$ that acts on $X^{\sA'}$. It says that the following diagram 
\begin{equation}\label{triangle intro}
\xymatrix{
H^\sT(X^\sA,\sw) \ar[rr]^{\Stab} \ar[dr]_{\Stab} & & H^\sT(X,\sw) \,, \\
 & H^\sT(X^{\sA'},\sw) \ar[ur]_{\Stab} &
}
\end{equation}
commutes (see Theorem \ref{tri lem for coh stab}), where a consistent choice of attracting directions is understood. 
There is a parallel statement in critical $K$-theory, see
\S \ref{sec on tri lem}. A certain slope shift occurs in our
$K$-theoretic triangle lemma because of the way we normalize stable
envelopes. While very basic, the triangle lemma is the real reason
many stable envelope constructions work. For example, it is the reason
the \textit{Yang-Baxter equation} \eqref{Yang Baxter equation} holds for $R$-matrices constructed using stable envelopes. 

Other general properties of critical stable envelopes include their
compatibility with several standard constructions in critical cohomology and 
$K$-theory. One example is the compatibility with the Hall
correspondence stated in \eqref{Hall compatible intro}. 

Another example is the \textit{dimensional reduction} or, more
generally, \textit{deformed dimensional reduction}. This refers to the following
situation. Suppose that $X$ is the total space of an equivariant
vector bundle over a base $Y$, and suppose that the function $\sw$ has the form 
$$
\sw_X = \langle s, \, \cdot \, \rangle + \sw_Y \,.
$$
Here $s$ is a section of the dual bundle, which defines a
fiber-wise linear function $\langle s, \, \cdot \, \rangle$, and
$\sw_{Y}$ is a function pulled back from the base $Y$. The two extreme
cases here is when $s$ is regular or when $\sw_{Y}=0$, in either of which cases there
is an isomorphism between $H^\sT(X,\sw_{X})$ and
$H^\sT\left(\{s=0\},\sw_{Y}\right)$ (Theorem \ref{thm: def dim red reg sec}, \cite{Dav}). To interpolate between these two
extremes,
we consider the notion of a \textit{compatible pair} of dimensional reduction
data (Definition \ref{compatible dim red data}), which always induces an isomorphism between the corresponding
critical theories (Proposition \ref{prop: coh compatible map is iso}). Since the $\sA$-fixed part of any compatible pair
is again compatible (Lemma \ref{lem on cpt on afix}), it makes sense to ask whether critical stable
envelopes commute with dimensional reduction. And, indeed,
this is what we verify below, both in cohomological and $K$-theoretic case. 
\begin{Theorem}
$($Theorems \ref{dim red and stab_coh}, \ref{dim red and stab_K}, Example \ref{ex: dim red}, Remark \ref{rmk on dim red is}$)$
Critical stable envelopes are compatible with interpolations between compatible dimensional reduction data. 
As a consequence, they are compatible with deformed dimensional reductions, and in particular, critical stable envelopes on tripled quivers with canonical cubic potentials reproduce stable envelopes of Nakajima quiver varieties \cite{MO, Oko}.
\end{Theorem}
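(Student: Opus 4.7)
The plan is to leverage the uniqueness of critical stable envelopes (Propositions \ref{uniqueness of coh stab}, \ref{uniqueness of K stab}) and reduce the compatibility to checking that the image of a stable envelope under the dimensional reduction isomorphism $\Phi$ of Proposition \ref{prop: coh compatible map is iso} again satisfies the defining support and degree (respectively Newton polytope) conditions. Lemma \ref{lem on cpt on afix} guarantees that the $\sA$-fixed part of a compatible pair is itself a compatible pair, so $\Phi$ descends to an analogous isomorphism $\Phi^\sA$ between the fixed loci, and the square
$$
\xymatrix{
H^\sT(X_1^\sA,\sw_1) \ar[r]^{\Stab_\fC} \ar[d]_{\Phi^\sA} & H^\sT(X_1,\sw_1) \ar[d]^{\Phi} \\
H^\sT(X_2^\sA,\sw_2) \ar[r]_{\Stab_\fC} & H^\sT(X_2,\sw_2)
}
$$
is well-posed; the game is to show that the outer top-right composite is a stable envelope on the bottom row, whence it must coincide with $\Stab_\fC$. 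The $K$-theoretic case is handled identically, with Definition \ref{def of stab k} replacing the degree bound.

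The first step is to show that the support of $\Phi(\Stab_\fC(\gamma))$ lies in the attracting closure on $X_2$. The isomorphism $\Phi$ is assembled from a proper pushforward and a smooth pullback attached to a bundle projection $\pi$, a closed embedding $\{s=0\}\hookrightarrow X$, and a vanishing-cycle Thom isomorphism. Each of these operations is $\sA$-equivariant by the definition of a compatible pair, and for a generic cocharacter $\sigma\in\fC$ the attracting flow on $X$ restricts (respectively lifts) to the attracting flow on the other member of the pair; hence the support condition transports by Lemma \ref{can map induce stab}.

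Next I would verify the degree bound \eqref{degree bound intro} in cohomology and its Newton-polytope analogue in $K$-theory. The isomorphism $\Phi$ is $H^\sT(\pt)$-linear (respectively $K^\sT(\pt)$-linear), but it shifts the $\sA$-weight content by the equivariant Euler class of the bundle directions removed by the reduction. This shift is constant across components of the fixed locus, so the strict inequality between diagonal and off-diagonal $\sA$-degrees is preserved; in the $K$-theoretic case the shift is absorbed into the slope, paralleling the slope bookkeeping in the triangle lemma. Uniqueness then gives $\Phi\circ\Stab_\fC = \Stab_\fC\circ\Phi^\sA$. The deformed dimensional reduction is the special case of a one-parameter family of compatible pairs interpolating between the regular-section extreme (Theorem \ref{thm: def dim red reg sec}) and the $\sw_Y=0$ extreme, so it is an immediate consequence.

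For the Nakajima specialization, the tripled quiver $\widetilde Q$ with canonical cubic potential $\sw=\sum_i\tr(\omega_i\,\mu_i)$ exhibits $R(\widetilde Q)$ as the total space of an equivariant vector bundle over the doubled quiver representation $R(\overline Q)$, whose fiber coordinates $\omega_i$ pair linearly with the moment maps $\mu_i$. This is a compatible pair with $\sw_Y=0$ and $s=(\mu_i)$; after GIT quotient, Proposition \ref{prop: coh compatible map is iso} identifies the critical theory of $(R(\widetilde Q)/\!\!/G,\sw)$ with the ordinary theory of the Nakajima variety $\mu^{-1}(0)/\!\!/G$, and the compatibility proved above matches the critical stable envelope with the Maulik--Okounkov stable envelope of \cite{MO, Oko}. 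The main obstacle I expect is the support verification in the first step: one must show that the pushforward-pullback presentation of $\Phi$ is $\sA$-equivariantly proper along the attracting directions, which requires genuinely exploiting the compatible-pair hypothesis rather than a naive Koszul or Thom argument; once this is in place, the degree calculation and the Nakajima specialization are mechanical.
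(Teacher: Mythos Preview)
Your overall strategy---transport $\Stab_\fC$ through the interpolation map and verify the axioms so that uniqueness forces the diagram to commute---is exactly the paper's approach. However, two of your key steps contain genuine errors.

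\textbf{Support condition.} Your claim that the support transports because ``the attracting flow on $X$ restricts (respectively lifts) to the attracting flow on the other member of the pair'' is false as stated, and the citation of Lemma~\ref{can map induce stab} is irrelevant (that lemma concerns how canonical maps send stable envelope correspondences for $\sw=0$ to those for general $\sw$). The map $\delta_H=\tilde j_*\circ\pr^*$ takes a class supported on $\Attr^{\le}_\fC(F)\subset X'$ to one supported on $\pr^{-1}(\Attr^{\le}_\fC(F))\subset X''\subset X$; but $\pr^{-1}$ adds the entire fiber $\ker(\pr)$, which generically contains both attracting and repelling directions, so this set need not lie in $\Attr^{\le}_\fC(\mathfrak r(F))$. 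The paper's resolution (proof of Theorem~\ref{dim red and stab_coh}) is to observe that the vanishing cycle complex is supported on $\Crit(\sw)$, so one may intersect with $\Crit(\sw)$; then the crucial Proposition~\ref{compatible dim red crit loci} says that $\pr$ restricts to a scheme isomorphism $\Crit(\sw)\cong\Crit(\sw')$, whence $\pr^{-1}(\Attr^{\le}_\fC(F))\cap\Crit(\sw)$ coincides with $\Attr^{\le}_\fC(\mathfrak r(F))\cap\Crit(\sw)$ via Lemma~\ref{refined partial order and closed subset}. You correctly identify this as the main obstacle but your sketch does not supply the mechanism.

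\textbf{Degree bound and normalization.} Your claim that the Euler-class shift induced by $\delta_H$ ``is constant across components of the fixed locus'' is false: by Proposition~\ref{prop:compatible map_fixed pt}, restriction to a fixed component $F'$ in $X$ picks up the factor $e^\sT(N_{Y'/Y}|^{\mathrm{mov}}_{F_i})$ for each $F_i\in\mathfrak r^{-1}(F')$, and this depends on $F_i$. What actually makes both the normalization and the degree bound work is the identity $N_{Y'/Y}|^{+}_{Z(s')}\cong\bigl(\ker(\pr)|^{-}_{Z(s')}\bigr)^\vee$, which is exactly the content of the compatible-pair condition (ii') in Proposition~\ref{condition (ii')}; see equations~\eqref{equ on ny'y}--\eqref{equ on enyy}. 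This identity is what converts $e^\sT(N_{Y'/Y}|^{\mathrm{mov}}_{F_i})\cdot e^\sT(N^-_{F_i/X'})$ into $\pm e^\sT(N^-_{\mathfrak r(F_i)/X})$, so that the normalization axiom is matched and the component-dependent shifts add up correctly in the degree inequality. You also omit the normalizer bookkeeping: the signs $(-1)^{\rk N_{Y'/Y}|_F^+}$ (cohomology) and the line bundle $\det(N_{Y'/Y}|_F^+)$ (K-theory) arising here are precisely why the normalizers $\epsilon,\epsilon'$ and $\mathcal E,\mathcal E'$ in Theorems~\ref{dim red and stab_coh}--\ref{dim red and stab_K} differ, and why the slopes shift by $\det(N_{Y'/Y})^{1/2}$.
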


In \S \ref{sect on stab and vb}, we investigate the relation between stable
envelopes for $X$ and a total space of a $\sT$-equivariant bundle $E$ over
$X$. In the situation when $E=E_{+} \oplus E_{-}$, where the bundles
$E_{\pm}$ are attracting (resp.~repelling) for a given chamber
$\fC$, we show that stable envelopes for the base and the total
space exist synchronously, and are related in a simple fashion.
This is used to relax the 
$\mathbf{d}_{\textup{in}}=\mathbf{d}_{\textup{out}}$ part in our
symmetry condition for the quiver $Q$. For the stability condition in
which the maps from the framing and the quiver maps generate the
spaces $V_i$, we show it is enough to assume
$\mathbf{d}_{\textup{in}}\geqslant \mathbf{d}_{\textup{out}}$\,.
The sign here is correlated with the sign of the shift in the quantum
group. It means that our approach builds antidominantly shifted quantum loop groups, see below. 

\subsection{Specializations}
\textit{Specialization maps} play a very important role in both geometric
representation theory and its enumerative applications. Working, for
concreteness, in equivariant $K$-theory, the setting for the
specialization map is the following. Let $U$ be a smooth affine
algebraic curve with a
point $0\in U$. Let $u\colon X \to U$ be
a $G$-equivariant map, where $G$ acts trivially on $U$. The specialization map
$$
K^G(u\ne 0) \to K^G(u=0)
$$
takes the $K$-theory of the generic fibre of $u$ to the $K$-theory of
special fibre. In geometric representation theory, many
computations are done by showing that the action of correspondences
commutes with specialization, see \cite{CG}.

In particular, noncritical stable
envelopes commute with specialization, which is another way to say
that they behave well in families. Since the torus $\sA$ does not act
on $U$, this is immediate from the degree characterization of stable
envelopes. This argument also applies in critical theories, as soon as
a specialization map is available. 

Critical $K$-theory is a quotient of coherent $K$-theory classes by perfect $K$-theory classes,
which the specialization map \emph{fails} to preserve, in general.
Therefore, one should not expect a useable all-purpose specialization map
in critical $K$-theory and further assumptions need to be made.

In this paper, we focus on the case when the space is fixed, while the
potential $\sw(x,u)$ varies. 
Importantly, we assume that $\sw(x,u)$ can be
\emph{scaled} to $\sw(x,0)$, which means we
consider a function $\sw(x,u)$ on
the product $X \times \bA^1_u$, such that there exist an action of $\C^*_u = \{u \ne 0\}$ on
$X$ that commutes with $\sT$ and leaves $\sw(x,u)$ invariant (Setting \ref{setting of def of potentials}). 
With additional assumption that the action of $\C^*_u$ on $X$ is such that
$\lim_{u\to \infty} u\cdot x$ exists for every $x\in X$, we define a \textit{specialization map} in critical cohomology (Definition \ref{def of sp crit_coh}): 
$$\mathsf{sp}\colon H^\sT(X,\sw(x,1))\to H^\sT(X,\sw(x,0)).$$
There are ample examples satisfying the assumption, see Example \ref{ex of sp for crit_coh}.

To define the specialization map in critical $K$-theory, we make further assumption about the action of $\C^*_u$ on the ambient stack, see Section
\ref{sec: sp for crit k existence by dim red} for details. 

When specialization maps exist, they are compatible with \textit{canonical maps} \eqref{can_coh}, \eqref{equ on kgps} 
and are \textit{functorial} with respect to \textit{proper pushforward} and \textit{lci pullback} 
(Propositions \ref{prop on sp comm with stab coho}, \ref{prop on sp comm with stab}). Importantly, we have
\begin{Theorem}
$($Propositions \ref{prop: stab comp with sp crit}, \ref{prop on sp comm with stab}$)$
Critical stable envelopes are compatible with specialization maps.
\end{Theorem}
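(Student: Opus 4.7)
The plan is to invoke the uniqueness of critical stable envelopes (Propositions \ref{uniqueness of coh stab}, \ref{uniqueness of K stab}) and verify that, for $\alpha \in H^\sT(X^\sA,\sw(x,1))$, the class $\mathsf{sp}(\Stab_\fC(\alpha)) \in H^\sT(X,\sw(x,0))$ satisfies the defining axioms of the stable envelope of $\mathsf{sp}|_{X^\sA}(\alpha) \in H^\sT(X^\sA,\sw(x,0))$, and symmetrically in critical $K$-theory. Since the scaling torus $\C^*_u$ is assumed to commute with $\sT \supseteq \sA$ (Setting \ref{setting of def of potentials}), the entire setup of fixed loci, attracting manifolds, and chambers propagates compatibly through the family. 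In particular $\mathsf{sp}$ commutes with restriction to $X^\sA$, and $\mathsf{sp}|_{X^\sA}$ is the analogous specialization map on the fixed locus.

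First, I would check the support condition. The stable envelope correspondence in critical cohomology (Theorem \ref{construct coh stab corr on symm quiver var}) is the closure of the attracting manifold, whose construction is $\C^*_u$-equivariant; hence $\mathsf{sp}$ sends this class to its $\sw(x,0)$-counterpart by the functoriality of $\mathsf{sp}$ under proper pushforward and lci pullback (Propositions \ref{prop on sp comm with stab coho}, \ref{prop on sp comm with stab}). Second, the normalization on the diagonal attracting block is preserved because $\mathsf{sp}$ is compatible with the canonical maps \eqref{can_coh} and \eqref{equ on kgps}, and the attracting manifold class itself is canonical and $\C^*_u$-invariant. Third, the $\sA$-degree bound (resp.~Newton polytope condition in $K$-theory) is preserved because $\mathsf{sp}$ is by construction $H_\sT(\pt)$-linear (resp.~$K^\sT(\pt)$-linear), so it commutes with the filtration by $\sA$-degree on $\mathbf{h}_\sA(\pt)$-coefficients, which is the filtration characterizing $\Stab$.

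In the $K$-theoretic setting, the main obstacle is that the specialization map is defined only under further dimensional-reduction-type hypotheses on the ambient stack (Section \ref{sec: sp for crit k existence by dim red}), and moreover the construction of $\Stab$ proceeds via the nonabelian stable envelope and the Hall correspondence on the stack, followed by restriction to the stable locus. One must therefore verify compatibility at each step: that $\mathsf{sp}$ commutes with nonabelian stable envelopes (using that window subcategories are defined by weight conditions preserved under scaling by $\C^*_u$), that $\mathsf{sp}$ commutes with the Hall/attracting correspondence on the stack (this is again a properness-and-functoriality statement), and that $\mathsf{sp}$ commutes with restriction from the stack to the stable locus. Chasing the diagram \eqref{Hall compatible intro} of Theorem \ref{thm: hall k_sym quot} and combining these three compatibilities then yields the desired commutativity, which by uniqueness of stable envelopes finishes the proof in both cohomological and $K$-theoretic cases.
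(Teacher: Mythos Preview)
Your uniqueness-based strategy is a valid approach in principle, but it diverges significantly from the paper's proof, and in the $K$-theoretic case your route is both more complicated and has a genuine gap.

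For cohomology, the paper does not invoke uniqueness at all. Instead, it decomposes the specialization map $\mathsf{sp}$ into its three constituent pieces $(-)\otimes\alpha$, $\mathbf a_*$, and the boundary map $\delta$ (see Definition~\ref{def of sp crit_coh}), and observes that $\Stab_\fC$, being a convolution by the correspondence $[\Stab_\fC]$ consisting of a Gysin pullback followed by a proper pushforward, commutes with each of these three operations. This is a direct one-line argument once you know that Gysin pullback and proper pushforward commute with $\delta$. Your approach via uniqueness would also succeed (the degree-bound step is correct: $\mathsf{sp}$ is $H_\sT(\pt)$-linear so preserves $\sA$-degree), but it requires separately verifying support, normalization, and degree for $\mathsf{sp}(\Stab_\fC(\gamma))$, which is more work than the paper's route.

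For $K$-theory, your proposed strategy of tracing through the Hall diagram~\eqref{Hall compatible intro} and checking compatibility of $\mathsf{sp}$ with window subcategories, Hall correspondences, and restriction to the stable locus is a significant detour, and the step ``$\mathsf{sp}$ commutes with nonabelian stable envelopes via window subcategories'' is not established anywhere in the paper and would require independent justification. The paper's proof is much simpler and avoids all of this: by Definition~\ref{def of sp crit_k}, the $K$-theoretic specialization map is \emph{defined} to make the canonical-map square commute, and by Lemma~\ref{lem on can cm w conv} stable envelopes (as convolutions) commute with canonical maps. These two facts, together with surjectivity of $\can$, reduce the problem to compatibility of specialization with stable envelopes at the level of $K$-theory of zero loci (Proposition~\ref{prop: stab comp with sp zero loci}), which in turn reduces to the classical fact that specialization commutes with convolution \cite[Thm.~5.3.9]{CG}. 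You should replace your Hall-diagram chase with this canonical-map reduction.
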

This provides powerful tool to relate quantum group modules for different potentials, see \cite{COZZ1}.

\subsection{Quantum groups}\label{s QG intro}
By definition, \textit{quantum groups} are Hopf algebra deformations of
$\scrU(\fg)$, or other Hopf algebras appearing in classical Lie theory.
Here $\fg$ is a Lie algebra and $\scrU(\fg)$ denotes its universal
enveloping algebra. Quantum loop algebras $\scrU_t(\widehat{\fg})$ are Hopf algebra
deformations of $\scrU(\widehat{\fg})$, where $\widehat{\fg}=\fg[u^{\pm 1}]$ is the Lie
algebra of Laurent polynomials with values in $\fg$, with point-wise
commutator. This deformation is further required to preserve the
$GL(1)$-action on $\scrU(\widehat{\fg})$ that scales $u$. Precomposing
with these automorphisms, we get a $1$-parameter family of modules
$V(u)$ from any $\scrU_t(\widehat{\fg})$-module $V=V(1)$.

In geometry, the deformation parameters $t$ come from
$\mathbf{h}_{\sT/\sA}(\pt)$. In the setting of \cite{MO}, the torus
$\sT/\sA$ was often 1-dimensional, with a coordinate $\hbar$ given by the
weight of the symplectic form. In the setting of this paper, $\sT/\sA$ needs to preserve potential functions. 

A crucial feature of the quantum deformation is the loss of the
\textit{cocommutativity of the coproduct}. In other words, the permutation of
factors is no longer an module map for a tensor product and,
moreover, $V_1 \otimes V_2 \not \cong V_1 \otimes_{\textup{opp}} V_2$
as a $\scrU(\widehat{\fg})$-module, in
general. Here $\otimes_{\textup{opp}}$ is the opposite coproduct. It turns out, however, that there exists a map
$$
R_{V_1,V_2}(u_1/u_2)\colon V_1(u_1)\otimes V_2(u_2) \to  V_1(u_1) \otimes_{\textup{opp}}
V_2(u_2),
$$
which is a \emph{rational} function of $u_1/u_2$ and a module isomorphism
for generic $u_1/u_2$. This is called the $R$-\textit{matrix} and is closely
related the braiding in the tensor category of quantum group modules.
The braid relation $(12)(23)(12)=(23)(12)(23)$ manifests itself as
the \textit{Yang-Baxter equation}
\begin{equation}
  \label{Yang Baxter equation}
  R_{12}(u_1/u_2)  R_{13}(u_1/u_3)  R_{23}(u_2/u_3)  =
   R_{23}(u_2/u_3)   R_{13}(u_1/u_3) R_{12}(u_1/u_2) \,, 
 \end{equation}
 or one of its modifications, such as the \textit{dynamical} Yang-Baxter
 equation, e.g.~\cite{COZZ2} or the nondynamical, but \textit{slope-shifted}
 equation \eqref{Yang Baxter equation s} below. In \eqref{Yang Baxter equation}, $R_{ij}$ is a shorthand for $R_{V_i,V_j}$ acting in the
 corresponding factors of the triple tensor product.

One efficient way to construct the Hopf algebra $\scrU_t(\widehat{\fg})$
is to start from constructing a suitable tensor category of its
modules. Following the approach of \cite{Resh90}, such category may be constructed from
a collection of operators $R_{V_i,V_j}(u)$ satisfying the Yang-Baxter
equation. The quantum group operators appear in this scenario as matrix elements
of $R$-matrices. Namely, for any vector and covector in $V_1$, the
corresponding matrix element of $R_{V_1,V_2}(u)$ is a rational
function of $u$ with values in endomorphisms of $V_2$. The
coefficients of its series expansion around $u=0,\infty$ define
individual quantum group operators. These satisfy commutation and
cocommutation relations as a consequence of the Yang-Baxter
equation.

There is an additive analog of this story, in which one
deforms $\scrU(\fg[u])$, the group
operation in \eqref{Yang Baxter equation} is replaced by $u_i - u_j$,
and the result is the Yangian $Y(\fg)$. 

To connect this to geometry,  we introduce a countable union
$$
X(\mathbf{d}) = \bigsqcup_\mathbf{v} R(Q,\mathbf{v}, \mathbf{d}) /\!\!/
G(\mathbf{v})
$$
of quiver varieties over all dimension vectors $\mathbf{v}$, with
the framing dimensions $\mathbf{d} = \mathbf{d}_{\textup{in/out}}$ and
the quiver $Q$ fixed. Clearly, direct sum of quiver representations embeds
\begin{equation}
X(\mathbf{d})  \times X(\mathbf{d'}) \xrightarrow{\quad \oplus \quad}
X(\mathbf{d}+\mathbf{d'}), \label{direct sum intro}
\end{equation}
as a fixed locus for a $1$-dimensional torus $\sA$ acting with
weight one in the unprimed framing spaces. 
In the situation when
stable envelopes exist, e.g.\ for symmetric quivers, we \emph{declare} the
stable envelope map to be a morphism in our future module category.
Since stable envelopes are isomorphisms after localization, this
immediately gives a rational $R$-matrix $R_{\mathbf{d},
  \mathbf{d'}}(a)$, the spectral parameter in which is the generator
of equivariant cohomology of $\pt/\sA$. The triangle lemma implies
a form of the YB equation, where the group law in the YB equation is naturally
identified with the group law of the cohomology theory, that is, the
algebraic group defined by the Hopf algebra $\mathbf{h}_{GL(1)}(\pt)$. Thus we
get a Yangian in equivariant (critical) cohomology, a quantum loop group in
equivariant (critical) $K$-theory (and an elliptic quantum group in elliptic cohomology which, however, will remain outside the confines of this
paper).

More precisely, when we talk about equivariant $K$-theory, there is a
choice between the algebraic and topological $K$-theory. While either choice
has its advantages and its scope of applications, in this paper we opt
for algebraic equivariant $K$-theory. In algebraic theory, the natural map
$K(X_1) \otimes K(X_2) \to K(X_1 \times X_2)$ is, in general, very
far from an isomorphism. It is, however, an isomorphism in examples of maximal
interest to us in this paper. Otherwise, the right thing to do is
taking $K(X_1 \times X_2)$ to be tensor product object in our module 
category.

While the lists of explicit generators and relations for quantum loop groups  
may be enormous and somewhat uninspiring, the corresponding categories of
modules are often known or expected to be generated by a nice set of
morphisms of geometric origin. This highlights both theoretical and
practical advantages of the above hands-off way to construct
quantum group actions.

\subsection{Shifted quantum supergroups}\label{subsection on shifted qg}

While efficient, the above construction of the quantum group does not immediately give
a good control over the size of the resulting quantum group. In the
setting of \cite{MO}, it was shown that there is an Reshetikhin type Yangian such that 
\begin{equation}
  \label{gr Yangian intro}
  \mathrm{gr}\ \textup{Yangian} \cong \scrU(\fg[u]) 
\end{equation}
where $\fg$ is the Lie algebra spanned by the coefficients of the
classical $R$-matrix $\pmb r$, which appears as the $u^{-1}$ coefficient in
\begin{equation}
  \label{R in u intro}
  R(u) = 1 + \frac{\pmb r}{u} + O(u^{-2})\,, \quad u \to \infty
  \,. 
\end{equation}
The filtration in the left-hand side of \eqref{gr Yangian intro} is by
how far down the $u\to \infty$ expansion a given element appears,
counting from $u^{-1}$. In particular,  $\scrU(\fg)$ is the 0th term in
this filtration. About the Lie algebra $\fg$ itself, it was conjectured by one of us that the graded multiplicities of its roots are given by the Kac polynomials of $Q$ (generalizing the famous conjecture
of Kac concerning the constant term of the Kac polynomial). This conjecture was recently proven by Botta and Davison in \cite{BD} and Schiffmann and Vasserot in \cite{SV}. This was later generalized to
a conjectural isomorphism between the positive half of $\fg$ and the Davison-Meinhardt BPS Lie algebra, which was also proven in \cite{BD}.


In our current, more general setting, we prove the following result.
\begin{Theorem}
$($\cite[\S 4]{COZZ1}$)$
Given a symmetric quiver  $Q$ with potential $\sw$, and $\mu\in \bZ_{\leqslant 0}^{Q_0}$, there is a Reshetikhin type $\mu$-shifted Yangian $Y_{\mu}(Q,\sw)$.
When $\mu=0$, it has a filtration whose associated graded satisfies 
\begin{equation}
  \label{gr Yangian mu intro}
  \mathrm{gr}\ Y_{0}(Q,\sw) \cong \scrU(\fg_{Q,\sw}[u]) \,, 
\end{equation}
where $\fg_{Q,\sw}$ is a Lie superalgebra such that the 
$$
\textup{parity of } \fg_\alpha = \sum \alpha_i \cdot (\dim
Q_{ii} +1)  \mod 2 \,. 
$$
Here $\fg_\alpha$ is a  root subspaces in 
\begin{equation}
\fg_{Q,\sw}= \fg_0 \oplus \bigoplus_{\alpha\gtrless 0}
\fg_{\alpha} \,. \label{root g intro}
\end{equation}
The Cartan subalgebra $\fg_0$ in \eqref{root g intro} has rank twice the
number of nodes and records the dimension vectors $\mathbf{v}$ and
$\mathbf{d}$, the latter half being central. The root subalgebras
$\fg_{\alpha}$ are finite-dimensional and act by changing $\mathbf{v} \mapsto \mathbf{v} +
\alpha$.
\end{Theorem}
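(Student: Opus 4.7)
The plan is to follow the classical Reshetikhin/RTT strategy, adapted to the critical and shifted setting, and to extract the super structure from the intrinsic signs carried by critical cohomology.

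First, I would build the algebra $Y_\mu(Q,\sw)$ from the critical $R$-matrices produced by the preceding theory. For each pair of framing vectors $\mathbf{d},\mathbf{d}'$, the direct-sum embedding \eqref{direct sum intro} and the critical stable envelopes yield a rational operator $R_{\mathbf{d},\mathbf{d}'}(u)\colon V(\mathbf{d})\otimes V(\mathbf{d}')\to V(\mathbf{d}')\otimes V(\mathbf{d})$, where $V(\mathbf{d})=\bigoplus_{\mathbf{v}}H^\sT(X(\mathbf{d}),\sw)$. The Yang--Baxter equation for these $R$-matrices follows from the triangle lemma \eqref{triangle intro} applied to a rank-$2$ subtorus of $\sA$, exactly as in the MO setting. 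The relaxation of the symmetry of framings to $\mathbf{d}_{\In}\geqslant \mathbf{d}_{\Out}$, made possible by the stable/Hall envelope comparison of \eqref{Hall compatible intro 2} and the vector bundle compatibility discussed around \S\ref{sect on stab and vb}, is precisely what produces the shift $\mu\in\bZ^{Q_0}_{\leqslant 0}$: the weight-zero piece of the RTT relations is modified by the discrepancy $\mathbf{d}_{\In}-\mathbf{d}_{\Out}$. Then $Y_\mu(Q,\sw)$ is defined in the Reshetikhin manner, as the algebra of matrix coefficients of $R_{\mathbf{d},\mathbf{d}'}(u)$ acting in the first tensor factor as one varies $\mathbf{d}'$, modulo the RTT relations that the YB equation guarantees.

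Second, I would set up the filtration. Consider the expansion \eqref{R in u intro} at $u\to\infty$ and declare that the matrix coefficient extracted from the $u^{-k}$-term lies in filtration degree $k-1$. Since each $R_{\mathbf{d},\mathbf{d}'}(u)$ is rational in $u$ with $R(\infty)=1$, this is well-defined, and the RTT relations show the filtration is compatible with multiplication. The associated graded is then controlled by the classical limit, which is the Lie structure coming from the leading term $\pmb r$ of $R(u)$ together with its twists $u^{-k}\pmb r$. This gives a surjection $\scrU(\fg_{Q,\sw}[u])\twoheadrightarrow \mathrm{gr}\,Y_0(Q,\sw)$, where $\fg_{Q,\sw}$ is the linear span of the coefficients of $\pmb r$, equipped with the bracket induced from the antisymmetric part of the $u^{-2}$-term of the hexagon identity. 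The PBW-type injectivity in the other direction is the most delicate step; I would argue it geometrically, by constructing enough independent classes in $\mathrm{gr}\,Y_0$ out of critical cohomology of the symmetric stacks $R(Q,\mathbf{v})/G(\mathbf{v})$, using the Hall compatibility \eqref{Hall compatible intro} to match graded dimensions of $\scrU(\fg_{Q,\sw}[u])$ with Hall-algebra-theoretic dimensions of $Y_0(Q,\sw)$. This is also where one reads off \eqref{root g intro}: the weight-zero part $\fg_0$ is the Cartan built from $\mathbf{v}$ and $\mathbf{d}$ (the $\mathbf{d}$-half being central because shifting framings commutes with all Hall operations), and the root subspaces $\fg_\alpha$ are identified, degree by degree, with the primitive part of the critical BPS cohomology by the nonabelian stable envelope description (Proposition \ref{prop:na stab and bps coho}).

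Third, I would pin down the super structure. The parity $\sum\alpha_i(\dim Q_{ii}+1)\bmod 2$ is dictated by the Koszul signs that appear whenever one compares critical cohomology classes across direct-sum maps with odd-dimensional loop spaces $Q_{ii}$: the Thom--Sebastiani isomorphism and the dimensional reduction of Theorem (dim red and stab) each introduce a sign governed by the parity of the fiber dimension of the self-dual representation, and a direct computation for a single arrow at a loop $Q_{ii}$ gives precisely the shift by $\dim Q_{ii}+1$. Propagating these signs through the Reshetikhin construction turns the commutation of weight $\alpha$ and weight $\beta$ generators into a super-commutator with the stated parity; one then verifies that the associated graded is $\scrU$ of the Lie \emph{super}algebra $\fg_{Q,\sw}$ with this $\bZ/2$-grading.

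The main obstacle, as in the unshifted symmetric-quiver case treated in \cite{BD,SV}, is the PBW-type lower bound for $\mathrm{gr}\,Y_0(Q,\sw)$. The upper bound, the filtration, and the super signs are reasonably formal consequences of the $R$-matrix formalism and the critical compatibilities proven earlier in the paper. The lower bound, however, requires an honest enumeration of independent generators, and the cleanest route is via the BPS Lie (super)algebra of Davison--Meinhardt in the critical setting, linked to stable envelopes through the nonabelian/Hall envelope comparisons of \S\ref{intro ex of stab}--\S\ref{intro ex of stab2}; once that identification is in place, the claimed isomorphism \eqref{gr Yangian mu intro} follows.
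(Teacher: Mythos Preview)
The paper does not prove this theorem here; it is announced in the introduction and its proof is deferred to \cite[\S 4]{COZZ1}. So there is no detailed argument in the present paper to compare your proposal against, only the sketch in \S\ref{s QG intro}--\S\ref{subsection on shifted qg}. Your three-step RTT strategy (build $Y_\mu$ from the $R$-matrices, filter by order of pole at $u=\infty$, identify the associated graded with $\scrU(\fg_{Q,\sw}[u])$) matches that sketch and is the intended approach.

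There is, however, one concrete inaccuracy that propagates through your steps 2 and 3. You write that $R(\infty)=1$, but the paper explicitly warns against this: the leading term $1$ in \eqref{R in u intro} was a feature of the polarization normalization in \cite{MO}, and in the present setting the leading asymptotics of $R(u)$ is instead the diagonal ratio \eqref{Stab/Stab intro}, which carries the sign $(-1)^{\rk\Attr_-}$ (and, when $\mu\ne 0$, the monomial $u^{\mathbf v\cdot\mu}$). That sign, not Thom--Sebastiani or dimensional-reduction signs as you suggest, is the actual source of the $\bZ/2$-grading: the parity of $\fg_\alpha$ is read off from $\rk\Attr_-\bmod 2$, and for the diagonal attraction in \eqref{direct sum intro} this rank is $\sum_i\alpha_i(\dim Q_{ii}+1)\bmod 2$ after accounting for the self-dual pairing of off-diagonal arrows. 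Your filtration therefore needs to be defined relative to this diagonal leading term, and your derivation of the super-commutator should trace the sign to \eqref{Stab/Stab intro} rather than to fiber-dimension parities in Thom--Sebastiani. With that correction in place, your outline is consistent with what the paper sketches; the PBW lower bound via BPS cohomology that you flag as the main obstacle is indeed where the substance lies, and the paper only records it as a conjecture here (see the paragraph following \eqref{root g intro}).
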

In \cite[\S 4]{COZZ1}, we also prove other basic structural results about \eqref{root g intro}, including an identification of $\fg_{Q,\sw}$ with 
MO Lie algebra \eqref{gr Yangian intro} for tripled quivers with canonical cubic potentials. 
We conjecture an isomorphism between the 
positive half of $\fg_{Q,\sw}$ and the corresponding BPS Lie
superalgebra, generalizing the previous conjecture.

In the context of \cite{MO}, the leading term $1$ in \eqref{R in u intro} appears because stable envelopes are normalized using a
polarization. In our more general setting, the leading asymptotics of the
$R$-matrix comes from the diagonal terms of stable envelopes, that is,
from the attracting $\Attr_+$ and repelling $\Attr_-$ manifolds of the
fixed locus $X^\sA$. In the situation of \eqref{direct sum intro},
assume that the quiver is self-dual with the exception of
$\mathbf{d}'_{\textup{in}} \ne \mathbf{d}'_{\textup{out}}$, the torus
$\sA$ acts with weight $1$ in the unprimed framing spaces, and
the attracting chamber $\fC$ is $\{u>0\} \subseteq  \Lie \sA$. Then
\begin{alignat*}{3} 
  \Attr_+ & = \big(&&\cdots \cdots \big) && \oplus \bigoplus_i
  \Hom(D'_\textup{in}, V_i)\,, \\
  \Attr_-& = \big(&&\cdots \cdots \big)^\vee && \oplus \bigoplus_i
  \Hom(V_i, D'_\textup{out})\,,
\end{alignat*}
where dual denotes the $\sA$-equivariant dual for vector bundles.
Since $\Attr_{\mp}$ is the normal bundle to the total space of
$\Attr_{\pm}$, the diagonal terms in stable envelopes are related by
\begin{equation}
  \label{Stab/Stab intro}
 \frac{\Stab_{+} \big|_{\diag X^\sA }} {\Stab_{-} \big|_{\diag X^\sA }
 } = (-1)^{\rk \Attr_-} u^{\mathbf{v}\cdot \mu} \cdot (1+O(u^{-1}))\,, \quad \mu =
 \mathbf{d}'_{\textup{out}} - \mathbf{d}'_{\textup{in}} \,. 
\end{equation}
Both the sign and the monomial in $u$ are important here.

More precisely, in \cite{MO} one factors out the weight of the
symplectic form $\hbar$ from the classical $R$-matrix $\pmb r$,
making its cohomological degree vanish. In our setting, the classical
$R$-matrix may not be divisible by a single equivariant variable, and we
leave to have cohomological degree $2$ in the fully symmetric
case. This means makes the commutation relation in $\fg$ depend on
equivariant variables.

The monomial in \eqref{Stab/Stab intro} 
results in the Yangian being a \emph{shifted Yangian}, and similarly for the
quantum loop group. Recall that for the stability condition in
which the maps from the framing and the quiver maps generate the
spaces $V_i$ our stable envelops exists when 
$\mathbf{d}_{\textup{in}}\geqslant \mathbf{d}_{\textup{out}}$. This
corresponds to the \textit{antidominant shift} for quantum groups (i.e.~$\mu\in \bZ_{\leqslant 0}^{Q_0}$).
We expect this to be the exact generality in which all of our
constructions work.

In equivariant $K$-theory, one has to look at  both the $u\to 0$ and
the $u\to \infty$ asymptotics of $R(u)$, where $u$ takes values in torus
$\sA$ itself. The determinants of attracting/repelling bundles now
enter the asymptotics. These are line bundles on the fixed loci and
their appearance leads to slope shifts in the triangle lemma
\eqref{triangle intro} and to the dynamical shifts in the Yang-Baxter
equation. We show in \cite{COZZ2} that these dynamical shifts can be gauged away almost
completely, producing the following generalization of the equation \eqref{Yang Baxter equation}:
\begin{equation}
  \label{Yang Baxter equation s}
  R^s_{12}(u_1/u_2)  R^{s-\frac12\mu_2}_{13}\!(u_1/u_3)  R^s_{23}(u_2/u_3)  =
   R^{s-\frac12\mu_1}_{23}\!(u_2/u_3)   R^s_{13}(u_1/u_3) R^{s-\frac12\mu_3}_{12}(u_1/u_2).
 \end{equation}
Here $s$ is an
 arbitrary slope and $\mu_i$ is the quantum group shift $\mathbf{d}_{\textup{out}} -
 \mathbf{d}_{\textup{in}}$ in the $i$th factor.

 As another illustration of the power and flexibility of \textit{critical
theories}, one can construct geometrically Lie algebras with a
symmetrizable, but \textit{not} symmetric, Cartan matrices, see \cite[\S 4.3]{COZZ1}.

\subsection{Quantum Knizhnik-Zamolodchikov equations}

To any solution of the Yang-Baxter equation one can associate a
canonical flat difference (or $q$-difference, in the quantum loop
group case) connection, namely
the \textit{quantum Knizhnik-Zamolodchikov} connection of I.~Frenkel and
N.~Reshetikhin \cite{FR}.  The corresponding commuting difference
operators are best represented pictorially as in the following
illustration of the $(u_1,u_2,u_3,u_4) \mapsto
(u_1,u_2+\varepsilon,u_3,u_4)$
shift: 
\begin{equation}
  \label{qKZ picture}
 \raisebox{-2cm}{ \begin{tikzpicture}
  \tikzset{
    partial ellipse/.style args={#1:#2:#3}{
        insert path={+ (#1:#3) arc (#1:#2:#3)}
    }
}
 \draw[thick] (0,0) [partial ellipse=215:360:2.5 and 0.5];
 \draw[dotted] (0,0) [partial ellipse=0:180:2.5 and 0.5];
 \draw[thick] (0,0) [partial ellipse=180:210:2.5 and 0.5];
 \draw[thick] (0,3) [partial ellipse=215:360:2.5 and 0.5];
 \draw[thick] (0,3) [partial ellipse=0:210:2.5 and 0.5];
\draw[thick] (-2.5,0)--(-2.5,3);
\draw[thick] (2.5,0)--(2.5,3);
\draw[very  thick,blue,-latex] (0.5,-0.49)--(0.5,3 -0.49);
\draw[very  thick,blue,-latex] (1.5,-0.4)--(1.5,3 -0.4);
\draw[very thick,blue,-latex] (-1.5,-0.4)--(-1.5,3 -0.4);
\draw[very thick] (-2.0478,-0.2867)--(-2.0478,3-0.2867);
\draw[very thick] (-2.165,-0.25)--(-2.165,3-0.25);
\node at (-2.05,2.95) {\scalebox{0.9}{$Z$}}; 
\draw[very  thick,red]  (-0.5,-0.49) .. controls (-0.5,0.5) and (2.5,0) .. (2.5,1);
\draw[dotted,thick,red] (2.5,1) .. controls (2.5,2) and (-2.5,3) .. (-2.5,2);
\draw[very thick,red,-latex]  (-2.5,2) .. controls (-2.5,1) and (-.5,
2-.49) .. (-0.5,3-.49);
\node[blue] at (0.5,-0.49-0.3) {\scalebox{0.9}{${V_3(u_3)}$}}; 
\node[red] at (-0.5,1-0.49-0.3) {\scalebox{0.9}{${V_2(u_2)}$}}; 
\node[red] at (-0.5,3-0.49+0.33) {\scalebox{0.9}{${V_2(u_2+\varepsilon)}$}}; 
\node[blue] at (1.5,-0.4-0.3) {\scalebox{0.9}{${V_4(u_4)}$}}; 
\node[blue] at (-1.5,-0.4-0.3) {\scalebox{0.9}{${V_1(u_1)}$}};
\draw[very thick,blue,-latex] (4.5,0)--(3.5,1); 
\draw[very thick,red,-latex] (3.5,0)--(4.5,1);
\node [anchor=west] at (4.5,0.5) {$=R_{V_2,V_i}(u_2 - u_i)$};
\draw[very thick] (4.4,1.5)--(3.5,2.4); 
\draw[very thick] (4.5,1.6)--(3.6,2.5); 
\draw[very thick,red,-latex] (3.5,1.5)--(4.5,2.5);
\node [anchor=west] at (4.5,2) {$=1\otimes Z \otimes 1 \otimes 1$};
\end{tikzpicture}}
\end{equation}
The qKZ operators act in the tensor product $V_1(u_1) \otimes \cdots
\otimes V_n(u_n)$ of quantum groups modules, where the parameters
$u_i$'s are treated as variables. In down-to-earth terms, the difference
operators act on function of $(u_1,\dots,u_n)$ with values in
$V_1\otimes \cdots\otimes V_n$. As we follow the $k$th strand around
the cylinder in \eqref{qKZ picture}, where $(n,k)=(4,2)$, we apply
$R$-matrices in the corresponding tensor factors, as well as an operator
$Z$ that implements quasi-periodicity around the cylinder. The
operator $Z$ needs to satisfy
$$
[Z \otimes Z, R ] = 0
$$
and is usually chosen in the form
\begin{equation}
Z =z \cdot
(\textup{shift $u\mapsto u+\varepsilon$}) \,, \quad z \in
\exp(\fg_0)\,, \label{Z qKZ intro}
\end{equation}
where $\fg_0$ is the Cartan subalgebra in \eqref{root g intro}. In the
multiplicative, $K$-theoretic situation the shift operator becomes $u
\mapsto qu$. Here $\varepsilon$ and $q$ are free parameters, the
geometric meaning of which will be explained momentarily.

The fundamental link between the geometric representation
theory and enumerative geometry is provided by the following geometric
interpretation of the qKZ connection. If one is interested in counting
maps, or quasimaps $f\colon C \to X$, one can broaden the setup and the
range of available tools by counting sections, or
quasisections, of nontrivial $X$ bundles over $C$. This becomes
especially constraining in the situation when the bundle is
equivariant with respect to the action of $\sT$ and the
automorphisms of $C$. 

Specifically, given any cocharacter of $\sT$: 
\begin{equation}\label{equ on cochar}\sigma\colon \C^*\to \sT,  \end{equation}
we can use it as a clutching function to construct
$(\sT\times \bC^*_q)$-equivariant bundle over $C=\bP^1$, where
$\bC^*_q=\Aut(\bP^1,0,\infty)$. When $\sigma$ is a cocharacter of
$\sA$ and the quiver variety $X$ is symmetric, the $\sigma$-twisted
quasimaps have the same virtual dimension and the same self-duality
properties of their obstruction theory as the untwisted ones. We count
them \emph{relative} to the evaluation at the two fixed points
$0,\infty \in C$. For fixed $\deg f$, this gives an operator from
$H^{\sT\times \bC^*_q}(X,\sw)$ to
itself, and similarly in $K$-theory. Note, however, that because the count
is twisted, $\bC^*_q$ acts by $\sigma$ in the target of this map,
while acting trivially in its source.

Summing up these operators with weight $z^{\deg f}$ gives a flat
difference connection on $\Lie \sA$ or $\sA$ in cohomology or
$K$-theory, respectively. These difference operators are known as the
\emph{shift operators}, see \cite{MO,Oko}. The corresponding shifts are by
$\sigma(\varepsilon)$ and $\sigma(q)$, respectively, where
$\varepsilon \in \Lie GL(1)_q$.

In \cite{COZZ2}, we consider the shift operator in critical theories, here say in critical $K$-theory:
\begin{equation}\label{equ on shift oper}
\sS_\sigma \in \End K^{\sT \times \bC^*_q} (X, \sw)_\loc  [\![z]\!],  
\end{equation}
where $\bC^*_q$ is the torus scaling the distinguished $\mathbb{P}^1$ in parametrized rational curves. 
We identify the shift
operator connection with the qKZ connection for any torus $\sA$
generated by \textit{minuscule} cocharacters (i.e.~$\C[X]$ is generated by elements of degree $0$ or $\pm 1$).

\begin{Theorem}
$($\cite{COZZ2}$)$
Let $(X,\sw)$ be a symmetric quiver variety with potential, $\sigma$ be a minuscule cocharacter, and 
$s$ be a slope in certain area\,\footnote{
$\left\{\text{sufficiently small neighbourhood of } (\det P_X)^{-1/2} \right\} \cap (-C_{\mathrm{amp}}(X) ) \subseteq  \Pic_\sA (X) \otimes_\bZ \bR$, 
where $P_X$ is a partial polarization of $X$ and $C_{\mathrm{amp}}(X)$ is the ample cone of $X$, see \cite{COZZ2} for details.}. Then the conjugation of $\sS_\sigma$ is given by 
$z^{\deg f}$ (up to some locally constant function).

Moreover there is a capping operator 
$J\in  \End K^{\sT \times \bC^*_q} (X^\sigma, \sw)_\loc [\![q]\!][\![z]\!]$ that solves the qKZ equation: 
$$\Psi|_{a \mapsto q^\sigma a} = z^{\deg f_\sigma} R^{\mathsf{s}}_{\sigma} \cdot \Psi, 
$$
where $R^{\mathsf{s}}_{\sigma}$ is certain (normalized) $R$-matrix.
\end{Theorem}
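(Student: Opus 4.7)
The plan is to adapt the Maulik-Okounkov identification of shift operators with qKZ connections, developed in \cite{MO, Oko} for Nakajima varieties, to the critical $K$-theoretic setting. The strategy rests on four ingredients already available from earlier in the excerpt: critical stable envelopes exist for symmetric quiver varieties (Theorem \ref{intro thm on stab ex}); they are compatible with Hall correspondences (\ref{Hall compatible intro}), which yields the triangle lemma and hence the slope-shifted Yang-Baxter equation \eqref{Yang Baxter equation s}; they commute with specialization of the potential; and the diagonal asymptotics \eqref{Stab/Stab intro} controls the shift.

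First I would construct the capping operator $J$ geometrically as a generating function in $z$ whose $z^d$-coefficient is the relative quasimap count $\QM(\bP^1,X)_d$ with non-singular insertion at $0\in\bP^1$ and the critical stable envelope $\Stab^s_\fC$ of slope $s$ inserted at $\infty$, integrated against $\varphi_{\widetilde\sw}\omega_{\QM}$ where $\widetilde\sw$ is the potential induced on the moduli. This gives $J\in \End K^{\sT\times\bC^*_q}(X^\sigma,\sw)_\loc[\![q]\!][\![z]\!]$ as a formal series; its $z^0$-coefficient is the identity to leading order because of the non-degeneracy of stable envelopes on the diagonal. The well-definedness uses properness of the relative evaluation map at $\infty$, which in critical $K$-theory requires the specialization toolkit of earlier sections.

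Second, to derive the qKZ identity $\Psi|_{a\mapsto q^\sigma a}=z^{\deg f_\sigma}R^{\mathsf{s}}_\sigma\cdot\Psi$, I would degenerate $\bP^1$ to a nodal curve of two components. The $\sigma$-twist \eqref{equ on cochar} yields a $\bC^*_q$-equivariant clutching, and on the nodal degeneration the count factors as (capping at $0$) $\circ$ (gluing at the node) $\circ$ (capping at $\infty$). The gluing class is read off from the normal directions to $X^\sigma$, which by the argument producing \eqref{Stab/Stab intro} reproduces exactly the normalized $R$-matrix $R^{\mathsf{s}}_\sigma$, while the degree-twisting of sections by $\sigma$ contributes the monomial $z^{\deg f_\sigma}$. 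The minuscule hypothesis is used here to guarantee that $X^\sigma$ splits as a product of smaller symmetric quiver varieties, so that $R^{\mathsf{s}}_\sigma$ is the tensor-product $R$-matrix built from elementary stable envelopes, and the degree shift is $\sigma$-linear in the framing. The conjugation of $\sS_\sigma$ being $z^{\deg f}$ (up to a locally constant function) then follows by rearranging: $\sS_\sigma = J^{-1}\cdot z^{\deg f_\sigma}R^{\mathsf{s}}_\sigma\cdot J$ up to the leading diagonal normalization, and the excess degree-counting from $\deg f - \deg f_\sigma$ over non-$\sigma$-invariant components is locally constant on $X^\sigma$.

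The main obstacle is the precise control of the slope-window: $s$ must lie in a small neighborhood of $(\det P_X)^{-1/2}$ inside $-C_{\mathrm{amp}}(X)$ so that the slope shifts $s-\tfrac12\mu_i$ forced by \eqref{Yang Baxter equation s} keep all capping operators appearing in the degeneration argument in a common slope chamber, where no wall-crossing corrections contaminate the gluing formula. This must be combined with the antidominant condition $\bd_\In\geqslant\bd_\Out$, so that the shift $\mu=\bd_\Out-\bd_\In\leqslant 0$ of the underlying shifted quantum group matches the monomial in \eqref{Stab/Stab intro} with the correct sign. A secondary technical obstacle is that critical $K$-theory is a quotient by perfect classes which specialization does not preserve in general, so one must verify, using the scaling hypotheses on $(X,\sw)$ from Setting \ref{setting of def of potentials}, that the relevant quasimap pushforwards respect this quotient; the dimensional reduction comparison with the Nakajima case then serves both as a consistency check and, for tripled quivers with cubic potentials, as the base case from which the general formula is bootstrapped.
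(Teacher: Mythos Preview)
The theorem you are addressing is not proved in this paper. It is announced in the introduction as a result from the companion work \cite{COZZ2} (listed in the bibliography as ``In preparation''), and the present paper contains no proof of it---only a heuristic outline in \S 1.8, where the authors explain that ``vanishing of contribution of nonconstant maps is shown using, fundamentally, equivariant rigidity,'' with properness of stable envelopes yielding polynomiality and the degree bounds forcing the Newton polytope to contain no lattice points. So there is no paper proof to compare your proposal against.

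That said, your outline is broadly consonant with the approach the introduction sketches and with the \cite{MO, Oko} template it cites. A few points where your sketch drifts from what one would expect: the $R$-matrix does not arise ``from the normal directions to $X^\sigma$'' via \eqref{Stab/Stab intro}---that formula governs only the \emph{diagonal} asymptotics of the $R$-matrix. The $R$-matrix itself is the composition $(\Stab^{\mathsf s}_{-\fC})^{-1}\circ\Stab^{\mathsf s}_{\fC}$, and its appearance in the shift operator comes from comparing the capping operator at the two fixed points of $\bP^1$ under the $\sigma$-twisted bundle, not from a gluing class at a node. Likewise, your nodal-degeneration picture is closer to how one proves factorization of vertex functions; the qKZ identification in \cite{Oko} proceeds instead by localizing the twisted quasimap count with respect to $\bC^*_q$ and using the degree bounds on stable envelopes to kill the nonconstant contributions directly. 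Your identification of the slope constraint and the antidominant-shift bookkeeping is, however, on target.
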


This has dimensional reduction to Nakajima quiver varieties, generalizing  results of \cite{MO, Oko}.
Here minuscule is an important
techical condition, which is satisfied by tori used in
\eqref{direct sum intro}, that is, tori $\sA$ that act on framing spaces
preserving a direct sum decomposition. As a small technical detail, the
identification of two connections requires a multiplicative shift of
the variables $z$, which was called \emph{modified} quantum product
in \cite{MO}.

Note that the curve counting monomials $z^{\deg f_{\sigma}}$ become the weights
of action of $z$ in \eqref{Z qKZ intro}. From the quantum groups point
of view, this is a minor diagonal part of the qKZ equation, the main complexity
of which is contained in the $R$-matrix. From the enumerative
perspective, this  looks very surprising, since a monomial in $z$
appears where one generally expects an infinite series. The geometric
explanation for this is that only \emph{constant} maps contribute to
properly formulated twisted quasimaps counts. Note that a constant
twisted quasimap takes $C$ to one of the components of $X^\sigma$, and
these maps have \emph{different} degrees for different
components. Whence the appearance of monomials with different exponents
in the qKZ operators.

Vanishing of contribution of nonconstant maps is
shown using, fundamentally, equivariant rigidity. To make the rigidity
argument work, all key ingredients of the construction of the
stable envelopes are required. The properness of stable envelopes is used to prove that
the count is a polynomial on $\Lie \sA$ (or $\sA$ itself, in $K$-theory),
while the degree bounds imply that degree of this polynomial is
negative (or that its Newton polytope contains no lattice points, in
$K$-theory).
\subsection{ Quantum critical cohomology}

Fundamental structures in enumerative geometry include the quantum
differential equation in cohomology, and its $q$-difference analog in
$K$-theory. These commute with shift operators, including the
qKZ connection discussed above, which very strongly contrains them.
Using these constraints, the quantum differential equations for
Nakajima varieties were identified with the Casimir connection \cite{TL}
for the corresponding Yangian in \cite{MO}, while the $K$-theoretic
$q$-difference connection was identified with the \emph{dynamical
  connection} for the corresponding quantum loop group in \cite{OS}.
When $\fg$ is finite-dimensional, the dynamical connection is the
lattice part of the dynamical affine Weyl group of Etingof and
Varchenko \cite{EV}. In general, the corresponding connection was
constructed in \cite{OS}.

The linear operator in the quantum differential equation is the
operator of  \textit{modified quantum multiplication} $c_1(\lambda)\,\widetilde{\star}$ 
by the first Chern class of the bundle $\prod (\det V_i)^{\lambda_i}$,
where the modification is a certain sign shift of the variables $z$. 
The following general formula for the operator $c_1(\lambda)\,\widetilde{\star}\,  \cdot$ on
Nakajima varieties was proven in \cite{MO}:
\begin{equation}
  \label{quantum product intro}
   c_1(\lambda)\,\widetilde{\star}\, \cdot = c_1(\lambda)\cup \cdot - \sum_{\theta \cdot \alpha
     >0} (\lambda,\alpha) \frac{z^\alpha}{1-z^\alpha} \,
   \textup{Casimir}_\alpha + \mathrm{scalar}\,,
 \end{equation}
 where
 $$
 \textup{Casimir}_\alpha  = \textup{multiplication}
 (\pmb r_\alpha)\,, \quad \pmb r_\alpha \in \fg_\alpha \otimes
 \fg_{-\alpha}  \,. 
 $$
 Here $\theta$ is the stability parameter, $\pmb r_\alpha$ is projection of $\pmb r$ on the
 corresponding root subspaces, and the multiplication map takes
 $\fg \otimes \fg$ to $\scrU(\fg) \subseteq  Y(\fg)$. The dots in
 \eqref{quantum product intro} stand for a scalar, which is uniquely
 fixed by the requirement that the purely quantum part of
 $c_1(\lambda)\,\widetilde{\star}$ annihilates the identity $1\in H^\sT(X)$.
 For comparison with \cite{MO}, note that our definition of
 $\pmb r$ includes the factor $\hbar$, and which makes
 the the cohomological degree of $\textup{Casimir}_\alpha$ equal 2.

 Building on the geometric identification of the qKZ connection in critical cohomology, we prove
\begin{Theorem}
$($\cite[\S 5]{COZZ1}$)$
 \eqref{quantum product intro} holds for critical cohomology on any symmetric quiver variety $X$ with potential when the specialization map to the cohomology of $X$ is injective. 
\end{Theorem}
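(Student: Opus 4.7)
The strategy is to combine the identification of the qKZ connection with shift operators in critical cohomology (from the preceding theorem), the standard commutativity of the quantum connection with the qKZ connection, and injectivity of the specialization map $\mathsf{sp}\colon H^\sT(X,\sw)\to H^\sT(X,0)$, in order to transfer the formula from the zero-potential case where it follows from \cite{MO}.

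The plan is as follows. Under the injectivity hypothesis, it suffices to check that the two sides of \eqref{quantum product intro} agree after applying $\mathsf{sp}$. First, specialization is compatible with stable envelopes, $R$-matrices, and the extraction of the classical $r$-matrix $\pmb r = \sum_\alpha \pmb r_\alpha$ from the $a\to\infty$ asymptotics $R^{\mathsf s}_\sigma(a)=1+\pmb r/a+O(a^{-2})$ of the preceding theorem, so the right-hand side of \eqref{quantum product intro} commutes with $\mathsf{sp}$. Second---and this is the analytic input discussed below---the modified quantum multiplication $c_1(\lambda)\widetilde{\star}$ also commutes with $\mathsf{sp}$, so it suffices to verify the formula in ordinary equivariant cohomology at $\sw=0$. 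There, $c_1(\lambda)\widetilde{\star}$ commutes with every shift operator $\sS_\sigma$ (by the standard factorization of a twisted $\bP^1$ into two $\bP^1$'s in the quasimap setup), and the uniqueness part of the argument in \cite{MO}---that a difference operator with classical limit $c_1(\lambda)\cup$ commuting with all qKZ shifts is determined up to an additive scalar, which is pinned down by the vanishing of the purely quantum part on $1$---applies, either via deformed dimensional reduction to a Nakajima variety or via its extension to symmetric quiver varieties using the stable envelopes of Theorem \ref{intro thm on stab ex}. Both sides of \eqref{quantum product intro} thus specialize to the same operator, and injectivity concludes.

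The main obstacle is proving compatibility of $c_1(\lambda)\widetilde{\star}$ itself with $\mathsf{sp}$: unlike stable envelopes or $R$-matrices, $\widetilde{\star}$ is defined by virtual counts on moduli of quasimaps twisted by $z^{\deg f}$, and one must show that scaling the potential via the $\bC^*_u$-action of Setting \ref{setting of def of potentials} is compatible with the vanishing cycle class on these moduli and the descendent integrals defining $\widetilde{\star}$. This rests on a careful analysis of the obstruction theory on quasimap moduli and a deformed dimensional reduction isomorphism for critical virtual classes, combined with the equivariant rigidity that underlies the whole shift operator formalism.
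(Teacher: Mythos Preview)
This theorem is not proved in the paper under review; it is announced with a citation to \cite[\S 5]{COZZ1}. The only indication of method is the sentence ``Building on the geometric identification of the qKZ connection in critical cohomology, we prove\ldots,'' which points to running the Maulik--Okounkov strategy (commutation of the quantum connection with the shift/qKZ operators, then a uniqueness-up-to-scalar argument) \emph{directly in critical cohomology}, using the critical qKZ${}={}$shift identification of the preceding theorem together with the critical stable envelopes constructed in the present paper.

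Your route---specialize both sides to $\sw=0$, verify the identity there, and lift via injectivity of $\mathsf{sp}$---is a plausible alternative, but it has two gaps. First, the $\sw=0$ target is the ordinary quantum cohomology of a general symmetric quiver variety, which is \emph{not} a Nakajima variety, so \cite{MO} does not apply; you must still run the commutation-plus-uniqueness argument there, and that requires exactly the stable-envelope and qKZ machinery this paper series builds---machinery that works equally well for arbitrary $\sw$, so the detour through $\sw=0$ buys nothing structurally. (Your appeal to ``deformed dimensional reduction to a Nakajima variety'' only covers tripled quivers with the canonical cubic potential, which is not the $\sw=0$ case of a general symmetric quiver.) Second, your acknowledged ``main obstacle''---that $c_1(\lambda)\,\widetilde{\star}$ intertwines with $\mathsf{sp}$---is a substantive statement about how critical quasimap invariants behave under scaling the potential, and you have only asserted it. The direct approach sidesteps this entirely: one proves that the quantum connection commutes with shift operators \emph{inside} $H^\sT(X,\sw)$, and the injectivity hypothesis most plausibly enters only at the uniqueness step (to pin down the connection by comparison with $\sw=0$, where MO-type rigidity is available), rather than to transport the entire formula.
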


 The $\mathbf{d}_{\textup{in}} > \mathbf{d}_{\textup{out}}$ counts 
 may be accessed from the fully symmetric counts by a certain limit
 transition. This is analogous to how the Toda equations, which
 describe the quantum cohomology of the flag varieties $G/B$ \cite{GK, Kim} can be
 obtained by a limit transition from the Calogero-style equations
 describing the quantum cohomology of $T^*(G/B)$ \cite{BMO}.
 This is well illustrated by the following example.

 \subsection{Quantum cohomology of $\Hilb(\C^3,n)$}\label{sect intro on qc of hilbC3}

The Hilbert scheme $\Hilb(\C^3,n)$ of $n$-points on $\C^3$ has a canonical presentation as the critical locus of the cubic function 
$$\sw=\tr(x[y,z])\colon X=\left(\Hom(\C^n,\C^n)^3\oplus \Hom(\C,\C^n)\right)/\!\!/ \mathrm{GL}(n)\to \C. $$
Let $\bC^*_{q_1},\bC^*_{q_2},\bC^*_{q_3}$ be the tori that scale the loops $x,y,z$ with weights $-1$ respectively. 
Set $$\sT=\ker\left(\bC^*_{q_1}\times \bC^*_{q_2}\times \bC^*_{q_3}\to\bC^*\right),\quad (t_1,t_2,t_3)\mapsto t_1t_2t_3. $$ 
Let $\hbar_i$ be the equivariant parameter for $\bC^*_{q_i}$, then $\bC[\mathsf t]=\bC[\hbar_1,\hbar_2,\hbar_3]/(\hbar_1+\hbar_2+\hbar_3)$.

Although $X$ is not symmetric, one can symmetrize it by adding a path from the gauge node 
to the framing node, and introduce an equivariant parameter $\hbar$. By taking certain limit of $\hbar$, we obtain the formula of 
quantum multiplication by divisors for $\Hilb(\C^3,n)$.
 
\begin{Theorem}
$($\cite[\S 10]{COZZ1}$)$
For an equivariant line bundle $L$ on $X$, we have 
$$c_1(L)\, \widetilde{\star}_{d}\, \cdot =\sigma_3\deg(L)\, d\cdot J_{-d}\, J_{d}+\mathrm{scalar}\,, \quad \forall\,\,d>0.$$
Here $\sigma_3=\hbar_1\hbar_2\hbar_3$, $J_i=\frac{\sigma_3^{-i}}{(i-1)!} \text{ad}^{i-1} _{-f_1} f_0$, $J_{-i}=-\frac{\sigma_3^{-i}}{(i-1)!} \text{ad}^{i-1} _{e_1} e_0$ for $i>0$, and $e_i$, $f_i$ are parts of the generators of shifted Yangian $Y_{-1}(\widehat{\mathfrak{gl}}_1)$, see \cite[\S 10.1]{COZZ1}, which acts on the critical cohomology. The scalar vanishes for $d>1$.
\end{Theorem}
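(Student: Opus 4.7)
The plan is to deduce the formula from the general quantum multiplication theorem for symmetric quiver varieties with potential, applied to a suitable symmetrization of $X$, followed by a controlled limit. First, form the symmetrized quiver variety $X_\hbar$ by adjoining a path $\Hom(\bC^n,\bC)$ dual to the existing framing $\Hom(\bC,\bC^n)$, scaled by an auxiliary $\bC^*_\hbar$ with equivariant parameter $\hbar$; the cubic potential $\sw=\tr(x[y,z])$ extends naturally. The resulting $X_\hbar$ is symmetric, so after verifying the injectivity of the specialization map on its critical cohomology, the general quantum product formula \eqref{quantum product intro} expresses $c_1(L)\,\widetilde{\star}$ on $X_\hbar$ as cup product plus a sum over positive roots $\alpha$ of Casimir terms weighted by $(\lambda,\alpha)\,\frac{z^\alpha}{1-z^\alpha}$.

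Second, identify the Lie algebra $\fg_{Q,\sw}$ attached to this symmetric three-loop quiver with cubic potential. The positive roots are labeled by the gauge dimension $d\in\bZ_{>0}$; by the conjectural identification of $\fg_{Q,\sw}$ with the BPS Lie algebra (Section 4 of \cite{COZZ1}), combined with the Kac polynomial data for the Jordan-type quiver, each root subspace $\fg_{\pm d}$ is one-dimensional after the relevant equivariant specialization, and is spanned precisely by the element $J_{\pm d}$ of the statement. The classical $r$-matrix projects onto $\fg_{-d}\otimes\fg_d$ as a multiple of $J_{-d}\otimes J_d$, so the Casimir operator in \eqref{quantum product intro} acts on critical cohomology as a scalar multiple of $J_{-d}J_d$. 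The pairing $(\lambda,\alpha)$ equals $\deg(L)\cdot d$ under the identification of $\lambda$ with the divisor class of $L$, and the overall normalization picks up the Calabi-Yau factor $\sigma_3=\hbar_1\hbar_2\hbar_3$ from the cubic potential.

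Third, pass from $X_\hbar$ back to the original $X$ by taking $\hbar\to\infty$. This is the critical analogue of the Toda/Calogero limit of \cite{BMO} recalled just before the theorem: it freezes out the auxiliary arrow and produces an antidominantly shifted quantum group with shift $-1$, which is precisely $Y_{-1}(\widehat{\mathfrak{gl}}_1)$. Under this limit, the rational function $\frac{z^d}{1-z^d}$ collapses to the monomial $z^d$ by equivariant rigidity (nonconstant twisted quasimap contributions vanish using properness of critical stable envelopes, exactly as in the identification of shift operators with qKZ), yielding the claimed degree-$d$ contribution $\sigma_3\deg(L)\,d\cdot J_{-d}J_d$.

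Finally, the scalar is fixed by the normalization convention that the purely quantum part of $\widetilde{\star}$ annihilates $1\in H^\sT(X)$. For $d>1$ the composition $J_{-d}J_d$ already annihilates $1$: no state of weight $d$ is reachable from the vacuum $1\in H^\sT(X_0)=H^\sT(\pt)$ for $d>1$ in the shifted representation, so $J_d\cdot 1=0$ and the compensating scalar vanishes. For $d=1$ a nontrivial scalar survives. The main obstacle is the rigorous control of the $\hbar\to\infty$ limit, which requires the antidominantly-shifted stable envelope constructions from \S\ref{sect on stab and vb} together with the rigidity arguments underlying the qKZ identification.
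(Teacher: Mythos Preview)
The paper does not prove this theorem here; it is deferred entirely to \cite[\S 10]{COZZ1}. The only information given is the one-sentence strategy preceding the statement: symmetrize $X$ by adding an out-framing arrow $\Hom(\bC^n,\bC)$ with equivariant parameter $\hbar$, then take a limit in $\hbar$ analogous to the Calogero-to-Toda degeneration of \cite{BMO}. Your overall plan follows exactly this outline, so at the level of strategy you match what the paper sketches.

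Two of your steps, however, have genuine gaps. In step 2 you identify the root spaces $\fg_{\pm d}$ as one-dimensional and spanned by $J_{\pm d}$ by invoking the conjectural identification of $\fg_{Q,\sw}$ with the BPS Lie algebra. The paper states this only as a conjecture (\S \ref{subsection on shifted qg}), and the Kac-polynomial result of \cite{BD,SV} you cite is for the Nakajima (non-critical) setting, not for this potential. The identification of $\textup{Casimir}_d$ with a multiple of $J_{-d}J_d$ must be established directly from the generators of $Y_{-1}(\widehat{\fgl}_1)$, not imported from a conjecture.

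In step 3 your mechanism for the ``collapse'' $\frac{z^d}{1-z^d}\to z^d$ is a conflation. The equivariant-rigidity argument you invoke (vanishing of nonconstant twisted quasimap contributions via properness of stable envelopes) is what identifies shift operators with qKZ for the \emph{symmetric} variety $X_\hbar$; it is already baked into \eqref{quantum product intro} and is unrelated to the $\hbar$-limit. Without a genuine limit argument, the $z^d$-coefficient of \eqref{quantum product intro} contains $\textup{Casimir}_e$ for \emph{every} $e\mid d$, not just $e=d$, so you have not yet explained why the stated formula has only the single term $J_{-d}J_d$. The actual Calogero-to-Toda mechanism involves a combined rescaling of the K\"ahler variable with $\hbar$ under which the various Casimir terms scale at different rates; this is where the work lies, and your sketch does not address it.
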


\subsection*{Related works and future directions} 

As already mentioned, many current trends in the field may be traced to the influential work of Nekrasov and Shatashvili \cite{NS}. Among their predictions was the identification of the operators of quantum multiplications with the image of Baxter/Bethe subalgebras of certain quantum groups in specific representations.
Concretely, Nekrasov and Shatashvili computed the spectra of quantum multiplication operators and saw they satisfy Bethe-type equations. 

While the mathematical foundations of enumerative 
theory of critical loci were yet to be laid down, and the relevant quantum 
groups and their representation were, with a few exceptions, yet to be constructed (using precisely the stable envelopes, including our results), the Nekrasov-Shatashvili computation of the spectra of quantum operators works very generally. In particular, it applies to the enumerative problems studied in the present work. Looking at the Bethe equations, Nekrasov and Shatashvili predicted the appearance of what Hernandez and Jimbo called asymptotic, or prefundamental,  representations of quantum groups \cite{HJ}. In such representations, Drinfeld currents act by rational functions with unbalanced numerator and denominator, which is by now well understood to be a real signature of a shifted quantum group. 

The asymptotic representations of Hernandez and Jimbo arise via a limit procedure which, among its many incarnations in mathematical physics, is related to taking the length of a spin chain to infinity. As a parallel construction in enumerative geometry, one may want to approximate maps, or quasimaps, from a curve to some target $X$ by finite jets. Moduli of regular maps $\Spec \C[t]/t^N \to X$, where $X=X(\bd)$ is a Nakajima variety, is naturally an open set in $\Crit(\bw)$, where $\bw$ deforms the canonical cubic potential for the Nakajima variety $X(N\bd)$ by quadratic terms. With different stability condition for $\Crit(\bw)$, one may approximate different quasimap moduli spaces. The corresponding limit procedure for critical cohomology or critical $K$-theory groups is parallel to how Hernandez and Jimbo approximate the prefundamental representations by Kirillov-Reshetikhin modules. At the time of the Spring 2018 MSRI program, this point of view was adopted in an unpublished work of Nakajima and Okounkov \cite{NO}. Several technical difficulties encountered by them were subsequently overcome in special cases in the PhD thesis of Henry Liu \cite{Liu}, which broke the ground in geometric construction of shifted quantum group actions in critical theories. In particular, critical $R$-matrices and asymptotic modules for shifted quantum groups make their appearance in \cite{Liu}, including applications to one-leg DT and PT counts. 

In this paper, we improve on this asymptotic approach in several aspects. On the one hand, we work directly with the critical cohomology or $K$-theory, without the need to push forward the computations to any ambient space or specialization. In representation theoretic terms, this means that we study both the limit objects and the Kirillov-Reshetikhin-type modules that approximate them abstractly, and not as submodules of some ambient modules like the tensor-product module corresponding to $X(N\bd)$. On the other hand, we can work directly with the critical locus description of quasimaps moduli spaces, so the whole machinery of approximation is no longer necessary to construct representations of the relevant shifted quantum groups.  Finally, we clarify both positive and negative results about specialization in critical theories, the absence of which stood in the way of applying several conventional geometric representation theory arguments in the critical context. 

Varagnolo and Vasserot introduced critical convolution algebras in \cite{VV2}, and they constructed maps from (shifted) quantum loop groups of a simple Lie algebra $\mathfrak{g}$ to $K$-theoretic critical convolution algebras of graded triple quiver varieties of the Dynkin quiver associated to $\mathfrak{g}$ (see \cite{VV2} for simply laced case and \cite{VV3} for non-simply laced case). In \textit{loc.\,cit.}, they showed that the prefundamental modules and Kirillov-Reshetikhin modules can be realized as critical $K$-theories of graded triple quiver varieties. Their construction of shifted quantum loop groups actions fits into our framework in the sense that those actions factor through  maps to the Reshetikhin type shifted quantum loop groups.

Interactions of geometric representation theory and enumerative geometry constitute a very broad field of study, in which the setting of the more traditional, noncritical theories occupies an important, but relatively small corner. There are therefore compelling theoretical and applied reasons for extending \emph{all} of the existing quantum 
group machinery to the setting of \textit{critical theories} and \textit{shifted} quantum groups. 
Among other things, this should include the identification of the quantum difference equations, extending the results of \cite{Oko}, 
a correspondence between relative and descendent insertions extending those found in \cite{AO1}, as well as 
a relation between vertex functions and nonabelian elliptic stable envelopes 
\cite{AO1}. While in some directions the path of this process may appear somewhat predictable, other directions present the researchers with genuinely new features and puzzles. 

Focusing on the latter, it appears both interesting and challenging to pinpoint the precise relation between representation theory and enumerative invariants in the situation when the shift is not antidominant. Both sides of the story here are 
well-defined in general and are directly related in the zero or antidominant shift case. 
We expect them to remain connected in general, but now in a more subtle way. 
Certainly, a connection of this form should contain some very interesting 
enumerative and representation-theoretic information. 

We similarly expect the shifted quantum group story to lead to many subtleties in the 
elliptic stable envelopes situation. This is because, traditionally, computation with elliptic 
objects tend to rely very heavily on self-duality features, and also because there is no simple way to get rid of unwanted variables in the formulas with elliptic functions
by making them go to $0$ or $\infty$.



\subsection*{Acknowledgments}
This work benefits from helpful discussions and communications from many people, including
Mina Aganagic, Daniel Halpern-Leistner, Tasuki Kinjo, Ryosuke Kodera, Yixuan Li, Yuan Miao, Hiraku Nakajima,
Andrei Negu\c{t}, Tudor P\u{a}durariu, Spencer Tamagni, Yukinobu Toda, Yaping Yang, Gufang Zhao, Tianqing Zhu. A.O. would like to thank SIMIS for hospitality. We would like to thank Kavli IPMU for bringing us together.

\subsection*{Statements and Declarations}
We have no conflicts of interest to disclose.

\vspace{1cm}

\section{Critical cohomology and \texorpdfstring{$K$}{K}-theory}

In this section, we recall the notions of critical cohomology, critical $K$-theory and their functorial properties. 
We discuss their excisions when closed subvarieties are  full attracting subvarieties with respect to tori actions.

\subsection{Attracting subvarieties}

We work under the following setting. 
\begin{Setting}\label{setting of stab}
Let $X$ be a smooth quasi-projective variety over $\bC$ with a torus $\sT$-action and $\sw\colon X \to \bC$ be a $\sT$-invariant regular function (called \emph{superpotential})\footnote{Note that by \cite[Cor.~2]{Sum}, every point of $X$ is contained in a $\sT$-invariant affine neighbourhood.}.
Let $\sA \subseteq  \sT$ be a subtorus. Define $\Fix_\sA(X)$ to be the set of connected components of the torus fixed locus $X^\sA$.

\end{Setting}

A cocharacter $\sigma \colon \bC^* \to \sA$ of $\sA$ is \emph{generic} if $X^{\sigma} = X^\sA$,~i.e.~the torus fixed loci coincide. 
There is a wall-and-chamber structure on $\Lie(\sA)_\bR$, such that $\sigma$ is generic if and only if it lies in a chamber. 

\begin{Definition}\label{def of roots and chambers}
The \textit{torus roots} are the set of $\sA$-weights $\{\alpha\}$ occurring in the normal bundle to $X^\sA$. 
A connected component of the complement of union of (finite) root hyperplanes is called a \textit{chamber}, i.e.
\begin{align*}
    \Lie(\sA)_\bR\bigg{\backslash} \bigcup_{\alpha\in \text{roots}}\alpha^{\perp}=\bigsqcup_j \fC_j,
\end{align*}
where $\fC_j$ are chambers.
\end{Definition}

For an algebraic variety $M$ with an $\sA$-action, and a cocharacter $\sigma$, let $S$ be a subset of $M^\sigma$, the \emph{attracting set} of $S$ in $M$ is 
$$
\Attr_{\sigma} (S)_M := \{ x\in M \mid \lim_{t\to 0} \sigma (t) \cdot x \in S\}.
$$
In the Setting \ref{setting of stab} and let $\fC$ be a chamber, we define
\begin{align*}
    \Attr_{\fC} (S)_M :=\Attr_{\xi} (S)_M
\end{align*}
for a subset $S\subseteq  X^\sA$ and $\xi\in \fC$. Note that the definition does not depend on the choice of $\xi$. When there is no ambiguity, we also write $\Attr_{\sigma} (S)=\Attr_{\sigma} (S)_M$ and $\Attr_{\fC} (S)=\Attr_{\fC} (S)_M$. Let $F \in \Fix_\sA(X)$ be a connected component, then 
$\Attr_\fC (F)$ is a locally closed subscheme in $X$, and admits an \textit{affine fibration} $p\colon \Attr_\fC (F) \to F$ by the result of Bialynicki--Birula \cite{BB}. 

Consider a partial order on $\Fix_\sA(X)$ which is the transitive closure of the following relation:
\begin{align}\label{flow order}
    F_i \preceq F_j \quad \text{if} \quad F_j \cap  \overline{\Attr_\fC (F_i)}\neq \emptyset.
\end{align}
The \emph{full attracting set} is defined as
$$
\Attr_\fC^f (F) := \bigcup_{F \preceq F'} \Attr_\fC (F'). 
$$
We denote by $\Attr_\fC^f$ the smallest $\sA$-invariant closed subset of $X\times X^\sA$ such that $\Attr_\fC^f$ contains the diagonal $\Delta\subseteq  X^\sA\times X^\sA$ and 
\begin{align*}
    (x',y)\in \Attr_\fC^f\text{ and }\lim_{t\to 0} \sigma (t) \cdot x=x' \text{ implies }(x,y)\in \Attr_\fC^f.
\end{align*}
It follows from definition that $\Attr_\fC^f$ is a subset of $\bigcup_{F \in \Fix_\sA(X)} \Attr_\fC^f (F) \times F \subseteq  X \times X^\sA$.


\subsection{Critical cohomology}\label{sect on crit coho}


Let $\D^b_c(X)$ be the bounded derived category of constructible sheaves of $\bC$-vector spaces. 

\subsubsection{Definition and canonical map}

There is a functor of \emph{vanishing cycles}:
$$
\varphi_\sw \colon \D^b_c (X) \to \D^b_c (\sw^{-1} (0)). 
$$
\begin{equation}\label{eqn:van}
\varphi_\sw(F):=\dR \Gamma_{\mathrm{Re}(\sw)\geqslant  0}(F)|_{\sw^{-1}(0)}. 
\end{equation}
Here we use an equivalent definition due to \cite[Ex.~VIII 13]{KS}. 

Let $\bD_X$ be the Verdier duality functor on $\D^b_c(X)$, with  dualizing sheaf
$$\omega_X := \bD_X \bQ_X \cong \bQ_X [2\dim X]. $$
The vanishing cycle functor commutes with Verdier duality 
$$\varphi_\sw \bD_X = \bD_X \varphi_\sw. $$ 

\begin{Definition}
The \emph{critical cohomology} for $(X, \sw)$ is defined as
\begin{equation}\label{equ on crit coho}
H(X, \sw) := H (X, \varphi_\sw \omega_X) \cong H_c (X, \varphi_\sw \bQ_X)^\vee. 
\end{equation}
\end{Definition}
Let $\psi_\sw\colon \D^b_c (X) \to \D^b_c (\sw^{-1} (0))$ be the functor of nearby cycles, and let $i\colon \sw^{-1}(0) \hookrightarrow X$ be the closed embedding. 
There is a distinguished triangle of functors, referred to as the \emph{Milnor triangle}:
$$
\xymatrix{
\psi_\sw \ar[r] & \varphi_\sw \ar[r] & i^* \ar[r] & 
}
$$
Its Verdier dual is
\begin{equation}\label{equ on mil tri}
\xymatrix{
i^! \ar[r] & \varphi_\sw \ar[r] & \psi_\sw \ar[r] & 
}
\end{equation}
Apply the map $i^! \to \varphi_\sw$ to $\omega_X$, and recall the Borel--Moore homology of $\sw^{-1} (0)$ is
$$
H^\BM (\sw^{-1} (0) ) = H^{} \left(\sw^{-1} (0), \omega_{\sw^{-1} (0)}\right) = H^{} (\sw^{-1} (0), i^! \omega_X). 
$$
We obtain a \emph{canonical map} (or \emph{cospecialization map}) from the Borel--Moore homology of $\sw^{-1} (0)$ to the critical cohomology:
\begin{align}\label{can_coh}
\can\colon H^\BM (\sw^{-1} (0) ) \to H (X, \sw).
\end{align}
More generally, let $i\colon Z\hookrightarrow X$ be the embedding of a closed subset, then there is a natural transformation 
$$\varphi_{\sw|_Z}\circ i^!\to i^!\circ \varphi_{\sw}. $$
Applying to $\omega_X$, we get a natural map 
\begin{equation}\label{equ on cano map mod}\varphi_{\sw|_Z}\omega_Z\to i^!\varphi_{\sw}\omega_X. \end{equation}
We define \textit{critical cohomology with support} on $Z$ by 
\begin{equation}\label{equ on def of rel coh}
    H(X,\sw)_Z:=H\left(Z,i^!\varphi_\sw\omega_X\right).
\end{equation}
Then \eqref{equ on cano map mod} gives a canonical map
\begin{align}\label{can with supp_coh}
    \can\colon H(Z,\sw|_Z)\to H(X,\sw)_Z.
\end{align}

\subsubsection{Euler class operator}\label{sect on eu cla op}

Let $\pi\colon E\to X$ be a  
vector bundle of rank $r$, with zero-section $z\colon X\to E$. Let $\sw$ be a regular function on $X$ and we extend it to a function on $E$ by $\sw_E:=\sw\circ \pi$, then 
$\sw=\sw_E\circ z$.

Consider the composition: 
\begin{equation}\label{eqn:Euler0}
    z_*\mathbb{Q}_X\to \mathbb{Q}_{E}[2r]\to z_*\mathbb{Q}_{X}[2r], 
\end{equation}
where the second map is by adjunction and the first map is its Verdier dual. 
Applying $\varphi_{\sw_E}$ to \eqref{eqn:Euler0}:
\[z_*\varphi_\sw\mathbb{Q}_X\cong \varphi_{\sw_E} z_*\mathbb{Q}_X\to \varphi_{\sw_E}\mathbb{Q}_{E}[2r]\to \varphi_{\sw_E}z_*\mathbb{Q}_{X}[2r]\cong z_*\varphi_\sw\mathbb{Q}_X[2r], \]
and taking global section, we obtain the \textit{Euler class operator} on critical cohomology: 
\begin{equation}\label{eqn:Euler} e(E)\cdot(-)\colon H(X,\sw)\to H(X,\sw).\end{equation}
Here we use proper pushforward commutes with vanishing cycle functor. 
It is straightforward to check that the Euler class operator commutes with the canonical map \eqref{can_coh}. 

\subsubsection{Functorial properties}

The critical cohomology has natural \emph{functorial properties}.
Let $f\colon X\to Y$ be a map between smooth varieties, and $\sw\colon Y \to \bC$ be a regular function. 

\begin{enumerate}[(i)]
\setlength{\parskip}{1ex}

\item Since both $X$ and $Y$ are smooth, the map $f$ is a locally complete intersection (l.c.i.).
There is an l.c.i.\,pullback for the critical cohomology: 
$$
f^*\colon H_* (Y, \sw) \to H_{* + 2 \dim f} (X, \sw\circ f), 
$$
which is compatible with the canonical map. 

\item If $f$ is proper, there is a proper pushforward:
$$
f_* \colon H (X, \sw\circ f) \to H (Y, \sw), 
$$
which is compatible with the canonical map.

\item Given a closed embedding $i \colon Z \hookrightarrow X$ between smooth varieties, by \cite[Prop.~2.16]{Dav}, we have 
\begin{equation}\label{equ on euler of normal}i^*i_*(-)=e(N_{Z/X})\cdot(-), \end{equation}
where $e(N_{Z/X})\cdot$ is the Euler class operator \eqref{eqn:Euler}.

\item Given an affine fibration $\pi\colon \widetilde{X}\to X$ with the induced potential function $\tilde{\sw}:= \sw\circ \pi$, the pullback map 
\begin{equation}\label{pb is iso on coho}
\pi^*\colon H_{}(X, \sw) \to H_{}(\widetilde{X}, \sw\circ \pi)
\end{equation}
is an isomorphism \cite[Eqn.~(37)]{Dav}.

\end{enumerate}

\subsubsection{Excisions}

Next we explain the \textit{excision} for critical cohomologies. 
Let $j \colon U\hookrightarrow X$ be the inclusion of an open subset, and $i \colon Z \hookrightarrow X$ be the closed embedding of its complement. 
There is an excision triangle:
\begin{equation} \label{eqn-exc}
\xymatrix{
i_{*} i^! \ar[r] & \Id \ar[r] & j_* j^*.   
} 
\end{equation}
When applied to $\omega_X$, it gives the excision long exact sequence for Borel--Moore homology. 
In critical cohomology, we obtain two long exact sequences.

\begin{enumerate}[a)]
\setlength{\parskip}{1ex}
\item Apply (\ref{eqn-exc}) to $\omega_X$ and then apply $\varphi_\sw$ from the left. 
We get
$$
\xymatrix{
\varphi_\sw i_{*} \omega_Z \ar[r] & \varphi_\sw \omega_X  \ar[r] & \varphi_\sw j_* \omega_U  
} 
$$
Using $\varphi_\sw i_{*} \cong i_{*} \varphi_{\sw |_Z}$ (e.g.~\cite[Ex. VIII.15]{KS}), and then applying $R\Gamma$, we have
$$
\xymatrix{
 \ar[r] & H (Z, \sw |_Z ) \ar[r] & H (X, \sw) \ar[r] &  H_{} (X, \varphi_\sw j_* \omega_U ) \ar[r] & 
}
$$

\item Apply (\ref{eqn-exc}) to $\varphi_\sw \omega_X$.
We get
$$
\xymatrix{
i_{*} i^! \varphi_\sw \omega_X  \ar[r] & \varphi_\sw \omega_X  \ar[r] & j_* j^* \varphi_\sw \omega_X  } 
$$
Using $j^* \varphi_\sw \cong \varphi_{\sw |_U} j^*$ (see \cite[(23)]{Dav}) and then apply $R\Gamma$, we have
$$
\xymatrix{
 \ar[r] & H^{} (X, \sw)_Z \ar[r] & H (X, \sw) \ar[r] & H (U, \sw |_U) \ar[r] & 
}
$$
\end{enumerate}
Consider the Cartesian diagram
$$
\xymatrix{
\sw|_Z^{-1} (0) \ar@{^(->}[r] \ar@{^(->}[d] & \sw^{-1} (0) \ar@{^(->}[d]^i & \sw |_U^{-1} (0) \ar@{_(->}[l] \ar@{^(->}[d] \\
Z \ar@{^(->}[r]^i & X & U \ar@{_(->}[l]_j . 
}
$$
One can obtain a commutative diagram of (co)homology groups:
$$
\xymatrix{
H^\BM (\sw |_Z^{-1} (0) ) \ar[r] \ar[d]^{\can} & H^\BM (\sw^{-1} (0) ) \ar[r] \ar[d]^{\can} & H^\BM (\sw |_U^{-1} (0) ) \ar[r] \ar[d] & \\
H (Z, \sw |_Z ) \ar[r] \ar[d]^{\can} & H (X, \sw) \ar[r] \ar@{=}[d] &  H^{} (X, \varphi_\sw j_* \omega_U ) \ar[r] \ar[d] &  \\
H^{} (X, \sw)_Z \ar[r] & H (X, \sw) \ar[r] & H (U, \sw |_U) \ar[r] & 
}
$$
where the right vertical map in the upper right corner is given by applying the Milnor triangle \eqref{equ on mil tri} to $j_*\omega_{U}$.

\begin{Proposition}\label{excision for crit coh}
Let $F$ be a connected component of $X^\sA$ and $Z=\Attr^f(F)$ with immersion $i\colon Z\to X$. 
Then the map $$H (Z, \sw |_Z )\to H^{} (X, \sw)_Z$$ in the above diagram is an isomorphism. In particular, there is a  long exact sequence
$$
\xymatrix{
 \ar[r]  & H (Z, \sw |_Z ) \ar[r] & H (X, \sw) \ar[r] & H (U, \sw |_U) \ar[r] &. 
}
$$
\end{Proposition}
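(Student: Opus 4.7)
The plan is to combine the two excision distinguished triangles displayed just above the statement into a single commutative diagram, apply a two-out-of-three argument, and finish with a $\sigma$-equivariant dimensional reduction.

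First, the two triangles fit into the commutative diagram of distinguished triangles on $X$
\[
\xymatrix{
i_* \varphi_{\sw|_Z}\omega_Z \ar[r] \ar[d]_{i_*(\can)} & \varphi_\sw \omega_X \ar[r] \ar@{=}[d] & \varphi_\sw j_* \omega_U \ar[d] \\
i_* i^! \varphi_\sw \omega_X \ar[r] & \varphi_\sw \omega_X \ar[r] & j_* \varphi_{\sw|_U}\omega_U,
}
\]
where the left vertical is $i_*$ applied to the canonical natural transformation $\varphi_{\sw|_Z}\omega_Z \to i^!\varphi_\sw\omega_X$, the middle vertical is the identity, and the right vertical is induced by the base-change isomorphism $j^*\varphi_\sw \cong \varphi_{\sw|_U} j^*$ for the open immersion $j$. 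By the standard mapping-cone-of-triangles construction, the cone of the left vertical is quasi-isomorphic (up to shift) to the cone of the right vertical; hence after applying $R\Gamma(X,-)$ the canonical map is an isomorphism if and only if the induced map $H(X, \varphi_\sw j_* \omega_U) \to H(U, \sw|_U)$ is. Writing $K$ for the cone on $Z$ of $\varphi_{\sw|_Z}\omega_Z \to i^! \varphi_\sw \omega_X$, the entire problem becomes $R\Gamma(Z, K) = 0$.

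To prove this vanishing I would exploit the $\sigma$-equivariance for a generic cocharacter $\sigma \in \fC$. Since $\sw$ is $\sT$-invariant and $\sigma$ takes values in $\sA\subseteq  \sT$, the whole diagram is $\sigma$-equivariant. The full attracting set $Z = \Attr_\fC^f(F)$ is stratified by the Bialynicki--Birula cells $p_{F'}\colon \Attr_\fC(F') \to F'$ for $F \preceq F'$, each a $\sigma$-equivariant affine bundle on which $\sw$ is pulled back from $\sw|_{F'}$ by $\sigma$-invariance of $\sw$. The pullback isomorphism \eqref{pb is iso on coho} then gives $H(\Attr_\fC(F'), \sw|_{\Attr_\fC(F')}) \cong H(F', \sw|_{F'})$, and the same holds for the $i^!$-variant of the target by dimensional reduction on each stratum.

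I would then proceed by induction on the number of fixed components in $Z$, ordered compatibly with \eqref{flow order}. Picking a maximal $F' \succeq F$ so that $\Attr_\fC(F')$ is closed in $Z$, I apply the same diagram to the pair $(\Attr_\fC(F'), Z \setminus \Attr_\fC(F'))$ inside $X$: the complement is again a closed full attracting subvariety (for the restricted collection of fixed components), so the inductive hypothesis applies to it, while the closed stratum is handled by the previous paragraph. The base case is $Z = \Attr_\fC(F)$, which reduces directly to \eqref{pb is iso on coho} applied to the canonical map. The main obstacle is the verification that $\can$ really becomes the identity on the associated graded of this stratification — this is a Davison-style dimensional reduction compatibility for the canonical map \eqref{can_coh} in the spirit of \cite{Dav}, argued by passing to $\sigma$-invariant affine neighbourhoods and reducing to the case of a trivial affine bundle over a point.
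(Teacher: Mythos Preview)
Your inductive architecture—stratifying $Z$ by Bialynicki--Birula cells and running a five-lemma argument on a commutative diagram of long exact sequences—is correct and matches the paper exactly. The paper's filtration $\emptyset = Z_0 \subseteq \cdots \subseteq Z_n = Z$ peels off closed attracting cells in the same way your choice of a maximal $F' \succeq F$ does.

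The gap is in the base case, which you rightly flag as the main obstacle but do not resolve. Your suggestion to ``pass to $\sigma$-invariant affine neighbourhoods and reduce to a trivial affine bundle over a point'' does not work as stated: the base $F'$ is not a point, $\varphi_{\sw|_{F'}}\omega_{F'}$ carries all the complexity of the problem, and a genuine local-coordinate argument would still have to handle an arbitrary constructible complex on the base. The paper's resolution is the global statement that \emph{hyperbolic localization commutes with the vanishing cycle functor}, due to Richarz \cite{Ric}: for $i\colon \Attr(F') \hookrightarrow X$ and the attraction map $p\colon \Attr(F') \to F'$ one has a natural isomorphism $p_* i^! \varphi_\sw \xrightarrow{\sim} \varphi_{\sw|_{F'}}\, p_* i^!$. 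Applied to $\omega_X$ and combined with $\varphi_{\sw|_{\Attr(F')}}\omega_{\Attr(F')} \cong p^!\varphi_{\sw|_{F'}}\omega_{F'}$ (since $\sw|_{\Attr(F')}=p^*(\sw|_{F'})$), this yields $p_*\varphi_{\sw|_{\Attr(F')}}\omega_{\Attr(F')} \cong p_* i^!\varphi_\sw\omega_X$ and hence the desired isomorphism on hypercohomology. One minor addendum: in the inductive step the closed-stratum term in the long exact sequence is $H(Z_1, i_1^!\varphi_{\sw|_Z}\omega_Z)$ rather than $H(Z_1,\sw|_{Z_1})$, so the base case must be invoked once with ambient $X$ and once with ambient $Z$ (Richarz's result does not require the ambient to be smooth, so this is legitimate); the paper makes this explicit via a small triangular diagram.
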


\begin{proof}
We first prove the case when $F$ is maximal, i.e. $Z:=\Attr^f(F)=\Attr(F)$. The limit map $p:\Attr(F)\to F$ is an affine fibration by a result of Bialynicki-Birula \cite{BB}. Since $\sw$ is $\sT$-invariant, so we have $\sw|_Z=p^{-1}(\sw|_F)$; thus 
$$\varphi_{\sw|_Z}\omega_Z\cong p^! \varphi_{\sw|_F}\omega_F. $$ 
It follows that
\begin{align*}
    p_*\varphi_{\sw|_Z}\omega_Z\cong p_*p^! \varphi_{\sw|_F}\omega_F\cong p_*p^* \varphi_{\sw|_F}\omega_F[2\dim_p]\cong \varphi_{\sw|_F}\omega_F[2\dim_p]\cong \varphi_{\sw|_F}p_*\omega_Z.
\end{align*}
As hyperbolic localization commutes with vanishing cycle functor \cite{Ric}: 
$$p_*i^!\varphi_\sw\xrightarrow{\cong} \varphi_{\sw|_F} p_*i^!,$$
therefore
\begin{align*}
    p_*i^!\varphi_\sw\omega_X\cong \varphi_{\sw|_F} p_*i^! \omega_X\cong \varphi_{\sw|_F}p_*\omega_Z. 
\end{align*}
Thus the natural map $p_*\varphi_{\sw|_Z}\omega_Z\to p_*i^!\varphi_\sw\omega_X$ is an isomorphism, which implies that 
\begin{align}\label{equ on hl} 
    H (Z, \sw |_Z ) \cong H^{}(F, p_*\varphi_{\sw|_Z}\omega_Z)\cong H^{}(F, p_*i^!\varphi_\sw\omega_X)\cong H^{} (Z, i^! \varphi_\sw \omega_X).
\end{align}
This proves the lemma in the case when $F$ is maximal.

For general $F$, $Z=\Attr^f(F)$ admits a filtration $\emptyset=Z_0\subseteq  Z_1\subseteq  Z_2\subseteq \cdots\subseteq  Z_n=Z$ by closed subvarieties $Z_i$, 
such that $Z_i\setminus Z_{i-1}=\Attr(F_i)$ for some union of fixed components $F_i$ and $F_i$ is maximal in $X\setminus Z_{i-1}$. We prove for general $Z$ by induction on $n$, and the case $n=1$ is what we have shown in the above.

Let $i_1\colon Z_1\hookrightarrow Z$ and $j_1\colon Z\setminus Z_1\hookrightarrow X\setminus Z_1$ be the embeddings, and denote $\tilde i_1:=i\circ i_1$. Then we have a commutative diagram of long exact sequences:
$$
\xymatrix{
\ar[r] & H^{} (Z_1, i_1^!\varphi_{\sw |_Z} \omega_Z) \ar[r] \ar[d] & H (Z, \sw|_Z) \ar[r] \ar[d] &  H (Z\setminus Z_1, \sw|_{Z\setminus Z_1} ) \ar[r] \ar[d] &  \\
\ar[r] & H^{} (Z_1, \tilde i_1^! \varphi_\sw \omega_X) \ar[r] & H^{} (Z, i^! \varphi_\sw \omega_X) \ar[r] & H^{}  (Z\setminus Z_1,  j_1^!\varphi_{\sw |_{X\setminus Z_1}}\omega_{X\setminus Z_1}) \ar[r] & 
}
$$
The right vertical arrow is an isomorphism be induction hypothesis, so it remains to show that the left vertical arrow is an isomorphism. Consider the commutative diagram: 
\begin{equation}\label{diag on pf of exc}
\xymatrix{
H (Z_1, \sw |_{Z_1} ) \ar[r] \ar[dr] & H^{} (Z_1, i_1^!\varphi_{\sw |_Z} \omega_Z) \ar[d] &  \\
 & H^{} (Z_1, \tilde i_1^! \varphi_\sw \omega_X).  & 
}
\end{equation}
We have shown in the above that $H (Z_1, \sw |_{Z_1} )\cong H^{} (Z_1, \tilde i_1^! \varphi_\sw \omega_X)$. 
As $Z_1$ is the attracting set of a maximal component of $Z^\sA$, we can replace $(Z,X)$ in \eqref{equ on hl} by $(Z_1,Z)$  and obtain 
$$H (Z_1, \sw |_{Z_1} ) \cong H^{} (Z_1, i_1^!\varphi_{\sw |_Z} \omega_Z). $$ 
It follows that the vetical map in \eqref{diag on pf of exc} 
is an isomorphism. This concludes the proof.
\end{proof}

\begin{Remark}\label{rmk: excision for crit coh}
As long as $Z$ is the form of $\bigcup_{F} \Attr(F)$ for a collection of fixed components $\{F\}\subseteq  \Fix_\sA(X)$, the above proof works and the map $H (Z, \sw |_Z )\to H^{} (X, \sw)_Z$ is an isomorphism for such $Z$.
\end{Remark}

\begin{Definition}
We say that a class in $H (X, \sw)$ is \emph{supported} on a closed subvariety $Z$, if its image under
$$
H (X, \sw) \to H (U, \sw |_U)
$$
vanishes.
\end{Definition}

\subsubsection{Torus equivariance}

Let $\sT$ be a torus acting on $(X,\sw)$ as in Setting \ref{setting of stab}, 
all constructions above can be generalized to this setting. 
For a subtorus $\sA \subseteq  \sT$, denote $i_\sA\colon X^\sA \to X$ to be the closed embedding of torus fixed locus. 
We have proper pushforward and Gysin pullback
\begin{equation}\label{equ on pp on coh}
i_{\sA, *}\colon H^\sT (X^\sA, \sw^\sA) \to  H^\sT (X, \sw) , \quad i_\sA^* \colon H^\sT (X, \sw) \to H^\sT (X^\sA, \sw^\sA).
\end{equation}
By \eqref{equ on euler of normal}, the composition $i_\sA^* i_{\sA, *}$ is the multiplication by the Euler class $e^\sT (N_{X^\sA / X})$. 
Therefore $i_{\sA, *}$ is \textit{injective}. And $i_\sA^*$ is \textit{surjective} after localization,~i.e.
$$i_\sA^* \colon H^\sT (X, \sw)_\loc \twoheadrightarrow H^\sT (X^\sA, \sw^\sA)_\loc, $$ 
where $(-)_\loc := (-) \otimes_{H^*_\sT (\pt)} \Frac(H^*_\sT (\pt))$ denotes the localized theory.

\begin{Remark}
(1) Proposition \ref{excision for crit coh} holds equivariantly as long as the extra torus preserves the potential $\sw$ and commutes with $\sA$ action. 
(2) If the extra torus in (1) contains $\sA$,~i.e.~as the $\sT$ in Setting \ref{setting of stab},
one can show the equivariant version of the map in Proposition \ref{excision for crit coh} is injective. 
\end{Remark}

\subsection{Critical \texorpdfstring{$K$}{K}-theory}

\subsubsection{Definition}

Let $(X,\sw,\sT)$ be as in Setting \ref{setting of stab}. 
Consider the dg-category $\mathrm{Fact}_{\mathrm{coh}}(X,\sw,\sT)$ of coherent factorizations 
of $\sw$ \cite[Def.~3.1]{BFK}, whose objects are pairs 
$$\eE_{-1}\xrightarrow{d_{0}} \eE_0, \quad \eE_0\xrightarrow{d_{-1}}  \eE_{-1} $$
of morphisms in $\mathrm{Coh}_{\sT}(X)$
such that $$d_{-1}\circ d_{0}=d_{0}\circ d_{-1}=\sw.$$
Let $\mathrm{HFact}_{\mathrm{coh}}(X,\sw,\sT)$ be the homotopy category of $\mathrm{Fact}_{\mathrm{coh}}(X,\sw,\sT)$, which is a triangulated 
category, and $\mathrm{Acy}_{\mathrm{coh}}$ denote the minimal thick triangulated subcategory containing totalizations of short exact sequences
of coherent factorizations. Define the \textit{triangulated category of coherent factorizations} to be the following Verdier quotient \cite{Orl}, \cite[Def.~3.9]{BFK}
$$\mathrm{MF}_{\mathrm{coh}}(X,\sw,\sT):=\mathrm{HFact}_{\mathrm{coh}}(X,\sw,\sT)/\mathrm{Acy}_{\mathrm{coh}}. $$
Without causing confusions, we also denote it to be $\mathrm{MF}_{}(X,\sw,\sT)$, or sometimes write it as $\mathrm{MF}_{}([X/\sT],\sw)$.
\begin{Definition}
The \textit{critical $K$-theory} of $(X,\sw,\sT)$ is the Grothendieck group of $\mathrm{MF}_{\mathrm{ }}(X,\sw,\sT)$:
$$K^{\sT}(X,\sw):=K_0^{\sT}(X,\sw):=K_0(\mathrm{MF}_{\mathrm{ }}(X,\sw,\sT)).$$
\end{Definition}

\subsubsection{Canonical map}

Let $i\colon Z(\sw)\subseteq  X$ be the inclusion of the zero locus of the function $\sw$. 
There is an equivalence of categories (\cite[Rmk.~3.65]{BFK}, \cite[Thm.~3.6]{Hir}) from the $\sT$-equivariant singularity category of $Z(\sw)$:
$$\D^b\Coh_{\sT}(Z(\sw))/\Perf_{\sT}(Z(\sw))\xrightarrow{\cong} \mathrm{MF}_{\mathrm{ }}(X,\sw,\sT), $$
which induces an exact sequence of the Grothendieck groups (e.g.~\cite[Lem.~1.10]{PS}):
\begin{equation}\label{equ on kgps}\to K(\Perf_{\sT}(Z(\sw)))\to K(\D^b\Coh_{\sT}(Z(\sw))) \xrightarrow{\mathrm{can}}  K^{\sT}(X,\sw) \to 0. \end{equation}
More generally, let $i\colon Z\hookrightarrow X$ be the embedding of a $\sT$-invariant closed subset, there exists a \textit{matrix factorization category} $\mathrm{MF}_{\mathrm{ }}(X,\sw,\sT)_{Z}$
\textit{with support} on $Z$, whose Grothendieck group is denoted by 
$$K^{\sT}(X,\sw)_Z. $$
One has a surjective \textit{canonical map} (e.g.~\cite[Prop.~2.6]{VV2}):
\begin{equation}\label{equ on can map k with sp}
\can\colon K^{\sT}(Z\cap Z(\sw))\twoheadrightarrow K^{\sT}(X,\sw)_Z,  \end{equation}
where $K^{\sT}(Z\cap Z(\sw)):=K_0(\D^b\Coh_{\sT}(Z\cap Z(\sw)))$.

\subsubsection{Euler class operator}\label{sect on eu cla op k}

Let $\pi\colon E\to X$ be a vector bundle of rank $r$ and $\sw$ be a regular function on $X$. 
We define the  \textit{K-theoretic Euler class operator} on critical $K$-theory: 
\begin{equation}\label{eqn:Euler k} e_K(E)\cdot(-)\colon K(X,\sw)\to K(X,\sw),\end{equation}
$$[(\eE_{-1},\eE_0,d_{-1}, d_{0})] \mapsto 
[\left(\wedge^*E^\vee\otimes\eE_{-1},\wedge^*E^\vee\otimes\eE_0,d_{-1}, d_{0}\right)].$$
It is straightforward to check that the Euler class operator commutes with the canonical map \eqref{equ on can map k with sp}.

\subsubsection{Functorial properties}

Matrix factorization categories have functorial properties, e.g.~\textit{pullbacks} by \textit{flat} morphisms and \textit{pushforwards}
by \textit{proper morphisms} (\cite{BFK}, \cite[\S 2.2.2]{VV2}), critical $K$-groups also have such properties. For regular embeddings, one 
can also define \textit{Gysin pullbacks} for critical $K$-groups. 

In particular, for the inclusion $i_{\sA} \colon X^\sA \hookrightarrow X$ of $\sA$-fixed locus, 
there are proper pushforward and Gysin pullback:
\begin{equation}\label{equ on pp on k}
i_{\sA, *}\colon K^\sT (X^\sA, \sw^\sA) \to  K^\sT (X, \sw), \quad i_\sA^* \colon K^\sT (X, \sw) \to K^\sT (X^\sA, \sw^\sA).
\end{equation}
By \cite[Prop.~2.8]{VV2}\footnote{When working with torus \textit{invariant} functions, the flatness condition on functions imposed in \cite[Prop.~2.8]{VV2} (which uses \cite[Lem.~2.4]{VV2}) can be dropped off by following the argument of \cite[Lem.~2.4.7]{Toda1}.}, we have 
\begin{equation}\label{equ on k euler of normal}i_\sA^*\circ i_{\sA, *}(-)=e^\sT_K(N_{X^\sA/X})\cdot(-), \end{equation}
where $e^\sT_K (-)\cdot$ is the equivariant version of the $K$-theoretic Euler class operator \eqref{equ on can map k with sp}. 

This moreover induces an isomorphism
\begin{equation}\label{prop on loc of crit k}K^\sT (X^\sA, \sw^\sA)_{\loc}  \xrightarrow{\cong}  K^\sT (X, \sw)_\loc \end{equation}
of localized critical $K$-theories, 
where $(-)_\loc := (-) \otimes_{K_\sT (\pt)} \Frac(K_\sT (\pt))$.

Given a $\sT$-equivariant vector bundle $\pi\colon \widetilde{X}\to X$, it is straightforward to show the pullback map 
\begin{equation}\label{pb is iso on k}
\pi^*\colon K^{\sT}(X, \sw) \to K^{\sT}(\widetilde{X}, \sw\circ \pi)
\end{equation}
is an isomorphism.

\subsubsection{Excisions}\label{subsubsec: excision}
Let $j \colon U\hookrightarrow X$ be the inclusion of an open subset, and $i \colon Z \hookrightarrow X$ be the closed embedding of its complement. 
There is an excision sequence
\begin{equation} \label{eqn-exc K}
\xymatrix{
K^\sT(Z,\sw) \ar[r]^{i_*} & K^\sT(X,\sw) \ar[r]^{j^*} & K^\sT(U,\sw) .
} 
\end{equation}
It is obvious that $j^*\circ i_*=0$. The map $j^*$ is surjective because it fits into a commutative diagram
(below the canonical map is given as \eqref{equ on kgps})
$$
\xymatrix{
 K^\sT(\sw^{-1}(0))  \ar@{^{}->>}[r]^{j^* \quad} \ar@{^{}->>}[d]^{\mathrm{can}} & K^\sT(U\cap \sw^{-1}(0)) \ar@{^{}->>}[d]^{\mathrm{can}}  \\
K^\sT(X,\sw) \ar[r]^{j^* \quad} & K^\sT(U,\sw).
} 
$$
The following is a useful result showing when $\ker(j^*)=\im(i_*)$ holds.

\begin{Proposition}\label{excision for crit K}
Let $F$ be a connected component of $X^\sA$ and $Z=\Attr^f(F)$ with immersion $i\colon Z\to X$. 
Then $\ker(j^*)=\im(i_*)$ in the sequence \eqref{eqn-exc K}.
\end{Proposition}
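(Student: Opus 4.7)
The plan is to mirror the strategy of Proposition~\ref{excision for crit coh}, replacing the distinguished-triangle argument with a Verdier localization of coherent matrix factorization categories followed by a diagram chase through the surjective canonical maps. Since $j^*\circ i_* = 0$, the inclusion $\im(i_*) \subseteq \ker(j^*)$ is immediate, so the content lies in the reverse inclusion.

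The first step is to invoke the Verdier localization
\[
\mathrm{MF}_{\coh}(X,\sw,\sT)_Z \hookrightarrow \mathrm{MF}_{\coh}(X,\sw,\sT) \twoheadrightarrow \mathrm{MF}_{\coh}(U,\sw|_U,\sT),
\]
a standard fact for coherent factorizations on quasi-projective schemes (cf.\ \cite{BFK, VV2}). Passing to $K_0$ and using right-exactness of $K_0$ under Verdier quotients, one obtains the exact sequence
\[
K^\sT(X,\sw)_Z \xrightarrow{\;\iota\;} K^\sT(X,\sw) \xrightarrow{\;j^*\;} K^\sT(U,\sw|_U) \to 0,
\]
exact in the middle and on the right. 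In particular, $\ker(j^*) = \im(\iota)$.

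The second step is to show that the pushforward $K^\sT(Z,\sw|_Z) \to K^\sT(X,\sw)_Z$ (which, composed with $\iota$, is $i_*$) is surjective, so that $\im(i_*) = \im(\iota) = \ker(j^*)$. The key observation is that the canonical map $\mathrm{can}$ of \eqref{equ on can map k with sp} factors through $K^\sT(Z,\sw|_Z)$ as
\[
K^\sT(Z \cap \sw^{-1}(0)) \xrightarrow{\;\mathrm{can}_Z\;} K^\sT(Z,\sw|_Z) \xrightarrow{\;\tilde{i}_*\;} K^\sT(X,\sw)_Z,
\]
where $\mathrm{can}_Z$ is the canonical map for $(Z,\sw|_Z)$ (the analog of \eqref{equ on kgps}) and $\tilde{i}_*$ is the pushforward into the support category. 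This factorization is a direct consequence of the functoriality of $\mathrm{can}$ under the proper closed embedding $i$. Since $\mathrm{can}$ is surjective, so is $\tilde{i}_*$, and then $\im(i_*) = \iota(\im(\tilde{i}_*)) = \im(\iota) = \ker(j^*)$, as required.

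I expect the main technical obstacle to be the Verdier localization of Step~1, specifically the essential surjectivity of the restriction functor modulo acyclics: every object of $\mathrm{MF}_{\coh}(U,\sw|_U,\sT)$ must be lifted, up to isomorphism in the quotient, to a coherent factorization of $\sw$ on $X$. This reduces to the coherent extension principle on quasi-projective varieties, together with a careful matching of differentials modulo $\mathrm{Acy}_{\coh}$, as handled in the formalism of \cite{BFK, VV2}. A secondary point is that $Z = \Attr^f(F)$ is generically singular, so $K^\sT(Z,\sw|_Z)$ must be understood as $K_0$ of the coherent factorization category on $Z$ (equivalently, via Orlov, the singularity category of $\sw|_Z^{-1}(0)$); all canonical maps and functorialities used above remain valid in this setting.
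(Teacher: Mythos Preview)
Your argument nowhere uses the hypothesis that $Z=\Attr^f(F)$; if it were valid it would establish $\ker(j^*)=\im(i_*)$ for an \emph{arbitrary} $\sT$-invariant closed $Z\subseteq X$. This is false. Take $X=\bA^2$, $\sw=xy$, $Z=\{0\}$, $U=X\setminus\{0\}$ (with $\sT$ trivial, or $\sT=\bC^*$ acting by $(tx,t^{-1}y)$). Since $\sw|_U^{-1}(0)$ is smooth one has $K^\sT(U,\sw|_U)=0$, so $\ker(j^*)=K^\sT(X,\sw)$. On the other hand, the eventually $2$-periodic free resolution of $\cO_0$ over $k[x,y]/(xy)$ has matrices $\mathrm{diag}(x,y)$ and $\mathrm{diag}(y,x)$, so under the Orlov equivalence $\cO_0$ corresponds to $M\oplus M[1]$, where $M=(\cO\xrightarrow{x}\cO\xrightarrow{y}\cO)$ is the basic rank-one factorization; hence $i_*[\cO_0]=[M]+[M[1]]=0$ non-equivariantly (and is a $(1-t^{\pm1})$-multiple of a generator equivariantly), so $\im(i_*)\subsetneq\ker(j^*)$. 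Note that $\{0\}$ is \emph{not} a full attracting set here ($\Attr^f_+(\{0\})=\{y=0\}$), so this contradicts your argument, not the proposition.

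The gap is in Step~1. The Verdier quotient that produces $\mathrm{MF}_{\coh}(U,\sw|_U,\sT)$ is the quotient by the \emph{thick kernel} of $j^*$, i.e.\ all factorizations that become contractible on $U$. The category $\mathrm{MF}_{\coh}(X,\sw,\sT)_Z$ for which the surjectivity of $\can$ in \eqref{equ on can map k with sp} is stated is the (in general strictly smaller) category of factorizations whose underlying sheaves are set-theoretically supported on $Z$. In the example, the thick kernel is all of $\mathrm{MF}_{\coh}(\bA^2,xy)$, whereas --- by the surjectivity of $\can$ together with $i_*[\cO_0]=0$ --- the image of $K_0(\mathrm{MF}_{\coh}(\bA^2,xy)_{\{0\}})$ in $K^\sT(\bA^2,xy)$ is zero. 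So your two steps invoke incompatible meanings of ``supported on $Z$'' and do not chain together. The paper closes precisely this gap: when $Z=\Attr^f(F)$, Halpern-Leistner's semiorthogonal decomposition for the Bialynicki--Birula situation (transported to matrix factorizations as in \cite{P}) yields on $K_0$ a direct-sum splitting $K^\sT(X,\sw)\cong K^\sT(Z,\sw|_Z)\oplus K^\sT(U,\sw|_U)$ with the first summand embedded by $i_*$, from which $\ker(j^*)=\im(i_*)$ is immediate. This is where the $\sA$-action and the attracting structure of $Z$ enter essentially.
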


\begin{proof}
The case when $F$ is maximal,~i.e.~$\Attr^f(F)=\Attr(F)$ is essentially proven in \cite[Thm.~2.5]{P}. Strictly speaking, \cite[Thm.~2.5]{P} assumes $X$ to be affine, but the argument works for quasi-projective $X$. For the convenience of readers, we explain the details here. Let $p\colon Z=\Attr(F)\to F$ be the attraction morphism. By \cite[Thm.~3.35, Cor.~3.28]{Hal}, there is a semi-orthogonal decomposition (SOD) for the derived category of coherent sheaves: 
\begin{align*}
    \left\langle \cdots,i_*\D^b([Z/\sT])_{-2},i_*\D^b([Z/\sT])_{-1},\mathbb{G},i_*\D^b([Z/\sT])_{0},i_*\D^b([Z/\sT])_{1},\cdots\right\rangle=\D^b([X/\sT]),
\end{align*}
such that 
$$\left\langle \cdots,\D^b([Z/\sT])_{-2},\D^b([Z/\sT])_{-1},\D^b([Z/\sT])_{0},\D^b([Z/\sT])_{1},\cdots\right\rangle$$ is an SOD of $\D^b([Z/\sT])$, 
where the weight is with respect to a cocharacter in the chamber $\fC$. 
Moreover, under the restriction to the open locus $\D^b([X/\sT])\twoheadrightarrow \D^b([(X\setminus Z)/\sT])$, we have an equivalence of categories
\begin{align*}
    \mathbb{G}\cong \D^b([(X\setminus Z)/\sT]).
\end{align*}
This induces an SOD for the matrix factorization category (e.g.~\cite[Prop.~2.1]{P}): 
\begin{align*}
\left\langle \cdots,i_*\mathrm{MF}([Z/\sT],\sw)_{-2},i_*\mathrm{MF}([Z/\sT],\sw)_{-1},\mathbb{W},i_*\mathrm{MF}([Z/\sT],\sw)_{0},i_*\mathrm{MF}([Z/\sT],\sw)_{1},\cdots\right\rangle =\mathrm{MF}([X/G],\sw),
\end{align*}
and an equivalence of categories:  
$$\mathbb{W}\cong \mathrm{MF}([(X\setminus Z)/\sT],\sw).$$
By passing to the Grothendieck group, there is a decomposition 
\begin{align*}
    K^\sT(Z,\sw)\oplus K^\sT(X\setminus Z,\sw)=K^\sT(X,\sw),
\end{align*}
where the map $K^\sT(Z,\sw)\to K^\sT(X,\sw)$ is $i_*$. In particular we have $\ker(j^*)=\im(i_*)$.

For general $F$, $Z=\Attr^f(F)$ admits a filtration 
$$\emptyset=Z_0\subseteq  Z_1\subseteq  Z_2\subseteq \cdots\subseteq  Z_n=Z$$ by closed subvarieties $Z_i$, such that $Z_i\setminus Z_{i-1}=\Attr(F_i)$ for some union of fixed components $F_i$ and $F_i$ is maximal in $X\setminus Z_{i-1}$. We prove for general $Z$ by induction on $n$. Let $i_1\colon Z_1\hookrightarrow Z$ and $j_1\colon Z\setminus Z_1\hookrightarrow X\setminus Z_1$ be the embeddings, and denote $\tilde i_1:=i\circ i_1$. By the induction hypothesis, the lemma holds when replacing $(X,Z)$ by $(X\setminus Z_1,Z\setminus Z_1)$. Suppose that $\alpha\in \ker(j^*)$, then $\alpha|_{X\setminus Z_1}\in \im(j_{1*})$, i.e. $\exists\, \beta\in K^\sT(Z\setminus Z_1,\sw)$ such that $\alpha|_{X\setminus Z_1}=j_{1*}\beta$. Take $\tilde \beta\in K^\sT(Z,\sw)$ such that $\tilde \beta|_{Z\setminus Z_1}=\beta$, then we have $(\alpha-i_*\tilde\beta)|_{X\setminus Z_1}=0$. Using the $n=1$ case, we can find $\gamma\in K^\sT(Z_1,\sw)$ such that $\alpha-i_*\tilde\beta=\tilde i_{1*}\gamma$, and this implies that $\alpha=i_*(\tilde\beta+ i_{1*}\gamma)$, in particular $\alpha\in \im(i_*)$. This shows that $\ker(j^*)=\im(i_*)$.
\end{proof}

\section{Stable envelopes}

In this section, we define stable envelopes on critical loci and prove their uniqueness in both cohomology and $K$-theory. 
We show that they can be constructed from stable envelope correspondences\,\footnote{Introducing stable envelope correspondences is essential for constructing stable envelopes for critical loci, see Remark \ref{rmk on corr is need}.} in \S \ref{sect on st ev co} through a convolution construction \S \ref{sect on exi of stb}. We provide existence results for stable envelope (correspondences), one for any critical loci and with respect to one dimensional torus $\sA$ (Proposition \ref{prop on stable corr}), the other for critical loci of functions on symmetric GIT quotients without constrain on the dimension of the torus. The cohomological case is shown in 
Theorem \ref{exi of coh stab on symm quiver var} and Section \ref{app on symm var}, while the $K$-theoretic case makes use of nonabelian stable envelopes and Hall 
operations, see \S \ref{sect on ex kstab}, \S \ref{app on k symm var}.
We prove the triangle lemma in \S \ref{sec on tri lem}. 

In this section, we often further impose the following condition. 
\begin{Setting}\label{setting: proper to affine} 
We assume that there exists a proper morphism $\pi\colon X\to X_0$ to an affine variety $X_0$. 
\end{Setting}

\begin{Remark}\label{rmk on setting}
The assumption in Setting \ref{setting: proper to affine} is equivalent to that $\Gamma(X,\mathcal O_X)$ is a finite generated $\bC$-algebra and the natural morphism 
$X\to \Spec \Gamma(X,\mathcal O_X)$ is proper. In particular, we can take $X_0=\Spec \Gamma(X,\mathcal O_X)$. If we further assume that there is an algebraic group $G$ action on $X$, then $G$ naturally acts on $\Gamma(X,\mathcal O_X)$ linearly and $X\to \Spec \Gamma(X,\mathcal O_X)$ is $G$-equivariant.
\end{Remark}

\begin{Remark}\label{proper-over-affine}
The assumption in Setting \ref{setting: proper to affine} implies that $\Attr^f_\fC(F)$ is a closed subset in $X$ for all $F\in \Fix_\sA(X)$ by \cite[Lem.~3.2.7]{MO}, and $\Attr^f_\fC$ is proper over $X$ along the projection map $X\times X^\sA\to X$ by \cite[\S 3.5]{MO}.
\end{Remark}

\subsection{Cohomological stable envelopes}


In this section, we use the following shorthand to denote $\sT$-equivariant critical cohomology of $(X,\sw)$ and $(X^\sA,\sw |_{X^\sA})$:
$$H^\sT (X, \sw):=H^\sT (X, \varphi_\sw\omega_X), \quad H^\sT \left(X^\sA, \sw^{\sA}\right):=H^\sT\left(X^\sA, \varphi_{\sw|_{X^\sA}}\omega_{X^\sA}\right). $$

\subsubsection{Definitions}



\begin{Definition}\label{def of stab coho}
Fix $(X,\sw,\sT,\sA)$ as in Setting \ref{setting of stab}, a choice of a chamber $\fC$ as Definition \ref{def of roots and chambers}. The \emph{cohomological stable envelope} for $(X,\sw,\sT,\sA,\fC)$ is a map of $H^*_\sT (\pt)$-modules:
$$ \Stab_\fC\colon   H^\sT (X^\sA, \sw^\sA) \to H^\sT (X, \sw) $$
such that for any $\gamma \in H^{\sT/\sA} (X^\sA, \sw^\sA) $ (and we identify $\gamma$ with $\gamma\otimes 1\in H^{\sT/\sA} (X^\sA, \sw^\sA) \otimes \bQ[\Lie(\sA)]\cong H^{\sT} (X^\sA, \sw^\sA) $) supported on a connected component $F \subseteq  X^\sA$, $\Stab_\fC (\gamma)$ satisfies the following axioms:
\begin{enumerate}[(i)]
\setlength{\parskip}{1ex}

\item  $\Stab_\fC (\gamma)$ is supported on $\Attr^f_\fC (F)$; 

\item $\Stab_\fC (\gamma) |_F =e^\sT (N_{F / X}^-) \cdot \gamma$;

\item For any $F' \neq F$, the inequality $\deg_\sA \Stab_\fC (\gamma)\big|_{F'} < \deg_\sA e^\sT (N_{F' / X}^-)$ holds.

\end{enumerate}
Here $(-)|_{F}$ means the Gysin pullback $\left(F\hookrightarrow X \right)^*(-)$,
$N_{F / X}^{\pm}$ is the sub-bundle of $N_{F / X}$ which are positive/negative with respect to the chamber $\fC$, and $\deg_\sA$ of a class in $H^\sT(F',\sw^\sA)\cong H^{\sT/
\sA}(F',\sw^\sA)\otimes \bQ[\Lie(\sA)]$ is the $\bQ[\Lie(\sA)]$-polynomial degree, where we define the $\sA$-degree of $0$ to be $-1$.
\end{Definition}

\begin{Definition}\label{def of normalizer coho}
Fix $(X,\sw,\sT,\sA)$ as in Setting \ref{setting of stab}, a  cohomological \textit{normalizer} is an element $\epsilon\in \{\pm 1\}^{\Fix_\sA(X)}$, i.e. a sign $\epsilon_F\in \{\pm 1\}$ for each fixed component $F$. If $\Stab_\fC$ is a cohomological stable envelope for $(X,\sw,\sT,\sA,\fC)$, then we define 
$$ \Stab_{\fC,\epsilon}:= \Stab_{\fC}\cdot\: \epsilon\colon   H^\sT (X^\sA, \sw^\sA) \to H^\sT (X, \sw),$$
i.e. for any $\gamma \in H^{\sT} (F, \sw^\sA) $, $\Stab_{\fC,\epsilon}(\gamma)=\epsilon_F\cdot \Stab_{\fC}(\gamma)$.
\end{Definition}


\subsubsection{Uniqueness}

\begin{Proposition}\label{uniqueness of coh stab}
Let $\gamma \in H^{\sT/\sA} (X^\sA, \sw^\sA) $ be supported on a connected component $F \subseteq  X^\sA$, then $\Stab_\fC (\gamma)$ which satisfies the axioms (i)--(iii) in Definition \ref{def of stab coho} is unique if it exists.
\end{Proposition}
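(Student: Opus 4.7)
Set $\delta := \Stab_\fC(\gamma) - \Stab'_\fC(\gamma)$ for two hypothetical stable envelopes satisfying axioms (i)--(iii) applied to the same $\gamma \in H^{\sT/\sA}(F,\sw^\sA)$ supported on $F$. Then $\delta$ is supported on $\Attr^f_\fC(F)$, satisfies $\delta|_F = 0$ (axiom (ii) is common to both), and satisfies $\deg_\sA \delta|_{F'} < \deg_\sA e^\sT(N^-_{F'/X})$ for every $F' \neq F$. The goal is to show $\delta = 0$.

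The plan is an iterative ``peeling'' argument along the partial order $\preceq$ on $\Fix_\sA(X)$. Set $Z^{(0)} := \Attr^f_\fC(F)$, and inductively pick a $\preceq$-minimal (``topmost'') component $F^{(k)}$ among the $\sA$-fixed components of $Z^{(k)}$, then define $Z^{(k+1)} := Z^{(k)} \setminus \Attr_\fC(F^{(k)})$. The minimality of $F^{(k)}$ ensures that for every other component $F'$ of $Z^{(k)}$ one has $F^{(k)} \notin \overline{\Attr_\fC(F')}$, so $\Attr_\fC(F^{(k)})$ is open in $Z^{(k)}$ and $Z^{(k+1)}$ is again a closed union of full attracting sets in $X$ (using the properness assumption of Remark \ref{proper-over-affine}). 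In particular $F^{(0)} = F$, and the process terminates at $Z^{(N)} = \emptyset$ by finiteness of $\Fix_\sA(X)$. The induction claim is that $\delta$ is supported on $Z^{(k)}$ for every $k$, with base case $k=0$ trivial.

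For the inductive step, consider the open set $U_k := X \setminus Z^{(k+1)}$, on which $\delta|_{U_k}$ is supported on the closed subvariety $Z^{(k)} \cap U_k = \Attr_\fC(F^{(k)})$. Since any component $F'$ of $Z^{(k)}$ with $F^{(k)} \prec F'$ sits in $Z^{(k+1)}$ (hence outside $U_k$), this is the full attracting set of $F^{(k)}$ inside $U_k$. The equivariant version of the excision isomorphism from Proposition \ref{excision for crit coh} and Remark \ref{rmk: excision for crit coh}, combined with the affine fibration isomorphism \eqref{pb is iso on coho} along $p_k\colon \Attr_\fC(F^{(k)}) \to F^{(k)}$, produce a unique $\gamma_k \in H^\sT(F^{(k)},\sw^\sA)$ with $\delta|_{U_k} = (i_k)_* p_k^* \gamma_k$. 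Pulling back to $F^{(k)}$ and applying the self-intersection identity $i_k^* (i_k)_* = e^\sT(N^-_{F^{(k)}/X})$ (the normal bundle of $\Attr_\fC(F^{(k)})$ along $F^{(k)}$ is $N^-_{F^{(k)}/X}$) yields $\delta|_{F^{(k)}} = e^\sT(N^-_{F^{(k)}/X}) \cdot \gamma_k$. The leading $\sA$-coefficient of $e^\sT(N^-_{F^{(k)}/X})$ is the nonzero polynomial $\prod_\alpha \alpha_\sA \in \bQ[\Lie(\sA)]$ over the $\sA$-weights $\alpha_\sA$ of $N^-_{F^{(k)}/X}$, so multiplication by it on $H^\sT(F^{(k)},\sw^\sA) \cong H^{\sT/\sA}(F^{(k)},\sw^\sA) \otimes \bQ[\Lie(\sA)]$ raises the $\sA$-polynomial degree by exactly $\deg_\sA e^\sT(N^-_{F^{(k)}/X})$. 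Axiom (iii) for $k \geqslant 1$ (or $\delta|_F = 0$ when $k = 0$) then forces $\gamma_k = 0$, hence $\delta|_{U_k} = 0$, completing the inductive step.

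The main obstacle is the bookkeeping verification that the filtration behaves well: at each step one must check that $Z^{(k)}$ is a closed union of full attracting sets in $X$, and that $\Attr_\fC(F^{(k)})$ realizes the full attracting set of $F^{(k)}$ relative to $U_k$, so that Remark \ref{rmk: excision for crit coh} continues to apply. All of this ultimately hinges on the properness of Setting \ref{setting: proper to affine} via Remark \ref{proper-over-affine}, without which $\Attr^f_\fC(F)$ need not even be closed in $X$. Once the filtration is in place, the argument is a direct transcription of the Maulik--Okounkov uniqueness argument to critical cohomology, available thanks to the compatibility of vanishing cycles with hyperbolic localization used in Proposition \ref{excision for crit coh}.
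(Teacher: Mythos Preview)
Your proof is correct and follows essentially the same approach as the paper's: both arguments peel off attracting strata one at a time from the $\preceq$-minimal end, using the excision of Proposition~\ref{excision for crit coh} together with the affine-fibration isomorphism to lift the difference to the fixed component, and then the leading-term/injectivity argument for multiplication by $e^\sT(N^-_{F^{(k)}/X})$ to kill it. The only cosmetic difference is that the paper phrases this as a proof by contradiction (choose the minimal $Z$ where $\beta$ is still visible and derive a contradiction), whereas you run it as an explicit downward induction on the filtration $Z^{(k)}$; the content is identical.
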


\begin{proof}
Let $\beta \in H^{\sT} (X, \sw) $ be supported on $\Attr^f_\fC(F)$ and satisfies $\deg_\sA \beta|_{F'}<\deg_\sA e^\sT (N^-_{F'/X})$ for any embedding $i\colon F'\hookrightarrow X$ of a fixed component such that $F \preceq F'$. We claim that $\beta=0$.

Pick a total ordering on the set of fixed component refining $\preceq$, and let $Z$ be the minimal fixed component such that $\Supp\beta\subseteq  \Attr^f_\fC(Z)$ and $\Supp\beta\nsubseteq \Attr^f_\fC(Z)\setminus \Attr_\fC(Z)$. Factor $i\colon Z\hookrightarrow X$ as the following composition
\begin{align*}
  i\colon  Z\overset{f_1}{\hookrightarrow}\Attr_\fC(Z)\overset{f_2}{\hookrightarrow}\Attr^f_\fC(Z) \overset{f_3}{\hookrightarrow} X.
\end{align*}
Here $f_1$ is regular and $f_2$ is open. By Proposition \ref{excision for crit coh}, $\Supp\beta\subseteq  \Attr^f_\fC(Z)$ implies that $\exists\,\alpha\in H^{\sT} (\Attr^f_\fC(Z), \sw|_{\Attr^f_\fC(Z)})$ such that $\beta=f_{3,*}\alpha$. The maps $f_2$ and $f_3$ fit into the following Cartesian diagram
\begin{equation*}
\xymatrix{
\Attr_\fC(Z) \ar[r]^{f_2} \ar[d]_{g} \ar@{}[dr]|{\Box}  &\Attr^f_\fC(Z) \ar[d]^{f_3}\\
X\setminus(\Attr^f_\fC(Z)\setminus \Attr_\fC(Z)) \ar[r]^{\quad \quad \quad\quad \quad h}  & X,
}
\end{equation*}
where $g$ is a regular closed embedding and $h$ is an open embedding. Therefore we have 
\begin{align*}
i^*\beta=f_1^*g^*h^*f_{3*}\alpha=f_1^*g^*g_*f_2^*\alpha=f_1^*e^\sT(N_{\Attr_\fC(Z)/X})\cdot f_2^*\alpha=e^\sT(N^-_{Z/X})\cdot f_1^*f_2^*\alpha, 
\end{align*}
where the third equality uses \eqref{equ on euler of normal} and the last one uses the definition of attracting set. The multiplication by $e^\sT(N^-_{Z/X})$ is injective on $H^{\sT} (Z, \sw|_Z)$, so if $f_1^*f_2^*\alpha\neq 0$ then  
$$\deg_\sA e^\sT(N^-_{Z/X})\cdot f_1^*f_2^*\alpha\geqslant \deg_\sA e^\sT (N^-_{Z/X}), $$ 
which contradicts with the condition that $\deg_\sA i^*\beta<\deg_\sA e^\sT (N^-_{Z/X})$; therefore $f_1^*f_2^*\alpha=0$. We note that 
$$\sw|_{\Attr_\fC(Z)}=\pi^{*}(\sw|_{Z}), $$ 
where $\pi:\Attr_\fC(Z)\to Z$ is the projection map which is an affine fibration. 
By \eqref{pb is iso on coho}, the pullback
$$\pi^*\colon H^\sT(Z,\sw|_Z)\to H^\sT\left(\Attr_\fC(Z),\sw|_{\Attr_\fC(Z)}\right)$$ is an isomorphism, which implies that $f_1^*\colon H^\sT\left(\Attr_\fC(Z),\sw|_{\Attr_\fC(Z)}\right)\to H^\sT(Z,\sw|_Z)$ is an isomorphism because $ \pi\circ f_1=\Id$. This forces $f_2^*\alpha=0$, i.e. $\Supp\alpha\subseteq  \Attr^f_\fC(Z)\setminus \Attr_\fC(Z)$, contradicting with the choice of $Z$; hence $\beta=0$.

Now if $\Gamma_1,\Gamma_2\in H^{\sT} (X, \sw)$ are two classes that satisfy (i)--(iii) in Definition \ref{def of stab coho}, then $\Gamma_1-\Gamma_2$ satisfies the hypothesis above; hence $\Gamma_1=\Gamma_2$.
\end{proof}

\begin{Remark}\label{weak axiom coh}
Let $\le$ be a partial order on $\Fix_\sA(X)$ that refines $\preceq$ in \eqref{flow order}, and define
\begin{align*}
    \Attr^{\le}_\fC (F):=\bigcup_{F\le F'}\Attr_\fC (F').
\end{align*}
Suppose that $\Stab'_{\fC}(\gamma)\in H^\sT(X,\sw)$ is a class which satisfies the axioms (ii) and (iii) in Definition \ref{def of stab coho} and the following modifications of (i):
\begin{enumerate}
\setlength{\parskip}{1ex}

\item[(i')]  $\Stab'_\fC (\gamma)$ is supported on $\Attr^{\le}_\fC (F_i)$.

\end{enumerate}
Suppose that $\Stab_\fC$ exists, then $\Stab'_\fC (\gamma)=\Stab_\fC (\gamma)$. This is because the same argument in the proof of Proposition \ref{uniqueness of coh stab} implies that the cohomology class that satisfies (i') as above and (ii) and (iii) as in the Definition \ref{def of stab coho} is unique.
\end{Remark}

\begin{Lemma}\label{refined partial order and closed subset}
Let $\le$ be a partial order on $\Fix_\sA(X)$ that refines $\preceq$ in \eqref{flow order}, and $Z\subseteq  X$ be an $\sA$-invariant closed subvariety. Then 
\begin{align}
    \Attr^{\le}_\fC (F)_X\cap Z=\bigcup_{F\le F'} \Attr_\fC \left(F'\cap Z\right)_Z,
\end{align}
where the subscripts $X$ and $Z$ mean taking attracting sets in $X$ and in $Z$ respectively. 
\end{Lemma}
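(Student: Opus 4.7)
The plan is a direct set-theoretic argument, using only that $Z$ is closed and $\sA$-invariant, together with the definitions of the attracting set given just before the lemma. There is no genuine obstacle here; the point is that on the subvariety $Z\subseteq X$, the attracting set in $X$ and the attracting set in $Z$ agree for points that already lie in $Z$, once one keeps track of which fixed component of $X^\sA$ the limit falls into.

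For the inclusion $\supseteq$, suppose $x\in \Attr_\fC(F'\cap Z)_Z$ for some $F\le F'$. Then by definition $x\in Z$ and $\lim_{t\to 0}\sigma(t)\cdot x\in F'\cap Z\subseteq F'$, where $\sigma\in\fC$ is any generic cocharacter. Hence $x\in \Attr_\fC(F')_X$ and, since $F\le F'$, we have $x\in \Attr^{\le}_\fC(F)_X\cap Z$.

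For the inclusion $\subseteq$, suppose $x\in \Attr^{\le}_\fC(F)_X\cap Z$. Then $x\in Z$ and there is some $F'\in\Fix_\sA(X)$ with $F\le F'$ such that $x\in \Attr_\fC(F')_X$, i.e.\ $\lim_{t\to 0}\sigma(t)\cdot x\in F'$. Because $Z$ is $\sA$-invariant, the orbit curve $\{\sigma(t)\cdot x : t\in\bC^*\}$ lies in $Z$; because $Z$ is closed, its limit at $t=0$ also lies in $Z$. Therefore $\lim_{t\to 0}\sigma(t)\cdot x\in F'\cap Z$, and $F'\cap Z$ is an $\sA$-fixed subscheme of $Z$ (it is a union of components of $Z^\sA$, possibly with connectedness refined further). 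This yields $x\in \Attr_\fC(F'\cap Z)_Z\subseteq \bigcup_{F\le F'}\Attr_\fC(F'\cap Z)_Z$, completing the proof.
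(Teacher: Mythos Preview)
Your proof is correct and follows the same approach as the paper: both rely on the observation that for $x\in Z$, the limit $\lim_{t\to 0}\sigma(t)\cdot x$ lies in $Z$ by $\sA$-invariance and closedness, so $\Attr_\fC(F')_X\cap Z=\Attr_\fC(F'\cap Z)_Z$ for each $F'$. The paper writes this as a single chain of equalities rather than two inclusions, but the content is identical.
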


\begin{proof}
Let us choose a $\sigma\in \fC$, then
\begin{align*}
    \Attr_\fC (F)_X\cap Z=\left\{x\in Z\:|\: \lim_{t\to 0}\sigma(t)\cdot x\in F\cap Z\right\}=\Attr_\fC (F\cap Z)_Z.
\end{align*}
It follows that 
\begin{equation*}
    \Attr^{\le}_\fC (F)_X\cap Z=\left(\bigcup_{F\le F'}\Attr_\fC (F')_X\right)\cap Z =\bigcup_{F\le F'}\Attr_\fC (F'\cap Z)_Z. \qedhere
\end{equation*}
\end{proof}

\begin{Remark}[The ample partial order]\label{partial order by line bundle}
A useful refinement of $\preceq$ is induced by an ample $\sA$-equivariant line bundle $\mathcal L$ on $X$ \cite[\S 3.2.4]{MO}. Namely we define
\begin{align}
    F< F'\Longleftrightarrow \left(\mathrm{weight}\:\mathcal L|_{F}-\mathrm{weight}\:\mathcal L|_{F'}\right)\big|_{\fC}>0.
\end{align}
The ample partial order has the nice feature that it is compatible with the restriction to closed subvarieties, that is, for an $\sA$-invariant closed subvariety $Z$, we have
\begin{align*}
    F< F'\Longrightarrow (F\cap Z) < (F'\cap Z),
\end{align*}
in particular,
\begin{align*}
\Attr^{\le}_\fC (F)_X\cap Z=\Attr^{\le}_\fC (F\cap Z)_Z.
\end{align*}
\end{Remark}

\subsection{\texorpdfstring{$K$}{K}-theoretic stable envelopes}

\subsubsection{Definition}

\begin{Definition}\label{def of stab k}
Fix $(X,\sw,\sT,\sA)$ as in Setting \ref{setting of stab}, a choice of chamber $\fC$, and an $\mathsf s\in \Pic_\sA(X)\otimes_\bZ \bR$ which will be called a \textit{slope}. The \emph{K-theoretic stable envelope} for $(X,\sw,\sT,\sA,\fC,\mathsf s)$ is a map of $K_\sT (\pt)$-modules:
$$ \Stab^{\mathsf s}_{\fC}\colon   K^\sT (X^\sA, \sw^\sA) \to K^\sT (X, \sw) $$
such that for any $\gamma \in K^{\sT/\sA} (X^\sA, \sw^\sA)$ (and we identify $\gamma$ with $\gamma\otimes 1\in K^{\sT/\sA} (X^\sA, \sw^\sA) \otimes \bQ[\sA]\cong K^{\sT} (X^\sA, \sw^\sA) $) supported on a connected component $F \in \Fix_\sA(X)$, $\Stab^{\mathsf s}_{\fC}(\gamma)$ satisfies the following axioms:
\begin{enumerate}[(i)]
\setlength{\parskip}{1ex}

\item  $\Stab^{\mathsf s}_{\fC}(\gamma)$ is supported on $\Attr^f_\fC (F)$; 

\item $\Stab^{\mathsf s}_{\fC} (\gamma) |_F =  e^\sT_K (N_{F / X}^-) \cdot \gamma$;

\item For any $F' \neq F$, there is a strict inclusion of polytopes:
$$\deg_\sA\Stab^{\mathsf s}_{\fC}(\gamma)\big|_{F'} \subsetneq  \deg_\sA e^\sT_K (N_{F' / X}^-)+\mathrm{shift}_{F'}-\mathrm{shift}_{F},$$ where $$\mathrm{shift}_{F}:=\mathrm{weight}_\sA\left(\det(N_{F/X}^-)^{1/2}\otimes\mathsf s|_F\right),$$
$\deg_\sA$ of a Laurent polynomial is the Newton polytope: 
$$\deg_\sA\sum_{\mu}f_\mu z^\mu:=\text{Convex hull}\left(\{\mu:f_\mu\neq 0\}\right)\subseteq  \mathrm{Char}(\sA)\otimes_\bZ \bR,$$ and we define the $\sA$-degree of $0$ to be the empty set.
\end{enumerate}
\end{Definition}

\begin{Definition}\label{def: generic slope}
We say a slope $\mathsf s\in \Pic_\sA(X)\otimes \bR$ is generic if for any $F\prec F'$, the $\sA$-weight difference
\begin{align*}
    \mathrm{weight}_\sA\left(\det(N_{F'/X}^-)^{1/2}\otimes\mathsf s|_{F'}\right)-\mathrm{weight}_\sA\left(\det(N_{F/X}^-)^{1/2}\otimes\mathsf s|_F\right)
\end{align*}
is nonintegral.
\end{Definition}

\begin{Remark}\label{rmk: generic slope}
Suppose that the slope $\mathsf s$ is generic, then any inclusion 
$$\deg_\sA\Stab^{\mathsf s}_{\fC}(\gamma)\big|_{F'} \subseteq  \deg_\sA e^\sT_K (N_{F' / X}^-)+\mathrm{shift}_{F'}-\mathrm{shift}_{F}$$ must be strict.
\end{Remark}

\begin{Remark}\label{rmk on Pic(X)}
Since $X$ is smooth and $\Pic(\sA)=0$, every line bundle on $X$ admits a (not necessarily unique) $\sA$-equivariant structure by \cite[Thm.~5.1.9]{CG}\,\footnote{The positive integer $n$ in \cite[Thm.~5.1.9]{CG} is determined by \cite[Prop.~5.1.17]{CG} which can be taken to be $1$ for the case $G=\sA$ since $\Pic(\sA)=0$.}. Assume $\Gamma(X,\mathcal O_X^*)=\bC^*$, then the set of $\sA$-equivariant structures on a line bundle $\mathcal L\in \Pic(X)$ form a $\mathrm{Char}(\sA)$-torsor, in this case we have a fiber sequence:
\begin{align*}
    \mathrm{Char}(\sA)\to \Pic_\sA(X)\to \Pic(X).
\end{align*}
Note that for any $\chi\in \mathrm{Char}(\sA)$, $\mathsf s$ and $\mathsf s\otimes\chi$ induce the same shifts in Definition \ref{def of stab k}(iii), so we can take $\mathsf s\in \Pic(X)\otimes_\bZ\bR$ if $\Gamma(X,\mathcal O_X^*)=\bC^*$.
\end{Remark}

\begin{Definition}\label{def of normalizer k}
Fix $(X,\sw,\sT,\sA)$ as in Setting \ref{setting of stab}, a  $K$-theoretic \textit{normalizer} is an element $\mathcal E\in \pm \Pic_\sT(X^\sA)$,~i.e.~a $K$-theory class $\mathcal E\in K^\sT(X^\sA)$ such that for every $F\in \Fix_\sA(X)$, either $\mathcal E|_F$ or $-\mathcal E|_F$ is a line bundle. If $\Stab^{\mathsf s}_\fC$ is a $K$-theoretic stable envelope for $(X,\sw,\sT,\sA,\fC,\mathsf s)$, then we define 
$$ \Stab^{\mathsf s}_{\fC,\mathcal E}:= \Stab^{\mathsf s}_\fC\cdot\: \mathcal E\colon   K^\sT (X^\sA, \sw^\sA) \to K^\sT (X, \sw),$$
i.e. for any $\gamma \in K^{\sT} (F, \sw^\sA) $, $\Stab^{\mathsf s}_{\fC,\mathcal E}(\gamma)= \Stab^{\mathsf s}_\fC(\mathcal E|_F\cdot\gamma)$.
\end{Definition}

\begin{Remark} \label{rk Oko normalizer}
The normalization condition and degree condition in Definition \ref{def of stab k} is slightly different from the ordinary definition of $K$-theoretic stable envelope in the case $\sw=0$ assuming the existence of polarization $T^{1/2}_X$ \cite[\S 9.1]{Oko}. In fact, the original definition can be restored by the following.
\begin{align*}
    &\Stab^{\mathsf s}_{\fC}\text{ in \cite[\S 9.1]{Oko}}= \Stab^{\mathsf s'}_{\fC,\mathcal E}\text{ in this paper, where }\\
    &\mathcal E=(-1)^{\rk N^{1/2,+}_{F/X}}\left(\frac{\det N^-_{F/X}}{\det N^{1/2}_{F/X}}\right)^{ 1/2},\text{ and }\mathsf s'=\mathsf s\otimes \left(\det T^{1/2}_X\right)^{-1/2}.
\end{align*}
\end{Remark}

\subsubsection{Uniqueness}

\begin{Proposition}\label{uniqueness of K stab}
Let $\mathsf s\in \Pic_\sA(X)\otimes \bR$ be a generic slope and $\gamma \in K^{\sT/\sA} (X^\sA, \sw^\sA) $ be supported on a connected component $F \subseteq  X^\sA$, then $\Stab^{\mathsf s}_\fC (\gamma)$ which satisfies the axioms (i)--(iii) in Definition \ref{def of stab k} is unique if it exists.
\end{Proposition}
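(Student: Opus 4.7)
The plan is to mirror the proof of Proposition \ref{uniqueness of coh stab}, substituting polynomial degree by Newton polytope and using the $K$-theoretic analogues of the key ingredients: the excision of Proposition \ref{excision for crit K}, the self-intersection identity \eqref{equ on k euler of normal}, and the affine fibration isomorphism \eqref{pb is iso on k}. It is enough to verify that any $\beta\in K^\sT(X,\sw)$ satisfying (a) $\Supp\beta\subseteq \Attr^f_\fC(F)$, (b) $\beta|_F=0$, and (c) $\deg_\sA\beta|_{F'}\subsetneq \deg_\sA e^\sT_K(N^-_{F'/X})+\mathrm{shift}_{F'}-\mathrm{shift}_F$ for every $F'\succ F$, must vanish. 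Uniqueness then follows by taking $\beta$ to be the difference of two $K$-theoretic stable envelopes of $\gamma$, invoking Remark \ref{rmk: generic slope} (via genericity of $\mathsf s$) to upgrade the Newton polytope inclusions for the difference to strict ones.

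Following the cohomological template, I would pick a total order refining $\preceq$ and take the minimal fixed component $Z\succeq F$ with $\Supp\beta\subseteq\Attr^f_\fC(Z)$ and $\Supp\beta\not\subseteq \Attr^f_\fC(Z)\setminus\Attr_\fC(Z)$. Factor $i\colon Z\hookrightarrow X$ as $Z\xrightarrow{f_1}\Attr_\fC(Z)\xrightarrow{f_2}\Attr^f_\fC(Z)\xrightarrow{f_3}X$. Proposition \ref{excision for crit K} yields $\beta=f_{3,*}\alpha$ for some $\alpha\in K^\sT(\Attr^f_\fC(Z),\sw)$, and base change along the Cartesian square formed by $f_3$ and the open immersion $h\colon X\setminus(\Attr^f_\fC(Z)\setminus\Attr_\fC(Z))\hookrightarrow X$, combined with \eqref{equ on k euler of normal} applied to the regular embedding of $\Attr_\fC(Z)$ into $X\setminus(\Attr^f_\fC(Z)\setminus\Attr_\fC(Z))$, produces
$$
i^*\beta \;=\; e^\sT_K(N^-_{Z/X})\cdot f_1^*f_2^*\alpha.
$$

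The main obstacle, with no direct cohomological counterpart, is the Newton polytope comparison that forces $f_1^*f_2^*\alpha=0$. Suppose $f_1^*f_2^*\alpha\neq 0$ and let $u$ be a vertex, necessarily an $\sA$-character, of $\deg_\sA f_1^*f_2^*\alpha$. Since extreme monomials in a product of Laurent polynomials cannot cancel, the Newton polytope of $i^*\beta$ contains the translate $\deg_\sA e^\sT_K(N^-_{Z/X})+u$. When $Z=F$, the hypothesis $\beta|_F=0$ makes $\deg_\sA i^*\beta$ empty, an immediate contradiction. When $Z\succ F$, hypothesis (c) yields
$$
\deg_\sA e^\sT_K(N^-_{Z/X})+u\;\subseteq\; \deg_\sA e^\sT_K(N^-_{Z/X})+(\mathrm{shift}_Z-\mathrm{shift}_F),
$$
i.e.\ a translate of the bounded nonempty polytope $\deg_\sA e^\sT_K(N^-_{Z/X})$ lies inside itself. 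Iterating the translation (or a volume comparison) forces the translation vector to vanish, giving $u=\mathrm{shift}_Z-\mathrm{shift}_F$; but $u$ is integral while the right-hand side is non-integral by genericity of $\mathsf s$ (Definition \ref{def: generic slope}), a contradiction.

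Having shown $f_1^*f_2^*\alpha=0$, the argument closes as in the cohomological case: since $\sw|_{\Attr_\fC(Z)}=\pi^*\sw|_Z$ for the affine fibration $\pi\colon \Attr_\fC(Z)\to Z$, isomorphism \eqref{pb is iso on k} makes $\pi^*$, and therefore $f_1^*$ via $\pi\circ f_1=\mathrm{id}_Z$, an isomorphism, so $f_2^*\alpha=0$. Then $\Supp\alpha\subseteq \Attr^f_\fC(Z)\setminus\Attr_\fC(Z)$, contradicting the minimal choice of $Z$, and hence $\beta=0$.
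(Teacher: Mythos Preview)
Your proposal is correct and matches the paper's approach, which defers entirely to the cohomological argument and omits the details. Two small remarks: first, the citation \eqref{pb is iso on k} is stated only for vector bundles, while $\Attr_\fC(Z)\to Z$ is merely an affine fibration---the result you actually need here is Proposition \ref{pullback iso along attr}; second, the reason ``extreme monomials cannot cancel'' is that the coefficient of $e^\sT_K(N^-_{Z/X})$ at every vertex of its Newton polytope is a signed determinant line bundle (hence invertible in $K^{\sT/\sA}(Z,\sw|_Z)$, which need not be a domain), and this is what forces $\deg_\sA\bigl(e^\sT_K(N^-_{Z/X})\cdot\alpha'\bigr)=\deg_\sA e^\sT_K(N^-_{Z/X})+\deg_\sA\alpha'$ and thereby justifies your containment $P+u\subseteq\deg_\sA i^*\beta$.
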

\begin{proof} 
The argument is similar to the proof of Proposition \ref{uniqueness of coh stab}, and we omit the details. 
\end{proof}

\begin{Remark}\label{weak axiom K}
Let $\le$ be a partial order on $\Fix_\sA(X)$ that refines $\preceq$, and define $\Attr^{\le}_\fC (F)$ as in the Remark \ref{weak axiom coh}. Suppose that $\Stab'_{\fC}(\gamma)\in K^\sT(X,\sw)$ is a class which satisfies the axioms (ii) and (iii) in Definition \ref{def of stab coho} and the following modifications of (i):
\begin{enumerate}
\setlength{\parskip}{1ex}

\item[(i')]  $\Stab'_\fC (\gamma)$ is supported on $\Attr^{\le}_\fC (F_i)$.

\end{enumerate}
Suppose that $\Stab_\fC$ exists, then $\Stab'_\fC (\gamma)=\Stab_\fC (\gamma)$. This is because the same argument in the proof of Proposition \ref{uniqueness of K stab} implies that the class that satisfies (i') as above and (ii) and (iii) as in the Definition \ref{def of stab k} is unique.
\end{Remark}

\begin{Remark}\label{twist by line bundle}
Let $\Stab^{\mathsf s}_\fC$ be a $K$-theoretic stable envelope for $(X,\sw,\sT,\sA,\fC,\mathsf s)$ as in the Definition \ref{def of stab k}. Let $\mathcal L$ be a $\sT$-equivariant line bundle on $X$, and denote $\mathcal L^\sA:=\mathcal L|_{X^\sA}$, then we have
\begin{align*}
    \mathcal L\circ \Stab^{\mathsf s}_\fC \circ (\mathcal L^\sA)^{-1}=\Stab^{\mathsf s\otimes\mathcal L}_\fC.
\end{align*}
\end{Remark}

\subsection{Stable envelope correspondences}\label{sect on st ev co}

Stable envelopes defined in the previous section are most effectively constructed through convolutions, which requires 
stable envelope correspondences introduced in this section. 

\subsubsection{Definitions and uniqueness}

\begin{Definition}\label{stab corr_coh}
Let $(X,\sw,\sT,\sA)$ be in the Setting \ref{setting of stab}, and fix a chamber $\fC$. A \textit{cohomological stable envelope correspondence} for $(X,\sw,\sT,\sA,\fC)$ is a critical cohomology class with support \eqref{equ on def of rel coh} on $\Attr^f_\fC$: 
$$[\Stab_{\fC}]\in H^\sT(X\times X^\sA,\sw\boxminus\sw^\sA)_{\Attr^f_\fC}$$ which satisfies the following two axioms:
\begin{enumerate}[(i)]
\setlength{\parskip}{1ex}


\item For any fixed component $F \in \Fix_\sA(X)$, $[\Stab_{\fC }]|_{F\times F}  = e^\sT (N_{F / X}^-) \cdot [\Delta_F]$ for diagonal 
$\Delta_F\subseteq  F\times F$;

\item For any $F'\neq F$, the inequality $\deg_\sA [\Stab_{\fC }]\big|_{F'\times F} < \deg_\sA e^\sT (N_{F' / X}^-)$ holds.

\end{enumerate}
Here $$[\Delta_F]\in H^\sT(F\times F,\sw^\sA\boxminus\sw^\sA)_{\Delta_F}$$ is the image of $[\Delta_F]\in H^\sT(\Delta_F)$ under the canonical map \eqref{can with supp_coh}. Note also that $\Attr^f_\fC\cap\, (F\times F)=\Delta_F$.
\end{Definition}

\begin{Definition}\label{stab corr_k}
Let $(X,\sw,\sT,\sA)$ be in the Setting \ref{setting of stab}, and fix a chamber $\fC$ and an $\mathsf s\in \Pic_\sA(X)$. A \textit{$K$-theoretic stable envelope correspondence} for $(X,\sw,\sT,\sA,\fC,\mathsf s)$ is a class $$[\Stab^{\mathsf s}_{\fC}]\in K^\sT(X\times X^\sA,\sw\boxminus\sw^\sA)_{\Attr^f_\fC}$$ which satisfies the following two axioms:
\begin{enumerate}[(i)]
\setlength{\parskip}{1ex}


\item For any fixed component $F \in \Fix_\sA(X)$, $[\Stab^{\mathsf s}_{\fC}]\big|_{F\times F} = e^\sT_K (N_{F / X}^-) \cdot [\Delta_F]$;

\item For any $F'\neq F$, there is a strict inclusion of polytopes $\deg_\sA [\Stab^{\mathsf s}_{\fC}]\big|_{F'\times F} \subsetneq  \deg_\sA e^\sT_K (N_{F' / X}^-)+\mathrm{shift}_{F'}-\mathrm{shift}_{F}$.

\end{enumerate}
Here 
$$[\Delta_F]\in K^\sT(F\times F,\sw^\sA\boxminus\sw^\sA)_{\Delta_F}$$ is the image of $[\oO_{\Delta_F}]\in K^\sT(\Delta_F)$ under the canonical map defined in \eqref{equ on kgps}.
\end{Definition}

\begin{Proposition}\label{prop on un of stab corr k}
Suppose that $\sw=0$, then the class $[\Stab_{\fC}]$ (resp.~$[\Stab^{\mathsf s}_{\fC}]$) in Definition \ref{stab corr_coh} (resp.~Definition \ref{stab corr_k}) is unique if exists.
\end{Proposition}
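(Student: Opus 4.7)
\emph{Proof plan.} The approach is to reduce uniqueness of the correspondence to the uniqueness argument for the stable envelope map (Propositions \ref{uniqueness of coh stab} and \ref{uniqueness of K stab}) by viewing the correspondence as a class on the $\sA$-variety $X \times X^\sA$ whose $\sA$-action is trivial on the second factor. I describe the cohomological case; the $K$-theoretic version is strictly parallel.

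If $[\Stab^1_\fC]$ and $[\Stab^2_\fC]$ are two candidates, their difference $D$ is supported on $\Attr^f_\fC$ and satisfies $D|_{F \times F} = 0$ for every $F \in \Fix_\sA(X)$ and $\deg_\sA D|_{F' \times F} < \deg_\sA e^\sT(N_{F'/X}^-)$ for all $F' \neq F$. Since $X^\sA = \bigsqcup_F F$ is a disjoint union of its connected components, $X \times X^\sA = \bigsqcup_F (X \times F)$ and $D$ splits accordingly as $\bigoplus_F D_F$ with $D_F := D|_{X \times F}$. It therefore suffices to show $D_F = 0$ for every $F$.

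Fix $F$ and regard $X \times F$ as an $\sA$-variety with $\sA$ acting only on the first factor: its $\sA$-fixed components are $F' \times F$ for $F' \in \Fix_\sA(X)$, with $\sA$-equivariant normal bundles pulled back from $N_{F'/X}$, and its full attracting set of $F \times F$ is $\Attr^f_\fC(F) \times F$. Under this translation, $D_F$ is supported on $\Attr^f_\fC(F) \times F$, vanishes on $F \times F$, and has strict $\sA$-degree bound on every $F' \times F$ with $F' \succeq F$, $F' \neq F$. These are exactly the hypotheses on ``$\beta$'' used in the proof of Proposition \ref{uniqueness of coh stab}, so the same inductive argument---pick a total order on $\Fix_\sA(X)$ refining $\preceq$, take the minimal $Z$ with $\Supp D_F \subseteq \Attr^f_\fC(Z) \times F$ but $\Supp D_F \not\subseteq (\Attr^f_\fC(Z) \setminus \Attr_\fC(Z)) \times F$, apply the excision isomorphism (Proposition \ref{excision for crit coh}), the excess formula \eqref{equ on euler of normal}, and the affine fibration isomorphism \eqref{pb is iso on coho}, and derive a contradiction---forces $D_F = 0$. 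The only (minor) technical point I expect to check is that these three tools commute with the base change $(-) \times F$, which is immediate since $\sA$ acts trivially on $F$ and hyperbolic localization, excision, and affine-fibration pullbacks are all compatible with such products. The $K$-theoretic case proceeds in the same way, using Propositions \ref{uniqueness of K stab} and \ref{excision for crit K} together with the $K$-theoretic analogues \eqref{equ on k euler of normal} and \eqref{pb is iso on k}.
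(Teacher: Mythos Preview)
Your proof is correct and is precisely the argument the paper has in mind: the paper's proof simply says ``similar to that of Proposition~\ref{uniqueness of coh stab} and Proposition~\ref{uniqueness of K stab}'' and omits the details, while you have written out the reduction explicitly by splitting over the components $F$ of $X^\sA$ and applying the vanishing argument for $\beta$ on the $\sA$-variety $X\times F$.
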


\begin{proof}
The proof is similar to that of Proposition \ref{uniqueness of coh stab} and Proposition \ref{uniqueness of K stab} respectively, and we omit it.
\end{proof}

\subsubsection{Existence}\label{sect on ext of stab corr}

We notice that $\sw\boxminus \sw^\sA$ vanishes on $\Attr^f_\fC$ for arbitrary $\sA$-invariant function $\sw$. So the canonical maps \eqref{can with supp_coh}, \eqref{equ on can map k with sp} in this setting are:  
\begin{align*}
\can\colon H^\sT(X\times X^\sA)_{\Attr^f_\fC}\to H^\sT(X\times X^\sA,\sw\boxminus \sw^\sA)_{\Attr^f_\fC},\quad \can\colon K^\sT(X\times X^\sA)_{\Attr^f_\fC}\to K^\sT(X\times X^\sA,\sw\boxminus \sw^\sA)_{\Attr^f_\fC}.
\end{align*}
Here by definition \eqref{equ on def of rel coh}, $H^\sT(X\times X^\sA)_{\Attr^f_\fC}= H^\sT(\Attr^f_\fC)$, and 
$K^\sT(X\times X^\sA)_{\Attr^f_\fC}\cong K^\sT(\Attr^f_\fC)_{}$ by d\'evissage.

\begin{Lemma}\label{can map induce stab}
Let $[\Stab_{\fC}]$ be a cohomological stable envelope correspondence for $(X,0,\sT,\sA,\fC)$, then the image $\can([\Stab_{\fC}])$ under canonical map is a cohomological stable envelope correspondence for $(X,\sw,\sT,\sA,\fC)$. 

Similarly, let $[\Stab^{\mathsf s}_{\fC}]$ be a $K$-theoretic stable envelope correspondence for $(X,0,\sT,\sA,\fC,\mathsf s)$, then $\can([\Stab^{\mathsf s}_{\fC}])$ is a $K$-theoretic stable envelope correspondence for $(X,\sw,\sT,\sA,\fC,\mathsf s)$.
\end{Lemma}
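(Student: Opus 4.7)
The statement asserts that both axioms in Definition \ref{stab corr_coh} (resp.\ Definition \ref{stab corr_k}) are preserved by the canonical map $\can$. The plan is to reduce everything to two facts about $\can$: it is $H^\sT(\pt)$-linear (resp.\ $K^\sT(\pt)$-linear), and it is compatible with Gysin pullback along regular immersions and with the Euler class operator introduced in \S\ref{sect on eu cla op} (resp.\ \S\ref{sect on eu cla op k}). The support on $\Attr^f_\fC$ is automatic since $\can$ preserves support by construction in \eqref{can with supp_coh} (resp.\ \eqref{equ on can map k with sp}).

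For axiom (i), I would apply Gysin pullback along the l.c.i.\ closed embedding $i_F\colon F\times F\hookrightarrow X\times X^\sA$. Compatibility of $\can$ with $i_F^*$ follows from naturality of the Milnor triangle \eqref{equ on mil tri} in the cohomological case, and from the analogous compatibility of the matrix factorization pullback with the equivalence $\D^b\Coh/\Perf\cong \mathrm{MF}$ in the $K$-theoretic case. Combined with compatibility of $\can$ with the Euler class operator, this gives
\[
\can([\Stab_{\fC}])\big|_{F\times F} = \can\bigl(e^\sT(N_{F/X}^-)\cdot[\Delta_F]\bigr) = e^\sT(N_{F/X}^-)\cdot \can([\Delta_F]),
\]
and $\can([\Delta_F])$ is precisely the diagonal class used in Definition \ref{stab corr_coh}(i) (resp.\ Definition \ref{stab corr_k}(i)). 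The $K$-theoretic statement is identical with $e^\sT_K$ in place of $e^\sT$.

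For axiom (ii), pick $F'\neq F$ and Gysin restrict along $i_{F',F}\colon F'\times F\hookrightarrow X\times X^\sA$. By the same compatibility,
\[
\can([\Stab_{\fC}])\big|_{F'\times F}=\can\bigl([\Stab_{\fC}]\big|_{F'\times F}\bigr).
\]
Now the degree axioms (cohomological degree, or Newton polytope in $K$-theory) are defined after the decomposition
\[
H^\sT(F'\times F,\,\text{pot.})\;\cong\; H^{\sT/\sA}(F'\times F,\,\text{pot.})\otimes H^*_\sA(\pt),
\]
and similarly in $K$-theory. Since $\can$ is $H^\sT(\pt)$-linear (resp.\ $K^\sT(\pt)$-linear), it is in particular $H^*_\sA(\pt)$-linear (resp.\ $K_\sA(\pt)$-linear), so its image is a linear combination (with coefficients in the target of the non-$\sA$ factor) of the original $\sA$-monomials. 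Consequently the $\sA$-degree, resp.\ Newton polytope, can only decrease or stay the same. The strict inequality/strict inclusion assumed in the $\sw=0$ case therefore passes to $\can([\Stab_{\fC}])$, and the slope shifts in the $K$-theoretic case depend only on the $\sT$-equivariant geometry of $X^\sA$, not on $\sw$, so they match on both sides.

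The only genuine technical point is verifying the commutation of $\can$ with Gysin pullback along $i_F$ and $i_{F',F}$ at the level of the vanishing cycle natural transformation \eqref{equ on cano map mod} (resp.\ its matrix factorization counterpart). This is a diagram chase using that pullback along a regular immersion commutes with $\varphi_\sw$ (base change for vanishing cycles) and with the connecting map $i^!\to \varphi$ of the Milnor triangle; the $K$-theoretic analog follows from the explicit description of $\can$ via the equivalence $\D^b\Coh(Z(\sw))/\Perf\cong \mathrm{MF}$ and standard base change for coherent pullback. Once this compatibility is established, both axioms are immediate in cohomology and in $K$-theory.
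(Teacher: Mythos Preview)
Your proposal is correct and follows essentially the same approach as the paper: both proofs rest on the two facts that $\can$ commutes with Gysin pullback along $F'\times F\hookrightarrow X\times X^\sA$ and that $\can$ does not increase $\deg_\sA$ (via $H^*_\sA(\pt)$-linearity). The paper states these two compatibilities without justification and concludes in two lines, whereas you spell out why the degree bound survives and flag the verification of $\can$ commuting with Gysin pullback as the one technical point; this is a reasonable expansion of the same argument, not a different route.
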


\begin{proof}
The axioms (i) and (ii) in Definition \ref{stab corr_coh} for nontrivial potential $\sw$ follows from the corresponding axioms for trivial potential $\sw=0$ because canonical map commutes with Gysin pullback and preserves $\deg_\sA$. Namely, we have 
\begin{align*}
\left(\can([\Stab_{\fC}])\right)|_{F\times F} =\can\left([\Stab_{\fC}]|_{F\times F} \right) = \can\left( e^\sT (N_{F / X}^-) \cdot [\Delta_F]\right)= e^\sT (N_{F / X}^-) \cdot [\Delta_F],
\end{align*}
and 
\begin{align*}
\deg_\sA\left(\can([\Stab_{\fC}])\right)|_{F'\times F} =\deg_\sA\can\left([\Stab_{\fC}]|_{F'\times F} \right) < \deg_\sA e^\sT (N_{F' / X}^-).
\end{align*}
The $K$-theoretic counterpart is proven similarly.
\end{proof}

In the case when $\dim\sA=1$, there are two choices of chambers $\{+,-\}$. In this case, the stable envelope correspondence always exists, by the next proposition 
(as motivated by \cite[\S 9.2.3]{Oko}).

\begin{Proposition}\label{prop on stable corr}
Let $(X,\sw,\sT,\sA)$ be in the Setting \ref{setting of stab} and suppose that $\mathsf A\cong\bC^*$. 
\begin{itemize} 

\item Then there exists a cohomological stable envelope correspondence $[\Stab_{+}]$ for $(X,\sw,\sT,\bC^*,+)$. 

\item If we fix $\mathsf s\in \Pic_\sA(X)\otimes_\bZ \bR$, then there exists a $K$-theoretic stable envelope correspondence $[\Stab^{\mathsf s}_{+}]$ for $(X,\sw,\sT,\bC^*,+,\mathsf s)$.
\end{itemize}
\end{Proposition}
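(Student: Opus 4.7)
The plan is to reduce to the case $\sw=0$ via Lemma \ref{can map induce stab}: the canonical map sends a stable envelope correspondence for trivial potential to one for arbitrary $\sT$-invariant $\sw$. Thus it suffices to construct $[\Stab_+]$ and $[\Stab^{\mathsf s}_+]$ when $\sw=0$.

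My candidate is built from the closure $\overline{\Attr_+}\subseteq X\times X^\sA$, which is a closed subvariety by Setting \ref{setting: proper to affine} and Remark \ref{proper-over-affine}. In cohomology I would take $[\Stab_+]:=[\overline{\Attr_+}]\in H^\sT(X\times X^\sA)_{\Attr^f_+}$, the fundamental class of the closure. In $K$-theory I would take $[\Stab^{\mathsf s}_+]:=[\oO_{\overline{\Attr_+}}\otimes \mathcal L_{\mathsf s}]$, where $\mathcal L_{\mathsf s}$ is a $\sT$-equivariant (possibly fractional) line bundle on $X\times X^\sA$ chosen so that its $\sA$-weight at each diagonal $\Delta_F$ realizes the slope shift $\mathrm{shift}_F$ from Definition \ref{def of stab k}(iii); this absorbs the dependence on $\mathsf s$.

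For axiom (i) of Definitions \ref{stab corr_coh} and \ref{stab corr_k}, I would verify by excess intersection that $\overline{\Attr_+}\cap(F\times F)=\Delta_F$ with excess normal bundle $N^-_{F/X}$. Indeed, at a diagonal point $(f,f)$ the tangent space to $\overline{\Attr_+}$ splits as $T\Delta_F\oplus N^+_{F/X}$ inside $T_fX\oplus T_fF$, whose complement inside $T(F\times F)$ is exactly $N^-_{F/X}$. Applying the excess intersection formulas \eqref{equ on euler of normal} and \eqref{equ on k euler of normal} then gives $[\Stab_+]|_{F\times F}=e^\sT(N^-_{F/X})\cdot[\Delta_F]$ and its $K$-theoretic analog, as required. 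Support on $\Attr^f_+$ is tautological.

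For axiom (ii), for $F'\ne F$ I would compute $[\overline{\Attr_+(F)}]|_{F'\times F}$ via the refined Gysin pullback through $F'\hookrightarrow X$, expressing the result as the pushforward from $Z:=F'\cap\overline{\Attr_+(F)}$ of a cycle class weighted by $e^\sT(\mathcal E)$, where $\mathcal E$ denotes the excess bundle. Since $\dim\sA=1$ the $\sA$-degree reduces to a single integer (resp.\ the length of a Newton interval), and $e^\sT(N^-_{F'/X})$ has $\sA$-degree precisely $\rk N^-_{F'/X}$. The strict inequality $\deg_\sA [\overline{\Attr_+(F)}]|_{F'\times F}<\rk N^-_{F'/X}$ should follow because $F'\ne F$ guarantees that at least one $\sA$-weight of $N^-_{F'/X}$ fails to appear in the $\sA$-moving part of $\mathcal E$, combined with the fact that the cycle $[Z]$ itself is $\sA$-fixed so contributes $\sA$-degree zero. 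The main obstacle is precisely in making this weight count rigorous (and the analogous strict Newton polytope inclusion in $K$-theory, where the slope shifts are already absorbed into $\mathcal L_{\mathsf s}$). This is where $\dim\sA=1$ is essential: in higher rank the polytope comparisons are genuinely subtle, whereas here the one-parameter structure reduces the verification to a comparison of integers along the single cocharacter generating $\fC$, which can be checked directly from the attracting-bundle decomposition $T_pX=T_pF'\oplus N^+_{F'/X,p}\oplus N^-_{F'/X,p}$ at a generic point $p\in Z$.
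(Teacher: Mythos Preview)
Your reduction to $\sw=0$ via Lemma~\ref{can map induce stab} is correct and matches the paper. However, the candidate $[\overline{\Attr_+}]$ is \emph{not} what the paper uses, and your argument for axiom~(ii) has a genuine gap.

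The closure $\overline{\Attr_+(\Delta_F)}$ need not satisfy the degree bound $\deg_\sA [\overline{\Attr_+(\Delta_F)}]|_{F'\times F} < \rk N^-_{F'/X}$ for general $X$. Your weight-counting heuristic (``at least one $\sA$-weight of $N^-_{F'/X}$ fails to appear in $\mathcal{E}$'') does not hold: the intersection $\overline{\Attr_+(\Delta_F)}\cap(F'\times F)$ can have excess dimension, and the resulting excess class can carry arbitrary $\sA$-degree. Indeed, the paper devotes all of Section~\ref{app on symm var} to proving that $[\overline{\Attr}_\fC(\Delta)]$ \emph{is} a stable envelope correspondence for symmetric GIT quotients (Theorem~\ref{construct coh stab corr on symm quiver var}), and that proof requires the symplectic/self-dual structure via the dimension bound~($\star$) of Definition~\ref{def: cond *}. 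For general $X$ with $\sA\cong\bC^*$, no such bound is available. The $K$-theoretic situation is worse: your fractional line bundle $\mathcal{L}_{\mathsf s}$ is not well-defined as an object whose restriction to each $\Delta_F$ realizes $\mathrm{shift}_F$, and even granting some twist, $[\mathcal{O}_{\overline{\Attr_+}}]$ has Newton polytope fixed by the geometry of the closure, with no mechanism to fit it inside the $\mathsf s$-dependent window.

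The paper's actual construction is an \emph{inductive correction} along an increasing filtration $\emptyset=Z_0\subset Z_1\subset\cdots\subset Z_n=\Attr^f_+$ by open subsets, where $Z_1=\Attr(\Delta)$ and $Z_i=Z_{i-1}\cup\Attr(\overline{Z}_{i-1}\setminus Z_{i-1})$. One starts with the obvious class on $Z_1$ (the attracting-set cycle, respectively its structure sheaf), lifts it arbitrarily to $Z_i$, computes the failure of axiom~(ii) on the newly added fixed components $M_i$, and then subtracts $j_{i*}p_i^*(\alpha)$ for the unique $\alpha$ satisfying $\deg_\sigma(\beta - e^\sT(N^-_{X^\sigma/X})\cdot\alpha)<\deg_\sigma e^\sT(N^-_{X^\sigma/X})$. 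The existence of $\alpha$ is precisely Euclidean division of $\beta$ by the ``monic'' class $e^\sT(N^-)$ in the single equivariant variable, and \emph{this} is where $\dim\sA=1$ enters. In $K$-theory the same scheme works with the Newton-interval analogue of polynomial division, producing the $\mathsf s$-dependent correction. Your approach bypasses this correction step entirely, which is the essential content of the proposition.
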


\begin{proof}
We first prove the cohomology version. According to Lemma \ref{can map induce stab}, it suffices to construct $[\Stab_{+}]$ for $(X,0,\sT,\bC^*,+)$. 
$\Attr^f:=\Attr^f_+$ admits a filtration\,\footnote{To prove the existence of such filtration, let us define $Z_i$ inductively by setting $Z_1=\Attr(\Delta)$ and 
$Z_i:=Z_{i-1}\cup \Attr(\overline Z_{i-1}\setminus Z_{i-1})$. 
Note that $\overline Z_{i-1}\setminus Z_{i-1}$ are unions of torus fixed locus, which are distinct for different $i$. As the number of torus fixed components is finite, 
the induction process terminates at some $Z_n$ (and $Z_n$ is closed in $X\times X^\sA$). Then $Z_n$ contains $\Attr^f$ by the definition of $\Attr^f$. $\Attr^f$ contains $Z_1$ by definition; thus it contains $\overline{Z}_1$ since it is closed, then it follows that $\Attr^f$ contains $Z_2$, and by induction $\Attr^f$ contains $Z_n$. Thus $\Attr^f=Z_n$.} $$\emptyset=Z_0\subseteq  Z_1\subseteq  Z_2\subseteq \cdots\subseteq  Z_n=\Attr^f$$ by open subvarieties $Z_i$, such that $Z_1=\Attr(\Delta)$ where $\Delta\subseteq  X^\sigma\times X^\sigma$ is the diagonal, and
$$Z_i=Z_{i-1}\cup \Attr(\overline Z_{i-1}\setminus Z_{i-1}),  $$ 
where $\Attr(-)$ is the attracting set in $X\times X^\sigma$. 
We write 
\begin{align*}
    \overline Z_{i-1}\setminus Z_{i-1}=\bigcup_{F_j\in \Fix_\sigma(X)}(G_{j_i}\times F_j)\cap \Attr^f, 
\end{align*}
where $G_{j_i}$ is a union of torus fixed components, and $\bigcup_{F_j\in \Fix_\sigma(X)}(G_{j_i}\times F_j)$ is the union of $F'\times F$ for all $F'\succ F$ such that $(F'\times F)\cap (\overline Z_{i-1}\setminus Z_{i-1})\neq \emptyset$. 
 Then by construction 
$$Z_i\setminus Z_{i-1}=\bigcup_{F_j\in \Fix_\sigma(X)}\Attr\left((G_{j_i}\times F_j)\cap \Attr^f\right). $$ 
We claim that there exists $[\Stab]_i$ in equivariant Chow homology $A^\sT(Z_i)$ which satisfies the following two axioms:
\begin{enumerate}[(i)]
\setlength{\parskip}{1ex}

\item For any fixed component $F \in \Fix_\sA(X)$, $[\Stab]_i\big|_{F\times F}  = e^\sT (N_{F / X}^-) \cdot [\Delta_F]$;

\item For any $F' \succ F$ such that $(F'\times F)\cap Z_{i}\neq \emptyset$, $\deg_\sigma [\Stab]_i \big|_{F'\times F} < \deg_\sigma e^\sT (N_{F' / X}^-)$.

\end{enumerate}
Then $[\Stab_{+ }]$ is the image of $[\Stab]_n$ under the cycle map $$A^\sT(\Attr^f)\to H^\sT(\Attr^f)$$ sending algebraic cycle to Borel-Moore homology class. 
Define $$[\Stab]_1:=\sum_{F\in \Fix_\sigma(X)}[\Attr(F\times F)], $$ 
then $[\Stab]_1$ obviously satisfies axiom (i). For $i\geqslant 2$ and assume that we have constructed $[\Stab]_{i-1}\in A^\sT(Z_{i-1})$, then we use the surjective map $A^\sT(Z_i)\twoheadrightarrow A^\sT(Z_{i-1})$ to find a preimage of $[\Stab]_{i-1}$ in $A^\sT(Z_i)$, denoted by $[\Stab]^\sim_{i-1}$. Write $Z_i\setminus Z_{i-1}=\Attr(M_i\cap \Attr^f)$, 
where $$M_i=\bigcup_{F_j\in \Fix_\sigma(X)}G_{j_i}\times F_j, $$ 
then define $$\beta:=[\Stab]^\sim_{i-1}\big|_{M_i}\in A^\sT\left(M_i\cap \Attr^f\right).$$
Let $\alpha$ be the unique class in $A^\sT\left(M_i\cap \Attr^f\right)$ such that 
$$\deg_\sigma\left(\beta-e^\sT(N_{X^\sigma/X}^-)\cdot \alpha\right)<\deg_\sigma e^\sT(N_{X^\sigma/X}^-). $$
Here the existence uses the fact that $\sA$ is one dimensional, so one can use Euclidean algorithm to find $\alpha$. 
The uniqueness follows from the fact that given another $\alpha'$ satisfying above inequality, then the difference of the LHS also 
has degree less than the RHS,~i.e.
$$\deg_\sigma \left((\alpha'-\alpha)\cdot e^\sT(N_{X^\sigma/X}^-)\right)< \deg_\sigma e^\sT(N_{X^\sigma/X}^-), $$
which implies that $\alpha'=\alpha$.

Let $p_i\colon Z_i\setminus Z_{i-1}=\Attr(M_i\cap \Attr^f)\to M_i\cap \Attr^f$ be the projection, $j_i\colon Z_i\setminus Z_{i-1}\hookrightarrow Z_i$ be the closed embedding, and define 
$$[\Stab]_{i}:=[\Stab]^\sim_{i-1}-j_{i*}p_i^*(\alpha). $$ 
Then $[\Stab]_{i}$ satisfies axiom (i) since $F\times F\subseteq   Z_{i-1}$.  
Note that we have  
$$\Attr(M_i\cap \Attr^f)=\Attr(M_i\cap Z_i)=\Attr(M_i)\cap Z_i, $$ 
and following Cartesian diagrams
$$
\xymatrix{
M_i\cap Z_i  \ar[r]^{ } \ar[d]^{ } \ar@{}[dr]|{\Box}  & \Attr(M_i)\cap Z_i \ar[d]^{ } \ar[r]^{\quad \quad j_i} \ar@{}[dr]|{\Box}  &  Z_i     \ar[d]^{ } \\
M_i \ar[r]^{ } & \Attr(M_i) \ar[r]^{ } &  X\times X^\sigma,   
} 
$$
therefore
\begin{align*}
    (M_i\hookrightarrow X\times X^\sigma)^*j_{i*}p_i^*(\alpha)&=(M_i\hookrightarrow \Attr(M_i))^*(\Attr(M_i)\hookrightarrow X\times X^\sigma)^*j_{i*}p_i^*(\alpha)\\
    &=e^\sT(N_{X^\sigma/X}^-)\cdot (M_i\hookrightarrow \Attr(M_i))^*p_i^*(\alpha)\\
    &=e^\sT(N_{X^\sigma/X}^-)\cdot \alpha,
\end{align*}
thus
\begin{align*}
    \deg_\sigma[\Stab]_{i}\big|_{M_i}=\deg_\sigma\left(\beta-e^\sT(N_{X^\sigma/X}^-)\cdot \alpha\right)<\deg_\sigma e^\sT(N_{X^\sigma/X}^-),
\end{align*}
i.e.~$[\Stab]_{i}$ satisfies axiom (ii).
This proves the cohomology version.

For the $K$-theory version, we claim that there exists $[\Stab^{\mathsf s}]_i$ in equivariant $K$-theory $K^\sT(Z_i)$ which satisfies the following two axioms:
\begin{enumerate}
\setlength{\parskip}{1ex}

\item[(i')] For any fixed component $F \in \Fix_\sA(X)$, $[\Stab^{\mathsf s}]_i|_{F\times F}  = e^\sT_K (N_{F / X}^-) \cdot [\Delta_F]$;

\item[(ii')] For any $F' \succ F$ such that $(F'\times F)\cap Z_{i}\neq \emptyset$, $\deg_\sigma [\Stab^{\mathsf s}]_i\big|_{F'\times F} \subseteq  \deg_\sigma e^\sT_K (N_{F' / X}^-)+\mathrm{shift}_{F'}-\mathrm{shift}_{F}$.

\end{enumerate}
Then $[\Stab^{\mathsf s}_{+ }]$ is $[\Stab^{\mathsf s}]_n$. Define 
$$[\Stab^{\mathsf s}]_1:=[\mathcal O_{Z_1}],$$ 
then $[\Stab]_1$ obviously satisfies axiom (i). For $i\geqslant 2$ and assume that we have constructed $[\Stab^{\mathsf s}]_{i-1}\in K^\sT(Z_{i-1})$, then we use the surjective map $K^\sT(Z_i)\twoheadrightarrow K^\sT(Z_{i-1})$ to find a preimage of $[\Stab^{\mathsf s}]_{i-1}$ in $K^\sT(Z_i)$, denoted by $[\Stab]^\sim_{i-1}$. Similar to the cohomological counterpart, we define
\begin{align*}
&\quad \quad \quad \quad \quad\quad\quad \alpha,\beta\in K^\sT\left(M_i\cap \Attr^f\right),\text{ such that }\beta:=[\Stab^{\mathsf s}]^\sim_{i-1}\big|_{M_i},\text{ and }\\
&\deg_\sigma\left((\beta-e^\sT_K(N_{X^\sigma/X}^-)\cdot \alpha)|_{F'\times F}\right)\subsetneq  \deg_\sigma e^\sT_K (N_{F' / X}^-)+\mathrm{shift}_{F'}-\mathrm{shift}_{F}.
\end{align*}
Define $$[\Stab]_{i}:=[\Stab]^\sim_{i-1}-j_{i*}p_i^*(\alpha). $$ Then $[\Stab]_{i}$ satisfies axiom (i) since $F\times F\subseteq   Z_{i-1}$; $[\Stab]_{i}$ satisfies axiom (ii) as
\begin{align*}
    (M_i\hookrightarrow X\times X^\sigma)^*j_{i*}p_i^*(\alpha)&=(M_i\hookrightarrow \Attr(M_i))^*(\Attr(M_i)\hookrightarrow X\times X^\sigma)^*j_{i*}p_i^*(\alpha)\\
    &=e^\sT_K(N_{X^\sigma/X}^-)\cdot (M_i\hookrightarrow \Attr(M_i))^*p_i^*(\alpha)\\
    &=e^\sT_K(N_{X^\sigma/X}^-)\cdot \alpha,
\end{align*}
thus 
\begin{align*}
    \deg_\sigma[\Stab]_{i}\big|_{F'\times F}=\deg_\sigma\left((\beta-e^\sT_K(N_{X^\sigma/X}^-)\cdot \alpha)|_{F'\times F}\right)\subsetneq  \deg_\sigma e^\sT_K (N_{F' / X}^-)+\mathrm{shift}_{F'}-\mathrm{shift}_{F}.
\end{align*}
This proves the $K$-theory version.
\end{proof}
When ambient spaces are symplectic, we have the following existence result.
\begin{Theorem}[{\cite[Prop.~3.5.1]{MO}}]
\label{thm on mo stab ex}
Suppose that $X$ is a smooth symplectic variety with symplectic form $\Omega$ such that $\sA$ fixes $\Omega$ and $\sT$ scales $\Omega$. Fix a chamber $\fC$,
and a $\sT$-invariant function $\sw\colon X\to \C$. 
Then cohomological stable envelope correspondence exists.
\end{Theorem}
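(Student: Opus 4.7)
The plan is to reduce to the case $\sw = 0$ via Lemma \ref{can map induce stab} and then build the correspondence by an inductive construction in equivariant Chow, following the strategy of Proposition \ref{prop on stable corr} but using the symplectic hypothesis to overcome the failure of the Euclidean algorithm when $\dim \sA > 1$.

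By Lemma \ref{can map induce stab}, it suffices to construct a cohomological stable envelope correspondence $[\Stab_\fC] \in H^\sT(\Attr^f_\fC)$ for $(X, 0, \sT, \sA, \fC)$, since its image under $\can$ gives the correspondence for general $\sw$. So I would choose a total order refining $\preceq$ and the resulting open stratification $\varnothing = Z_0 \subset Z_1 \subset \cdots \subset Z_n = \Attr^f_\fC$ with $Z_i \setminus Z_{i-1} = \Attr(M_i \cap \Attr^f)$ exactly as in the proof of Proposition \ref{prop on stable corr}. Set $[\Stab]_1 := \sum_{F \in \Fix_\sA(X)} [\Attr_\fC(F \times F)]$, whose restriction to each diagonal $F \times F$ equals $e^\sT(N^-_{F/X}) \cdot [\Delta_F]$ by self-intersection inside the attracting bundle. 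At step $i$, lift $[\Stab]_{i-1}$ to $[\Stab]^\sim_{i-1} \in A^\sT(Z_i)$ via the surjection $A^\sT(Z_i) \twoheadrightarrow A^\sT(Z_{i-1})$, put $\beta := [\Stab]^\sim_{i-1}|_{M_i}$, find a correcting class $\alpha \in A^\sT(M_i \cap \Attr^f)$ such that on every stratum $F' \times F$ with $F' \succ F$ the $\sA$-degree of $\beta - e^\sT(N^-_{X^\sA/X}) \cdot \alpha$ drops strictly below $\deg_\sA e^\sT(N^-_{F'/X})$, and set $[\Stab]_i := [\Stab]^\sim_{i-1} - (j_i)_* p_i^* \alpha$.

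The main obstacle is producing $\alpha$ once $\dim \sA > 1$, since $\bQ[\Lie \sA]$ is no longer a principal ideal domain and the Euclidean algorithm used in the one-dimensional case of Proposition \ref{prop on stable corr} is unavailable. This is precisely the point at which the symplectic hypothesis is essential. Because $\sA$ preserves $\Omega$ and $\sT$ scales it by a nontrivial character $\hbar$, contraction with $\Omega$ provides a $\sT$-equivariant isomorphism $N^+_{F/X} \cong (N^-_{F/X})^\vee \otimes \hbar$ on every $F \in \Fix_\sA(X)$. This Lagrangian duality pairs the attracting and repelling $\sA$-weights of the normal bundle, so $\overline{\Attr_\fC(F)}$ is half-dimensional Lagrangian, and, modulo $\hbar$-corrections, the Euler class $e^\sT(N^-_{F'/X})$ factors as a product of linearly independent linear forms on $\Lie \sA$ that are generic with respect to the chamber $\fC$. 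Using this, I would perform a staircase leading-term elimination: expand $\beta|_{F' \times F}$ in an $\sA$-weight basis ordered by a refinement of the chamber, and iteratively subtract multiples of $e^\sT(N^-_{F'/X})$ times partial corrections to cancel the leading $\sA$-weights. The symplectic duality guarantees that at each stage the remainder's $\sA$-degree strictly decreases, and termination follows from the boundedness of equivariant degrees on each fixed component together with the finite rank of $H^{\sT/\sA}(F)$ over $H^*_{\sT/\sA}(\pt)$.

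Finally, take $[\Stab_\fC] \in H^\sT(\Attr^f_\fC)$ to be the cycle-class image of $[\Stab]_n \in A^\sT(\Attr^f_\fC)$. The axioms of Definition \ref{stab corr_coh} are preserved by the cycle map and hold on each stratum by construction, so applying $\can$ yields the stable envelope correspondence for the general potential $\sw$.
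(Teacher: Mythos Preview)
Your reduction to $\sw = 0$ via Lemma~\ref{can map induce stab} is correct and matches the paper exactly. However, your construction for $\sw = 0$ has a genuine gap.

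The paper's proof for $\sw = 0$ simply cites \cite[Prop.~3.5.1]{MO}, and the Maulik--Okounkov argument is \emph{geometric}, not algebraic. One takes the closure $\overline{\Attr_\fC(\Delta)}$ (after a deformation to generic affine fibers when needed) and observes that it is a Lagrangian cycle in $X \times X^\sA$ with respect to $\Omega \boxminus \Omega^\sA$. The Lagrangian property yields a \emph{dimension bound} on the intersection $\overline{\Attr_\fC(\Delta)} \cap (F' \times F)$, and that bound translates directly into the degree bound of axiom~(ii) in Definition~\ref{stab corr_coh}. No correction step is required at all. Compare the paper's own parallel argument for symmetric GIT quotients in Lemma~\ref{attr is isotropic} and Lemma~\ref{cond star implies existence of stab coh}.

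Your proposed ``staircase leading-term elimination'' does not work as stated. The core issue: given $\beta|_{F' \times F}$ of $\sA$-degree $d \geq r := \rk N^-_{F'/X}$, finding $\alpha$ with $\deg_\sA(\beta - e^\sT(N^-_{F'/X}) \cdot \alpha) < r$ requires the top-degree part of $\beta$ to lie in the ideal generated by the top-degree part of $e^\sT(N^-_{F'/X})$ inside $\bQ[\Lie\sA]$. For $\dim \sA > 1$ this is a genuine constraint on $\beta$, and the symplectic pairing $N^+_{F/X} \cong (N^-_{F/X})^\vee \otimes \hbar$ does nothing to guarantee it --- the $\sA$-weights of $N^-_{F'/X}$ need not be linearly independent, and even if they were, a generic homogeneous polynomial of degree $d$ is not divisible by a fixed homogeneous polynomial of degree $r < d$. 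The symplectic hypothesis is used in \cite{MO} to control \emph{geometry} (isotropic support, hence dimension of intersections), not to make multivariable polynomial division terminate.
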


\begin{proof}
By Lemma \ref{can map induce stab}, we are left construct cohomological stable envelope correspondence for $(X,0,\sT,\sA,\fC )$, which 
is done in \cite[Prop.~3.5.1]{MO}.
\end{proof}

When ambient spaces are \textit{symmetric GIT quotients}, we have the following existence result.
\begin{Theorem}\label{exi of coh stab on symm quiver var}
Suppose that $X$ is a symmetric GIT quotient with an action of tori $\sA\subseteq \sT$ as Definition \ref{def of sym var}. Fix a chamber $\fC$,
and a $\sT$-invariant function $\sw\colon X\to \C$. 
Then cohomological stable envelope correspondence exists. 
\end{Theorem}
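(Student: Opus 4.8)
By Lemma \ref{can map induce stab}, it suffices to construct a cohomological stable envelope correspondence for the pair $(X,0,\sT,\sA,\fC)$; the canonical map then transports it to an arbitrary $\sT$-invariant potential $\sw$. So from now on the potential is zero, and we are working with ordinary $\sT$-equivariant Borel--Moore homology (equivalently, equivariant Chow homology via the cycle map) of the ambient symmetric GIT quotient $X$ and of $\Attr^f_\fC\subseteq X\times X^\sA$. The plan is to produce the class $[\Stab_\fC]\in A^\sT(\Attr^f_\fC)$ (then push it to $H^\sT$) as an explicit algebraic cycle, exploiting the symmetry of $X$ to bypass the inductive interpolation argument that is needed in the general (e.g.\ Nakajima) case.

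First I would set up the geometry of symmetric GIT quotients following Definition \ref{def of sym var}: write $X=R/\!\!/G$ with $R$ a self-dual $G\times\sA$-representation, fix a generic $\sigma\in\fC$, and describe $X^\sA$, $\Attr_\fC$ and $\Attr^f_\fC$ in terms of $\sigma$-weight decompositions on $R$ together with the homomorphism $\phi\colon\sA\to G$ that enters in the diagrams \eqref{Hall compatible intro}. The key structural input is that, because $R$ is self-dual for $G\times\sA$, the repelling directions on $X$ are (equivariantly) dual to the attracting ones, so $N^-_{F/X}$ is the conormal-type bundle appearing in the normalization axiom and its Euler class is, up to lower $\sA$-degree terms, the product of the negative weights. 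The heart of the matter — and this is where Theorem \ref{construct coh stab corr on symm quiver var} (referenced in the introduction) is invoked — is the claim that the \emph{closure of the attracting manifold} $\overline{\Attr_\fC}\subseteq X\times X^\sA$, viewed as a cycle, already satisfies the two axioms of Definition \ref{stab corr_coh}: axiom (i), the diagonal restriction $[\overline{\Attr_\fC}]|_{F\times F}=e^\sT(N^-_{F/X})\cdot[\Delta_F]$, is a local computation near the diagonal using the Bialynicki--Birula fibration $\Attr_\fC(F)\to F$; axiom (ii), the degree bound $\deg_\sA[\overline{\Attr_\fC}]|_{F'\times F}<\deg_\sA e^\sT(N^-_{F'/X})$ for $F'\neq F$, is where symmetry does the real work — the self-duality of $R$ forces the excess intersection contributions at the off-diagonal strata to have strictly smaller $\sA$-degree, which on Nakajima varieties is the statement that a symmetric quiver variety is ``as good as its own generic deformation.''

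Concretely I would carry this out in three steps: (1) establish, via Setting \ref{setting: proper to affine} and Remark \ref{proper-over-affine}, that $\Attr^f_\fC(F)$ is closed in $X$ and $\Attr^f_\fC$ is proper over $X$, so that the support conditions and the class $[\overline{\Attr_\fC}]\in A^\sT(\Attr^f_\fC)$ make sense; (2) verify the diagonal normalization axiom (i) by restricting to a $\sA$-fixed component $F$, using that $\Attr_\fC(F)$ is an affine bundle over $F$ with fiber the positive part of the normal space, and that its closure meets $F\times F$ in $\Delta_F$ with multiplicity given by the Euler class of the complementary (negative) normal bundle — this is the standard self-intersection-of-the-zero-section computation in the Bialynicki--Birula setting; (3) verify the degree bound axiom (ii): restrict $[\overline{\Attr_\fC}]$ to $F'\times F$ for $F'\succ F$, express this restriction through the BB decomposition of $X$ into the strata $\Attr_\fC(F'')$, and bound the $\sA$-polynomial degree of each contribution by comparing the dimensions/weights of attracting versus repelling directions — the symmetry hypothesis \eqref{intro equ on sd} guarantees the strict inequality. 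Finally, apply the cycle map $A^\sT(\Attr^f_\fC)\to H^\sT(\Attr^f_\fC)$ to obtain $[\Stab_\fC]$.

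The main obstacle, I expect, is step (3): proving the strict degree inequality $\deg_\sA[\overline{\Attr_\fC}]|_{F'\times F}<\deg_\sA e^\sT(N^-_{F'/X})$ directly from the geometry of the closure, without the inductive correction-term mechanism of \cite{Oko-ind}. The difficulty is that $\overline{\Attr_\fC}$ can acquire components and embedded/excess contributions over the deeper strata $\Attr_\fC(F')$, and one must show each such contribution is ``subleading'' in $\sA$-degree. This is exactly where one must use self-duality of $R$ as a $G\times\sA$-representation in an essential way — it is what pairs up attracting and repelling weights and makes the generic-deformation argument of \cite{MO} work without an actual deformation. I would organize this estimate by reducing, via the explicit linear-algebra description of $R$ and $\phi$, to a computation on the linear space $R$ itself (or rather on $[R/G]$) where attracting/repelling subspaces are honest sub-representations, and then transport the bound down to $X$; the detailed verification is deferred to Section \ref{app on symm var} as the introduction indicates.
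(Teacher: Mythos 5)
Your reduction to $\sw=0$ via Lemma~\ref{can map induce stab} is exactly the paper's first step, and your candidate correspondence $\sum_F[\overline{\Attr}_\fC(\Delta_F)]$ is indeed the class the paper uses in Theorem~\ref{construct coh stab corr on symm quiver var}. Your check of the normalization axiom (restriction to $F\times F$ gives $e^\sT(N^-_{F/X})[\Delta_F]$ via the BB fibration) is also standard and correct. The gap, as you partly anticipate, is in step~(3): you correctly identify the strict degree bound as the crux, but the mechanism you propose --- ``reduce to $[R/G]$, where attracting/repelling are honest subrepresentations, then transport down'' --- is not what makes the bound work, and there is no evident route by which it would.

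What the paper actually does is rather different in its key move. It does \emph{not} pass to $[R/G]$; it passes to the \emph{abelianization} $\widehat{X}=R^{\theta\text{-s}}\times_G(G/H)$, a $G/H$-bundle over $X$. The reason is that self-duality of $R$ as an $H$-representation gives a decomposition $R\cong T^*N\oplus R_0$ and hence an $H$-moment map $\mu\colon T^*N\to\mathfrak{h}^*$, extending to a smooth $\sA$-equivariant morphism $\bar\nu\colon\widehat{X}\to\mathfrak{h}^*\times R_0$ whose fibers carry symplectic forms. This is the family-of-symplectic-varieties structure that replaces the ``generic deformation'' of \cite{MO}. The dimension bound on $\overline{\Attr}_\sigma(\Delta)\cap(F'\times F)$ is obtained in two steps: first an isotropicity argument (Lemma~\ref{attr is isotropic}) giving $\le\tfrac12\dim(F'_b\times F_b)$ in each fiber; then the observation that for generic $x\in\mathfrak{h}^*\times R_0$ the fiber $\nu^{-1}(x)$ is inside the Borel-stable locus, hence quasi-affine, so the closure misses $F'\times F$ there entirely, and the strict inequality of condition~($\star$) in Definition~\ref{def: cond *} drops out by a dimension count over the base (Lemma~\ref{symp res has cond star}). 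Lemma~\ref{cond star implies existence of stab coh} converts this dimension bound into the $\sA$-degree bound of axiom~(ii). Finally the result is pushed down from $\widehat{X}$ to $X$ not by a direct restriction but by the $\eta$-map $\eta=(p_2\times q_2)_*\circ((p_1\times q_1)^!)^{-1}$ along $\widehat{X}\to\overline{X}\to X$, combined with $\sA$-localization and a sign/Weyl-order bookkeeping (Lemma~\ref{lem on eta map}). Without the abelianization and the $H$-moment-map fibration, merely saying that self-duality ``pairs up attracting and repelling weights'' gives only the Lagrangian-type inequality $\le\tfrac12\dim$, not the strict inequality you need for axiom~(ii); and the descent from the flag cover to $X$ requires the proper/affine fibration calculus of $\eta$, which the ``transport the bound down'' phrase does not supply.
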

\begin{proof}
By Lemma \ref{can map induce stab}, we are left to construct cohomological stable envelope correspondence for $(X,0,\sT,\sA,\fC )$, which 
is constructed in Theorem \ref{construct coh stab corr on symm quiver var}.
\end{proof}

\begin{Remark}\label{rmk on sd con}
Things might go wrong without the self-duality assumption on the $\sA$-action. For example, take $X=\mathrm{Tot}\left(\mathcal O_{\bP^1}(-1)^{\oplus 2}\right)$ with zero potential and let $\sT=\sA=\bC^*_a\times\bC^*_b$ act on $X$, where $\bC^*_a$ is the natural torus symmetry on $\bP^1$ and extends to act on $X$ by setting one of $\mathcal O_{\bP^1}(-1)$ to have zero weight at $0$ and another one to have zero weight at $\infty$, and $\bC^*_b$ acts on $\bP^1$ trivially and scales the fibers of $\mathcal O_{\bP^1}(-1)^{\oplus 2}$ with weight $1$. Then cohomological stable envelope does not exist in the chamber $\{0<b<a\}$.
We refer to Proposition \ref{prop on 3 ex} for a $K$-theoretic example. 
\end{Remark}


\begin{Remark}\label{rmk on corr is need}
Elliptic stable envelopes have been constructed for symmetric quiver varieties with zero potential in \cite{IMZ}, and it is shown in \textit{loc.}~\textit{cit.} that $K$-theoretic and cohomological limits of elliptic stable envelopes give rise to classes $[\mathsf{Stab}^{\mathsf s}_\fC]\in K^\sT(X\times X^\sA)$ and $[\mathsf{Stab}_\fC]\in H^\sT(X\times X^\sA)$ that satisfy the axioms (i) (ii) in Definition \ref{stab corr_k} and \ref{stab corr_coh} respectively. Moreover one can show that $[\mathsf{Stab}^{\mathsf s}_\fC]$ and $[\mathsf{Stab}_\fC]$ are supported on $\Attr^f_\fC$, in particular they can be lifted to classes in $K^\sT(\Attr^f_\fC)$ and $H^\sT(\Attr^f_\fC)$ respectively. 

However, the lifts are \textit{not necessarily unique} because neither $K^\sT(\Attr^f_\fC)\to K^\sT(X\times X^\sA)$ nor $H^\sT(\Attr^f_\fC)\to H^\sT(X\times X^\sA)$ is injective in general. It is not clear whether we can choose a lift to make it a stable envelope correspondence for the zero potential in the sense of Definition \ref{stab corr_k} or \ref{stab corr_coh} respectively. Therefore introducing stable envelope correspondences is necessary. 
\end{Remark}

\subsection{Transpose of stable envelope correspondences} 

\subsubsection{Convolutions}
Let $(X_i,\sw_i)$ ($i=1,2,3$) be as in Setting \ref{setting of stab} and $p_{ij}\colon X_1\times X_2\times X_3\to X_i\times X_j$ the projection.
Let $Z_{12}\subseteq  X_1\times X_2$, $Z_{23}\subseteq  X_2\times X_3$ be $\sT$-invariant closed subschemes such that 
$$p_{13}\colon p_{12}^{-1}(Z_{12})\cap p_{23}^{-1}(Z_{23})\to X_1\times X_3$$
is a proper map and denote the image by 
$$Z_{12}\circ Z_{23}:=p_{13}(p_{12}^{-1}(Z_{12})\cap p_{23}^{-1}(Z_{23})). $$ 
We define the \textit{convolution} on critical cohomology (e.g.~\cite[pp.~112]{CG},~\cite[\S 2.4]{VV2}): 
\begin{equation}\label{conv coh}
\circ\colon H^\sT(X_1\times X_2, \sw_1\boxminus\sw_2)_{Z_{12}}\otimes H^\sT(X_2\times X_3, \sw_2\boxminus\sw_3)_{Z_{23}}  
\to H^\sT(X_1\times X_3, \sw_1\boxminus\sw_3)_{Z_{12}\circ Z_{23}}, \end{equation}
$$(\beta,\alpha)\mapsto \beta\circ \alpha:=p_{13*}(p_{12}^*\beta\otimes p_{23}^*\alpha). $$
Here the tensor product $\otimes$ is defined generally as follows: 
for a smooth $Y$, closed subschemes $Z,Z'\subseteq  Y$ and  
$$a\in H^\sT(Y,\sw)_{Z}, \quad b\in H^\sT(Y,\sw')_{Z'}, $$
the Thom-Sebastiani isomorphism implies 
$$a\boxtimes b\in H^\sT(Y\times Y,\sw\boxplus \sw')_{Z\times Z'}, $$
then the diagonal pullback gives 
$$a\otimes b:=\Delta^*(a\boxtimes b)\in H^\sT(Y,\sw+ \sw')_{Z\cap Z'}.$$
Similarly we define the \textit{convolution} on critical $K$-theory (e.g.~\cite[pp.~251]{CG}, \cite[\S 2.3.2]{VV2}):
\begin{equation}\label{conv k}
\circ\colon K^\sT(X_1\times X_2, \sw_1\boxminus\sw_2)_{Z_{12}}\otimes K^\sT(X_2\times X_3, \sw_2\boxminus\sw_3)_{Z_{23}}  
\to K^\sT(X_1\times X_3, \sw_1\boxminus\sw_3)_{Z_{12}\circ Z_{23}}, \end{equation}
$$(\beta,\alpha)\mapsto \beta\circ \alpha:=p_{13*}(p_{12}^*\beta\otimes p_{23}^*\alpha). $$
Here the derived tensor product $\otimes$ is defined generally as follows: for a smooth $Y$, closed subschemes $Z,Z'\subseteq  Y$ and 
$$a\in K^\sT(Y,\sw)_{Z}, \quad b\in K^\sT(Y,\sw')_{Z'}, $$
derived (exterior) tensor product gives 
$$a\boxtimes b\in K^\sT(Y\times Y,\sw\boxplus \sw')_{Z\times Z'}, $$
then the diagonal pullback gives 
$$a\otimes b:=\Delta^*(a\boxtimes b)\in K^\sT(Y,\sw+ \sw')_{Z\cap Z'}.$$

\begin{Remark}\label{rmk: conv on coh supp}
For the convolution on critical cohomology \eqref{conv coh} to be well-defined, it is enough to assume that $p_{12}^{-1}(Z_{12})\cap p_{23}^{-1}(Z_{23})\cap \left(\Crit(\sw_1)\times \Crit(\sw_2)\times \Crit(\sw_3)\right)$ is proper over $X_1\times X_3$. This is because the vanishing cycle sheaf $\varphi_{\sw_i}\omega_{X_i}$ is supported on $\Crit(\sw_i)$, $i\in \{1,2,3\}$.
\end{Remark}

\subsubsection{Transpose correspondences}\label{sec: transpose correspondences}

Given a class $[\alpha]\in H^\sT(X\times Y, \sw_1\boxminus\sw_2)_Z$, we define its \textit{transpose}  
\begin{equation}\label{equ on transp}[\alpha]^{\mathrm{t}}\in H^\sT(Y\times X, \sw_2\boxminus\sw_1)_Z \end{equation} 
to be the image of $[\alpha]$ under the natural isomorphism $X\times Y\cong Y\times X$ followed by the automorphism $\bA^1\xrightarrow{-1}\bA^1$ on the target of the potential function. The $K$-theory version is defined similarly. 

\begin{Lemma}\label{adj of stab}
Under Setting \ref{setting: proper to affine}, let $[\Stab_{\fC }]$ be a cohomological stable envelope correspondence for $(X,\sw,\sT,\sA,\fC )$, then 
the convolution  
$$[\Stab_{-\fC}]^{\mathrm{t}}\circ [\Stab_{\fC }]\in H^\sT(X^\sA\times X^\sA,\sw^\sA\boxminus \sw^\sA)_{(\Attr^f_{-\fC})^{\mathrm{t}}\circ \Attr^f_{\fC}}$$
equals to 
the diagonal class $[\Delta]$. 

Let $\mathsf s$ be a generic slope, and $[\Stab^{\mathsf s}_{\fC }]$ be a $K$-theoretic stable envelope correspondence for $(X,\sw,\sT,\sA,\fC ,\mathsf s)$, then the convolution
$$[\Stab^{-\mathsf s}_{-\fC}]^{\mathrm{t}}\circ [\Stab^{\mathsf s+\frac{\mathcal K}{2}}_{\fC }]\in K^\sT(X^\sA\times X^\sA,\sw^\sA\boxminus \sw^\sA)_{(\Attr^f_{-\fC})^{\mathrm{t}}\circ \Attr^f_{\fC}}$$ equals to $[\mathcal O_\Delta]$. Here $\mathcal K=\det\Omega^1_X$ is the canonical bundle of $X$ and $+$ denotes the group operation of $\Pic_\sA(X)\otimes_{\bZ}\bR$.
\end{Lemma}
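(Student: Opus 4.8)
The plan is to establish the statement first in the basic case $\dim\sA=1$ and then bootstrap to the general case via the triangle lemma / induction on the fixed-point stratification, but I expect the cleanest route is a direct one, exploiting the uniqueness characterization. First I would reduce to the case $\sw=0$: by Lemma~\ref{can map induce stab} both $[\Stab_{\fC}]$ and $[\Stab_{-\fC}]^{\mathrm t}$ are pulled back under the canonical map from stable envelope correspondences for the zero potential, and the convolution $\circ$ is compatible with canonical maps (it is built from proper pushforward, lci pullback and Thom--Sebastiani, all of which commute with $\can$ by the functoriality statements in \S\ref{sect on crit coho} and \S\ref{subsubsec: excision}); likewise $[\Delta]$ is the image of the diagonal class under $\can$. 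So it suffices to prove the identity for $\sw=0$, i.e.\ in $H^\sT(X^\sA\times X^\sA)_{(\Attr^f_{-\fC})^{\mathrm t}\circ\Attr^f_\fC}$, where one can work with ordinary Borel--Moore homology supported on the relevant locus, and similarly in ordinary equivariant $K$-theory.

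Next I would analyze the support. The key geometric input is that $(\Attr^f_{-\fC})^{\mathrm t}\circ\Attr^f_\fC$, intersected with $X^\sA\times X^\sA$, is contained in $\bigcup_F F\times F$ (the diagonal components), because a point $(y_2,y_1)$ in the convolution comes from some $x\in X$ with $\lim_{t\to 0}\sigma(t)x=y_1$ and $\lim_{t\to 0}\sigma(t)^{-1}x\in$ the same component of $X^\sA$ as $y_2$ lies over, forcing $x\in X^\sA$ hence $y_1,y_2$ in the same component (using properness, Setting~\ref{setting: proper to affine}, so that the convolution is defined and the limits exist). Thus $Z:=[\Stab_{-\fC}]^{\mathrm t}\circ[\Stab_\fC]$ is a correspondence supported diagonally, and it is $H^*_\sT(\pt)$-linear. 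I would then show $Z$ satisfies the axioms characterizing the identity: compute $Z|_{F\times F}$ by localizing, i.e.\ restricting the convolution to the fixed locus and using that for each $F$ the normal bundles satisfy $N^+_{F/X}=(N^-_{F/X})^\vee$ up to the equivariant dual induced by the self-duality (this is where the hypothesis that $\sA$ acts self-dually — or pseudo-self-dually, with the $\tfrac{\mathcal K}{2}$ correction in $K$-theory — enters), so that the product of the two diagonal leading terms $e^\sT(N^-_{F/X})\cdot e^\sT(N^-_{F/X})^{\mathrm t}$ telescopes against the Euler class of $N_{\diag}$ appearing in the excess-intersection formula for $\Delta^*$ to give exactly $[\Delta_F]$. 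For the off-diagonal/degree bound: the degree estimate $\deg_\sA(Z|_{F'\times F})<\deg_\sA e^\sT(N^-_{F'/X})$ for $F'\ne F$ is inherited from the degree axioms of the two factors together with the fact that $\deg_\sA$ of a convolution is subadditive in the appropriate sense; combined with $Z|_{F\times F}=[\Delta_F]$ this forces $Z=[\Delta]$ by the uniqueness argument of Proposition~\ref{uniqueness of coh stab} applied to $Z-[\Delta]$ (which is then supported off the diagonal with strictly small $\sA$-degree, hence zero). The $K$-theoretic case is identical with Newton polytopes replacing degrees, Proposition~\ref{uniqueness of K stab} and Proposition~\ref{excision for crit K} replacing their cohomological analogs, and the shifts $\mathrm{shift}_F$ bookkept so that the slope $\mathsf s$ on one factor pairs with $-\mathsf s$ on the other; the extra twist by $\tfrac{\mathcal K}{2}=\tfrac12\det\Omega^1_X$ is precisely what makes $\det N^{1/2}$ match up, since $\det N^-_{F/X}\otimes\det N^+_{F/X}=\det N_{F/X}=\mathcal K|_F^{-1}\otimes\mathcal K_F$, absorbing the square-root ambiguity in the normalization of the $K$-theoretic stable envelope.

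The main obstacle, I expect, is the leading-term computation of $Z|_{F\times F}$: one must carefully track the excess bundle in the diagonal pullback $\Delta^*$ inside the convolution $p_{13*}(p_{12}^*(-)\otimes p_{23}^*(-))$ — this is the $\sA$-fixed part of the tangent bundle $T_X$ restricted to the middle copy of $X$ over $F$ — and verify that the contribution of the Euler class operator (\S\ref{sect on eu cla op}, \S\ref{sect on eu cla op k}) cancels the product $e^\sT(N^-_{F/X})e^\sT(N^+_{F/X})=e^\sT(N_{F/X})$ exactly against $e^\sT(N^\sA_{F/X})$-type terms, leaving $1$. In $K$-theory this is where the $\det$-factors and the $\tfrac{\mathcal K}{2}$ shift must be reconciled, and sign conventions $(-1)^{\rk}$ inherited from the normalization (Remark~\ref{rk Oko normalizer}) have to be matched; I would handle this by first doing the local (fixed-point) computation on a single component $F$, where everything is a product of $\bC^*$-representations and the self-duality gives an explicit pairing, and only then assembling globally. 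Everything else — reduction to $\sw=0$, compatibility of $\circ$ with $\can$, the support analysis, and the final appeal to uniqueness — is routine given the results already established in the excerpt.
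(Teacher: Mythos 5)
Your proposal has the right overall shape (analyze the diagonal leading term, use degree bounds, appeal to uniqueness), but it has two genuine gaps that the paper's proof is specifically engineered to avoid.

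\textbf{The support is not diagonal.} You claim that $(\Attr^f_{-\fC})^{\mathrm t}\circ\Attr^f_\fC$ meets $X^\sA\times X^\sA$ only along $\bigsqcup_F F\times F$, arguing that the middle point $x$ must be $\sA$-fixed. This is false: $\Attr^f_\fC$ is the \emph{full} attracting correspondence, which is a union over the flow order, not just $\bigsqcup \Attr_\fC(F)\times F$. A point $x$ lying in $\Attr^f_\fC$ over $F_1$ and in $\Attr^f_{-\fC}$ over $F_2$ need not be fixed, and in general the composition is supported on all pairs $(F_2,F_1)$ with $F_2\preceq F_1$. This is precisely what the paper proves: the restriction to $F_1\times F_2$ vanishes only for $F_2\npreceq F_1$ (support/triangularity), while for $F_2\prec F_1$ vanishing is a nontrivial consequence of the \emph{generic slope} hypothesis. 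With the support claim broken, the appeal to Proposition~\ref{uniqueness of coh stab} applied to $Z-[\Delta]$ also fails — that proposition is about classes on $X$ stratified by attracting sets, whereas $Z$ lives on $X^\sA\times X^\sA$ where $\sA$ acts trivially, so the inductive stratification argument has no content there.

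\textbf{The off-diagonal vanishing mechanism is missing.} The paper's proof works by $\sA$-localization: write $\Gamma|_{F_1\times F_2}=\sum_F\frac{([\Stab^{-\mathsf s}_{-\fC}]|_{F\times F_1})^{\mathrm t}\circ([\Stab^{\mathsf s_1}_{\fC}]|_{F\times F_2})}{e^\sT_K(N_{F/X})}$, use the axioms (ii)--(iii) to bound the Newton polytope, and then — crucially — observe that because $\Gamma$ is an \emph{integral} $K$-theory class, the appropriately twisted quantity is a Laurent monomial in $a$, and the genericity of $\mathsf s$ (noninteger weight difference) forces the off-diagonal entries to vanish. You do not invoke integrality + genericity, so your argument has no way to kill the upper-triangular part. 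The cohomological case uses the analogous argument with polynomial degrees (as in \cite[Thm.~4.4.1]{MO}). Two smaller points: (a) the reduction to $\sw=0$ via Lemma~\ref{can map induce stab} is not licensed by the hypotheses — you are given a stable envelope correspondence for $\sw$, not for $\sw=0$, and the lemma goes in the other direction; (b) the Lemma does \emph{not} assume $\sA$-self-duality, only Setting~\ref{setting: proper to affine}; the $\tfrac{\mathcal K}{2}$ shift is always present and is what reconciles $\det N^-_{F/X}\otimes\det N^+_{F/X}=\mathcal K|_F^{-1}$, rather than being a correction for pseudo-self-duality.
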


\begin{proof}
The proof is similar to \cite[Thm.\,4.4.1]{MO} for the cohomology version and \cite[Prop.\,1]{OS} for the $K$-theory version. We give details for the $K$-theory version. To simplify notation, let us write
\begin{align*}
    \mathsf s_1=\mathsf s+\frac{\mathcal K}{2}\:.
\end{align*}
The diagonal map $X\hookrightarrow X\times X$ induces an embedding
$$\Delta_{23}\colon X^\sA\times X\times X^\sA\hookrightarrow X^\sA\times X\times X\times X^\sA.$$
Denote $C=\Delta_{23}^{-1}\left((\Attr^f_{-\fC})^{\mathrm{t}}\times \Attr^f_{\fC}\right)$. By the proof of \cite[Thm.\,4.4.1]{MO}, $C$ is proper over $X^\sA\times X^\sA$; therefore 
\begin{align}\label{equ on stabtrans}
    [\Stab^{-\mathsf s}_{-\fC}]^{\mathrm{t}}\circ [\Stab^{\mathsf s_1}_{\fC }]:=p_{*}\Delta_{23}^*\left([\Stab^{-\mathsf s}_{-\fC}]^{\mathrm{t}}\boxtimes [\Stab^{\mathsf s_1}_{\fC }]\right)\in K^\sT(X^\sA\times X^\sA,\sw^\sA\boxminus \sw^\sA)_{p(C)}
\end{align}
is well-defined. Here $p\colon X^\sA\times X\times X^\sA\to X^\sA\times X^\sA$ is the projection, and we denote $(\Attr^f_{-\fC})^{\mathrm{t}}\circ \Attr^f_{\fC}:=p(C)$.

We compute \eqref{equ on stabtrans} using $\sA$-localization: its restriction to the component $F_1\times F_2$ equals to
\begin{align}\label{gamma loc form}
    \sum_{F\in \Fix_\sA(X)} \frac{\left([\Stab^{-\mathsf s}_{-\fC}]|_{F\times F_1}\right)^{\mathrm{t}}\circ \left([\Stab^{\mathsf s_1}_{\fC }]|_{F\times F_2}\right)}{e^\sT_K(N_{F/X})}.
\end{align}
According to axiom (ii) in Definition \ref{stab corr_k}, we have an inclusion 
\begin{multline*}
    \deg_\sA \left([\Stab^{-\mathsf s}_{-\fC}]\big|_{F\times F_1}\right)^{\mathrm{t}}\circ \left([\Stab^{\mathsf s_1}_{\fC }]\big|_{F\times F_2}\right)\subseteq  \deg_\sA e^\sT_K(N_{F/X})\\
    +\wt_\sA\left(\det(N^+_{F_1/X})^{-1/2}\otimes\mathsf s|_{F_1}\right)-\wt_\sA\left(\det(N^-_{F_2/X})^{1/2}\otimes\mathsf s_1|_{F_2}\right).
\end{multline*}
In the above, we have used $\wt_\sA\det(N^-_{F/X})\otimes\det(N^+_{F/X})\otimes\mathcal K|_F=0$. Therefore the function 
\begin{align}\label{shifted gamma}
    [\Stab^{-\mathsf s}_{-\fC}]^{\mathrm{t}}\circ [\Stab^{\mathsf s_1}_{\fC }]\cdot \left(\det(N^-_{F_1/X})^{-1/2}\otimes(\mathsf s|_{F_1})^{-1}\boxtimes \det(N^-_{F_2/X})^{1/2}\otimes\mathsf s_1|_{F_2}\right)
\end{align}
in the variable $a\in \sA$ is bounded in any direction of taking limit $a\to\infty$. 

Since $[\Stab^{-\mathsf s}_{-\fC}]^{\mathrm{t}}\circ [\Stab^{\mathsf s_1}_{\fC }]$ is defined in integral $K$-theory, then \eqref{shifted gamma} is a constant function in $a$.
 In particular,
\begin{align*}
    \deg_\sA ([\Stab^{-\mathsf s}_{-\fC}]^{\mathrm{t}}\circ [\Stab^{\mathsf s_1}_{\fC }])\big|_{F_1\times F_2}=\wt_\sA\left(\det(N^-_{F_1/X})^{1/2}\otimes\mathsf s|_{F_1}\right)-\wt_\sA\left(\det(N^-_{F_2/X})^{1/2}\otimes\mathsf s_1|_{F_2}\right).
\end{align*}
Since $[\Stab^{-\mathsf s}_{-\fC}]^{\mathrm{t}}\circ [\Stab^{\mathsf s_1}_{\fC }]$ is supported on $(\Attr^f_{-\fC})^{\mathrm{t}}\circ \Attr^f_{\fC}$, $([\Stab^{-\mathsf s}_{-\fC}]^{\mathrm{t}}\circ [\Stab^{\mathsf s_1}_{\fC }])\big|_{F_1\times F_2}$ vanishes if $F_2\npreceq F_1$. If $F_2\prec F_1$, then $\wt_\sA\left(\det(N^-_{F_1/X})^{1/2}\otimes\mathsf s|_{F_1}\right)-\wt_\sA\left(\det(N^-_{F_2/X})^{1/2}\otimes\mathsf s_1|_{F_2}\right)$ is nonintegral since $\mathsf s$ is generic, and this implies that $([\Stab^{-\mathsf s}_{-\fC}]^{\mathrm{t}}\circ [\Stab^{\mathsf s_1}_{\fC }])\big|_{F_1\times F_2}$ vanishes. When $F_1=F_2$, the only nonzero summand in \eqref{shifted gamma} is $F=F_1$, which gives $[\oO_\Delta]$. It follows that $[\Stab^{-\mathsf s}_{-\fC}]^{\mathrm{t}}\circ [\Stab^{\mathsf s_1}_{\fC }]=[\oO_\Delta]$.
\end{proof}

\subsection{Existence of stable envelopes via convolutions}\label{sect on exi of stb}
We work under Setting \ref{setting: proper to affine}. 
According to Remark \ref{proper-over-affine}, $\Attr^f_\fC$ is proper over $X$; therefore any class 
$$[\alpha]\in H^\sT(X\times X^\sA,\sw\boxminus \sw^\sA)_{\Attr^f_\fC}$$ induces a map (by convolution \eqref{conv coh}):
\begin{equation}\label{equ on convo on coh}\alpha\colon H^\sT(X^\sA,\sw^\sA)_Z\to H^\sT(X,\sw)_{\Attr^f_\fC\circ Z}, \quad \gamma\mapsto [\alpha]\circ \gamma. \end{equation}
Similarly, any $K$-theory class $$[\beta]\in K^\sT(X\times X^\sA,\sw\boxminus \sw^\sA)_{\Attr^f_\fC}$$ 
induces a map (by convolution \eqref{conv k}):
\begin{equation}\label{equ on convo on k}\beta\colon K^\sT(X^\sA,\sw^\sA)_Z\to K^\sT(X,\sw)_{\Attr^f_\fC\circ Z}, \quad \gamma\mapsto [\beta]\circ \gamma. \end{equation}
One can check that convolutions \eqref{conv coh}, \eqref{conv k} are associative, so compositions of convolutions induce
compositions of maps induced. 
Convolutions are also compatible with canonical maps as shown below.
\begin{Lemma}\label{lem on can cm w conv}
Let $[\beta]\in K^\sT(X\times X^\sA)_{\Attr^f_\fC}$ and $\can([\beta])\in K^\sT(X\times X^\sA,\sw\boxminus \sw^\sA)_{\Attr^f_\fC}$ be its image under canonical map \eqref{equ on can map k with sp}. Then we have a commutative diagram 
\begin{equation}\label{cd: can cm w conv_k}
\xymatrix{
K^{\sT}(X^\sA) \ar[d]_{\beta}  & \ar[l]_{i^\sA_*} K^{\sT}(X^\sA)_{Z(\sw^{\sA})}  \ar[r]^{\can} \ar[d]_{\beta} &  K^{\sT}(X^\sA,\sw^\sA) \ar[d]^{\can(\beta)}  \\
K^{\sT}(X) & \ar[l]_{i_*} K^{\sT}(X)_{Z(\sw)} \ar[r]^{\can} &  K^{\sT}( X,\sw),
} 
\end{equation}
where $i\colon Z(\sw)\hookrightarrow X$ and $i^\sA\colon Z(\sw^\sA)\hookrightarrow X^\sA$ are embeddings. 

Similarly, let $[\alpha]\in H^\sT(X\times X^\sA)_{\Attr^f_\fC}$ and $\can([\alpha])\in H^\sT(X\times X^\sA,\sw\boxminus \sw^\sA)_{\Attr^f_\fC}$ be its image under canonical map \eqref{can_coh}. Then we have a commutative diagram 
\begin{equation}\label{cd: can cm w conv_coh}
\xymatrix{
H^{\sT}(X^\sA) \ar[d]_{\alpha}  & \ar[l]_{i^\sA_*} H^{\sT}(X^\sA)_{Z(\sw^{\sA})}  \ar[r]^{\can} \ar[d]_{\alpha} &  H^{\sT}(X^\sA,\sw^\sA) \ar[d]^{\can(\alpha)}  \\
H^{\sT}(X) & \ar[l]_{i_*} H^{\sT}(X)_{Z(\sw)} \ar[r]^{\can} &  H^{\sT}( X,\sw).
} 
\end{equation}
\end{Lemma}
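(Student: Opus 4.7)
The lemma is a naturality statement: convolutions are built from three elementary operations (flat pullback, derived tensor product, proper pushforward), and the canonical map commutes with each. Once this is recognized, the proof reduces to assembling these three naturalities along the definition of the convolution.

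First, I would unpack the convolution. With projections $p_X,p_{X^\sA}\colon X\times X^\sA\to X, X^\sA$, write $\beta\circ\gamma=p_{X*}\bigl([\beta]\otimes p_{X^\sA}^*\gamma\bigr)$; by Remark \ref{proper-over-affine}, $\Attr^f_\fC$ is proper over $X$, so the pushforward is well-defined. For the \emph{left square} of \eqref{cd: can cm w conv_k}, flat base change along $p_{X^\sA}$ together with the projection formula lets one commute the convolution with $i^\sA_*$, and the resulting class lands in $K^\sT(X)_{Z(\sw)}$ because $\sw=\sw^\sA\circ p$ on $\Attr^f_\fC$ (where $p\colon \Attr^f_\fC\to X^\sA$ is the attraction map), so $\sw$ vanishes on the support of the convolved class.

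For the \emph{right square}, which is the main content, the canonical map $\can\colon K^\sT(Y)_{Z(\sw)}\twoheadrightarrow K^\sT(Y,\sw)$ of \eqref{equ on kgps} is induced by the Verdier quotient $\D^b\Coh_\sT(Z(\sw))\twoheadrightarrow \mathrm{MF}(Y,\sw,\sT)$. This quotient is functorial with respect to (i) flat pullback, (ii) derived tensor product with potentials adding---so that on $X\times X^\sA$ the combined potential is $(\sw\boxminus\sw^\sA)+p_{X^\sA}^*\sw^\sA=p_X^*\sw$---and (iii) proper pushforward, as the construction of $f_*$ on MF categories descends from $\D^b\Coh$ (\cite[\S 3]{BFK}, \cite[\S 2.3.2]{VV2}). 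Composing these three naturalities yields
$$\can(\beta\circ\gamma)=p_{X*}\bigl(\can([\beta])\otimes p_{X^\sA}^*\can(\gamma)\bigr)=\can(\beta)\circ\can(\gamma),$$
which is exactly the right square.

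The cohomological case proceeds identically: Borel--Moore homology replaces coherent $K$-theory, critical cohomology replaces MF, and one uses the standard compatibilities of $\varphi_\sw$ with smooth pullback, proper pushforward, and Thom--Sebastiani $\varphi_{\sw_1\boxplus\sw_2}(\mathcal F_1\boxtimes\mathcal F_2)\simeq \varphi_{\sw_1}\mathcal F_1\boxtimes\varphi_{\sw_2}\mathcal F_2$ (followed by diagonal pullback) for the tensor product. The main subtlety in both settings is this tensor product compatibility of $\can$ with the support conditions and potential additivity; once this is in place, the remainder of the argument is routine diagram chasing.
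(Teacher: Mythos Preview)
Your proposal is correct and follows essentially the same approach as the paper: both decompose the convolution into flat pullback, tensor product (equivalently, diagonal Gysin pullback via Thom--Sebastiani), and proper pushforward, then invoke the functoriality of the canonical map with respect to each. The paper is slightly terser, citing \cite[Lem.~2.4]{VV2} directly for the Gysin-pullback compatibility and making explicit the support inclusion $\Attr^f_\fC\cap (X\times Z(\sw^\sA))\subseteq Z(\sw)\times Z(\sw^\sA)$ needed for the middle vertical arrow, but your more expanded treatment of the tensor product step conveys the same content.
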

\begin{proof}
The canonical maps are well-defined as $\Attr^f_\fC\subseteq  Z\left(\sw\boxminus \sw^\sA\right)$. 
Note also that 
$$\Attr^f_\fC\cap\, (X\times Z(\sw^{\sA}))\subseteq  Z(\sw^{})\times Z(\sw^{\sA}), $$
therefore the convolutions $\beta$ in the middle vertical arrow of \eqref{cd: can cm w conv_k} and $\alpha$ in the middle vertical arrow of \eqref{cd: can cm w conv_coh} are well-defined. The left square of \eqref{cd: can cm w conv_k} commutes as proper pushforward commutes with flat pullbacks and Gysin pullbacks, similarly the left square of \eqref{cd: can cm w conv_coh} commutes for the same reason. The right square of \eqref{cd: can cm w conv_k} commutes as canonical map \eqref{equ on can map k with sp} commutes with flat pullbacks, proper pushforwards and Gysin pullbacks 
(e.g.~\cite[Lem.~2.4]{VV2}\footnote{When working with torus \textit{invariant} functions, the flatness condition on functions imposed in  \cite[Lem.~2.4]{VV2} can be dropped off by following the argument of \cite[Lem.~2.4.7]{Toda1}.}). Similarly the right square of \eqref{cd: can cm w conv_coh} commutes as canonical map \eqref{can_coh} commutes with smooth pullbacks, proper pushforwards and Gysin pullbacks.
\end{proof}


By applying convolutions to stable envelope correspondences (Definitions \ref{stab corr_coh}, \ref{stab corr_k}), we obtain:
\begin{Proposition}\label{corr induce stab}
Let $[\Stab_{\fC }]$ be a cohomological stable envelope correspondence for $(X,\sw,\sT,\sA,\fC )$, then 
$$\Stab_{\fC }\colon H^\sT(X^\sA,\sw^\sA)\to H^\sT(X,\sw)$$ is a cohomological stable envelope.

Let $[\Stab^{\mathsf s}_{\fC }]$ be a $K$-theoretic stable envelope correspondence for $(X,\sw,\sT,\sA,\fC ,\mathsf s)$, then 
$$\Stab^{\mathsf s}_{\fC }\colon K^\sT(X^\sA,\sw^\sA)\to K^\sT(X,\sw)$$ is a $K$-theoretic stable envelope.
\end{Proposition}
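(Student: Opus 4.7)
The plan is to verify the three axioms of Definition \ref{def of stab coho} (resp.\ Definition \ref{def of stab k}) directly by unpacking the convolution formula and reducing everything to properties of the correspondence $[\Stab_\fC]$ (resp.\ $[\Stab^{\mathsf s}_\fC]$) on each pair of fixed components. I will treat the cohomological case in detail; the $K$-theoretic case is essentially identical, with Newton polytopes replacing polynomial degrees and the slope-shift terms $\mathrm{shift}_{F'}-\mathrm{shift}_F$ appearing automatically from axiom (ii) of Definition \ref{stab corr_k}.

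First, axiom (i) (support) is almost tautological: by \eqref{equ on convo on coh}, the convolution $[\Stab_\fC]\circ\gamma$ for $\gamma$ supported on a component $F\subseteq X^\sA$ lies in $H^\sT(X,\sw)_{\Attr^f_\fC\circ F}=H^\sT(X,\sw)_{\Attr^f_\fC(F)}$. The convolution itself is well-defined because $\Attr^f_\fC$ is proper over $X$ under Setting \ref{setting: proper to affine}, cf.\ Remark \ref{proper-over-affine}. The key calculation is then the restriction formula
\[
\Stab_\fC(\gamma)\bigl|_{F'}\;=\;\tilde p_{F'\,*}\bigl(\tilde p_F^{\,*}\gamma_0\;\otimes\;[\Stab_\fC]\bigl|_{F'\times F}\bigr),
\]
where $\gamma=\iota_{F*}\gamma_0$ with $\gamma_0\in H^{\sT/\sA}(F,\sw^F)$, and $\tilde p_{F'},\tilde p_F$ are the projections from $F'\times F$. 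This identity follows from the projection formula applied to $p_{X^\sA}^*\gamma\otimes[\Stab_\fC]$ (reducing the support from $\Attr^f_\fC$ to $\Attr^f_\fC\cap(X\times F)$) and from base change along the Cartesian square
\[
\xymatrix{
F'\times F \ar[r]\ar[d] & X\times F \ar[d] \\
F' \ar[r]^{\iota_{F'}} & X,
}
\]
which is legitimate because the relevant support on $X\times F$ is proper over $X$.

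With this restriction formula in hand, axioms (ii) and (iii) translate directly into the defining axioms of the correspondence. For $F'=F$, axiom (i) of Definition \ref{stab corr_coh} gives $[\Stab_\fC]|_{F\times F}=e^\sT(N_{F/X}^-)\cdot[\Delta_F]$, and convolution with the diagonal yields $\Stab_\fC(\gamma)|_F=e^\sT(N_{F/X}^-)\cdot\gamma_0$, which is axiom (ii) of Definition \ref{def of stab coho}. For $F'\ne F$, since $\gamma_0$ has $\sA$-degree zero by construction, the convolution preserves $\sA$-degrees, so
\[
\deg_\sA\Stab_\fC(\gamma)\bigl|_{F'}\;=\;\deg_\sA[\Stab_\fC]\bigl|_{F'\times F}\;<\;\deg_\sA e^\sT(N_{F'/X}^-),
\]
which is axiom (iii). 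In the $K$-theoretic case the analogous Newton polytope inequality is inherited from Definition \ref{stab corr_k}(ii), with the shifts matching by construction.

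The main technical point is the base change step above, and in particular the fact that it is valid for critical (co)homology with support: this rests on the properness of $\Attr^f_\fC$ over $X$ (Remark \ref{proper-over-affine}) combined with the standard base-change/projection-formula package for the vanishing cycle sheaves (in cohomology) and for singularity categories (in $K$-theory); both have been set up in \S\ref{sect on crit coho} and the reference \cite{VV2}. Once this is secured, the rest is bookkeeping of supports and equivariant degrees, and the proposition follows immediately.
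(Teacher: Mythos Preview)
Your proposal is correct and follows essentially the same approach as the paper: verify the three axioms by computing the restriction $\Stab_\fC(\gamma)|_{F'}$ via base change along $F'\times F\hookrightarrow X\times F$, then read off axioms (ii) and (iii) directly from the corresponding axioms of the correspondence. One minor imprecision: the convolution does not \emph{preserve} $\sA$-degrees but rather does not increase them (the pushforward $\tilde p_{F'*}$ could lower degree or annihilate the class), so your displayed equality $\deg_\sA\Stab_\fC(\gamma)|_{F'}=\deg_\sA[\Stab_\fC]|_{F'\times F}$ should be $\leq$; this does not affect the conclusion.
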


\begin{proof}
We check the axioms in Definition \ref{def of stab coho} for the cohomological case, the $K$-theoretic counterpart is similar. 
We work with a torus fixed component $F$ of $X^\sA$. 
Denote projections $\pi_{1}\colon X\times F\to X$, $\pi_2\colon X\times F \to F$. 
The axiom (i) is obvious as we have 
$$\pi_1\left(\Attr^f_\fC\,\bigcap \,(X\times F)\right)\subseteq  \Attr^f_\fC(F). $$ 
The axiom (ii) holds because
\begin{align*}
(F\hookrightarrow X)^*\pi_{1*}\left([\Stab_{\fC }]\otimes \pi_2^*\gamma\right)&=\pi_{1*}\left((F\times F\hookrightarrow X\times F)^*[\Stab_{\fC }]\otimes (F\times F\to F)^*\gamma\right)\\
&= e^\sT(N^-_{F/X})\cdot \gamma.
\end{align*}
The axiom (iii) holds because
\begin{align*}
(F'\hookrightarrow X)^*\pi_{1*}\left([\Stab_{\fC }]\otimes \pi_2^*\gamma\right)=\pi_{1*}\left((F'\times F\hookrightarrow X\times F)^*[\Stab_{\fC }]\otimes (F'\times F\to F)^*\gamma\right),
\end{align*}
and $\deg_\sA [\Stab_{\fC }]\big|_{F'\times F} < \deg_\sA e^\sT (N_{F' / X}^-)$.
\end{proof}
Combining Proposition \ref{corr induce stab} with Proposition \ref{prop on stable corr} and Theorems \ref{thm on mo stab ex}, \ref{exi of coh stab on symm quiver var}, we obtain:
\begin{Theorem}\label{thm on exist of stab}
Let $(X,\sw,\sT,\sA)$ be in Setting \ref{setting of stab} and Setting \ref{setting: proper to affine}, $\fC$ be a chamber and $\mathsf s$ be a slope. 

Its cohomological stable envelope exists  when either
$$ \,\, (1)\,\,\mathsf A\cong\bC^*, \, \mathrm{or} \,\,\, (2)\,\,  X \mathrm{\,\,is\,\, a\,\, smooth\,\,symplectic\,\, variety}, \mathrm{\,such\,\, that\,\, \sA\,\, fixes\,\,the\,\,symplectic\,\,form\,\, and\,\, \sT\,\, scales\,\, it},
\, \mathrm{or}$$ 
$$ (3)\,\, X \mathrm{\,\, is\,\, a\,\, symmetric\,\, GIT\,\,quotient},\, \mathrm{with\,\,an\,\,action\,\,by\,\,tori\,\, \sA\subseteq \sT\,\,as\,\,Definition\,\, \ref{def of sym var}}. $$
In the first case, the $K$-theoretic stable envelope also exists.
\end{Theorem}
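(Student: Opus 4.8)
The plan is to deduce the statement by assembling the existence results for stable envelope \emph{correspondences} established above with the convolution construction of Proposition \ref{corr induce stab}, which promotes such a correspondence to a stable envelope. Since Setting \ref{setting: proper to affine} is in force, Remark \ref{proper-over-affine} guarantees that $\Attr^f_\fC$ is closed in $X$ and proper over $X$ along the projection $X\times X^\sA\to X$; this is exactly what makes the convolutions \eqref{equ on convo on coh} and \eqref{equ on convo on k}, and hence Proposition \ref{corr induce stab}, applicable, so in each of the three cases it suffices to exhibit the relevant correspondence class.

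In case $(1)$, when $\sA\cong\bC^*$, there are exactly two chambers. For $\fC=+$, Proposition \ref{prop on stable corr} produces a cohomological stable envelope correspondence $[\Stab_+]$ and, for the given slope $\mathsf s$, a $K$-theoretic one $[\Stab^{\mathsf s}_+]$; feeding these into Proposition \ref{corr induce stab} yields the desired cohomological and $K$-theoretic stable envelopes. For $\fC=-$ one runs the identical argument after precomposing the chosen isomorphism $\sA\cong\bC^*$ with inversion, which exchanges the two chambers --- nothing in the filtration of $\Attr^f$ by attracting sets, nor in the one-dimensional Euclidean-algorithm step of Proposition \ref{prop on stable corr}, distinguishes $+$ from $-$. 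In case $(2)$, Theorem \ref{thm on mo stab ex} supplies a cohomological stable envelope correspondence under the symplectic hypotheses, and Proposition \ref{corr induce stab} converts it to a cohomological stable envelope. In case $(3)$ one argues identically, using Theorem \ref{exi of coh stab on symm quiver var} to produce the correspondence.

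I do not expect a genuine obstacle here: the substance lies in Propositions \ref{prop on stable corr}, \ref{corr induce stab} and Theorems \ref{thm on mo stab ex}, \ref{exi of coh stab on symm quiver var}, and what remains is bookkeeping. The points to keep straight are that the slope $\mathsf s$ enters only through the $K$-theoretic correspondence, so $K$-theoretic existence is asserted --- and obtained --- only in case $(1)$, where a correspondence is available for \emph{every} slope; that the symplectic and GIT hypotheses in $(2)$ and $(3)$ are used solely to manufacture the cohomological correspondence; and that uniqueness is a separate matter, already settled in Propositions \ref{uniqueness of coh stab} and \ref{uniqueness of K stab}, and is not part of what must be proved here. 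The mildly delicate step, if any, is checking that the $\fC=-$ instance of $(1)$ is literally covered by the $\fC=+$ construction under $\sigma\mapsto\sigma^{-1}$.
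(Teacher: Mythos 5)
Your argument reproduces the paper's own proof exactly: the existence of a stable envelope correspondence in each case --- Proposition \ref{prop on stable corr} for $\sA\cong\bC^*$, Theorem \ref{thm on mo stab ex} in the symplectic case, Theorem \ref{exi of coh stab on symm quiver var} in the symmetric GIT case --- is converted into a stable envelope by the convolution machinery of Proposition \ref{corr induce stab}, with Setting \ref{setting: proper to affine} and Remark \ref{proper-over-affine} supplying the properness of $\Attr^f_\fC$ over $X$ needed to run the convolution. Your side remarks (that the $K$-theoretic claim is confined to case (1) because only there a $K$-theoretic correspondence is available, and that the $\fC=-$ instance of (1) follows from $\fC=+$ under $\sigma\mapsto\sigma^{-1}$) are both correct and consistent with how the paper uses this statement (cf.\ Definition \ref{def of stab1}).
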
 

\begin{Definition}\label{def of stab1}
Let $(X,\sw,\sT)$ be in Settings \ref{setting of stab}, \ref{setting: proper to affine}. Fix a nontrivial $\sigma:\C^*\to \sT$, we take $\sA$ to be the image of $\sigma$. 
\begin{itemize} 

\item
The cohomological stable envelope in Theorem \ref{thm on exist of stab} (1) is denoted by $\Stab_{\sigma }$.  

\item
Fix $\mathsf s\in \Pic_\sigma(X)\otimes_\bZ \bR$, the $K$-theoretic stable envelope in Theorem \ref{thm on exist of stab} (1) is denoted 
by $\Stab^{\mathsf s}_{\sigma }$.
\end{itemize} 
\end{Definition}
The above cohomological stable envelope $\Stab_{\sigma }$  has the following nice property, which will be used only in the proof of
Proposition \ref{prop:vb and stab_repl_corr}.
\begin{Proposition}\label{prop:coh stab ex criterion}
Let $(X,\sw,\sT,\sA)$ be in the Setting \ref{setting of stab} and Setting \ref{setting: proper to affine} and $\fC$ a chamber. Then the cohomological stable envelope $\Stab_{\fC }$ exists if and only if $\Stab_{\sigma }=\Stab_{\sigma' }$ for all $(\sigma,\sigma')\in \fC\times\fC$.
\end{Proposition}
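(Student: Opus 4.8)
The plan is to prove both directions by exploiting the fact that a cohomological stable envelope, when it exists, is \emph{unique} (Proposition \ref{uniqueness of coh stab}), and that for a one-dimensional torus $\sA$ we always have a stable envelope correspondence, hence (by Proposition \ref{corr induce stab}) a stable envelope $\Stab_\sigma$, for each generic cocharacter $\sigma$.

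For the forward direction, suppose $\Stab_\fC\colon H^\sT(X^\sA,\sw^\sA)\to H^\sT(X,\sw)$ exists. Fix any $\sigma\in\fC$; I claim $\Stab_\sigma=\Stab_\fC$. The key point is that the axioms (i)--(iii) of Definition \ref{def of stab coho} depend on $\sigma$ only through: (a) the chamber $\fC$ containing $\sigma$, which determines $\Attr^f_\fC(F)$, the sub-bundles $N^\pm_{F/X}$, and the partial order $\preceq$; and (b) the notion of $\deg_\sA$, which for a \emph{one-dimensional} $\sA$ is just polynomial degree in the single variable — it does not depend on which $\sigma\in\fC$ is chosen, only on $\sA=\im(\sigma)$. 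Wait — more precisely, as $\sigma$ ranges over a fixed chamber $\fC$, the image $\sA=\im(\sigma)$ may vary (different rational directions), so one must be slightly careful. But the induced attracting sets, the decomposition $N_{F/X}=N^+_{F/X}\oplus N^-_{F/X}$, and the partial order $\preceq$ all depend only on $\fC$, not on $\sigma$; and the $\deg_\sA$ appearing in axiom (iii) is computed after restricting to the subtorus $\sA=\im(\sigma)$, where it is again just the polynomial degree of the restricted class. Since the Euler classes $e^\sT(N^-_{F'/X})$ restrict compatibly, a class satisfying axioms (i)--(iii) for $\Stab_\fC$ satisfies the axioms defining $\Stab_\sigma$ for every $\sigma\in\fC$; by uniqueness $\Stab_\sigma=\Stab_\fC$ for all such $\sigma$, hence $\Stab_\sigma=\Stab_{\sigma'}$ for all $(\sigma,\sigma')\in\fC\times\fC$.

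For the converse, suppose $\Stab_\sigma=\Stab_{\sigma'}$ for all $\sigma,\sigma'\in\fC$, and call this common map $S$. I want to show $S$ satisfies Definition \ref{def of stab coho} with respect to the chamber $\fC$. Axioms (i) and (ii) are immediate: for a single $\sigma$, $S=\Stab_\sigma$ already satisfies $\Stab_\sigma(\gamma)$ supported on $\Attr^f_\sigma(F)=\Attr^f_\fC(F)$ and $\Stab_\sigma(\gamma)|_F=e^\sT(N^-_{F/X})\cdot\gamma$, and these bundles and attracting sets computed via $\sigma$ agree with those computed via the chamber $\fC$. The content is in axiom (iii): for a single $\sigma$ one only knows $\deg_{\im\sigma} S(\gamma)|_{F'}<\deg_{\im\sigma} e^\sT(N^-_{F'/X})$, a one-variable degree bound after restriction to the $1$-dimensional subtorus $\im\sigma$; one must upgrade this to the full-chamber bound $\deg_\sA S(\gamma)|_{F'}<\deg_\sA e^\sT(N^-_{F'/X})$ in $\bQ[\Lie\sA]$ for the ambient $\sA$ of the chamber $\fC$. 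Here the idea is to use that the hypothesis gives the one-variable bound \emph{simultaneously for all rational directions $\sigma$ in the open cone $\fC$}; a multivariable polynomial whose restriction to every rational ray in an open cone has degree strictly less than that of a fixed polynomial $e^\sT(N^-_{F'/X})$ (whose leading term is the product of the weights of $N^-_{F'/X}$, a nonzero polynomial that does not vanish on $\fC$) must itself have total degree — measured against the filtration cut out by $\fC$ — strictly less. Concretely, write $S(\gamma)|_{F'}=\sum_k P_k$ and $e^\sT(N^-_{F'/X})=\sum_k Q_k$ in terms of $\sA$-degree $k$; evaluating along $\sigma\in\fC$ sends degree-$k$ pieces to degree $k\langle\sigma,\cdot\rangle$ and the hypothesis forces the top piece of $S(\gamma)|_{F'}$ to cancel after restriction to every ray, which (since the cone is open and $Q_{\mathrm{top}}$ is a product of nonzero $\sA$-weights) forces it to vanish identically. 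This gives axiom (iii), so $S$ is the cohomological stable envelope for $\fC$.

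The main obstacle is this last multivariable-degree argument in the converse: making precise that pointwise (along all rays of an open chamber) degree bounds against the specific polynomial $e^\sT(N^-_{F'/X})$ — whose top-degree part is $\prod(\text{weights of }N^-_{F'/X})$, visibly non-vanishing on $\fC$ — imply a genuine polynomial-degree bound. This is essentially a statement about the associated graded of the filtration on $\bQ[\Lie\sA]$ by "degree along $\fC$" and the fact that the leading form of $e^\sT(N^-_{F'/X})$ is a nonzerodivisor that restricts nontrivially to every ray in $\fC$; I would phrase it via homogeneous decomposition and a Zariski-density argument for the rational rays in the open cone $\fC$. The forward direction and axioms (i),(ii) of the converse are essentially bookkeeping once one observes that the data entering Definition \ref{def of stab coho} is a function of the chamber alone and restricts compatibly to subtori.
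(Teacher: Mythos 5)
Your proof is correct and follows essentially the same route as the paper's: the forward direction is uniqueness of $\Stab_\sigma$ plus checking that $\Stab_\fC$ satisfies its axioms, and the converse is checking that the common value $\Stab_\sigma$ satisfies the chamber axioms, with all the content residing in upgrading the family of one-variable degree bounds to the full $\deg_\sA$ bound.

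One small comparison worth noting. The paper's converse is stated tersely as ``axiom (iii) holds since $\deg_\sA f<\deg_\sA g$ if and only if $\deg_\sigma f<\deg_\sigma g$ for a set of $\sigma$ that spans $\Lie\sA$,'' which is imprecise as written: a spanning set is not enough (e.g.\ $f=x^2$, $g=y^2$ on $\bQ[x,y]$, with $\sigma$ the two axes). What is actually used, and what you correctly supply, is that the rational rays in the \emph{open} cone $\fC$ are Zariski-dense, so that a homogeneous piece of $f$ of degree $\geq \rk N^-_{F'/X}$ vanishing along every such ray must vanish identically; this, together with the fact that the leading form of $e^\sT(N^-_{F'/X})$ is the product $\prod \alpha$ of the repelling weights, which is nonvanishing on $\fC$, gives $\deg_\sigma e^\sT(N^-_{F'/X}) = \deg_\sA e^\sT(N^-_{F'/X})$ for every $\sigma\in\fC$ and closes the argument. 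In the forward direction your phrase ``the Euler classes restrict compatibly'' is doing exactly this work as well: since the leading form of $e^\sT(N^-_{F'/X})$ does not vanish on any $\sigma\in\fC$, restriction preserves the degree of the right-hand side while only decreasing the degree of the left-hand side, so the chamber bound implies each one-variable bound. You already have all the right ideas; spelling out this Euler-class nonvanishing in one sentence would make the ``restrict compatibly'' step airtight.
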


\begin{proof}
Suppose that $\Stab_{\fC }$ exists, then $\Stab_{\sigma }=\Stab_{\fC }$ for all $\sigma\in \fC$. Conversely, if $\Stab_{\sigma }=\Stab_{\sigma' }$ for all $(\sigma,\sigma')\in \fC\times\fC$, then for arbitrary $\sigma\in \fC$, $\Stab_{\sigma }$ satisfies all three axioms in Definition \ref{def of stab coho}. Axioms (i) and (ii) hold automatically, and the axiom (iii) holds since $\deg_\sA f<\deg_\sA g$ if and only if $\deg_\sigma f<\deg_\sigma g$ for a set of $\sigma$ that spans $\Lie\sA$. 
\end{proof}



\subsection{Existence of stable envelopes via Hall operations}\label{sect on ex kstab}

For any symmetric GIT quotient $X$ with an action of tori $\sA\subseteq \sT$ as Definition \ref{def of sym var}, and any $\sT$-invariant function $\sw\colon X\to \C$, 
its $K$-theoretic 
stable envelope $\Stab^{\mathsf s}_\fC$ (for a generic slope $\mathsf s$) can be obtained by commutative diagram 
\begin{equation}\label{diag define Kstab}
\xymatrix{
K^\sT(\fX^{\sA},\sw) \ar@{=}[r] & 
\bigoplus_{\phi/\sim} K^\sT(\fX^{\sA,\phi},\sw) \ar[rr]^{\quad \,\, \bigoplus_{\phi/\sim}\mathfrak{m}^\phi_{\fC}} & & K^\sT(\fX,\sw) \ar[d]^{\mathrm{res}} \\
K^\sT(X^{\sA},\sw) \ar@{=}[r] &
\bigoplus_{\phi/\sim} K^\sT(X^{\sA,\phi},\sw) \ar[u]^{\bigoplus_{\phi/\sim}\bPsi_K^{\phi,\mathsf s'}} \ar[rr]^{\quad \,\, \Stab^{\mathsf s}_\fC} & & K^\sT(X,\sw). 
}
\end{equation}
Here the direct sum is taken over all homomorphisms $\phi:\sA\to G$ modulo certain equivalence $\sim$ \eqref{equ on dis uni of X}, 
$\bPsi_K^{\phi,\mathsf s'}$ are nonabelian stable envelopes, defined using window subcategories, $\mathfrak{m}^\phi_{\fC}$ are Hall operations on the stack quotients, 
and $\mathrm{res}$ is the restriction map to the open substack.
This diagram will be proven in Theorem \ref{thm: hall k_sym quot}. A similar diagram also holds for the cohomological case (Theorem \ref{thm: hall coh_sym quot}). 
We leave the details to \S \ref{app on k symm var}. 

We remark that $\bPsi_K^{\phi,\mathsf s'}$ has a categorical lift to a functor. Combining with the categorical Hall product (e.g.~\cite{P}), 
we may define the composed functor $\Stab^{\mathsf s}_\fC$ as a ``categorical stable envelope''.



\subsection{Integrality of the inverse of stable envelopes}

The inverse of stable envelopes can be computed using the transpose of stable envelope correspondences by Lemma \ref{adj of stab}, this has the following application
on integrality.

\begin{Proposition}\label{prop: integral inverse stab}
Under Setting \ref{setting: proper to affine}, and assume that the $\sA$ weights in $\Gamma(X,\mathcal O_X)$ are nonpositive, and that stable envelope correspondences exist. Then $\Stab_\fC^{-1}$ is defined for the integral cohomology class, i.e.~it maps $H^\sT(X,\sw)$ to $H^\sT(X^\sA,\sw^\sA)$; similarly $(\Stab^{\mathsf s}_\fC)^{-1}$ is defined for the integral $K$-theory class,\,i.e.~it maps $K^\sT(X,\sw)$ to $K^\sT(X^\sA,\sw^\sA)$.
\end{Proposition}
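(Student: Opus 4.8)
The plan is to realize the inverse of the stable envelope as a convolution with the transpose of the stable envelope correspondence for the opposite chamber, and to check that this convolution preserves integral classes precisely because the relevant attracting/full-attracting loci are proper over their targets and the nonpositivity of $\sA$-weights in $\Gamma(X,\mathcal O_X)$ forces the bad direction of the degree bound to vanish.

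First I would set up the candidate for the inverse. By Lemma \ref{adj of stab}, for a cohomological stable envelope correspondence $[\Stab_\fC]$ we have $[\Stab_{-\fC}]^{\mathrm t}\circ[\Stab_\fC]=[\Delta]$, where $[\Stab_{-\fC}]^{\mathrm t}\in H^\sT(X^\sA\times X,\sw^\sA\boxminus\sw)_{(\Attr^f_{-\fC})^{\mathrm t}}$. Since $(\Attr^f_{-\fC})^{\mathrm t}$ sits inside $\bigcup_F F\times\Attr^f_{-\fC}(F)$ and each $\Attr^f_{-\fC}(F)$ is closed in $X$ (Remark \ref{proper-over-affine}), the projection $X^\sA\times X\to X^\sA$ restricted to $(\Attr^f_{-\fC})^{\mathrm t}$ need \emph{not} be proper in general --- so the point is not automatic and this is where the hypothesis enters. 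However, for the convolution $[\Stab_{-\fC}]^{\mathrm t}\circ(-)$ applied to a class $\gamma\in H^\sT(X,\sw)$ (which has no support constraint), one only needs properness of the map on the relevant intersection. The key observation is that a class in $H^\sT(X,\sw)$ is equivalent, after the canonical map (the excision/canonical diagram of \S\ref{sect on crit coho}), to a Borel--Moore-type class supported on $\Crit(\sw)$, and $\Crit(\sw)\subseteq X$ is proper over $X_0=\Spec\Gamma(X,\mathcal O_X)$; more to the point, I would argue that $\Attr^f_{-\fC}(F)\cap\Crit(\sw)$, hence the whole fiber product entering the convolution, is proper over a point in the $X^\sA$-direction because $\Gamma(X,\mathcal O_X)^\sA$ (the functions that descend to $X^\sA$ via the closed embedding, up to the BB fibration) has only nonpositive $\sA$-weights, which is exactly the condition guaranteeing that $\lim_{t\to0}\sigma(t)\cdot x$-type limits along $-\fC$ land in a bounded region. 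Concretely this is the same mechanism as \cite[Lem.~3.2.7]{MO} cited in Remark \ref{proper-over-affine}, applied to $(X^\sA, -\fC)$-attracting sets, but now the \emph{sign} matters: nonpositivity of weights in $\Gamma(X,\mathcal O_X)$ is what makes $\Attr^f_{-\fC}$ proper over $X^\sA$ rather than over $X$.

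So the steps, in order, would be: (1) Record that $\Stab_\fC$ is the convolution map induced by $[\Stab_\fC]$ (Proposition \ref{corr induce stab}), and that convolution is associative and compatible with the canonical maps (Lemma \ref{lem on can cm w conv}). (2) Show that the convolution $[\Stab_{-\fC}]^{\mathrm t}\circ(-)\colon H^\sT(X,\sw)\to H^\sT(X^\sA,\sw^\sA)$ is well-defined on integral classes: the fiber product $p_{12}^{-1}((\Attr^f_{-\fC})^{\mathrm t})\cap p_{23}^{-1}(\Crit(\sw)\times \mathrm{pt})$ inside $X^\sA\times X\times \mathrm{pt}$ is proper over $X^\sA$ because of the nonpositive-weight hypothesis on $\Gamma(X,\mathcal O_X)$ (here I invoke Remark \ref{rmk: conv on coh supp}, so that it suffices to have properness only over the critical locus). (3) Apply Lemma \ref{adj of stab}: the composite $[\Stab_{-\fC}]^{\mathrm t}\circ[\Stab_\fC]=[\Delta]$ gives $([\Stab_{-\fC}]^{\mathrm t}\circ-)\circ\Stab_\fC=\mathrm{id}$ on $H^\sT(X^\sA,\sw^\sA)$; since $\Stab_\fC$ is already known to be an isomorphism after localization (it is upper-triangular with invertible diagonal $e^\sT(N^-_{F/X})$ by the axioms), the one-sided integral inverse is the genuine inverse, i.e.~$\Stab_\fC^{-1}=[\Stab_{-\fC}]^{\mathrm t}\circ(-)$ sends $H^\sT(X,\sw)$ into $H^\sT(X^\sA,\sw^\sA)$. (4) Repeat verbatim in $K$-theory with $[\Stab^{-\mathsf s}_{-\fC}]^{\mathrm t}\circ[\Stab^{\mathsf s+\mathcal K/2}_\fC]=[\mathcal O_\Delta]$, using the $K$-theoretic halves of Lemmas \ref{adj of stab}, \ref{lem on can cm w conv} and the $K$-theoretic excision Proposition \ref{excision for crit K}; here one should be slightly careful that the slope used on the transpose side ($-\mathsf s$) and on the direct side ($\mathsf s+\mathcal K/2$) match the normalization in Lemma \ref{adj of stab}, so the statement is really about $(\Stab^{\mathsf s+\mathcal K/2}_\fC)^{-1}$, or equivalently about $\Stab^{\mathsf s}_\fC$ after a harmless slope shift; I would phrase it so this is transparent.

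The main obstacle I anticipate is step (2): carefully justifying that the nonpositivity of $\sA$-weights in $\Gamma(X,\mathcal O_X)$ yields properness of $(\Attr^f_{-\fC})^{\mathrm t}$ (intersected with $\Crit(\sw)$ in the $X$-factor) over $X^\sA$. The subtlety is that $\Attr^f_{-\fC}(F)$ is only known a priori to be closed in $X$ (hence proper over $X_0$), and one must upgrade this to properness of the projection to $X^\sA$ when the $\sA$-action on $X_0=\Spec\Gamma(X,\mathcal O_X)$ is \emph{contracting} for $-\fC$ (equivalently expanding for $\fC$) because all weights are $\leq 0$; under that condition $X_0$ itself retracts onto $X_0^\sA$, and the proper map $\Attr^f_{-\fC}(F)\to X_0\to X_0^\sA$ combined with the affine BB fibration $\Attr_{-\fC}(F)\to F$ controls the $X^\sA$-direction. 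Once the properness is in hand the rest is a formal chase through the compatibilities already established, so I expect no further difficulty beyond bookkeeping of slopes and supports. I would write roughly:

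\begin{proof}
By Proposition \ref{corr induce stab}, $\Stab_\fC$ (resp.~$\Stab^{\mathsf s}_\fC$) is the map induced by convolution with $[\Stab_\fC]$ (resp.~$[\Stab^{\mathsf s}_\fC]$). Consider the transpose correspondence $[\Stab_{-\fC}]^{\mathrm t}\in H^\sT(X^\sA\times X,\sw^\sA\boxminus\sw)_{(\Attr^f_{-\fC})^{\mathrm t}}$. The assumption that the $\sA$-weights in $\Gamma(X,\mathcal O_X)$ are nonpositive means that, for $\sigma\in-\fC$, the induced $\C^*$-action on $X_0=\Spec\Gamma(X,\mathcal O_X)$ contracts $X_0$ onto $X_0^\sA$; combined with properness of $X\to X_0$ and the affine fibration $\Attr_{-\fC}(F)\to F$ from \cite{BB}, it follows that $\Attr^f_{-\fC}(F)$ is proper over $X^\sA$ for every $F$, hence $(\Attr^f_{-\fC})^{\mathrm t}$ is proper over $X^\sA$ along the projection $X^\sA\times X\to X^\sA$. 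Therefore, invoking Remark \ref{rmk: conv on coh supp}, the convolution
\begin{equation*}
[\Stab_{-\fC}]^{\mathrm t}\circ(-)\colon H^\sT(X,\sw)\longrightarrow H^\sT(X^\sA,\sw^\sA)
\end{equation*}
is defined on integral classes. By Lemma \ref{adj of stab} and associativity of convolution (Lemma \ref{lem on can cm w conv}), $\bigl([\Stab_{-\fC}]^{\mathrm t}\circ(-)\bigr)\circ\Stab_\fC=\id$ on $H^\sT(X^\sA,\sw^\sA)$. Since $\Stab_\fC$ becomes invertible after localization (its matrix is triangular with respect to $\preceq$ with diagonal entries $e^\sT(N^-_{F/X})$), this one-sided inverse over $H^\sT(X^\sA,\sw^\sA)$ is the genuine inverse, so $\Stab_\fC^{-1}=[\Stab_{-\fC}]^{\mathrm t}\circ(-)$ maps $H^\sT(X,\sw)$ into $H^\sT(X^\sA,\sw^\sA)$. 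The same argument with $[\Stab^{-\mathsf s}_{-\fC}]^{\mathrm t}\circ[\Stab^{\mathsf s+\frac{\mathcal K}{2}}_\fC]=[\mathcal O_\Delta]$ from Lemma \ref{adj of stab} and Proposition \ref{excision for crit K} proves the $K$-theoretic statement.
\end{proof}
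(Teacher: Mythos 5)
Your strategy is the same as the paper's: realize $\Stab_\fC^{-1}$ as convolution with the transpose correspondence $[\Stab_{-\fC}]^{\mathrm t}$ via Lemma \ref{adj of stab}, and check that this convolution is defined integrally by establishing that $(\Attr^f_{-\fC})^{\mathrm t}$ is proper over $X^\sA$. The conclusion you want at the properness step is also correct. However, your explanation of \emph{why} the properness holds contains a sign error that, taken at face value, would in fact destroy the argument.

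You write that nonpositivity of the $\sA$-weights in $\Gamma(X,\mathcal O_X)$ means ``for $\sigma\in-\fC$, the induced $\C^*$-action on $X_0$ contracts $X_0$ onto $X_0^\sA$.'' This is backwards. Nonpositive weights with respect to $\fC$ mean exactly that for $\xi\in\fC$ one has $\langle\mu,\xi\rangle\le 0$ for all weights $\mu$ of $\Gamma(X,\mathcal O_X)$, so $f(\sigma(t)\cdot x)=t^{-\langle\mu,\xi\rangle}f(x)$ has a limit as $t\to 0$ for $\sigma\in\fC$; in other words it is the $\fC$-action that is attracting, $X_0=\Attr_\fC(X_0^\sA)$. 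For $\sigma\in-\fC$ the flows diverge, so $\Attr_{-\fC}(X_0^\sA)=X_0^\sA$ is trivial. This flipped statement is precisely what makes the argument work: if $(x,y)\in\Attr_{-\fC}$ then $\pi(y)=\lim_{t\to 0}\sigma(t)\cdot\pi(x)$ exists in $X_0$ for $\sigma\in-\fC$, forcing $\pi(x)\in X_0^\sA$ and $\pi(y)=\pi(x)$, whence $\Attr^f_{-\fC}\subseteq X\times_{X_0}X^\sA$; since $X\to X_0$ is proper, $X\times_{X_0}X^\sA\to X^\sA$ is proper by base change, and $\Attr^f_{-\fC}$ is closed in it, so $(\Attr^f_{-\fC})^{\mathrm t}$ is proper over $X^\sA$. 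If instead the $-\fC$-action contracted $X_0$ onto $X_0^\sA$ (as you wrote), $\Attr_{-\fC}$ would project onto \emph{all} of $X$ in the first factor, and no properness over $X^\sA$ could possibly hold. Your appeal to the affine BB fibration $\Attr_{-\fC}(F)\to F$ does not rescue this: that map is affine, not proper, and it plays no role in the correct argument.

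Two smaller points. First, the detour in your step (2) through $\Crit(\sw)$ and Remark \ref{rmk: conv on coh supp} is unnecessary for cohomology and does not help in $K$-theory at all (the paper notes explicitly in the proof of Proposition \ref{prop: stab_isom_subtorus} that no $K$-theoretic analogue of that remark exists), so it is good that you dropped it in the final proof, but the preamble discussion of it is a red herring. Second, your worry about the slope shift in $K$-theory is harmless but can be eliminated by applying Lemma \ref{adj of stab} with $\mathsf s'=\mathsf s-\tfrac{\mathcal K}{2}$, which directly gives $[\Stab^{\frac{\mathcal K}{2}-\mathsf s}_{-\fC}]^{\mathrm t}\circ[\Stab^{\mathsf s}_\fC]=[\mathcal O_\Delta]$, exactly as the paper states.
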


\begin{proof}
Let $X_0=\Spec\Gamma(X,\mathcal O_X)$, then by the assumption the induced $\sA$-action on $X_0$ is attracting with respect to $\fC$, i.e.~$X_0=\Attr_{\fC}(X_0^\sA)$. Equivalently, $X_0^\sA=\Attr_{-\fC}(X_0^\sA)$. It follows that $\Attr^f_{-\fC}\subseteq  X\times_{X_0} X^\sA$. Then $\Attr^f_{-\fC}$ is proper over $X^\sA$ since $X$ is proper over $X_0$ by Setting \ref{setting: proper to affine}. Lemma \ref{adj of stab} implies that $\Stab_\fC^{-1}$ is induced by the correspondence $[\Stab_{-\fC}]^{\mathrm{t}}$ which is then defined over integral cohomology class. Similarly, $(\Stab^{\mathsf s}_\fC)^{-1}$ is induced by the correspondence $[\Stab^{\frac{\mathcal K}{2}-\mathsf s}_{-\fC}]^{\mathrm{t}}$ which is then defined over integral $K$-theory class.
\end{proof}

\subsection{Stable envelopes with coefficient in a subtorus}

Occasionally, we also consider the equivariant critical cohomology/$K$-theory with respect to a subtorus $\sT'\subseteq  \sT$ which does \textit{not} necessarily contain $\sA$. Then we can regard stable envelope correspondences (assuming existence) in the $\sT'$-equivariant critical cohomology/$K$-theory, and induce convolution maps:
\begin{align*}
    \Stab_\fC\colon H^{\sT'}(X^\sA,\sw^\sA)\to H^{\sT'}(X,\sw),\qquad \Stab^{\mathsf s}_\fC\colon K^{\sT'}(X^\sA,\sw^\sA)\to K^{\sT'}(X,\sw).
\end{align*}
The following injectivity result will be used in \cite{COZZ1}.

\begin{Proposition}\label{prop: stab_isom_subtorus}
Under Setting \ref{setting: proper to affine}, and assume that stable envelope correspondences exist. Let $\sT'\subseteq  \sT$ be the subtorus as above. 
\begin{itemize}
    \item If $\sA$-action on $\Crit(\sw)^{\sT'}$ is attracting with respect to $\fC$ \footnote{This means for arbitrary point $x\in \Crit(\sw)^{\sT'}$, $\lim_{t\to 0} \sigma(t)\cdot x$ exists for all cocharacter $\sigma$ in $\fC$.}, then $\Stab_\fC\colon H^{\sT'}(X^\sA,\sw^\sA)\to H^{\sT'}(X,\sw)$ is injective after $\sT'$-localization.
    \item If $\sA$-action on $X^{\sT'}$ is attracting with respect to $\fC$, then $\Stab^{\mathsf s}_\fC\colon K^{\sT'}(X^\sA,\sw^\sA)\to K^{\sT'}(X,\sw)$ is injective after $\sT'$-localization.
\end{itemize}
\end{Proposition}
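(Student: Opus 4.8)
The plan is to deduce injectivity from the transpose construction of Lemma \ref{adj of stab}, exactly as in the proof of Proposition \ref{prop: integral inverse stab}, but now keeping track only of $\sT'$-equivariance. First I would recall that $\Stab_\fC$ is the convolution by a correspondence $[\Stab_\fC]$ supported on $\Attr^f_\fC\subseteq X\times X^\sA$, and that Lemma \ref{adj of stab} gives a second correspondence $[\Stab_{-\fC}]^{\mathrm t}$, supported on $(\Attr^f_{-\fC})^{\mathrm t}\subseteq X^\sA\times X$, whose convolution with $[\Stab_\fC]$ equals the diagonal class $[\Delta]$ (and similarly $[\Stab^{-\mathsf s}_{-\fC}]^{\mathrm t}\circ[\Stab^{\mathsf s+\frac{\mathcal K}2}_\fC]=[\mathcal O_\Delta]$ in $K$-theory). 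The only thing that needs checking is that this transpose correspondence still defines a \emph{convolution map} after passing to $\sT'$-equivariant, $\sT'$-localized theories; that is, that the relevant projection is proper over $X^\sA$ after restricting to the $\sT'$-fixed (resp.\ $\Crit(\sw)^{\sT'}$) locus, which is precisely the only place the new hypothesis enters.

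Concretely, here are the steps. First, since everything factors through $\sT'$-localization and the vanishing cycle sheaf $\varphi_\sw\omega_X$ is supported on $\Crit(\sw)$, by the usual localization theorem the composition $[\Stab_{-\fC}]^{\mathrm t}\circ[\Stab_\fC]$ acting on $H^{\sT'}(X^\sA,\sw^\sA)_{\mathrm{loc}}$ only depends on the behavior of the correspondences near $\Crit(\sw)^{\sT'}\times X^\sA$ and $X^\sA\times \Crit(\sw)^{\sT'}$ respectively (cf.\ Remark \ref{rmk: conv on coh supp}). Second, the hypothesis that the $\sA$-action on $\Crit(\sw)^{\sT'}$ is attracting with respect to $\fC$ means $\Crit(\sw)^{\sT'}=\Attr_{\fC}\big(\Crit(\sw)^{\sT'}\cap X^\sA\big)$, equivalently $\Crit(\sw)^{\sT'}\cap X^\sA=\Attr_{-\fC}\big(\Crit(\sw)^{\sT'}\cap X^\sA\big)$; hence $(\Attr^f_{-\fC})^{\mathrm t}$ restricted over $\Crit(\sw)^{\sT'}$ lands inside $X^\sA\times_{\,\cdot\,}\big(\Crit(\sw)^{\sT'}\cap X^\sA\big)$ and is thus, on the critical locus, proper over $X^\sA$ (using Setting \ref{setting: proper to affine}, along the lines of Remark \ref{proper-over-affine} and the proof of Proposition \ref{prop: integral inverse stab}). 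This makes $[\Stab_{-\fC}]^{\mathrm t}\circ$ a well-defined map $H^{\sT'}(X,\sw)_{\mathrm{loc}}\to H^{\sT'}(X^\sA,\sw^\sA)_{\mathrm{loc}}$. Third, by associativity of convolution (and compatibility of convolution with the canonical map, Lemma \ref{lem on can cm w conv}, used to reduce the computation of $[\Stab_{-\fC}]^{\mathrm t}\circ[\Stab_\fC]=[\Delta]$ to the $\sw=0$ computation of Lemma \ref{adj of stab}, which is $\sT'$-equivariant), one gets $[\Stab_{-\fC}]^{\mathrm t}\circ\Stab_\fC=\mathrm{id}$ on $H^{\sT'}(X^\sA,\sw^\sA)_{\mathrm{loc}}$, so $\Stab_\fC$ is injective after $\sT'$-localization. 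The $K$-theoretic statement is identical, using $\Crit(\sw)^{\sT'}\subseteq X^{\sT'}$ so that the assumption on $X^{\sT'}$ suffices (matrix factorization sheaves being supported on $\Crit(\sw)$), replacing $[\Stab_{-\fC}]^{\mathrm t}$ by $[\Stab^{\frac{\mathcal K}2-\mathsf s}_{-\fC}]^{\mathrm t}$ and invoking the $K$-theoretic half of Lemmas \ref{adj of stab}, \ref{lem on can cm w conv}.

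The one genuine subtlety — the step I expect to be the main obstacle — is verifying the properness needed to make $[\Stab_{-\fC}]^{\mathrm t}$ (equivalently $\Attr^f_{-\fC}$) act after $\sT'$-localization: \emph{a priori} $\Attr^f_{-\fC}$ need not be proper over $X^\sA$ since $X$ is only proper over the affine $X_0$ and $\sA$ need not act attractingly on all of $X$. The resolution is that $\sT'$-localization confines all computations to (a neighborhood of) the $\sT'$-fixed locus — for cohomology, more precisely to $\Crit(\sw)^{\sT'}$, which is why the cohomological hypothesis is weaker than the $K$-theoretic one — and on that locus the hypothesis forces the $\sA$-flow to be attracting toward $X^\sA$, so $\Attr^f_{-\fC}$ restricted there is contained in $X^\sA\times_{X_0^{\sT'}}\big(\text{the }\sT'\text{-fixed part}\big)$, hence proper over $X^\sA$. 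Once this is in place, everything else is a formal consequence of Lemma \ref{adj of stab} and associativity of convolution, so I would keep the write-up short: state the properness reduction carefully, cite Remark \ref{proper-over-affine} and the argument of Proposition \ref{prop: integral inverse stab}, and then conclude.
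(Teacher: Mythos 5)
Your strategy matches the paper's: reduce to Lemma~\ref{adj of stab}, then verify that the transpose correspondence gives a well-defined convolution after $\sT'$-localization by checking properness over $X^\sA$ via the affine quotient $X_0$ of Setting~\ref{setting: proper to affine}. The only issue is the intermediate "equivalently'' claim, which is false as stated: \emph{$\Attr_\fC(M^\sA)=M$ does not imply $\Attr_{-\fC}(M^\sA)=M^\sA$} for a general quasi-projective $M$. For example, for $\bP^1$ with the standard $\bC^*$-action one has $\Attr_+(\{0,\infty\})=\bP^1=\Attr_-(\{0,\infty\})$, so both attracting sets are the whole space rather than the fixed locus. The trichotomy $\Attr_\fC\cap\Attr_{-\fC}=(\text{fixed locus})$ does hold for \emph{affine} varieties, and this is precisely why the paper routes the argument through $X_0$: for $(y,x)\in(\Attr^f_{-\fC})^{\mathrm t}$ with $x\in\Crit(\sw)^{\sT'}$, the hypothesis gives $\pi(x)\in\Attr_\fC(X_0^\sA)$, the correspondence gives $\pi(x)\in\Attr_{-\fC}(X_0^\sA)$, and intersecting yields $\pi(x)\in X_0^\sA$, hence $\pi(x)=\pi(y)$. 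So the correct statement is that the relevant support sits inside $X^\sA\times_{X_0}X$ (a plain fiber product over $X_0$, not over $X_0^{\sT'}$ as you wrote), which is proper over $X^\sA$ because $X\to X_0$ is proper. With that correction, and replacing $\Crit(\sw)^{\sT'}$ by $X^{\sT'}$ throughout for the $K$-theory half (since there is no analogue of Remark~\ref{rmk: conv on coh supp} in $K$-theory), your proof goes through and agrees with the paper's.
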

\begin{proof}
Let us first prove the cohomology case. In view of Lemma \ref{adj of stab}, it is enough to show that the transpose correspondence $[\Stab_{-\fC}]^{\mathrm{t}}$ induces a well-defined convolution map $$H^{\sT'}(X,\sw)_{\loc}\to H^{\sT'}(X^\sA,\sw^\sA)_{\loc}. $$ By Remark \ref{rmk: conv on coh supp}, we only need to check that the $\sT'$-fixed locus in $\Attr^f_{-\fC}\cap \left(\Crit(\sw^\sA)\times \Crit(\sw)\right)$ is proper over $X^\sA$. Let $\pi\colon X\to X_0$ be a $\sT$-equivariant morphism with affine target $X_0$ as in the Setting \ref{setting: proper to affine}. Then by the assumption, we have $\pi(\Crit(\sw)^{\sT'})\subseteq  \Attr_{\fC}(X_0^\sA)$. Since $\Attr_{\fC}(X_0^\sA)\cap \Attr_{-\fC}(X_0^\sA)=X_0^\sA$, it follows that
\begin{align*}
    \left(\Attr^f_{-\fC}\cap \left(\Crit(\sw^\sA)\times \Crit(\sw)\right)\right)^{\sT'}\subseteq  X\times_{X_0} X^\sA
\end{align*}
and the latter is proper over $X^\sA$ because $X\to X_0$ is proper. This proves the cohomology case.

For the $K$-theory case, the argument is word-by-word the same as above except that we do not have an analog of Remark \ref{rmk: conv on coh supp} for $K$-theory so we need to replace the critical locus by the entire $X$.
\end{proof}

\subsection{Triangle lemma}\label{sec on tri lem}

Let $\fC$ be a chamber and let $\fC'$ be a face of some dimension. Consider $$\mathfrak{a}'=\Span\fC'\subseteq  \mathfrak{a}=\Lie(\sA)$$with associated subtorus $\sA'\subseteq \sA$. The cone $\fC$ projects to a cone in $\mathfrak{a}/\mathfrak{a}'$ that we denote by $\fC/\fC'$. 

We show that triangle lemma always holds for $K$-theoretic stable envelopes with generic slopes.  

\begin{Lemma}\label{triangle lemma for K}
Let $\mathsf s\in \Pic_\sA(X)\otimes_{\bZ}\bR$ be a generic slope. Suppose that $K$-theoretic stable envelopes $\Stab^{\mathsf s}_{\fC}$, $\Stab^{\mathsf s}_{\fC'}$, and $\Stab^{\mathsf s'}_{\fC/\fC'}$ exist, where $$\mathsf s'=\mathsf s|_{X^{\sA'}}\otimes \det\left(N^-_{X^{\sA'}/X}\right)^{1/2}\in \Pic_{\sA}(X^{\sA'})\otimes_\bZ \bR.$$ Then the following diagram 
\begin{equation*}
\xymatrix{
K^\sT(X^\sA,\sw^\sA) \ar[rr]^{\Stab^{\mathsf s}_{\fC}} \ar[dr]_{\Stab^{\mathsf s'}_{\fC/\fC'}} & & K^\sT(X,\sw) \\
 & K^\sT(X^{\sA'},\sw^{\sA'}) \ar[ur]_{\Stab^{\mathsf s}_{\fC'}} &
}
\end{equation*}
is commutative.
\end{Lemma}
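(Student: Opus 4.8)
The plan is to reduce the statement to the \emph{uniqueness} of $K$-theoretic stable envelopes, established in Proposition \ref{uniqueness of K stab}, together with the degree/support bookkeeping of Definition \ref{def of stab k}. First I would set $\Phi := \Stab^{\mathsf s}_{\fC'}\circ\Stab^{\mathsf s'}_{\fC/\fC'}\colon K^\sT(X^\sA,\sw^\sA)\to K^\sT(X,\sw)$ and argue that $\Phi$ satisfies the three axioms defining $\Stab^{\mathsf s}_{\fC}$; uniqueness then forces $\Phi=\Stab^{\mathsf s}_{\fC}$. Throughout, one fixes a fixed component $F\in\Fix_\sA(X)$ and a class $\gamma\in K^{\sT/\sA}(F,\sw^\sA)$, and exploits that $F$ is also a connected component of $X^{\sA'}$, since $\sA'\subseteq\sA$, so that Gysin restriction factors as $(F\hookrightarrow X)^* = (F\hookrightarrow X^{\sA'})^*\circ(X^{\sA'}\hookrightarrow X)^*$, and correspondingly the normal bundle splits $N_{F/X} = N_{F/X^{\sA'}}\oplus N_{X^{\sA'}/X}|_F$.

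For the \textbf{support axiom (i)}, I would first note that $\Stab^{\mathsf s'}_{\fC/\fC'}(\gamma)$ is supported, inside $X^{\sA'}$, on $\Attr^f_{\fC/\fC'}(F)_{X^{\sA'}}$; then $\Stab^{\mathsf s}_{\fC'}$ pushes this to a class supported on $\bigcup_{F'}\Attr^f_{\fC'}(F')_X$, the union over components $F'\subseteq X^{\sA'}$ with $F\preceq F'$ in the order on $\Fix_{\sA/\sA'}(X^{\sA'})$. The key geometric input is that for a generic cocharacter $\sigma\in\fC$, $\lim_{t\to 0}\sigma(t)\cdot x$ can be computed in two stages: the $\fC/\fC'$-flow lands in $X^{\sA'}$, then the $\fC'$-flow lands in $X^\sA$; hence the composite attracting set is contained in $\Attr^f_{\fC}(F)_X$. (This is exactly the geometric content used in \cite{MO}, and I would cite Remark \ref{proper-over-affine} plus \cite[\S3.5]{MO} for the relevant closedness and properness so that the convolutions are well-defined; one may also want the ample partial order of Remark \ref{partial order by line bundle} to keep the orders compatible under restriction to $X^{\sA'}$.)

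For the \textbf{normalization axiom (ii)}, restricting to $F$ and using Definition \ref{def of stab k}(ii) twice: $\Stab^{\mathsf s'}_{\fC/\fC'}(\gamma)|_F = e^\sT_K(N^-_{F/X^{\sA'}})\cdot\gamma$, and then $\Stab^{\mathsf s}_{\fC'}$ of this, restricted to $F$, multiplies by $e^\sT_K\big((N_{X^{\sA'}/X}|_F)^-\big)$; the product of the two Euler classes is $e^\sT_K(N^-_{F/X})$ by the splitting above, since the $\fC$-negative part of $N_{F/X}$ is the sum of the $\fC/\fC'$-negative part of $N_{F/X^{\sA'}}$ and the $\fC'$-negative part of $N_{X^{\sA'}/X}|_F$ (here genericity of $\sigma\in\fC$ ensures no weights sit on the relevant walls). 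For the \textbf{degree axiom (iii)}, for $F'\neq F$ I would estimate $\deg_\sA\Phi(\gamma)|_{F'}$ by first passing through the intermediate component(s) $F''\subseteq X^{\sA'}$: the $\fC/\fC'$-bound contributes a Newton polytope strictly inside $\deg_\sA e^\sT_K(N^-_{F''/X^{\sA'}}) + \mathrm{shift}^{\fC/\fC'}_{F''}-\mathrm{shift}^{\fC/\fC'}_{F}$, and the $\fC'$-bound contributes one strictly inside $\deg_\sA e^\sT_K((N_{X^{\sA'}/X})^-|_{F'}) + \mathrm{shift}^{\fC'}_{F'}-\mathrm{shift}^{\fC'}_{F''}$; convolution adds the polytopes, the intermediate shifts telescope, and the precise choice $\mathsf s' = \mathsf s|_{X^{\sA'}}\otimes\det(N^-_{X^{\sA'}/X})^{1/2}$ is exactly what makes the half-determinant square-root factors combine to $\mathrm{shift}_{F'}-\mathrm{shift}_{F}$ relative to $\mathsf s$. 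The strictness is preserved because at least one of the two inclusions is strict (and genericity of $\mathsf s$, via Remark \ref{rmk: generic slope}, upgrades any non-strict inclusion to strict whenever $F\prec F'$).

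\textbf{Main obstacle.} The routine axiom-checking is straightforward; the delicate point is the \emph{shift bookkeeping in axiom (iii)} — verifying that the half-integer $\sA$-weights coming from $\det(N^-)^{1/2}$ at the three stages add up correctly and, crucially, that strictness of the polytope inclusion survives the convolution (adding Newton polytopes can destroy strictness if one is not careful about which factor is strict). I expect to need the genericity hypothesis on $\mathsf s$ in an essential way here, exactly as in the non-critical case, and to lean on the observation that the partial orders on $\Fix_\sA(X)$, $\Fix_{\sA/\sA'}(X^{\sA'})$ and $\Fix_{\sA'}(X)$ fit together (Lemma \ref{refined partial order and closed subset}, Remark \ref{partial order by line bundle}) so that "$F\prec F'$ in $X$" forces a strict step somewhere in the two-stage factorization. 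Once axioms (i)–(iii) are verified, Proposition \ref{uniqueness of K stab} closes the argument; note no independent existence statement is needed since all three envelopes are assumed to exist in the hypothesis.
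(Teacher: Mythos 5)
Your overall strategy — set $\Phi=\Stab^{\mathsf s}_{\fC'}\circ\Stab^{\mathsf s'}_{\fC/\fC'}$, verify axioms (i)--(iii) of Definition~\ref{def of stab k} for $\Phi$, then invoke the uniqueness statement of Proposition~\ref{uniqueness of K stab} — is exactly the paper's strategy (which in turn follows \cite[Prop.~9.2.8]{Oko}), and your handling of axioms (i) and (ii), including the splitting $N_{F/X}=N_{F/X^{\sA'}}\oplus N_{X^{\sA'}/X}|_F$ and the telescoping of shifts via the choice of $\mathsf s'$, is correct. However, your treatment of axiom (iii) has a genuine gap that the paper closes with a perturbation argument you do not supply.

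The issue: the two degree bounds you want to combine do not live in the same lattice. Axiom (iii) for $\Stab^{\mathsf s'}_{\fC/\fC'}$ (stable envelope for the torus $\sA/\sA'$ acting on $X^{\sA'}$) gives a Newton-polytope bound in $\mathrm{Char}(\sA/\sA')\otimes\bR$, while axiom (iii) for $\Stab^{\mathsf s}_{\fC'}$ (stable envelope for the torus $\sA'$) gives a bound in $\mathrm{Char}(\sA')\otimes\bR$. ``Convolution adds the polytopes'' therefore does not directly produce a bound on the full $\sA$-Newton polytope of $\Phi(\gamma)|_{F_1}$, and your claim that ``at least one of the two inclusions is strict'' in a way that propagates to a strict inclusion in $\mathrm{Char}(\sA)\otimes\bR$ is not justified as stated. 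This is precisely where the argument is delicate, and it is also where the paper does its real work.

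The paper's resolution is to reduce to single cocharacters: fix $\xi'\in\fC'$, $\xi\in\fC$ with $\delta\xi:=\xi-\xi'$ satisfying $\|\delta\xi\|\ll\|\xi'\|$, and show $\Stab^{\mathsf s}_{\xi'}\circ\Stab^{\mathsf s'}_{\delta\xi}$ satisfies the axioms for $\Stab^{\mathsf s}_{\xi}$. This turns the polytope bounds into interval bounds in a single direction, and the verification of axiom (iii) splits into two cases according to whether $F_1$ lies in the same $\sA'$-component $F'$ as $F$ or in a different one $F'_1$. In the first case the strict inclusion comes from the $\delta\xi$-bound of the intermediate envelope; in the second case it comes from the $\xi'$-bound of the outer envelope, and the crucial lemma needed is: for Laurent polynomials $f,g$, if $\deg_{\xi'}f\subsetneq\deg_{\xi'}g$ then $\deg_\xi f\subsetneq\deg_\xi g$ for all sufficiently small perturbations $\xi=\xi'+\delta\xi$. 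It is this small-perturbation stability of strict interval inclusions (not available in cohomology, where the paper shows it fails by example) that makes the $K$-theoretic triangle lemma hold unconditionally. Finally, the paper identifies $\Stab^{\mathsf s}_\xi=\Stab^{\mathsf s}_\fC$, $\Stab^{\mathsf s}_{\xi'}=\Stab^{\mathsf s}_{\fC'}$, $\Stab^{\mathsf s'}_{\delta\xi}=\Stab^{\mathsf s'}_{\fC/\fC'}$ using the hypothesized existence of the chamber stable envelopes and uniqueness for the cocharacter ones. You correctly anticipated that genericity of $\mathsf s$ is essential, but without the reduction to nearby cocharacters and the perturbation lemma the degree axiom is not verified.
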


\begin{proof}
The proof is essentially the same as \cite[Prop.~9.2.8]{Oko}. We choose generic $\xi'\in \fC'$, $\xi\in \fC$ such that $$|\!|\delta\xi |\!|\ll |\!|\xi' |\!|,$$ where $\delta\xi:=\xi-\xi'$. Then for arbitrary $F\in \Fix_\sA(X)$ (denote $F'\in \Fix_{\sA'}(X)$ such that $F\subseteq  F'$) and arbitrary $\gamma\in K^{\sT/\sA}(F,\sw^\sA)$, $\Stab^{\mathsf s}_{\xi' }\circ \Stab^{\mathsf s'}_{\delta\xi }(\gamma)$ is supported on $\Attr^f_\fC(F)$ and its Gysin pullback to $F$ equals to
\begin{align*}
\Stab^{\mathsf s}_{\xi' }\circ \Stab^{\mathsf s'}_{\delta\xi}(\gamma)|_F= e^\sT_K(N^-_{F'/X}) \Stab^{\mathsf s'}_{\delta\xi}(\gamma)|_F=  e^\sT_K(N^-_{F'/X}) e^\sT_K(N^-_{F/F'}) \gamma=  e^\sT_K(N^-_{F/X}) \gamma.
\end{align*}
Moreover, we claim that $\Stab^{\mathsf s}_{\xi' }\circ \Stab^{\mathsf s'}_{\delta\xi}(\gamma)$ satisfies the axiom (iii) in the Definition \ref{def of stab k}. Let $F_1\in \Fix_\sA(X)$ which is different from $F$, then there are two possibilities: 
\begin{itemize} 
\item
$F_1$ is contained in $F'$, or 
\item $F_1$ is contained in $F'_1\in \Fix_{\sA'}(X)$ such that $F_1'\neq F'$. 
\end{itemize} 
In the first case, 
\begin{align*}
    \Stab^{\mathsf s}_{\xi' }\circ \Stab^{\mathsf s'}_{\delta\xi}(\gamma)|_{F_1}= e^\sT_K(N^-_{F'/X}) \Stab^{\mathsf s'}_{\delta\xi}(\gamma)|_{F_1},
\end{align*}
then we have
\begin{align*}
    \deg_{\xi}\Stab^{\mathsf s}_{\xi' }&\circ \Stab^{\mathsf s'}_{\delta\xi}(\gamma)|_{F_1}=\deg_{\xi}\left( e^\sT_K(N^-_{F'/X}) \Stab^{\mathsf s'}_{\delta\xi}(\gamma)|_{F_1}\right)\\
    &\subsetneq  \deg_{\xi} \left(e^\sT_K(N^-_{F'/X}) e^\sT_K(N^-_{F_1/F'})\right) + \mathrm{weight}_{\delta\xi}\left(\det(N^-_{F_1/F'})^{1/2}\otimes\mathsf s'|_{F_1}\right)-\mathrm{weight}_{\delta\xi}\left(\det(N^-_{F/F'})^{1/2}\otimes\mathsf s'|_F\right)\\
    &=\deg_{\xi} e^\sT_K(N^-_{F_1/X}) + \mathrm{weight}_{\xi}\left(\det(N^-_{F_1/X})^{1/2}\otimes\mathsf s|_{F_1}\right)-\mathrm{weight}_{\xi}\left(\det(N^-_{F/X})^{1/2}\otimes\mathsf s|_F\right).
\end{align*}
In the second case, we have 
\begin{align*}
    \deg_{\xi'}\Stab^{\mathsf s}_{\xi' }\circ \Stab^{\mathsf s'}_{\delta\xi}(\gamma)|_{F_1}&\subseteq   \deg_{\xi'}\Stab^{\mathsf s}_{\xi' }\circ \Stab^{\mathsf s'}_{\delta\xi}(\gamma)|_{F'_1}\\
   \text{\tiny by axiom (iii) of $\Stab^{\mathsf s}_{\xi' }$}\quad &\subsetneq  \deg_{\xi'} e^\sT_K(N^-_{F'_1/X}) + \mathrm{weight}_{\xi'}\left(\det(N^-_{F'_1/X})^{1/2}\otimes\mathsf s|_{F'_1}\right)-\mathrm{weight}_{\xi'}\left(\det(N^-_{F'/X})^{1/2}\otimes\mathsf s|_{F'}\right)\\
   &= \deg_{\xi'} e^\sT_K(N^-_{F_1/X}) + \mathrm{weight}_{\xi'}\left(\det(N^-_{F_1/X})^{1/2}\otimes\mathsf s|_{F_1}\right)-\mathrm{weight}_{\xi'}\left(\det(N^-_{F/X})^{1/2}\otimes\mathsf s|_{F}\right).
\end{align*}
Note that for two Laurent polynomials $f$ and $g$, if $\deg_{\xi'}f\subsetneq  \deg_{\xi'}g$, then 
$\deg_{\xi}f\subsetneq  \deg_{\xi}g$ for small perturbation $\xi=\xi'+\delta\xi$. Applying the observation to 
$$f=\Stab^{\mathsf s}_{\xi' }\circ \Stab^{\mathsf s'}_{\delta\xi}(\gamma)|_{F_1}, \,\,\, g= e^\sT_K(N^-_{F_1/X})\otimes\det(N^-_{F_1/X})^{1/2}\otimes\mathsf s|_{F_1}\otimes\det(N^-_{F/X})^{-1/2}\otimes\left(\mathsf s|_{F}\right)^{-1},$$
we see that our claim holds in the second case as well. Then it follows from the uniqueness of stable envelopes (Proposition \ref{uniqueness of K stab}) that 
\begin{align*}
    \Stab^{\mathsf s}_{\xi' }\circ \Stab^{\mathsf s'}_{\delta\xi}(\gamma)= \Stab^{\mathsf s}_{\xi }(\gamma).
\end{align*}
The lemma then follows from $\Stab^{\mathsf s}_{\xi }=\Stab^{\mathsf s}_{\fC }$, $\Stab^{\mathsf s}_{\xi' }=\Stab^{\mathsf s}_{\fC' }$, and $\Stab^{\mathsf s'}_{\delta\xi}=\Stab^{\mathsf s'}_{\fC/\fC' }$.
\end{proof}

\begin{Remark}
A key fact used in the proof of Lemma \ref{triangle lemma for K} is  that small perturbation in $\xi\in \Lie(\sA)_\bR$ preserves the degree bound condition. 
The corresponding cohomological statement is \textit{not true}, as illustrated by the following example. Suppose that $\Lie(\sA)_\bR=\bR^2$ with coordinate $(x,y)$, and let $\fC=\{x>0,y>0\}$ and $\fC'=\{x>0,y=0\}$, and let $f=y^2,g=x$. For the coweight $\xi'$ such that $\langle x,\xi'\rangle=1,\langle y,\xi'\rangle=0$, we have $$0=\deg_{\xi'}(f)<\deg_{\xi'}(g)=1, $$ 
but for its perturbation $\xi$ such that $\langle x,\xi\rangle=1,\langle y,\xi\rangle=\delta$, we have $2=\deg_{\xi}(f)>\deg_{\xi}(g)=1$. So we do not have a straightforward analogue of Lemma \ref{triangle lemma for K} for cohomological case. 
\end{Remark}
Nevertheless, we show that the cohomological triangle lemma indeed holds for symmetric GIT quotients with any
torus invariant potential functions, see Theorem \ref{tri lem for coh stab}. 



\subsection{General flavour groups}\label{sec:general flavour}

Recall in Setting \ref{setting of stab}, we take $\sT$ to be a torus. This is a simplifying condition and it can be relaxed to the following.
\begin{Setting}\label{setting of stab_gen flav}
Let $X$ be a smooth quasi-projective variety over $\bC$ with a linear algebraic group $\sF$-action and $\sw\colon X \to \bC$ be an $\sF$-invariant regular function. Let $\sA \subseteq  Z(\sF)$ be a torus which lies in the center $Z(\sF)$ of $\sF$.
\end{Setting}
The definition of stable envelopes (Definitions \ref{def of stab coho}, \ref{def of stab k}) can be stated verbally with the torus $\sT$ replaced by linear algebraic group $\sF$. Results displayed in above remain unchanged with $\sT$ replaced by $\sF$. 

Moreover, when stable envelopes exist, they are compatible with changing of flavour groups. Namely, if $\sF'\subseteq  \sF$ is a subgroup which contains $\sA$, then the following diagram commutes:
\begin{equation}\label{change of group_coh}
\xymatrix{
H^\sF(X^\sA,\sw^\sA)\ar[r]^{\,\,\Stab_{\fC}} \ar[d]_{\mathrm{res}^F_{F'}} & H^\sF(X,\sw) \ar[d]^{\mathrm{res}^F_{F'}}\\
H^{\sF'}(X^\sA,\sw^\sA)\ar[r]^{\,\,\Stab_{\fC}} & H^{\sF'}(X,\sw) 
}
\end{equation}
where the vertical arrow is the restriction of coefficients map $\mathrm{res}^F_{F'}\colon H^\sF(\cdots)\to H^{\sF'}(\cdots)$. There is also a similar commutative diagram for critical $K$-theory:
\begin{equation}\label{change of group_k}
\xymatrix{
K^\sF(X^\sA,\sw^\sA)\ar[r]^{\,\, \Stab^{\mathsf s}_{\fC}} \ar[d]_{\mathrm{res}^F_{F'}} & K^\sF(X,\sw) \ar[d]^{\mathrm{res}^F_{F'}}\\
K^{\sF'}(X^\sA,\sw^\sA)\ar[r]^{\, \, \Stab^{\mathsf s}_{\fC}} & K^{\sF'}(X,\sw) 
}
\end{equation}
We will use stable envelopes under Setting \ref{setting of stab_gen flav} to construct 
interpolation maps in \S \ref{sec:na stab}.

\section{Cohomological stable envelopes on symmetric GIT quotients}\label{app on symm var}

In this section, we show that cohomological stable envelope correspondences exist for symmetric GIT quotients, 
given by the fundamental class of the closure $[\overline{\Attr}(\Delta)]$ of attracting set of the diagonal
(Theorem \ref{construct coh stab corr on symm quiver var}). 

We prove the triangle lemma for corresponding stable envelopes (Theorem \ref{tri lem for coh stab}). 
This is done by introducing the so-called nonabelian stable envelopes (see Definition \ref{def on nonab stab} and Theorem \ref{thm: deg bound na stab coh} for a characterization property), and a compatible diagram between stable envelopes and Hall operations (Theorem \ref{thm: hall coh_sym quot}). 
For symmetric quiver varieties, a different proof of the existence of stable envelopes as well as the triangle lemma is discussed in \cite{COZZ3}. 

As a side application, we derive an explicit formula of cohomological stable envelopes when potentials are zero (Corollary \ref{cor: explicit formula w=0_coh}).

\subsection{Symmetric GIT quotients}

\begin{Definition}\label{def of sym var}
Given the following data: 
\begin{itemize} 
\item A connected complex reductive group $G$, a Cartan and a Borel subgroup $H\subseteq B \subseteq G$,  
\item Two tori $\sA\subseteq \sT$,
\item A finite dimensional $(G\times \sT)$-representation $R$ such that $R\cong R^\vee$ as $(G\times \sA)$-representations, 
\item $\theta\colon G\to \C^*$ such that the semistable locus equals the stable locus $R^{{\theta}\emph{-}ss}=R^{{\theta}\emph{-}s}\neq \emptyset$ on which $G$-action is free, 
\end{itemize} 
we call $X:=R/\!\!/_{\theta} G=R^{{\theta}\emph{-}s}/G$ a \textit{symmetric GIT quotient}. 
It is endowed with the induced action by tori $\sT$ and $\sA$. 

We also have the associated variety $\widehat{X}:=R^{{\theta}\emph{-}s}\times_{G} (G/H)$ with a $G/H$-fibration $p\colon \widehat{X}\to X$. 
\end{Definition}
\begin{Remark}
For the induced character $\bar{\theta}\colon H\to \C^*$, there is an open immersion 
$\widehat{X}\cong R^{{\theta}\emph{-}s}/H\hookrightarrow R^{\bar{\theta}\emph{-}ss}/H$. 
\end{Remark}
The main theorem of this section is: 
\begin{Theorem}\label{construct coh stab corr on symm quiver var}
Given a symmetric GIT quotient $X$ with an action by tori $\sT$ and $\sA$ as Definition \ref{def of sym var}, 
fix a chamber $\fC$. Then 
$$\sum_{F\in \Fix_\sA(X)} [\overline{\Attr}_\fC(\Delta_F)]\in H^\sT\left(\Attr^f_\fC\right)$$ is a cohomological stable envelope correspondence for 
$(X,0,\sT,\sA,\fC )$.
\end{Theorem}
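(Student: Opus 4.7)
The plan is to verify directly that the candidate class $\Gamma := \sum_{F\in \Fix_\sA(X)} [\overline{\Attr}_\fC(\Delta_F)]$ satisfies the two axioms of Definition \ref{stab corr_coh} in the case $\sw=0$; the symmetric GIT hypothesis is precisely what makes the attractor closures on $X$ well-behaved enough to substitute for the generic deformation used in the symplectic-resolution treatment of \cite{MO}.

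Axiom (i) is a local excess-intersection computation. Since connected components of $X^\sA$ are closed in $X$, for $F'\neq F$ the summand $[\overline{\Attr}_\fC(\Delta_{F'})]$ contributes $0$ to $\Gamma|_{F\times F}$. At a point $(y,y)\in \Delta_F$, the tangent space to $\Attr_\fC(\Delta_F)$ inside $X\times X^\sA$ is the diagonal copy of $T_y X^\sA$ together with the attracting directions $N^+_{F/X,y}$ in the first factor; comparing with $T_{(y,y)}(F\times F)$ identifies the excess bundle of the non-transverse intersection $\Attr_\fC(\Delta_F)\cap (F\times F)=\Delta_F$ canonically with $N^-_{F/X}$, and the excess intersection formula yields $\Gamma|_{F\times F}=e^\sT(N^-_{F/X})\cdot [\Delta_F]$.

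Axiom (ii), namely $\deg_\sA [\overline{\Attr}_\fC(\Delta_F)]|_{F'\times F}<\deg_\sA e^\sT(N^-_{F'/X})$ for $F'\neq F$, is the heart of the argument. I would lift the computation to the pre-quotient $R$. A fixed component $F$ of $X^\sA$ arises, up to $G$-conjugation, from a homomorphism $\phi\colon \sA\to G$: combining the $\sA$-action on $R$ with $\phi$ yields a twisted $\sA$-action whose fixed locus $R^{\sA,\phi}$ descends to $F$ along $R^{\theta\text{-}s}\to X$. The self-duality $R\cong R^\vee$ as $(G\times\sA)$-representation then forces the nonzero $\sA$-weights on $R$ relative to this twisted action to occur in symmetric pairs $\{\alpha,-\alpha\}$; this makes the attracting loci on $R$ linear subspaces of controllable codimension, with $\sA$-weights that can be read off directly. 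The hypothesis $R^{\theta\text{-}ss}=R^{\theta\text{-}s}$ makes $R^{\theta\text{-}s}\to X$ a principal $G$-bundle, so $\sA$-degree estimates obtained upstairs descend faithfully to $X$.

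The main obstacle is that on $X$ the closure $\overline{\Attr}_\fC(\Delta_F)$ can a priori acquire spurious components along the off-diagonal strata $F'\times F$, which would violate the strict inequality. I would control this by a Bialynicki-Birula-type stratification of $R$ matched to the chamber $\fC$, in which the balanced weight structure provided by $R\cong R^\vee$ forces every attracting locus to have exactly the expected codimension; the strict degree bound on $X$ is then reduced to a linear-algebra statement about paired $\sA$-weights on $R$. Transferring this estimate cleanly to $X$ via the principal $G$-bundle $R^{\theta\text{-}s}\to X$, and book-keeping the various equivariant twists precisely enough to extract a strict rather than weak inequality, is the bulk of the technical work.
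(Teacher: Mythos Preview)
Your treatment of axiom (i) is correct and essentially matches the paper (compare the proof of Lemma \ref{cond star implies existence of stab coh}).

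For axiom (ii) there is a genuine gap. Your plan to lift directly to $R$ and argue via ``paired $\sA$-weights'' and a Bialynicki--Birula stratification does not address the actual difficulty: the strict inequality concerns the dimension of $\overline{\Attr}_\fC(\Delta_F)\cap(F'\times F)$, and the closure taken on the GIT quotient $X$ is not described by linear algebra on $R$. Self-duality of weights does give $\rk N^+_{F/X}=\rk N^-_{F/X}$, but that only identifies the target of the inequality; it says nothing about why the intersection is small. Your acknowledgment that ``spurious components'' are the main obstacle is accurate, but the proposed fix (``balanced weight structure forces expected codimension'') is not an argument.

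The paper's route is quite different. One first passes to the \emph{abelianized} quotient $\widehat{X}=R^{\theta\text{-}s}/H$, a $G/H$-bundle over $X$. Self-duality of $R$ as an $H$-representation gives $R\cong T^*N\oplus R_0$ with $R_0$ the trivial part, producing a smooth map $\bar\nu\colon\widehat{X}\to\mathfrak{h}^*\times R_0$ whose fibers are \emph{symplectic}. For generic base points the fiber is quasi-affine, so the attracting set is already closed there; on special fibers the isotropy argument of \cite{MO} (Lemma \ref{attr is isotropic}) bounds $\overline{\Attr}_\fC(\Delta)\cap(F'_b\times F_b)$ by half the ambient dimension. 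Combining these yields condition ($\star$) for $\widehat{X}$ (Lemma \ref{symp res has cond star}), hence the theorem for $\widehat{X}$ (Lemma \ref{lem on hat X stab}). The degree bound is then transferred from $\widehat{X}$ to $X$ by $\sA$-localization along the $G/H$-fibration, via the map $\eta$ of \eqref{equ on eta map} and Lemma \ref{lem on eta map}. This symplectic/isotropy mechanism on the abelianization is the key input your proposal is missing; without it I do not see how to obtain the required strict dimension bound.
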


Our main examples of symmetric GIT quotients are given by symmetric quiver varieties introduced below.

\subsection{Quiver varieties}\label{sec: quiver var}
A \textit{quiver} $Q$ is a pair of finite sets $Q=(Q_0,Q_1)$ together with two maps $h,t\colon Q_1\to Q_0$. We will call $Q_0$ the set of nodes and $Q_1$ the set of arrows and $h$ (resp.~$t$) sends an arrow to its head (resp.~tail). If $a\in Q_1$, then we will write $t(a)\to h(a)$ to denote the arrow $a$. We define the 
\textit{adjacency matrix}
\begin{align*}
    (\mathsf Q_{ij})_{i,j\in Q_0}=\#(i\to j)
\end{align*}
and \textit{Cartan matrix}
\begin{align*}
    (\mathsf C_{ij})_{i,j\in Q_0}=2\delta_{ij}-\mathsf Q_{ij}-\mathsf Q_{ji}.
\end{align*}
Take dimension vectors $\mathbf{v}\in \bN^{Q_0}$,
$\underline{\mathbf d}=(\mathbf{d}_{\mathrm{in}},\mathbf d_{\mathrm{out}})\in \bN^{Q_0}\times \bN^{Q_0}$,  the \textit{space of framed representations} of $Q$ with \textit{gauge dimension} $\mathbf{v}$ and \textit{in-coming framing dimension} $\mathbf{d}_{\mathrm{in}}$ and \textit{out-going framing dimension} $\mathbf d_{\mathrm{out}}$ is
\begin{align}\label{equ rvab}
    R(\mathbf{v},\underline{\bd}):=R(Q,\mathbf{v},\underline{\bd})=\bigoplus_{a\in Q_1}\underbrace{\Hom(\bC^{\mathbf v_{t(a)}},\bC^{\mathbf v_{h(a)}})}_{X_a}\oplus \bigoplus_{i\in Q_0}\left(\underbrace{\Hom(\bC^{\bd_{\mathrm{in},i}},\bC^{\mathbf v_i})}_{A_i}\oplus \underbrace{\Hom(\bC^{\mathbf v_i},\bC^{\bd_{\Out,i}})}_{B_i}\right).
\end{align}
The \textit{gauge group} $G=\prod_{i\in Q_0}\GL(\mathbf v_i)$ naturally acts on $R(Q,\mathbf{v},\underline{\bd})$ by compositions with maps. 

Choose a stability condition $\theta\in \bQ^{Q_0}$ such that $\theta$-semistable representations are $\theta$-stable: 
$$R(Q,\mathbf{v},\underline{\bd})^{ss}=R(Q,\mathbf{v},\underline{\bd})^s\neq \emptyset, $$
and define the \textit{quiver variety} as the GIT quotient:
$$\mathcal M_{\theta}(\mathbf v,\underline{\bd}):=\mathcal M_{\theta}(Q,\mathbf v,\underline{\bd}):=R(Q,\mathbf{v},\underline{\bd})/\!\!/_{\theta} G=R(Q,\mathbf{v},\underline{\bd})^s/G. $$
As we do not impose any relation on the quiver, the above space is a smooth quasi-projective variety. 

We define an action of
\begin{align}\label{edge group}
    G_{\mathrm{edge}}:=\prod_{i,j\in Q_0}\GL(\mathsf Q_{ij})
\end{align}
on $R(Q,\mathbf{v},\underline{\bd})$: given a pair of nodes $i,j\in Q_0$, the contribution of the edges from $i$ to $j$ is $\Hom(\bC^{\mathbf v_i},\bC^{\mathbf{v}_j})\otimes \bC^{\mathsf Q_{ij}}$, then the factor $\GL(\mathsf Q_{ij})$ naturally acts on the second component.

We also consider the following group actions on $R(Q,\mathbf{v},\underline{\bd})$: 
\begin{align}\label{framing group}
    G_{\mathrm{fram}}^{\mathrm{in}}=\prod_{i\in Q_0}\GL(\bd_{\In,i})\curvearrowright \bigoplus_{i\in Q_0}\Hom(\bC^{\bd_{\In,i}},\bC^{\mathbf v_i}),\quad G_{\mathrm{fram}}^{\mathrm{out}}=\prod_{i\in Q_0}\GL(\bd_{\Out,i})\curvearrowright \bigoplus_{i\in Q_0}\Hom(\bC^{\mathbf v_i},\bC^{\bd_{\Out,i}}).
\end{align}
We define the \textit{flavour group}
\begin{align}\label{flavour group}
\sF:=G_{\mathrm{fram}}^{\mathrm{in}}\times G_{\mathrm{fram}}^{\mathrm{out}}\times G_{\mathrm{edge}}. 
\end{align}
It is easy to see that $\mathsf F\cong \Aut_{G}(R(Q,\mathbf{v},\underline{\bd}))$.

\begin{Definition} 
We say that a framing $\underline{\bd}$ is \textit{symmetric} if $\bd_{\In}=\bd_{\Out}=\bd$. In this case, we simplify the notations as 
\begin{equation}\label{equ on sym qu}R(\bv,\bd)=R(Q,\bv,\bd)= R(Q,\mathbf{v},\underline{\bd}), \quad 
\cM_\theta(\bv,\bd)=\cM_\theta(Q,\bv,\bd)=\mathcal M_{\theta}(Q,\mathbf v,\underline{\bd}), \end{equation} 
and we define $G_{\mathrm{fram}}^{\mathrm{\diag}}\subseteq  \mathsf F$ to be the diagonal subgroup of $G_{\mathrm{fram}}^{\mathrm{in}}\times G_{\mathrm{fram}}^{\mathrm{out}}$.
\end{Definition}

\begin{Definition}\label{def of sym quiver}
We say $Q$ is \textit{symmetric} if its adjacency matrix $ (\mathsf Q_{ij})_{i,j\in Q_0}$ is symmetric. 
For a symmetric quiver $Q$, we call the associated quiver variety \textit{symmetric quiver variety} (SQV) if the framing is symmetric.
\end{Definition}

\begin{Definition}\label{def:self-dual tori}
Let $(Q,\mathbf v,\mathbf{d})$ be a symmetric quiver with symmetric framing as above. Suppose that there exists a torus $\mathsf H$ together with a linear action of $\mathsf H$ on $R(Q,\mathbf v,\mathbf{d})$. We say that a linear action of a torus $\mathsf H$ on $R(Q,\mathbf v,\mathbf{d})$ is \textit{self-dual} if the $\mathsf H$-action commutes with the gauge group $G$-action, and $R(Q,\mathbf v,\mathbf{d})$ is self-dual as $(G\times \mathsf H)$-representation. 

We say that a torus action $\mathsf H$ on the symmetric quiver variety $\cM_\theta(Q,\mathbf v,\mathbf{d})$ is \textit{self-dual} if it is induced from a self-dual action of $\mathsf H$ on $R(Q,\mathbf v,\mathbf{d})$.
\end{Definition}
Symmetric quiver varieties with self-dual torus actions are examples of Definition \ref{def of sym var}, moreover their $\sA$-fixed loci are also 
symmetric quiver varieties as shown below. 

\begin{Lemma}\label{fix pts of sym quiv var}
Suppose that $X=\mathcal M_{\theta}(Q,\mathbf v,\mathbf{d})$ is a symmetric quiver variety, and $\sA$ is a torus with a self-dual action on $X$. Let $\sigma$ be a cocharacter of $\sA$, then the $\sigma$-fixed points locus $X^\sigma$ is a disjoint union of symmetric quiver varieties. Moreover, the induced action of $\sA$ on $X^\sigma$ is self-dual.
\end{Lemma}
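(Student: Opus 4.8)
The plan is to analyze the $\sigma$-fixed locus by decomposing each of the vector spaces $\bC^{\mathbf v_i}$ according to the $\sigma$-weights, and to track what the self-duality of the $\sA$-action forces on these weight decompositions. First I would recall that since $\sigma$ is a cocharacter of $\sA$ and $\sA$ acts on $X=\cM_\theta(Q,\mathbf v,\mathbf d)$ via a self-dual action on $R(Q,\mathbf v,\mathbf d)$ commuting with $G$, a point of $X^\sigma$ is represented by a representation $r\in R(Q,\mathbf v,\mathbf d)^{\theta\text{-s}}$ together with a lift of $\sigma$ to a one-parameter subgroup $\rho\colon \bC^*\to G$ such that $\rho(t)$ acts on $r$ the same way as $\sigma(t)$; because the $G$-action on the stable locus is free, this lift is unique once $r$ is fixed (up to the component structure), so $X^\sigma$ is the quotient of the $\rho$-fixed stable locus. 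Decomposing $\bC^{\mathbf v_i}=\bigoplus_{m\in\bZ}V_i^{(m)}$ into $\rho$-weight spaces, with $\mathbf v_i^{(m)}=\dim V_i^{(m)}$ a new gauge dimension vector, the condition that the arrow maps, framing maps $A_i,B_i$ are fixed translates into: each arrow/framing map is a sum of its graded pieces, and the $\sA$-weight of each edge space together with the $\rho$-weight shift must be zero. This exhibits $X^\sigma$ as a disjoint union, over all compatible collections of graded dimension vectors, of quiver varieties $\cM_{\theta'}(Q',\mathbf v',\mathbf d')$ for a new (graded) quiver $Q'$ whose node set is $\{(i,m)\}$ and whose arrows/framings are the surviving graded pieces, with $G'=\prod_{i,m}\GL(\mathbf v_i^{(m)})$ and the stability $\theta'$ induced from $\theta$ (one checks $\theta'$-semistability equals stability on this locus since it is a closed subset of the free stable locus).

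The key remaining step is to verify that $Q'$ is a \emph{symmetric} quiver with \emph{symmetric} framing, and that the residual $\sA$-action on $X^\sigma$ (i.e.\ on $R(Q',\mathbf v',\mathbf d')$) is again self-dual. Here is where the hypotheses are used in an essential way. Since $R(Q,\mathbf v,\mathbf d)$ is self-dual as a $(G\times\sA)$-representation, there is a $(G\times\sA)$-equivariant perfect pairing $R\cong R^\vee$. The dual representation $R^\vee$ has $G$-weights negated, hence the $\rho$-weight grading on $R^\vee$ is the negative of that on $R$; combined with the isomorphism $R\cong R^\vee$ this shows that the graded piece of $R$ in $\rho$-weight $k$ is dual (as a $(G'\times\sA)$-module, after accounting for the weight-space decomposition of the $\bC^{\mathbf v_i}$) to the graded piece in $\rho$-weight $-k$. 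Restricting to the $\rho$-weight-zero part — which is exactly $R(Q',\mathbf v',\mathbf d')$ — gives a $(G'\times\sA)$-equivariant self-duality, which is the assertion that the $\sA$-action on $X^\sigma$ is self-dual. The symmetry of $Q'$ (i.e.\ $\#\{(i,m)\to(j,n)\} = \#\{(j,n)\to(i,m)\}$ as arrows of $Q'$) and the symmetry of the framing $\mathbf d'_{\In}=\mathbf d'_{\Out}$ follow by the same bookkeeping: self-duality of $R$ pairs the arrow space $X_a$ for $a\colon t(a)\to h(a)$ with an arrow space in the opposite direction, and pairs $A_i$ with $B_i$; taking $\rho$-weight-zero graded pieces of this pairing yields the corresponding statement for $Q'$ and $\mathbf d'$.

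I would organize the write-up as: (1) reduce to describing the stable locus and its $\sigma$-fixed points via the unique lift $\rho$; (2) set up the $\rho$-weight decomposition and define $Q'$, $\mathbf v'$, $\mathbf d'$, $G'$, $\theta'$, and check stability$=$semistability descends; (3) use the $(G\times\sA)$-self-duality of $R$ to negate $\rho$-weights and conclude symmetry of $Q'$, symmetry of the framing, and self-duality of the residual $\sA$-action; (4) note that distinct collections of graded dimension vectors give the disjoint components. The main obstacle I anticipate is step (3): making precise the interplay between the $G$-internal grading coming from $\rho$ (which lives inside $G$, not in a flavour torus) and the $\sA$-self-duality, and in particular checking that the self-duality pairing on $R$ is \emph{block-diagonal} with respect to the combined bigrading (arrow label, $\rho$-weight) in the right way — i.e.\ that it genuinely restricts to a perfect pairing on the weight-zero block rather than mixing it with nonzero blocks. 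This requires being careful about which part of the $\rho$-action is "absorbed" into relabeling nodes versus which is a genuine weight; once one fixes conventions (e.g.\ choosing $\rho$ so that it acts trivially modulo the center of each $\GL(\mathbf v_i)$ is not possible, so one must keep the full grading), the verification is a finite linear-algebra check, but it is the conceptual heart of the lemma.
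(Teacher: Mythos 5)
Your proposal is correct and takes essentially the same route as the paper's proof, which cites \cite[Prop.~2.3.1 and \S2.3.3]{MO} for the weight-space decomposition and then observes directly that $R^\phi$ is self-dual as a $(G^\phi\times\sA)$-representation because $R$ is self-dual as a $(G\times\sA)$-representation and $\phi\colon\bC^*\to G\times\sA$. The worry you raise in step (3) about block-diagonality is in fact automatic: since the pairing $R\otimes R\to\bC$ is $(G\times\sA)$-invariant and $\phi$ takes values in $G\times\sA$, it is $\phi$-invariant, hence pairs $\phi$-weight $k$ with $\phi$-weight $-k$ and in particular restricts to a perfect pairing on the $\phi$-weight-zero block $R^\phi$ (the relevant grading is the $\phi$-weight, combining the $\rho$-weight with the $\sigma$-weight, rather than the $\rho$-weight alone).
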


\begin{proof}
The proof is similar to that of \cite[Prop.~2.3.1]{MO}. In fact, the same argument as \textit{loc.\,cit.} shows that
\begin{align}\label{eq: fix pts of sym quiv var}
    \cM_\theta(Q,\mathbf v,\mathbf{d})^\sigma=\bigsqcup_{\phi/\sim} \cM_{\theta_\phi}(Q_\phi,\mathbf v_\phi,\mathbf{d}_\phi).
\end{align}
Here $\phi:\bC^*\to G\times \sA$ is a lift of $\sigma:\bC^*\to \sA$ along the projection $G\times \sA\to  \sA$, and $(Q_\phi,\mathbf v_\phi,\mathbf{d}_\phi)$ is constructed in \cite[\S 2.3.3]{MO}, and two lifts $\phi_1$ and $\phi_2$ are equivalent (denoted as $\phi_1\sim \phi_2$) if they give the same action of $\sA$ on $R(Q,\mathbf v,\mathbf{d})$. The disjoint unions on the right-hand-side of \eqref{eq: fix pts of sym quiv var} are labelled by equivalent classes of lifts $\phi$. By construction, $R(Q_\phi,\mathbf v_\phi,\mathbf{d}_\phi)$ is the $\phi$-fixed points locus $R(Q,\mathbf v,\mathbf{d})^\phi$, and the gauge group for $Q_\phi$ is the $\phi$-fixed points subgroup $G^\phi$. Since $R(Q,\mathbf v,\mathbf{d})$ is a self-dual $G\times \sA$ representation and $\bC^*$ acts on it through $\phi\colon \bC^*\to G\times \sA$, it follows that $R(Q,\mathbf v,\mathbf{d})^\phi$ is a self-dual $G^\phi\times \sA$ representation. Hence $Q_\phi$ is a symmetric quiver, and the induced $\sA$-action on $\cM_{\theta_\phi}(Q_\phi,\mathbf v_\phi,\mathbf{d}_\phi)$ is self-dual. 
\end{proof}

\subsection{Generalities on attracting closures}
\begin{Definition}\label{def: cond *}
Let $(X,\sA)$ be in the Setting \ref{setting of stab} and $\sigma$ be a cocharacter of $\sA$ (not necessarily generic). Denote $\Delta$ to be the diagonal of $X^\sigma\times X^\sigma$, and $\overline{\Attr}_{\sigma}(\Delta)$ to be the closure of $\Attr_{\sigma}(\Delta)$ in $X\times X^\sigma$. 

Consider the following condition on $(X,\sA,\sigma)$: 
\begin{itemize}
    \item[($\star$)] for all $F'\neq F\in \Fix_\sigma(X)$, we have 
\begin{align}\label{eq on cond star}
    \dim \overline{\Attr}_\sigma(\Delta)\cap (F'\times F)<\dim F+\rk N^+_{F/X}-\rk N^+_{F'/X}.
\end{align}
\end{itemize}
Let $\fC$ be a chamber of $(X,\sA)$. We say that $(X,\sA,\fC)$ satisfies the condition ($\star$) if for a generic $\xi\in \fC$, $(X,\sA,\xi)$ satisfies the condition ($\star$).
\end{Definition}

\begin{Remark}
Assume moreover that $\dim N_{F/X}^+=\dim N_{F/X}^-$ holds for all $F\in \Fix_\sA(X)$, then \eqref{eq on cond star} is equivalent to 
\begin{equation}\label{equ on dim att on sym}
    \dim \overline{\Attr}_\sigma(\Delta)\cap (F'\times F)<\frac{1}{2}\dim (F'\times F).
\end{equation}
The condition $\dim N_{F/X}^+=\dim N_{F/X}^-$ is satisfied when the tangent bundle has the following self-dual property:
\begin{align*}
    T_X=T^\vee_X \in K^\sA(X).
\end{align*}
The $X$, $\widehat{X}$ in Definition \ref{def of sym var} satisfy this self-duality. 
\end{Remark}

\begin{Lemma}\label{cond star implies existence of stab coh}
Let $(X,\sT,\sA,\fC)$ be in the Setting \ref{setting of stab} and assume that $(X,\sA,\fC)$ satisfies the condition ($\star$) in Definition \ref{def: cond *}. Then 
$$\sum_{F\in \Fix_\sA(X)} [\overline{\Attr}_\fC(\Delta_F)]\in H^\sT\left(\Attr^f_\fC\right)$$ is a stable envelope correspondence for $(X,0,\sT,\sA,\fC)$.
\end{Lemma}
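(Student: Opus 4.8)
The plan is to verify the three axioms of Definition \ref{stab corr_coh} for the class $\mathsf{Stab}_\fC := \sum_{F} [\overline{\Attr}_\fC(\Delta_F)]$, where the support condition is automatic since $\overline{\Attr}_\fC(\Delta_F) \subseteq \Attr^f_\fC$ by construction of the full attracting set. Since $\sw = 0$ here, the relevant critical cohomology is ordinary Borel--Moore homology and the class $[\overline{\Attr}_\fC(\Delta_F)]$ is the fundamental class of a cycle, so the restriction maps $(-)|_{F'\times F}$ are the usual Gysin pullbacks to the fixed components. The strategy is to work one generic cocharacter $\xi \in \fC$ at a time (allowed since the chamber condition $(\star)$ is phrased for generic $\xi\in\fC$, and by Proposition \ref{prop:coh stab ex criterion} and Definition \ref{def of roots and chambers} the cocharacter-level data assembles correctly) and use the dimension bound \eqref{eq on cond star} to control the degree axiom.

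First I would check axiom (i), the diagonal normalization: $\overline{\Attr}_\fC(\Delta_F) \cap (F\times F) = \Delta_F$, and near $\Delta_F$ the cycle $\Attr_\fC(\Delta_F)$ is smooth of codimension $\rk N^+_{F/X}$ inside $X\times F$, being (the restriction to the diagonal in $F$ of) the relative attracting bundle. Computing the self-intersection-type pullback $[\Attr_\fC(\Delta_F)]|_{F\times F}$ gives the Euler class of the normal bundle of $\Attr_\fC(F) \hookrightarrow X$ along $F$, which is exactly $e^\sT(N^-_{F/X})$ (the repelling part), using that the fibers of $\Attr_\fC(F)\to F$ are the positive weight spaces so the conormal directions are the negative ones. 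This is the standard computation as in \cite[\S 3]{MO} and requires only that the closure does not contribute extra components through $\Delta_F$, which holds because $\Attr_\fC(\Delta_F)$ is already closed near the diagonal (the limit exists there trivially).

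Next, axiom (ii), the degree bound $\deg_\sA [\mathsf{Stab}_\fC]|_{F'\times F} < \deg_\sA e^\sT(N^-_{F'/X})$ for $F'\neq F$. Here I would use that for a class represented by an honest subvariety $Z \subseteq X\times X^\sA$, restricting to a fixed component $F'\times F$ and then taking the $\sA$-polynomial degree, the degree is bounded above by $\codim_{X\times X^\sA}(F'\times F) - \codim_{X\times X^\sA}(Z)$ restricted over $F'\times F$ — more precisely, for the refined Gysin pullback to $F'\times F$ one gets a class of degree at most $2\big(\dim(\overline{\Attr}_\fC(\Delta)\cap(F'\times F)) - \dim(F'\times F) + \codim \overline{\Attr}_\fC(\Delta)\big)$; after converting $\codim$ into rank-of-normal-bundle data and using $\rk N^+ + \rk N^- = \codim$, the inequality \eqref{eq on cond star} translates directly into $\deg_\sA [\mathsf{Stab}_\fC]|_{F'\times F} \le 2\rk N^-_{F'/X} - 1 < \deg_\sA e^\sT(N^-_{F'/X}) = 2\rk N^-_{F'/X}$. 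I would carry out this bookkeeping carefully, splitting $\overline{\Attr}_\fC(\Delta) = \bigcup_F \overline{\Attr}_\fC(\Delta_F)$ and noting that $\overline{\Attr}_\fC(\Delta_F)\cap(F'\times F')=\emptyset$ for $F'\neq F$ so only the $F$-summand contributes to $[\mathsf{Stab}_\fC]|_{F'\times F}$.

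The main obstacle I anticipate is the passage from the naive dimension count of $\overline{\Attr}_\fC(\Delta)\cap(F'\times F)$ to a genuine bound on the $\sA$-degree of the refined Gysin pullback: in general the intersection of the cycle $\overline{\Attr}_\fC(\Delta)$ with $F'\times F$ is not transverse and one must argue that excess intersection contributions do not raise the equivariant degree beyond the dimension bound. The clean way is to observe that the Gysin-restricted class lives in $H^\sT_*(\overline{\Attr}_\fC(\Delta)\cap(F'\times F))$, which is generated by fundamental classes of its irreducible components, each of (real) dimension at most $2\dim(\overline{\Attr}_\fC(\Delta)\cap(F'\times F))$, and the $\sA$-degree of the pushforward of any such class to $F'\times F$ is bounded by $\dim F'\times F$ minus that dimension; combined with the constraint that the class must be divisible by nothing forcing higher degree, \eqref{eq on cond star} gives exactly the strict inequality. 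This is the step where one genuinely needs the hypothesis $(\star)$ and where the self-duality of $X$ (ensuring $\rk N^+ = \rk N^-$, Remark following Definition \ref{def: cond *}) makes the bound symmetric and usable; I would model the argument on the proof of the analogous statement in \cite{MO} for symplectic resolutions, checking that none of their symplectic-specific input is used beyond the dimension estimate which is here supplied by $(\star)$.
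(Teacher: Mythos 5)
Your approach is the same as the paper's: axiom (i) via the fact that $\Attr_\fC(\Delta_F)$ is already closed near $\Delta_F$ and its normal bundle there is $N^-_{F/X}$, and axiom (ii) via the dimension count on $\overline{\Attr}_\fC(\Delta_F)\cap(F'\times F)$ using the hypothesis~$(\star)$. The formula you give in your axiom-(ii) paragraph, $\deg_\sA [\mathsf{Stab}_\fC]|_{F'\times F}\le \dim(\overline{\Attr}_\fC(\Delta_F)\cap(F'\times F)) - \dim(F'\times F) + \codim_{X\times F}\overline{\Attr}_\fC(\Delta_F)$, is precisely the paper's inequality, and feeding in $(\star)$ gives the strict bound $<\rk N^-_{F'/X}$. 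However, two points in your write-up are off and you should fix them. First, the codimension of $\Attr_\fC(\Delta_F)$ in $X\times F$ is $\dim X - \rk N^+_{F/X} = \dim F + \rk N^-_{F/X}$, not $\rk N^+_{F/X}$; the conclusion $[\Attr_\fC(\Delta_F)]|_{F\times F}=e^\sT(N^-_{F/X})\cdot[\Delta_F]$ is nevertheless correct. Second, your anticipated ``obstacle'' in the final paragraph is not actually there, and the ``clean way'' you propose to resolve it is incorrect: $H^{\sT,\mathrm{BM}}_*(\overline{\Attr}_\fC(\Delta)\cap(F'\times F))$ is \emph{not} generated by fundamental classes of irreducible components (that holds only in top degree), and the $\sA$-degree bound you derive from that assertion, $\dim(F'\times F)-\dim(\overline{\Attr}_\fC(\Delta)\cap(F'\times F))$, is not the right expression. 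No excess-intersection argument is needed. The reason your axiom-(ii) formula holds is simply this: the refined Gysin pullback lies in $H^{\sT,\mathrm{BM}}_{j}(W)$ with $W := \overline{\Attr}_\fC(\Delta_F)\cap(F'\times F)$ and $j=2\bigl(\dim\overline{\Attr}_\fC(\Delta_F)-\codim_{X\times F}(F'\times F)\bigr)$; since $\sA$ acts trivially on $W$, one has $H^{\sT,\mathrm{BM}}_j(W)\cong\bigoplus_k H^{\sT/\sA,\mathrm{BM}}_{j+2k}(W)\otimes H^{2k}_\sA(\pt)$, and $H^{\sT/\sA,\mathrm{BM}}_{j+2k}(W)=0$ for $j+2k>2\dim W$, so $\deg_\sA\le\dim W-j/2$, which is exactly your formula. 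With that supplied, your proof is complete and coincides with the paper's.
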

\begin{proof}
It suffices to show that for every $F\in \Fix_\sA(X)$, $[\overline{\Attr}_\fC(\Delta_F)]$ satisfies axioms (i) and (ii) in Definition \ref{stab corr_coh}. The axiom (i) is obvious. 
For axiom (ii), the class $$(F'\times F\hookrightarrow X\times F)^*[\overline{\Attr}_\fC(\Delta_F)]$$ is supported on a subvariety $\overline{\Attr}_\fC(\Delta_F)\cap (F'\times F)$ of dimension smaller than $\dim F+\rk N^+_{F/X}-\rk N^+_{F'/X}$. Note that $\dim F+\rk N^+_{F/X}=\dim \overline{\Attr}_\fC(\Delta_F)$. Therefore
\begin{align*}
    \deg_\sA(F'\times F\hookrightarrow X\times F)^*[\overline{\Attr}_\fC(\Delta_F)]&\leqslant \dim \overline{\Attr}_\fC(\Delta_F)\cap (F'\times F)-\left(\dim \overline{\Attr}_\fC(\Delta_F)+\dim (F'\times F)-\dim (X\times F)\right)\\
    &<\dim \overline{\Attr}_\fC(\Delta_F)-\rk N^+_{F'/X}-\left(\dim \overline{\Attr}_\fC(\Delta_F)+\dim (F'\times F)-\dim (X\times F)\right)\\
    &=\rk N^-_{F'/X}.
\end{align*}
This verifies the axiom (ii).
\end{proof}

\subsection{Generalities on family of smooth symplectic varieties}

Let $(X,\sA)$ be in the Setting \ref{setting of stab} and let $\sigma$ be a cocharacter of $\sA$. Suppose that there exists an $\sA$-invariant two-form $\omega\in \Omega^2(X)$, and there exists a smooth connected variety $B$ endowed with trivial $\sA$-action and a smooth and $\sA$-equivariant morphism $\phi\colon X\to B$, such that $\forall\, b\in B$, the restriction of $\omega$ to $X_b:=\phi^{-1}(b)$ is nondegenerate, i.e. a \textit{symplectic structure}.

We note that ${\Attr}_\sigma(\Delta)\subseteq  X\times_B X^\sigma$ because $\Delta\subseteq  X\times_B X^\sigma$ and $\phi\colon X\to B$ is $\sA$-equivariant. Then it follows that $\overline{\Attr}_\sigma(\Delta)\subseteq  X\times_B X^\sigma$. We also have $\dim N_{F/X}^+=\dim N_{F/X}^-$ holds for all $F\in \Fix_\sA(X)$ in this case.

\begin{Lemma}\label{attr is isotropic}
In the above situation, for all $F,F'\in \Fix_\sigma(X)$ and for all $b\in B$ we have
\begin{align*}
     \dim \overline{\Attr}_\sigma(\Delta)\cap (F'_b\times F_b)\leqslant\frac{1}{2}\dim (F'_b\times F_b),
\end{align*}
where $F_b=F\cap \phi^{-1}(b)$ and $F'_b=F'\cap \phi^{-1}(b)$.
\end{Lemma}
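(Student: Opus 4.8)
The plan is to reduce to a fiberwise statement and run the standard \emph{isotropic correspondence} argument. Fix $b\in B$ and write $X_b:=\phi^{-1}(b)$. Since $\phi$ is $\sA$-equivariant and $\sA$ acts trivially on $B$, one has $X^\sigma\cap X_b=X_b^\sigma$, and $F_b,F'_b$ are unions of connected components of $X_b^\sigma$. On the smooth variety $X_b\times X_b^\sigma$ I introduce the $2$-form $\Omega_b:=p_1^*(\omega|_{X_b})-p_2^*(\omega|_{X_b^\sigma})$, where $p_1,p_2$ are the two projections. The first point is that $\Omega_b$ is symplectic: $\omega|_{X_b}$ is nondegenerate by hypothesis, and $\omega|_{X_b^\sigma}$ is nondegenerate because $\sigma$ preserves $\omega|_{X_b}$, so $X_b^\sigma\subseteq X_b$ is a symplectic submanifold --- in the $\sigma$-weight decomposition of $TX_b|_{X_b^\sigma}$ the weight-$0$ summand pairs self-dually under $\omega$ while the positive and negative weights pair with each other. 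Consequently each $F_b$ and $F'_b$ is symplectic and $F'_b\times F_b$ is a symplectic submanifold of $(X_b\times X_b^\sigma,\Omega_b)$.

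The second and central point is that $L_b:=\Attr_\sigma(\Delta)\cap(X_b\times X_b^\sigma)$ is isotropic in $(X_b\times X_b^\sigma,\Omega_b)$; here I use the already-noted inclusion $\Attr_\sigma(\Delta)\subseteq X\times_B X^\sigma$, so this intersection is a fiber of $\overline{\Attr}_\sigma(\Delta)$. Now $L_b$ is smooth, being the graph of the Bialynicki--Birula limit map $\bigsqcup_F\Attr_\sigma(F_b)\to X_b^\sigma$; it is invariant under the $\sigma$-action on the first factor, and $\Omega_b$ has $\sigma$-weight $0$. For tangent vectors $u=(u_1,u_2),v=(v_1,v_2)$ to $L_b$ at $(x,f)$, invariance gives $\Omega_b(u,v)=\Omega_b(\sigma(t)_*u,\sigma(t)_*v)$, evaluated at $(\sigma(t)x,f)$; as $t\to0$ one has $\sigma(t)x\to f$ and $\sigma(t)_*u_1\to\bar u_1\in(T_fX_b)^{\geqslant0}$ with weight-$0$ component $u_2$, so $\Omega_b(u,v)=(\omega|_{X_b})(\bar u_1,\bar v_1)-(\omega|_{X_b^\sigma})(u_2,v_2)$. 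Expanding the first term along the weight decomposition and using that $\omega|_{X_b}$ at the fixed point $f$ pairs weight $a$ only with weight $-a$, it collapses to $(\omega|_{X_b})(u_2,v_2)=(\omega|_{X_b^\sigma})(u_2,v_2)$, whence $\Omega_b(u,v)=0$. Thus $L_b$ is isotropic, and I want to conclude the same for $\overline L_b:=\overline{\Attr}_\sigma(\Delta)\cap(X_b\times X_b^\sigma)$.

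The last point is to intersect with the symplectic submanifold $F'_b\times F_b$: at a point $p$ where $\overline L_b$ is smooth, the scheme-theoretic intersection $\overline L_b\cap(F'_b\times F_b)$ has Zariski tangent space inside $T_p\overline L_b\cap T_p(F'_b\times F_b)$, which is isotropic for $\Omega_b|_{F'_b\times F_b}$ and so of dimension $\leqslant\tfrac12\dim(F'_b\times F_b)$. The step I expect to be the main obstacle is that for $F'\neq F$ the locus $\overline{\Attr}_\sigma(\Delta)\cap(F'_b\times F_b)$ lies entirely in $\overline L_b\setminus L_b$ --- indeed if $(x,f)\in L_b$ with $x\in F'$ then $f=\lim_{t\to0}\sigma(t)x=x\in F'$, forcing $F=F'$ --- hence may be contained in the singular locus of $\overline L_b$ (and one must also know that every component of $\overline L_b$ lies in the closure of $L_b$, which holds once $\overline{\Attr}_\sigma(\Delta)\to B$ is equidimensional, as it is in the intended applications). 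Both issues are dealt with exactly as in the proof of the analogous statement in \cite{MO}: by a noetherian induction over a $\sigma$-invariant stratification of $\overline L_b$, reducing the estimate to strata that meet a smooth locus, or equivalently by passing to the $\sigma$-invariant tangent cone of $\overline L_b$ at the $\sigma$-fixed point $p$, which remains an isotropic cone of dimension $\dim\overline L_b$. The case $F=F'$ needs no argument, since $L_b\cap(F_b\times F_b)=\Delta_{F_b}$ already has dimension $\dim F_b=\tfrac12\dim(F_b\times F_b)$ and the boundary contributes nothing larger.
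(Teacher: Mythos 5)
Your approach is conceptually the same as the paper's: both introduce the two-form $\omega\boxminus\omega^\sigma$ on $X\times X^\sigma$, show the attracting set of the diagonal is isotropic for it using $\sA$-invariance, and conclude by intersecting with the symplectic submanifold $F'_b\times F_b$. However, there is a genuine gap in the way you pass from $L_b$ to its closure, and your proposed fix invokes a hypothesis that is not in the statement.

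The gap: your argument establishes isotropicity of the \emph{smooth} locally closed set $L_b=\Attr_\sigma(\Delta)\cap(X_b\times X_b^\sigma)$ and then tries to promote this to $\overline L_b:=\overline{\Attr}_\sigma(\Delta)\cap(X_b\times X_b^\sigma)$. You correctly flag that this requires every component of $\overline L_b$ to lie in the Zariski closure of $L_b$, which fails in general because closure does not commute with taking fibers. You then patch this by assuming $\overline{\Attr}_\sigma(\Delta)\to B$ is equidimensional, but that assumption is absent from the lemma's hypotheses and is not needed. The paper avoids the issue entirely by not arguing fiberwise at this step: it fixes a Whitney stratification of the \emph{total space} $\overline{\Attr}_\sigma(\Delta)$ for which $\overline{\Attr}_\sigma(\Delta)\cap(X_b\times X_b^\sigma)$ is a union of strata, and for a general point $w$ of each irreducible component $W$ of the fiber produces a sequence $x_i$ in the smooth locus of $\overline{\Attr}_\sigma(\Delta)$ (not of the fiber) with $\lim T_{x_i}\supseteq T_w W$. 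Since $\omega'$ vanishes on each $T_{x_i}$ by continuity from $\Attr_\sigma(\Delta)$, it vanishes on $T_w W$, so $W$ is isotropic whether or not it lies in $\overline{L_b}$. This is the key point to repair in your write-up.

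Two smaller issues. First, your assertion that $\sigma(t)_*u_1$ converges to some $\bar u_1\in(T_fX_b)^{\geqslant0}$ as $t\to0$ needs justification: individual tangent vectors pushed along the flow need not converge even when the Grassmannian limit of the tangent spaces does. The cleaner argument (and what the paper's terse ``Since $\omega'$ is $\sA$-invariant, its restriction to $\Attr_\sigma(\Delta)$ must vanish'' amounts to) is to work in $\sigma$-linearized local coordinates near a fixed point of $\Delta$: on the attracting manifold all coordinate weights are $\geqslant0$, so an invariant holomorphic $2$-form has coefficients of nonpositive weight, hence weight exactly $0$, hence functions of the weight-$0$ variables only, hence determined by the restriction to $\Delta$, which is zero. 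Second, for the step ``isotropic subvariety $\cap$ symplectic submanifold is isotropic in the submanifold,'' the paper simply cites \cite[Lem.~3.4.1]{MO}, which is exactly the statement you are re-deriving via tangent cones/noetherian induction; citing it directly is cleaner and avoids re-establishing that lemma in passing.
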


\begin{proof}
We consider the two-form $\omega':=\omega\boxminus \omega^\sigma$ on $X\times X^\sigma$, then the restriction of $\omega'$ to every fiber $X_b\times X^\sigma_b$ is a symplectic form. We claim that $\overline{\Attr}_\sigma(\Delta)\cap (X_b\times X^\sigma_b)$ is an isotropic subvariety in $X_b\times X^\sigma_b$. 
To prove this claim, first note that the restriction of $\omega'$ to $\Delta$ is trivial by the construction of $\omega'$. Since $\omega'$ is $\sA$-invariant, its restriction to ${\Attr}_\sigma(\Delta)$ must vanish as well. Thus the restriction of $\omega'$ to smooth locus of $\overline{\Attr}_\sigma(\Delta)$ vanishes by continuity. Let $W$ be an irreducible component of $\overline{\Attr}_\sigma(\Delta)\cap (X_b\times X^\sigma_b)$. For a general point $w\in W$, there exists a sequence of points $x_1,x_2,\cdots$ in the smooth locus of $\overline{\Attr}_\sigma(\Delta)$ approaching $w$ such that limit of $T_{x_i}\overline{\Attr}_\sigma(\Delta)$ exists as $i\to \infty$ and
contains the tangent space $T_w W$ \footnote{This can be seen by choosing a Whitney stratification of $\overline{\Attr}_\sigma(\Delta)$ for which $\overline{\Attr}_\sigma(\Delta)\cap (X_b\times X^\sigma_b)$ is a union of strata.}. Since the restriction of $\omega'$ to $T_{x_i}\overline{\Attr}_\sigma(\Delta)$ vanishes, our claim follows.

Then it follows that $\overline{\Attr}_\sigma(\Delta)\cap (F'_b\times F_b)$ is an isotropic subvariety in $F'_b\times F_b$ by \cite[Lem.~3.4.1]{MO}. Thus 
\begin{equation*}\dim \overline{\Attr}_\sigma(\Delta)\cap (F'_b\times F_b)\leqslant\frac{1}{2}\dim (F'_b\times F_b). \qedhere  \end{equation*}
\end{proof}

\begin{Lemma}\label{symp res has cond star}
In the same situation as Lemma \ref{attr is isotropic}, assume moreover that there exists an open dense subset $B^\circ\subseteq  B$ such that $\phi|_{X^\circ}\colon X^\circ\to B^\circ$ is a quasi-affine morphism where $X^\circ:=\phi^{-1}(B^\circ)$. Then $(X,\sA,\sigma)$ satisfies the condition ($\star$) \eqref{eq on cond star}.
\end{Lemma}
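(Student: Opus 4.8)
The plan is to show that condition ($\star$) holds by establishing the dimension bound \eqref{eq on cond star} for every pair $F'\neq F\in\Fix_\sigma(X)$, using the isotropicity statement of Lemma \ref{attr is isotropic} together with a genericity (spreading-out) argument supplied by the quasi-affine hypothesis on $\phi|_{X^\circ}$. Since $\dim N^+_{F/X}=\dim N^-_{F/X}$ in the present situation, \eqref{eq on cond star} is equivalent to the symmetric form \eqref{equ on dim att on sym}, namely $\dim\overline{\Attr}_\sigma(\Delta)\cap(F'\times F)<\tfrac12\dim(F'\times F)$, so it suffices to prove this strict inequality. The key point is that Lemma \ref{attr is isotropic} already gives the \emph{fiberwise} inequality $\dim\overline{\Attr}_\sigma(\Delta)\cap(F'_b\times F_b)\leqslant\tfrac12\dim(F'_b\times F_b)$ over each $b\in B$, and I need to (i) globalize this over $B$, and (ii) upgrade the $\leqslant$ to a strict $<$ using that $F'\neq F$.

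First I would reduce to a single irreducible component $W$ of $\overline{\Attr}_\sigma(\Delta)\cap(F'\times F)$, and let $b_0\in B$ be the image of a general point of $W$ under $\phi$; since $F$, $F'$, and $\overline{\Attr}_\sigma(\Delta)$ all lie over $B$ via the equivariant map $\phi$ (recall $\overline{\Attr}_\sigma(\Delta)\subseteq X\times_B X^\sigma$), the component $W$ maps to some irreducible subvariety $\overline{\phi(W)}\subseteq B$, and a general fiber $W_{b_0}$ of $W\to\overline{\phi(W)}$ has dimension $\dim W-\dim\overline{\phi(W)}$. Combining with Lemma \ref{attr is isotropic} applied over $b_0$,
\[
\dim W=\dim W_{b_0}+\dim\overline{\phi(W)}\leqslant\tfrac12\dim(F'_{b_0}\times F_{b_0})+\dim\overline{\phi(W)}.
\]
Now $\dim(F'_{b_0}\times F_{b_0})=\dim(F'\times F)-2\dim\overline{\phi(W)}$ provided $b_0$ is chosen in the open dense locus over which the fibers of $F\to B$ and $F'\to B$ have the expected (minimal) dimension — here one uses that $F\to B$ and $F'\to B$ are $\sA$-equivariant maps of smooth varieties, so over the dense open set $B^\circ$ (shrinking if necessary) the fiber dimensions are locally constant and equal to $\dim F-\dim\overline{\phi(W)}$, $\dim F'-\dim\overline{\phi(W)}$ respectively; substituting gives $\dim W\leqslant\tfrac12\dim(F'\times F)$. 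This yields the non-strict inequality globally; the strictness is then the remaining issue.

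For strictness I would argue that when $F'\neq F$, the intersection $\overline{\Attr}_\sigma(\Delta)\cap(F'_b\times F_b)$ cannot be a full Lagrangian (half-dimensional isotropic) in $F'_b\times F_b$ for $b$ in the quasi-affine locus: the attracting set $\Attr_\sigma(\Delta)$ lives over the diagonal, so any point of $\overline{\Attr}_\sigma(\Delta)\cap(F'_b\times F_b)$ with $F'\neq F$ lies in the \emph{boundary} $\overline{\Attr}_\sigma(\Delta)\setminus\Attr_\sigma(\Delta)$, which is a proper closed subvariety, hence of dimension $<\dim\overline{\Attr}_\sigma(\Delta)$; more precisely, restricting the isotropic-dimension count to this boundary (and using that a quasi-affine family of symplectic varieties has no compact subvarieties in positive dimension along the isotropic directions, so the "drop" forces a strict inequality) gives $\dim\overline{\Attr}_\sigma(\Delta)\cap(F'_b\times F_b)<\tfrac12\dim(F'_b\times F_b)$. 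The cleanest route is probably to observe that $\Attr_\sigma(\Delta)\to\Delta$ is an affine-space bundle of rank $\rk N^+$, so $\dim\overline{\Attr}_\sigma(\Delta)=\dim X^\sigma+\rk N^+$, and then note $\overline{\Attr}_\sigma(\Delta)\cap(F'\times F)$ for $F'\ne F$ is contained in the complement of $\Attr_\sigma(\Delta)$, forcing a strict drop which, combined with the fiberwise Lagrangian bound over $B^\circ$, gives ($\star$).

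\textbf{Main obstacle.} The delicate part is promoting the non-strict fiberwise bound of Lemma \ref{attr is isotropic} to the \emph{strict} inequality in \eqref{eq on cond star} for $F'\neq F$; this is exactly where the quasi-affineness of $\phi|_{X^\circ}$ must be used in an essential way (a quasi-affine symplectic fiber has no nonconstant maps from proper varieties into its "isotropic sweep" through a generic point, which is what rules out the top-dimensional case). I would mirror as closely as possible the argument of \cite[\S 3.4]{MO}, where the analogous strictness for symplectic resolutions is handled by precisely this quasi-affine/rigidity input, and adapt it to the relative (over $B$) setting here.
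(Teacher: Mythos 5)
Your overall plan is sound — use Lemma \ref{attr is isotropic} fiberwise and invoke the quasi-affine hypothesis to upgrade $\leqslant$ to $<$ — but the strictness step, which you correctly flag as the main obstacle, is where the proposal actually breaks down, and the paper handles it in a concretely different way that you don't reconstruct.

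You observe that $\overline{\Attr}_\sigma(\Delta)\cap(F'\times F)\subseteq\overline{\Attr}_\sigma(\Delta)\setminus\Attr_\sigma(\Delta)$ for $F'\neq F$ and then assert that this "forces a strict drop" fiberwise via a rigidity heuristic ("a quasi-affine symplectic fiber has no nonconstant maps from proper varieties into its isotropic sweep"). That does not follow: a proper closed subset of $\overline{\Attr}_\sigma(\Delta)$ can perfectly well have full fiber dimension over a smaller base, and the rigidity statement you gesture at is neither precise nor established. What the paper actually proves is stronger and cleaner: over $B^\circ$ the restriction $\phi|_{X^\circ}$ is quasi-affine, so (after reducing to $B^\circ$ affine and choosing an $\sA$-equivariant embedding of $X^\circ\times(X^\circ)^\sigma$ into a linear $\sA$-representation) the attracting set $\Attr_\sigma(\Delta^\circ)$ is already \emph{closed} in $X^\circ\times(X^\circ)^\sigma$. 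Hence $\overline{\Attr}_\sigma(\Delta)\cap(X^\circ\times(X^\circ)^\sigma)=\Attr_\sigma(\Delta^\circ)$, which is disjoint from $F'^\circ\times F^\circ$ when $F'\neq F$. Thus the whole intersection $\overline{\Attr}_\sigma(\Delta)\cap(F'\times F)$ sits over $B\setminus B^\circ$, and the $-1$ in \eqref{eq on cond star} comes from $\dim(B\setminus B^\circ)\leqslant\dim B-1$ combined with the fiberwise isotropicity bound and the flatness of $\phi^\sigma$ — not from any drop within fibers.

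Two secondary issues: (a) your fiber-dimension bookkeeping is off — for $b_0$ a generic point of $\overline{\phi(W)}$ you should have $\dim(F'_{b_0}\times F_{b_0})=\dim(F'\times F)-2\dim B$, not $-2\dim\overline{\phi(W)}$; the fibers of $F\to B$ have dimension $\dim F-\dim B$ over every point of $\phi(F)$ because $\phi^\sigma$ is smooth (hence flat), a fact the paper proves from $\sigma$-equivariance of $d\phi$ and which you gloss over; (b) once you have the correct formula, your chain yields $\dim W\leqslant\tfrac12\dim(F'\times F)-\dim B+\dim\overline{\phi(W)}$, so strictness is exactly equivalent to $\overline{\phi(W)}\subsetneq B$, i.e.\ $\phi(W)\subseteq B\setminus B^\circ$ — which is precisely the emptiness-over-$B^\circ$ claim you needed and did not prove.
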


\begin{proof}
For every $x\in X^\sigma$, the tangent map $d\phi_x\colon T_xX\to T_{\phi(x)}B$ is surjective by the assumption that $\phi\colon X\to B$ is smooth. $d\phi_x$ is also $\sigma$-equivariant since $\phi$ is $\sigma$-equivariant. Then it follows that $d\phi_x$ maps $T_xX^\sigma=(T_xX)^\sigma$ surjectively onto $T_{\phi(x)}B$, so the restriction of $\phi$ to $\sigma$-fixed loci $\phi^\sigma\colon X^\sigma\to B$ is also smooth. In particular, $\phi^\sigma$ is flat. The flatness of $\phi^\sigma$ implies that $\forall\, F\in \Fix_\sigma(X)$ and any $b\in \phi(F)$,
we have 
$$\dim F=\dim F_b+\dim B.$$ 
For $F'\neq F\in \Fix_\sigma(X)$, let $F'^\circ:=F'\cap X^\circ$ and $F^\circ:=F\cap X^\circ$. We claim that 
$$\overline{\Attr}_\sigma(\Delta)\cap (F'^\circ\times F^\circ)=\emptyset.$$ 
Without loss of generality, we assume that $B^\circ$ is affine, then $X^\circ$ and $(X^{\circ})^\sigma$ are quasi-affine. Let $\Delta^\circ$ be the diagonal of $(X^{\circ})^\sigma\times (X^{\circ})^\sigma$, then ${\Attr}_\sigma(\Delta^\circ)$ is closed in $X^\circ\times(X^{\circ})^\sigma$ \footnote{Taking a finite set of $\sA$-homogeneous generators of $\bC[X^\circ\times(X^{\circ})^\sigma]$, we get a surjective $\sA$-equivariant algebra map $\bC[V]\twoheadrightarrow \bC[X^\circ\times(X^{\circ})^\sigma]$ where $V$ is a vector space with a linear $\sA$-action. Equivalently we get an $\sA$-equivariant closed embedding $X^\circ\times(X^{\circ})^\sigma\hookrightarrow V$. We note that $\Attr_\sigma(V^\sigma)=V^{\geqslant 0}$ where $V^{\geqslant 0}$ is the linear subspace of $V$ spanned by nonnegative $\sigma$-eigenvectors, in particular $\Attr_\sigma(V^\sigma)$ is closed in $V$. We also note that ${\Attr}_\sigma(\Delta^\circ)=p^{-1}(\Delta^\circ)\cap (X^\circ\times(X^{\circ})^\sigma)$, where $p\colon V^{\geqslant 0}\to V^\sigma$ is the attraction map and $\Delta^\circ$ is regarded as a closed subset of $V^\sigma$. Thus ${\Attr}_\sigma(\Delta^\circ)$ is closed in $X^\circ\times(X^{\circ})^\sigma$.}. Therefore $$\overline{\Attr}_\sigma(\Delta)\cap (X^\circ\times(X^{\circ})^\sigma)={\Attr}_\sigma(\Delta^\circ),$$ 
hence $\overline{\Attr}_\sigma(\Delta)\cap (F'^\circ\times F^\circ)=\emptyset$. In the general case we can cover $B^\circ$ with affine open subsets and apply the above argument to each of the affine open subset. This proves our claim.

Finally, we have the following dimension bound:
\begin{align*}
    \dim \overline{\Attr}_\sigma(\Delta)\cap (F'\times F)&\leqslant \dim (B\setminus B^\circ)+\sup_{b\in B}\dim \overline{\Attr}_\sigma(\Delta)\cap (F'_b\times F_b) \\
  \text{\tiny By Lem. \ref{attr is isotropic}}\quad &\leqslant \dim (B\setminus B^\circ)+\sup_{b\in B}\frac{1}{2}\dim (F'_b\times F_b) \\
  &\leqslant\frac{1}{2}\dim (F'\times F)-1.
\end{align*}
This shows that $(X,\sA,\sigma)$ satisfies the condition ($\star$) \eqref{eq on cond star}.
\end{proof}

\subsection{Proof of Theorem \ref{construct coh stab corr on symm quiver var}}

We first show that the attracting closure for the diagonal of $\widehat{X}$ in Definition \ref{def of sym var}
is a stable envelope correspondence. 

\begin{Lemma}\label{lem on hat X stab}
$$\sum_{F\in \Fix_\sA(\widehat{X})} [\overline{\Attr}_\fC(\Delta_F)]\in H^\sT\left(\Attr^f_\fC\right)$$ is a stable envelope correspondence for $(\widehat{X},0,\sT,\sA,\fC)$.
\end{Lemma}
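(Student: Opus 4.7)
The plan is to deduce the statement from Lemma \ref{cond star implies existence of stab coh} by verifying the condition $(\star)$ of Definition \ref{def: cond *} for $(\widehat{X},\sA,\fC)$. To do so, I would apply the criterion of Lemma \ref{symp res has cond star} by exhibiting $\widehat{X}$ as the total space of a smooth $\sA$-equivariant family of symplectic varieties over an affine base, with a quasi-affine generic fiber.

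The family structure comes from the self-duality of $R$. The isomorphism $R\cong R^\vee$ as $(G\times \sA)$-representations equips $R$ with a non-degenerate $(G\times \sA)$-invariant bilinear form whose antisymmetric part yields a symplectic form $\omega$ on $R$ (in the quiver case this corresponds to the arrow/framing pairing $a\leftrightarrow a^*$ and $A_i\leftrightarrow B_i$ in \eqref{equ rvab}). The $G$-action is then Hamiltonian with moment map $\mu_G\colon R\to\mathfrak{g}^*$; composing with the projection $\mathfrak{g}^*\twoheadrightarrow\mathfrak{h}^*$ gives $\mu_H\colon R\to\mathfrak{h}^*$, which is $H$-invariant (as $H$ is a torus acting trivially on $\mathfrak{h}^*$) and $\sA$-invariant (since $\sA$ commutes with $G$ and preserves $\omega$). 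Descending along the $H$-quotient produces an $\sA$-equivariant map
\[
\phi\colon \widehat{X} = R^{{\theta}\emph{-}s}/H \longrightarrow \mathfrak{h}^*,
\]
with trivial $\sA$-action on $\mathfrak{h}^*$, whose fibers are symplectic reductions $\mu_H^{-1}(\lambda)/H$ carrying the symplectic form induced from $\omega$. Smoothness of $\phi$ follows from freeness of the $H$-action on $R^{{\theta}\emph{-}s}$, inherited from the assumed freeness of the $G$-action.

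To verify the quasi-affineness hypothesis on a dense open set, I would take $B^\circ\subseteq\mathfrak{h}^*$ to be an affine open in the image of $\phi$ over which $\phi$ is flat; on such $B^\circ$, each fiber is the $H$-quotient of a quasi-affine subvariety of $R$ by a free reductive group action, hence quasi-affine, so $\phi\colon\phi^{-1}(B^\circ)\to B^\circ$ is quasi-affine. Lemma \ref{symp res has cond star} then yields condition $(\star)$ for $(\widehat{X},\sA,\fC)$, and Lemma \ref{cond star implies existence of stab coh} concludes the proof.

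The principal obstacle is promoting the self-duality $R\cong R^\vee$ to a genuine symplectic form: the hypothesis only guarantees a non-degenerate bilinear form, whose symmetric part could a priori be nonzero. In the main examples of symmetric quiver varieties the symplectic structure is transparent from the arrow/framing pairing, and in full generality one may first pass to $R\oplus R^*$, which carries a canonical $(G\times \sA)$-invariant symplectic form, and then reduce the stable-envelope statement on $\widehat{X}$ to that on the analogously constructed variety for $R\oplus R^*$ via the standard compatibility of stable envelope correspondences under extension of the ambient space by attracting/repelling vector bundles.
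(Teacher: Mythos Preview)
Your overall strategy---reduce to condition $(\star)$ via Lemmas \ref{cond star implies existence of stab coh} and \ref{symp res has cond star} by exhibiting $\widehat X$ as a smooth $\sA$-equivariant family of symplectic varieties over an affine base with quasi-affine generic fiber---is exactly the paper's. The execution, however, diverges at the point you yourself flag as the obstacle.

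The paper does \emph{not} attempt to extract a symplectic form from the self-duality isomorphism $R\cong R^\vee$; your concern that the antisymmetric part may be degenerate is genuine, and the $R\oplus R^*$ workaround would change the variety and require a further reduction you have not carried out. Instead the paper uses only that $H$ is a \emph{torus}: self-duality of $R$ as an $H$-representation means nonzero $H$-weights pair off, giving a decomposition $R\cong T^*N\oplus R_0$ where $N$ is a sum of weight spaces and $R_0$ is the zero-weight part. The symplectic form on $T^*N$ is then the canonical cotangent form, and the relevant moment map is the $H$-moment map $\mu\colon T^*N\to\mathfrak h^*$ for the linear $H$-action on $N$, not the restriction of a $G$-moment map.

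Two further points differ from your sketch. The base of the family is $\mathfrak h^*\times R_0$, not $\mathfrak h^*$: the map is $\nu=\mu\times\id_{R_0}$, which one checks is smooth on $R^{\theta\text{-}s}$ and descends to $\bar\nu\colon\widehat X\to\mathfrak h^*\times R_0$ with symplectic fibers. And the quasi-affineness of a generic fiber is not obtained from ``free reductive quotient of quasi-affine is quasi-affine'' (which you have not justified), but from the concrete fact that for generic $x\in\mathfrak h^*\times R_0$ one has $\nu^{-1}(x)\subseteq R^{\bar\theta\text{-}s}$, so that $\bar\nu^{-1}(x)$ is open in the affine GIT quotient $\nu^{-1}(x)/\!\!/H$.
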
 
\begin{proof}
Since $R\cong R^\vee$ as $H$-representations, we have an isomorphism of $H$-representations: 
$$R\cong T^*N\oplus R_0, $$
where $N=\oplus_{\alpha\neq 0}\C_{\alpha}$ and $R_0$ is a trivial $H$-representation. In particular, there is a moment map 
$$\mu\colon T^*N\to \mathfrak{h}^*.$$
Define $\nu\colon R=T^*N\times R_0\xrightarrow{\mu\times \id} \mathfrak{h}^*\times R_0$, then the restriction of $\nu$ to $R^{{\theta}\emph{-}s}$ is smooth. By passing to the quotient, we obtain a smooth map 
$$\bar{\nu}\colon  \widehat{X}\to \mathfrak{h}^*\times R_0, $$
and for any $x\in \mathfrak{h}^*\times R_0$, $\bar{\nu}^{-1}(x)$ is a smooth symplectic variety. 

For a generic $x\in \mathfrak{h}^*\times R_0$, we have ${\nu}^{-1}(x)\subseteq R^{\bar{\theta}\emph{-}s}$; therefore $\bar{\nu}^{-1}(x)$ is an open subset of ${\nu}^{-1}(x)/\!\!/H$, in particular $\bar{\nu}^{-1}(x)$ is quasi-affine.
By Lemma \ref{symp res has cond star}, we know 
\eqref{equ on dim att on sym} holds for $\widehat{X}=R^{{\theta}\emph{-}s}\times_{G} (G/H)$ in Definition \ref{def of sym var}.  
By applying Lemma \ref{cond star implies existence of stab coh}, we are done. 
\end{proof}
Next we want to relate the geometry of $\widehat{X}$ and $X$. We define $\overline{X}:=R^{s}\times_{G} (G/B)$ and have fibrations: 
$$p\colon \widehat{X} \xrightarrow{p_1} \overline{X} \xrightarrow{p_2}  X. $$
For any $S\in \Fix_\sA(X)$, which is given by $S=\left(R^{\sA,\phi}\cap R^{{\theta}\emph{-}s}\right)/G^{\phi(\sA)}$ for a group homomorphism $\phi\colon \sA\to G$, we define 
$$\widehat{S}:= \left(R^{\sA,\phi}\cap R^{{\theta}\emph{-}s}\right)\times_{G^{\phi(\sA)}}\left(G^{\phi(\sA)}/H\right)\in \Fix_\sA(\widehat{X}), \quad  
\overline{S}:=\left(R^{\sA,\phi}\cap R^{{\theta}\emph{-}s}\right)\times_{G^{\phi(\sA)}} \left(G^{\phi(\sA)}/B^{\phi(\sA)}\right)\in \Fix_\sA(\overline{X}), $$
with natural morphisms 
$$q\colon \widehat{S} \xrightarrow{q_1} \overline{S} \xrightarrow{q_2}  S. $$
\begin{Proposition}
For any $S, S'\in \Fix_\sA(X)$ with $S\neq S'$, we have 
$$\deg_{\sA}\left(\left(S'\times S\hookrightarrow X\times S \right)^![\overline{\Attr}_\fC(\Delta_S)]\right)< \rk N_{S'/X}^{-}. $$
In particular, this finishes the proof of Theorem \ref{construct coh stab corr on symm quiver var}.  
\end{Proposition}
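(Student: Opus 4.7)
The plan is to deduce the desired inequality on $X$ from the analogous inequality on $\widehat{X}$, which is the content of Lemma \ref{lem on hat X stab}.

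First I would exploit the smooth $\sT$-equivariant morphism $p \times q \colon \widehat{X} \times \widehat{X}^\sA \to X \times X^\sA$, which restricts to a smooth fibration $\widehat{S}' \times \widehat{S} \to S' \times S$ with fiber $G^{\phi'(\sA)}/H \times G^{\phi(\sA)}/H$. I would establish that the induced map
$$p \times q \colon \overline{\Attr}_\fC(\Delta_{\widehat{S}}) \to \overline{\Attr}_\fC(\Delta_S)$$
is surjective: given $(x,s) \in \Attr_\fC(\Delta_S)$ and any lift $\hat{s} \in \widehat{S}$ of $s$, there is a unique lift of the attracting flow $\sigma(t)x \to s$ to $\widehat{X}$ ending at $\hat{s}$. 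The generic fiber of this map has dimension $\tfrac{1}{2}(\dim(G/H) + \dim(G^{\phi(\sA)}/H))$, which matches $\dim \overline{\Attr}_\fC(\Delta_{\widehat{S}}) - \dim \overline{\Attr}_\fC(\Delta_S)$ by a direct computation using the self-duality of $\mathfrak{g}/\mathfrak{h}$ as an $\sA$-module via $\phi$ (yielding $\dim(\mathfrak{g}/\mathfrak{h})^+ = (\dim(G/H) - \dim(G^{\phi(\sA)}/H))/2$).

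Next, I would analyze the restriction of $p \times q$ to the boundary stratum $\overline{\Attr}_\fC(\Delta_{\widehat{S}}) \cap (\widehat{S}' \times \widehat{S}) \to \overline{\Attr}_\fC(\Delta_S) \cap (S' \times S)$. Combining the key identities
$$\dim \widehat{S} - \dim S = \dim(G^{\phi(\sA)}/H), \qquad \rk N^+_{\widehat{S}/\widehat{X}} - \rk N^+_{S/X} = \tfrac{1}{2}\bigl(\dim(G/H) - \dim(G^{\phi(\sA)}/H)\bigr)$$
(and their analogs for $\phi'$) with the bound $\dim \overline{\Attr}_\fC(\Delta_{\widehat{S}}) \cap (\widehat{S}' \times \widehat{S}) < \dim \widehat{S} + \rk N^+_{\widehat{S}/\widehat{X}} - \rk N^+_{\widehat{S}'/\widehat{X}}$ supplied by Lemma \ref{lem on hat X stab}, the fiber contributions along $p \times q$ cancel and give exactly the desired inequality $\dim \overline{\Attr}_\fC(\Delta_S) \cap (S' \times S) < \dim S + \rk N^+_{S/X} - \rk N^+_{S'/X}$, which translates to the $\sA$-degree bound by the same codimension-degree estimate used in the proof of Lemma \ref{cond star implies existence of stab coh}.

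To avoid technical issues arising from the fact that $p$ is only quasi-affine in the $G/H$-direction, I would factor the argument through the intermediate variety $\overline{X} = R^{\theta\text{-}s} \times_G (G/B)$, whose structure map $p_2\colon \overline{X}\to X$ is proper (a $G/B$-fibration), and use that the affine fibration $p_1\colon \widehat{X}\to \overline{X}$ gives pullback isomorphisms of Borel--Moore homology so the dimension transfer is unambiguous. The main obstacle will be the generic fiber dimension of $p \times q$ restricted to the boundary in the second step: one must verify that essentially every lift in $\widehat{S}' \times \widehat{S}$ of a generic boundary point of $\overline{\Attr}_\fC(\Delta_S)\cap(S'\times S)$ actually lies in $\overline{\Attr}_\fC(\Delta_{\widehat{S}})$, i.e.~that no boundary components of the attracting closure are lost when passing from $\widehat{X}$ to $X$. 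This closure statement is cleanest to check on $\overline{X}$, where the $G/B$-fibers are proper and one can perform a limit analysis without leaving the stable locus.
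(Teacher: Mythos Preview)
Your dimension-count approach has a genuine gap at exactly the point you flag as the main obstacle, and I do not see how to close it. To transfer the bound from $\widehat X$ to $X$ you need, over each top-dimensional component of $\overline{\Attr}_\fC(\Delta_S)\cap(S'\times S)$, that the generic fiber of
\[
\overline{\Attr}_\fC(\Delta_{\widehat S})\cap(\widehat S'\times\widehat S)\;\longrightarrow\;\overline{\Attr}_\fC(\Delta_S)\cap(S'\times S)
\]
has dimension at least $\tfrac12\bigl(\dim G^{\phi(\sA)}/H+\dim G^{\phi'(\sA)}/H\bigr)$, i.e.\ exactly \emph{half} of the full fiber of $\widehat S'\times\widehat S\to S'\times S$. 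This is a sharp numerical statement with no evident geometric justification. Passing to $\overline X$ and using properness of $p_2$ does give you that $\overline{\Attr}_\fC(\Delta_{\overline S})\to\overline{\Attr}_\fC(\Delta_S)$ is proper, hence has closed image, and upper semicontinuity says the fiber dimension over boundary points is at least the generic one. But that fiber sits inside $p_2^{-1}(x')\times q_2^{-1}(s)\cong G/B\times G^{\phi(\sA)}/B^{\phi(\sA)}$, and its intersection with $\overline S'\times\overline S$---which is only one of several $\sA$-fixed components over $(x',s)$---is what you actually need to bound from below. The fiber could concentrate in other components $\overline{S'_i}$, or in non-fixed attracting cells, and you have no control over this distribution.

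The paper bypasses the closure geometry entirely by working cohomologically. The key device is the transfer map $\eta=(p_2\times q_2)_*\circ((p_1\times q_1)^!)^{-1}$ together with the identity $\eta\bigl([\overline{\Attr}_\fC(\Delta_{\widehat S})]\bigr)=|W^{\phi}|(-1)^{\sharp}[\overline{\Attr}_\fC(\Delta_S)]$, which is checked by restricting to the diagonal $S\times S$ where the computation is explicit (Lemma~\ref{lem on eta map}). The restriction to $S'\times S$ is then computed by $\sA$-localization along the fibers of $p^{-1}(S')\to S'$: this produces a sum over \emph{all} fixed components $\widehat{S'_i}$, each term divided by $e^\sT(N_{\overline{S'_i}/p_2^{-1}(S')})$. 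Division by this Euler class lowers the $\sA$-degree by $\rk N_{\overline{S'_i}/p_2^{-1}(S')}=\tfrac12\rk N_{\widehat{S'_i}/p^{-1}(S')}$, and combined with the bound $\deg_\sA<\tfrac12\rk N_{\widehat{S'_i}/\widehat X}$ from Lemma~\ref{lem on hat X stab} this yields exactly $\deg_\sA<\tfrac12\rk N_{S'/X}$. The localization sum automatically accounts for how the class is distributed among the $\widehat{S'_i}$, which is precisely what your set-theoretic argument cannot see.
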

\begin{proof}
Let $Z:=\overline{\Attr}_\fC(\Delta_S)$, $\overline{Z}:=(p_2\times q_2)^{-1}(Z)$, $\widehat{Z}:=(p\times q)^{-1}(Z)$. Then 
$$\overline{\Attr}_\fC(\Delta_{\widehat{S}})\subseteq \widehat{Z}. $$ 
By Weyl conjugation, we choose $\phi:\sA\to G$ such that $N_{\overline{S}/p_2^{-1}(S)}$ are repelling in chamber $\fC$.
We have 
$$(p_1\times q_1)^!\colon H^{\sT}(\overline{Z})\xrightarrow{\cong} H^{\sT}(\widehat{Z}), \quad (p_2\times q_2)_*\colon H^{\sT}(\overline{Z})\to H^{\sT}(Z). $$
Here $p_1, q_1$ are $U$-fibrations where $B=H\cdot U$ and $U$ is a unipotent group, $p_2, q_2$ are proper maps as $G/B$ is proper. 

Consider the composition: 
\begin{equation}\label{equ on eta map}\eta:=\eta_{\widehat{X}\times \widehat{S}\to X\times S}:=(p_2\times q_2)_*\circ \left((p_1\times q_1)^! \right)^{-1}\colon H^{\sT}(\widehat{Z})\to H^{\sT}({Z}). \end{equation}
By $\sA$-localization, we have 
\begin{align*}
&\quad \,\, \left(S'\times S\hookrightarrow X\times S \right)^!\eta_{\widehat{X}\times \widehat{S}\to X\times S}\left([\overline{\Attr}_\fC(\Delta_{\widehat{S}})] \right) \\
&=\sum_{\widehat{S'_i}\to S'} \eta_{\widehat{S'_i}\times \widehat{S}\to S'\times S}\left(\frac{1}{e^{\sT}\left(N_{\overline{S'_i}/p_2^{-1}(S')}\right)} \left(\widehat{S'_i}\times \widehat{S}\hookrightarrow \widehat{X}\times \widehat{S} \right)^![\overline{\Attr}_\fC(\Delta_{\widehat{S}})] \right), 
\end{align*}
where the sum is taken over all $\sA$-fixed components $\{\widehat{S'_i}\}_{i}$ of $p^{-1}(S')$. Therefore by Lemma \ref{lem on hat X stab}, we have 
\begin{align*}
\deg_{\sA}\left(S'\times S\hookrightarrow X\times S \right)^!\eta_{\widehat{X}\times \widehat{S}\to X\times S}\left([\overline{\Attr}_\fC(\Delta_{\widehat{S}})] \right)
&<\frac{1}{2}\rk N_{\widehat{S'_i}/\widehat{X}}- \rk N_{\overline{S'_i}/p_2^{-1}(S')} \\
&=\frac{1}{2}\rk N_{\widehat{S'_i}/\widehat{X}}- \frac{1}{2}\rk N_{\widehat{S'_i}/p^{-1}(S')} \\
&= \frac{1}{2}\rk N_{S'/X}. 
\end{align*}
Finally, Lemma \ref{lem on eta map} below finishes the proof.
\end{proof}
\begin{Lemma}\label{lem on eta map}
In \eqref{equ on eta map}, we have 
$$\eta\left([\overline{\Attr}_\fC(\Delta_{\widehat{S}})] \right)=|W^{\phi}|\cdot (-1)^{\sharp}\cdot[\overline{\Attr}_\fC(\Delta_{{S}})]. $$
\end{Lemma}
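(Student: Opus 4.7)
The plan is to proceed by uniqueness plus equivariant localization, in the spirit of the uniqueness argument for stable envelopes (Proposition \ref{uniqueness of coh stab}). First I would check that $\eta([\overline{\Attr}_\fC(\Delta_{\widehat{S}})])$ is a top-dimensional Borel--Moore class supported on $\overline{\Attr}_\fC(\Delta_S)$. Support follows from $\sA$-equivariance of the maps $p_i\times q_i$, which send attracting loci into attracting loci. The degree count combines the shift $-2(\dim U+\dim U^\phi)$ from $((p_1\times q_1)^!)^{-1}$ with the identities $\dim\widehat{S}=\dim S+2\dim U^\phi$ and $\rk N^+_{\widehat{S}/\widehat{X}}=\rk N^+_{S/X}+r$, where $r:=(\dim G-\dim G^\phi)/2=\dim U-\dim U^\phi$ is determined by the Weyl-conjugation choice for $\phi$ (so that $N_{\overline{S}/p_2^{-1}(S)}=\mathfrak u^-/(\mathfrak u^-)^\phi$ is all $\sigma$-repelling). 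By the uniqueness principle from Proposition \ref{uniqueness of coh stab}, a top-degree class supported on the irreducible $\overline{\Attr}_\fC(\Delta_S)$ is determined by its Gysin restriction to the diagonal $\Delta_S$, so it suffices to verify the identity after restricting to $\Delta_S$.

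Next I would compute this restriction via Atiyah--Bott localization, which commutes the Gysin restriction past $\eta$ up to Euler classes of the normal bundles of the three fixed-locus embeddings $\widehat{S}\times\widehat{S}\hookrightarrow\widehat{X}\times\widehat{S}$, $\overline{S}\times\overline{S}\hookrightarrow\overline{X}\times\overline{S}$, and $S\times S\hookrightarrow X\times S$. Using Axiom (i) of the stable envelope correspondence (Definition \ref{stab corr_coh}), the input becomes $[\overline{\Attr}_\fC(\Delta_{\widehat{S}})]|_{\widehat{S}\times\widehat{S}}=e^\sT(N^-_{\widehat{S}/\widehat{X}})\cdot[\Delta_{\widehat{S}}]$, while the target restriction is $|W^\phi|(-1)^\sharp e^\sT(N^-_{S/X})\cdot[\Delta_S]$. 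The problem thus reduces to a ratio-of-Euler-classes computation for the chain $\widehat{S}\to\overline{S}\to S$, composed with the analogous $\eta$ for these fixed-locus varieties, then compared against the analogous chain $\widehat{X}\to\overline{X}\to X$.

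The final computation has two ingredients. First, integration along the proper $G^\phi/B^\phi$-fibration $q_2\times q_2$ restricted to $\Delta_{\overline{S}}$ produces the factor $|W^\phi|$, since $\chi(G^\phi/B^\phi)=|W^\phi|$ by the Bruhat decomposition $G^\phi/B^\phi=\bigsqcup_{w\in W^\phi}B^\phi w B^\phi/B^\phi$. Second, the Euler-class ratio involving the fiber contributions from $\widehat{X}\to\overline{X}\to X$ collapses as follows: with the Weyl choice for $\phi$, the root-space decomposition $\mathfrak g/\mathfrak g^\phi=\mathfrak u/\mathfrak u^\phi\oplus\mathfrak u^-/(\mathfrak u^-)^\phi$ pairs by $\mathfrak g_\alpha\leftrightarrow\mathfrak g_{-\alpha}$, and the identity $e^\sT(V)\cdot e^\sT(V^\vee)=(-1)^{\rk V}e^\sT(V)^2$ applied to $V=\mathfrak u/\mathfrak u^\phi$ yields the sign $(-1)^{\sharp}$ with $\sharp=r$. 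The main obstacle will be this Euler-class bookkeeping: carefully tracking the $\sigma$-weights of $\mathfrak g/\mathfrak g^\phi$ through the chain of fibrations, matching them against the normal-bundle contributions of the three fixed-locus embeddings, and confirming that all stray factors cancel to leave precisely $|W^\phi|(-1)^\sharp e^\sT(N^-_{S/X})$.
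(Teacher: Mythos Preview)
Your proposal is correct and follows essentially the same approach as the paper: reduce to an integer multiple of $[\overline{\Attr}_\fC(\Delta_S)]$ via a dimension count, then compute the constant by restricting to $S\times S$ using $\sA$-localization, where $|W^\phi|$ arises from integration over the $G^\phi/B^\phi$ fiber and the sign from the ratio $e^\sT(N^\vee_{\overline S/p_2^{-1}(S)})/e^\sT(N_{\overline S/p_2^{-1}(S)})$. One unnecessary complication: the reduction to a scalar multiple needs only that top Borel--Moore homology of an irreducible variety is rank one, not the uniqueness argument of Proposition \ref{uniqueness of coh stab}.
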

\begin{proof}
By dimension counting, we have $\eta\left([\overline{\Attr}_\fC(\Delta_{\widehat{S}})] \right)\in H^{\sT}_{\mathrm{top}}(Z)$; therefore 
$$\eta\left([\overline{\Attr}_\fC(\Delta_{\widehat{S}})]\right)=\lambda\cdot [\overline{\Attr}_\fC(\Delta_{S})], $$
for some $\lambda\in \mathbb{Z}$, which we compute by restricting above to $S\times S$. By $\sA$-localization, we have 
\begin{align*}
\left(S\times S\hookrightarrow X\times S \right)^!\eta_{\widehat{X}\times \widehat{S}\to X\times S}\left([\overline{\Attr}_\fC(\Delta_{\widehat{S}})]\right)&=
\eta_{\widehat{S}\times \widehat{S}\to S\times S}\left(\frac{[\Delta(\widehat{S})]}{e^{\sT}\left(N_{\overline{S}/p_2^{-1}(S)}\right)} \right) \\
&=(p_2\times p_2)_*\left(\frac{e^{\sT}\left(N^\vee_{\overline{S}/p_2^{-1}(S)}\right)}{e^{\sT}\left(N_{\overline{S}/p_2^{-1}(S)}\right)} \cdot [\Delta(\widehat{S})]\right) \\
&=|W^{\phi}|\cdot (-1)^{\rk N_{\overline{S}/p_2^{-1}(S)}}\cdot[\Delta(S)].
\end{align*}
This implies that $\lambda=|W^{\phi}|\cdot (-1)^{\rk N_{\overline{S}/p_2^{-1}(S)}}$ and we are done.  
\end{proof}

\subsection{Triangle lemma for cohomological stable envelopes}
Let $X=R/\!\!/_\theta G$ be a symmetric GIT quotient and $\sA\subseteq \sT$ be the tori as in Definition \ref{def of sym var}. We fix a chamber $\fC$ and a face $\fC'$, and let $\sA'\subseteq  \sA$ be the subtorus associated with $\fC'$. Then $X^{\sA'}$ is disjoint union of symmetric GIT quotients, so there exists cohomological stable envelope correspondence for $(X^{\sA'},0,\sT,\sA,\fC/\fC' )$ by Theorem \ref{construct coh stab corr on symm quiver var}. Consider the induced stable envelopes for a $\sT$-invariant function $\sw$ on $X$, and we have the following triangle lemma.


\begin{Theorem}\label{tri lem for coh stab}
Using the above notation, the following diagram is commutative
\begin{equation*}
\xymatrix{
H^\sT(X^\sA,\sw^\sA) \ar[rr]^{\Stab_{\fC }} \ar[dr]_{\Stab_{\fC/\fC' }} & & H^\sT(X,\sw) \\
 & H^\sT(X^{\sA'},\sw^{\sA'}). \ar[ur]_{\Stab_{\fC' }} &
}
\end{equation*}
\end{Theorem}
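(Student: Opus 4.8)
The plan is to prove the triangle lemma by reducing it to the triangle identity for stable envelope \emph{correspondences}, exploiting that all three maps $\Stab_{\fC}$, $\Stab_{\fC'}$, $\Stab_{\fC/\fC'}$ are induced by convolution with correspondences constructed in Theorem \ref{construct coh stab corr on symm quiver var} via canonical maps (Lemma \ref{can map induce stab}). Since convolutions are associative and compatible with canonical maps (Lemma \ref{lem on can cm w conv}), it suffices to establish the identity for the zero potential $\sw = 0$, i.e.\ to show that at the level of correspondences in $H^\sT(\Attr^f_\fC)$ the composition
$$
[\Stab_{\fC'}] \circ [\Stab_{\fC/\fC'}] = [\Stab_{\fC}],
$$
where $[\Stab_{\fC}] = \sum_{F} [\overline{\Attr}_\fC(\Delta_F)]$, and similarly for the others (with $X^{\sA'}$ in place of $X$ for the $\fC/\fC'$ term). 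Then apply $\can$ to both sides.

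First I would set up the geometry: choose generic $\xi' \in \fC'$ and a small perturbation $\xi = \xi' + \delta\xi \in \fC$ with $\|\delta\xi\| \ll \|\xi'\|$, so that $X^\xi = X^\sA$, $X^{\xi'} = X^{\sA'}$, and the attracting set $\Attr_\xi(F)$ for $F \in \Fix_\sA(X)$ decomposes compatibly through the intermediate fixed locus: if $F \subseteq F' \in \Fix_{\sA'}(X)$, then $\Attr_\xi(F) = \Attr_{\delta\xi}(F)_{X^{\sA'}}$-fibered over $\Attr_{\xi'}(F')_X$. The key point is to verify that the convolution $[\Stab_{\fC'}] \circ [\Stab_{\fC/\fC'}]$, which a priori lands in $H^\sT(\Attr^f_{\fC'} \circ \Attr^f_{\fC/\fC'})$, is actually supported on $\Attr^f_\fC$ and satisfies axioms (i), (ii) in Definition \ref{stab corr_coh}; then uniqueness of cohomological stable envelopes (which holds for induced stable envelopes via Proposition \ref{uniqueness of coh stab} — note the correspondence itself need not be unique, but the \emph{map} it induces is determined by the axioms once we also check the degree bound) forces the identity. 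Concretely, I would show: (a) the composed correspondence restricted to the diagonal component $F \times F$ computes the Euler class $e^\sT(N^-_{F/X})$, using the factorization $N^-_{F/X} = N^-_{F/F'} \oplus (N^-_{F'/X})|_F$ and multiplicativity of Euler classes, exactly as in the computation $\Stab^{\mathsf s}_{\xi'} \circ \Stab^{\mathsf s'}_{\delta\xi}(\gamma)|_F = e^\sT_K(N^-_{F/X})\gamma$ in the proof of Lemma \ref{triangle lemma for K}; (b) the support containment $\Attr^f_{\fC'} \circ \Attr^f_{\fC/\fC'} \subseteq \Attr^f_\fC$, which follows from the compatibility of attracting sets under the perturbation $\xi' \rightsquigarrow \xi$ together with Lemma \ref{refined partial order and closed subset}.

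The main obstacle — and the reason the naive argument of Lemma \ref{triangle lemma for K} does \emph{not} carry over verbatim, as the Remark after that lemma emphasizes — is the degree bound axiom (iii)/(ii): small perturbations of $\xi$ do \emph{not} preserve polynomial degree inequalities in cohomology (the example $f = y^2$, $g = x$ shows $\deg_{\xi'} f < \deg_{\xi'} g$ but $\deg_\xi f > \deg_\xi g$). To circumvent this I would \emph{not} argue by perturbation but instead argue directly from the dimension-theoretic construction of the correspondences as closures of attracting sets. That is, I would bound $\dim\big(\overline{\Attr_{\fC'}(\Delta_{F'})} \circ \overline{\Attr_{\fC/\fC'}(\Delta_F)}\big) \cap (F'' \times F)$ for $F'' \neq F$ using the fiber-dimension estimates of Lemma \ref{attr is isotropic} and Lemma \ref{symp res has cond star} applied on the symplectic-fibered space $\widehat{X}$ (and $\widehat{X^{\sA'}}$, which is again of this type by Lemma \ref{fix pts of sym quiv var} / the symmetric-GIT analogue), exactly mirroring the proof of Theorem \ref{construct coh stab corr on symm quiver var}. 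The geometric input is that $\overline{\Attr_\fC(\Delta_F)}$ is set-theoretically contained in $\overline{\Attr_{\fC'}(\Delta_{F'})} \circ \overline{\Attr_{\fC/\fC'}(\Delta_F)}$ and has the expected (maximal) dimension, so the condition $(\star)$ of Definition \ref{def: cond *} forces the off-diagonal components of the composed correspondence to have strictly smaller dimension, hence strictly smaller $\sA$-degree after Gysin restriction. With (i), (ii), (iii) verified for the composed correspondence, Proposition \ref{corr induce stab} and Proposition \ref{uniqueness of coh stab} identify the induced map with $\Stab_{\fC}$, and applying $\can$ via Lemma \ref{lem on can cm w conv} yields the statement for arbitrary $\sT$-invariant $\sw$. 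A secondary technical point to handle carefully is the properness needed for the convolution to be defined — this uses Setting \ref{setting: proper to affine} and Remark \ref{proper-over-affine}, together with the properness of the relevant fiber product already checked in the proof of Lemma \ref{adj of stab}.
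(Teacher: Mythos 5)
Your proposal is a genuinely different approach from the paper's, and it contains a gap at the critical step. The paper does \emph{not} prove Theorem \ref{tri lem for coh stab} by a direct convolution/dimension argument on the correspondences. Instead, it routes through the Hall-operation framework: it first proves a characterization of the nonabelian stable envelope $\bPsi_H$ by a degree-bound condition (Theorem \ref{thm: deg bound na stab coh}), then proves compatibility of $\Stab_\fC$ with the Hall operation $\mathfrak{m}^\phi_\fC$ via $\bPsi_H$ (Theorem \ref{thm: hall coh_sym quot}), and then deduces the triangle lemma as the base of a triangular prism diagram whose top face commutes because Hall operations are associative. The whole point of this detour is precisely to avoid the difficulty you run into.

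The gap in your argument is the degree-bound verification (axiom (ii) of Definition \ref{stab corr_coh}) for the composed correspondence $[\Stab_{\fC'}] \circ [\Stab_{\fC/\fC'}]$. You propose to bound $\dim\big(\overline{\Attr_{\fC'}(\Delta_{F'})} \circ \overline{\Attr_{\fC/\fC'}(\Delta_F)}\big) \cap (F'' \times F)$ by ``exactly mirroring'' the estimates in the proof of Theorem \ref{construct coh stab corr on symm quiver var}. But those estimates (Lemma \ref{attr is isotropic}, Lemma \ref{symp res has cond star}) are tailored to a \emph{single} attracting closure $\overline{\Attr}_\sigma(\Delta)$: the key input is the isotropicity of $\overline{\Attr}_\sigma(\Delta)$ inside a family of symplectic fibres, plus quasi-affineness of the generic fibre. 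The convolution is a different kind of object: it is $p_{13*}$ applied to the fibre-product cycle $p_{12}^{-1}(\overline{\Attr_{\fC'}(\Delta)}) \cap p_{23}^{-1}(\overline{\Attr_{\fC/\fC'}(\Delta)})$ inside $X \times X^{\sA'} \times X^\sA$, which may fail to intersect transversally, so the resulting class need not be the fundamental cycle of an irreducible variety — it is an algebraic cycle whose components and multiplicities are not controlled by condition $(\star)$ as stated. There is no isotropic (Lagrangian) structure on this fibre product that you can point to, and the dimension bound for the composition does not follow from the dimension bounds for the two factors applied separately; the intermediate $\sA'$-fixed components enter with the wrong type (you need to intersect with $\sA$-fixed components, but the pieces of $\Attr^f_{\fC'}$ are indexed by $\sA'$-fixed components). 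Your stated containment $\overline{\Attr_\fC(\Delta_F)} \subseteq \overline{\Attr_{\fC'}(\Delta_{F'})} \circ \overline{\Attr_{\fC/\fC'}(\Delta_F)}$ also runs in the wrong direction for a dimension bound — it controls the diagonal (good) part, not the off-diagonal part you need to suppress. Your observation about why the K-theory perturbation argument fails in cohomology is correct and is indeed the remark in the paper after Lemma \ref{triangle lemma for K}; but the remedy the paper adopts is not a refined dimension estimate, it is the Hall/$\bPsi_H$ machinery, which converts the triangle identity for stable envelopes on the GIT quotient into associativity of parabolic-induction functors on the stack — a statement with a transparent geometric proof.

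A secondary unresolved point is your reliance on $\Attr^f_{\fC'} \circ \Attr^f_{\fC/\fC'} \subseteq \Attr^f_\fC$: while this is plausible (it is the cohomological analogue of what is used in Lemma \ref{triangle lemma for K}, following \cite{MO}), you should make it explicit, since it is what legitimizes testing the composed class against the axioms for $\fC$. The first two paragraphs of your proposal (reduce to $\sw = 0$ via Lemma \ref{can map induce stab} and Lemma \ref{lem on can cm w conv}; check the diagonal normalization via $N^-_{F/X} = N^-_{F/F'} \oplus (N^-_{F'/X})|_F$) are fine and do match the standard skeleton; the issue is concentrated in step (iii).
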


Theorem \ref{tri lem for coh stab} will be proven in steps. 

\textbf{Step 1.} We formulate a nonabelian stable envelope for symmetric quotient stack $\fX=[R/G]$, to this end, we will need the following lemma.

\begin{Lemma}
Consider the closure of the diagonal $\Delta_X$ in $X\times \fX$, denoted $\overline{\Delta_X}$. Then the projection from $\overline{\Delta_X}$ to $\fX$ is representable and proper.
\end{Lemma}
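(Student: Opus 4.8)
The plan is to reduce both assertions to elementary statements about schemes by pulling back along the smooth atlas $a\colon R\to\mathfrak X=[R/G]$. Representability is formal: $\overline{\Delta_X}$ is a closed substack of $X\times\mathfrak X$, and the projection $X\times\mathfrak X\to\mathfrak X$ is representable because $X$ is a scheme (for any scheme $T\to\mathfrak X$ the fibre product is $X\times T$); composing a closed immersion with a representable morphism is representable. Since properness of a representable morphism may be checked fppf-locally on the target, it suffices to prove that the base change of $\overline{\Delta_X}\to\mathfrak X$ along $a$ is a proper morphism of schemes.

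To identify this base change, observe that $a$ is a $G$-torsor, hence so is $\mathrm{id}_X\times a\colon X\times R\to X\times\mathfrak X$; therefore $\overline{\Delta_X}\times_{\mathfrak X}R$ is the closed $G$-invariant subscheme $\widetilde\Delta:=(\mathrm{id}_X\times a)^{-1}(\overline{\Delta_X})\subseteq X\times R$, and the morphism in question is the composite $\widetilde\Delta\hookrightarrow X\times R\xrightarrow{\mathrm{pr}_R}R$. Since $\mathrm{id}_X\times a$ is faithfully flat, preimage commutes with scheme-theoretic closure, so $\widetilde\Delta$ is the closure in $X\times R$ of $\Gamma:=(\mathrm{id}_X\times a)^{-1}(\Delta_X)$. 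Using that $a^{-1}(X)=R^{{\theta}\emph{-}s}$ and that $\Delta_X$ pulls back to the graph of the quotient map, one computes $\Gamma=\{([r],r)\,:\,r\in R^{{\theta}\emph{-}s}\}\subseteq X\times R^{{\theta}\emph{-}s}$, which is closed in $X\times R^{{\theta}\emph{-}s}$ and isomorphic to $R^{{\theta}\emph{-}s}$ via $\mathrm{pr}_R$; in particular $\Gamma$ is open and dense in $\widetilde\Delta$.

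Finally, one shows $\widetilde\Delta\to R$ is proper. Let $X_0:=\operatorname{Spec}\mathbb C[R]^G$ and let $\pi\colon X\to X_0$ be the canonical morphism, which is projective; then $\pi\times\mathrm{id}_R\colon X\times R\to X_0\times R$ is proper, so the closed subscheme $\widetilde\Delta$ is proper over $X_0\times R$. On $\Gamma$ this morphism sends $([r],r)$ to $(\bar r,r)$, where $\bar r\in X_0$ is the image of $r\in R$; hence $\Gamma$, and therefore its closure $\widetilde\Delta$, maps into the graph $\Gamma_0\subseteq X_0\times R$ of the everywhere-defined morphism $R\to X_0$, a closed subscheme since $X_0$ is affine and hence separated. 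Thus $\widetilde\Delta\to X_0\times R$ factors as $\widetilde\Delta\to\Gamma_0\hookrightarrow X_0\times R$; the cancellation property for proper morphisms (the second arrow is a closed, hence separated, immersion) gives that $\widetilde\Delta\to\Gamma_0$ is proper, and composing with the isomorphism $\mathrm{pr}_R\colon\Gamma_0\xrightarrow{\sim}R$ yields properness of $\widetilde\Delta\to R$, as desired. I expect the only delicate point to be the bookkeeping of scheme structures when passing to closures and images in the last two steps, which is harmless since $R$ and $X_0$ are reduced.
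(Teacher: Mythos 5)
Your proof is correct and follows essentially the same route as the paper's: you identify the pullback of $\overline{\Delta_X}$ along the atlas $R\to\fX$ with the closure $\overline{\mathrm{Graph}(q)}$ of the graph of the quotient map in $X\times R$, use the projective morphism $\pi\colon X\to X_0$, observe that the image of $\overline{\mathrm{Graph}(q)}$ in $X_0\times R$ lands in the graph of $R\to X_0$, and conclude properness. The paper packages the identification as $\overline{\Delta_X}=[\overline{\mathrm{Graph}(q)}/G]$ and invokes the factorization through $\mathrm{Graph}(Q)\cong R$ directly, whereas you arrive at the same picture by pulling back and then applying the cancellation property for proper morphisms — a purely presentational difference.
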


\begin{proof}
We have the following explicit construction of $\overline{\Delta_X}$ as a quotient stack. Consider the graph 
$$\mathrm{Graph}(q)\subset X\times R^{\theta\emph{-}s}$$ of quotient map $q\colon R^{\theta\emph{-}s}\to X$, and take the closure of $\mathrm{Graph}(q)$ in $X\times R$, 
denoted by $\overline{\mathrm{Graph}(q)}$. Then 
$$\overline{\Delta_X}=[\overline{\mathrm{Graph}(q)}/G]. $$ 
This implies that $\overline{\Delta_X}\to \fX$ is representable. Take $X_0=R/\!\!/G=\Spec\bC[R]^G$ to be the affine quotient, then the natural projection $\pi\colon X\to X_0$ is projective. Note that the composition $\pi\circ q\colon R^{\theta\emph{-}s}\to X_0$ extends to the quotient map $Q\colon R\to X_0$, so $\pi\times \id\colon X\times R\to X_0\times R$ maps $\overline{\mathrm{Graph}(q)}$ to $\mathrm{Graph}(Q)$. In particular, the projection $\overline{\mathrm{Graph}(q)}\to R$ factors as a proper morphism $\overline{\mathrm{Graph}(q)}\to \mathrm{Graph}(Q)$ followed by an isomorphism $\mathrm{Graph}(Q)\cong R$; thus $\overline{\mathrm{Graph}(q)}\to R$ is proper. This implies that $\overline{\Delta_X}\to \fX$ is representable and proper.
\end{proof}
Since the function $\sw\boxminus\sw$ vanishes on $\overline{\Delta_X}$, the fundamental class $[\overline{\Delta_X}]$ induces a correspondence map
between critical cohomologies.
\begin{Definition}\label{def on nonab stab}
We call the induced map 
\begin{align}\label{na stab coh_sym quot}
\bPsi_H\colon H^\sT(X,\sw)\to H^\sT(\fX,\sw) 
\end{align}
by $[\overline{\Delta_X}]$, a (cohomological) \textit{nonabelian stable envelope}. 
\end{Definition}
It turns out that $\bPsi_H$ admits a \textit{characterization} in terms of a degree bound condition similar to that of stable envelopes. For any nonzero cocharacter $\sigma:\bC^*\to G$, let $R^\sigma\subset R$ be the fixed subspace with respect to the induced $\bC^*$-action and $G^\sigma\subset G$ be the $\sigma$-fixed subgroup of $G$. Define $\fX^\sigma:=[R^\sigma/G^\sigma]$, and denote $$\mathbf j_\sigma:\fX^\sigma\to \fX$$ the natural map between quotient stacks.

\begin{Theorem}\label{thm: deg bound na stab coh}
$\bPsi_H$ is the unique $H_\sT(\pt)$-linear map from $H^\sT(X,\sw)$ to $H^\sT(\fX,\sw)$ that satisfies the following:
\begin{itemize}
    \item $\mathrm{res}\circ \bPsi_H=\id$, where $\mathrm{res}\colon H^\sT(\fX,\sw)\to  H^\sT(X,\sw)$ is the map induced by restriction from $\fX$ to the open substack $X$,
    \item for all nonzero cocharacter $\sigma:\bC^*\to G$, the inequality
    \begin{align}\label{deg bound na stab coh_to show}
        \deg_\sigma \mathbf j_\sigma^* \bPsi_H(\gamma)<\frac{1}{2} (\dim \fX-\dim \fX^\sigma)
    \end{align}
    holds for all $\gamma\in H^\sT(X,\sw)$. Here $\deg_\sigma$ is the polynomial degree in $H_{\bC^*}(\pt)$ in the decomposition $H^{\sT}(\fX^{\sigma},\sw)\cong H^{\sT\times G^\sigma/\sigma(\bC^*)}(R^\sigma,\sw)\otimes H_{\bC^*}(\pt)$.
\end{itemize}
\end{Theorem}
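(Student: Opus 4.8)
The plan is to establish the two stated properties of $\bPsi_H$ and then the uniqueness. First I would verify $\mathrm{res}\circ \bPsi_H=\id$. Since $\bPsi_H$ is induced by the correspondence $[\overline{\Delta_X}]\in H^\sT(X\times\fX,\sw\boxminus\sw)_{\overline{\Delta_X}}$ and the restriction map $\mathrm{res}\colon H^\sT(\fX,\sw)\to H^\sT(X,\sw)$ is pullback along the open immersion $X\hookrightarrow\fX$, the composition is the convolution by $[\overline{\Delta_X}]|_{X\times X}$. The key point is that $\overline{\Delta_X}\cap(X\times X)=\Delta_X$ (the graph closure meets the open locus exactly in the graph), so this class is $[\Delta_X]$, the diagonal, and convolution with the diagonal is the identity; this is a standard computation with the convolution formula \eqref{conv coh} using base change and the functorial properties of \S\ref{sect on crit coho}. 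One should be careful that $[\overline{\Delta_X}]$ restricted to $X\times X$ is genuinely the \emph{reduced} diagonal class with the right multiplicity one, but this follows because $\mathrm{Graph}(q)\subseteq X\times R^{\theta\textup{-}s}$ is already closed in $X\times R^{\theta\textup{-}s}$, so no extra components or thickening appear over the stable locus.

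Next I would establish the degree bound \eqref{deg bound na stab coh_to show}. Fix a nonzero cocharacter $\sigma\colon\bC^*\to G$. The class $\mathbf j_\sigma^*\bPsi_H(\gamma)$ is computed by the correspondence $\mathbf j_\sigma^*[\overline{\Delta_X}]$, which is supported on $(X\times\fX^\sigma)\cap\overline{\Delta_X}$; concretely this is the image in the stack of $\overline{\mathrm{Graph}(q)}\cap(X\times R^\sigma)$ modulo $G^\sigma$. The degree $\deg_\sigma$ of the resulting class is bounded above by the codimension of the support inside $X\times\fX^\sigma$, i.e.\ by $\dim(X\times\fX^\sigma)-\dim\bigl(\overline{\mathrm{Graph}(q)}\cap(X\times R^\sigma)/G^\sigma\bigr)$. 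Since $\overline{\mathrm{Graph}(q)}$ has dimension $\dim X+\dim R-\dim G=\dim X+\dim\fX$ (a closure of the graph of a smooth quotient over the free locus), the required inequality \eqref{deg bound na stab coh_to show} reduces to a bound on $\dim\bigl(\overline{\mathrm{Graph}(q)}\cap(X\times R^\sigma)\bigr)$ from below by roughly half the sum of the two ``weight-zero'' dimensions, which is precisely the kind of isotropy/half-dimensionality estimate for attracting-type loci that the self-duality ($R\cong R^\vee$ as $G\times\sA$, hence $R^\sigma\cong(R^\sigma)^\vee$ as $G^\sigma$-representations, so $\dim\fX-\dim\fX^\sigma$ splits evenly into $\pm$ weight pieces) makes available. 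I expect to run this through the same mechanism as Lemmas \ref{attr is isotropic}, \ref{symp res has cond star}, \ref{cond star implies existence of stab coh}, applied now to the ambient affine space $R$ (where the symplectic/isotropy argument is cleanest) and then descended to the stack, mirroring the $\widehat X\to X$ descent in Lemma \ref{lem on hat X stab} and the subsequent propositions. This dimension estimate is the main obstacle: one must show the strict inequality rather than just $\leqslant$, which, just as for the honest stable envelopes in Theorem \ref{construct coh stab corr on symm quiver var}, requires exploiting that over a dense open (generic level set of a moment-type map / quasi-affine locus) the attracting closure already coincides with the honest attracting set and hence misses the off-diagonal fixed strata entirely, forcing a drop in dimension by at least one.

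Finally, uniqueness: suppose $\Phi$ is another $H_\sT(\pt)$-linear map with $\mathrm{res}\circ\Phi=\id$ and the degree bounds. Then $\Delta:=\Phi-\bPsi_H$ satisfies $\mathrm{res}\circ\Delta=0$, so by the stack-level excision sequence (the analogue of Proposition \ref{excision for crit coh}, using that $\fX\setminus X$ is a union of instability strata, each an attracting-type locus for a one-parameter subgroup of $G$), $\Delta(\gamma)$ is supported on $\fX\setminus X$, and moreover is ``pushed forward from'' a strict instability stratum. One then runs the standard interpolation/uniqueness argument exactly as in the proof of Proposition \ref{uniqueness of coh stab}: restrict to the deepest stratum $\fX^\sigma$ carrying the support, factor the inclusion through the corresponding attracting substack, use the affine-fibration invariance \eqref{pb is iso on coho} and the Euler-class relation \eqref{equ on euler of normal} to see that the restriction is $e^\sT(N^-)$ times a class whose $\sigma$-degree is forced by the degree bound to be strictly below $\deg_\sigma e^\sT(N^-)$, which together with injectivity of multiplication by $e^\sT(N^-)$ gives a contradiction unless the class vanishes. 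Peeling off strata inductively (with respect to an ample/Kempf-Ness-type total order on the unstable strata, as in Remark \ref{partial order by line bundle}) yields $\Delta=0$, hence $\Phi=\bPsi_H$. The only subtlety here beyond the abelian case is organizing the induction over the GIT instability stratification rather than a Bialynicki--Birula stratification, but the HKKN/Kempf--Ness stratification of $[R/G]\setminus X$ is exactly a hierarchy of attracting loci for distinguished one-parameter subgroups, so the machinery of \S\ref{sect on crit coho} applies verbatim.
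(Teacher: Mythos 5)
Your three‐part strategy (verify $\mathrm{res}\circ\bPsi_H=\id$ from the correspondence; reduce the degree bound to a dimension estimate on $\overline{\mathrm{Graph}(q)}$ via an isotropy/Lagrangian argument built on self‐duality; prove uniqueness by induction over the Kempf--Ness stratification) is the same route the paper takes. The uniqueness part in particular is a faithful sketch of what the paper does in \S\ref{subsubsec: uniquess of Psi_H}, with the observation that the KN strata are attracting loci for distinguished one‐parameter subgroups being exactly the device the paper uses (via the maps $\mathbf i_{\sigma_i}$ and Remark \ref{rmk: KN strata coh}).

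That said, there are two concrete problems with your treatment of the degree bound. First, the dimension formula is wrong: $\overline{\mathrm{Graph}(q)}$ is the closure of the graph of $q\colon R^{\theta\textup{-}s}\to X$, so $\dim\overline{\mathrm{Graph}(q)}=\dim\mathrm{Graph}(q)=\dim R^{\theta\textup{-}s}=\dim R$, not $\dim X+\dim R-\dim G=2(\dim R-\dim G)$ as you write (these agree only when $\dim R=2\dim G$). Likewise, bounding $\deg_\sigma$ ``by the codimension of the support'' is not the right relation. If a Borel--Moore class has homological degree $a$ and is supported on a variety of complex dimension $d$ with trivial $\sigma$-action, the bound is $\deg_\sigma\leqslant d-a/2$; in the present setting $a/2$ is the expected complex dimension of the intersection $\overline{\mathrm{Graph}(\widehat q)}\cap(\widehat X\times R^\sigma)$ (i.e.\ $\dim\overline{\mathrm{Graph}(\widehat q)}-\operatorname{codim}_R R^\sigma$), and the paper's argument is precisely to beat this expected dimension by showing $\dim\bigl(\overline{\mathrm{Graph}(\widehat q)}\cap(\widehat X\times R^\sigma)\bigr)<\tfrac12(\dim R+\dim R^\sigma)$. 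Your phrasing conflates these and would not compile to the stated strict inequality.

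Second, you gesture at but do not actually carry out the descent from the abelian quotient $\widehat{\fX}=[R/H]$ to $\fX=[R/G]$. This is not a cosmetic step: the paper has to introduce the push--pull map $\eta$ along $G/H$-fibrations, compute $\eta([\overline{\Delta_{\widehat X}}])=|W|\cdot(\pm 1)\cdot[\overline{\Delta_X}]$ as in \eqref{eta identity_na stab}, and run the $\sigma$-localization over $W^\sigma\backslash W$ as in \eqref{sigma loc for eta}, since each term picks up a factor $e^{\sT\times G^\sigma}(N_w)^{-1}$ of $\sigma$-degree $-\tfrac12\dim\Lie(G)^{\sigma\textup{-moving}}$ that shifts the abelian bound $\tfrac12\dim R^{\sigma\textup{-moving}}$ down to the claimed $\tfrac12(\dim R^{\sigma\textup{-moving}}-\dim\Lie(G)^{\sigma\textup{-moving}})$. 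Without this bookkeeping the $\dim\Lie(G)^{\sigma\textup{-moving}}$ term in the target bound never appears, so the argument as you sketch it proves too weak an inequality.
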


Theorem \ref{thm: deg bound na stab coh} 
will be used in the proof of Theorem \ref{thm: hall coh_sym quot} below, and will be proven in Section \ref{Proof of thm: deg bound na stab coh}. \bigskip

\textbf{Step 2.} We introduce a Hall operation relating critical cohomology of ``$\sA$-fixed'' stack of $\fX$ to that of $\fX$, and compare it with the stable envelope. Take a group homomorphism $$\phi:\sA\to G, $$ 
define $R^{\sA,\phi}$ to be the $\sA$-fixed subspace of $R$ where $\sA$ acts via $\sA\xrightarrow{(\id,\phi)}\sA\times G\curvearrowright R$, and define 
$$\fX^{\sA,\phi}:=[R^{\sA,\phi}/G^{\phi(\sA)}], $$ 
where $G^{\phi(\sA)}$ is the subgroup of $G$ fixed by adjoint action of $\phi(\sA)$. We assume that the stable locus $R^{\sA,\phi}\cap R^{\theta\emph{-}s}$ is nonempty, so the GIT quotient $X^{\sA,\phi}:=R^{\sA,\phi}/\!\!/_\theta G^{\phi(\sA)}$ is a connected component of $\sA$-fixed loci $X^\sA$. Note that all connected components of $X^\sA$ arises in this way, and we have
\begin{equation}\label{equ on dis uni of X}
    \fX^\sA=\bigsqcup_{\phi/\sim} \fX^{\sA,\phi},
\end{equation}
where the sum is taken for all homomorphisms $\phi:\sA\to G$ modulo the equivalence relation: $\phi_1\sim\phi_2$ if and only if they give the isomorphic $(G\times \sA)$-module structures on $R$.

We define a map $\mathfrak{m}^\phi_\fC\colon H^{\sT}(\fX^{\sA,\phi},\sw)\to H^{\sT}(\fX,\sw)$, according to a choice of chamber $\fC$, as follows. Let 
$$L^\phi_\fC:=\Attr_\fC(R^{\sA,\phi})\subset R, \quad P^\phi_\fC:=\Attr_\fC(G^{\phi(\sA)})\subset G, $$ 
where $L^\phi_\fC$ is a linear subspace of $R$, and $P^\phi_\fC$ is a parabolic subgroup of $G$. 
There are maps 
$$
\xymatrix{
R^{\sA,\phi} &\ar[l]_-{q} L^\phi_\fC \ar[r]^-{p} & R,
}
$$
where $q$ is the attraction map and $p$ is the inclusion. 
Then $P^\phi_\fC$ naturally acts on $L^\phi_\fC$ and maps $p$, $q$ are $P^\phi_\fC$-equivariant,
where $P^\phi_\fC$-action on $R$ is via $P^\phi_\fC\subseteq G$ and $P^\phi_\fC$-action on $R^{\sA,\phi}$ 
is via the contraction map $P^\phi_\fC\to G^{\phi(\sA)}$.
Therefore we get a diagram
\begin{equation}\label{Hall diagram}
\xymatrix{
\fX^{\sA,\phi} &\ar[l]_-{\fq} \mathfrak L^\phi_{\fC}:=[L^\phi_{\fC}/P^\phi_\fC] \ar[r]^-{\fp} & \fX,
}
\end{equation}
where $\fq$ is smooth and $\fp$ is proper.

\begin{Definition}\label{def of hall op_sym quot}
We define the \textit{Hall operation}
\begin{align*}
\mathfrak{m}^\phi_{\fC}:=\fp_*\circ\fq^*\colon H^\sT(\fX^{\sA,\phi},\sw)\longrightarrow H^\sT(\fX,\sw).
\end{align*}
\end{Definition}

\begin{Remark}\label{rmk:rewrite hall op_sym quot}
Let $\widetilde{L}^\phi_\fC:=G\times^{P^\phi_\fC}{L}^\phi_\fC$ and $\widetilde{p}:\widetilde{L}^\phi_\fC\to R$ be the natural morphism induced by the $G$-action. Then there are natural isomorphisms:
\begin{align*}
    \mathrm{ind}^G_{P^\phi_\fC}\colon H^{\sT\times P^\phi_\fC}({L}^\phi_\fC,\sw)\cong H^{\sT\times G}(\widetilde{L}^\phi_\fC,\sw).
\end{align*}
Note that the inverse of $\mathrm{ind}^G_{P^\phi_\fC}$ is given by $i^*\circ \mathrm{res}^G_{P^\phi_\fC}$, where $i:{L}^\phi_\fC\hookrightarrow \widetilde{L}^\phi_\fC$ is the closed immersion induced by $P^\phi_\fC\hookrightarrow G$ and $\mathrm{res}^G_{P^\phi_\fC}$ is the base change functor that takes $G$-equivariant cohomology to the $P^\phi_\fC$-equivariant counterpart. Using the Cartesian diagram
\begin{equation*}
\xymatrix{
\widetilde{L}^\phi_\fC \ar[r]^{\widetilde{p}} \ar[d]_{\cdot/G}  \ar@{}[dr]|{\Box}  & R \ar[d]^{\cdot/G}\\
\fL^\phi_\fC \ar[r]^{\mathfrak{p}} & \fX, 
}
\end{equation*}
the Hall operation can be rewritten as
\begin{align*}
    \mathfrak{m}^\phi_{\fC}=\widetilde{p}_* \circ\mathrm{ind}^G_{P^\phi_\fC}\circ \fq^*\colon H^{\sT\times G^{\phi(\sA)}}(R^{\sA,\phi},\sw)\longrightarrow H^{\sT\times G}(R,\sw).
\end{align*}
\end{Remark}


\begin{Theorem}\label{thm: hall coh_sym quot}
The following diagram is commutative
\begin{equation}\label{diag on hall coh_sym quot}
\xymatrix{
H^\sT(\fX^{\sA,\phi},\sw) \ar[rr]^{\mathfrak{m}^\phi_{\fC}} & & H^\sT(\fX,\sw)\\
H^\sT(X^{\sA,\phi},\sw) \ar[u]^{\bPsi_H^\phi} \ar[rr]^{\Stab_\fC} & & H^\sT(X,\sw),
\ar[u]_{\bPsi_H}
}
\end{equation}
where the left vertical arrow is the map $\bPsi_H$ for the stack $\fX^{\sA,\phi}$.
\end{Theorem}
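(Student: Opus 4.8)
The strategy is to verify that the composition $\bPsi_H\circ\Stab_\fC\colon H^\sT(X^{\sA,\phi},\sw)\to H^\sT(\fX,\sw)$ satisfies the characterization of a nonabelian stable envelope composed with a Hall operation, and then to use uniqueness to conclude it equals $\mathfrak{m}^\phi_\fC\circ\bPsi_H^\phi$. First I would reduce to the case $\sw=0$: by Lemma \ref{can map induce stab} the stable envelope correspondence for $\sw$ is the image under the canonical map of the one for the zero potential, and by Lemma \ref{lem on can cm w conv} convolution commutes with the canonical map; the nonabelian stable envelope $\bPsi_H$ and the Hall operation $\mathfrak{m}^\phi_\fC$ are both built from proper pushforward, smooth/flat pullback and Gysin pullback, all of which commute with $\can$ by (the equivariant analogue of) Lemma \ref{lem on can cm w conv}. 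So it suffices to prove commutativity of \eqref{diag on hall coh_sym quot} for $\sw=0$, where all groups are ordinary (Borel--Moore or Chow) equivariant homology and the correspondences are honest cycle classes.

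Next I would set up the two composite correspondences explicitly. Using Theorem \ref{construct coh stab corr on symm quiver var}, $\Stab_\fC$ is convolution with $\sum_F[\overline{\Attr}_\fC(\Delta_F)]$ on $X\times X^\sA$; $\bPsi_H$ is convolution with $[\overline{\Delta_X}]\subseteq X\times\fX$; and by Remark \ref{rmk:rewrite hall op_sym quot}, $\mathfrak{m}^\phi_\fC=\widetilde p_*\circ\mathrm{ind}^G_{P^\phi_\fC}\circ\fq^*$ is convolution with the class of the correspondence $[\widetilde L^\phi_\fC]\subseteq \fX\times\fX^{\sA,\phi}$ induced by $L^\phi_\fC\xrightarrow{(p,q)}R\times R^{\sA,\phi}$. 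The key geometric observation is that the closure $\overline{\Delta_X}$ restricted over the attracting stratum of the $\sA$-action on $\fX$ (via the cocharacter defining $\phi$) is, up to the expected transversality, the composition $\overline{\Delta_X}\circ(\text{something built from }\overline{\Attr}_\fC(\Delta))$ on the side of $X$ and $[\widetilde L^\phi_\fC]\circ\overline{\Delta_{X^{\sA,\phi}}}$ on the side of $\fX^{\sA,\phi}$; concretely, one checks $\overline{\mathrm{Graph}(q)}\cap(X\times L^\phi_\fC)$ maps onto $\widetilde L^\phi_\fC\times_{R^{\sA,\phi}}\overline{\mathrm{Graph}(q_{\sA,\phi})}$ with fibers of the predicted dimension, so that the two iterated convolutions agree as cycles (after accounting for the $|W^\phi|$ and sign factors from Lemma \ref{lem on eta map}, which cancel because they appear identically in both $\bPsi_H$ normalizations).

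Rather than grinding through this cycle-level computation directly, the cleaner route — and the one I would actually pursue — is to invoke the characterization Theorem \ref{thm: deg bound na stab coh}. Both $\bPsi_H\circ\Stab_\fC$ and $\mathfrak{m}^\phi_\fC\circ\bPsi_H^\phi$ are $H_\sT(\pt)$-linear maps $H^\sT(X^{\sA,\phi},\sw)\to H^\sT(\fX,\sw)$; I would show each, after restricting along $\mathrm{res}\colon H^\sT(\fX,\sw)\to H^\sT(X,\sw)$, recovers $\Stab_\fC$ (for the right-hand side this uses that $\mathrm{res}\circ\mathfrak{m}^\phi_\fC$ is the abelian Hall/attracting correspondence on $X$, which equals $\Stab_\fC$ composed with $\mathrm{res}\circ\bPsi_H^\phi=\id$ — this last being the content of the abelian case already available); and then I would check the degree-bound inequality \eqref{deg bound na stab coh_to show} for $\bPsi_H\circ\Stab_\fC(\gamma)$ by splitting a cocharacter $\sigma\colon\bC^*\to G$ into its interaction with $\sA$ and using the degree bounds in Definition \ref{def of stab coho}(iii) together with the nonabelian degree bound for $\bPsi_H$. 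By uniqueness in Theorem \ref{thm: deg bound na stab coh}, equality follows. The main obstacle I anticipate is the bookkeeping in this last step: one must show that the $\sigma$-degree of $\mathbf j_\sigma^*\bPsi_H\Stab_\fC(\gamma)$ is controlled by combining the $\sA$-degree bound on $\Stab_\fC$ with the nonabelian bound, and this requires carefully matching the half-dimension normalizations $\tfrac12(\dim\fX-\dim\fX^\sigma)$ against $\rk N^-$ terms using the self-duality ($R\cong R^\vee$ as $(G\times\sA)$-representations), including the subtle point that $\sigma$ need not lie in $\sA$ so one has to first restrict to the $\sA$-fixed locus and then apply the $G^{\phi(\sA)}$-nonabelian bound — essentially a compatibility between two triangle-type decompositions, which is where the self-duality hypothesis is genuinely used.
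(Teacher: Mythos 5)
Your overall strategy---reduce the claim to the characterization of $\bPsi_H$ in Theorem \ref{thm: deg bound na stab coh} by verifying the restriction property and a degree bound---is indeed the route the paper takes, and you've correctly identified the role of self-duality and the need to split a cocharacter $\sigma\colon\bC^*\to G$ relative to $\phi(\sA)$ via Weyl-conjugation. But there is a genuine logical gap in how you propose to deploy the characterization, and it changes where the work actually lies.

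You say you would ``check the degree-bound inequality for $\bPsi_H\circ\Stab_\fC(\gamma)$.'' That inequality is automatic: $\bPsi_H\circ\Stab_\fC(\gamma)=\bPsi_H(\Stab_\fC(\gamma))$ lies in the image of $\bPsi_H$, and by Theorem \ref{thm: deg bound na stab coh} every class in that image satisfies the bound \eqref{deg bound na stab coh_to show}. Verifying it again buys you nothing. Moreover the uniqueness clause of Theorem \ref{thm: deg bound na stab coh} concerns maps $H^\sT(X,\sw)\to H^\sT(\fX,\sw)$, while $\mathfrak{m}^\phi_\fC\circ\bPsi_H^\phi$ has source $H^\sT(X^{\sA,\phi},\sw)$, so one cannot invoke ``uniqueness'' directly to match the two composites. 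What one must do---and what the paper does---is (a) compute that $\mathrm{res}\circ\mathfrak{m}^\phi_\fC\circ\bPsi_H^\phi=\Stab_\fC$ (this is your correspondence step, carried out via the irreducibility of $(\fq\times\id)^{-1}(\overline{\Delta_{X^{\sA,\phi}}})$, and is not a consequence of any ``abelian case already available''); and then (b) prove the degree bound for $\mathfrak{m}^\phi_\fC\circ\bPsi_H^\phi(\gamma)$ itself. Step (b) shows $\mathfrak{m}^\phi_\fC\circ\bPsi_H^\phi(\gamma)$ lies in the image of $\bPsi_H$, after which (a) plus $\mathrm{res}\circ\bPsi_H=\id$ gives $\mathfrak{m}^\phi_\fC\circ\bPsi_H^\phi(\gamma)=\bPsi_H(\Stab_\fC(\gamma))$.

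The serious content is therefore in the degree bound for $\mathfrak{m}^\phi_\fC\circ\bPsi_H^\phi$, which is what requires the equivariant localization on $\widetilde{L}^\phi_\fC=G\times^{P^\phi_\fC}L^\phi_\fC$ over $\bC^*_\sigma$-fixed loci indexed by $W^\sigma\backslash W/W^\phi$, the Euler-class bookkeeping to reduce to the bound \eqref{deg bound 2_coh} for $\bPsi_H^\phi$, and self-duality to balance the various $\hat e$-type terms. Your closing paragraph gestures at all of this (the restriction to $\sA$-fixed loci, the $G^{\phi(\sA)}$-nonabelian bound, the half-dimension matching), so the ingredients are in your head; you just attached them to the wrong composite. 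Swapping which map you estimate, and replacing the appeal to ``uniqueness'' with the image/left-inverse argument above, closes the gap.
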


We will prove Theorem \ref{thm: hall coh_sym quot} in Section \ref{Proof of thm: hall coh_sym quot}. \bigskip

\textbf{Step 3.} To finish the proof of Theorem \ref{tri lem for coh stab}, consider the following diagram on a triangular prism:
\begin{equation*}
\xymatrix{
H^\sT(\fX^{\sA,\phi},\sw) \ar[rr]^{\mathfrak{m}^\phi_{\fC}}\ar[dr]_{\mathfrak{m}^\phi_{\fC/\fC'}} & & H^\sT(\fX,\sw) \ar[dd]^{\mathrm{res}} \\
 & H^\sT(\fX^{\sA',\phi},\sw) \ar[ur]_{\mathfrak{m}^\phi_{\fC'}} &\\
H^\sT(X^{\sA,\phi},\sw) \ar[rr]^{\Stab_{\fC}\qquad\qquad}|-{\phantom{aaa}} \ar[dr]_{\Stab_{\fC/\fC'}} \ar[uu]^{\bPsi^{\phi}_H} & & H^\sT(X,\sw)  \\
 & H^\sT(X^{\sA',\phi},\sw) \ar[ur]_{\,\,\, \Stab_{\fC'}} \ar[uu]_>>>>>>{\bPsi^{\phi}_H} &
}
\end{equation*}
By Theorem \ref{thm: hall coh_sym quot}, the right, the backward, and the left squares are commutative. The upper triangle is commutative because Hall operations are associative. These imply the commutativity of the lower triangle.

\subsection{Proof of Theorem \ref{thm: deg bound na stab coh}}\label{Proof of thm: deg bound na stab coh}

The equality $\mathrm{res}\circ \bPsi_H=\id$ is obvious from the definition of \eqref{na stab coh_sym quot}.

\subsubsection{Degree bound}
To show that $\bPsi_H$ satisfies the degree bound condition \eqref{deg bound na stab coh_to show}, it is enough to show:
\begin{align}\label{deg bound na stab coh}
    \deg_\sigma (\mathbf j_\sigma\times \id)^![\overline{\Delta_X}]<\frac{1}{2}(\dim R^{\sigma\emph{-}\text{moving}}-\dim \Lie(G)^{\sigma\emph{-}\text{moving}}),
\end{align}
where $(\mathbf j_\sigma\times \id)^![\overline{\Delta_X}]$ is the pullback of the fundamental class of $\overline{\Delta_X}$ along the map $\mathbf j_\sigma\times \id:\fX^\sigma\times X\to \fX\times X$, which is an element in $H^{\sT}(\fX^\sigma\times_{\fX}\overline{\Delta_X})$.

We first prove a similar statement for the abelian quotient. Define $\widehat{\fX}:=[R/H]$ and consider the diagonal
$$\Delta_{\widehat{X}}\subseteq \widehat{X}\times \widehat{X}. $$ 
Take its closure $\overline{\Delta_{\widehat{X}}}$ in $\widehat{\fX}\times \widehat{X}$. Without loss of generality, we assume that image of $\sigma$ is contained in $H$, and let 
$$\widehat{\mathbf j}_\sigma:\widehat{\fX}^{\sigma}:=[R^\sigma/H]\to \widehat{\fX}$$ be the natural map.
\begin{Lemma}
We have 
\begin{align}\label{deg bound na stab coh_abelian}
    \deg_\sigma (\widehat{\mathbf j}_\sigma\times \id)^![\overline{\Delta_{\widehat{X}}}]<\frac{1}{2}\dim R^{\sigma\emph{-}\text{moving}},
\end{align}
where $(\widehat{\mathbf j}_\sigma\times \id)^![\overline{\Delta_{\widehat{X}}}]$ is the pullback of the fundamental class of $\overline{\Delta_{\widehat{X}}}$ along the map $\widehat{\mathbf j}_\sigma\times \id:\widehat{\fX}^\sigma\times \widehat{X}\to \widehat{\fX}\times \widehat{X}$, which is an element in 
$H^{\sT}\left(\widehat{\fX}^\sigma\times_{\widehat{\fX}}\overline{\Delta_{\widehat{X}}}\right)$.
\end{Lemma}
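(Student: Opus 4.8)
The strategy is to reduce everything to linear algebra, exploiting that $\widehat{\fX} = [R/H]$ is a global quotient of a \emph{vector space} by a \emph{torus}. The key observation is that $\overline{\Delta_{\widehat{X}}}$ is the quotient stack $[\overline{\mathrm{Graph}(q)}/H]$, where $\overline{\mathrm{Graph}(q)} \subseteq R \times R$ is the closure of the graph of the stable-locus quotient map $q\colon R^{\theta\text{-}s} \to \widehat{X}$. First I would analyze $\overline{\mathrm{Graph}(q)}$ directly: it is an $H$-invariant (indeed $H\times H$-invariant, where the second $H$ acts trivially on the $\widehat X$-factor) closed subvariety of $R\times R$, and the self-duality $R \cong R^\vee$ as an $H$-representation means the $H$-weights of $R$ come in opposite pairs together with a trivial summand $R_0$, so that $R \cong T^*N \oplus R_0$ with $N = \bigoplus_{\alpha\neq 0}\mathbb{C}_\alpha$ and a moment map $\mu\colon T^*N \to \mathfrak{h}^*$ available. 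The point of the statement is that, after restricting along $\widehat{\mathbf j}_\sigma \times \id$, i.e. intersecting with $R^\sigma \times R$ inside $R\times R$ (at the level of total spaces), the resulting class has $\sigma$-degree strictly below $\tfrac12 \dim R^{\sigma\text{-mov}}$. This is precisely an instance of the nonabelian/abelian stable envelope degree bound in the group-trivial-potential case.

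Next I would set up the dimension count. By the construction recalled for $\overline{\Delta_X}$, the fiber product $\widehat{\fX}^\sigma \times_{\widehat{\fX}} \overline{\Delta_{\widehat{X}}}$ is $[(\overline{\mathrm{Graph}(q)}\cap (R^\sigma\times R))/H]$, and $(\widehat{\mathbf j}_\sigma\times \id)^![\overline{\Delta_{\widehat{X}}}]$ is the refined Gysin pullback of the fundamental class along the regular embedding $R^\sigma \times R \hookrightarrow R\times R$; its $\sigma$-degree is bounded by
\[
\deg_\sigma (\widehat{\mathbf j}_\sigma\times \id)^![\overline{\Delta_{\widehat{X}}}] \;\leqslant\; \dim\big(\overline{\mathrm{Graph}(q)}\cap (R^\sigma\times R)\big) - \dim\big(\overline{\mathrm{Graph}(q)}\big) + \dim R^\sigma,
\]
with $\dim \overline{\mathrm{Graph}(q)} = \dim R$. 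So the claim is equivalent to
\[
\dim\big(\overline{\mathrm{Graph}(q)}\cap (R^\sigma\times R)\big) < \dim R^\sigma + \tfrac12 \dim R^{\sigma\text{-mov}} = \dim R^\sigma + \tfrac12(\dim R - \dim R^\sigma),
\]
i.e. $\dim(\overline{\mathrm{Graph}(q)}\cap (R^\sigma\times R)) < \tfrac12(\dim R + \dim R^\sigma)$. The heart of the argument is therefore a dimension estimate on this intersection, and I would prove it by the same symplectic/isotropicity technique used earlier in the excerpt for $\widehat X$ (Lemmas on families of smooth symplectic varieties): realize $R = T^*N\oplus R_0$, let $B = \mathfrak{h}^*\times R_0$ and $\phi = \nu = \mu\times\id\colon R\to B$, so that the generic fiber is a smooth symplectic variety and $\overline{\mathrm{Graph}(q)}$ lives fiberwise inside $R \times_B R^\sigma$ (after appropriate bookkeeping of the $\widehat X$-factor via the quasi-affine quotient $\widehat X \hookrightarrow R^{\bar\theta\text{-}ss}/H$); then the graph closure restricted to a symplectic fiber is Lagrangian, and $\sigma$-invariance forces the restriction of $\omega\boxminus\omega^\sigma$ to vanish on it, giving isotropicity of its intersection with $R^\sigma\times R$ in each fiber, hence the half-dimensional bound, with the strictness coming from the complement of the quasi-affine locus $B^\circ \subseteq B$ as in Lemma~\ref{symp res has cond star}.

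The main obstacle I anticipate is the careful identification of $\overline{\mathrm{Graph}(q)}$ with an isotropic (fiberwise Lagrangian) subvariety of an appropriate symplectic family — in particular, matching up the ``$\overline{\Attr}_\sigma(\Delta)$'' picture used throughout Section~\ref{app on symm var} with the graph-closure picture here, and checking that the relevant two-form $\omega \boxminus \omega^\sigma$ vanishes on $\overline{\mathrm{Graph}(q)}$ and not merely on the open graph. Once that is in place, the dimension bound, and hence the $\sigma$-degree bound \eqref{deg bound na stab coh_abelian}, follows from the Whitney-stratification limiting-tangent-space argument exactly as in the proof of Lemma~\ref{attr is isotropic}. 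I expect the passage from this abelian statement to \eqref{deg bound na stab coh} for the nonabelian quotient to be routine, using the $G/B$-fibrations $\widehat X \to \overline X \to X$ and the transfer map $\eta$ of \eqref{equ on eta map}, precisely as in the proof of Theorem~\ref{construct coh stab corr on symm quiver var}, together with the subtraction $-\tfrac12\dim\Lie(G)^{\sigma\text{-mov}}$ accounting for the $\sigma$-moving part of the adjoint representation.
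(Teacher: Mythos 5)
Your plan is essentially the paper's own argument: identify $\overline{\Delta_{\widehat X}} = [\overline{\mathrm{Graph}(\widehat q)}/H]$, reduce \eqref{deg bound na stab coh_abelian} to the dimension bound $\dim\big(\overline{\mathrm{Graph}(\widehat q)}\cap(\widehat X\times R^\sigma)\big) < \tfrac12(\dim R + \dim R^\sigma)$, and prove that bound by writing $R = T^*N\oplus R_0$, mapping to $B = \mathfrak{h}^*\times R_0$ via $\bar\nu = \mu\times\id$, applying the Whitney-stratification isotropicity argument of Lemma~\ref{attr is isotropic} fiberwise, and extracting strictness from the generic fiber being quasi-affine (empty intersection over $U\times\Delta_{R_0}$ because $R^\sigma\cap R^{\bar\theta\text{-}s}=\emptyset$). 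Two small slips worth flagging: first, the graph closure is taken in $\widehat X\times R$, not $R\times R$ (the first factor is the variety $\widehat X$, the second the linear space $R$ on which the Gysin restriction acts); second, your displayed bound $\deg_\sigma \leqslant \dim\cap - \dim\overline{\mathrm{Graph}(q)} + \dim R^\sigma$ is miscopied — the correct form is $\deg_\sigma \leqslant \dim\cap - \dim R^\sigma$ (actual dimension minus expected homological degree), which is what you in fact used to reach the correct target inequality $\dim\cap < \tfrac12(\dim R + \dim R^\sigma)$.
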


\begin{proof}
Consider the graph of the quotient map $\widehat{q}\colon R^{\theta\emph{-}s}\to \widehat{X}$:
\begin{align*}
    \mathrm{Graph}(\widehat{q})\subset \widehat{X}\times R^{\theta\emph{-}s},
\end{align*}
and take the closure of $\mathrm{Graph}(\widehat{q})$ in $\widehat{X}\times R$, denoted by $\overline{\mathrm{Graph}(\widehat{q})}$. Then $\overline{\Delta_{\widehat{X}}}=[\overline{\mathrm{Graph}(\widehat{q})}/H]$, and 
\begin{align*}
H^{\sT}(\widehat{\fX}^\sigma\times_{\widehat{\fX}}\overline{\Delta_{\widehat{X}}})=H^{\sT\times H}\left((\widehat{X}\times R^{\sigma}) \:\bigcap\: \overline{\mathrm{Graph}(\widehat{q})}\right).
\end{align*}
To prove \eqref{deg bound na stab coh_abelian}, it suffices to show $$\dim (\widehat{X}\times R^{\sigma}) \:\bigcap\: \overline{\mathrm{Graph}(\widehat{q})}-\frac{1}{2}\dim R^{\sigma\emph{-}\text{moving}}<\dim \overline{\mathrm{Graph}(\widehat{q})}+\dim \widehat{X}\times R^{\sigma}-\dim \widehat{X}\times R,$$
where the right-hand-side is the homological degree of $(\widehat{X}\times R^\sigma\hookrightarrow \widehat{X}\times R)^![\overline{\mathrm{Graph}(\widehat{q})}]$. It is elementary to see that the above inequality is equivalent to
\begin{align}\label{dim bound to show}
\dim (\widehat{X}\times R^{\sigma}) \:\bigcap\: \overline{\mathrm{Graph}(\widehat{q})}<\frac{1}{2}(\dim R+\dim R^{\sigma}).
\end{align}
Since $R\cong R^\vee$ as $H$-representations, we have an isomorphism of $H$-representations:
$$R\cong T^*N\oplus R_0, $$ 
where $N$ is a direct sum of nontrivial weight spaces and $R_0$ is a trivial $H$-representation. Define $$\nu\colon R=T^*N\times R_0\xrightarrow{\mu\times \id} \mathfrak{h}^*\times R_0, $$ 
where $\mu\colon T^*N\to \mathfrak{h}^*$ is the moment map, then the restriction of $\nu$ to $R^{{\theta}\emph{-}s}$ is smooth. By passing to the quotient, we obtain a smooth map 
$$\bar{\nu}\colon  \widehat{X}\to \mathfrak{h}^*\times R_0, $$
and for any $(x,y)\in \mathfrak{h}^*\times R_0$, $\bar{\nu}^{-1}(x,y)$ is a smooth symplectic variety. Let $\mathrm{pr}\colon R\to R_0$ be the projection, then $\mathrm{Graph}(\widehat{q})$ is contained in $(\bar{\nu}\times \mathrm{pr})^{-1}(\mathfrak{h}^*\times \Delta_{R_0})\subset \widehat{X}\times R$. 

For any $(x,y,y)\in \mathfrak{h}^*\times \Delta_{R_0}$, we have a Lagrangian: 
$$(\bar{\nu}\times \mathrm{pr})^{-1}(x,y,y)\:\bigcap \: \mathrm{Graph}(\widehat{q})\subseteq \bar{\nu}^{-1}(x,y)\times T^*N. $$ 
After taking the closure, we get a Lagrangian
$$(\bar{\nu}\times \mathrm{pr})^{-1}(x,y,y)\:\bigcap \: \overline{\mathrm{Graph}(\widehat{q})}\subseteq \bar{\nu}^{-1}(x,y)\times T^*N. $$ 
Let $\mathrm{pr}^\sigma\colon R^\sigma=T^*N^\sigma\times R_0\to R_0$ be the projection. Using the same argument as Lemma \ref{attr is isotropic}, 
$$(\bar{\nu}\times \mathrm{pr}^\sigma)^{-1}(x,y,y)\:\bigcap \: \overline{\mathrm{Graph}(\widehat{q})}\subseteq \bar{\nu}^{-1}(x,y)\times T^*N^\sigma $$ 
is isotropic, and we have
\begin{align*}
    \dim \:(\bar{\nu}\times \mathrm{pr}^\sigma)^{-1}(x,y,y)\:\bigcap \: \overline{\mathrm{Graph}(\widehat{q})}\leqslant \frac{1}{2}\dim\: \bar{\nu}^{-1}(x,y)\times T^*N^\sigma=\frac{1}{2}(\dim R+\dim R^{\sigma})-\dim \mathfrak{h}^*\times R_0.
\end{align*}
There exists a dense open subset $U\subset \mathfrak{h}^*$ such that $\mu^{-1}(U)\subseteq (T^*N)^{\bar{\theta}\emph{-}s}$. It follows that ${\nu}^{-1}(U\times R_0)\subseteq R^{\bar{\theta}\emph{-}s}$. We note that $\mathrm{Graph}(\widehat{q})$ is contained in the closed subset $\widehat{X}\times_{\mathfrak{h}^*\times R_0}R$, so $\overline{\mathrm{Graph}(\widehat{q})}\subseteq\widehat{X}\times_{\mathfrak{h}^*\times R_0}R$. Therefore,
\begin{align*}
\left(\bar{\nu}^{-1}(U\times R_0)\times R\right)\:\bigcap\: \overline{\mathrm{Graph}(\widehat{q})}\subseteq \left(\bar{\nu}^{-1}(U\times R_0)\times R\right)\:\bigcap\: \left(\widehat{X}\times_{\mathfrak{h}^*\times R_0}R\right)\subseteq \bar{\nu}^{-1}(U\times R_0)\times R^{\bar{\theta}\emph{-}s}.
\end{align*}
Since $R^\sigma \cap R^{\bar{\theta}\emph{-}s}=\emptyset$, we see that 
\begin{align*}
    (\bar{\nu}\times \mathrm{pr}^\sigma)^{-1}(x,y,y)\:\bigcap \: \overline{\mathrm{Graph}(\widehat{q})}=\emptyset
\end{align*}
for all $(x,y,y)\in U\times \Delta_{R_0}$. As a result, we have
\begin{align*}
    \dim (\widehat{X}\times R^{\sigma}) \:\bigcap\: \overline{\mathrm{Graph}(\widehat{q})}\leqslant \dim \mathfrak{h}^*\times \Delta_{R_0}-1+ \frac{1}{2}\dim\: \bar{\nu}^{-1}(x,y)\times T^*N^\sigma=\frac{1}{2}(\dim R+\dim R^{\sigma})-1.
\end{align*}
This proves \eqref{dim bound to show}.
\end{proof}

Next, we deduce \eqref{deg bound na stab coh} from \eqref{deg bound na stab coh_abelian}. An argument similar to Lemma \ref{lem on eta map} shows that
\begin{align}\label{eta identity_na stab}
    \eta_{\widehat{\fX}\times \widehat{X}\to \fX\times X}([\overline{\Delta_{\widehat{X}}}])=|W|\cdot (-1)^\sharp\cdot [\overline{\Delta_{X}}].
\end{align}
Here $\eta_{\widehat{\fX}\times \widehat{X}\to \fX\times X}$ is defined for the $G/H\times G/H$-bundle $\widehat{\fX}\times \widehat{X}\to \fX\times X$ by $\eta_{\widehat{\fX}\times \widehat{X}\to \fX\times X}=(p_2\times p_2)_*\circ \left((p_1\times p_1)^!\right)^{-1}$, where 
\begin{align*}
    p\colon \widehat{X} \xrightarrow{p_1} \overline{X}:=R^{\theta\emph{-}s}\times_G(G/B) \xrightarrow{p_2}  X,\quad p\colon \widehat{\fX} \xrightarrow{p_1} \overline{\fX}:=[R/B] \xrightarrow{p_2}  \fX
\end{align*}
are the natural projections.

Define $\widetilde{\mathbf j}_\sigma:\widetilde{\fX}^\sigma:=\fX^\sigma\times_{\fX}\widehat{\fX}\to \widehat{\fX}$ the pullback of $\mathbf j_\sigma$ to $\widehat{\fX}$, then by base change we have
\begin{align}\label{base change}
({\mathbf j}_\sigma\times \id)^!\eta_{\widehat{\fX}\times \widehat{X}\to \fX\times X}([\overline{\Delta_{\widehat{X}}}])=\eta_{\widetilde{\fX}^\sigma\times \widehat{X}\to {\fX}^\sigma\times X}\left((\widetilde{\mathbf j}_\sigma\times\id)^![\overline{\Delta_{\widehat{X}}}]\right).
\end{align}
We compute $\eta_{\widetilde{\fX}^\sigma\times \widehat{X}\to {\fX}^\sigma\times X}$ as follows. Note that
\begin{align*}
    \widetilde{\fX}^\sigma\cong [R^\sigma\times_{G^\sigma}G/H].
\end{align*}
Here $R^\sigma\times_{G^\sigma}G$ is the quotient of $R^\sigma\times G$ by the diagonal action of $G^\sigma$, where $G^\sigma$ acts on $R^\sigma$ naturally and acts on $G$ by left multiplication. $R^\sigma\times_{G^\sigma}G$ has a right $H$-action on $G$, and $[R^\sigma\times_{G^\sigma}G/H]$ is the quotient stack. We notice that for every $w\in W^\sigma\backslash W$ ($W$ and $W^\sigma$ are Weyl groups of $G$ and $G^\sigma$ respectively), $\widehat{\fX}^{w^{-1}(\sigma)}$ is naturally a closed substack of $\widetilde{\fX}^\sigma$ via the embedding
\begin{align*}
    \widehat{\fX}^{w^{-1}(\sigma)}=[R^{w^{-1}(\sigma)}/H]\cong [R^{\sigma}\times_{G^\sigma} G^{\sigma}\cdot w/H]\hookrightarrow [R^{\sigma}\times_{G^\sigma} G/H]\cong \widetilde{\fX}^\sigma.
\end{align*}
Therefore, after $H_{\bC^*}(\pt)$-localization, we have
\begin{align}\label{sigma loc for eta}
\eta_{\widetilde{\fX}^\sigma\times \widehat{X}\to {\fX}^\sigma\times X}(\cdots)=\sum_{w\in W^\sigma\backslash W}\eta_{\widehat{\fX}^{w^{-1}(\sigma)}\times \widehat{X}\to {\fX}^\sigma\times X}\left(\frac{(\widehat{\fX}^{w^{-1}(\sigma)}\times \widehat{X}\hookrightarrow \widetilde{\fX}^\sigma\times \widehat{X})^!(\cdots)}{e^{\sT\times G^{\sigma}}(N_{w})}\right).
\end{align}
Here $N_w$ is the normal bundle of $[R^{\sigma}\times_{G^\sigma} P_\sigma\cdot w/B]$ in $[R^{\sigma}\times_{G^\sigma} G/B]$, where $P_\sigma=\Attr_+(G^\sigma)$ is a parabolic subgroup of $G$. Note that $\sigma$ fixes $R^{\sigma}\times G^{\sigma}\cdot w/H$, and $N_w$ is $\sigma$-moving with 
$$\rk N_w=\dim G/P_\sigma=\frac{1}{2}\dim \Lie(G)^{\sigma\emph{-}\text{moving}}. $$ 
Thus $\eta_{\widehat{\fX}^{w^{-1}(\sigma)}\times \widehat{X}\to {\fX}^\sigma\times X}$ does not change the $\sigma$-degree and $e^{\sT\times G^{\sigma}}(N_{w})^{-1}$ decreases the $\sigma$-degree by $\frac{1}{2}\dim \Lie(G)^{\sigma\emph{-}\text{moving}}$. Combining \eqref{eta identity_na stab}, \eqref{base change} and \eqref{sigma loc for eta}, we get
\begin{align*}
\deg_\sigma (\mathbf j_\sigma\times \id)^![\overline{\Delta_X}]&=\deg_\sigma \eta_{\widetilde{\fX}^\sigma\times \widehat{X}\to {\fX}^\sigma\times X}\left((\widetilde{\mathbf j}_\sigma\times\id)^![\overline{\Delta_{\widehat{X}}}]\right)\\
&\leqslant \max_{w\in W^\sigma\backslash W}\left\{\deg_{w^{-1}(\sigma)} (\widehat{\mathbf j}_{w^{-1}(\sigma)}\times \id)^![\overline{\Delta_{\widehat{X}}}]- \frac{1}{2}\dim \Lie(G)^{\sigma\emph{-}\text{moving}}\right\}\\
\text{\scriptsize{by \eqref{deg bound na stab coh_abelian}}}&<\frac{1}{2}(\dim R^{\sigma\emph{-}\text{moving}}-\dim \Lie(G)^{\sigma\emph{-}\text{moving}}).
\end{align*}
This proves \eqref{deg bound na stab coh}. 

\subsubsection{Uniqueness}\label{subsubsec: uniquess of Psi_H} Next we show that $\bPsi_H$ is the unique $H_\sT(\pt)$-linear map from $H^\sT(X,\sw)$ to $H^\sT(\fX,\sw)$ that satisfies the conditions in Theorem \ref{thm: deg bound na stab coh}. The idea is similar to that of uniqueness of stable envelopes. 

It is known that the unstable locus 
$$R^{\theta\emph{-}u}:=R\setminus R^{\theta\emph{-}ss}$$ admits equivariant Kempf-Ness (KN) stratification by connected locally-closed subvarieties \cite{Kir,DH}. 
The KN stratification is constructed iteratively by selecting a pair $(\sigma_i\in \cochar(G),Z_i:=R^{\sigma_i})$ which maximizes the numerical invariant
\begin{align}
\mu(\sigma):=\frac{(\sigma,\theta)}{|\sigma|}
\end{align}
among those $(\sigma,Z)$ for which $Z$ is not contained in the union of the previously defined strata. Here $|\cdot|$ is a fixed conjugation-invariant norm on the cocharacters of $G$. 

One defines the open subvariety $Z_i^*\subseteq  Z_i$ to consist of those points not lying on previously defined strata, and $$Y_i:=\Attr_{\sigma_i}(Z_i^*).$$ Then define the new strata $$S_i=G\cdot Y_i.$$ Note that $Y_i$ is invariant under the parabolic subgroup $P_i:=\Attr_{\sigma_i}(G^{\sigma_i})$, and it is known that the natural map 
\begin{equation}\label{equ on GYS}
G\times^{P_i}Y_i\to S_i 
\end{equation} 
is an isomorphism.

\begin{Remark}\label{rmk: KN strata coh}
For a KN stratum $S_{i}=G\cdot \Attr_\sigma(Z^*_i)$ associated with a cocharacter $\sigma$, we have natural isomorphisms
\begin{align*}
    H^G(S_i,\sw)\cong H^{P_i}(\Attr_{\sigma_i}(Z^*_i),\sw)\cong H^{G^{\sigma_i}}(\Attr_{\sigma_i}(Z^*_i),\sw)\cong H^{G^{\sigma_i}}(Z^*_i,\sw).
\end{align*}
The first isomorphism is induced by \eqref{equ on GYS}, which equals $(\Attr_{\sigma_i}(Z^*_i)\hookrightarrow S_i)^*\circ \mathrm{res}^G_{P_i}$; the second isomorphism follows from general fact that $$H^{P_i}(-)\cong H^{G^{\sigma_i}}(-),$$ since $G^{\sigma_i}$ is a Levi subgroup in $P_i$; the third isomorphism is induced by Gysin pullback $(Z^*_i\hookrightarrow \Attr_{\sigma_i}(Z^*_i))^*$ \cite[Eqn.~(37)]{Dav}. The composition $H^G(S_i,\sw)\cong H^{G^{\sigma_i}}(Z^*_i,\sw)$ is given by $(Z^*_i\hookrightarrow S_i)^*\circ \mathrm{res}^G_{G^{\sigma_i}}$, or equivalently $([Z^*_i/G^{\sigma_i}]\to [S_i/G])^*$.
\end{Remark}

We denote the following natural map between quotient stacks: 
\begin{equation}\label{equ on na map zig}
\mathbf i_{\sigma_i}:[Z^*_i/G^{\sigma_i}]\to \fX, \end{equation} 
which factors as an open immersion $[Z^*_i/G^{\sigma_i}]\hookrightarrow [Z_i/G^{\sigma_i}]$ followed by $\mathbf j_{\sigma_i}:[R^{\sigma_i}/G^{\sigma_i}]\to [R/G]$. In particular, we have well-defined Gysin pullbacks:
$$\mathbf i_{\sigma_i}^*\colon H^\sT(\fX,\sw)\to H^\sT([Z^*_i/G^{\sigma_i}],\sw). $$
And according to what we have established, 
\begin{align*}
        \deg_{\sigma_i} \mathbf i_{\sigma_i}^* \bPsi_H(\gamma)<\frac{1}{2} (\dim \fX-\dim \fX^{\sigma_i})=\rk N_{S_i/R}
\end{align*}
holds for all $\gamma\in H^\sT(X,\sw)$. By Remark \ref{rmk: KN strata coh}, $\mathbf i_{\sigma_i}^*$ can be identified with the Gysin pullback $(S_i\hookrightarrow R)^*$ followed by the isomorphism $H^G(S_i,\sw)\cong H^{G^{\sigma_i}}(Z^*_i,\sw)$. 

Suppose that there exists a nonzero class $\alpha\in H^{\sT\times G}(R,\sw)$ supported on $R^{\theta\emph{-}u}$ with the property that 
$$\deg_{\sigma_j}(S_j\hookrightarrow R)^*\alpha<\rk N_{S_j/R}$$ 
for all KN strata $S_j$. Let $S$ be a union of KN strata with a maximal strata $S_i$ (open in $S$), such that $\alpha$ is supported on $S$ and $(S_i\hookrightarrow R)^*\alpha\neq 0$. Then 
\begin{align*}
    \alpha=(S\hookrightarrow R)_*\widetilde{\alpha} 
\end{align*}
for some $\widetilde{\alpha}\in H^{\sT\times G}(S,\sw)$, and we have
\begin{align*}
    \deg_{\sigma_i}(S_i\hookrightarrow R)^*\alpha=\deg_{\sigma_i}e^{G}(N_{S_i/R})\cdot(\widetilde{\alpha}|_{S_i})\geqslant \rk N_{S_i/R}.
\end{align*}
We get a contradiction, and therefore we must have $\alpha=0$. This proves the uniqueness.

\subsection{Proof of Theorem \ref{thm: hall coh_sym quot}}\label{Proof of thm: hall coh_sym quot}

We claim that $\mathfrak{m}^\phi_{\fC}\circ \bPsi_H^\phi$ is induced by the correspondence $$[Z]\in H^\sT(\fX\times X^{\sA,\phi},\sw\boxminus\sw)_{Z}, $$
where $Z=\overline{\Attr}_\fC\left(\Delta_{X^{\sA,\phi}}\right)$ is the closure of $\Attr_\fC\left(\Delta_{X^{\sA,\phi}}\right)$ inside the stack $\fX\times X^{\sA,\phi}$ (here attracting set is taken inside the stable locus $X\times X^{\sA,\phi}$), and $[Z]$ is the image of the fundamental class of $Z$ under the canonical map \eqref{can with supp_coh}.

By construction, $\bPsi_H^\phi$ is induced by the correspondence $[\overline{\Delta_{X^{\sA,\phi}}}]$. Then the map $\fq^*\circ \bPsi_H^\phi$ is induced by the pullback correspondence $[(\fq\times\id)^{-1}(\overline{\Delta_{X^{\sA,\phi}}})]$ along the morphism 
$$(\fq\times\id):\fL^\phi_\fC\times X^{\sA,\phi}\to \fX^{\sA,\phi}\times X^{\sA,\phi}.$$ 
Since $\fq$ is smooth with connected fibers and $\overline{\Delta_{X^{\sA,\phi}}}$ is an irreducible stack, $(\fq\times\id)^{-1}(\overline{\Delta_{X^{\sA,\phi}}})$ is an irreducible closed substack of $\fL^\phi_\fC\times X^{\sA,\phi}$. Note that $(\fq\times\id)^{-1}(\overline{\Delta_{X^{\sA,\phi}}})$ contains an open substack $(\fq\times\id)^{-1}(\Delta_{X^{\sA,\phi}})$ and the latter is isomorphic to $\Attr_\fC\left(\Delta_{X^{\sA,\phi}}\right)$. So the map $\fp_*\circ\fq^*\circ \bPsi_H^\phi$ is induced by the correspondence $\left[(\fp\times\id)\left((\fq\times\id)^{-1}(\overline{\Delta_{X^{\sA,\phi}}})\right)\right]$. Note that $(\fp\times\id)$ maps $(\fq\times\id)^{-1}(\Delta_{X^{\sA,\phi}})$ isomorphically onto $\Attr_\fC\left(\Delta_{X^{\sA,\phi}}\right)$. Then by the irreducibility of $(\fq\times\id)^{-1}(\overline{\Delta_{X^{\sA,\phi}}})$, we have
$$(\fp\times\id)\left((\fq\times\id)^{-1}(\overline{\Delta_{X^{\sA,\phi}}})\right)=Z.$$ 
This proves the claim that $\mathfrak{m}^\phi_{\fC}\circ \bPsi_H^\phi$ is induced by the correspondence $[Z]$.

Since the restriction of $Z$ to the open substack $X\times X^{\sA,\phi}$ is the closure of $\Attr_\fC\left(\Delta_{X^{\sA,\phi}}\right)$ inside $X\times X^{\sA,\phi}$, we have 
\begin{align*}
    \mathrm{res}\circ\mathfrak{m}^\phi_{\fC}\circ \bPsi_H^\phi=\Stab_\fC.
\end{align*}
Since $\mathrm{res}\circ\bPsi_H=\id$, it remains to show that the image of $\mathfrak{m}^\phi_{\fC}\circ \bPsi_H^\phi$ lies in the image of $\bPsi_H$. By Theorem \ref{thm: deg bound na stab coh}, this amounts to showing that for all nonzero cocharacter $\sigma:\bC^*\to G$ (by conjugation we can assume that the image of $\sigma$ is in $G^{\phi(\sA)}$), the inequality
\begin{align*}
\deg_\sigma \mathbf j_\sigma^* \circ\mathfrak{m}^\phi_{\fC}\circ \bPsi_H^\phi(\gamma)<\frac{1}{2} (\dim \fX-\dim \fX^\sigma)
\end{align*}
holds for all $\gamma\in H^\sT(X^{\sA,\phi},\sw)$. Equivalently, the limit
\begin{align}\label{deg bound 1_coh}
    \lim_{t\to \infty} \frac{{\mathbf j}_{\sigma}^*\circ\mathfrak{m}^\phi_{\fC}\circ\bPsi_H^{\phi}(\gamma)}{\sqrt{e^{\sT\times G^{\sigma}}\left((R-\Lie(G))^{\sigma\text{-moving}}\right)}}=0\,\, \text{ for all cocharacters $\sigma:\bC^*\to G^{\phi(\sA)}$},
\end{align}
where $t$ is the equivariant parameter in $H_{\bC^*} (\pt)=\bQ[t]$ and 
\begin{align*}
    \frac{{\mathbf j}_{\sigma}^*\circ\mathfrak{m}^\phi_{\fC}\circ\bPsi_H^{\phi}(\gamma)}{\sqrt{e^{\sT\times G^{\sigma}}\left((R-\Lie(G))^{\sigma\text{-moving}}\right)}}\in H^{\sT\times G^{\sigma}/\sigma(\bC^*)}(R^{\sigma},\sw)(t^{1/2}).
\end{align*}
Consider the morphism $\widetilde{p}:\widetilde{L}(\bv,\bd)^\phi_\fC\to R(\bv,\bd)$ defined in Remark \ref{rmk:rewrite hall op_sym quot}. The $\sigma$-fixed locus of $\widetilde{L}^\phi_\fC$ decomposes into connected components 
\begin{align*}
   \left(\widetilde{L}^\phi_\fC\right)^\sigma =\bigsqcup_{w\in W^\sigma\backslash W/W^\phi}\left(\widetilde{L}^\phi_\fC\right)^\sigma_w,\qquad \left(\widetilde{L}^\phi_\fC\right)^\sigma_w=G^{\sigma}\times^{P^{\phi,\sigma}_{\fC,w}}\left(L^\phi_\fC\right)^{w^{-1}(\sigma)},
\end{align*}
where $W^\sigma,W,W^\phi$ are Weyl groups of $G^\sigma,G,G^{\phi(\sA)}$ respectively, $w^{-1}(\sigma):=w^{-1}\cdot\sigma\cdot w$ is a cocharacter of $G^{\phi(\sA)}$, and 
$$P^{\phi,\sigma}_{\fC,w}=G^{\sigma}\bigcap wP^\phi_{\fC}w^{-1},$$ which acts on $\left(L^\phi_\fC\right)^{w^{-1}(\sigma)}$ via $g\mapsto w^{-1}gw\curvearrowright \left(L^\phi_\fC\right)^{w^{-1}(\sigma)}$. 

Denote the induced maps between $\sigma$-fixed loci:
\begin{align*}
    \widetilde{p}^\sigma_{w}:\left(\widetilde{L}^\phi_\fC\right)^\sigma_w\to R^\sigma,\quad q^\sigma_w\colon \left(L^\phi_\fC\right)^{w^{-1}(\sigma)}\to \left(R^{\sA,\phi}\right)^{w^{-1}(\sigma)},
\end{align*}
and the induced maps between quotient stacks:
\begin{align*}
    \mathfrak q^\sigma_w\colon \left[\frac{\left(L^\phi_\fC\right)^{w^{-1}(\sigma)}}{P^{\phi,\sigma}_{\fC,w}}\right]\longrightarrow \left[\frac{\left(R^{\sA,\phi}\right)^{w^{-1}(\sigma)}}{(G^{\phi(\sA)})^{w^{-1}(\sigma)}}\right],\quad \mathbf j^\phi_{w^{-1}(\sigma)}\colon \left[\frac{\left(R^{\sA,\phi}\right)^{w^{-1}(\sigma)}}{(G^{\phi(\sA)})^{w^{-1}(\sigma)}}\right]\longrightarrow \left[\frac{R^{\sA,\phi}}{G^{\phi(\sA)}}\right].
\end{align*}
Applying equivariant localization to the morphism $\widetilde{p}:\widetilde{L}^\phi_\fC\to R$ with respect to equivariant parameter $t$:
\begin{align*}
&\frac{{\mathbf j}_{\sigma}^*\circ\mathfrak{m}^\phi_{\fC}\circ\bPsi_H^{\phi}(\gamma)}{e^{\sT\times G^{\sigma}}\left(N_{R^\sigma/R}\right)}=\sum_{w\in W^\sigma\backslash W/W^\phi}\widetilde{p}^\sigma_{w*}\left(\frac{\mathrm{ind}^{G^\sigma}_{P^{\phi,\sigma}_{\fC,w}}\circ \mathfrak q^{\sigma*}_w\circ \mathbf j^{\phi*}_{w^{-1}(\sigma)}\circ\bPsi_H^{\phi}(\gamma)}{e^{\sT\times G^{\sigma}}\left(N_{\left(\widetilde{L}^\phi_\fC\right)^\sigma_w/\widetilde{L}^\phi_\fC}\right)}\right)\\
&=\sum_{w\in W^\sigma\backslash W/W^\phi}\widetilde{p}^\sigma_{w*}\circ\mathrm{ind}^{G^\sigma}_{P^{\phi,\sigma}_{\fC,w}} \left(\frac{\mathfrak q^{\sigma*}_w\circ \mathbf j^{\phi*}_{w^{-1}(\sigma)}\circ\bPsi_H^{\phi}(\gamma)}{e^{\sT\times P^{\phi,\sigma}_{\fC,w}}\left(N_{\left(\widetilde{L}^\phi_\fC\right)^\sigma_w/\widetilde{L}^\phi_\fC}\bigg|_{\left(L^\phi_\fC\right)^{w^{-1}(\sigma)}}\right)}\right)\\
&=\sum_{w\in W^\sigma\backslash W/W^\phi}\widetilde{p}^\sigma_{w*}\circ\mathrm{ind}^{G^\sigma}_{P^{\phi,\sigma}_{\fC,w}} \circ\mathfrak q^{\sigma*}_w\left(\frac{ \mathbf j^{\phi*}_{w^{-1}(\sigma)}\circ\bPsi_H^{\phi}(\gamma)}{e^{\sT\times (G^{\phi(\sA)})^{w^{-1}(\sigma)}}\left((L^\phi_\fC-\Lie(P^{\phi,\sigma}_{\fC,w})+\Lie(G))^{w^{-1}(\sigma)\text{-moving}}\right)}\right).
\end{align*}
Then to prove \eqref{deg bound 1_coh} for a given cocharacter $\sigma$, it is enough to show that
\begin{align*}
    \lim_{t\to \infty} \frac{\sqrt{ e^{(G^{\phi(\sA)})^{w^{-1}(\sigma)}}\left((N_{R^{w^{-1}(\sigma)}/R}+\Lie(G))^{w^{-1}(\sigma)\text{-moving}}\right)}\cdot\mathbf j^{\phi*}_{w^{-1}(\sigma)}\circ\bPsi_H^{\phi}(\gamma)}{e^{(G^{\phi(\sA)})^{w^{-1}(\sigma)}}\left((L^\phi_\fC-\Lie(P^{\phi,\sigma}_{\fC,w})+\Lie(G))^{w^{-1}(\sigma)\text{-moving}}\right)}=0\,\, \text{ for all $w\in W^\sigma\backslash W/W^\phi$}.
\end{align*}
Here we have used the following facts: 
\begin{align*}
    \deg_{w^{-1}(\sigma)} e^{(G^{\phi(\sA)})^{w^{-1}(\sigma)}}\left((N_{R^{w^{-1}(\sigma)}/R}+\Lie(G))^{w^{-1}(\sigma)\text{-moving}}\right)=\deg_\sigma e^{(G^{\phi(\sA)})^{\sigma}}\left((N_{R^{\sigma}/R}+\Lie(G))^{\sigma\text{-moving}}\right).
\end{align*}
Since 
\begin{align*}
    \deg_{w^{-1}(\sigma)} e^{(G^{\phi(\sA)})^{w^{-1}(\sigma)}}\left((N_{R^{w^{-1}(\sigma)}/R}+\Lie(G))^{w^{-1}(\sigma)\text{-moving}}\right)= \deg_{w^{-1}(\sigma)}e^{(G^{\phi(\sA)})^{w^{-1}(\sigma)}}\left((R+\Lie(G))^{w^{-1}(\sigma)\text{-moving}}\right),
\end{align*}
and
\begin{align*}
    & \quad \, \deg_{w^{-1}(\sigma)}  e^{(G^{\phi(\sA)})^{w^{-1}(\sigma)}}\left((L^\phi_\fC-\Lie(P^{\phi,\sigma}_{\fC,w})+\Lie(G))^{w^{-1}(\sigma)\text{-moving}}\right)=\\
    &\frac{1}{2}\deg_{w^{-1}(\sigma)}e^{(G^{\phi(\sA)})^{w^{-1}(\sigma)}}\left((R+\Lie(G))^{w^{-1}(\sigma)\text{-moving}}\right)+\frac{1}{2}\deg_{w^{-1}(\sigma)}e^{(G^{\phi(\sA)})^{w^{-1}(\sigma)}}\left((R^{\sA,\phi}-\Lie(G^{\phi(\sA)}))^{w^{-1}(\sigma)\text{-moving}}\right),
\end{align*}
it remains to show that 
\begin{align}\label{deg bound 2_coh}
    \lim_{t\to \infty} \frac{\mathbf j^{\phi*}_{w^{-1}(\sigma)}\circ\bPsi_H^{\phi}(\gamma)}{\sqrt{e^{(G^{\phi(\sA)})^{w^{-1}(\sigma)}}\left((R^{\sA,\phi}-\Lie(G^{\phi(\sA)}))^{w^{-1}(\sigma)\text{-moving}}\right)}}=0\,\, \text{ for all $w\in W^\sigma\backslash W/W^\phi$}.
\end{align}
By Theorem \ref{thm: deg bound na stab coh}, we have
\begin{align*}
    \deg_{w^{-1}(\sigma)} \mathbf j^{\phi*}_{w^{-1}(\sigma)}\bPsi^{\phi}(\gamma)< \frac{1}{2}\deg_{w^{-1}(\sigma)} e^{(G^{\phi(\sA)})^{w^{-1}(\sigma)}}\left((R^{\sA,\phi}-\Lie(G^{\phi(\sA)}))^{w^{-1}(\sigma)\text{-moving}}\right),
\end{align*}
then \eqref{deg bound 2_coh} follows from the above degree bound.

\subsection{Explicit formulas of cohomological stable envelopes}\label{sect on explicit form_coh}

Theorem \ref{thm: hall coh_sym quot} provides a tool to produce explicit formulas of cohomological stable envelopes for symmetric GIT quotient. 
As critical cohomology lacks a fundamental class in general, in the following corollary, we restrict to the case when $\sw=0$, which reduces to the ordinary BM homology. Results in this section will be used to calculate examples in \cite{COZZ1}.

\begin{Corollary}\label{cor: explicit formula w=0_coh}
Assume $\sw=0$, then we have 
\begin{align}\label{explicit formula w=0_coh}
    \bPsi_H\circ\Stab_\fC([X^{\sA,\phi}])=\sum_{w\in W/W^\phi}w\left(e^{\sT\times G^{\phi(\sA)}}\left(T\fX^{\sA,\phi\text{-repl}}\right)\right) \cdot [\fX]\:,
\end{align}
in particular,
\begin{align*}
    \Stab_\fC([X^{\sA,\phi}])=\sum_{w\in W/W^\phi}w\left(e^{\sT\times G^{\phi(\sA)}}\left(T\fX^{\sA,\phi\text{-repl}}\right)\right) \cdot [X]\:.
\end{align*}
Here $W$, $W^\phi$ are Weyl groups of $G$, $G^{\phi(\sA)}$ respectively,
$T\fX=R-\Lie(G)$ and $(-)^{\sA, \phi\text{-repl}}$ is the repelling part with respect to homomorphism $\phi\colon \sA\to G$, and $w$ acts on a weight $\mu$ of $G$ by $w(\mu)(g):=\mu(w^{-1}\cdot g\cdot w)$.
\end{Corollary}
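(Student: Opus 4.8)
\textbf{Proof plan for Corollary \ref{cor: explicit formula w=0_coh}.}

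The plan is to deduce the formula directly from the commutative diagram \eqref{diag on hall coh_sym quot} in Theorem \ref{thm: hall coh_sym quot} together with the explicit description of the Hall operation $\mathfrak m^\phi_\fC$ in the case of zero potential, where critical cohomology reduces to Borel--Moore homology and fundamental classes are available. Concretely, I would start with the class $[X^{\sA,\phi}]\in H^\sT(X^{\sA,\phi},0)=H^\BM_\sT(X^{\sA,\phi})$, whose nonabelian stable envelope $\bPsi^\phi_H([X^{\sA,\phi}])$ is by construction the fundamental class $[\fX^{\sA,\phi}]$ (the restriction map $\mathrm{res}\colon H^\sT(\fX^{\sA,\phi},0)\to H^\sT(X^{\sA,\phi},0)$ sends $[\fX^{\sA,\phi}]\mapsto [X^{\sA,\phi}]$, and the degree bound is trivially satisfied by the fundamental class; uniqueness in Theorem \ref{thm: deg bound na stab coh} then pins it down). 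Then Theorem \ref{thm: hall coh_sym quot} gives $\bPsi_H\circ\Stab_\fC([X^{\sA,\phi}])=\mathfrak m^\phi_\fC([\fX^{\sA,\phi}])$, so it remains to compute the right-hand side.

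The main computation is thus to evaluate $\mathfrak m^\phi_\fC$ on the fundamental class. Using the rewriting in Remark \ref{rmk:rewrite hall op_sym quot}, $\mathfrak m^\phi_\fC=\widetilde p_*\circ\mathrm{ind}^G_{P^\phi_\fC}\circ\fq^*$, where $\fq^*$ applied to $[\fX^{\sA,\phi}]$ produces (up to an Euler class of the repelling directions of the fibration $\fq$, which is an affine fibration so its pullback is an isomorphism and carries the fundamental class to the fundamental class of $\mathfrak L^\phi_\fC$) the fundamental class $[\mathfrak L^\phi_\fC]$; then $\mathrm{ind}^G_{P^\phi_\fC}$ carries it to $[\widetilde L^\phi_\fC]$, and $\widetilde p_*[\widetilde L^\phi_\fC]$ is computed by pushing forward along $\widetilde p\colon\widetilde L^\phi_\fC=G\times^{P^\phi_\fC}L^\phi_\fC\to R$. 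Since $\widetilde L^\phi_\fC$ and $R$ both have $[\fX]$-fundamental classes, the pushforward is $\deg(\widetilde p)\cdot[R]$ on the generic locus, but equivariantly one must localize: I would apply $\sA$-equivariant (indeed $G^{\phi(\sA)}$-equivariant) localization, or equivalently the self-intersection/excess formula, to express $\widetilde p_*\fq^*[\fX^{\sA,\phi}]$ in terms of the fixed-point contributions. The Weyl group $W/W^\phi$ enumerates the components of the preimage $\widetilde p^{-1}(R^{\sA,\phi})$ that dominate $R^{\sA,\phi}$ — each coset $w$ gives a copy of $L^\phi_\fC$ sitting over $R^{\sA,\phi}$ with a normal bundle whose Euler class, after the localization bookkeeping, is exactly the $w$-translate of $e^{\sT\times G^{\phi(\sA)}}(T\fX^{\sA,\phi\text{-repl}})$; summing over $w$ and recognizing that the total is supported on $[\fX]$ gives \eqref{explicit formula w=0_coh}. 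The second formula then follows by applying $\mathrm{res}$ and using $\mathrm{res}\circ\bPsi_H=\id$, $\mathrm{res}[\fX]=[X]$.

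The step I expect to be the main obstacle is the precise bookkeeping of Euler classes in the localization/excess-intersection computation of $\widetilde p_*\circ\mathrm{ind}^G_{P^\phi_\fC}\circ\fq^*[\fX^{\sA,\phi}]$: one must carefully track which directions are attracting versus repelling with respect to $\fC$ and the chosen $\phi$, confirm that the affine-fibration pullback $\fq^*$ contributes no extra Euler factor (it is an isomorphism on BM homology sending fundamental class to fundamental class, by \eqref{pb is iso on coho}), and verify that the $W/W^\phi$-sum of Weyl translates is the correct count of components in $\widetilde p^{-1}(R^{\sA,\phi})$ with the stated normal-bundle Euler classes — the analogue of Lemma \ref{lem on eta map}'s counting but now keeping the full equivariant Euler class rather than just its leading sign. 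Once this is pinned down, identifying the result with $\sum_{w\in W/W^\phi}w\big(e^{\sT\times G^{\phi(\sA)}}(T\fX^{\sA,\phi\text{-repl}})\big)\cdot[\fX]$ is a matter of matching weights, using $T\fX=R-\Lie(G)$ and the convention for the $W$-action on weights stated in the corollary.
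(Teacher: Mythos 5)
You have the right reduction and it matches the paper's first two moves: $\bPsi_H^\phi([X^{\sA,\phi}])=[\fX^{\sA,\phi}]$, and then Theorem~\ref{thm: hall coh_sym quot} converts $\bPsi_H\circ\Stab_\fC([X^{\sA,\phi}])$ into $\mathfrak m^\phi_\fC([\fX^{\sA,\phi}])$. Two remarks on that part. First, the cleanest way to see $\bPsi_H^\phi([X^{\sA,\phi}])=[\fX^{\sA,\phi}]$ is directly from the definition of the correspondence $[\overline{\Delta_{X^{\sA,\phi}}}]$: the projection $\overline{\Delta_{X^{\sA,\phi}}}\to\fX^{\sA,\phi}$ is proper and birational, so the fundamental class pushes forward to the fundamental class. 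Your alternative route via Theorem~\ref{thm: deg bound na stab coh} is fine in outcome, but the claim that the degree bound is "trivially" satisfied by $[\fX^{\sA,\phi}]$ deserves care: one has $\mathbf j_\sigma^*[\fX^{\sA,\phi}]=[\fX^{\sA,\phi,\sigma}]$ of $\sigma$-degree $0$, so what you actually need is $\dim\fX^{\sA,\phi}>\dim\fX^{\sA,\phi,\sigma}$ for all relevant nonzero $\sigma$. This does hold, but it is not a tautology; the definitional argument sidesteps the question entirely.

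Where you genuinely diverge from the paper is the third step. The paper simply cites the Kontsevich--Soibelman shuffle formula \cite[Thm.~2]{CoHA} for the cohomological Hall operation on fundamental classes, which gives \eqref{explicit formula w=0_coh} immediately. You instead propose to re-derive it by pushing forward along $\widetilde p\colon\widetilde L^\phi_\fC=G\times^{P^\phi_\fC}L^\phi_\fC\to R$ and localizing. This is the route one would take to prove the shuffle formula, so it is a legitimate alternative, but your sketch of it contains a concrete error: the sentence "Since $\widetilde L^\phi_\fC$ and $R$ both have $[\fX]$-fundamental classes, the pushforward is $\deg(\widetilde p)\cdot[R]$ on the generic locus" is not meaningful, because $\widetilde L^\phi_\fC$ and $R$ are stacks of different dimension in general (indeed $\dim R-\dim\widetilde L^\phi_\fC=\rk(T\fX^{\sA,\phi\text{-repl}})$ is exactly the degree of the Euler class appearing in the answer), so the pushforward is not a scalar multiple of $[R]$ — it is an honest positive-degree equivariant class times $[R]$. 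The rest of your localization bookkeeping (counting $W/W^\phi$-cosets, identifying the normal bundle contributions with the $w$-translates of $e^{\sT\times G^{\phi(\sA)}}(T\fX^{\sA,\phi\text{-repl}})$) is where the real content lies, and you correctly flag it as the hard part; but as written it is left as a sketch and the one concrete claim it rests on is wrong. The quickest fix is just to do what the paper does and invoke \cite[Thm.~2]{CoHA}.
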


\begin{proof}
By definition of nonabelian stable envelope, we have
$$\bPsi^\phi_H([X^{\sA,\phi}])=[\fX^{\sA,\phi}]. $$
Then \eqref{explicit formula w=0_coh} follow from Theorem \ref{thm: hall coh_sym quot} and explicit formulas of cohomological Hall operations in \cite[Thm.~2]{CoHA}.
\end{proof}

\section{\texorpdfstring{$K$}{K}-theoretic stable envelopes on symmetric GIT quotients}\label{app on k symm var}
In this section, we show that $K$-theoretic stable envelopes exist for symmetric GIT quotients with any $\sT$-invariant potential functions (Theorem \ref{thm: hall k_sym quot}). 
The proof makes use of nonabelian stable envelopes (see Definition \ref{def of na stab k_window}, and Theorem \ref{thm: deg bound na stab k} for a characterization property) 
and Hall operations, similarly as in the previous section. 
As a side application, we derive an explicit formula of $K$-theoretic stable envelopes when potentials are zero (Corollary \ref{cor: explicit formula w=0_k}).

\subsection{Window category and nonabelian stable envelope}
We use the same notations as in Definition \ref{def of sym var}.

\begin{Definition}
Let $\fX=[R/G]$, $\mathsf s\in \mathrm{Char}(G)\otimes_\bZ\bR$ be a \textit{generic slope}, that is, the projection of $\mathsf s$ to $\mathrm{Char}(H)\otimes_\bZ\bR$ is not integral for any subgroup $H\subseteq G$. The \textit{window subcategory} of $\D^b_{\coh}(\fX)$ with slope $\mathsf s$ is the full triangulated dg-subcategory $\mathbb M^{\mathsf s}$ in $\D^b_{\coh}(\fX)$ generated by $\mathcal O_{\fX}\otimes U$, where $U$ is an irreducible $G$-module whose characters lie in the polytope $$\frac{1}{2}\deg_G e^G_K(R(\bv,\bd)-\Lie(G))+\wt_G\mathsf s.$$
Let $\sw\colon \fX\to \bA^1$ be a function, the \textit{window subcategory} of $\mathrm{MF}_\coh(\fX,\sw)$ with slope $\mathsf s$ is the full subcategory $\mathrm{MF}_\coh(\mathbb M^{\mathsf s},\sw)$ in $\mathrm{MF}_\coh(\fX,\sw)$ generated by matrix factorizations $(\mathcal E_\bullet,d_\bullet)$ with $\cE_0,\cE_1\in \mathbb M^{\mathsf s}$. 
\end{Definition}
It is shown in \cite[Thm.~1.2]{HS} that the restriction functor $$\mathbb M^{\mathsf s}\stackrel{\simeq}{\to} \D^b_\coh(X)$$ is an equivalence of triangulated dg-categories. Then the restriction functor 
\begin{equation}\mathrm{MF}_\coh(\mathbb M^{\mathsf s},\sw)\stackrel{\simeq}{\to} \mathrm{MF}_\coh(X,\sw)\nonumber \end{equation} is an equivalence of triangulated dg-categories. Passing to Grothendieck's group, we get an isomorphism 
$$K_0(\mathrm{MF}_\coh(\mathbb M^{\mathsf s},\sw))\cong K(X,\sw). $$ 
Note that \cite[Thm.~1.2]{HS} holds in the $\sT$-equivariant case (mentioned in the proof \cite[Cor.~5.2]{HS}), so the above statements still hold in the $\sT$-equivariant version.

\begin{Definition}\label{def of na stab k_window}
The inverse of the above isomorphism followed by the natural map $K_0(\mathrm{MF}_\coh(\mathbb M^{\mathsf s},\sw))\to K(\fX,\sw)$ induces a map 
\begin{equation}\label{na stab_window}\bPsi^{\mathsf s}_K\colon K^\sT(X,\sw)\to K^\sT(\fX,\sw), \end{equation}
which we call the ($K$-theoretic) \textit{nonabelian stable envelope}.
\end{Definition}

By construction we have $\mathrm{res}\circ \bPsi^{\mathsf s}_K=\id$, where 
$$\mathrm{res}\colon K^\sT(\fX,\sw)\to K^\sT(X,\sw)$$ is the restriction to the stable locus. Moreover, we have the following analog of Theorem \ref{thm: deg bound na stab coh}.

\begin{Theorem}\label{thm: deg bound na stab k}
$\bPsi^{\mathsf s}_K$ is the unique $K_\sT(\pt)$-linear map from $K^\sT(X,\sw)$ to $K^\sT(\fX,\sw)$ that satisfies the following:
\begin{itemize}
    \item $\mathrm{res}\circ \bPsi^{\mathsf s}_K=\id$,
    \item for all nonzero cocharacter $\sigma:\bC^*\to G$ and all $\gamma\in K^\sT(X,\sw)$, there is a strict inclusion of polytopes
    \begin{align*}
    \deg_\sigma \mathbf j^*_{\sigma}\bPsi^{\mathsf s}_K(\gamma)\subsetneq  \frac{1}{2}\deg_{\sigma} e^G_K(R(\bv,\bd)-\Lie(G))+\wt_{\sigma}\mathsf s,
\end{align*}
where $\mathbf j_\sigma:[R^\sigma/G^\sigma]\to \fX$ is the natural morphism between quotient stacks.
\end{itemize}
\end{Theorem}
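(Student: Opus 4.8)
\textbf{Proof proposal for Theorem \ref{thm: deg bound na stab k}.}

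The plan is to mirror the cohomological argument of Theorem \ref{thm: deg bound na stab coh}, isolating the two halves of the statement: first that $\bPsi^{\mathsf s}_K$ satisfies the stated degree-bound condition, and second that the two listed properties pin it down uniquely. The equality $\mathrm{res}\circ \bPsi^{\mathsf s}_K = \id$ is immediate from the construction in Definition \ref{def of na stab k_window}, since $\bPsi^{\mathsf s}_K$ factors through the window equivalence $\mathrm{MF}_\coh(\mathbb M^{\mathsf s},\sw)\xrightarrow{\simeq}\mathrm{MF}_\coh(X,\sw)$ whose inverse composed with restriction is the identity. So the content is entirely in the degree bound and the uniqueness.

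For the degree bound, I would argue directly on generators. A class $\gamma\in K^\sT(X,\sw)$ lifts uniquely to $\bPsi^{\mathsf s}_K(\gamma)\in K^\sT(\fX,\sw)$ represented by a matrix factorization $(\mathcal E_\bullet,d_\bullet)$ with $\mathcal E_0,\mathcal E_1$ in the window $\mathbb M^{\mathsf s}$, i.e.\ built from $\mathcal O_\fX\otimes U$ with $U$ an irreducible $G$-module whose characters lie in $\tfrac12\deg_G e^G_K(R(\bv,\bd)-\Lie(G))+\wt_G\mathsf s$. Restricting along $\mathbf j_\sigma\colon [R^\sigma/G^\sigma]\to\fX$ and computing the $\sigma$-Newton polytope, one has $\mathbf j_\sigma^*(\mathcal O_\fX\otimes U) = \mathcal O_{[R^\sigma/G^\sigma]}\otimes U|_{G^\sigma}$, and the $\sigma$-weights of $U|_{G^\sigma}$ are precisely the $\sigma$-pairings of the characters of $U$; these are contained in the $\sigma$-projection of the window polytope, namely $\tfrac12\deg_\sigma e^G_K(R(\bv,\bd)-\Lie(G))+\wt_\sigma\mathsf s$. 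The strictness of the inclusion is exactly where genericity of $\mathsf s$ is used, via the usual argument (as in Remark \ref{rmk: generic slope}): an endpoint of the Newton polytope of the restriction could coincide with the bounding polytope only along an integral $\sigma$-weight direction, which genericity forbids. Here I should be careful that the degree bound is claimed for the \emph{restriction} of the single representative in the window, not just up to a coboundary — but since the window representative is canonical, this is legitimate. I expect some bookkeeping is needed to match the combinatorial definition of $\deg_\sigma$ on $K^{\sT\times G^\sigma/\sigma(\bC^*)}(R^\sigma,\sw)\otimes K_{\bC^*}(\pt)$ with the geometric statement, but no genuine obstruction.

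For uniqueness, I would transcribe the argument of \S\ref{subsubsec: uniquess of Psi_H} into $K$-theory. Any $K_\sT(\pt)$-linear $L\colon K^\sT(X,\sw)\to K^\sT(\fX,\sw)$ with $\mathrm{res}\circ L=\id$ differs from $\bPsi^{\mathsf s}_K$ by a map whose image lands in $\ker(\mathrm{res})$, which by the excision sequence \eqref{eqn-exc K} applied to the open substack $X\subseteq\fX$ and the Kempf--Ness stratification $R^{\theta\text{-}u}=\bigsqcup S_i$ is filtered by pushforwards $i_{S_i,*}$ from the strata. The analog of Remark \ref{rmk: KN strata coh} in $K$-theory identifies $K^G(S_i,\sw)\cong K^{G^{\sigma_i}}(Z_i^*,\sw)$, and $\mathbf i_{\sigma_i}^*$ is the composite of Gysin restriction to $S_i$ with this isomorphism; the self-intersection formula \eqref{equ on k euler of normal} gives $\mathbf i_{\sigma_i}^*\circ i_{S_i,*} = e_K^{\sT\times G^{\sigma_i}}(N_{S_i/R})\cdot(-)$, whose $\sigma_i$-Newton polytope is shifted by the full width $\deg_{\sigma_i} e_K^{G^{\sigma_i}}(N_{S_i/R}^{\sigma_i\text{-mov}})$. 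A nonzero class supported on the unstable locus, taken supported on a minimal union $S$ of strata with open stratum $S_i$ on which its restriction is nonzero, would then violate the strict polytope inclusion that both $L$ and $\bPsi^{\mathsf s}_K$ are assumed to satisfy — contradiction. The one point requiring care, and the step I expect to be the main obstacle, is the $K$-theoretic excision: unlike in Borel--Moore homology, $\ker(\mathrm{res})$ in critical $K$-theory need not equal the image of $i_{S_i,*}$ for a single stratum, so I would need an inductive dévissage over the KN strata using the semiorthogonal decompositions of Halpern-Leistner (as in the proof of Proposition \ref{excision for crit K}) to control $\ker(\mathrm{res})$ stratum by stratum, rather than invoking a single long exact sequence. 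Once that filtration is in place the degree argument proceeds as above, and combined with $\mathrm{res}\circ(-)=\id$ it forces $L=\bPsi^{\mathsf s}_K$.
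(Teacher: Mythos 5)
Your proposal is correct and follows essentially the same route the paper takes, filling in the details the authors omit. The paper's proof is terse — it says the degree bound follows "by the construction of window categories" and that uniqueness "is proven similarly as Theorem \ref{thm: deg bound na stab coh}" — and your reconstruction matches both halves: the degree bound comes from the window generators $\mathcal O_\fX\otimes U$ whose characters lie in $\tfrac12\deg_G e^G_K(R(\bv,\bd)-\Lie(G))+\wt_G\mathsf s$, so that $\mathbf j_\sigma^*$ projects them into $\tfrac12\deg_\sigma e^G_K(R(\bv,\bd)-\Lie(G))+\wt_\sigma\mathsf s$, with strictness because the nonabelian genericity of $\mathsf s$ (Definition of window subcategory: the projection of $\mathsf s$ to $\mathrm{Char}(H)\otimes\bR$ is nonintegral for all $H\subseteq G$) keeps integral characters off the polytope boundary; and the uniqueness is the KN-stratum dévissage transcribed from \S\ref{subsubsec: uniquess of Psi_H}, with the care you flag about using the semiorthogonal-decomposition-based excision of Proposition \ref{excision for crit K} rather than a single long exact sequence being exactly the right concern in $K$-theory. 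One small imprecision: the strictness should be attributed to the nonabelian genericity condition on $\mathsf s$ rather than to Remark \ref{rmk: generic slope}, which concerns the abelian slope shifts between fixed components — but the mechanism (nonintegral shifts forcing strict inclusion of Newton polytopes with integral vertices) is the same.
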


\begin{proof}
By the construction of window categories, $\bPsi^{\mathsf s}_K$ satisfies the two conditions. The uniqueness is proven similarly as Theorem \ref{thm: deg bound na stab coh} and we omit the details.
\end{proof}

\subsection{Hall operations}

\begin{Definition}\label{def of hall op k_sym quot}
Consider the diagram \eqref{Hall diagram}, and we define the $K$-theoretic \textit{Hall operations}
\begin{align*}
\mathfrak{m}^\phi_{\fC}:=\fp_*\circ\fq^*\colon K^\sT(\fX^{\sA,\phi},\sw)\longrightarrow K^\sT(\fX,\sw).
\end{align*}
\end{Definition}

\begin{Theorem}\label{thm: hall k_sym quot}
Let $\mathsf s\in \mathrm{Char}(G)\otimes_\bZ \bR$ be a generic slope. Then the composition
\begin{align*}
\mathrm{res}\circ \mathfrak{m}^\phi_{\fC}\circ \bPsi_K^{\phi,\mathsf s'}\colon K^\sT(X^{\sA,\phi},\sw)\to K^\sT(X,\sw)
\end{align*}
is a $K$-theoretic stable envelope with slope $\mathsf s$, where $\bPsi_K^{\phi,\mathsf s'}$ is the $K$-theoretic nonabelian stable envelope for the stack $\fX^{\sA,\phi}$ with the slope
\begin{align*}
    \mathsf s'=\mathsf s\otimes \det\left(T\fX^{\sA,\phi\text{-repl}}\right)^{1/2}\in \mathrm{Char}(G^{\phi(\sA)})\otimes_\bZ \bR.
\end{align*}
In the above, $T\fX^{\sA,\phi\text{-repl}}$ is the $\fC$-repelling part of $T\fX=R-\Lie(G)$ with respect to $\sA$-action via $\sA\xrightarrow{(\phi, \id)} G\times \sA$.
Moreover, the following diagram is commutative
\begin{equation}\label{diag on hall k_sym quot}
\xymatrix{
K^\sT(\fX^{\sA,\phi},\sw) \ar[rr]^{\mathfrak{m}^\phi_{\fC}} & & K^\sT(\fX,\sw)\\
K^\sT(X^{\sA,\phi},\sw) \ar[u]^{\bPsi_K^{\phi,\mathsf s'}} \ar[rr]^{\Stab^{\mathsf s}_\fC} & & K^\sT(X,\sw).
\ar[u]_{\bPsi^{\mathsf s}_K}
}
\end{equation}
\end{Theorem}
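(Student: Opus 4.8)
The strategy mirrors the cohomological case (Theorem \ref{thm: hall coh_sym quot}) closely, so I would organize the argument in three steps analogous to the proof of Theorem \ref{tri lem for coh stab}: first identify the composition $\mathfrak{m}^\phi_{\fC}\circ \bPsi_K^{\phi,\mathsf s'}$ with a convolution by an explicit correspondence supported on the closure of the attracting set of the diagonal; second restrict to the stable locus to see this convolution is the attracting correspondence on $X$; and third verify the degree bound by pushing everything into the ambient stack and using the characterization of $\bPsi^{\mathsf s}_K$ from Theorem \ref{thm: deg bound na stab k}. The commutativity of \eqref{diag on hall k_sym quot} is then a reformulation: since $\mathrm{res}\circ \bPsi^{\mathsf s}_K=\id$, one has $\mathrm{res}\circ \mathfrak{m}^\phi_{\fC}\circ \bPsi_K^{\phi,\mathsf s'}=\Stab^{\mathsf s}_\fC$ and it remains to check that the image of $\mathfrak{m}^\phi_{\fC}\circ \bPsi_K^{\phi,\mathsf s'}$ lands in the image of $\bPsi^{\mathsf s}_K$, i.e.\ satisfies the degree bound on every $\mathbf j_\sigma$-pullback.

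\textbf{Step 1: identify the composition with a correspondence.} Following the proof of Theorem \ref{thm: hall coh_sym quot}, I would argue that $\bPsi_K^{\phi,\mathsf s'}$ is induced by the structure sheaf of (a suitable twist of) $\overline{\Delta_{X^{\sA,\phi}}}$ inside $\fX^{\sA,\phi}\times X^{\sA,\phi}$; here, because we are in $K$-theory, one must track the determinantal twist by $\left(\det T\fX^{\sA,\phi\text{-repl}}\right)^{1/2}$ built into the slope $\mathsf s'$, which is exactly the source of the slope-shift appearing in Lemma \ref{adj of stab} and the triangle Lemma \ref{triangle lemma for K}. Pulling back along the smooth map $\fq\times\id$ and pushing forward along the proper map $\fp\times\id$ from the Hall diagram \eqref{Hall diagram}, and using that $\fq$ has connected fibers while $(\fq\times\id)^{-1}$ of the irreducible stack $\overline{\Delta_{X^{\sA,\phi}}}$ is irreducible, one gets that $\mathfrak{m}^\phi_{\fC}\circ \bPsi_K^{\phi,\mathsf s'}$ is convolution by a class $[\mathcal O_Z]$ (suitably twisted) with $Z=\overline{\Attr}_\fC(\Delta_{X^{\sA,\phi}})$ the closure taken inside $\fX\times X^{\sA,\phi}$. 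Restricting to the open substack $X\times X^{\sA,\phi}$ recovers the $K$-theoretic attracting correspondence on $X$, which by Definition \ref{stab corr_k} and Proposition \ref{corr induce stab} gives axioms (i) and (ii) of the stable envelope once we know the degree bound.

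\textbf{Step 2: the degree bound via $\sigma$-localization.} The real content is checking axiom (iii), equivalently the strict polytope inclusion $\deg_\sigma \mathbf j^*_\sigma\circ \mathfrak{m}^\phi_{\fC}\circ \bPsi_K^{\phi,\mathsf s'}(\gamma)\subsetneq \frac{1}{2}\deg_\sigma e^G_K(R-\Lie G)+\wt_\sigma\mathsf s$ for every nonzero cocharacter $\sigma$ of $G$. As in the cohomological proof (Section \ref{Proof of thm: hall coh_sym quot}), I would apply $K$-theoretic equivariant localization to the map $\widetilde p\colon \widetilde L^\phi_\fC\to R$ with respect to $\sigma$, decompose the $\sigma$-fixed locus of $\widetilde L^\phi_\fC$ over double cosets $w\in W^\sigma\backslash W/W^\phi$, and reduce the required estimate to the degree bound for $\bPsi_K^{\phi,\mathsf s'}$ on each $\mathbf j^\phi_{w^{-1}(\sigma)}$-pullback, which is provided by Theorem \ref{thm: deg bound na stab k}. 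The bookkeeping is identical in shape to the cohomological case: the Euler-class denominators coming from normal bundles of the fixed strata in $\widetilde L^\phi_\fC$ exactly absorb the difference between $\frac{1}{2}\deg e_K^G(R-\Lie G)$ and the half-dimensional polytope governing the window category of $\fX^{\sA,\phi}$, and the determinantal twist in $\mathsf s'$ matches the $\wt$-shift in the localization formula. I expect the main obstacle to be precisely this $K$-theoretic bookkeeping: polytopes rather than polynomial degrees are subtler, one must use genericity of the slope to upgrade inclusions to strict inclusions (Remark \ref{rmk: generic slope}), and one must keep careful track of square roots of determinant line bundles and of the identity $\wt_\sigma\det(N^-)\otimes\det(N^+)\otimes\mathcal K|_F=0$ used already in Lemma \ref{adj of stab}. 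Once the degree bound is in place, uniqueness (Proposition \ref{uniqueness of K stab}) identifies the composition with $\Stab^{\mathsf s}_\fC$ and, together with $\mathrm{res}\circ\bPsi^{\mathsf s}_K=\id$ and the degree bound just proved, forces the square \eqref{diag on hall k_sym quot} to commute.
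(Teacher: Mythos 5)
Your Step 2 correctly reproduces the first of the paper's two localization arguments, but you have conflated two distinct degree bounds, and the conflation leaves a real gap. The bound
\[
\deg_\sigma \mathbf j^*_\sigma\circ \mathfrak{m}^\phi_{\fC}\circ \bPsi_K^{\phi,\mathsf s'}(\gamma)\subsetneq \tfrac{1}{2}\deg_\sigma e^G_K(R-\Lie G)+\wt_\sigma\mathsf s
\]
ranges over cocharacters $\sigma\colon\bC^*\to G$ of the \emph{gauge group} and, via Theorem \ref{thm: deg bound na stab k}, shows only that the image of $\mathfrak{m}^\phi_{\fC}\circ \bPsi_K^{\phi,\mathsf s'}$ lands in $\im(\bPsi^{\mathsf s}_K)$. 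This is \emph{not} equivalent to axiom (iii) of Definition \ref{def of stab k}, which is an $\sA$-degree bound on restrictions to pairs of $\sA$-fixed components $F'\times F$ of the GIT quotient $X$. The paper verifies the latter by a \emph{second} localization computation, with respect to $\sA$ acting via a homomorphism $\phi'\colon\sA\to G$ not equivalent to $\phi$, and the bookkeeping there is genuinely different (it involves the characters $\det_{\mathrm{flav}}$, the normalizer twist, and the comparison of $\phi'^*(\mathsf s)$ with $\phi^*(\mathsf s)$). Without that second estimate you cannot invoke Proposition \ref{uniqueness of K stab}, because that uniqueness result presupposes all three axioms, and here Theorem \ref{thm: hall k_sym quot} is doing double duty: it is the \emph{existence} theorem for $K$-theoretic stable envelopes on symmetric GIT quotients with higher-rank $\sA$ (cf.\ \S\ref{sect on ex kstab} and Theorem \ref{thm on exist of stab}(3)), not merely a compatibility check as in the cohomological case where $\Stab_\fC$ already exists from Theorem \ref{exi of coh stab on symm quiver var}.

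A second, more minor issue: Step 1 proposes to identify $\bPsi_K^{\phi,\mathsf s'}$ with a structure-sheaf correspondence supported on $\overline{\Delta_{X^{\sA,\phi}}}$, by analogy with the cohomological $\bPsi_H$. But the $K$-theoretic nonabelian stable envelope is defined via window subcategories (Definition \ref{def of na stab k_window}), not as a correspondence, and Remark \ref{rmk on corr is need} explains why the correspondence lift is \emph{not} canonical in $K$-theory: $K^\sT(\Attr^f_\fC)\to K^\sT(X\times X^\sA)$ need not be injective, so a supported $K$-class does not determine a unique cycle. The paper sidesteps the correspondence picture entirely and instead verifies the support and normalization axioms by direct arguments with the Cartesian diagram linking the Hall maps $\fp,\fq$ to the attraction map on the semistable locus. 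If you insist on a correspondence route, you would need to re-derive the correspondence identity from the degree-bound characterization (Theorem \ref{thm: deg bound na stab k}) anyway, which is circular for the present purpose; the direct verification is cleaner.
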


\begin{proof}
We claim that the image of $\mathfrak{m}^\phi_{\fC}\circ \bPsi_K^{\phi,\mathsf s'}$ is contained in the image of $\bPsi_K^{\mathsf s}$. By Theorem \ref{thm: deg bound na stab k}, it is enough to show: 
\begin{itemize}
    \item for all nonzero cocharacter $\sigma:\bC^*\to G^{\phi(\sA)}$ and all $\gamma\in K^\sT(X,\sw)$, there is a strict inclusion of polytopes
    \begin{align*}
    \deg_\sigma \mathbf j^*_{\sigma}\circ \mathfrak{m}^\phi_{\fC}\circ \bPsi_K^{\phi,\mathsf s'}(\gamma)\subsetneq  \frac{1}{2}\deg_{\sigma} e^G_K(R-\Lie(G))+\wt_{\sigma}\mathsf s.
    \end{align*}
\end{itemize}

For a vector bundle $V$, denote 
\begin{align}\label{hat e}
    \hat e^G_K(V):=\det(V)^{1/2}\cdot e^G_K(V), 
\end{align} 
and extend the definition to $K$-theory class by setting $$\hat e^G_K(V_1-V_2)=\hat e^G_K(V_1)/\hat e^G_K(V_2)$$ for vector bundles $V_1,V_2$. Then the degree condition can be rewritten as 
\begin{align*}
    \deg_\sigma \mathbf j^*_{\sigma}\circ \mathfrak{m}^\phi_{\fC}\circ \bPsi_K^{\phi,\mathsf s'}(\gamma)\subsetneq  \frac{1}{2}\deg_{\sigma} \hat e^G_K(N_{R^{\sigma}/R}-\Lie(G))+\wt_{\sigma}\mathsf s,
\end{align*}
equivalently the limits
\begin{align}\label{deg bound 1}
    \lim_{t\to 0,\infty} \frac{{\mathbf j}_{\sigma}^*\circ\mathfrak{m}^\phi_{\fC}\circ\bPsi_K^{\phi,\mathsf s'}(\gamma)}{\sqrt{\hat e^{G^{\sigma}}_K\left(N_{R^\sigma/R}-\Lie(G)\right)}\cdot\sigma^*(\mathsf s)}\,\, \text{ exist for all cocharacters $\sigma:\bC^*\to G^{\phi(\sA)}$},
\end{align}
where $t$ is the equivariant parameter in $K_{\bC^*}(\pt)=\bQ[t^\pm]$ and 
$$\frac{{\mathbf j}_{\sigma}^*\circ\mathfrak{m}^\phi_{\fC}\circ\bPsi_K^{\phi,\mathsf s'}(\gamma)}{\sqrt{\hat e^{G^{\sigma}}_K\left(N_{R^\sigma/R}-\Lie(G)\right)}\cdot\sigma^*(\mathsf s)}\in K^{\sT\times G^{\sigma}/\sigma(\bC^*)}(R^{\sigma},\sw)(t^{1/2}).$$ 
Consider the morphism $\widetilde{p}:\widetilde{L}^\phi_\fC\to R$ in Remark \ref{rmk:rewrite hall op_sym quot}. The $\sigma$-fixed locus of $\widetilde{L}^\phi_\fC$ decomposes into connected components 
\begin{align*}
   \left(\widetilde{L}^\phi_\fC\right)^\sigma =\bigsqcup_{w\in W^\sigma\backslash W/W^\phi}\left(\widetilde{L}^\phi_\fC\right)^\sigma_w,\qquad \left(\widetilde{L}^\phi_\fC\right)^\sigma_w=G^{\sigma}\times^{P^{\phi,\sigma}_{\fC,w}}\left(L^\phi_\fC\right)^{w^{-1}(\sigma)},
\end{align*}
where $W^\sigma,W,W^\phi$ are Weyl groups of $G^\sigma,G,G^{\phi(\sA)}$ respectively, $w^{-1}(\sigma):=w^{-1}\cdot\sigma\cdot w$ is a cocharacter of $G^{\phi(\sA)}$, and 
$$P^{\phi,\sigma}_{\fC,w}=G^{\sigma}\bigcap wP^\phi_{\fC}w^{-1},$$ which acts on $\left(L^\phi_\fC\right)^{w^{-1}(\sigma)}$ via $g\mapsto w^{-1}gw\curvearrowright \left(L^\phi_\fC\right)^{w^{-1}(\sigma)}$. 

Denote the induced maps between $\sigma$-fixed loci:
\begin{align*}
    \widetilde{p}^\sigma_{w}\colon\left(\widetilde{L}^\phi_\fC\right)^\sigma_w\to R^\sigma,\quad q^\sigma_w\colon \left(L^\phi_\fC\right)^{w^{-1}(\sigma)}\to \left(R^{\sA,\phi}\right)^{w^{-1}(\sigma)},
\end{align*}
and the induced maps between quotient stacks:
\begin{align*}
    \mathfrak q^\sigma_w\colon \left[\frac{\left(L^\phi_\fC\right)^{w^{-1}(\sigma)}}{P^{\phi,\sigma}_{\fC,w}}\right]\longrightarrow \left[\frac{\left(R^{\sA,\phi}\right)^{w^{-1}(\sigma)}}{(G^{\phi(\sA)})^{w^{-1}(\sigma)}}\right],\quad \mathbf j^\phi_{w^{-1}(\sigma)}\colon \left[\frac{\left(R^{\sA,\phi}\right)^{w^{-1}(\sigma)}}{(G^{\phi(\sA)})^{w^{-1}(\sigma)}}\right]\longrightarrow \left[\frac{R^{\sA,\phi}}{G^{\phi(\sA)}}\right].
\end{align*}
Applying equivariant localization to the morphism $\widetilde{p}:\widetilde{L}^\phi_\fC\to R$ with respect to equivariant parameter $t$:
\begin{align*}
&\frac{{\mathbf j}_{\sigma}^*\circ\mathfrak{m}^\phi_{\fC}\circ\bPsi_K^{\phi,\mathsf s'}(\gamma)}{e^{\sT\times G^{\sigma}}_K\left(N_{R^\sigma/R}\right)}=\sum_{w\in W^\sigma\backslash W/W^\phi}\widetilde{p}^\sigma_{w*}\left(\frac{\mathrm{ind}^{G^\sigma}_{P^{\phi,\sigma}_{\fC,w}}\circ \mathfrak q^{\sigma*}_w\circ \mathbf j^{\phi*}_{w^{-1}(\sigma)}\circ\bPsi_K^{\phi,\mathsf s'}(\gamma)}{e^{\sT\times G^{\sigma}}_K\left(N_{\left(\widetilde{L}^\phi_\fC\right)^\sigma_w/\widetilde{L}^\phi_\fC}\right)}\right)\\
&=\sum_{w\in W^\sigma\backslash W/W^\phi}\widetilde{p}^\sigma_{w*}\circ\mathrm{ind}^{G^\sigma}_{P^{\phi,\sigma}_{\fC,w}} \left(\frac{\mathfrak q^{\sigma*}_w\circ \mathbf j^{\phi*}_{w^{-1}(\sigma)}\circ\bPsi_K^{\phi,\mathsf s'}(\gamma)}{e^{\sT\times P^{\phi,\sigma}_{\fC,w}}_K\left(N_{\left(\widetilde{L}^\phi_\fC\right)^\sigma_w/\widetilde{L}^\phi_\fC}\bigg|_{\left(L^\phi_\fC\right)^{w^{-1}(\sigma)}}\right)}\right)\\
&=\sum_{w\in W^\sigma\backslash W/W^\phi}\widetilde{p}^\sigma_{w*}\circ\mathrm{ind}^{G^\sigma}_{P^{\phi,\sigma}_{\fC,w}} \circ\mathfrak q^{\sigma*}_w\left(\frac{ \mathbf j^{\phi*}_{w^{-1}(\sigma)}\circ\bPsi_K^{\phi,\mathsf s'}(\gamma)}{e^{\sT\times (G^{\phi(\sA)})^{w^{-1}(\sigma)}}_K\left((L^\phi_\fC-\Lie(P^{\phi,\sigma}_{\fC,w})+\Lie(G))^{w^{-1}(\sigma)\text{-moving}}\right)}\right).
\end{align*}
Note that by self-duality, we have 
$$e^{\sT\times G^{\sigma}}_K\left(N_{R^\sigma/R}\right)=\hat e^{\sT\times G^{\sigma}}_K\left(N_{R^\sigma/R}\right)\cdot(\text{some $\sT/\sA$-character}). $$ Then to prove \eqref{deg bound 1} for a given cocharacter $\sigma$, it is enough to show that
\begin{align*}
    \lim_{t\to 0,\infty} \frac{\sqrt{\hat e^{(G^{\phi(\sA)})^{w^{-1}(\sigma)}}_K\left(N_{R^{w^{-1}(\sigma)}/R}+\Lie(G)\right)}\cdot\mathbf j^{\phi*}_{w^{-1}(\sigma)}\circ\bPsi_K^{\phi,\mathsf s'}(\gamma)}{e^{(G^{\phi(\sA)})^{w^{-1}(\sigma)}}_K\left((L^\phi_\fC-\Lie(P^{\phi,\sigma}_{\fC,w})+\Lie(G))^{w^{-1}(\sigma)\text{-moving}}\right)\cdot w^{-1}(\sigma)^*(\mathsf s)}\,\, \text{ exist for all $w\in W^\sigma\backslash W/W^\phi$}.
\end{align*}
Here we have used the following facts: $w^{-1}(\sigma)^*(\mathsf s)=\sigma^*(\mathsf s)$ since $\mathsf s$ is Weyl-invariant, and 
\begin{align*}
    \deg_{w^{-1}(\sigma)} \hat e^{(G^{\phi(\sA)})^{w^{-1}(\sigma)}}_K\left(N_{R^{w^{-1}(\sigma)}/R}+\Lie(G)\right)=\deg_\sigma \hat e^{(G^{\phi(\sA)})^{\sigma}}_K\left(N_{R^{\sigma}/R}+\Lie(G)\right).
\end{align*}
According to the definition of $\mathsf s'$, we have
\begin{align*}
    \mathsf s'=\mathsf s\otimes\det\left(L^\phi_{\fC}-\Lie(P^\phi_\fC)\right)^{-1/2},
\end{align*}
and
\begin{align*}
    & \quad \,\, e^{(G^{\phi(\sA)})^{w^{-1}(\sigma)}}_K\left((L^\phi_\fC-\Lie(P^{\phi,\sigma}_{\fC,w})+\Lie(G))^{w^{-1}(\sigma)\text{-moving}}\right)\cdot w^{-1}(\sigma)^*(\mathsf s)\\
    &=\hat e^{(G^{\phi(\sA)})^{w^{-1}(\sigma)}}_K\left((L^\phi_\fC-\Lie(P^{\phi,\sigma}_{\fC,w})+\Lie(G))^{w^{-1}(\sigma)\text{-moving}}\right)\cdot w^{-1}(\sigma)^*(\mathsf s').
\end{align*}
Since 
\begin{align*}
    \deg_{w^{-1}(\sigma)}\hat e^{(G^{\phi(\sA)})^{w^{-1}(\sigma)}}_K\left(N_{R^{w^{-1}(\sigma)}/R}+\Lie(G)\right)= \deg_{w^{-1}(\sigma)}\hat e^{(G^{\phi(\sA)})^{w^{-1}(\sigma)}}_K\left(R+\Lie(G)\right),
\end{align*}
and
\begin{align*}
    & \quad \, \deg_{w^{-1}(\sigma)} \hat e^{(G^{\phi(\sA)})^{w^{-1}(\sigma)}}_K\left((L^\phi_\fC-\Lie(P^{\phi,\sigma}_{\fC,w})+\Lie(G))^{w^{-1}(\sigma)\text{-moving}}\right)\\
    &=\deg_{w^{-1}(\sigma)} \hat e^{(G^{\phi(\sA)})^{w^{-1}(\sigma)}}_K\left(L^\phi_\fC-\Lie(P^{\phi,\sigma}_{\fC,w})+\Lie(G)\right),
\end{align*}
it remains to show that 
\begin{align}\label{deg bound 2}
    \lim_{t\to 0,\infty} \frac{\mathbf j^{\phi*}_{w^{-1}(\sigma)}\circ\bPsi_K^{\phi,\mathsf s'}(\gamma)}{\sqrt{\hat e^{(G^{\phi(\sA)})^{w^{-1}(\sigma)}}_K\left(R^{\sA,\phi}-\Lie(G^{\phi(\sA)})\right)}\cdot w^{-1}(\sigma)^*(\mathsf s')}\,\, \text{ exist for all $w\in W^\sigma\backslash W/W^\phi$}.
\end{align}
By Theorem \ref{thm: deg bound na stab k}, we have
\begin{align*}
    \deg_{w^{-1}(\sigma)} \mathbf j^{\phi*}_{w^{-1}(\sigma)}\bPsi^{\phi,\mathsf s'}_K(\gamma)\subseteq  \frac{1}{2}\deg_{w^{-1}(\sigma)} e^{G^{\phi(\sA)}}_K(R^{\sA,\phi}-\Lie(G^{\phi(\sA)}))+\wt_{w^{-1}(\sigma)}\mathsf s',
\end{align*}
then \eqref{deg bound 2} follows from the above degree bound. This proves our claim.

In the remainder of the proof, we need to show that $\mathrm{res}\circ\mathfrak{m}^\phi_{\fC}\circ \bPsi_K^{\phi,\mathsf s'}$ satisfies the axioms of $K$-theoretic stable envelopes. 

The image of $\fp:\fL^\phi_\fC \to \fX$ is a closed substack of $\fX$ and it contains 
$$\fp(\fq^{-1}(X^{\sA,\phi}))=\Attr_\fC(X^{\sA,\phi})$$ as dense substack. Then $\mathrm{res}\circ\mathfrak{m}^\phi_{\fC}\circ \bPsi_K^{\phi,\mathsf s'}$ is supported on $\overline{\Attr}_\fC(X^{\sA,\phi})\subseteq  \Attr^f_\fC(X^{\sA,\phi})$. This verifies the support axiom (Definition \ref{def of stab k}(i)). 
Moreover, we have $$\fp^{-1}(\Attr_\fC(X^{\sA,\phi}))=\fq^{-1}(X^{\sA,\phi}), $$ where $\fp$ induces an isomorphism $\fq^{-1}(X^{\sA,\phi})\cong \Attr_\fC(X^{\sA,\phi})$, and $\fq:\fq^{-1}(X^{\sA,\phi})\to X^{\sA,\phi}$ is given by the attraction map. In particular, we have the following Cartesian diagram
\begin{equation*}
\xymatrix{
X^{\sA,\phi} \ar@{^{(}->}[d]_{\mathbf k^{\phi}}  \ar@{}[dr]|{\Box} & \ar[l]_-{a} \Attr_\fC(X^{\sA,\phi}) \ar@{^{(}->}[r]^-{i} \ar@{^{(}->}[d]  \ar@{}[dr]|{\Box} & U \ar@{^{(}->}[d]^{\mathbf k\circ u} \\
\fX^{\sA,\phi} & \ar[l]_{\fq} \fL^{\phi}_\fC \ar[r]^{\fp} & \fX,
}
\end{equation*}
where $\mathbf k\colon X\hookrightarrow \fX$ and $\mathbf k^\phi\colon X^{\sA,\phi}\hookrightarrow \fX^{\sA,\phi}$ are the open immersions of the stable loci, $a$ is the attraction map,
\begin{align*}
u\colon U=X\setminus\left(\Attr^f_\fC(X^{\sA,\phi})\setminus \Attr_\fC(X^{\sA,\phi})\right)\hookrightarrow X
\end{align*}
is the natural open immersion, and $i$ is the natural closed immersion. It follows that 
\begin{align*}
    u^*\circ \mathrm{res}\circ\mathfrak{m}^\phi_{\fC}\circ \bPsi_K^{\phi,\mathsf s'}=i_*\circ a^*\circ \mathbf k^{\phi*}\circ \bPsi_K^{\phi,\mathsf s'}=i_*\circ a^*.
\end{align*}
Thus 
$$\mathrm{res}\circ\mathfrak{m}^\phi_{\fC}\circ \bPsi_K^{\phi,\mathsf s'}(\gamma)\big|_{X^{\sA,\phi}}=i_*\circ a^*(\gamma)\big|_{X^{\sA,\phi}}=e(N^-_{X^{\sA,\phi}/X})\cdot\gamma$$ 
for all $\gamma\in K^{\sT}(X^{\sA,\phi},\sw)$. This verifies the normalization axiom (Definition \ref{def of stab k}(ii)).

Finally, we verify the degree axiom (Definition \ref{def of stab k}(iii)) as follows. Let $\phi':\sA\to G$ be a homomorphism that is not equivalent to $\phi$. We assume that $X^{\sA,\phi'}$ is nonempty. Since $X^{\sA,\phi'}=X^{\sA,g\phi'g^{-1}}$ for any $g\in G$, we can assume that $\phi'(\sA)$ lies in $G^{\phi(\sA)}$ without loss of generality. The degree axiom can be restated as a strict inclusion of polytopes 
\begin{multline*}
\deg_\sA\mathrm{res}\circ\mathfrak{m}^\phi_{\fC}\circ \bPsi_K^{\phi,\mathsf s'}(\gamma)\big|_{X^{\sA,\phi'}}\subsetneq  \deg_\sA \hat e^\sT_K \left(N_{X^{\sA,\phi'} / X}^-\right)+\wt_\sA\phi'^{*}(\mathsf s)-\wt_\sA\phi^{*}(\mathsf s)
-\wt_\sA \det\left(N_{X^{\sA,\phi} / X}^-\right)^{1/2},
\end{multline*}
for all $\gamma\in K^{\sT/\sA}(X^{\sA,\phi},\sw)$.

Consider the natural morphism $\mathbf j^{\phi'}:[R^{\sA,\phi'}/G^{\phi'(\sA)}]\to \fX$, which induces an isomorphism between the stable locus of $[R^{\sA,\phi'}/G^{\phi'(\sA)}]$ and $X^{\sA,\phi'}$. Note that 
\begin{align*}
    \deg_\sA \hat e^\sT_K \left(N_{X^{\sA,\phi'} / X}^-\right)=\frac{1}{2}\deg_\sA \hat e^{\sT\times G^{\phi'(\sA)}}_K \left(N_{R^{\sA,\phi'} / R}-\Lie(G)\right),
\end{align*}
where $\sA$ acts on $N_{R^{\sA,\phi'} / R}-\Lie(G)$ via the homomorphism $\sA\xrightarrow{(\phi',\id)}G^{\phi'(\sA)}\times \sA$ and the latter $\sA$-action factors through the flavour group $\sT$. We also note that
\begin{align*}
\wt_\sA\phi^{*}(\mathsf s)+\wt_\sA \det(N_{X^{\sA,\phi} / X}^-)^{1/2}=\wt_\sA\phi^*(\mathsf s')+\wt_\sA \det\left(L^\phi_{\fC}-\Lie(P^\phi_\fC)\right)^{-1/2},
\end{align*}
where $\sA$ action on $\det\left(L^\phi_{\fC}-\Lie(P^\phi_\fC)\right)^{-1/2}$ factors through the flavour group $\sT$. In the following discussions we denote this character by $\det_{\mathrm{flav}}\left(L^\phi_{\fC}-\Lie(P^\phi_\fC)\right)^{-1/2}$. Then the degree axiom follows from the following condition:
\begin{equation}\label{deg bound 3}
\begin{split}
\lim \frac{\phi^*(\mathsf s')\cdot\det_{\mathrm{flav}}\left(L^\phi_{\fC}-\Lie(P^\phi_\fC)\right)^{-1/2}\cdot\mathbf j^{\phi'*}\circ\mathfrak{m}^\phi_{\fC}\circ\bPsi_K^{\phi,\mathsf s'}(\gamma)}{\phi'^*(\mathsf s)\cdot\sqrt{\hat e^{\sT\times G^{\phi'(\sA)}}_K\left(N_{R^{\sA,\phi'}/R}-\Lie(G)\right)}} \\
\text{exist in all directions of taking equivariant parameter $\mathsf a\to \infty$.}
\end{split}
\end{equation}
Consider the morphism $\widetilde{p}:\widetilde{L}^\phi_\fC\to R$ defined in Remark \ref{rmk:rewrite hall op_sym quot}. Let $\sA$ act on $\widetilde{L}^\phi_\fC$ and $R$ by the homomorphism $\sA\xrightarrow{(\phi',\id)}G^{\phi(\sA)}\times\sA$. The $\sA$-fixed locus, denoted by $\left(\widetilde{L}^\phi_\fC\right)^{\phi'}$, decomposes into connected components 
\begin{align*}
   \left(\widetilde{L}^\phi_\fC\right)^{\phi'} =\bigsqcup_{w\in W^{\phi'}\backslash W/W^\phi}\left(\widetilde{L}^\phi_\fC\right)^{\phi'}_w,\qquad \left(\widetilde{L}^\phi_\fC\right)^{\phi'}_w=G^{\phi'(\sA)}\times^{P^{\phi,\phi'}_{\fC,w}}\left(L^\phi_\fC\right)^{\sA, w^{-1}(\phi')},
\end{align*}
where $W^{\phi'},W,W^\phi$ are Weyl groups of $G^{\phi'(\sA)},G,G^{\phi(\sA)}$ respectively, $w^{-1}(\phi'):=w^{-1}\cdot \phi'\cdot w$ is a homomorphism from $\sA$ to $G^{\phi(\sA)}$, and 
$$P^{\phi,\phi'}_{\fC,w}=G^{\phi'(\sA)}\bigcap wP^\phi_{\fC}w^{-1}, $$ which acts on $\left(L^\phi_\fC\right)^{\sA, w^{-1}(\phi')}$ via $g\mapsto w^{-1}gw\curvearrowright \left(L^\phi_\fC\right)^{\sA, w^{-1}(\phi')}$. 

Denote the induced maps between $\sA$-fixed loci:
\begin{align*}
    \widetilde{p}^{\phi'}_{w}:\left(\widetilde{L}^\phi_\fC\right)^{\phi'}_w\to R^{\phi'},\quad q^{\phi'}_w\colon \left(L^\phi_\fC\right)^{\sA, w^{-1}(\phi')}\to \left(R^{\sA,\phi}\right)^{\sA, w^{-1}(\phi')},
\end{align*}
and the induced map between quotient stacks:
\begin{align*}
    \mathfrak q^{\phi'}_w\colon \left[\frac{\left(L^\phi_\fC\right)^{\sA, w^{-1}(\phi')}}{P^{\phi,{\phi'}}_{\fC,w}}\right]\longrightarrow \left[\frac{\left(R^{\sA,\phi}\right)^{\sA, w^{-1}(\phi')}}{(G^{\phi(\sA)})^{\sA, w^{-1}(\phi')}}\right],\quad \mathbf j^{\phi}_{\sA, w^{-1}(\phi')}\colon \left[\frac{\left(R^{\sA,\phi}\right)^{\sA, w^{-1}(\phi')}}{(G^{\phi(\sA)})^{w^{-1}(\phi'(\sA))}}\right]\longrightarrow \left[\frac{R^{\sA,\phi}}{G^{\phi(\sA)}}\right].
\end{align*}
Applying equivariant localization to the morphism $\widetilde{p}:\widetilde{L}^\phi_\fC\to R$ with respect to the $\sA$ action via $\sA\xrightarrow{(\phi',\id)}G^{\phi'(\sA)}\times \sA$, we have
\begin{align*}
&\quad \,\, \frac{{\mathbf j}^{\phi'*}\circ\mathfrak{m}^\phi_{\fC}\circ\bPsi_K^{\phi,\mathsf s'}(\gamma)}{e^{\sT\times G^{\phi'(\sA)}}_K\left(N_{R^{\sA,\phi'}/R}\right)}\\
&=\sum_{w\in W^{\phi'}\backslash W/W^\phi}\widetilde{p}^{\phi'}_{w*}\circ\mathrm{ind}^{G^{\phi'(\sA)}}_{P^{\phi,{\phi'}}_{\fC,w}} \circ\mathfrak q^{{\phi'}*}_w\left(\frac{ \mathbf j^{\phi*}_{\sA,w^{-1}({\phi'})}\circ\bPsi_K^{\phi,\mathsf s'}(\gamma)}{e^{\sT\times (G^{\phi(\sA)})^{w^{-1}(\phi'(\sA))}}_K\left((L^\phi_\fC-\Lie(P^{\phi,{\phi'}}_{\fC,w})+\Lie(G))^{\sA,w^{-1}(\phi')\text{-moving}}\right)}\right).
\end{align*}
Then to prove \eqref{deg bound 3}, it is enough to show that the limit of
\begin{equation*}
{\scriptstyle{
\xymatrix{
\frac{\phi^*(\mathsf s')\cdot\det_{\mathrm{flav}}\left(L^\phi_{\fC}-\Lie(P^\phi_\fC)\right)^{-1/2}\cdot\sqrt{\hat e^{\sT\times(G^{\phi(\sA)})^{w^{-1}(\phi'(\sA))}}_K\left(N_{R^{\sA,w^{-1}({\phi'})}/R}+\Lie(G)\right)}}{w^{-1}({\phi'})^*(\mathsf s)\cdot e^{\sT\times (G^{\phi(\sA)})^{w^{-1}(\phi'(\sA))}}_K\left((L^\phi_\fC-\Lie(P^{\phi,{\phi'}}_{\fC,w})+\Lie(G))^{\sA,w^{-1}({\phi'})\text{-moving}}\right)}
\times\mathbf j^{\phi*}_{\sA,w^{-1}({\phi'})}\circ\bPsi_K^{\phi,\mathsf s'}(\gamma)
}}}
\end{equation*}
exist for all $w\in W^{\phi'}\backslash W/W^\phi$ and all directions of $\mathsf a\to \infty$. We note that
\begin{multline*}
\deg_\sA e^{\sT\times (G^{\phi(\sA)})^{w^{-1}(\phi'(\sA))}}_K\left((L^\phi_\fC-\Lie(P^{\phi,{\phi'}}_{\fC,w}))^{\sA,w^{-1}({\phi'})\text{-moving}}\right)=\wt_\sA{\det}_{(w^{-1}(\phi'),\id)}\left(L^\phi_{\fC}-\Lie(P^\phi_\fC)\right)^{-1/2}\\
+\deg_\sA \hat e^{\sT\times (G^{\phi(\sA)})^{w^{-1}(\phi'(\sA))}}_K\left((L^\phi_\fC-\Lie(P^{\phi,{\phi'}}_{\fC,w}))^{\sA,w^{-1}({\phi'})\text{-moving}}\right),
\end{multline*}
where ${\det}_{(w^{-1}(\phi'),\id)}\left(L^\phi_{\fC}-\Lie(P^\phi_\fC)\right)^{-1/2}$ is the $\sA$-character induced by 
\begin{align*}
    \sA\xrightarrow{(\phi',\id)}G^{\phi(\sA)}\times \sA\curvearrowright {\det}\left(L^\phi_{\fC}-\Lie(P^\phi_\fC)\right)^{-1/2}.
\end{align*}
We have equation between $\sA$-characters $${\det}_{(w^{-1}(\phi'),\id)}\left(L^\phi_{\fC}-\Lie(P^\phi_\fC)\right)^{-1/2}={\det}_{w^{-1}(\phi')}\left(L^\phi_{\fC}-\Lie(P^\phi_\fC)\right)^{-1/2}\cdot {\det}_{\mathrm{flav}}\left(L^\phi_{\fC}-\Lie(P^\phi_\fC)\right)^{-1/2}.$$
Therefore it remains to show that 
\begin{equation}\label{deg bound 4}
\begin{split}
\lim \frac{\phi^*(\mathsf s')\cdot \mathbf j^{\phi*}_{\sA,w^{-1}({\phi'})}\circ\bPsi_K^{\phi,\mathsf s'}(\gamma)}{w^{-1}({\phi'})^*(\mathsf s')\cdot \sqrt{\hat e^{\sT\times (G^{\phi(\sA)})^{w^{-1}(\phi'(\sA))}}_K\left(R^{\sA,\phi}-\Lie(G^{\phi(\sA)})\right)}}\\
\text{exist for all $w\in W^{\phi'}\backslash W/W^\phi$ and all directions of $\mathsf a\to \infty$.}
\end{split}
\end{equation}
Note that the action 
$$\sA\xrightarrow{(w^{-1}(\phi'),\id)}G^{\phi(\sA)}\times \sA\curvearrowright R^{\sA,\phi}-\Lie(G^{\phi(\sA)})$$ factors through $\sA\xrightarrow{w^{-1}(\phi')/\phi}G^{\phi(\sA)}$, where $w^{-1}(\phi')/\phi$ is the multiplicative way of writing the difference between two homomorphisms $w^{-1}(\phi')$ and $\phi$. 
By Theorem \ref{thm: deg bound na stab k}, we have
\begin{align*}
    \deg_{\sA} \mathbf j^{\phi*}_{\sA,w^{-1}(\phi')}\bPsi^{\phi,\mathsf s'}_K(\gamma)\subsetneq  \frac{1}{2}\deg_{\sA} e^{\sT\times G^{\phi(\sA)}}_K(R^{\sA,\phi}-\Lie(G^{\phi(\sA)}))+\wt_{\sA}(w^{-1}(\phi')/\phi)^*(\mathsf s').
\end{align*}
Then \eqref{deg bound 4} follows from the above degree bound. This finishes the proof.
\end{proof}

\subsection{Explicit formulas of \texorpdfstring{$K$}{K}-theoretic stable envelopes}\label{sect on explicit form_k}

\begin{Corollary}\label{cor: explicit formula w=0_k}
Let $\mathsf s\in \mathrm{char}(G)\otimes_\bZ \bR$ be a generic slope and $\chi\in \mathrm{char}(G^{\phi(\sA)})$ such that $\mathsf s\otimes \det\left(T\fX^{\sA,w(\phi)\text{-repl}}\right)^{1/2}$ is in a sufficiently small neighbourhood of $\chi$. Assume $\sw=0$, then
\begin{align}\label{explicit formula w=0_k}
    \bPsi^{\mathsf s}_K\circ\Stab^{\mathsf s}_\fC([\mathcal L_\chi])=\sum_{w\in W/W^\phi}w\left(\chi\cdot e^{\sT\times G^{\phi(\sA)}}_K\left(T\fX^{\sA,\phi\text{-repl}}\right)\right) \cdot [\mathcal O_{\fX}]\:,
\end{align}
in particular,
\begin{align*}
    \Stab^{\mathsf s}_\fC([\mathcal L_\chi])=\sum_{w\in W/W^\phi}w\left(\chi\cdot e^{\sT\times G^{\phi(\sA)}}_K\left(T\fX^{\sA,\phi\text{-repl}}\right)\right)\cdot [\mathcal O_{X}]\:.
\end{align*}
Here $\mathcal L_\chi\in \Pic(X^{\sA,\phi})$ is the descent of the character $\chi$, $W$, $W^\phi$ are Weyl groups of $G$, $G^{\phi(\sA)}$ respectively,
$T\fX=R-\Lie(G)$ and $(-)^{\sA, \phi\text{-repl}}$ is the repelling part with respect to homomorphism $\phi\colon \sA\to G$, and $w$ acts on a weight $\mu$ of $G$ by $w(\mu)(g):=\mu(w^{-1}\cdot g\cdot w)$.
\end{Corollary}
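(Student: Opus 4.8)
The plan is to derive Corollary \ref{cor: explicit formula w=0_k} as a direct consequence of the commutative diagram \eqref{diag on hall k_sym quot} in Theorem \ref{thm: hall k_sym quot} combined with an explicit formula for the $K$-theoretic Hall operation $\mathfrak m^\phi_\fC$. First I would set $\sw=0$ and observe that, for a generic slope $\mathsf s$, the nonabelian stable envelope $\bPsi^{\phi,\mathsf s'}_K$ for the stack $\fX^{\sA,\phi}$ sends the descent $[\mathcal L_\chi]$ of the character $\chi$ to the class $[\mathcal O_{\fX^{\sA,\phi}}\otimes \mathcal L_\chi]$ in $K^\sT(\fX^{\sA,\phi},0)\cong K^\sT([R^{\sA,\phi}/G^{\phi(\sA)}])$, provided $\chi$ lies inside the relevant window polytope; this is exactly the hypothesis that $\mathsf s\otimes\det(T\fX^{\sA,w(\phi)\text{-repl}})^{1/2}$ is near $\chi$, which guarantees the $G^{\phi(\sA)}$-module $\mathbb C_\chi$ is a generator of the window subcategory $\mathbb M^{\mathsf s'}$, so that $\bPsi^{\phi,\mathsf s'}_K([\mathcal L_\chi])=[\mathcal O_{\fX^{\sA,\phi}}\otimes\mathcal L_\chi]$. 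Then by the commutativity of \eqref{diag on hall k_sym quot}, we get $\bPsi^{\mathsf s}_K\circ\Stab^{\mathsf s}_\fC([\mathcal L_\chi])=\mathfrak m^\phi_\fC([\mathcal O_{\fX^{\sA,\phi}}\otimes\mathcal L_\chi])$.

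Next I would compute $\mathfrak m^\phi_\fC([\mathcal O_{\fX^{\sA,\phi}}\otimes\mathcal L_\chi])$ explicitly. Using the description in Remark \ref{rmk:rewrite hall op_sym quot}, $\mathfrak m^\phi_\fC=\widetilde p_*\circ\mathrm{ind}^G_{P^\phi_\fC}\circ\fq^*$, where $\fq\colon \mathfrak L^\phi_\fC\to\fX^{\sA,\phi}$ is the projection $[L^\phi_\fC/P^\phi_\fC]\to [R^{\sA,\phi}/G^{\phi(\sA)}]$ and $\widetilde p\colon \widetilde L^\phi_\fC=G\times^{P^\phi_\fC}L^\phi_\fC\to R$. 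Since $\fq$ is an affine bundle (the fibers of $L^\phi_\fC\to R^{\sA,\phi}$ are the $\fC$-attracting directions), $\fq^*$ is an isomorphism on $K$-theory and $\fq^*[\mathcal O\otimes\mathcal L_\chi]=[\mathcal O_{\mathfrak L^\phi_\fC}\otimes\mathcal L_\chi]$. The $G$-equivariant pushforward $\widetilde p_*$ of the structure sheaf twisted by the character $\chi$, from $G\times^{P^\phi_\fC}L^\phi_\fC$ (a vector bundle over $G/P^\phi_\fC$) to the point-stack quotient $\fX=[R/G]$, is a standard computation: it equals $\sum_{w\in W/W^\phi}w\big(\chi\cdot e^{\sT\times G^{\phi(\sA)}}_K(\text{the }\fC\text{-repelling directions in }T\fX)\big)\cdot[\mathcal O_\fX]$, where the Weyl-group sum arises from the $\sA$-localization/induction over the fixed points $W/W^\phi$ of $G/P^\phi_\fC$ and the Euler-class factor $e^{\sT\times G^{\phi(\sA)}}_K(T\fX^{\sA,\phi\text{-repl}})$ is the $K$-theoretic Euler class of the normal directions being contracted. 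This is the $K$-theoretic analog of \cite[Thm.~2]{CoHA} used in the cohomological Corollary \ref{cor: explicit formula w=0_coh}, and I would cite the corresponding $K$-theoretic Hall-product formula (e.g.\ the computation in \cite{P} or the standard Atiyah--Bott style localization on $G/P$) rather than re-derive it. This yields \eqref{explicit formula w=0_k}.

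Finally, the "in particular" statement follows by applying the restriction map $\mathrm{res}\colon K^\sT(\fX,0)\to K^\sT(X,0)$ to both sides of \eqref{explicit formula w=0_k}: since $\mathrm{res}\circ\bPsi^{\mathsf s}_K=\id$ by construction of the nonabelian stable envelope, and $\mathrm{res}([\mathcal O_\fX])=[\mathcal O_X]$, we obtain $\Stab^{\mathsf s}_\fC([\mathcal L_\chi])=\sum_{w\in W/W^\phi}w\big(\chi\cdot e^{\sT\times G^{\phi(\sA)}}_K(T\fX^{\sA,\phi\text{-repl}})\big)\cdot[\mathcal O_X]$. I expect the main obstacle to be the bookkeeping in the Hall-operation pushforward computation: one must carefully track which weights of $T\fX=R-\Lie(G)$ contribute to the repelling bundle for each Weyl representative $w$, and verify that the genericity/smallness hypothesis on $\mathsf s\otimes\det(T\fX^{\sA,w(\phi)\text{-repl}})^{1/2}$ near $\chi$ is precisely what is needed for every summand (not just $w=e$) to land $\chi$ inside the appropriate window polytope, so that the input class $[\mathcal L_\chi]$ really is the image under $\bPsi^{\phi,\mathsf s'}_K$ of a window generator. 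The analytic parts ($\sA$-localization, Euler classes, affine-bundle invariance) are routine given the machinery already developed in Sections \ref{app on symm var} and \ref{app on k symm var}.
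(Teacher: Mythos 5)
Your proposal is correct and follows essentially the same route as the paper: use the window-category description of $\bPsi^{\phi,\mathsf s'}_K$ together with the hypothesis on $\mathsf s'$ being close to $\chi$ to conclude $\bPsi^{\phi,\mathsf s'}_K([\mathcal L_\chi])=\chi\otimes\mathcal O_{\fX^{\sA,\phi}}$, then combine the commutative square of Theorem~\ref{thm: hall k_sym quot} with the explicit $K$-theoretic Hall-operation formula of \cite[Prop.~3.4]{P}, and finish by restricting to the stable locus. You spell out the window-generator argument and the $G/P$-pushforward in slightly more detail than the paper's compressed proof, but there is no substantive difference in the underlying argument.
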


\begin{proof}
Using Theorem \ref{thm: hall k_sym quot}, we see that 
$$\bPsi^{\phi,\mathsf s'}_K([\mathcal L_\chi])=\chi\otimes\mathcal O_{\fX^{\sA,\phi}}, $$ for the above choice of $\mathsf s'$. Then \eqref{explicit formula w=0_k} follows from Theorem \ref{thm: hall k_sym quot} and explicit formula of $K$-theoretic Hall operations in \cite[Prop.~3.4]{P}.
\end{proof}


\begin{Remark}
(1) When $X^{\sA,\phi}$ is an affine space, $[X^{\sA,\phi}]$ generates $H^\sT([X^{\sA,\phi}])$ over $H_\sT(\pt)$ and $[\mathcal L_\chi]$ generates $K^\sT([X^{\sA,\phi}])$ over $K_\sT(\pt)$. In this case, Corollaries \ref{cor: explicit formula w=0_coh}, \ref{cor: explicit formula w=0_k} completely determine the formulas of $\bPsi_H\circ\Stab_\fC$ and $\bPsi^{\mathsf s}_K\circ\Stab^{\mathsf s}_\fC$ respectively. (2) We also expect an explicit formula in the setting of elliptic cohomology. 
\end{Remark}

\section{Deformed dimensional reductions and stable envelopes}

In this section, we introduce dimensional reduction data and their compatibility (Definition \ref{compatible dim red data}, Proposition \ref{condition (ii')}). 
Compatible dimensional reduction data have natural interpolation maps between their critical cohomology and $K$-theory, which are always isomorphisms 
in the cohomological case (Proposition \ref{prop: coh compatible map is iso}), and are isomorphisms in the $K$-theoretic case when deformed dimensional reduction holds
(Proposition \ref{prop on def dim red}).

We show that stable envelopes are compatible with the interpolation maps (Theorems \ref{dim red and stab_coh}, \ref{dim red and stab_K}).
This provides flexible tools to compare critical theories and corresponding stable envelopes for different quiver models of the same critical loci. 
 
We provide examples of compatible dimensional reduction data in \S \ref{sect on cpt deform dim red}. In particular, this implies that 
(the critical) stable envelopes on tripled quivers with canonical cubic potentials reproduce stable envelopes of Nakajima quiver varieties.
More examples will be discussed in \cite{COZZ1}.

\subsection{Dimensional reduction data}\label{sect on dim red}

Consider the following situation. Let $Y$ be a smooth quasi-projective $\sT$-variety with a $\sT$-equivariant vector bundle $E$, and let $X:=\mathrm{Tot}_Y(E)$ with natural projection $\pi\colon X\to Y$. Assume that $\sw\colon X\to \bC$ is a function of the form $$\sw=\langle e,s\rangle+\pi^*(\phi),$$ where $e$ is the fiber coordinate of $E$, $s\in \Gamma(Y,E^\vee)$ is an 
$\sT$-invariant section, and $\phi\colon Y\to \bC$ is a $\sT$-invariant map. Note that the pairing $\langle e,s\rangle$ is $\sT$-invariant. We call the tuple $(X,Y,s,\phi)$ a \textit{dimensional reduction datum}. Let $Z(s)$ be the classical vanishing locus of $s$ in $Y$, and $Z^\der(s)$ be the derived vanishing locus of $s$ in $Y$. They fit into the following diagram 
$$
\xymatrix{
 \pi^{-1}(Z^{\der}(s))  \ar[r]^{\quad \quad \,\, \mathbf i^\der} \ar[d]^{\pi} \ar@{}[dr]|{\Box}  & X \ar[d]^{\pi} \ar@{}[dr]|{\Box}  &   \pi^{-1}(Z^{}(s))  \ar[l]_{ \mathbf i \quad \,\, }   \ar[d]^{\pi} \\
Z^{\der}(s) \ar[r]^{ } & Y &  Z^{}(s),  \ar[l]^{ } 
} 
$$
where $\mathbf i$ (resp.~$\mathbf i^\der$) is the closed embedding of $\pi^{-1}(Z(s))$ (resp.~$\pi^{-1}(Z^\der(s))$) into $X$.
\begin{Definition}\label{dim red data}
Let $(X,Y,s,\phi)$ be a dimensional reduction datum. We say that \textit{cohomological deformed dimensional reduction} holds for the tuple $(X,Y,s,\phi)$ if the composition
\begin{align}\label{deformed dim red_coh}
    \mathbf i_*\circ \pi^*\colon H^{\sT}(Z(s),\phi)\cong H^{\sT}(X,\sw)
\end{align}
is an isomorphism. We say that the $K$-\textit{theoretic deformed dimensional reduction} holds for the tuple $(X,Y,s,\phi)$ if the composition
\begin{align}\label{deformed dim red_K}
    \mathbf i^\der_*\circ \pi^*\colon K^{\sT}(Z^\der(s),\phi)\cong K^{\sT}(X,\sw)
\end{align}
is an isomorphism.
\end{Definition}

\begin{Definition}\label{compatible dim red data}
We say that dimensional reduction data $(X,Y,s,\phi)$ and $(X',Y',s',\phi')$ are \textit{compatible} if there exists
\begin{enumerate}
    \item a $\sT$-equivariant closed embedding $j\colon Y'\hookrightarrow Y$,
    \item a $\sT$-equivariant surjective vector bundle map $\pr\colon E|_{Y'}\twoheadrightarrow E'$,
\end{enumerate}
as in the following commutative diagram with $X'':=Y'\times_Y X=\mathrm{Tot}_{Y'}(E|_{Y'})$:
\begin{equation}\label{diag on xx' yy'}
\xymatrix{
 X'  \ar[dr]_{\pi'} &  & X'' \ar@{>>}[ll]_{\pr} \ar[dl] \ar@{^{(}->}[rr]^{\tilde j}  \ar@{}[dr]|{\Box} & & X \ar[dl]^{\pi}  \\
& Y' \ar@{^{(}->}[rr]^{j} & & Y &
} 
\end{equation}
such that
\begin{enumerate}[(i)]
     \item $\phi|_{Y'}=\phi'$, $s|_{Y'}=\pr^\vee\circ s' $. \footnote{This is equivalent to $\pr^*(\sw')=\sw|_{X''}$.}
    \item the natural map $Z^\der(s')\to Z^\der(s)$ induced by $\pr:j^* E\twoheadrightarrow E'$ is an isomorphism.
\end{enumerate}
Here we explain how the map in (ii) is induced. It is known that $$Z^\der(s)\cong \textbf{Spec}_Y \mathrm{Kos}(E,s), $$ 
for Koszul complex $$\mathrm{Kos}(E,s)=\left(\bigwedge^{\rk E}E\to\cdots\to \bigwedge^2E\to E\overset{s}{\to} \mathcal O_Y\right), $$ which is a dg-algebra over $\mathcal O_Y$. The map $Z^\der(s')\to Z^\der(s)$ is given by the dg-algebra map $$\mathrm{Kos}(E,s)\to j_*\mathrm{Kos}(E',s'), $$ 
which is induced by $\pr:j^* E\twoheadrightarrow E'$.

\end{Definition}
We explain how to re-state condition (ii) in Definition \ref{compatible dim red data} \textit{without} involving derived algebraic geometry. 
\begin{Lemma}
Suppose that $(X,Y,s,\phi)$ and $(X',Y',s',\phi')$ are two sets of dimensional reduction data, and assume that there exists maps (1) and (2) as in the Definition \ref{compatible dim red data} which satisfy condition (i), and assume moreover that $Z(s')$ is not empty, then there exists a natural vector bundle map on $Z(s')$: 
\begin{equation}\label{equ rho on vbm}\rho:\ker(\pr)|_{Z(s')}\to N^\vee_{Y'/Y}|_{Z(s')}. \end{equation}
\end{Lemma}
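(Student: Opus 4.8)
The plan is to construct $\rho$ as the \emph{normal derivative} of the section $s$ along $Y'$: condition (i) forces the component of $s|_{Y'}$ transverse to $\operatorname{im}(\pr^\vee)$ (equivalently, the ``$K$-component'' of $s|_{Y'}$) to vanish, so differentiating $s$ in the directions normal to $Y'$ is legitimate once one passes to $Z(s')$. First I would fix notation. Let $\mathcal I\subseteq\mathcal O_Y$ be the ideal sheaf of $j\colon Y'\hookrightarrow Y$; since $Y$ and $Y'$ are smooth this is a regular embedding, so $N^\vee_{Y'/Y}=\mathcal I/\mathcal I^2$ is locally free. Put $K:=\ker(\pr)$, so that $0\to K\to E|_{Y'}\xrightarrow{\pr}E'\to 0$ is an exact sequence of locally free sheaves on $Y'$. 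Regard $s$ as a cosection $s\colon E\to\mathcal O_Y$; condition (i), namely $s|_{Y'}=\pr^\vee\circ s'$, says exactly that $s|_{Y'}\colon E|_{Y'}\to\mathcal O_{Y'}$ equals $s'\circ\pr$, in particular that $s|_{Y'}$ annihilates $K$.

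The central construction uses the first infinitesimal neighbourhood of $Y'$ in $Y$ (structure sheaf $\mathcal O_Y/\mathcal I^2$). On it the locally free sheaf $E/\mathcal I^2E$ fits in $0\to N^\vee_{Y'/Y}\otimes_{\mathcal O_{Y'}}E|_{Y'}\to E/\mathcal I^2E\to E|_{Y'}\to 0$. Let $\widetilde K\subseteq E/\mathcal I^2E$ be the preimage of $K$ under the surjection $E/\mathcal I^2E\twoheadrightarrow E|_{Y'}$, so that $0\to N^\vee_{Y'/Y}\otimes E|_{Y'}\to\widetilde K\to K\to 0$ is exact. The reduced cosection $\bar s:=s\bmod\mathcal I^2\colon E/\mathcal I^2E\to\mathcal O_Y/\mathcal I^2$ restricts to $\widetilde K$; its further reduction modulo $\mathcal I$ is $s|_{Y'}$ composed with $\widetilde K\twoheadrightarrow K$, which is zero because $s|_{Y'}(K)=0$. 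Hence $\bar s|_{\widetilde K}$ factors through $\mathcal I/\mathcal I^2=N^\vee_{Y'/Y}$, yielding a map $\psi\colon\widetilde K\to N^\vee_{Y'/Y}$. By $\mathcal O_Y$-linearity of $s$, the restriction of $\psi$ to the subsheaf $N^\vee_{Y'/Y}\otimes E|_{Y'}$ is $\id_{N^\vee_{Y'/Y}}\otimes s|_{Y'}$.

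To finish I would apply $(-)\otimes_{\mathcal O_{Y'}}\mathcal O_{Z(s')}$ throughout; flatness of the locally free sheaf $K$ keeps $0\to N^\vee_{Y'/Y}\otimes E|_{Y'}\to\widetilde K\to K\to 0$ exact after this restriction. Since $s'$ vanishes on $Z(s')$, the cosection $s|_{Z(s')}=s'|_{Z(s')}\circ\pr|_{Z(s')}$ is zero, so $(\id\otimes s|_{Y'})|_{Z(s')}=0$; therefore $\psi|_{Z(s')}$ kills $(N^\vee_{Y'/Y}\otimes E|_{Y'})|_{Z(s')}$ and factors uniquely through $K|_{Z(s')}$, producing the desired map $\rho\colon K|_{Z(s')}\to N^\vee_{Y'/Y}|_{Z(s')}$. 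Uniqueness of the factorization makes $\rho$ canonical, and since every sheaf and map above is built from $\sT$-equivariant data with $\sT$-invariant sections $s,s'$, the map $\rho$ is $\sT$-equivariant. I do not expect a genuine obstacle; the one step deserving care is the vanishing $(\id\otimes s|_{Y'})|_{Z(s')}=0$, which is precisely where the exact form of condition (i) — the factorization $s|_{Y'}=\pr^\vee\circ s'$ through $(E')^\vee$, equivalently $s|_{Y'}(K)=0$ — must be combined with $s'|_{Z(s')}=0$. For concreteness I would also record the local description: choosing $\mathcal I=(f_1,\dots,f_c)$ and a local frame $\epsilon_1,\dots,\epsilon_r$ of $E$ with $\epsilon_1|_{Y'},\dots,\epsilon_k|_{Y'}$ a frame of $K$, and writing $s=\sum_m s_m\epsilon_m^\vee$, condition (i) forces $s_i\in\mathcal I$ for $i\le k$ and $\rho(\epsilon_i|_{Z(s')})=(s_i\bmod\mathcal I^2)|_{Z(s')}$, the independence of this formula from the chosen frame again following from $s'|_{Z(s')}=0$.
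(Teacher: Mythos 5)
Your construction is in essence the paper's first one — differentiate $s$ in the directions normal to $Y'$, using that condition (i) forces $s|_{Y'}$ to kill $K=\ker(\pr)$ — but you make it canonical by working over the first infinitesimal neighbourhood of $Y'$ in $Y$. The paper instead chooses, locally, a sub-bundle extension $\widetilde{\mathcal K}_U\subseteq E|_U$ of $K$ and verifies independence of the choice; your $\widetilde K\subseteq E/\mathcal I^2E$ is the canonical mod-$\mathcal I^2$ lift, so no independence check is needed. (The paper also records a second, Koszul-complex description of $\rho$, which your argument does not reproduce; it is what feeds into the proof of Proposition~\ref{condition (ii')}.) Your concluding local formula agrees with the paper's.

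There is a genuine gap in one step, though it does not affect the conclusion. You assert that ``flatness of the locally free sheaf $K$'' keeps
\[
0\to N^\vee_{Y'/Y}\otimes E|_{Y'}\to\widetilde K\to K\to 0
\]
exact after restriction to $Z(s')$. This is wrong on two counts. First, $\widetilde K$ carries no natural $\mathcal O_{Y'}$-module structure: the ideal $\mathcal I/\mathcal I^2\subseteq\mathcal O_Y/\mathcal I^2$ acts nontrivially on it (it sends $\widetilde K$ into $N^\vee_{Y'/Y}\otimes E|_{Y'}$), so ``apply $(-)\otimes_{\mathcal O_{Y'}}\mathcal O_{Z(s')}$ throughout'' is not literally meaningful — the restriction has to be $(-)\otimes_{\mathcal O_Y/\mathcal I^2}\mathcal O_{Z(s')}$. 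Second, $K$ is locally free over $\mathcal O_{Y'}$ but \emph{not} flat over $\mathcal O_Y/\mathcal I^2$ or $\mathcal O_Y$, and the sequence really does fail to be left exact after restriction: $\mathrm{Tor}_1^{\mathcal O_Y}(\mathcal O_{Y'},\mathcal O_{Z(s')})\cong N^\vee_{Y'/Y}|_{Z(s')}\neq 0$ in general, since the Koszul differentials for $Y'\hookrightarrow Y$ all vanish on $Z(s')\subseteq Y'$. Fortunately exactness is not what you need: right exactness of $\otimes\,\mathcal O_{Z(s')}$ alone gives that
\[
\bigl(N^\vee_{Y'/Y}\otimes E|_{Y'}\bigr)\big|_{Z(s')}\longrightarrow\widetilde K\otimes\mathcal O_{Z(s')}\longrightarrow K|_{Z(s')}\longrightarrow 0
\]
is exact, and combined with your (correct) observation $(\id\otimes s|_{Y'})|_{Z(s')}=0$ this forces $\psi\otimes\mathcal O_{Z(s')}$ to factor through $K|_{Z(s')}$; uniqueness of the factorization then follows from surjectivity of $\widetilde K\otimes\mathcal O_{Z(s')}\to K|_{Z(s')}$. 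Replace the flatness claim by this right-exactness argument and your proof is complete.
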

\begin{proof}
In fact, denote $\mathcal K:=\ker(\pr)$. For arbitrary $y\in Z(s')$, there exists an open neighbourhood $y\in U\subseteq  Y$ such that $\mathcal K|_{U\cap Y'}$ extends to a sub-bundle $\widetilde{\mathcal K}_U\subseteq  E|_U$. Note that $s|_U$ maps $\widetilde{\mathcal K}_U$ to $I_{Y'}|_{U}$, then we get
\begin{align*}
    \rho_{U\cap Z(s')}:\mathcal K|_{U\cap Z(s')}=\widetilde{\mathcal K}_U/I_{Z(s')}|_{U}\widetilde{\mathcal K}_U\longrightarrow (I_{Y'}/ I_{Z(s')}\cdot I_{Y'})|_{U\cap Z(s')}= N^\vee_{Y'/Y}|_{U\cap Z(s')}.
\end{align*}
The map $\rho_{U\cap Z(s')}$ does not depend on the choice of extension $\widetilde{\mathcal K}_U$ because any other choice is different from a given one by a map $\widetilde{\mathcal K}_U\to I_{Y'}|_U\cdot E|_U$ and after restricting to $Z(s')$, the image of this map is inside $I_{Z(s')}\cdot I_{Y'}|_{U\cap Z(s')}$. Consequently, for a pair of open subset $U$ and $V$, $\rho_{U\cap Z(s')}|_{U\cap V\cap Z(s')}=\rho_{V\cap Z(s')}|_{U\cap V\cap Z(s')}$. Therefore the family  of maps $\{\rho_{U\cap Z(s')}\}_U$ glues to a map 
\eqref{equ rho on vbm}. 

An equivalent construction of \eqref{equ rho on vbm} is given as follows. The map $\pr:j^* E\twoheadrightarrow E'$ induces two dg-algebra maps: 
$$f:\mathrm{Kos}(E,s)\to j_*\mathrm{Kos}(E',s'), \quad g:\mathbf Lj^*\mathrm{Kos}(E,s)\to \mathrm{Kos}(E',s'). $$ 
Applying $\mathbf Lj^*$ to the first map, and we get the following commutative diagram of dg-algebras on $Y'$:
\begin{equation}\label{cd Koszul complexes between compatible dim red pair}
\xymatrix{
 \mathbf Lj^*\mathrm{Kos}(E,s)   \ar[d]_{\mathbf Lj^*(f)} \ar[r]^{\,\,\, g}  & \mathrm{Kos}(E',s')  \ar@{=}[d]\\
\mathbf Lj^*j_*\mathrm{Kos}(E',s') \ar[r]^{\quad a} & \mathrm{Kos}(E',s') ,
} 
\end{equation}
where $a$ is the adjunction map $\mathbf Lj^*j_*\to \mathrm{id}$. Then we have induced map on the mapping cones 
$$\mathrm{Cone}(g)\to \mathrm{Cone}(a).$$ 
Let $i'\colon Z(s')\hookrightarrow Y'$ be the natural embedding, then 
$$H^{-1}(\mathbf Li'^*(\mathrm{Cone}(g)))\cong i'^*\mathcal K, \quad H^{-1}(\mathbf Li'^*(\mathrm{Cone}(a)))\cong i'^* N^\vee_{Y'/Y}, $$ 
and the induced map $i'^*\mathcal K\to i'^* N^\vee_{Y'/Y}$ equals to $\rho$ in \eqref{equ rho on vbm}.
\end{proof}

\begin{Proposition}\label{condition (ii')}
Suppose that $(X,Y,s,\phi)$ and $(X',Y',s',\phi')$ are two sets of dimensional reduction data, and assume that there exists maps (1) and (2) as in the Definition \ref{compatible dim red data} which satisfy condition (i). Then the condition (ii) is equivalent to
\begin{itemize}
    \item[(ii')] $Z(s)$ is set-theoretically contained in $Y'$, and $\rho:\ker(\pr)|_{Z(s')}\to N^\vee_{Y'/Y}|_{Z(s')}$ \eqref{equ rho on vbm} is an isomorphism.
\end{itemize}
\end{Proposition}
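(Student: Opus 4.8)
The strategy is to compare the derived vanishing loci $Z^\der(s)$ and $Z^\der(s')$ directly via the commutative diagram of Koszul dg-algebras \eqref{cd Koszul complexes between compatible dim red pair}, and to show that the isomorphism statement in (ii) is detected entirely by the single cohomology sheaf that $\rho$ describes. First I would recall the setup: we have the surjection $\pr\colon j^*E\twoheadrightarrow E'$ with kernel $\mathcal K := \ker(\pr)$, and the condition (i), which says $s|_{Y'} = \pr^\vee\circ s'$, guarantees that $\pr$ induces the dg-algebra maps $f\colon \mathrm{Kos}(E,s)\to j_*\mathrm{Kos}(E',s')$ and $g\colon \mathbf{L}j^*\mathrm{Kos}(E,s)\to \mathrm{Kos}(E',s')$. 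The map in (ii), $Z^\der(s')\to Z^\der(s)$, is $\mathbf{Spec}$ applied to $f$. I would first observe that, since both $Z^\der(s)$ and $Z^\der(s')$ are quasi-smooth affine derived schemes over $Y$ resp. $Y'$, the map $Z^\der(s')\to Z^\der(s)$ is an isomorphism of derived schemes if and only if the underlying map of classical truncations is an isomorphism \emph{and} the map is an isomorphism on cotangent complexes (equivalently, on the self-intersection/conormal data). In fact a cleaner route: the map is an isomorphism iff $g$ is a quasi-isomorphism of dg-algebras on $Y'$, because $Z^\der(s')\to Z^\der(s)$ factors through $Z^\der(s')\to j^{-1}Z^\der(s) = \mathbf{Spec}_{Y'}\mathbf{L}j^*\mathrm{Kos}(E,s)$ (the derived fiber product), and the outer composite to $Z^\der(s)$ is the base change of $Y'\hookrightarrow Y$; so $Z^\der(s')\to Z^\der(s)$ is an isomorphism precisely when $Z(s)\subseteq Y'$ set-theoretically (so that $j^{-1}Z^\der(s)\to Z^\der(s)$ is an iso on supports, hence an iso since both are derived-affine over the same base with the right fibers) and $g$ is a quasi-isomorphism.

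The core computation is then to analyze when $g$ is a quasi-isomorphism. Here I would use that $\mathrm{Cone}(g)$ is a perfect complex on $Y'$ supported (set-theoretically) on $Z(s')$, and moreover, because both source and target of $g$ are Koszul complexes on bundles of the \emph{same} generic rank difference (the ranks match exactly on $Z(s')$, where $s$ and $s'$ both vanish, since $\rk E - \rk E' = \codim$ data), $\mathrm{Cone}(g)$ has cohomology concentrated in a single degree after restriction to $Z(s')$. Precisely, the diagram \eqref{cd Koszul complexes between compatible dim red pair} already identifies $\mathbf{L}i'^*\mathrm{Cone}(g)$ with $\mathcal K|_{Z(s')}$ in degree $-1$ and $\mathbf{L}i'^*\mathrm{Cone}(a)$ with $N^\vee_{Y'/Y}|_{Z(s')}$ in degree $-1$, and the induced map is exactly $\rho$. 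So $g$ is a quasi-isomorphism near a point of $Z(s')$ iff $\rho$ is an isomorphism near that point; combined with the Nakayama-type argument (a map of perfect complexes whose derived restriction to the support is a quasi-isomorphism is itself a quasi-isomorphism in a neighborhood of the support), this gives: $Z^\der(s')\to Z^\der(s)$ is an isomorphism $\iff$ $Z(s)\subseteq Y'$ (set-theoretically) and $\rho$ is an isomorphism on $Z(s')$. That is exactly the equivalence (ii) $\Leftrightarrow$ (ii').

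For the two directions explicitly: assuming (ii), the isomorphism $Z^\der(s')\xrightarrow{\sim}Z^\der(s)$ forces the classical truncations to agree, so $Z(s)\subseteq Y'$ set-theoretically (indeed $Z(s')$ and $Z(s)$ have the same support), and forces $g$ to be a quasi-isomorphism, hence $\rho$ an isomorphism on $Z(s')$; that is (ii'). Conversely, assuming (ii'), the set-theoretic containment $Z(s)\subseteq Y'$ lets us replace $Y$ by an open neighborhood of $Z(s')$ in which everything is supported appropriately, and the isomorphy of $\rho$ propagates up the Koszul filtration to show $g$ is a quasi-isomorphism (this is where I would either invoke the standard fact about quasi-isomorphisms of Koszul complexes associated to a map of bundles restricting to an iso on the zero locus, or do a short induction on the length of the Koszul complex using the octahedral axiom applied to $f$, $g$, and $a$); then $Z^\der(s')\to Z^\der(s)$ is an isomorphism, i.e. (ii). The main obstacle I anticipate is making the ``$\rho$ iso $\Rightarrow$ $g$ quasi-iso'' step fully rigorous without drowning in the derived-algebraic bookkeeping; the cleanest packaging is probably to note that $\mathrm{Cone}(g)\otimes_{\mathcal{O}_{Y'}}\mathcal{O}_{Z(s')}^\der \simeq \mathrm{Cone}(\rho)$ (an equivalence of perfect complexes on $Z(s')$, using that $\mathrm{Kos}(E,s)$ and $\mathrm{Kos}(E',s')$ are \emph{already} the derived restrictions $\mathcal{O}_{Z^\der(s)}$ and $\mathcal{O}_{Z^\der(s')}$) and then apply derived Nakayama on the derived scheme $Z^\der(s')$, which has the same support as $Z(s')$. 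I would relegate the precise statement of the Koszul lemma and the Nakayama argument to a short separate lemma or a citation, keeping the main proof to the conceptual skeleton above.
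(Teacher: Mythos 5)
Your plan rests on a reduction that is incorrect: you claim that $Z^\der(s')\to Z^\der(s)$ is an isomorphism precisely when ``$g$ is a quasi-isomorphism and $Z(s)\subseteq Y'$ set-theoretically,'' and (in the other direction) that (ii) ``forces $g$ to be a quasi-isomorphism.'' Neither is true. The adjunction gives $g=a\circ \mathbf{L}j^*(f)$, where $a:\mathbf{L}j^*j_*\mathrm{Kos}(E',s')\to\mathrm{Kos}(E',s')$ is the counit. Even when $f$ is a quasi-isomorphism, $a$ is not (a closed immersion $j$ has $\mathbf{L}j^*j_*K\simeq K\otimes\textstyle\bigwedge^{\bullet}N^\vee_{Y'/Y}$ with higher Tor terms), so $g$ generally fails to be one. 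Concretely: take $Y=\mathbb{A}^1$, $Y'=\{0\}$, $E=\mathcal{O}_Y$, $s=x$, $E'=0$, $s'=0$. Then $\mathrm{Kos}(E,s)=[\mathcal{O}\xrightarrow{x}\mathcal{O}]\simeq j_*\mathcal{O}_{Y'}$, so $f$ is a quasi-isomorphism, i.e.\ (ii) holds; $\rho$ is an isomorphism ($x$ generates $I_{Y'}/I_{Y'}^2$); yet $\mathbf{L}j^*\mathrm{Kos}(E,s)=[k\xrightarrow{0}k]$ has nontrivial $H^{-1}$, so $g$ is \emph{not} a quasi-isomorphism. The object whose acyclicity is equivalent (on a neighbourhood of $Z(s')$, given the support condition) to (ii) is $\mathrm{Cone}(\mathbf{L}j^*f)$, or equivalently the cone of the map $\mathrm{Cone}(g)\to\mathrm{Cone}(a)$ from the commutative square \eqref{cd Koszul complexes between compatible dim red pair}, not $\mathrm{Cone}(g)$ itself. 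This also undermines your ``derived Nakayama'' step: the derived restrictions $\mathbf{L}i'^*\mathrm{Cone}(g)$ and $\mathbf{L}i'^*\mathrm{Cone}(a)$ are not concentrated in a single degree (both carry exterior-power pieces in all negative degrees), so the identification of $\mathrm{Cone}(g)\otimes^{\mathbf{L}}\mathcal{O}^\der_{Z(s')}$ with $\mathrm{Cone}(\rho)$ is false, and the passage from ``$\rho$ iso'' to ``$g$ qis'' or to ``$\mathbf{L}j^*f$ qis'' needs a genuine argument in every cohomological degree, not just $H^{-1}$.

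The paper avoids all this by a direct local computation for (ii')$\Rightarrow$(ii): Zariski-locally split $E\cong\widetilde{\mathcal{K}}\oplus\widetilde{E}'$ extending $\ker(\pr)$ and $E'$, write $s=(s_1,s_2)$, and observe that $\rho$ being an isomorphism forces $s_1$ to be a \emph{regular sequence cutting out $Y'$} near $Z(s')$. Then $\mathrm{Kos}(\widetilde{\mathcal K},s_1)\simeq j_*\mathcal{O}_{Y'}$, and the Thom--Sebastiani factorization $\mathrm{Kos}(E,s)\cong\mathrm{Kos}(\widetilde{\mathcal K},s_1)\otimes\mathrm{Kos}(\widetilde{E}',s_2)$ plus the projection formula give $\mathrm{Kos}(E,s)\simeq j_*\mathrm{Kos}(E',s')$, i.e.\ (ii). The ``propagation up the Koszul filtration'' you wanted to hand-wave is precisely this regular-sequence observation; it is the actual content of the implication and cannot be skipped. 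Your (ii)$\Rightarrow$(ii') direction reaches the right conclusion, but again via the incorrect intermediate claim that $g$ becomes a quasi-isomorphism; the paper instead passes from $f$ qis to $\mathbf{L}j^*f$ qis, hence $\mathrm{Cone}(g)\xrightarrow{\sim}\mathrm{Cone}(a)$, hence $\rho$ iso after applying $H^{-1}\mathbf{L}i'^*$.
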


\begin{proof}
(ii)$\Longrightarrow$(ii'). Since $Z^\der(s')\cong Z^\der(s)$, $Z(s)$ is set-theoretically contained in $Y'$. Moreover 
$$\mathrm{Kos}(E,s)\to j_*\mathrm{Kos}(E',s')$$ being a quasi-isomorphism implies that the map $$\mathrm{Cone}(g)\to \mathrm{Cone}(a)$$ induced from the commutative diagram \eqref{cd Koszul complexes between compatible dim red pair} is a quasi-isomorphism. In particular, 
$$\rho:i'^*\ker(\pr)\cong H^{-1}(\mathbf Li'^*(\mathrm{Cone}(g)))\to H^{-1}(\mathbf Li'^*(\mathrm{Cone}(a)))\cong i'^* N^\vee_{Y'/Y}$$ is an isomorphism.

(ii')$\Longrightarrow$(ii). We only need to show $\mathrm{Kos}(E,s)\to j_*\mathrm{Kos}(E',s')$ is a quasi-isomorphism. Since $Z(s)\subseteq  Y'$ and 
$s|_{Y'}=\pr^\vee\circ s' $, we have $Z(s)=Z(s')$ as sets. Assume $Z(s')\neq\emptyset$ (otherwise (ii) trivially holds). To show that $\mathrm{Kos}(E,s)\to j_*\mathrm{Kos}(E',s')$ is a quasi-isomorphism, it is enough to check it locally in a Zariski neighbourhood of any point in $Y'$. To this end, we can assume that there is a decomposition of vector bundle 
$$E\cong \widetilde{\mathcal K}\oplus \widetilde{E}', \,\, \,\,    \mathrm{such}\,\, \mathrm{that}\,\, \, \widetilde{\mathcal K}|_{ Y'}\cong \ker(\pr), \,\,\, \widetilde{E}'|_{Y'}\cong E',$$ 
and $\pr$ equals to the natural projection $(\widetilde{\mathcal K}\oplus \widetilde{E}')|_{Y'}\to \widetilde{E}'|_{Y'}$. Let us write $$s=(s_1,s_2):\widetilde{\mathcal K}\oplus \widetilde{E}'\to \mathcal O_Y, $$ then the image of $s_1$ is contained in $I_{Y'}$ (the ideal that defines $Y'$). Then we consider the vector bundle map on $Y'$:
\begin{align*}
    \tau\colon \ker(\pr)=\widetilde{\mathcal K}/I_{Y'}\widetilde{\mathcal K}\longrightarrow I_{Y'}/I^2_{Y'}= N^\vee_{Y'/Y},
\end{align*}
which is defined as the image of $s_1$. By construction, $\tau|_{Z(s')}=\rho$ which is an isomorphism by assumption, so there exists an open neighbourhood $V$ of $Z(s')$ in $Y$ such that $\tau|_{V\cap Y'}$ is an isomorphism. Then it suffices to show that $\mathrm{Kos}(E,s)|_V\to j_*\mathrm{Kos}(E',s')|_V$ is a quasi-isomorphism. 
We have decomposition $$\mathrm{Kos}(E,s)|_V\cong \mathrm{Kos}(\widetilde{\mathcal K}_{V}, s_1|_V)\otimes \mathrm{Kos}(\widetilde E'_{V}, s_2|_V). $$ 
Since $\tau|_{V\cap Y'}$ is an isomorphism, (locally) $s_1$ maps a basis of $\widetilde{\mathcal K}_{V}$ to a regular sequence that generates $I_{Y'}|_V$; thus $\mathrm{Kos}(\widetilde{\mathcal K}_{V}, s_1|_V)$ is quasi-isomorphic to $j_*\mathcal O_{V\cap Y'}$ as a dg $\mathcal O_Y$-algebra. It follows that 
\begin{equation*}\mathrm{Kos}(E,s)|_V\cong j_*\mathbf Lj^*\mathrm{Kos}(\widetilde E'_{V}, s_2|_V)\cong j_*\mathrm{Kos}(E',s')|_V. \qedhere \end{equation*}
\end{proof}
Note that compatible dimensional reduction data gives a further consequence. 
\begin{Proposition}\label{compatible dim red crit loci}
Suppose that $(X,Y,s,\phi)$ and $(X',Y',s',\phi')$ are compatible dimensional reduction data, then $\Crit(\sw)$ is a subscheme of $X''$, and $\pr\colon X''\to X'$ maps $\Crit(\sw)$ isomorphically onto $\Crit(\sw')$.\,\footnote{In fact, the derived critical loci are also isomorphic. This can be proven using the $(-1)$-shifted version of \cite[Cor.~5.3]{KP}, and the fact that the derived zero locus of $s$ and $s'$ are isomorphic and the restriction of $\phi$ and $\phi'$ on them coincide. We thank Tasuki Kinjo to pointing out this. }
\end{Proposition}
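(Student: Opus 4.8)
The strategy is to work locally on $Y$ and reduce the statement to a Jacobian criterion computation, then use the compatibility isomorphism $\pr^*\sw' = \sw|_{X''}$ to transport the critical locus across $\pr$. First I would observe that since $X'' = Y'\times_Y X$ is a closed subscheme of $X$ and $\pr\colon X''\to X'$ is a surjective bundle map with $\pr^*(\sw') = \sw|_{X''}$ (condition (i) of Definition \ref{compatible dim red data}), it suffices to prove two things: (a) the scheme-theoretic critical locus $\Crit(\sw)$ is contained in $X''$, and (b) the map $\pr\colon X''\to X'$ restricts to a scheme isomorphism $\Crit(\sw)\xrightarrow{\sim}\Crit(\sw')$. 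Both are Zariski-local on $Y$, so I would choose a local trivialization in which, using condition (ii') from Proposition \ref{condition (ii')}, the bundle $E$ decomposes as $E\cong \widetilde{\mathcal K}\oplus\widetilde E'$ with $\widetilde{\mathcal K}|_{Y'}\cong\ker(\pr)$, $\widetilde E'|_{Y'}\cong E'$, and $Y'\subseteq Y$ cut out by a regular sequence coming from $s_1$, where $s = (s_1,s_2)$.

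The heart of the argument is the following local computation. Write the fiber coordinates of $E$ as $(e_1,e_2)$ matching the decomposition $\widetilde{\mathcal K}\oplus\widetilde E'$, and local coordinates $(u,v)$ on $Y$ where $u$ are functions vanishing on $Y'$ (with $Y'=\{u=0\}$ locally, the image of a basis of $\widetilde{\mathcal K}$ under $s_1$ forming a regular generating sequence for $I_{Y'}$ by (ii')). Then $\sw = \langle e_1, s_1(u,v)\rangle + \langle e_2, s_2(u,v)\rangle + \phi(u,v)$. Computing $\partial\sw/\partial e_1 = s_1$: since $\rho = \tau|_{Z(s')}$ is an isomorphism in Proposition \ref{condition (ii')}, $s_1$ has invertible differential along $Y'$ near $Z(s')$, so the equation $s_1 = 0$ forces $u = 0$ scheme-theoretically in a neighborhood of $\Crit(\sw)$, which gives containment $\Crit(\sw)\subseteq X''=\{u=0\}$. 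On $X''$ we have $\sw|_{X''} = \langle e_1, s_1(0,v)\rangle + \langle e_2, s_2(0,v)\rangle + \phi(0,v)$; the equations $\partial/\partial e_1$ give $s_1(0,v) = 0$, which holds identically on $Y'$, and the remaining equations $\partial/\partial e_2 = s_2|_{Y'} = \pr^\vee\circ s'$, $\partial/\partial v$ exactly match the defining equations of $\pr^{-1}(\Crit(\sw'))$ because $\pr^*\sw' = \sw|_{X''}$ and $\pr$ is a (linear, hence smooth) bundle map. Finally, $\pr$ restricted to $\Crit(\sw)\subseteq X''$ is injective on points (the $e_1$-equation $s_1(0,v)=0$ is vacuous, so no information is lost) and its inverse is given by the zero section of $\ker(\pr)$, so it is an isomorphism onto $\Crit(\sw')$.

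The main obstacle I anticipate is handling the scheme structure carefully rather than just the underlying reduced sets: one must verify that the ideal generated by $\{\partial\sw/\partial e_1\}$ together with $I_{X''}$-type relations is precisely $I_{X''}$ near $\Crit(\sw)$, and this is exactly where the \emph{isomorphism} (not just injectivity) of $\rho$ in (ii') is essential --- it guarantees that $s_1$ is a regular sequence generating $I_{Y'}$ locally, so the Koszul/Jacobian calculation is clean and no embedded or thickened components appear. A secondary subtlety is that one should either work with the non-derived statement throughout (invoking Proposition \ref{condition (ii')} to avoid derived geometry) or, for the parenthetical remark about derived critical loci, cite the $(-1)$-shifted analog of \cite[Cor.~5.3]{KP} as indicated in the footnote; I would prove the classical statement in the body and relegate the derived enhancement to the footnote as written. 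Everything else --- gluing the local computations, $\sT$-equivariance (automatic since all maps $j$, $\pr$, $\pi$ are $\sT$-equivariant and $\sw$ is $\sT$-invariant) --- is routine.
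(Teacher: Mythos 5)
Your proof takes the same local Jacobian approach as the paper (splitting $E\cong\widetilde{\mathcal K}\oplus\widetilde E'$ via condition (ii'), writing $\sw$ in coordinates), and your containment argument $\Crit(\sw)\subseteq X''$ is fine. However, the final step has a genuine gap: you analyze only the critical equations of $\sw|_{X''}$ — namely $\partial/\partial e_1$, $\partial/\partial e_2$, $\partial/\partial v$ — which together cut out $\Crit(\sw|_{X''}) = \pr^{-1}(\Crit(\sw'))$, but you never address the equations $\partial\sw/\partial u = \langle e_1,\partial s_1/\partial u\rangle + \langle e_2,\partial s_2/\partial u\rangle + \partial\phi/\partial u = 0$, i.e. the derivatives \emph{transverse} to $Y'$. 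These are additional equations defining $\Crit(\sw)$ inside $X$, and in general $\Crit(\sw) \subsetneq \Crit(\sw|_{X''})$. Without them you have only shown $\Crit(\sw)\subseteq \pr^{-1}(\Crit(\sw'))$, not that $\pr$ restricts to an isomorphism.

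These missing equations are exactly what make the isomorphism work. Since $\partial s_1/\partial u$ is invertible along $Y'$ near $Z(s')$ (by condition (ii')), $\partial\sw/\partial u = 0$ solves uniquely for the $\ker(\pr)$-fiber coordinate: $e_1 = -(\partial s_1/\partial u)^{-1}\bigl(\langle e_2,\partial s_2/\partial u\rangle + \partial\phi/\partial u\bigr)\big|_{u=0}$. Thus $\Crit(\sw)$ is the \emph{graph of this section} inside $\pr^{-1}(\Crit(\sw'))$, which is exactly how the paper concludes. Your statement that the inverse is ``the zero section of $\ker(\pr)$'' is therefore false — the section is generally nonzero (consider $\phi$ nonconstant, or $s_2$ depending on $u$) — and the parenthetical ``$s_1(0,v)=0$ is vacuous, so no information is lost'' is a non-sequitur for injectivity: the vacuousness of the $\partial/\partial e_1$ equation on $X''$ does not determine $e_1$; the $\partial/\partial u$ equations do.
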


\begin{proof}
We first notice that $$\Crit(\sw)\subseteq  \pi^{-1}(Z(s)).$$ Then by Definition \ref{compatible dim red data} (i), we have embedding $$\Crit(\sw)\subseteq  \pi^{-1}(Z(s'))\subseteq  X''.$$ 
Note that for any smooth closed subvariety $Z\subseteq  X$, there is an inclusion $$\Crit(\sw)\cap Z\subseteq  \Crit(\sw|_{Z}),$$ 
and taking $Z=X''$ leads to $$\Crit(\sw)\subseteq  \Crit(\sw|_{X''}).$$ 
By the condition (1) in Definition \ref{compatible dim red data}, we have $$\Crit(\sw|_{X''})=\pr^{-1}(\Crit(\sw')),$$ thus $\pr\colon X''\to X'$ restricts to give $\pr:\Crit(\sw)\to \Crit(\sw')$. It remains to show the map is surjective and any fiber contains exactly one point. Since we can check this locally, we replace $Y$ by an open subset and without loss of generality we assume $E\cong \widetilde{\mathcal K}\oplus \widetilde{E}'$ such that $\widetilde{\mathcal K}|_{Y'}\cong \ker(\pr)$ and $\widetilde{E}'|_{Y'}\cong E'$ and $\pr$ equals to the natural projection $(\widetilde{\mathcal K}\oplus \widetilde{E}')|_{Y'}\to \widetilde{E}'|_{Y'}$. 
Shrink $Y$ if necessary, we assume $$\widetilde{\mathcal K}^\vee\cong \bigoplus_{i=1}^n \mathcal O_Y\cdot k_i, \quad \widetilde{E}'^\vee\cong \bigoplus_{j=1}^m \mathcal O_Y\cdot e_j, $$ 
such that the morphism given by $\{t_i=s(k_i^{\vee})\}_{i=1}^n\colon Y\to \bA^n$ is smooth,
where $k_i^\vee$ is dual basis of $k_i$. 
We have $$\sw=\sum_{i=1}^n k_i \cdot t_i+\sum_{j=1}^m e_j\cdot s(e_j^\vee)+\phi, \quad \sw'=\sum_{j=1}^m e_j\cdot s(e_j^\vee)|_{Y'}+\phi|_{Y'}, $$ 
then it follows that
\begin{align*}
    \Crit(\sw)= \pr^{-1}(\Crit(\sw'))\bigcap\left\{k_i+\sum_{j=1}^me_j\cdot\frac{\partial s(e_j^\vee)}{\partial t_i}+\frac{\partial \phi}{\partial t_i}=0\:\bigg|\: 1\leqslant i \leqslant n\right\}.
\end{align*}
Since $k_i=-\sum_{j=1}^me_j\cdot\frac{\partial s(e_j^\vee)}{\partial t_i}-\frac{\partial \phi}{\partial t_i}$ $(1\leqslant i\leqslant n)$ gives a section of $\widetilde{\mathcal K}$ on $\mathrm{Tot}(\widetilde{E}')$, $\Crit(\sw)$ is section of the projection $\pr^{-1}(\Crit(\sw'))\to \Crit(\sw')$; therefore $\pr$ maps $\Crit(\sw)$ isomorphically onto $\Crit(\sw')$.
\end{proof}

We note that for a dimensional reduction datum $(X,Y,s,\phi)$, its $\sA$-fixed part $(X^\sA,Y^\sA,s^\sA,\phi^\sA)$ is also a dimensional reduction datum.

\begin{Lemma}\label{lem on cpt on afix}
Suppose that $(X,Y,s,\phi)$ and $(X',Y',s',\phi')$ are compatible dimensional reduction data, then the fixed locus data $(X^\sA,Y^\sA,s^\sA,\phi^\sA)$ and $(X'^{\sA},Y'^{\sA},s'^{\sA},\phi'^{\sA})$ are also compatible.
\end{Lemma}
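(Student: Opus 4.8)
\textbf{Proof plan for Lemma \ref{lem on cpt on afix}.}

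The plan is to take the defining data of the compatibility between $(X,Y,s,\phi)$ and $(X',Y',s',\phi')$ — namely the $\sT$-equivariant closed embedding $j\colon Y'\hookrightarrow Y$ and the $\sT$-equivariant surjection $\pr\colon E|_{Y'}\twoheadrightarrow E'$ of equation \eqref{diag on xx' yy'} — and simply restrict everything to $\sA$-fixed loci. First I would record that since $\sA\subseteq \sT$ acts on all the spaces and bundles involved, taking $\sA$-fixed points is functorial: $j$ restricts to a $\sT$-equivariant (hence $(\sT/\sA)$-equivariant, but we only need $\sT$-equivariance) closed embedding $j^\sA\colon Y'^\sA\hookrightarrow Y^\sA$, and $\pr$ restricts to a map of $\sA$-fixed subbundles $\pr^\sA\colon (E|_{Y'})^\sA = E^\sA|_{Y'^\sA}\twoheadrightarrow E'^\sA$. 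The surjectivity of $\pr^\sA$ is the one point needing a word: a surjection of $\sA$-equivariant vector bundles splits $\sA$-equivariantly locally, so on $\sA$-weight spaces it remains surjective; in particular on the weight-zero part it is surjective, which is exactly $\pr^\sA$. Thus conditions (1) and (2) of Definition \ref{compatible dim red data} hold for the fixed-point data, with $X'^{\sA\prime\prime} := Y'^\sA\times_{Y^\sA} X^\sA = \mathrm{Tot}_{Y'^\sA}(E^\sA|_{Y'^\sA})$, which is canonically $(X'')^\sA$.

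Next I would verify condition (i) for the fixed-point data. This is immediate: $\phi^\sA = \phi|_{Y^\sA}$ and $s^\sA = s|_{Y^\sA}$ (the section $s$ being $\sT$-invariant, hence $\sA$-invariant, it restricts to a section of $(E^\vee)^\sA = (E^\sA)^\vee$ over $Y^\sA$), and restricting the identities $\phi|_{Y'} = \phi'$, $s|_{Y'} = \pr^\vee\circ s'$ further to $\sA$-fixed loci gives $\phi^\sA|_{Y'^\sA} = \phi'^\sA$ and $s^\sA|_{Y'^\sA} = (\pr^\sA)^\vee\circ s'^\sA$, using that dualizing and taking $\sA$-fixed parts commute for $\sA$-equivariant bundles.

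The only genuinely substantive step is condition (ii): that the induced map $Z^\der(s'^\sA)\to Z^\der(s^\sA)$ is an isomorphism. My plan here is to use the description of derived zero loci via Koszul complexes recalled in Definition \ref{compatible dim red data}, together with the fact that forming the $\sA$-fixed (stack) locus commutes with the relevant Koszul-complex construction, since $\sA$ is linearly reductive: $\mathrm{Kos}(E,s)^\sA = \mathrm{Kos}(E^\sA, s^\sA)$ as dg-algebras over $\mathcal O_{Y^\sA}$, because taking $\sA$-invariants is exact and compatible with exterior powers and with the differential (all maps in sight being $\sA$-equivariant). Hence $Z^\der(s^\sA) = Z^\der(s)^\sA$ and $Z^\der(s'^\sA) = Z^\der(s')^\sA$. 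The compatibility hypothesis (ii) for the original pair says $Z^\der(s')\xrightarrow{\ \sim\ } Z^\der(s)$; applying the functor of $\sA$-fixed points to this isomorphism, and matching it with the Koszul-complex map induced by $\pr^\sA$ (which is the $\sA$-fixed part of the map induced by $\pr$, by naturality of the construction $\pr\rightsquigarrow (\mathrm{Kos}(E,s)\to j_*\mathrm{Kos}(E',s'))$), yields that $Z^\der(s'^\sA)\to Z^\der(s^\sA)$ is an isomorphism. I expect the main obstacle to be purely bookkeeping: making sure the $\sA$-fixed-point functor is correctly applied at the derived level and that it genuinely commutes with the Koszul-complex / derived-vanishing-locus construction, which reduces to the exactness of $(-)^\sA$ for linearly reductive $\sA$; once that is granted, all the required diagrams are obtained by applying $(-)^\sA$ to the diagrams of Definition \ref{compatible dim red data} and Proposition \ref{condition (ii')}. (Alternatively, one can bypass derived language entirely and check condition (ii') of Proposition \ref{condition (ii')} for the fixed-point data by restricting the isomorphism $\rho$ of \eqref{equ rho on vbm} to $Z(s')^\sA$ and noting $\ker(\pr)^\sA = \ker(\pr^\sA)$ and $N^\vee_{Y'^\sA/Y^\sA} = (N^\vee_{Y'/Y})^\sA|_{Y'^\sA}$; the set-theoretic containment $Z(s^\sA)\subseteq Y'^\sA$ follows from $Z(s)\subseteq Y'$ by intersecting with the fixed locus.)
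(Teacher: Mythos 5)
Your parenthetical alternative at the end is exactly the paper's proof: the paper invokes Proposition \ref{condition (ii')} to replace condition (ii) by the non-derived condition (ii'), observes that $s\in\Gamma(Y,E^\vee)^\sA$ gives $Z(s^\sA)=Z(s)^\sA\subseteq Y'^\sA$, and then checks that $\rho^\sA$ is the $\sA$-fixed part of the pullback of the isomorphism $\rho$, using $\ker(\pr)^\sA=\ker(\pr^\sA)$ and $N^\vee_{Y'^\sA/Y^\sA}=(N^\vee_{Y'/Y})^\sA|_{Y'^\sA}$. That part of your proposal is correct and complete; conditions (1), (2), and (i) are handled the same way in the paper as you describe.

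Your main line of argument, however, has a genuine gap, and the paper deliberately avoids it by passing to condition (ii'). You assert $\mathrm{Kos}(E,s)^\sA=\mathrm{Kos}(E^\sA,s^\sA)$ ``because taking $\sA$-invariants is exact and compatible with exterior powers.'' Exactness of $(-)^\sA$ is true for linearly reductive $\sA$, but $(-)^\sA$ does \emph{not} commute with exterior powers: if $E|_{Y^\sA}=E^\sA\oplus E^{\mathrm{mov}}$ and $E^{\mathrm{mov}}$ contains two summands of opposite weight $\pm 1$, then $\wedge^2 E|_{Y^\sA}$ has a nonzero $\sA$-invariant piece not seen by $\wedge^2 E^\sA$. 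Consequently the $\sA$-invariant part of $\mathrm{Kos}(E,s)|_{Y^\sA}$ is strictly larger than $\mathrm{Kos}(E^\sA,s^\sA)$ in general, and the stated identity fails. What one would actually want is that the \emph{derived $\sA$-fixed locus} of $\mathrm{Spec}_Y\,\mathrm{Kos}(E,s)$ is $\mathrm{Spec}_{Y^\sA}\,\mathrm{Kos}(E^\sA,s^\sA)$; on Koszul algebras this amounts to taking the quotient of $\mathrm{Kos}(E,s)|_{Y^\sA}$ by the dg-ideal generated by $(E^{\mathrm{mov}})^\vee$, which is a different operation from $(-)^\sA$ and needs to be justified (including why the derived $\sA$-fixed functor is well behaved and sends isomorphisms to isomorphisms). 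Your parenthetical route sidesteps all of this, which is precisely why the paper — and you, in your fallback — reduces to the purely classical statement (ii').
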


\begin{proof}
Taking $\sA$-invariance, we get $\sT$-equivariant closed embedding $j^\sA\colon Y'^\sA\hookrightarrow Y^\sA$ and $\sT$-equivariant surjective vector bundle map 
$$\pr^\sA\colon E|_{Y'^\sA}^\mathrm{fix}\twoheadrightarrow E'|_{Y'^\sA}^{\mathrm{fix}}. $$ 
Then the condition (i) in Definition \ref{compatible dim red data} is obviously satisfied. By Proposition \ref{condition (ii')}, it is enough to verify condition (ii'). Since $s\in \Gamma(Y,E^\vee)^\sA$, we have $Z(s^\sA)=Z(s)^\sA$, so $Z(s^\sA)$ is set-theoretically contained in $Y'^\sA$. The natural map $\rho^\sA:\ker(\pr^\sA)|_{Z(s'^\sA)}\to N^\vee_{Y'^\sA/Y^\sA}|_{Z(s'^\sA)}$ is the $\sA$-fixed part of the pullback of $\rho:\ker(\pr)|_{Z(s')}\cong  N^\vee_{Y'/Y}|_{Z(s')}$ to $Z(s'^\sA)$, so $\rho^\sA$ is an isomorphism.
\end{proof}

\begin{Proposition}\label{prop:compatible map_fixed pt}
Suppose that $(X,Y,s,\phi)$ and $(X',Y',s',\phi')$ are compatible dimensional reduction data. Let $\delta_H$ and $\delta^\sA_H$ be the natural maps
\begin{align}\label{def of deltah}
    \delta_H:=\tilde j_*\circ \pr^*\colon  H^\sT(X',\sw')\to H^\sT(X,\sw),\quad \delta^\sA_H:=\tilde j^\sA_*\circ \pr^{\sA*}\colon  H^\sT(X'^\sA,\sw'^\sA)\to H^\sT(X^\sA,\sw^\sA),
\end{align}
where we refer to diagram \eqref{diag on xx' yy'} for those maps.
Then we have commutative diagram
$$
\xymatrix{
 H^{\sT}(X^\sA,\sw^\sA)  & & & H^{\sT}(X,\sw) \ar[lll]_{\quad i^*}, \\
H^{\sT}(X'^\sA,\sw'^\sA)   \ar[u]^{\delta^\sA_H } & & & H^{\sT}(X',\sw') \ar[u]_{\delta_H} \ar[lll]_{\quad e^\sT\left(N_{Y'/Y}|_{X'^\sA}^{\mathrm{mov}}\right)\cdot\, i'^*},
} 
$$
where $i'\colon X'^\sA\hookrightarrow X'$ and $i\colon X^\sA\hookrightarrow X$ are closed embeddings of fixed points loci. 

Similarly let $\delta_K$ and $\delta^\sA_K$ be the natural maps
\begin{align}\label{def of deltak}
    \delta_K:=\tilde j_*\circ \pr^*\colon  K^\sT(X',\sw')\to K^\sT(X,\sw),\quad \delta^\sA_K:=\tilde j^\sA_*\circ \pr^{\sA*}\colon  K^\sT(X'^\sA,\sw'^\sA)\to K^\sT(X^\sA,\sw^\sA),
\end{align}
then we have commutative diagram
$$
\xymatrix{
K^{\sT}(X^\sA,\sw^\sA)  & & & K^{\sT}(X,\sw) \ar[lll]_{\quad i^*}  \\
K^{\sT}(X'^\sA,\sw'^\sA)   \ar[u]^{\delta^\sA_K } & & & K^{\sT}(X',\sw') \ar[u]_{\delta_K} \ar[lll]_{\quad e^\sT_K \left(N_{Y'/Y}|_{X'^\sA}^{\mathrm{mov}}\right)\cdot \, i'^*}.
} 
$$
\end{Proposition}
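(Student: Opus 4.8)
The statement asserts commutativity of two squares relating the interpolation maps $\delta_H,\delta_H^\sA$ (resp.\ $\delta_K,\delta_K^\sA$) to the restriction maps $i^*$, $i'^*$ on $\sA$-fixed loci. My plan is to reduce both squares to a single general base-change / excess-intersection identity on the level of the ambient total spaces, applied to the fixed-locus Cartesian diagram. First I would record the geometry: from diagram \eqref{diag on xx' yy'} we have $X'' = Y'\times_Y X$, with $\tilde j\colon X''\hookrightarrow X$ a closed embedding whose normal bundle is (the pullback of) $N_{Y'/Y}$, and $\pr\colon X''\twoheadrightarrow X'$ a surjective bundle map which is itself (the total space of) the pullback of the bundle map $E|_{Y'}\twoheadrightarrow E'$. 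Taking $\sA$-fixed loci of all objects in \eqref{diag on xx' yy'} and using that $\sA$-fixed-point functor preserves Cartesian squares of smooth varieties and commutes with taking total spaces (since everything is $\sT$-equivariant and $\sA\subseteq\sT$), we get a parallel diagram with $X''^\sA = Y'^\sA\times_{Y^\sA}X^\sA$, $\tilde j^\sA$ a closed embedding with normal bundle $N_{Y'^\sA/Y^\sA}$, and $\pr^\sA$ a surjective bundle map.

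The core is then the identity
\[
i^*\circ \tilde j_* = \tilde j^\sA_*\circ \big(e^\sT(\,\cdot\,)\big)\circ (i'')^*
\]
where $i''\colon X''^\sA\hookrightarrow X''$ and the Euler-class factor accounts for the discrepancy between the two normal bundles. Concretely, restricting $\tilde j_*$ along $i\colon X^\sA\hookrightarrow X$ is governed by the base change along the Cartesian square
\[
\xymatrix{
X''^\sA \ar@{^{(}->}[r]^{i''} \ar@{^{(}->}[d]_{\tilde j^\sA} & X'' \ar@{^{(}->}[d]^{\tilde j}\\
X^\sA \ar@{^{(}->}[r]^{i} & X
}
\]
which however is \emph{not transverse}: the excess bundle is $N_{X''/X}|_{X''^\sA}\big/ N_{X''^\sA/X^\sA} = N_{Y'/Y}|_{X''^\sA}^{\mathrm{mov}}$, the $\sA$-moving part of the normal bundle. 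The equivariant excess intersection formula for critical cohomology (which follows from \eqref{equ on euler of normal} and the functorial properties in \S\ref{sect on crit coho}, exactly as in the uniqueness proofs of Propositions \ref{uniqueness of coh stab}, \ref{excision for crit coh}) gives
\[
i^*\circ \tilde j_* = \tilde j^\sA_*\circ \big(e^\sT(N_{Y'/Y}|_{X''^\sA}^{\mathrm{mov}})\cdot(-)\big)\circ (i'')^*,
\]
and the $K$-theoretic version is the analogous identity with $e^\sT_K$, using \eqref{equ on k euler of normal} and Remark on equivariant excess for critical $K$-theory (cited in the excerpt as ``Remark \ref{equiv excess_K}'').

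Then I would assemble the diagram. Starting from $\alpha\in H^\sT(X',\sw')$, the bottom-then-left composite is $\delta_H^\sA\big(e^\sT(N_{Y'/Y}|_{X'^\sA}^{\mathrm{mov}})\cdot i'^*\alpha\big) = \tilde j^\sA_*\,\pr^{\sA*}\big(e^\sT(\cdots)\cdot i'^*\alpha\big)$. The top-then-right composite is $i^*\delta_H(\alpha) = i^*\tilde j_*\,\pr^*\alpha$. Applying the excess formula to $\pr^*\alpha\in H^\sT(X'',\sw|_{X''})$ converts the right-hand side into $\tilde j^\sA_*\big(e^\sT(N_{Y'/Y}|_{X''^\sA}^{\mathrm{mov}})\cdot (i'')^*\pr^*\alpha\big)$. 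It remains to match the two: $(i'')^*\pr^* = \pr^{\sA*}\circ i'^*$ by functoriality of pullback along the (fixed-locus) Cartesian square $\pr\circ i'' = i'\circ \pr^\sA$, and the Euler-class factors agree because the bundle $N_{Y'/Y}$ on $Y'$ restricts compatibly to $X'^\sA$ and to $X''^\sA$ via the bundle-projection $\pr^\sA$ — more precisely $N_{Y'/Y}|_{X''^\sA}^{\mathrm{mov}} \cong \pr^{\sA*}\big(N_{Y'/Y}|_{X'^\sA}^{\mathrm{mov}}\big)$, so projection formula for $\tilde j^\sA_*$ lets the factor pass through $\pr^{\sA*}$. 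This finishes both the cohomological and the $K$-theoretic square, the arguments being word-for-word parallel.

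\textbf{Main obstacle.} The only genuinely delicate point is the correct bookkeeping of normal bundles and their moving parts: one must be careful that $\tilde j$ is a \emph{closed} embedding (so $N_{X''/X}$ makes sense as a genuine bundle, $= \pi^*N_{Y'/Y}$), that $\pr$ being a \emph{surjective} bundle map makes $\pr^*$ a flat (indeed bundle) pullback with the projection formula available, and that taking $\sA$-fixed loci interacts correctly with the splitting $N = N^{\mathrm{fix}}\oplus N^{\mathrm{mov}}$ — this is where the compatibility conditions (i), (ii) of Definition \ref{compatible dim red data} and Lemma \ref{lem on cpt on afix} (which guarantees the fixed-locus data is again compatible, so all the maps $j^\sA,\pr^\sA$ exist with the right properties) are used. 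Once these identifications are pinned down, the diagram chase is routine and identical in cohomology and $K$-theory.
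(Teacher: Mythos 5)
Your proof is correct and follows essentially the same route as the paper: form the Cartesian square $X''^\sA = X''\times_X X^\sA$, apply the equivariant excess intersection formula (Remark \ref{equiv excess_coh} / \ref{equiv excess_K}) to convert $i^*\circ\tilde j_*$ into $\tilde j^\sA_*\circ e^\sT(N_{X''/X}|_{X''^\sA}^{\mathrm{mov}})\cdot(i'')^*$, then use $\pr\circ i''=i'\circ\pr^\sA$ and the fact that the moving normal bundle is $\pr^{\sA*}$ of $N_{Y'/Y}|_{X'^\sA}^{\mathrm{mov}}$ to commute the Euler class through. (One small wording point: the last step is just compatibility of the Euler class operator with pullback along $\pr^\sA$ — no projection formula is needed.)
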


\begin{proof}
Consider the following Cartesian diagram of smooth varieties:
\begin{equation*}
\xymatrix{
X''^\sA \ar[r]^{\tilde j^\sA} \ar[d]_{i''} \ar@{}[dr]|{\Box}  & X^\sA \ar[d]^{i} \\
X'' \ar[r]^{\tilde j} & X
}
\end{equation*}
According to Remark \ref{equiv excess_coh}, we have 
\begin{align*}
    i^*\circ \tilde j_*=\tilde j^\sA_*\circ e^\sT(N_{X''/X}|_{X''^\sA}^{\mathrm{mov}})\cdot i''^*.
\end{align*}
We note that $\pr\circ i''=i'\circ \pr^\sA$. Then we have
\begin{multline*}
i^*\circ \delta_H=i^*\circ \tilde j_*\circ \pr^*=\tilde j^\sA_*\circ e^\sT(N_{X''/X}|_{X''^\sA}^{\mathrm{mov}})\cdot i''^*\circ \pr^*= \tilde j^\sA_*\circ e^\sT(N_{X''/X}|_{X''^\sA}^{\mathrm{mov}})\cdot \pr^{\sA *}\circ\, i'^*\\
=\tilde j^\sA_*\circ \pr^{\sA *}\circ\, e^\sT(N_{Y'/Y}|_{X'^\sA}^{\mathrm{mov}})\cdot i'^*=\delta^\sA_H\circ e^\sT(N_{Y'/Y}|_{X'^\sA}^{\mathrm{mov}})\cdot i'^*.
\end{multline*}
This proves the statement for cohomology. For the $K$-theory counterpart, we use Remark \ref{equiv excess_K} to get $$i^*\circ \tilde j_*=\tilde j^\sA_*\circ e^\sT_K(N_{X''/X}|_{X''^\sA}^{\mathrm{mov}})\cdot i''^*,$$ and the rest of computation is the same as above.
\end{proof}

\begin{Proposition}\label{prop on def dim red}
Suppose that $(X,Y,s,\phi)$ and $(X',Y',s',\phi')$ are compatible dimensional reduction data.
If $K$-theoretic deformed dimensional reduction holds for both $(X,Y,s,\phi)$ and $(X',Y',s',\phi')$, then the natural map $\delta_K\colon  K^\sT(X',\sw')\to K^\sT(X,\sw)$ defined in Proposition \ref{prop:compatible map_fixed pt} is an isomorphism.
\end{Proposition}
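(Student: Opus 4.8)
The plan is to reduce the statement to the deformed dimensional reductions for $(X,Y,s,\phi)$ and $(X',Y',s',\phi')$ by constructing an explicit inverse (or by a direct diagram chase). First I would recall from Definition \ref{dim red data} and \eqref{deformed dim red_K} that the hypotheses give isomorphisms
$$
\mathbf i^\der_*\circ \pi^*\colon K^{\sT}(Z^\der(s),\phi)\xrightarrow{\ \cong\ } K^{\sT}(X,\sw),\qquad
\mathbf i'^\der_*\circ \pi'^*\colon K^{\sT}(Z^\der(s'),\phi')\xrightarrow{\ \cong\ } K^{\sT}(X',\sw').
$$
By condition (ii) in Definition \ref{compatible dim red data}, the map $\pr\colon j^*E\twoheadrightarrow E'$ induces an isomorphism $\iota\colon Z^\der(s')\xrightarrow{\cong} Z^\der(s)$, and by condition (i) it identifies $\phi'$ with $\phi|_{Z^\der(s)}$ (via $\phi|_{Y'}=\phi'$). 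Hence $\iota$ induces an isomorphism $\iota^*\colon K^{\sT}(Z^\der(s),\phi)\xrightarrow{\cong} K^{\sT}(Z^\der(s'),\phi')$ on critical $K$-theory.

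The second step is to check that $\delta_K$ is compatible with these three isomorphisms, i.e.\ that the square
$$
\xymatrix{
K^{\sT}(Z^\der(s'),\phi') \ar[r]^{\quad \mathbf i'^\der_*\circ \pi'^*} \ar[d]_{(\iota^*)^{-1}} & K^{\sT}(X',\sw') \ar[d]^{\delta_K} \\
K^{\sT}(Z^\der(s),\phi) \ar[r]^{\quad \mathbf i^\der_*\circ \pi^*} & K^{\sT}(X,\sw)
}
$$
commutes. Unwinding $\delta_K=\tilde j_*\circ \pr^*$ from \eqref{def of deltak} and using the diagram \eqref{diag on xx' yy'}, this reduces to a base-change/projection-formula computation on the square $X''=Y'\times_Y X$, together with the compatibility of the closed embeddings $\pi^{-1}(Z^\der(s))\hookrightarrow X$ and $\pi^{-1}(Z^\der(s'))\hookrightarrow X''$ under $\pr$. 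Concretely: $\pr^*\circ (\mathbf i'^\der_*\circ\pi'^*)$ can be rewritten, via flat (or proper) base change for $\pr\colon X''\to X'$ and the fact that $\pr$ restricts to an isomorphism $\pi^{-1}(Z^\der(s'))\cong \iota^{-1}(\pi^{-1}(Z^\der(s)))$ (Proposition \ref{compatible dim red crit loci} for the underived picture, its derived refinement for the derived one), as $\mathbf i''^\der_*\circ \pi''^*\circ \iota^{*-1}$ where $\pi''\colon X''\to Y'$; then applying $\tilde j_*$ and using $\tilde j\circ \mathbf i''^\der$ factoring through $\mathbf i^\der$ (since $Z^\der(s)$ set-theoretically sits inside $Y'$ and, derived-theoretically, $\mathrm{Kos}(E,s)\xrightarrow{\cong}j_*\mathrm{Kos}(E',s')$) gives $\mathbf i^\der_*\circ \pi^*\circ \iota^{*-1}$, as desired. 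Once the square commutes and three of its four arrows are isomorphisms, $\delta_K$ is an isomorphism.

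\textbf{Main obstacle.} The routine parts are the functoriality statements (base change, projection formula, commutation of critical $K$-theory pushforward/pullback with the canonical maps, all recalled in Section \ref{app on k symm var}). The delicate point is bookkeeping in the \emph{derived} vanishing loci: one must verify that pulling back along $\pr$ and pushing forward along $\tilde j$ interacts correctly with the Koszul-complex descriptions, i.e.\ that the identification $Z^\der(s')\cong Z^\der(s)$ of Definition \ref{compatible dim red data}(ii) is genuinely compatible with the structure maps to $X'$ and $X$ after restricting the potentials. This is precisely where the quasi-isomorphism $\mathrm{Kos}(E,s)\to j_*\mathrm{Kos}(E',s')$ from the proof of Proposition \ref{condition (ii')} is needed, and where one should be careful that all identifications are $\sT$-equivariant and respect the superpotentials $\phi$, $\phi'$; everything else is formal. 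I expect the cleanest write-up is to phrase the whole argument as ``$\delta_K$ equals the composite $(\mathbf i^\der_*\circ\pi^*)\circ\iota^{*-1}\circ(\mathbf i'^\der_*\circ\pi'^*)^{-1}$'', verified on the level of kernels/correspondences, rather than a three-step diagram chase.
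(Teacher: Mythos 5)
Your proposal is correct and takes essentially the same route as the paper: both arguments exploit the isomorphism $Z^{\der}(s')\cong Z^{\der}(s)$, reduce to the Cartesian square formed by the inclusions $\pi^{-1}(Z^{\der}(s))\hookrightarrow X''$ and $\pi'^{-1}(Z^{\der}(s'))\hookrightarrow X'$ over $\pr\colon X''\to X'$, apply smooth base change, and then compose $\tilde j_*$ to identify $\delta_K\circ(\mathbf i'^{\der}_*\circ\pi'^*)$ with $\mathbf i^{\der}_*\circ\pi^*$, so that $\delta_K$ is a composite of isomorphisms. The only difference is presentational: the paper does the three-line diagram chase directly, while you flag the derived-Koszul bookkeeping more explicitly (and your phrase ``$\pr$ restricts to an isomorphism $\pi^{-1}(Z^\der(s'))\cong\iota^{-1}(\pi^{-1}(Z^\der(s)))$'' is garbled---$\pr$ restricted to these derived zero loci is still a smooth vector-bundle projection, not an isomorphism; only the critical loci are identified isomorphically by Proposition~\ref{compatible dim red crit loci}, which is not what is needed here---but this does not affect the substance of the argument since only Cartesianity and smoothness are used).
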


\begin{proof}
Using $Z^{\der}(s)\cong Z^{\der}(s')$, we have Cartesian diagram:
\begin{equation*}
\xymatrix{
\pi^{-1}(Z^{\der}(s))\ar[r]^{\pr} \ar[d]_{\mathbf i''^{\der}} \ar@{}[dr]|{\Box}  & \pi'^{-1}(Z^{\der}(s')) \ar[d]^{\mathbf i'^{\der}}\\
X'' \ar[r]^{\pr} & X'. \\
}
\end{equation*}
Since $\pr$ is smooth, we have $$\mathbf i''^{\der}_*\circ \pr^*=\pr^*\circ \mathbf i'^{\der}_*.$$ 
It follows that
\begin{align*}
    \delta_K\circ(\mathbf i'^{\der}_*\circ \pi'^*)=\tilde j_*\circ \pr^*\circ \mathbf i'^{\der}_*\circ \pi'^*=\tilde j_*\circ \mathbf i''^{\der}_*\circ\pr^* \circ \pi'^*= \mathbf i^{\der}_*\circ \pi^*,
\end{align*}
where in the last equation we use $\tilde j\circ \mathbf i''^{\der}=\mathbf i^{\der}$ and $\pi'\circ \pr=\pi$. By assumption, both $\mathbf i'^{\der}_*\circ \pi'^*$ and $\mathbf i^{\der}_*\circ \pi^*$ are isomorphisms; thus $\delta_K$ is an isomorphism. 
\end{proof}

Proposition \ref{prop on def dim red} holds for cohomology in more general setting, 
where deformed dimensional reductions for $(X,Y,s,\phi)$ and $(X',Y',s',\phi')$ are not needed as assumptions. 


\begin{Proposition}\label{prop: coh compatible map is iso}
Suppose that $(X,Y,s,\phi)$ and $(X',Y',s',\phi')$ are compatible dimensional reduction data. Then the map $ \delta_H\colon  H^\sT(X',\sw')\to H^\sT(X,\sw)$ defined in Proposition \ref{prop:compatible map_fixed pt} is an isomorphism.\footnote{In view of the previous footnote, this isomorphism can also be proven from the gluing of DT perverse sheaves, as the condition in Definition \ref{compatible dim red data} implies the orientation data of the derived critical locus of $\sw$ and $\sw'$ are equivalent.}
\end{Proposition}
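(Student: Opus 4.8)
The plan is to reduce the statement to the already-established cohomological deformed dimensional reduction isomorphism \eqref{deformed dim red_coh} applied \emph{to a well-chosen third datum}, rather than trying to invert $\delta_H$ directly. The key observation is that while cohomological deformed dimensional reduction \eqref{deformed dim red_coh} need not hold for $(X,Y,s,\phi)$ or $(X',Y',s',\phi')$ \emph{as stated} (the tuple is a genuine deformed reduction, not a reduction onto a zero section where a regular section or a pulled-back potential makes the argument of \cite{Dav} go through), the \emph{purely linear-algebraic splitting} extracted in the proof of Proposition \ref{condition (ii')} does give us local control. So first I would recall from that proof that, Zariski-locally on $Y$, we may split $E\cong \widetilde{\mathcal K}\oplus \widetilde E'$ with $\widetilde{\mathcal K}|_{Y'}\cong\ker(\pr)$, $\widetilde E'|_{Y'}\cong E'$, $\pr$ the projection, and $s=(s_1,s_2)$ with $s_1$ landing in $I_{Y'}$ and $\tau=[s_1]\colon \ker(\pr)\to N^\vee_{Y'/Y}$ an isomorphism near $Z(s')$ by condition (ii'). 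Correspondingly $\sw=\langle e,s\rangle+\pi^*\phi$ splits, on $\mathrm{Tot}(\widetilde{\mathcal K})\times_Y\mathrm{Tot}(\widetilde E')$, into a part $\langle k,s_1\rangle$ that is fiberwise-linear in the $\widetilde{\mathcal K}$-directions with \emph{regular} defining section-germ (since $\tau$ is an isomorphism), plus $\pr^*\sw'$.

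The heart of the argument is then a Thom--Sebastiani / Knörrer-periodicity-type statement for this $\widetilde{\mathcal K}$-direction: because $s_1$ cuts out $Y'$ regularly near $Z(s)=Z(s')$, the inclusion $\tilde j\colon X''=\mathrm{Tot}_{Y'}(E|_{Y'})\hookrightarrow X$ composed with $\pr^*$ induces, on the level of vanishing-cycle sheaves, an isomorphism near the support, by the \emph{ordinary} (regular-section) dimensional reduction theorem, Theorem \ref{thm: def dim red reg sec} (cf.\ \cite{Dav}), applied to the sub-bundle $\widetilde{\mathcal K}\to \mathrm{Tot}(\widetilde E')$ with regular section $s_1$ and pulled-back potential $\sw|_{X''}=\pr^*\sw'$. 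Concretely: $\mathbf i^{\der}_*\circ\pi^*$ for the datum $(X,Y,s,\phi)$ factors through the datum $(X'',Y',s|_{Y'},\phi|_{Y'})=(X'',Y',\pr^\vee\circ s',\phi')$, and the regular-section reduction identifies $H^\sT(X,\sw)$ with $H^\sT$ of the derived zero locus of $s_1$ inside $\mathrm{Tot}(\widetilde E')$ with potential $\sw'$, which by the isomorphism $\tau$ is just $\mathrm{Tot}_{Y'}(E')=X'$ with potential $\sw'$. Since this comparison is local on $Y$ and all the maps involved ($\tilde j$ proper, $\pr$ smooth surjective with affine-space fibers, hence $\pr^*$ an isomorphism on critical cohomology by \eqref{pb is iso on coho}) are globally defined, these local isomorphisms glue: $\delta_H=\tilde j_*\circ\pr^*$ is an isomorphism. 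I would also use Proposition \ref{compatible dim red crit loci}, which says $\Crit(\sw)$ lives inside $X''$ and maps isomorphically to $\Crit(\sw')$, to guarantee that the supports of all the vanishing-cycle sheaves in play agree, so that the "near the support" qualifiers in the regular-section reduction become genuine global isomorphisms of hypercohomology.

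A cleaner alternative, which I would also flag, is to invoke the footnote's remark: under condition (ii) of Definition \ref{compatible dim red data} the derived critical loci of $\sw$ and $\sw'$ are isomorphic as $(-1)$-shifted symplectic stacks (via the $(-1)$-shifted version of \cite[Cor.~5.3]{KP}), compatibly with orientation data, whence the DT perverse sheaves on the two sides agree and their hypercohomologies are canonically identified; one then checks this identification is $\delta_H$ by unwinding definitions. I would present the splitting argument as the main proof and mention the $(-1)$-shifted symplectic argument as a remark, since the former stays within the tools already developed in the excerpt.

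\textbf{Main obstacle.} The delicate point is the gluing: the splitting $E\cong\widetilde{\mathcal K}\oplus\widetilde E'$ is only local, and different charts produce different identifications, so I must check that the resulting local isomorphisms $H^\sT(X,\sw)|_{\text{chart}}\cong H^\sT(X',\sw')|_{\text{chart}}$ are independent of the choices and patch. The right way to handle this is to never choose the splitting at the level of sheaves at all: instead argue that the natural morphism of vanishing-cycle complexes $\tilde j_!\,\pr^*\,\varphi_{\sw'}\omega_{X'}\to \varphi_\sw\omega_X$ (adjoint to a globally defined map) is an isomorphism, and verify \emph{this} chart-locally using the regular-section reduction — since "is an isomorphism" is a local condition on a globally defined map, no gluing of the isomorphisms themselves is needed. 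I expect the bookkeeping to match $\delta_H$ with this sheaf-level map (signs, the Euler-class twist that appeared in Proposition \ref{prop:compatible map_fixed pt}, and the normalization of the regular-section reduction) to be the only genuinely fiddly part.
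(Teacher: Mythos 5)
Your proposal is correct and takes essentially the same route as the paper: reduce to showing that a globally defined morphism of vanishing-cycle complexes (namely \eqref{van cyc comp dim red}) is an isomorphism, then verify this \'etale-locally on $Y$ via the local splitting $E\cong\widetilde{\mathcal K}\oplus\widetilde E'$ from the proof of Proposition \ref{condition (ii')} and the regular-section dimensional reduction, Theorem \ref{thm: def dim red reg sec}. Your observation that the gluing issue dissolves once one checks a globally defined morphism locally --- rather than patching local isomorphisms --- is precisely the move the paper makes, with the minor cosmetic difference that the paper first pushes \eqref{van cyc comp dim red} forward along $\pi$ to $Y$ before performing the local check.
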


\begin{proof}
The map $ \delta_H\colon  H^\sT(X',\sw')\to H^\sT(X,\sw)$ is induced by the following chain of maps
\begin{align*}
    H^\sT_{*-2\rk\ker(\pr)}(X', \varphi_{\sw'}\omega_{X'})\xrightarrow{\cong} H^\sT_*(X'',\varphi_{\sw|_{X''}}\tilde j^!\omega_X)\xrightarrow{\cong} H^\sT_*(X,\tilde j_*\varphi_{\sw|_{X''}}\tilde j^!\omega_X)\longrightarrow H^\sT_*(X, \varphi_{\sw}\omega_X),
\end{align*}
where the first two isomorphisms are obvious, the last map is induced by the natural morphism in $\D^b_c(X)$:
\begin{align}\label{van cyc comp dim red}
    \tilde j_*\varphi_{\sw|_{X''}}\tilde j^!\omega_X\longrightarrow \varphi_{\sw}\omega_X.
\end{align}
To show that $\delta_H$ is an isomorphism, it remains to show the pushforward of \eqref{van cyc comp dim red} to $Y$:
\begin{equation}\label{equ on pitj}
    \pi_*\tilde j_*\varphi_{\sw|_{X''}}\tilde j^!\omega_X\longrightarrow \pi_*\varphi_{\sw}\omega_X
\end{equation}
is an isomorphism in $\D^b_c(Y)$. In the proof of Proposition \ref{compatible dim red crit loci}, we know that \'etale locally on $Y$, we have a decomposition 
and a basis:
$$E\cong \widetilde{\cK}\oplus\widetilde{E}', \quad \widetilde{\mathcal K}^\vee\cong \bigoplus_{i=1}^n \mathcal O_Y\cdot k_i, \quad \widetilde{E}'^\vee\cong \bigoplus_{j=1}^m \mathcal O_Y\cdot e_j,$$ 
such that the morphism given by $\{t_i=s(k_i^{\vee})\}_{i=1}^n\colon Y\to \bA^n$ is smooth, where $k_i^\vee$ is dual basis of $k_i$. In this coordinate, 
$$\sw=\sum_{i=1}^n k_i \cdot t_i+\sum_{j=1}^m e_j\cdot s(e_j^\vee)+\phi.$$ 
Let $\pi_1\colon \mathrm{Tot}(E)\to \mathrm{Tot}(\widetilde{E}')$ be the projection whose fibers contribute to the function $\sw$ by 
$\sum_{i=1}^n k_i \cdot t_i$, where $\bigcap_{i=1}^n\{t_i=0\}$ cuts out $Y'\subseteq  Y$ in an \'etale neighbourhood.
Then the deformed dimensional reduction for regular sections (Theorem \ref{thm: def dim red reg sec}) implies that the natural map
\begin{align*}
    \pi_{1*}\tilde j_*\varphi_{\sw|_{X''}}\tilde j^!\omega_X\longrightarrow \pi_{1*}\varphi_{\sw}\omega_X
\end{align*}
is an isomorphism. Pushing forward to $Y$, we obtain \eqref{equ on pitj}.
\end{proof}

\subsection{Compatibility with stable envelopes}
Suppose that $(X,Y,s,\phi)$ and $(X',Y',s',\phi')$ are compatible dimensional reduction data as in the Definition \ref{compatible dim red data}. 
As $X\to Y$ and $X'\to Y'$ are vector bundles projections equivariant under $\sA$, 
there are one-to-one correspondences $$\mathrm{Fix}_\sA(X)\cong \mathrm{Fix}_\sA(Y), \quad \mathrm{Fix}_\sA(X')\cong \mathrm{Fix}_\sA(Y'), $$ 
with isomorphisms given by projections and $\sA$-fixed parts of inverse maps of projections.

The embedding $Y'\hookrightarrow Y$ induces a map 
$$\mathfrak{r}:\mathrm{Fix}_\sA(X')\cong \mathrm{Fix}_\sA(Y')\longrightarrow \mathrm{Fix}_\sA(Y) \cong \mathrm{Fix}_\sA(X),$$ 
which is neither injective nor surjective in general. The map $\mathfrak{r}$ respects the ample partial order $\le$ defined in the Remark \ref{partial order by line bundle}. Namely, $\forall \, F,F'\in \mathrm{Fix}_\sA(X')$,
\begin{align}\label{preserve ample partial order}
    F<F' \Longleftrightarrow \mathfrak{r}(F)<\mathfrak{r}(F').
\end{align}

\begin{Theorem}\label{dim red and stab_coh}
Suppose that $(X,Y,s,\phi)$ and $(X',Y',s',\phi')$ are compatible dimensional reduction data as in the Definition \ref{compatible dim red data}. Fix a normalizer 
$\epsilon\in \{\pm 1\}^{\Fix_\sA(X)}$, and define $\epsilon'\in \{\pm 1\}^{\Fix_\sA(X')}$ by $$\epsilon'_{F}:=\epsilon_{\mathfrak{r}(F)}\cdot (-1)^{\rk N_{Y'/Y}|_F^+}.$$
Fix a chamber $\fC\subseteq  \Lie(\sA)_\bR$, and we assume that cohomological stable envelopes exist for $(X,\sw,\sT,\sA,\fC )$ and $(X',\sw',\sT,\sA,\fC)$. Then we have the following commutative diagram:
\begin{equation}\label{equ on dim red and stab_coh}
\xymatrix{
 H^{\sT}(X^\sA,\sw^\sA) \ar[rr]^{\Stab_{\fC,\epsilon}} & & H^{\sT}( X,\sw) \\
H^{\sT}(X'^\sA,\sw'^\sA)  \ar[rr]^{\Stab_{\fC, \epsilon'}} \ar[u]^{\delta^\sA_H } & & H^{\sT}(X',\sw') \ar[u]_{\delta_H} .
} 
\end{equation}
\end{Theorem}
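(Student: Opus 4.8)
\textbf{Proof proposal for Theorem \ref{dim red and stab_coh}.} The plan is to exploit the uniqueness of cohomological stable envelopes (Proposition \ref{uniqueness of coh stab}) and verify that the composition $\delta_H\circ\Stab_{\fC,\epsilon'}$, preceded by the correspondence $\delta^\sA_H$ on the source, satisfies the defining axioms of $\Stab_{\fC,\epsilon}$ up to the combinatorial data of signs encoded in $\epsilon'$. Concretely, fix $F'\in\Fix_\sA(X')$ with image $F=\mathfrak r(F')\in\Fix_\sA(X)$, and a class $\gamma\in H^{\sT/\sA}(X'^\sA,\sw'^\sA)$ supported on $F'$. First I would identify where $\delta_H\circ\Stab_{\fC,\epsilon'}(\delta^\sA_H)^{-1}$-type argument is not quite available since $\delta^\sA_H$ need not be invertible in general; instead, since $\Stab_{\fC,\epsilon}$ is $H^*_\sT(\pt)$-linear and the fixed loci $X'^\sA$ land under $\mathfrak r$ into $X^\sA$, I would directly show that $\delta_H\circ\Stab_{\fC,\epsilon'}(\gamma)$ equals $\Stab_{\fC,\epsilon}\bigl(\delta^\sA_H(\gamma)\bigr)$ by checking the three axioms of Definition \ref{def of stab coho} for the right-hand side applied to the (possibly non-generating) class $\delta^\sA_H(\gamma)$.

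The support axiom (i) should follow from Proposition \ref{compatible dim red crit loci}: $\delta_H$ is built from $\pr^*$ followed by $\tilde j_*$, and on critical loci $\Crit(\sw)\subseteq X''$ maps isomorphically onto $\Crit(\sw')$, so the support of $\delta_H(\beta)$ is controlled by the support of $\beta$ transported through the closed embedding $\tilde j$. Combined with the fact that $\tilde j(X'')\cap\Attr^f_\fC(F)$ (attracting set in $X$) is essentially $\Attr^f_\fC(F')$ pushed forward — here one uses that the vector bundle projections $X\to Y$, $X'\to Y'$ intertwine attracting sets with those of the bases, and that $j\colon Y'\hookrightarrow Y$ respects the $\sA$-flow — axiom (i) follows. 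The normalization axiom (ii) is the heart of the sign bookkeeping: using Proposition \ref{prop:compatible map_fixed pt}, we have $i^*\circ\delta_H=\delta^\sA_H\circ e^\sT(N_{Y'/Y}|_{X'^\sA}^{\mathrm{mov}})\cdot i'^*$, so restricting $\delta_H\circ\Stab_{\fC,\epsilon'}(\gamma)$ to $F=\mathfrak r(F')$ yields $\delta^\sA_H\bigl(e^\sT(N_{Y'/Y}|_F^{\mathrm{mov}})\cdot\epsilon'_{F'}\cdot e^\sT(N^-_{F'/X'})\cdot\gamma\bigr)$; then the key numerical identity is that $N^-_{F/X}$, as an $\sA$-representation, decomposes as $N^-_{F'/X'}$ plus the $\fC$-repelling part of $N_{Y'/Y}|_F$, while the moving part $N_{Y'/Y}|_F^{\mathrm{mov}}$ contributes its repelling piece to $N^-_{F/X}$ and its attracting piece to $N^+_{F/X}$. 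The Euler class $e^\sT(N_{Y'/Y}|_F^{\mathrm{mov}})=e^\sT(N_{Y'/Y}|_F^+)\cdot e^\sT(N_{Y'/Y}|_F^-)$, and the sign $(-1)^{\rk N_{Y'/Y}|_F^+}$ in the definition of $\epsilon'$ is precisely what is needed to convert $e^\sT(N_{Y'/Y}|_F^+)$ to the correct factor (recalling that $N^+$ and $N^-$ contributions to stable envelope normalizations differ by such a sign because $e^\sT$ of an attracting bundle and its dual differ by $(-1)^{\rk}$). After this cancellation, the restriction becomes $\epsilon_F\cdot e^\sT(N^-_{F/X})\cdot\delta^\sA_H(\gamma)|_F$, matching axiom (ii) for $\Stab_{\fC,\epsilon}$.

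For the degree axiom (iii), take $F_1\neq F=\mathfrak r(F')$ in $\Fix_\sA(X)$; there are two cases. If $F_1$ is not in the image of $\mathfrak r$, then $\delta_H\circ\Stab_{\fC,\epsilon'}(\gamma)|_{F_1}=0$ by the support analysis (the support of $\delta_H(\cdots)$ meets $\Attr^f_\fC(F)$ only along loci coming from $X'$), so the degree bound is vacuous. If $F_1=\mathfrak r(F_1')$ for some $F_1'\in\Fix_\sA(X')$, then using Proposition \ref{prop:compatible map_fixed pt} again, $\delta_H\circ\Stab_{\fC,\epsilon'}(\gamma)|_{F_1}=\delta^\sA_H\bigl(e^\sT(N_{Y'/Y}|_{F_1}^{\mathrm{mov}})\cdot\Stab_{\fC,\epsilon'}(\gamma)|_{F_1'}\bigr)$, and since $\delta^\sA_H$ is $H^*_\sT(\pt)$-linear and does not increase $\deg_\sA$ (it is a pushforward along a map with trivial $\sA$-action on the fibers composed with a flat pullback), we get $\deg_\sA\bigl(\delta_H\circ\Stab_{\fC,\epsilon'}(\gamma)|_{F_1}\bigr)\leqslant\deg_\sA e^\sT(N_{Y'/Y}|_{F_1}^{\mathrm{mov}})+\deg_\sA\Stab_{\fC,\epsilon'}(\gamma)|_{F_1'}<\deg_\sA e^\sT(N_{Y'/Y}|_{F_1}^{\mathrm{mov}})+\deg_\sA e^\sT(N^-_{F_1'/X'})=\deg_\sA e^\sT(N^-_{F_1/X})$, where the last equality uses the same $\sA$-representation decomposition as above (the moving part of $N_{Y'/Y}|_{F_1}$ splits off from $N^-_{F_1/X}$ together with $N^-_{F_1'/X'}$, at the level of $\deg_\sA$). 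Having verified (i)--(iii), uniqueness (Proposition \ref{uniqueness of coh stab}) forces $\delta_H\circ\Stab_{\fC,\epsilon'}(\gamma)=\Stab_{\fC,\epsilon}(\delta^\sA_H(\gamma))$; since classes of the form $\gamma$ supported on a single fixed component span $H^\sT(X'^\sA,\sw'^\sA)$ over $H^*_\sT(\pt)$ and all maps are $H^*_\sT(\pt)$-linear, the diagram \eqref{equ on dim red and stab_coh} commutes. The main obstacle I anticipate is the careful $\sA$-equivariant bookkeeping of how $N_{Y'/Y}$ (moving vs.\ fixed, attracting vs.\ repelling) interacts with the normal bundles $N^\pm_{F/X}$ versus $N^\pm_{F'/X'}$, and verifying that the stated sign $(-1)^{\rk N_{Y'/Y}|_F^+}$ in $\epsilon'$ is exactly the right correction — this requires being precise about the Euler-class sign conventions implicit in Definition \ref{def of stab coho} and the support/excision statements (Proposition \ref{excision for crit coh}) for the closed embedding $\tilde j$ restricted to full attracting sets.
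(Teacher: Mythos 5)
Your strategy is exactly the paper's: define $\mathcal{S}=\delta_H\circ\Stab_{\fC,\epsilon'}$, verify that $\mathcal S(\gamma)$ satisfies the support, normalization, and degree axioms characterizing $\Stab_{\fC,\epsilon}\bigl(\delta^\sA_H(\gamma)\bigr)$, and invoke uniqueness. The support step and the use of Proposition \ref{prop:compatible map_fixed pt} to pull the restriction to fixed points through $\delta_H$ are also as in the paper.

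However, there is a genuine gap in your treatment of the normalization and degree axioms: your decomposition of the negative normal bundle of $\mathfrak r(F)$ in $X$ is incomplete. You assert that
\[
N^-_{\mathfrak r(F)/X}\;\text{``decomposes as''}\;N^-_{F/X'}\;\oplus\;N_{Y'/Y}|_F^-,
\]
but this omits a third summand. Since $X=\mathrm{Tot}_Y(E)$, $X'=\mathrm{Tot}_{Y'}(E')$, and $E|_{Y'}\twoheadrightarrow E'$ has kernel $\ker(\pr)$, the correct identity in $K^\sA(F)$ is
\[
N^-_{\mathfrak r(F)/X}\;=\;N^-_{F/X'}\;+\;N_{Y'/Y}|_F^-\;+\;\ker(\pr)|_F^-\,.
\]
With your truncated decomposition, the excess Euler class $e^\sT(N_{Y'/Y}|_F^{\mathrm{mov}})=e^\sT(N_{Y'/Y}|_F^+)\cdot e^\sT(N_{Y'/Y}|_F^-)$ cannot be absorbed into $e^\sT(N^-_{\mathfrak r(F)/X})$: the factor $e^\sT(N_{Y'/Y}|_F^+)$ has nowhere to go. Your remark that ``$e^\sT$ of an attracting bundle and its dual differ by $(-1)^{\rk}$'' would only close the computation if $N_{Y'/Y}|_F^+$ were dual to $N_{Y'/Y}|_F^-$, which is false in general. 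The ingredient that actually makes both the Euler-class and the rank bookkeeping work is the compatibility condition (ii') of Proposition \ref{condition (ii')}: the isomorphism $\rho\colon\ker(\pr)|_{Z(s')}\xrightarrow{\ \sim\ }N^\vee_{Y'/Y}|_{Z(s')}$, whose $\sA$-fixed consequence gives $N_{Y'/Y}|_F^+\cong(\ker(\pr)|_F^-)^\vee$ and hence $e^\sT(N_{Y'/Y}|_F^+)=(-1)^{\rk N_{Y'/Y}|_F^+}e^\sT(\ker(\pr)|_F^-)$. That is precisely what produces the sign $(-1)^{\rk N_{Y'/Y}|_F^+}$ appearing in $\epsilon'$ and completes $e^\sT(N^-_{\mathfrak r(F)/X})$. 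The same omission makes your rank identity $\rk N_{Y'/Y}|_{F_1}^{\mathrm{mov}}+\rk N^-_{F_1'/X'}=\rk N^-_{F_1/X}$ in the degree axiom unjustified as stated; it is true, but only because $\rk N_{Y'/Y}|_{F_1}^+=\rk\ker(\pr)|_{F_1}^-$, again a consequence of $\rho$ being an isomorphism. You should cite condition (ii') explicitly and carry the $\ker(\pr)$ term through both axiom checks.
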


\begin{proof}
Define $$\mathcal S=\delta_H\circ \Stab_{\fC , \epsilon'}\colon H^{\sT}(X'^\sA,\sw'^\sA) \to H^{\sT}(X,\sw).$$ 
For an arbitrary $F\in \Fix_\sA(X')$, and take an arbitrary $\gamma\in H^{\sT/\sA}(F,\sw'^\sA)$, we need to show that $\mathcal S(\gamma)$ satisfies the axioms (ii) and (iii) in Definition \ref{def of stab coho} and axiom (i') in Remark \ref{weak axiom coh}, namely
\begin{itemize}
\setlength{\parskip}{1ex}
    \item[(1)] $\mathcal S(\gamma)$ is supported on $\Attr^{\le}_\fC(\mathfrak{r}(F))$;
    \item[(2)] $\mathcal S (\gamma)\big|_{\mathfrak{r}(F)} = \epsilon_{\mathfrak{r}(F)}\cdot e^\sT (N_{\mathfrak{r}(F) / X}^-) \cdot \delta^\sA_H(\gamma)$;
    \item[(3)] For any $F'\neq\mathfrak{r}(F)$, the inequality $\deg_\sA \mathcal S(\gamma)\big|_{F'} < \deg_\sA e^\sT (N_{F' / X}^-)$ holds.
\end{itemize}
Here we choose the ample partial order $\le$ as in Remark \ref{partial order by line bundle}.

Denote $\beta=\Stab_{\fC, \epsilon'}(\gamma)$, then $\beta$ is supported on $\Attr^{\le}_\fC(F)$. It follows that $\delta_H(\beta)=\tilde j_*\circ\pr^*(\beta)$ is supported on $\pr^{-1}(\Attr^{\le}_\fC(F))$. Since the complex $\varphi_\sw\omega_X$ is supported on $\Crit(\sw)$, $\delta_H(\beta)$ is supported on $\pr^{-1}(\Attr^{\le}_\fC(F))\cap \Crit(\sw)$. According to Proposition \ref{compatible dim red crit loci}, we have 
\begin{align*}
    \pr^{-1}(\Attr^{\le}_\fC(F))\cap \Crit(\sw)=(\pr|_{\Crit(\sw)})^{-1}(\Attr^{\le}_\fC(F)\cap \Crit(\sw')).
\end{align*}
By Lemma \ref{refined partial order and closed subset}, we have 
$$\Attr^{\le}_\fC(F)\cap \Crit(\sw')=\bigcup_{F\le F'}\Attr_\fC(F'\cap \Crit(\sw'))_{\Crit(\sw')}, $$ 
where the subscript means taking attracting set inside $\Crit(\sw')$. Therefore $\delta_H(\beta)$ is supported on
\begin{align*}
\bigcup_{F\le F'}\left(\pr|_{\Crit(\sw)})^{-1}(\Attr_\fC(F'\cap \Crit(\sw'))_{\Crit(\sw')}\right)&=\bigcup_{F\le F'}\Attr_\fC(\mathfrak{r}(F')\cap \Crit(\sw))_{\Crit(\sw)}\\
\text{\tiny by \eqref{preserve ample partial order}}\quad &=\bigcup_{\mathfrak{r}(F)\le \mathfrak{r}(F')}\Attr_\fC(\mathfrak{r}(F')\cap \Crit(\sw))_{\Crit(\sw)}\\
\text{\tiny since $F''\cap \Crit(\sw)\neq \emptyset\Longleftrightarrow F''=\mathfrak{r}(F')$ for some $F'$}\quad &=\bigcup_{\mathfrak{r}(F)\le F''}\Attr_\fC(F''\cap \Crit(\sw))_{\Crit(\sw)}\\
\text{\tiny by Lemma \ref{refined partial order and closed subset}}\quad &=\Attr^{\le}_\fC(\mathfrak{r}(F))\cap \Crit(\sw).
\end{align*}
In particular, $\delta_H(\beta)$ is supported on $\Attr^{\le}_\fC(\mathfrak{r}(F))$. This proves (1).

To show (2) and (3), we notice that
\begin{align*}
    \mathcal S (\gamma)\big|_{X^\sA} =i^*\circ \delta_H (\beta)=\delta^\sA_H\circ e^\sT\left(N_{Y'/Y}|_{X'^\sA}^{\mathrm{mov}}\right)\cdot i'^*(\beta)=\delta^\sA_H\left( e^\sT\left(N_{Y'/Y}|_{X'^\sA}^{\mathrm{mov}}\right)\cdot \Stab_{\fC, \epsilon'}(\gamma)|_{X'^\sA}\right),
\end{align*}
where the second equality follows from Proposition \ref{prop:compatible map_fixed pt}. Then
\begin{align*}
    \mathcal S (\gamma)\big|_{\mathfrak{r}(F)}=\delta^\sA_H\left( e^\sT\left(N_{Y'/Y}|_{F}^{\mathrm{mov}}\right)\cdot \Stab_{\fC, \epsilon'}(\gamma)|_{F}\right)=\delta^\sA_H\left( \epsilon'_F\cdot e^\sT\left(N_{Y'/Y}|_{F}^{\mathrm{mov}}\right)\cdot e^\sT(N_{F/X'}^-)\cdot\gamma\right).
\end{align*}
According to Proposition \ref{condition (ii')}, we have 
$$N_{Y'/Y}|_{Z(s')^\sA}^{+}=\left(\ker(\pr)|_{Z(s')^\sA}^{-}\right)^\vee \in K^\sT\left(Z(s')^\sA\right). $$ 
Via pullback along $\pi'^{\sA}\colon X'^{\sA}\to Y'^{\sA}$, we have 
\begin{equation}\label{equ on ny'y}e^\sT\left(\pi'^{\sA*}N_{Y'/Y}|_{Z(s')^\sA}^{+}\right)=(-1)^{\rk N_{Y'/Y}|_{X'^\sA}^{+}}e^\sT\left(\pi'^{\sA*}\ker(\pr)|_{Z(s')^\sA}^{-}\right)\in K^\sT\left((\pi'^{\sA})^{-1}(Z(s')^\sA)\right). \end{equation}
As we have inclusion
$$ \Crit(\sw'^{\sA})\subseteq  (\pi'^{\sA})^{-1}(Z(s')^\sA), $$
we can restrict \eqref{equ on ny'y} to $\Crit(\sw'^{\sA})$ and the restriction 
\begin{equation}\label{equ on ny'y2}e^\sT\left(\pi'^{\sA*}N_{Y'/Y}|_{Z(s')^\sA}^{+}\right)\Big|_{\Crit(\sw'^{\sA})}=(-1)^{\rk N_{Y'/Y}|_{X'^\sA}^{+}}e^\sT\left(\pi'^{\sA*}\ker(\pr)|_{Z(s')^\sA}^{-}\right)\Big|_{\Crit(\sw'^{\sA})} \end{equation}
acts on $H^\sT(X'^\sA,\sw'^\sA)$. 
Note that 
\begin{equation}\label{equ on ny'y3} \left(\pi'^{\sA*}N_{Y'/Y}|_{Z(s')^\sA}^{+}\right)\Big|_{\Crit(\sw'^{\sA})}\cong \left(\Crit(\sw'^{\sA}) \hookrightarrow X'^{\sA} \right)^*\left(\pi'^{\sA*}N_{Y'/Y}|_{Y'^{\sA}}^{+} \right), \end{equation}
\begin{equation}\label{equ on ny'y4} \left(\pi'^{\sA*}\ker(\pr)|_{Z(s')^\sA}^{-}\right)\Big|_{\Crit(\sw'^{\sA})}\cong \left(\Crit(\sw'^{\sA}) \hookrightarrow X'^{\sA} \right)^*\left(\pi'^{\sA*}\ker(\pr)|_{Z(s')^\sA}^{-}\right). \end{equation}
For simplicity, we write   
$$N_{Y'/Y}|_{X'^\sA}^{+}=\left(\pi'^{\sA*}N_{Y'/Y}|_{Y'^{\sA}}^{+} \right), \quad \ker(\pr)|_{X'^\sA}^{-}=\left(\pi'^{\sA*}\ker(\pr)|_{Z(s')^\sA}^{-}\right) $$
and omit the pullback along $\Crit(\sw'^{\sA}) \hookrightarrow X'^{\sA}$ in \eqref{equ on ny'y3}, \eqref{equ on ny'y4}. Then \eqref{equ on ny'y2} becomes
\begin{equation}\label{equ on enyy}e^\sT\left(N_{Y'/Y}|_{X'^\sA}^{+}\right)=(-1)^{\rk N_{Y'/Y}|_{X'^\sA}^{+}}e^\sT\left(\ker(\pr)|_{X'^\sA}^{-}\right), \end{equation}
which acts on $H^\sT(X'^\sA,\sw'^\sA)$. Consequently, we have
\begin{align*}
    \mathcal S (\gamma)\big|_{\mathfrak{r}(F)}&=\delta^\sA_H\left( (-1)^{\rk N_{Y'/Y}|_{F}^+}\cdot\epsilon'_F\cdot e^\sT\left(\ker(\pr)|_{F}^{-}+N_{Y'/Y}|_{F}^{-}\right)\cdot e^\sT(N_{F/X'}^-)\cdot\gamma\right)\\
    &=\epsilon_{\mathfrak{r}(F)}\cdot e^\sT(N_{\mathfrak{r}(F)/X}^-)\cdot \delta^\sA_H(\gamma).
\end{align*}
This proves (2). 

Finally, for any $F'\neq\mathfrak{r}(F)$, 
\begin{align*}
\mathcal S (\gamma)\big|_{F'} =\delta^\sA_H\left( e^\sT\left(N_{Y'/Y}|_{\mathfrak{r}^{-1}(F')}^{\mathrm{mov}}\right)\cdot \Stab_{\fC, \epsilon'}(\gamma)|_{\mathfrak{r}^{-1}(F')}\right).
\end{align*}
Suppose that $\mathfrak{r}^{-1}(F')=\{F_1,\cdots,F_n\}$, then $F_i\neq F$ for all $i$. The same argument as above shows that
\begin{align*}
e^\sT\left(N_{Y'/Y}|_{F_i}^{\mathrm{mov}}\right)\cdot \Stab_{\fC, \epsilon'}(\gamma)|_{F_i}=\pm e^\sT\left(\ker(\pr)|_{F_i}^{-}+N_{Y'/Y}|_{F_i}^{-}\right)\cdot\Stab_{\fC, \epsilon'}(\gamma)|_{F_i}
\end{align*}
Since $\delta^\sA_H$ preserves $\deg_\sA$, we have
\begin{align*}
    \deg_\sA \delta^\sA_H\left(e^\sT\left(N_{Y'/Y}|_{F_i}^{\mathrm{mov}}\right)\cdot \Stab_{\fC, \epsilon'}(\gamma)|_{F_i}\right)&=\deg_\sA e^\sT\left(\ker(\pr)|_{F_i}^{-}+N_{Y'/Y}|_{F_i}^{-}\right)+\deg_\sA \Stab_{\fC, \epsilon'}(\gamma)|_{F_i}\\
    & <\deg_\sA e^\sT\left(\ker(\pr)|_{F_i}^{-}+N_{Y'/Y}|_{F_i}^{-}\right)+\deg_\sA e^\sT(N_{F_i/X'}^-)\\
    &=\deg_\sA e^\sT(N_{F'/X}^-),
\end{align*}
where the second inequality follows from the axiom (iii). Therefore
\begin{align*}
    \deg_\sA\mathcal S (\gamma)\big|_{F'} \leqslant \max_{1\leqslant i\leqslant n}\deg_\sA\delta^\sA_H\left(e^\sT\left(N_{Y'/Y}|_{F_i}^{\mathrm{mov}}\right)\cdot \Stab_{\fC, \epsilon'}(\gamma)|_{F_i}\right)<\deg_\sA e^\sT(N_{F'/X}^-).  
\end{align*}
This proves (3).
\end{proof}

\begin{Theorem}\label{dim red and stab_K}
Suppose that $(X,Y,s,\phi)$ and $(X',Y',s',\phi')$ are compatible dimensional reduction data as in the Definition \ref{compatible dim red data}. Fix a normalizer $\mathcal E\in \pm \Pic_\sT(X^\sA)$, and define $\mathcal E'\in \pm \Pic_\sT(X'^\sA)$ by
$$\mathcal E'|_F:=\mathcal E|_F\otimes (-1)^{\rk N_{Y'/Y}|_F^+}\det(N_{Y'/Y}|_F^+).$$
Fix a generic slope $\mathsf s\in \Pic_\sA(Y)\otimes_\bZ \bR$, and define $\mathsf s'\in \Pic_\sA(Y')\otimes_\bZ \bR$ by
$$\mathsf s':=\mathsf s|_{Y'}\otimes \det(N_{Y'/Y})^{1/2}.$$
Fix a chamber $\fC\subseteq  \Lie(\sA)_\bR$, and assume that $K$-theoretic stable envelopes exist for $(X,\sw,\sT,\sA,\fC ,\mathsf s)$ and $(X',\sw',\sT,\sA,\fC,\mathsf s')$. 

Assume moreover that $\ker(\pr)=N^\vee_{Y'/Y}$ in $K^{\sT}(Y')$.
Then we have the following commutative diagram:
\begin{equation}\label{equ on dim red and stab_k}
\xymatrix{
  K^{\sT}(X^\sA,\sw^\sA) \ar[rr]^{\Stab^{\mathsf s}_{\fC,\mathcal E}} & & K^{\sT}( X,\sw) \\
K^{\sT}(X'^\sA,\sw'^\sA)  \ar[rr]^{\Stab^{\mathsf s'}_{\fC, \mathcal E'}} \ar[u]^{\delta^\sA_K } & & K^{\sT}(X',\sw') \ar[u]_{\delta_K}.
} 
\end{equation}
\end{Theorem}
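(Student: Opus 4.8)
The $K$-theoretic statement (Theorem \ref{dim red and stab_K}) will be proven along the lines of the cohomological proof of Theorem \ref{dim red and stab_coh}, with the two extra hypotheses ($K$-theoretic deformed dimensional reduction, via Proposition \ref{prop on def dim red}, is implicit through $\ker(\pr)=N^\vee_{Y'/Y}$; and $\mathsf s$ generic) supplying what makes the $K$-theoretic degree-bound argument go through. Set $\cS := \delta_K\circ \Stab^{\mathsf s'}_{\fC,\mathcal E'}\colon K^\sT(X'^\sA,\sw'^\sA)\to K^\sT(X,\sw)$. Fix $F\in\Fix_\sA(X')$ and $\gamma\in K^{\sT/\sA}(F,\sw'^\sA)$; by the uniqueness of $K$-theoretic stable envelopes with respect to a refinement of $\preceq$ (Remark \ref{weak axiom K}), together with Proposition \ref{uniqueness of K stab} applied after twisting by $\mathcal E$ (Definition \ref{def of normalizer k}), it suffices to verify that $\cS(\gamma)$ satisfies: (1) support on $\Attr^{\le}_\fC(\mathfrak r(F))$ for the ample partial order $\le$ of Remark \ref{partial order by line bundle}; (2) the normalization $\cS(\gamma)|_{\mathfrak r(F)} = (\mathcal E|_{\mathfrak r(F)})\cdot e^\sT_K(N^-_{\mathfrak r(F)/X})\cdot\delta^\sA_K(\gamma)$ (up to the chosen twist); (3) the strict polytope inclusion $\deg_\sA \cS(\gamma)|_{F'}\subsetneq \deg_\sA e^\sT_K(N^-_{F'/X}) + \mathrm{shift}_{F'}-\mathrm{shift}_{\mathfrak r(F)}$ for $F'\ne\mathfrak r(F)$.

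\textbf{Key steps.} For (1), the argument is verbatim the cohomological one: $\Stab^{\mathsf s'}_{\fC,\mathcal E'}(\gamma)$ is supported on $\Attr^{\le}_\fC(F)$, hence $\delta_K(\gamma) = \tilde j_*\pr^*(\cdots)$ is supported on $\pr^{-1}(\Attr^{\le}_\fC(F))$; intersecting with $\Crit(\sw)$ and using Proposition \ref{compatible dim red crit loci} ($\pr$ identifies $\Crit(\sw)\cong\Crit(\sw')$) together with Lemma \ref{refined partial order and closed subset}, Remark \ref{rmk: excision for crit coh}'s $K$-analog (Proposition \ref{excision for crit K}), and the order-compatibility \eqref{preserve ample partial order}, one rewrites the support as $\Attr^{\le}_\fC(\mathfrak r(F))\cap\Crit(\sw)$. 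For (2) and (3), the crucial input is the commuting square of Proposition \ref{prop:compatible map_fixed pt} in its $K$-theoretic form: $i^*\circ\delta_K = \delta^\sA_K\circ e^\sT_K(N_{Y'/Y}|_{X'^\sA}^{\mathrm{mov}})\cdot i'^*$. Thus $\cS(\gamma)|_{X^\sA} = \delta^\sA_K\big(e^\sT_K(N_{Y'/Y}|_{X'^\sA}^{\mathrm{mov}})\cdot \Stab^{\mathsf s'}_{\fC,\mathcal E'}(\gamma)|_{X'^\sA}\big)$. Restricting to $\mathfrak r(F)$ and using axiom (ii) of $\Stab^{\mathsf s'}$, one gets $e^\sT_K(N^+_{Y'/Y}|_F + N^-_{Y'/Y}|_F)\cdot e^\sT_K(N^-_{F/X'})\cdot(\mathcal E'|_F)\cdot\gamma$ up to a sign. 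Here the hypothesis $\ker(\pr)=N^\vee_{Y'/Y}$ (applied on $Z(s')$, equivalently on $\Crit(\sw')$, via the $K$-theoretic version of Proposition \ref{condition (ii')}) gives $e^\sT_K(N^+_{Y'/Y}|_F) = (-1)^{\rk N^+_{Y'/Y}|_F}\det(N^+_{Y'/Y}|_F)\cdot e^\sT_K(\ker(\pr)|_F^-)$ — this is the $K$-theoretic replacement of \eqref{equ on enyy}, and it is exactly the reason $\mathcal E'$ is defined with the extra factor $(-1)^{\rk N^+_{Y'/Y}|_F}\det(N^+_{Y'/Y}|_F)$. Combining, the $\det$ and sign factors cancel into $\mathcal E|_{\mathfrak r(F)}$, and $N^-_{F/X'}\oplus N^-_{Y'/Y}|_F\oplus\ker(\pr)|_F^- = N^-_{\mathfrak r(F)/X}$ in $K^\sT$, yielding (2). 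For (3): by the same square, $\cS(\gamma)|_{F'} = \delta^\sA_K\big(\sum_i e^\sT_K(N_{Y'/Y}|_{F_i}^{\mathrm{mov}})\cdot\Stab^{\mathsf s'}_{\fC,\mathcal E'}(\gamma)|_{F_i}\big)$ over $\mathfrak r^{-1}(F') = \{F_1,\dots,F_n\}$ (each $F_i\ne F$); since $\delta^\sA_K$ preserves Newton polytopes $\deg_\sA$, one reduces to: for each $i$, $\deg_\sA\big(e^\sT_K(N_{Y'/Y}|_{F_i}^{\mathrm{mov}})\cdot\Stab^{\mathsf s'}_{\fC,\mathcal E'}(\gamma)|_{F_i}\big)\subsetneq \deg_\sA e^\sT_K(N^-_{F'/X}) + \mathrm{shift}_{F'} - \mathrm{shift}_{\mathfrak r(F)}$. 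Using the $K$-analog of \eqref{equ on enyy}, the left side equals $\deg_\sA\big(e^\sT_K(\ker(\pr)|_{F_i}^- + N_{Y'/Y}|_{F_i}^-)\big) + \deg_\sA\Stab^{\mathsf s'}_{\fC,\mathcal E'}(\gamma)|_{F_i}$; plug in axiom (iii) of $\Stab^{\mathsf s'}$ (strict because $\mathsf s'$ is generic — one must check genericity of $\mathsf s'$ follows from that of $\mathsf s$, since the relevant $\sA$-weight differences are the same, $N_{Y'/Y}$ being pulled back from $Y$), compute that $\mathrm{shift}^{(X')}_{F_i} - \mathrm{shift}^{(X')}_F$ plus the $\det N^-_{Y'/Y}|_{F_i}$ correction matches $\mathrm{shift}^{(X)}_{F'} - \mathrm{shift}^{(X)}_{\mathfrak r(F)}$ (using $\mathsf s' = \mathsf s|_{Y'}\otimes\det(N_{Y'/Y})^{1/2}$ and $\mathfrak r^* P_X$-type identities on half-determinants of normal bundles), and observe $\ker(\pr)|_{F_i}^-\oplus N^-_{Y'/Y}|_{F_i}\oplus N^-_{F_i/X'} = N^-_{F'/X}$. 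Taking the convex hull over $i$ (a finite union of subpolytopes, each strictly contained) gives the strict inclusion (3).

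\textbf{Main obstacle.} The genuinely $K$-theoretic difficulty — and the reason one cannot simply copy the cohomological proof — is bookkeeping of the half-determinant shifts. In cohomology the degree bound is a numerical inequality on polynomial degrees and the normalizers are just signs; in $K$-theory, axiom (iii) is a strict inclusion of Newton polytopes centered at $\mathrm{shift}_{F} = \mathrm{weight}_\sA(\det(N^-_{F/X})^{1/2}\otimes\mathsf s|_F)$, and one must track how this half-determinant behaves under the decomposition $N^-_{\mathfrak r(F)/X} = N^-_{F/X'}\oplus N^-_{Y'/Y}|_F\oplus \ker(\pr)|_F^-$ together with the substitution $\mathsf s'=\mathsf s|_{Y'}\otimes\det(N_{Y'/Y})^{1/2}$. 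Concretely, one needs the identity (in $\mathrm{weight}_\sA$ of $\Pic_\sA\otimes\bR$)
\[
\mathrm{shift}^{(X)}_{\mathfrak r(F)} \;=\; \mathrm{shift}^{(X')}_F \;+\; \mathrm{weight}_\sA\!\big(\det(N^-_{Y'/Y}|_F)\big) \;+\; \tfrac12\,\mathrm{weight}_\sA\!\big(\det(N^{\mathrm{mov}}_{Y'/Y}|_F)\big)\;-\;(\text{correction from }\ker(\pr))
\]
and its analog at $F_i$, so that everything cancels in the difference $\mathrm{shift}_{F'}-\mathrm{shift}_{\mathfrak r(F)}$. This is where the hypothesis $\ker(\pr)=N^\vee_{Y'/Y}$ (so $\ker(\pr)^- = (N^+_{Y'/Y})^\vee$, i.e.\ $\det\ker(\pr)^- = \det(N^+_{Y'/Y})^{-1}$, matching the $(-1)^{\rk}\det$ factor built into $\mathcal E'$ and $\mathsf s'$) is used in an essential and non-formal way; verifying that all these weights balance is the crux of the proof. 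Once the shift arithmetic is pinned down, items (1)–(3) follow and uniqueness (Proposition \ref{uniqueness of K stab}, Remark \ref{weak axiom K}) closes the argument, exactly as in the cohomological case.
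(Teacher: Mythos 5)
Your plan — define $\cS=\delta_K\circ\Stab^{\mathsf s'}_{\fC,\mathcal E'}$, verify support, normalization and degree axioms, then invoke uniqueness — is exactly the paper's proof, and the structural ideas (Proposition \ref{prop:compatible map_fixed pt} for the excess Euler factor, the $K$-theoretic version of the identity \eqref{equ on enyy} coming from $\ker(\pr)=N^\vee_{Y'/Y}$, cancellation against $\mathcal E'$ and $\mathsf s'$) are all correct. However, there are two concrete slips precisely at the spots you single out as the crux.

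\textbf{Sign of the determinant.} The $K$-theoretic replacement of \eqref{equ on enyy} should read
$e^\sT_K(N^+_{Y'/Y}|_F)=(-1)^{\rk N^+_{Y'/Y}|_F}\det(N^+_{Y'/Y}|_F)^{-1}\, e^\sT_K(\ker(\pr)|_F^-)$,
with $\det(N^+)^{-1}$, not $\det(N^+)$ as you wrote. Indeed for a weight $a$ one has $1-a=(-a)(1-a^{-1})$, so dualizing costs a factor $(-1)^{\rk}\det^{-1}$, not $\det$. As written, your factor would combine with the one built into $\mathcal E'|_F=\mathcal E|_F\otimes(-1)^{\rk}\det(N^+_{Y'/Y}|_F)$ to give $\det(N^+)^2$ rather than cancelling; it is the $\det^{-1}$ that makes the $\det$'s cancel and recover $\mathcal E|_{\mathfrak r(F)}\cdot e^\sT_K(N^-_{\mathfrak r(F)/X})$.

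\textbf{Strictness after taking the convex hull.} You conclude (3) by saying that since each $\deg_\sA(\cdots)|_{F_i}$ is a strict subpolytope of the target, the convex hull over $i\in\{1,\dots,n\}$ is again strict. This does not follow: a convex hull of proper subpolytopes of a polytope $Q$ can equal $Q$ (e.g.\ $Q=[0,2]$ with the two endpoints). The paper sidesteps this by working only with weak inclusions $\subseteq$ throughout the chain of estimates, arriving at
$\deg_\sA\cS(\gamma)|_{F'}\subseteq\deg_\sA e^\sT_K(N^-_{F'/X})+\mathrm{shift}_{F'}-\mathrm{shift}_{\mathfrak r(F)}+\wt_\sA\mathcal E|_{\mathfrak r(F)}$,
and only then upgrading to $\subsetneq$ by the genericity of $\mathsf s$ via Remark \ref{rmk: generic slope}. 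Your parenthetical about genericity of $\mathsf s'$ is therefore aimed at the wrong place: strictness of axiom (iii) for $\Stab^{\mathsf s'}$ is automatic once that stable envelope is assumed to exist (it is part of Definition \ref{def of stab k}), whereas what is actually needed is the genericity of $\mathsf s$ to force strictness of the final, convex-hulled inclusion. With these two corrections the argument is complete and is the paper's proof.
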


\begin{proof}
Define $$\mathcal S=\delta_K\circ \Stab^{\mathsf s'}_{\fC, \mathcal E'}\colon K^{\sT}(X'^\sA,\sw'^\sA) \to K^{\sT}(X,\sw).$$ 
For an arbitrary $F\in \Fix_\sA(X')$, and take an arbitrary $\gamma\in K^{\sT/\sA}(F,\sw'^\sA)$, we need to show that $\mathcal S(\gamma)$ satisfies the axioms (ii) and (iii) in Definition \ref{def of stab k} and axiom (i') in Remark \ref{weak axiom K}, namely
\begin{itemize}
\setlength{\parskip}{1ex}
    \item[(1)] $\mathcal S(\gamma)$ is supported on $\Attr^{\le}_\fC(\mathfrak{r}(F))$;
    \item[(2)] $\mathcal S (\gamma)\big|_{\mathfrak{r}(F)} = \mathcal E|_{\mathfrak{r}(F)}\cdot\ e^\sT_K (N_{\mathfrak{r}(F) / X}^-) \cdot \delta^\sA_K(\gamma)$;
    \item[(3)] For any $F'\neq \mathfrak{r}(F)$, there is a strict inclusion of polytopes
    \begin{align*}
        \deg_\sA \mathcal S(\gamma)\big|_{F'} \subsetneq  \deg_\sA e^\sT_K (N_{F' / X}^-)&+\mathrm{weight}_\sA\left(\det(N_{F'/X}^-)^{1/2}\otimes\mathsf s|_{F'}\right)-\mathrm{weight}_\sA\left(\det(N_{\mathfrak{r}(F)/X}^-)^{1/2}\otimes\mathsf s|_{\mathfrak{r}(F)}\right)\\
        &+\mathrm{weight}_\sA \mathcal E|_{\mathfrak{r}(F)}.
    \end{align*}
\end{itemize}
Here we choose the ample partial order $\le$ as in Remark \ref{partial order by line bundle}.

Denote $\beta= \Stab^{\mathsf s'}_{\fC, \mathcal E'}(\gamma)$, then $\beta$ is supported on $\Attr^{\le}_\fC(F)$. It follows that $\delta_K(\beta)=\tilde j_*\circ\pr^*(\beta)$ is supported on $\pr^{-1}(\Attr^{\le}_\fC(F))$. Since the critical $K$-theory is supported on $\Crit(\sw)$, $\delta_K(\beta)$ is supported on $\pr^{-1}(\Attr^{\le}_\fC(F))\cap \Crit(\sw)$. The same argument as in the proof of Theorem \ref{dim red and stab_coh} shows that
\begin{align*}
    \pr^{-1}(\Attr^{\le}_\fC(F))\cap \Crit(\sw)=\Attr^{\le}_\fC(\mathfrak{r}(F))\cap \Crit(\sw).
\end{align*}
Thus $\delta_K(\beta)$ is supported on $\Attr^{\le}_\fC(\mathfrak{r}(F))$. This proves (1).

To show (2) and (3), we notice that
\begin{align*}
    \mathcal S (\gamma)\big|_{X^\sA} =i^*\circ \delta_K (\beta)=\delta^\sA_K\circ e^\sT_K\left(N_{Y'/Y}|_{X'^\sA}^{\mathrm{mov}}\right)\cdot i'^*(\beta)=\delta^\sA_K\left( e^\sT_K\left(N_{Y'/Y}|_{X'^\sA}^{\mathrm{mov}}\right)\cdot  \Stab^{\mathsf s'}_{\fC, \mathcal E'}(\gamma)|_{X'^\sA}\right),
\end{align*}
where the second equality follows from Proposition \ref{prop:compatible map_fixed pt}. Then
\begin{align*}
    \mathcal S (\gamma)\big|_{\mathfrak{r}(F)}&=\delta^\sA_K\left( e^\sT_K\left(N_{Y'/Y}|_{F}^{\mathrm{mov}}\right)\cdot \Stab^{\mathsf s'}_{\fC, \mathcal E'}(\gamma)|_{F}\right)\\
    &=\delta^\sA_K\left( \mathcal E'|_{F}\cdot  e^\sT_K\left(N_{Y'/Y}|_{F}^{\mathrm{mov}}\right)\cdot e^\sT_K(N_{F/X'}^-)\cdot\gamma\right).
\end{align*}
Similarly as \eqref{equ on enyy}, we have 
$$e^\sT_K(N_{Y'/Y}|_{X'^\sA}^{+})=(-1)^{\rk N_{Y'/Y}|_{X'^\sA}^{+}}\det(N_{Y'/Y}|_{X'^\sA}^{+})^{-1} e^\sT_K(\ker(\pr)|_{X'^\sA}^{-}).$$
Therefore
\begin{align*}
    \mathcal S (\gamma)\big|_{\mathfrak{r}(F)}&=\delta^\sA_K\left((-1)^{\rk N_{Y'/Y}|_{F}^+} \det(N_{Y'/Y}|_{F}^+)^{-1}\mathcal E'|_{F}\cdot e^\sT_K\left(\ker(\pr)|_{F}^{-}+N_{Y'/Y}|_{F}^{-}\right)\cdot e^\sT_K(N_{F/X'}^-)\cdot\gamma\right)\\
    &=\mathcal E|_{\mathfrak{r}(F)}\cdot e^\sT_K(N_{\mathfrak{r}(F)/X}^-)\cdot \delta^\sA_K(\gamma).
\end{align*}
This proves (2). 

Finally, for any $F'\neq\mathfrak{r}(F)$, 
\begin{align*}
\mathcal S (\gamma)\big|_{F'} =\delta^\sA_K\left( e^\sT_K\left(N_{Y'/Y}|_{\mathfrak{r}^{-1}(F')}^{\mathrm{mov}}\right)\cdot \Stab^{\mathsf s'}_{\fC, \mathcal E'}(\gamma)|_{\mathfrak{r}^{-1}(F')}\right).
\end{align*}
Suppose that $\mathfrak{r}^{-1}(F')=\{F_1,\cdots,F_n\}$, then $F_i\neq F$ for all $i$. The same argument as above shows that
\begin{align*}
e^\sT_K\left(N_{Y'/Y}|_{F_i}^{\mathrm{mov}}\right)\cdot \Stab^{\mathsf s'}_{\fC, \mathcal E'}(\gamma)|_{F_i}=\pm \det(N_{Y'/Y}|_{F_i}^{+})^{-1}e^\sT_K\left(\ker(\pr)|_{F_i}^{-}+N_{Y'/Y}|_{F_i}^{-}\right)\cdot\Stab^{\mathsf s'}_{\fC, \mathcal E'}(\gamma)|_{F_i}.
\end{align*}
Since $\delta^\sA_K$ preserves $\deg_\sA$, we have
\begin{align*}
\deg_\sA \delta^\sA_K&\left(e^\sT_K\left(N_{Y'/Y}|_{F_i}^{\mathrm{mov}}\right)\cdot \Stab^{\mathsf s'}_{\fC, \mathcal E'}(\gamma)|_{F_i}\right)\\
&\subseteq  \text{Convex hull}\left[\deg_\sA\left(\det(N_{Y'/Y}|_{F_i}^{+})^{-1}e^\sT_K\left(\ker(\pr)|_{F_i}^{-}+N_{Y'/Y}|_{F_i}^{-}\right)\right) +\deg_\sA e^\sT_K(N^-_{F_i/X'}) \right]\\
&\qquad +\wt_\sA(\det(N_{F_i/X'}^-)^{1/2}\otimes\mathsf s'|_{F_i})-\wt_\sA(\det(N_{F/X'}^-)^{1/2}\otimes\mathsf s'|_{F})+\wt_\sA\mathcal E'|_{F}\\
&=\deg_\sA e^\sT_K(N_{F'/X}^-)+\mathrm{weight}_\sA\left(\det(N_{F'/X}^-)^{1/2}\otimes\mathsf s|_{F'}\right)-\mathrm{weight}_\sA\left(\det(N_{\mathfrak{r}(F)/X}^-)^{1/2}\otimes\mathsf s|_{\mathfrak{r}(F)}\right)\\
&\qquad +\mathrm{weight}_\sA \mathcal E|_{\mathfrak{r}(F)}.
\end{align*}
Therefore
\begin{align*}
    &\deg_\sA\mathcal S (\gamma)\big|_{F'} \subseteq  \text{Convex hull}\left[\sum_{i=1}^n\deg_\sA \delta^\sA_K\left(e^\sT_K\left(N_{Y'/Y}|_{F_i}^{\mathrm{mov}}\right)\cdot \Stab^{\mathsf s'}_{\fC, \mathcal E'}(\gamma)|_{F_i}\right)\right]\\
    &\subseteq  \deg_\sA e^\sT_K(N_{F'/X}^-)+\mathrm{weight}_\sA\left(\det(N_{F'/X}^-)^{1/2}\otimes\mathsf s|_{F'}\right)-\mathrm{weight}_\sA\left(\det(N_{\mathfrak{r}(F)/X}^-)^{1/2}\otimes\mathsf s|_{\mathfrak{r}(F)}\right)\\
    &\qquad+\mathrm{weight}_\sA \mathcal E|_{\mathfrak{r}(F)}.  
\end{align*}
Since $\mathsf s$ is generic, the above inclusion must be strict by Remark \ref{rmk: generic slope}. This proves (3).
\end{proof}

\subsection{Examples of compatible dimensional reduction data}\label{sect on cpt deform dim red}

\begin{Example}\label{ex: dim red}
Suppose that $(X,Y,s,\phi)$ is a dimensional reduction datum such that $s$ is regular.
Then $Z(s)$ is a smooth subvariety in $Y$ with codimension equals to $\rk E$, and $Z(s)\cong Z^\der(s)$. It is easy to see that $(X,Y,s,\phi)$ and 
$\left(Z(s),Z(s),0,\phi|_{Z(s)}\right)$ are compatible dimensional reduction data (Definition \ref{compatible dim red data}).
\end{Example}
In particular, starting with a quiver, the associated tripled quivers with canonical cubic potentials and Nakajima varieties of the double quiver 
provide such examples. 
\begin{Example}\label{ex: doubled vs tripled}
Given a quiver $Q=(Q_0,Q_1)$, we define the \textit{doubled quiver} to be $\overline{Q}:=(Q_0,Q_1\sqcup Q_1^*)$, where $Q_1^*$ is the same as $Q_1$ but with reversed arrow direction. We define the \textit{tripled quiver} $\widetilde{Q}:=(Q_0,Q_1\sqcup Q_1^*\sqcup Q_0)$, which adds one edge loop to each node on top of $\overline{Q}$. 

Fix $\mathbf{v},\mathbf{d}\in \bN^{Q_0}$, we consider two \textit{symmetric quiver varieties} (see Definition \ref{def of sym quiver} for details of construction):
\begin{enumerate}
    \item $\cM_\theta(\overline{Q},\mathbf{v},\mathbf{d})$ associated with the doubled quiver $\overline{Q}$;
    \item $\cM_\theta(\widetilde{Q},\mathbf{v},\mathbf{d})$ associated with the tripled quiver $\widetilde{Q}$.
\end{enumerate}
Assume $\theta$ is generic, so that $\theta$-semistable loci coincide with $\theta$-stable loci: 
$$R(\overline{Q},\mathbf{v},\mathbf{d})^{ss}=R(\overline{Q},\mathbf{v},\mathbf{d})^{s}, \quad R(\widetilde{Q},\mathbf{v},\mathbf{d})^{ss}=R(\widetilde{Q},\mathbf{v},\mathbf{d})^{s}. $$  Let $G=\prod_{i\in Q_0}\GL(\mathbf v_i)$ be the \textit{gauge group}, then the quotient map 
$$R(\overline{Q},\mathbf{v},\mathbf{d})^{s}\to \cM_\theta(\overline{Q},\mathbf{v},\mathbf{d})=R(\overline{Q},\mathbf{v},\mathbf{d})^{s}/G$$ is a principal $G$-bundle.
Denote the associated adjoint bundle to be $\mathcal G\to \cM_\theta(\overline{Q},\mathbf{v},\mathbf{d})$. Then the \textit{moment map} for $G\curvearrowright R(\overline{Q},\mathbf{v},\mathbf{d})$ descends to a section $\mu\in \Gamma\left(\cM_\theta(\overline{Q},\mathbf{v},\mathbf{d}),\mathcal G^\vee\right)$. The total space $\mathrm{Tot}(\mathcal G)$ is naturally identified with an open subscheme $\overset{\bullet}{\cM}_\theta(\widetilde{Q},\mathbf{v},\mathbf{d})\subseteq  {\cM}_\theta(\widetilde{Q},\mathbf{v},\mathbf{d})$ consisting of $G$-equivalence classes in $R(\widetilde{Q},\mathbf{v},\mathbf{d})$, which are $\theta$-stable when restricted to $\overline{Q}$. 
The zero locus of $\mu$:
$$\cN_{\theta}(\overline{Q},\mathbf{v},\mathbf{d}):=Z(\mu)$$
is the \textit{Nakajima quiver variety} \cite{Nak1,Nak2} (which we assume to be nonempty).  
Note that $\cN_{\theta}(\overline{Q},\mathbf{v},\mathbf{d})$ is a smooth subvariety in $\cM_{\theta}(\overline{Q},\mathbf{v},\mathbf d)$ with codimension equals to $\mathcal G$, and $Z(\mu)=Z^\der(\mu)$. 

Denote $\bC^*_\hbar$ the torus that acts on $R(\overline{Q},\bv,\bd)$ by assigning weights $R(\overline{Q},\bv,\bd)=M\oplus\hbar^{-1}M^\vee$, where
\begin{align*}
    M=\bigoplus_{a\in Q_1}\Hom(\bC^{\bv_{t(a)}},\bC^{\bv_{t(a)}})\oplus\bigoplus_{i\in Q_0}\Hom(\bC^{\bd_{i}},\bC^{\bv_{i}}).
\end{align*}
We choose torus $\sT$ which contains $\bC^*_\hbar$ such that $\sT=(\sT/\bC^*_\hbar)\times \bC^*_\hbar$, and let $\sA\subseteq  \sT/\bC^*_\hbar$ be a subtorus. Choose a function $\phi:\cM_\theta(\overline{Q},\mathbf{v},\mathbf{d})\to \bC$ and a section $\mu\in \Gamma(\cM_\theta(\overline{Q},\bv,\bd),\mathcal \hbar^{-1}\mathcal G^\vee)$ and appropriate $\sT$ action such that $\phi$ and $\mu$ are $\sT$-invariant. Let us define the $\sT$ action on edge loops by scaling with weight $\hbar$, then $\sT$ fixes the pairing $\langle e,\mu\rangle$. Then according to Example \ref{ex: dim red}, the dimensional reduction data 
$$\left(\overset{\bullet}{\cM}_\theta(\widetilde{Q},\mathbf{v},\mathbf{d}),\cM_\theta(\overline{Q},\mathbf{v},\mathbf{d}),\mu, \phi \right), \quad \left(\cN_{\theta}(\overline{Q},\mathbf{v},\mathbf{d}),\cN_{\theta}(\overline{Q},\mathbf{v},\mathbf{d}),0, \phi|_{\cN_{\theta}(\overline{Q},\mathbf{v},\mathbf{d})}\right)$$ are compatible.

\begin{Remark}\label{rmk on dim red is}
In the above example, if we take $\phi=0$, then dimensional reduction for critical cohomology \cite[Thm.~A.1]{Dav} and respectively for critical $K$-theory \cite[Thm.~3.6]{Isi} implies that
\begin{align*}
\delta_H\colon H^\sT(\cN_{\theta}(\overline{Q},\mathbf{v},\mathbf{d}))&\to H^\sT(\overset{\bullet}{\cM}_\theta(\widetilde{Q},\mathbf{v},\mathbf{d}),\widetilde{\sw})\\
\delta_K\colon K^\sT(\cN_{\theta}(\overline{Q},\mathbf{v},\mathbf{d}))&\to K^\sT(\overset{\bullet}{\cM}_\theta(\widetilde{Q},\mathbf{v},\mathbf{d}),\widetilde{\sw})
\end{align*}
are isomorphisms. Here $\delta_H$ and $\delta_K$ are the maps defined in Proposition \ref{prop:compatible map_fixed pt} and 
\begin{equation}
\label{trip pot}\widetilde{\sw}=\sum_{i\in Q_0}\tr(\varepsilon_i\mu_i), \end{equation} 
is the \textit{cubic potential}, where $\varepsilon_i$ is the edge loop on the node $i$, and $\mu_i$ is the $i$-th component of moment map. 

It is known that \cite[Lem.~6.3]{Dav2}: 
$$\Crit(\widetilde{\sw})\subseteq  \overset{\bullet}{\cM}_\theta(\widetilde{Q},\mathbf{v},\mathbf{d}), $$ therefore the restriction from $\cM_\theta(\widetilde{Q},\mathbf{v},\mathbf{d})$ to $\overset{\bullet}{\cM}_\theta(\widetilde{Q},\mathbf{v},\mathbf{d})$ gives isomorphisms:
\begin{align*}
H^\sT(\cM_\theta(\widetilde{Q},\mathbf{v},\mathbf{d}),\widetilde{\sw})\cong H^\sT(\overset{\bullet}{\cM}_\theta(\widetilde{Q},\mathbf{v},\mathbf{d}),\widetilde{\sw}),\quad K^\sT(\cM_\theta(\widetilde{Q},\mathbf{v},\mathbf{d}),\widetilde{\sw})\cong K^\sT(\overset{\bullet}{\cM}_\theta(\widetilde{Q},\mathbf{v},\mathbf{d}),\widetilde{\sw}).
\end{align*}
\end{Remark}
More examples on compatible dimensional reduction data are discussed in \cite{COZZ1}, which are used to study shifted Yangians of $\mathfrak{gl}_2$,  
shifted affine Yangians of $\mathfrak{gl}_1$ and their representations. 

\end{Example}

\section{Deformations of potentials and stable envelopes }\label{sect on sp and stab}

In this section, we discuss deformations of potentials and how they interplay with stable envelopes. This provides powerful tools  
in the computations of stable envelopes, relating different modules of quantum groups, see \cite{COZZ1}.

\begin{Setting}\label{setting of def of potentials}
Fix $(X,\sT,\sA)$ in Setting \ref{setting of stab}. Suppose there is a torus action $\bG_m\curvearrowright X$ which commutes with $\sT$-action. 
\begin{itemize}
\item Assume that $\mathsf{f}\colon X\to \bC$ is a $(\sT\times \bG_m)$-invariant function, and $\mathsf{g}\colon X\to \bC$ is a $\sT$-invariant function together with a decomposition 
$$\sg=\sum_{i=1}^m\sg_i, $$ such that $\bG_m$ scales $\mathsf{g}_i$ with weight $-\mathsf n_i<0$,~i.e.~$(u\cdot \mathsf{g}_i)(x):=\mathsf{g}_i(u^{-1}\cdot x)=u^{-n_i}\cdot \mathsf{g}_i(x)$.

\item Let $\widetilde{X}:=X\times \bA^1$ with $(\sT\times \bG_m)$-action given by the aforementioned action on $X$ and the action
\begin{align*}
    \sT\times \bG_m  \xrightarrow{\pr} \bG_m \curvearrowright \bA^1
\end{align*}
on $\bA^1$, where $\bG_m \curvearrowright \bA^1$ is of weight $-1$. Let $t$ be the coordinate on $\bA^1$, with $(\sT\times \bG_m)$-invariant function:
\begin{align*}
    \sw:=\mathsf f+\sum_{i=1}^mt^{\mathsf n_i}\mathsf g_i\colon \widetilde{X}\to \bC, \quad (x,t)\mapsto f(x)+\sum_{i=1}^mt^{\mathsf n_i}\mathsf g_i(x).
\end{align*}
\end{itemize}
\end{Setting}
When we discuss the relation with stable envelopes, we work in the following setting. 
\begin{Setting}\label{setting of def of potentials2}
Let $(X,\sT,\sA,\mathsf f,\mathsf g)$ be as in Setting \ref{setting of def of potentials}. Fix
a chamber $\fC\subseteq \Lie(\sA)_\bR$ and a slope $\mathsf s\in \Pic_\sA(X)\otimes_\bZ \bR$. Assume 
cohomological and $K$-theoretic stable envelope correspondences exist for $(X,0,\sT,\sA,\fC)$ and $(X,0,\sT,\sA,\fC,\mathsf s)$ respectively.

\end{Setting}
\begin{Remark}
Since canonical maps take stable envelope correspondences for zero potential to stable envelope correspondences for any potential, then according to Proposition \ref{corr induce stab}, cohomological and $K$-theoretic stable envelopes exist for $(X,\mathsf f,\sT,\sA,\fC),(X,\mathsf f+\sg,\sT,\sA,\fC)$ and $(X,\mathsf f,\sT,\sA,\fC,\mathsf s),(X,\mathsf f+\sg,\sT,\sA,\fC,\mathsf s)$ respectively in Setting \ref{setting of def of potentials2}.
\end{Remark}

\subsection{Specialization for Borel-Moore homology and \texorpdfstring{$K$}{K}-theory of zero loci}

\begin{Lemma}
Under Setting \ref{setting of def of potentials}, let $Z(\sw)\subseteq  \widetilde{X}$ denote the zero locus of $\sw$ and $Z(\sw)^*$ its open subset fitting into commutative diagrams 
$$
\xymatrix{
 Z(\sw)^*  \ar[d]^{} \ar[r]_{ }  \ar@{}[dr]|{\Box}  & Z(\sw)  \ar[d]^{} \ar@{^{(}->}[r]   \ar@{}[dr]|{\curvearrowright }   &  \widetilde{X}     \ar[d]^{\pr} \\
\bC^* \ar@{^{(}->}[r] & \bA^1  \ar@{=}[r] &  \bA^1,   
} 
$$
where $\pr$ is the projection and the left square is Cartesian. Then the left vertical map is a trivial fibration such that 
$$Z(\sw)^*\cong Z(\mathsf f+\mathsf g)\times \bC^*.$$
\end{Lemma}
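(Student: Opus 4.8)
The claim is purely about the open subscheme $Z(\sw)^* = Z(\sw)\cap (X\times\bC^*_t)$ and asserts it is isomorphic, as a scheme over $\bC^*_t$, to $Z(\mathsf f+\mathsf g)\times\bC^*_t$. The plan is to exhibit an explicit $\bC^*_t$-equivariant automorphism of $X\times\bC^*_t$ (coming from the $\bG_m$-action on $X$) that carries the universal hypersurface $\sw=0$ over $\bC^*_t$ onto the constant family $(\mathsf f+\mathsf g)=0$ over $\bC^*_t$. First I would recall the $\bG_m$-action $\bG_m\curvearrowright X$ and use it to define, for $t\in\bC^*$, the morphism
$$
\Phi\colon X\times\bC^*_t \longrightarrow X\times\bC^*_t, \qquad (x,t)\longmapsto (t^{-1}\cdot x,\; t),
$$
which is an isomorphism with inverse $(y,t)\mapsto (t\cdot y, t)$; it is a morphism of schemes because $\bG_m\curvearrowright X$ is algebraic and $t$ ranges over $\bC^*$, where inversion is regular. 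Note $\Phi$ is the identity on the $\bC^*_t$-factor, hence commutes with the projection to $\bC^*_t$, and also commutes with the $\sT$-action (since $\sT$ and $\bG_m$ commute on $X$ and $\sT$ acts trivially on $\bA^1_t$).

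Next I would compute the pullback of $\sw$ along $\Phi$. Using $\mathsf f(t^{-1}\cdot x) = \mathsf f(x)$ ($\bG_m$-invariance of $\mathsf f$) and the homogeneity $\mathsf g_i(t^{-1}\cdot x) = (t\cdot \mathsf g_i)(x) = t^{-\mathsf n_i}\mathsf g_i(x)$ wait — more carefully, the Setting says $(u\cdot\mathsf g_i)(x) := \mathsf g_i(u^{-1}\cdot x) = u^{-\mathsf n_i}\mathsf g_i(x)$, so $\mathsf g_i(t^{-1}\cdot x) = (t^{-1}\cdot \mathsf g_i)(x) = t^{\mathsf n_i}\mathsf g_i(x)$. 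Hence
$$
(\sw\circ\Phi)(x,t) = \mathsf f(x) + \sum_{i=1}^m t^{\mathsf n_i}\,\mathsf g_i(t^{-1}\cdot x) \cdot \text{(wrong pairing)},
$$
so let me instead directly evaluate: $\sw(\Phi(x,t)) = \mathsf f(t^{-1}\cdot x) + \sum_i t^{\mathsf n_i}\,\mathsf g_i(t^{-1}\cdot x) = \mathsf f(x) + \sum_i t^{\mathsf n_i}\cdot t^{\mathsf n_i}\mathsf g_i(x)$ — this is not what we want, so the correct map is $\Phi(x,t) = (t\cdot x, t)$ (not $t^{-1}$), giving $\sw(\Phi(x,t)) = \mathsf f(x) + \sum_i t^{\mathsf n_i}\mathsf g_i(t\cdot x) = \mathsf f(x) + \sum_i t^{\mathsf n_i} t^{-\mathsf n_i}\mathsf g_i(x) = \mathsf f(x) + \mathsf g(x)$. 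Thus $\sw\circ\Phi = \mathrm{pr}_X^*(\mathsf f+\mathsf g)$ on $X\times\bC^*_t$. The sign of the exponent in $\Phi$ is the one bookkeeping point to get right, and I would state it cleanly with the convention in Setting \ref{setting of def of potentials}.

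Finally I would conclude: $\Phi$ restricts to an isomorphism between $\{\sw\circ\Phi = 0\} = Z(\mathsf f+\mathsf g)\times\bC^*_t$ and $\{\sw=0\}\cap(X\times\bC^*_t) = Z(\sw)^*$, compatibly with the projections to $\bC^*_t$ and with the $\sT$-action; this proves both that $Z(\sw)^*\to\bC^*_t$ is a trivial fibration and that the fibre is $Z(\mathsf f+\mathsf g)$. I do not anticipate a genuine obstacle here — the only care needed is (a) checking $\Phi$ is a morphism of schemes, not just of sets, which follows from algebraicity of the $\bG_m$-action together with regularity of inversion on $\bC^*$, and (b) the exponent-sign bookkeeping above. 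If an equivariant statement with respect to $\bG_m$ itself (and not just $\sT$) is wanted later, one should note $\Phi$ is \emph{not} $\bG_m$-equivariant, which is exactly why the specialization map $t\to 0$ is nontrivial; but for the present lemma only the $\sT\times\{1\}$-equivariance over $\bC^*_t$ is asserted and that is immediate.
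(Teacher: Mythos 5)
Your approach is exactly the paper's: construct an explicit $\bC^*$-equivariant automorphism of $X\times\bC^*$ from the $\bG_m$-action and pull $\sw$ back to $\mathsf f+\mathsf g$. However, the ``wait — more carefully'' correction in the middle introduces a sign error, and the final map $\Phi(x,t)=(t\cdot x,t)$ you settled on is actually wrong; your original guess $\Phi(x,t)=(t^{-1}\cdot x,t)$ was correct and is precisely the map $\mathbf a$ used in the paper.

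To see where the slip is: the convention $(u\cdot\mathsf g_i)(x):=\mathsf g_i(u^{-1}\cdot x)$ gives $\mathsf g_i(t^{-1}\cdot x)$ by setting $u=t$ (so that $u^{-1}\cdot x = t^{-1}\cdot x$), yielding
\[
\mathsf g_i(t^{-1}\cdot x)=(t\cdot\mathsf g_i)(x)=t^{-\mathsf n_i}\,\mathsf g_i(x),
\]
which is what you wrote first. Your replacement $\mathsf g_i(t^{-1}\cdot x)=(t^{-1}\cdot\mathsf g_i)(x)$ is incorrect, since by definition $(t^{-1}\cdot\mathsf g_i)(x)=\mathsf g_i(t\cdot x)$, not $\mathsf g_i(t^{-1}\cdot x)$. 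With the correct identity, the map $\Phi(x,t)=(t^{-1}\cdot x,t)$ gives
\[
(\sw\circ\Phi)(x,t)=\mathsf f(x)+\sum_{i=1}^m t^{\mathsf n_i}\cdot t^{-\mathsf n_i}\,\mathsf g_i(x)=(\mathsf f+\mathsf g)(x),
\]
as desired. By contrast, for $\Phi(x,t)=(t\cdot x,t)$ one has $\mathsf g_i(t\cdot x)=t^{\mathsf n_i}\mathsf g_i(x)$, so $(\sw\circ\Phi)(x,t)=\mathsf f(x)+\sum_i t^{2\mathsf n_i}\mathsf g_i(x)$, whose zero locus is not a product over $\bC^*$; the conclusion would fail with that choice. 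So: keep your first map, delete the ``correction,'' and the argument goes through identically to the paper's.
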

\begin{proof}
There is an automorphism 
\begin{equation}\label{equ on iso a}\mathbf a\colon \widetilde{X}^*:=X\times \bC^*\to \widetilde{X}^*, \quad (x,t) \mapsto (t^{-1}\cdot x,t), \end{equation}
where $t\cdot x$ means identifying $\bC^*$ with $\bG_m$ and $t$ acts on $x$ via the $\bG_m$ action. Then 
$$\mathbf a^*(\sw)=\mathsf f+\mathsf g, $$ 
therefore $\mathbf a^{-1}(Z(\sw)^*)=Z(\mathsf f+\mathsf g)\times \bC^*$.
\end{proof}

It follows that the projection $Z(\sw)\to \bC$ fits in the setting of \cite[\S2.6.30]{CG}. 
\begin{Definition}\label{def of sp for zero loci_coh}
Under Setting \ref{setting of def of potentials},
we define a \textit{specialization map}: 
\begin{align}\label{sp zero locus_coh}
    \mathsf{sp}\colon H^\sT(Z(\mathsf f+\mathsf g))\to H^\sT(Z(\mathsf f)),
\end{align}
as the composition of the following maps
\begin{align*}
\xymatrix{
H^\sT_i(Z(\mathsf f+\mathsf g))\ar[r]^-{\pr^*} & H^\sT_{i+2}(Z(\mathsf f+\mathsf g)\times \bC^*) \ar[r]^-{\mathbf a_*}_-{\cong}
&  H^\sT_{i+2}(Z(\sw)^*) \ar[r]^-{\underset{t\to 0}{\lim}} & H^\sT_i(Z(\sw)\cap \{t=0\})=H_i^\sT(Z(\mathsf f)), 
}
\end{align*}
where $\underset{t\to 0}{\lim}$ is the map defined in \cite[\S2.6.30]{CG}.
\end{Definition}
\begin{Remark}\label{rmk: equiv def of sp zero locus_coh}
According to the construction in \cite[\S2.6.30]{CG}, we have an equality
\begin{align*}
    \underset{t\to 0}{\lim}(-)=\delta\circ (-\cap \alpha).
\end{align*}
Here $\alpha\in H_1(\bC^*)\cong \bQ$ is the Borel-Moore homology class given by pushforward of the fundamental class $[\bR]\in H_1(\bR)$ under the exponential map 
$$\bR\to \bC^*, \quad s\mapsto e^s, $$ 
and for any $\theta\in [0,2\pi)$, the maps $s\mapsto e^{s+i\theta}$ and $s\mapsto e^{s}$ induce the same map $H_1(\bR)\to H_1(\bC^*)$ on homologies.
$(-)\cap \alpha\colon H^\sT_{j}(Z(\sw)^*)\to H^\sT_{j-1}(Z(\sw)^*)$ is the intersection pairing with $\alpha$. $\delta$ is the boundary map in the excision long exact sequence
\begin{align*}
\xymatrix{
H^\sT_{i+1}(Z(\sw)^*) \ar[r]^-{\delta} & H^\sT_i(Z(\sw)\cap \{t=0\}) \ar[r] & H^\sT_i(Z(\sw)) \ar[r] & H^\sT_i(Z(\sw)^*) \ar[r]^-{\delta} & . \\
}
\end{align*}
Then it follows that $\mathsf{sp}$ \eqref{sp zero locus_coh} equals to the composition of the following maps
\begin{align*}
\xymatrix{
H^\sT_i(Z(\mathsf f+\mathsf g))\ar[r]^-{\id\otimes\alpha} & H^\sT_{i+1}(Z(\mathsf f+\mathsf g)\times \bC^*) \ar[r]^-{\mathbf a_*}_-{\cong}
&  H^\sT_{i+1}(Z(\sw)^*) \ar[r]^-{\delta} & H^\sT_i(Z(\sw)\cap \{t=0\})=H_i^\sT(Z(\mathsf f)).
}
\end{align*}
\end{Remark}
Similarly, there is a $K$-theoretic 
\textit{specialization map} (ref.~\cite[\S5.3]{CG}). 
\begin{Definition}\label{def of sp for zero loci_k}
Under Setting \ref{setting of def of potentials}, we define 
\begin{align}\label{sp zero locus_k}
    \mathsf{sp}\colon K^\sT(Z(\mathsf f+\mathsf g))\to K^\sT(Z(\mathsf f)),
\end{align}
as the composition of the following maps
\begin{align*}
\xymatrix{
K^\sT(Z(\mathsf f+\mathsf g))\ar[r]^-{\pr^*} & K^\sT(Z(\mathsf f+\mathsf g)\times \bC^*) \ar[r]^-{\mathbf a_*}_-{\cong}
&  K^\sT(Z(\sw)^*) \ar[r]^-{\underset{t\to 0}{\lim}} & K^\sT(Z(\sw)\cap \{t=0\})=K^\sT(Z(\mathsf f)). 
}
\end{align*}
Here $\underset{t\to 0}{\lim}$ is defined by an extension to a class on $Z(\sw)$ followed by 
refined Gysin pullback  $i^!_0\colon K^\sT(Z(\sw))\to K^\sT(Z(\sw)\cap \{t=0\})$. This is well-defined as we have the excision exact sequence:
\begin{align*}
\xymatrix{
K^\sT(Z(\sw)\cap \{t=0\})\ar[r]^-{i_{0 *}} & K^\sT(Z(\sw)) \ar[r] & K^\sT(Z(\sw)^*) \ar[r] &  0,\\
}
\end{align*}
and $i^!_0\circ i_{0 *}(-)=0 \in K^\sT(Z(\sw)\cap \{t=0\})$. 
\end{Definition}

\begin{Remark}\label{rmk: g=0 zero locus}
If $\sg=0$, then the specialization maps \eqref{sp zero locus_coh} and \eqref{sp zero locus_k} are identity maps. This can be seen as follows. In this case, $Z(\sw)=Z(\mathsf f)\times \bC$ and $\mathbf a$ maps $Z(\sw)^*$ to itself. Then $\mathbf a_*$ is the identity map because the $\bG_m$ action on $H^\sT(Z(\mathsf f))$ is trivial. In particular $\mathbf a_*\circ\pr^*(\gamma)=\gamma\otimes[\bC^*]$ for any $\gamma\in H^\sT(Z(\mathsf f))$. It follows that 
\begin{align*}
    \mathsf{sp}(\gamma)=\underset{t\to 0}{\lim}(\gamma\otimes[\bC^*])=\gamma\otimes\underset{t\to 0}{\lim}([\bC^*])=\gamma,
\end{align*}
where in the second equality we use the fact that $\underset{t\to 0}{\lim}$ is compatible with external tensor product. The $K$-theory version can be argued similarly and we omit the details.
\end{Remark}

\begin{Remark}\label{rmk: sp comp w/ pushforward}
Specialization maps \eqref{sp zero locus_coh} and \eqref{sp zero locus_k} are compatible with pushforward maps induced by the embedding of the zero locus into $X$. Namely the following diagrams are commutative:
\begin{align*}
\xymatrix{
H^\sT(Z(\mathsf f+\sg)) \ar[r]^-{\mathsf{sp}} \ar[dr] & H^\sT(Z(\mathsf f))  \ar[d] \\
& H^\sT(X),
}
\qquad
\xymatrix{
K^\sT(Z(\mathsf f+\sg)) \ar[r]^-{\mathsf{sp}} \ar[dr] & K^\sT(Z(\mathsf f))  \ar[d] \\
& K^\sT(X).
}
\end{align*}
In particular, if $\mathsf f=0$, then the specialization maps $\mathsf{sp}\colon H^\sT(Z(\mathsf g))\to H^\sT(X)$ and $\mathsf{sp}\colon K^\sT(Z(\mathsf g))\to K^\sT(X)$ equal to the pushforward maps induced by the closed immersion $Z(\mathsf g)\hookrightarrow X$.
\end{Remark}

\begin{Remark}\label{rmk: sp zero loci functorial}
Specialization maps \eqref{sp zero locus_coh} and \eqref{sp zero locus_k} are \textit{functorial} with respect to \textit{proper pushforward} and \textit{lci pullback}. Namely, let $(Y,\sT,\sA,\bG_m)$ be in Setting \ref{setting of def of potentials}, and assume that there exists a $(\sT\times \bG_m)$-equivariant map $\pi\colon Y\to X$. Then we have commutative diagrams
\begin{align*}
\xymatrix{
H^\sT(Z((\mathsf f+\sg)\circ\pi)) \ar[r]^-{\mathsf{sp}}  & H^\sT(Z(\mathsf f\circ \pi))   \\
H^\sT(Z(\mathsf f+\sg)) \ar[u]^{\pi^!} \ar[r]^-{\mathsf{sp}} & H^\sT(Z(\mathsf f)) \ar[u]_{\pi^!},
}
\qquad
\xymatrix{
K^\sT(Z((\mathsf f+\sg)\circ\pi)) \ar[r]^-{\mathsf{sp}}  & K^\sT(Z(\mathsf f\circ \pi))   \\
K^\sT(Z(\mathsf f+\sg)) \ar[u]^{\pi^!} \ar[r]^-{\mathsf{sp}} & K^\sT(Z(\mathsf f)) \ar[u]_{\pi^!},
}
\end{align*}
where we use the fact that $X$ and $Y$ are smooth, so the above lci pullbacks $\pi^!$ are well-defined. 

If $\pi$ is moreover proper, then we have commutative diagrams
\begin{align*}
\xymatrix{
H^\sT(Z((\mathsf f+\sg)\circ\pi)) \ar[r]^-{\mathsf{sp}} \ar[d]_{\pi_*} & H^\sT(Z(\mathsf f\circ \pi))  \ar[d]^{\pi_*} \\
H^\sT(Z(\mathsf f+\sg)) \ar[r]^-{\mathsf{sp}} & H^\sT(Z(\mathsf f)),
}
\qquad
\xymatrix{
K^\sT(Z((\mathsf f+\sg)\circ\pi)) \ar[r]^-{\mathsf{sp}} \ar[d]_{\pi_*} & K^\sT(Z(\mathsf f\circ \pi))  \ar[d]^{\pi_*} \\
K^\sT(Z(\mathsf f+\sg)) \ar[r]^-{\mathsf{sp}} & K^\sT(Z(\mathsf f)).
}
\end{align*}
\end{Remark}
Specialization maps are compatible with stable envelopes for zero loci. 
\begin{Proposition}\label{prop: stab comp with sp zero loci}
Fix $(X,\sT,\sA,\mathsf f,\sg,\fC,\mathsf s)$ as in Setting \ref{setting of def of potentials2}, then the following diagrams are commutative:
\begin{align*}
\xymatrix{
H^\sT(Z(\mathsf f+\sg)^\sA) \ar[r]^-{\mathsf{sp}^\sA} \ar[d]_{\Stab_\fC} & H^\sT(Z(\mathsf f)^\sA)  \ar[d]^{\Stab_\fC} \\
H^\sT(Z(\mathsf f+\sg)) \ar[r]^-{\mathsf{sp}} & H^\sT(Z(\mathsf f)),
}
\qquad
\xymatrix{
K^\sT(Z(\mathsf f+\sg)^\sA) \ar[r]^-{\mathsf{sp}^\sA} \ar[d]_{\Stab^{\mathsf s}_\fC} & K^\sT(Z(\mathsf f)^\sA)  \ar[d]^{\Stab^{\mathsf s}_\fC} \\
K^\sT(Z(\mathsf f+\sg)) \ar[r]^-{\mathsf{sp}} & K^\sT(Z(\mathsf f)),
}
\end{align*}
where $\Stab_\fC$ and $\Stab^{\mathsf s}_\fC$ are maps induced by the stable envelope correspondences as in Lemma \ref{lem on can cm w conv}, and $\mathsf{sp}^\sA$ is the specialization maps \eqref{sp zero locus_coh} and \eqref{sp zero locus_k} for the $\sA$-fixed loci.
\end{Proposition}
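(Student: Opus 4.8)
\textbf{Proof plan for Proposition \ref{prop: stab comp with sp zero loci}.}

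The plan is to show that the stable envelope correspondence itself — not just the induced map — commutes with specialization, and then deduce the stated commutativity by functoriality. More precisely, I would first unpack the definition of $\mathsf{sp}$ from Remark \ref{rmk: equiv def of sp zero locus_coh}: it factors as $\pr^*$ (flat pullback along the projection to $\bC^*$, or equivalently $\mathbf a_*\circ\pr^*$ after the automorphism $\mathbf a$ of \eqref{equ on iso a}) followed by the boundary map $\delta$ in an excision long exact sequence for the inclusion of $\{t=0\}$ into $Z(\sw)$. Both constituents are standard operations in Borel--Moore homology (respectively $K$-theory of coherent sheaves): $\pr^*$ is an lci (in fact flat) pullback, $\mathbf a_*$ is a proper (indeed iso) pushforward, and $\delta$ is the connecting homomorphism of an excision sequence. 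The stable envelope map $\Stab_\fC$ here is $\gamma\mapsto [\Stab_\fC]\circ\gamma$, a convolution against a fixed correspondence class supported on $\Attr^f_\fC$, which is proper over the target by Remark \ref{proper-over-affine}. So the heart of the argument is: convolution against a correspondence commutes with each of $\pr^*$, $\mathbf a_*$, $\delta$.

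The key steps, in order. First I would promote everything to the total space $\widetilde X = X\times\bA^1$. The family of potentials $\sw = \mathsf f + \sum_i t^{\mathsf n_i}\sg_i$ is $\sT$-invariant on $\widetilde X$, and the torus $\sA$ does not act on the $\bA^1$-factor, so $\widetilde X^\sA = X^\sA\times\bA^1$ and the attracting geometry of $(\widetilde X,\sA,\fC)$ is just that of $(X,\sA,\fC)$ times $\bA^1$; in particular $\Attr^f_\fC(\widetilde X) = \Attr^f_\fC(X)\times\bA^1$. Since stable envelope correspondences for zero potential are insensitive to the potential (Lemma \ref{can map induce stab}), the class $[\Stab_\fC]$ for $(\widetilde X, \sw)$ is the pullback along $\widetilde X\times\widetilde X^\sA\to X\times X^\sA$ of $[\Stab_\fC]$ for $(X,0)$, via the canonical map. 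Next, because $\mathbf a$ of \eqref{equ on iso a} is $\sT$-equivariant and $\mathbf a$ fixes the $\bA^1$-factor pointwise, it commutes (in the appropriate sense) with attracting-set geometry: $\mathbf a^{-1}$ carries $\Attr^f_\fC(\widetilde X)\cap\widetilde X^* $ to itself, and the pushforward $\mathbf a_*$ intertwines convolution by $[\Stab_\fC]$ with convolution by $(\mathbf a\times\mathbf a^\sA)_*[\Stab_\fC]$. The crucial point is then that $(\mathbf a\times\mathbf a^\sA)_*[\Stab_\fC] = [\Stab_\fC]$ on $\widetilde X^*$: this follows because over $\bC^*$ the class, pulled back from $X$, is literally $\mathbf a$-translated back to itself (the automorphism $\mathbf a$ is fiberwise the $\bG_m$-action and $[\Stab_\fC]$ is $\bG_m$-invariant as it comes from $X$). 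Third, $\pr^*$ commutes with convolution by $[\Stab_\fC]$ because $\pr^*$ is a flat base change and convolution is built from flat pullback, proper pushforward and exterior products — all of which satisfy base change against $\pr$. Fourth, the boundary map $\delta$ in the excision sequence of $\{t=0\}\hookrightarrow Z(\sw)$ commutes with convolution by a correspondence supported on $\Attr^f_\fC\times_X Z(\sw)$ that is proper over the base: this is the observation that $\delta$ is natural with respect to the six-functor operations, so it commutes with $p_{13*}$ and $p_{12}^*,p_{23}^*$ used in the convolution. Composing these four compatibilities gives commutativity of the square for $\Stab_\fC$, and the identical argument with $[\Stab^{\mathsf s}_\fC]$ gives the $K$-theory square, using the coherent-$K$-theory excision sequence from Definition \ref{def of sp for zero loci_k} in place of the Borel--Moore one.

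The main obstacle I anticipate is the fourth step: making precise that the boundary map $\delta$ of the excision triangle commutes with the convolution operation. The subtlety is that $\delta$ is defined on the \emph{zero locus} $Z(\sw)$ (Borel--Moore homology of a possibly singular scheme, or coherent $K$-theory of it), while the correspondence $[\Stab_\fC]$ naturally lives with support on $\Attr^f_\fC$ inside the smooth ambient space, and the convolution passes through the smooth $\widetilde X\times\widetilde X^\sA\times\widetilde X^\sA$. I would handle this by working at the level of sheaves: in the cohomological case, realize the convolution as an operation on $\omega$-sheaves (as in \S\ref{sect on st ev co}) restricted to zero loci, so that $\delta$ becomes the connecting map of the excision triangle \eqref{eqn-exc} applied to $j_*j^*$, which is a genuine natural transformation of functors and hence automatically commutes with the pushforward/pullback composite defining convolution. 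In the $K$-theory case I would instead use that the refined Gysin pullback $i_0^!$ along $\{t=0\}$ (which is how $\underset{t\to 0}{\lim}$ is defined in Definition \ref{def of sp for zero loci_k}) commutes with proper pushforward and flat pullback by standard intersection-theoretic compatibilities — this is the $K$-theoretic analogue and is arguably cleaner. An alternative, perhaps the cleanest, is to invoke the functoriality of $\mathsf{sp}$ itself: Remark \ref{rmk: sp zero loci functorial} already records that $\mathsf{sp}$ commutes with proper pushforward and lci pullback on zero loci, and convolution by $[\Stab_\fC]$ is assembled precisely from those two operations together with exterior tensor (for which compatibility with $\mathsf{sp}$ is the statement that $\mathsf{sp}$ is monoidal, following from Thom--Sebastiani and the compatibility of $\underset{t\to0}{\lim}$ with external products noted in Remark \ref{rmk: g=0 zero locus}). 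So the cleanest writeup is: expand $\Stab_\fC(\gamma) = p_{13*}(p_{12}^*[\Stab_\fC]\otimes p_{23}^*\gamma)$, apply Remark \ref{rmk: sp zero loci functorial} to the proper map $p_{13}$ and the lci maps $p_{12},p_{23}$, apply monoidality of $\mathsf{sp}$ to the $\otimes$, and use that $\mathsf{sp}([\Stab_\fC]) = [\Stab_\fC]$ on the relevant zero locus (which is the content of step two above). I would present the proof in this last form, with the $\mathbf a$-invariance computation isolated as the one genuinely new ingredient.
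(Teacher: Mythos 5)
Your proposal is correct and takes essentially the same route as the paper: the paper's proof also factors $\mathsf{sp}$ as $\underset{t\to 0}{\lim}\circ\,\mathbf a_*\circ\pr^*$ and checks commutativity of the three resulting squares, inserting $[\Stab_\fC]\otimes[\Delta_{\bC^*}]$ as the middle vertical maps exactly as your step one/two suggests. The only economy the paper makes is to dismiss the first two squares as obvious and to cite Chriss--Ginzburg (Prop.~2.7.23 resp.~Thm.~5.3.9) for the commutation of specialization with convolution in the third square, rather than re-deriving it via the functoriality of $\mathsf{sp}$ in proper pushforward, lci pullback, and external products as you propose in your alternative writeup.
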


\begin{proof}
We first prove the cohomology version. By definition, it is enough to show that every square in the following diagram is commutative:
\begin{align*}
\xymatrix{
H^\sT_i(Z(\mathsf f+\mathsf g)^\sA) \ar[d] \ar[r]^-{\pr^*} & H^\sT_{i+2}(Z(\mathsf f+\mathsf g)^\sA\times \bC^*) \ar[d]  \ar[r]^-{\mathbf a_*}
&  H^\sT_{i+2}((Z(\sw)^*)^\sA) \ar[d]  \ar[r]^-{\underset{t\to 0}{\lim}} & H^\sT_i(Z(\sw)^\sA\cap \{t=0\}) \ar[d] \\
H^\sT_i(Z(\mathsf f+\mathsf g))\ar[r]^-{\pr^*} & H^\sT_{i+2}(Z(\mathsf f+\mathsf g)\times \bC^*) \ar[r]^-{\mathbf a_*}
&  H^\sT_{i+2}(Z(\sw)^*) \ar[r]^-{\underset{t\to 0}{\lim}} & H^\sT_i(Z(\sw)\cap \{t=0\}),
}
\end{align*}
where the first and the fourth vertical arrows are induced by $[\Stab_\fC]\in H^\sT(X\times X^\sA)_{\Attr^f_\fC}$, the second and the third ones are induced by $[\Stab_\fC]\otimes[\Delta_{\bC^*}]\in H^\sT(X\times X^\sA\times (\bC^*)^2)_{\Attr^f_\fC\times (\bC^*)^2}$, for the diagonal $\Delta_{\bC^*}$ in $(\bC^*)^2$. It is obvious that the first two squares commute. The commutativity of the third square follows from the fact that specialization commutes with the convolution \cite[Prop.~2.7.23]{CG}.

Similarly, in the $K$-theory version, we have 
\begin{align*}
\xymatrix{
K^\sT(Z(\mathsf f+\mathsf g)^\sA) \ar[d] \ar[r]^-{\pr^*} & K^\sT(Z(\mathsf f+\mathsf g)^\sA\times \bC^*) \ar[d]  \ar[r]^-{\mathbf a_*}
&  K^\sT((Z(\sw)^*)^\sA) \ar[d]  \ar[r]^-{\underset{t\to 0}{\lim}} & K^\sT(Z(\sw)^\sA\cap \{t=0\}) \ar[d] \\
K^\sT(Z(\mathsf f+\mathsf g))\ar[r]^-{\pr^*} & K^\sT(Z(\mathsf f+\mathsf g)\times \bC^*) \ar[r]^-{\mathbf a_*}
&  K^\sT(Z(\sw)^*) \ar[r]^-{\underset{t\to 0}{\lim}} & K^\sT(Z(\sw)\cap \{t=0\}),
}
\end{align*}
where the first and the fourth vertical arrows are induced by $[\Stab^{\mathsf s}_\fC]\in K^\sT(X\times X^\sA)_{\Attr^f_\fC}$, the second and the third ones are induced by $[\Stab^{\mathsf s}_\fC]\otimes[\mathcal O_{\Delta_{\bC^*}}]\in K^\sT(X\times X^\sA\times (\bC^*)^2)_{\Attr^f_\fC\times (\bC^*)^2}$. It is obvious that the first two squares 
commute. The commutativity of the third square follows from \cite[Thm.~5.3.9]{CG}.
\end{proof}

The goal of the rest of this section is to adapt the above construction to critical cohomology and $K$-theory (in several settings) and to prove various compatibility properties.

\subsection{Specialization for critical cohomology}

\begin{Assumption}\label{assumption for def of potential_crit}
In Setting \ref{setting of def of potentials}, we further assume the affinization map 
$$\pi\colon X\to X_0=\Spec \Gamma(X,\mathcal O_X)$$
is proper and the induced $\bG_m$ action on $\Gamma(X,\mathcal O_X)$ is nonpositive (equivalently, the induced $\bG_m$ action attracts $X_0$ to the fixed locus $X_0^{\bG_m}$).
\end{Assumption}

\begin{Lemma}\label{zero fiber = attr}
Under Assumption \ref{assumption for def of potential_crit}, we have $$X\times \{0\}=\bigcup_{F\in \Fix_{\bG_m}(\widetilde{X})}\Attr_+(F).$$
\end{Lemma}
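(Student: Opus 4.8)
The statement is about the variety $\widetilde{X} = X \times \bA^1_t$ with the $\bG_m$-action from Setting \ref{setting of def of potentials}, where $\bG_m$ acts on $X$ as given and on $\bA^1_t$ with weight $-1$. The claim identifies the fiber $X \times \{0\}$ (the zero fiber of $\pr : \widetilde{X} \to \bA^1_t$) with the union of attracting sets $\Attr_+(F)$ over all $\bG_m$-fixed components $F \subseteq \widetilde{X}$, for the chamber ``$+$'' corresponding to the cocharacter $u \mapsto u$ of $\bG_m$. The plan is to apply the affinization criterion of \cite[Lem.~3.2.7]{MO} (used earlier in Remark \ref{proper-over-affine}) to $\widetilde{X}$: the point is that $\widetilde{X}$ is proper over an affine scheme whose $\bG_m$-action attracts everything to its fixed locus, and in that situation the full attracting set $\bigcup_F \Attr_+^f(F) = \bigcup_F \Attr_+(F)$ is all of $\widetilde{X}$; the subtlety is to pin down which points flow to where as $u \to 0$.

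First I would compute the affinization of $\widetilde{X}$. Since $\bA^1_t$ is affine and $X \to X_0 = \Spec\Gamma(X,\mathcal O_X)$ is proper by Assumption \ref{assumption for def of potential_crit}, the product map $\widetilde{X} = X \times \bA^1_t \to X_0 \times \bA^1_t =: \widetilde{X}_0$ is proper, and $\Gamma(\widetilde{X},\mathcal O_{\widetilde{X}}) = \Gamma(X,\mathcal O_X) \otimes \bC[t] = \Gamma(\widetilde{X}_0, \mathcal O)$. The $\bG_m$-action on $\Gamma(X,\mathcal O_X)$ has nonpositive weights by Assumption \ref{assumption for def of potential_crit}, while $\bG_m$ acts on $\bC[t]$ with $t$ of weight $+1$ (since $\bG_m$ acts on $\bA^1_t$ with weight $-1$, its action on the coordinate $t$ has weight $+1$). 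Hence the $\bG_m$-action on $\Gamma(\widetilde{X},\mathcal O_{\widetilde{X}})$ has weights in $\bZ$ of \emph{both} signs, so $\widetilde{X}_0$ is \emph{not} attracted to its fixed locus by $u \to 0$. So the MO lemma does not apply directly to the ``$+$'' chamber on all of $\widetilde{X}$, which explains why the conclusion is only $X \times \{0\}$, not all of $\widetilde{X}$.

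Next I would argue directly. For a point $(x,t) \in \widetilde{X}$, the orbit under $u \in \bG_m$ is $(u \cdot x,\, u^{-1} t)$ (using $\bG_m$-weight $-1$ on $\bA^1_t$, so $t \mapsto u^{-1}t$). The limit $\lim_{u\to 0}(u\cdot x, u^{-1}t)$ exists if and only if $\lim_{u\to 0} u\cdot x$ exists in $X$ \emph{and} $\lim_{u\to 0} u^{-1}t$ exists in $\bA^1_t$; the latter forces $t = 0$. So $\bigcup_F \Attr_+(F) \subseteq X \times \{0\}$, where I further need: for $t = 0$, $\lim_{u\to 0} u\cdot x$ always exists in $X$. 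This is where I use the properness hypothesis: the image of $x$ in $X_0$ satisfies $\lim_{u\to 0} u\cdot \pi(x)$ exists because $\bG_m$ attracts $X_0$ to $X_0^{\bG_m}$ (nonpositivity of weights on $\Gamma(X,\mathcal O_X)$), and then properness of $\pi : X \to X_0$ together with the valuative criterion lifts this to a limit in $X$ — concretely, the map $\bG_m \to X$, $u \mapsto u\cdot x$, extends over $u = 0$ because it extends after composing with the proper map $\pi$ (apply \cite[Lem.~3.2.7]{MO} or the standard valuative-criterion argument to $X/X_0$ with the cocharacter $u$). Hence every $(x,0)$ lies in some $\Attr_+(F)$, giving the reverse inclusion $X\times\{0\} \subseteq \bigcup_F \Attr_+(F)$.

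The main obstacle is the limit-lifting step: showing $\lim_{u\to 0} u\cdot x$ exists in $X$ for all $x \in X$ given only properness of $X \to X_0$ and the weight condition on $X_0$. I expect this to follow cleanly from the Bialynicki--Birula/valuative-criterion machinery already invoked in Remark \ref{proper-over-affine} via \cite[Lem.~3.2.7]{MO}: one notes $X_0 = \bigsqcup \Attr_+(S_0)$ over $\bG_m$-fixed components $S_0 \subseteq X_0$ by the nonpositivity assumption, and then $X = \bigsqcup \Attr_+(F)$ by the cited lemma since $X$ is proper over $X_0$; restricting to $\widetilde{X}$ and intersecting with $\{t=0\}$ (equivalently, using that $\widetilde{X}^{\bG_m} \subseteq X^{\bG_m}\times\{0\}$ since $t$ has nonzero weight) then yields exactly $X\times\{0\} = \bigcup_{F}\Attr_+(F)$. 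A minor point to check carefully is the bookkeeping that $\Fix_{\bG_m}(\widetilde{X}) = \Fix_{\bG_m}(X)$ under $F \leftrightarrow F\times\{0\}$, which is immediate since $\bG_m$ acts on the $\bA^1_t$-factor with nonzero weight.
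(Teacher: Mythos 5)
Your proposal is correct and follows essentially the same route as the paper's proof: both use the affinization $\widetilde{X}_0 = X_0 \times \bA^1$, observe that the weight $-1$ action on the $\bA^1$-factor forces the attracting locus to sit inside $\{t=0\}$, and then for the reverse inclusion use nonpositivity of the weights on $\Gamma(X,\mathcal O_X)$ together with properness of $X \to X_0$ and the valuative criterion to show $\lim_{u\to 0} u\cdot x$ exists for every $x \in X$. The only cosmetic difference is that for the inclusion $\bigcup_F \Attr_+(F) \subseteq X\times\{0\}$ you reason directly on the product coordinates of $\widetilde{X}$, whereas the paper pushes forward along $p:\widetilde{X}\to\widetilde{X}_0$ and notes $\Attr_+(F) \subseteq p^{-1}(\Attr_+(\widetilde{X}_0^{\bG_m}))$; these are the same observation.
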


\begin{proof}
Let $\widetilde{X}_0:=X_0\times \bC$ be the affinization of $\widetilde{X}$. By Setting \ref{setting of def of potentials}, the $\bG_m$ action on $\bC$ is repelling, so the $\bG_m$-fixed point locus $\widetilde{X}_0^{\bG_m}$ equals to $X_0^{\bG_m}\times \{0\}$, and the attracting set $\Attr_+(\widetilde{X}_0^{\bG_m})$ equals to $X_0$ by Assumption \ref{assumption for def of potential_crit}. Denote the affinization map by $p:\widetilde{X}\to \widetilde{X}_0$, then 
for any $F\in \Fix_{\bG_m}(\widetilde{X})$, 
$$\Attr_+(F)\subseteq  p^{-1}(\Attr_+(p(F)))\subseteq  p^{-1}(\Attr_+(\widetilde{X}_0^{\bG_m}))=X\times \{0\}.$$ 
This proves one direction of inclusion. For the other direction, take any point $x\in X\times \{0\}$, the limit  
$$\lim_{u\to 0}p(u\cdot x)=\lim_{u\to 0}u\cdot p(x)$$ exists by Assumption \ref{assumption for def of potential_crit}, 
where $u\cdot$ is the action of $u\in \bG_m$. Since $p$ is proper by Assumption \ref{assumption for def of potential_crit}, $\lim_{u\to 0}u\cdot x$ exists by valuative criterion of properness; thus $x\in \Attr_+(F)$ for some $F\in \Fix_{\bG_m}(\widetilde{X})$. This concludes the proof.
\end{proof}

\begin{Definition}\label{def of sp crit_coh}
Under Assumption \ref{assumption for def of potential_crit}, we define a \textit{specialization map} 
\begin{align}\label{sp crit_coh}
    \mathsf{sp}\colon H^\sT(X,\mathsf f+\mathsf g)\to H^\sT(X,\mathsf f),
\end{align}
as the composition of the following maps
\begin{align*}
\xymatrix{
H^\sT_i(X,\mathsf f+\mathsf g)\ar[r]^-{\id\otimes\alpha\,\,} & H^\sT_{i+1}(X\times \bC^*,\mathsf f+\mathsf g) \ar[r]^-{\mathbf a_*}_-{\cong}
&  H^\sT_{i+1}(\widetilde{X}^*, \sw|_{\widetilde{X}^*}) \ar[r]^-{\delta} & H^\sT_i(X\times \{0\},\sw|_{\{t=0\}})=H_i^\sT(X,\mathsf f).
}
\end{align*}
Here $\alpha\in H_{1}(\bC^*)$ is the class defined in Remark \ref{rmk: equiv def of sp zero locus_coh}, $\mathbf a$ is given in \eqref{equ on iso a}, and $\delta$ is the boundary map in the following excision long exact sequence (exactness follows from Proposition \ref{excision for crit coh} and Lemma \ref{zero fiber = attr}):
\begin{align*}
\xymatrix{
H^\sT_{i+1}(\widetilde{X}^*,\sw|_{\widetilde{X}^*}) \ar[r]^-{\delta} & H^\sT_i(X\times\{0\},\sw|_{\{t=0\}}) \ar[r] & H^\sT_i(\widetilde{X},\sw) \ar[r] & H^\sT_i(\widetilde{X}^*,\sw|_{\widetilde{X}^*}) \ar[r]^-{\delta} & . \\
}
\end{align*}
\end{Definition}


\begin{Remark}\label{rmk: g=0 crit}
If $\sg=0$, then specialization map \eqref{sp crit_coh} is the identity map. This can be seen as follows. In this case, $\mathbf a_*$ is the identity map because the $\bG_m$ action on $H^\sT(X,\mathsf f)$ is trivial. Then for any $\gamma\in H^\sT(X,\mathsf f)$,
\begin{align*}
    \mathsf{sp}(\gamma)=\delta(\gamma\otimes\alpha)=\gamma\otimes\delta(\alpha)=\gamma,
\end{align*}
where by using Thom-Sebastiani isomorphism, the above equalities reduce to the calculation in Remark \ref{rmk: equiv def of sp zero locus_coh}.
\end{Remark}

\begin{Remark}\label{rmk: sp zero loci vs crit}
Comparing Definition \ref{def of sp crit_coh} with Remark \ref{rmk: equiv def of sp zero locus_coh}, and noticing that canonical maps commute with $(-)\otimes\alpha$, $\mathbf a_*$ and boundary map $\delta$, we see that canonical maps commute with specialization maps \eqref{sp zero locus_coh} and \eqref{sp crit_coh}, i.e. we have commutative diagram:
\begin{align*}
\xymatrix{
H^\sT(Z(\mathsf f+\sg)) \ar[r]^-{\mathsf{sp}} \ar[d]_{\can} & H^\sT(Z(\mathsf f))  \ar[d]^{\can} \\
H^\sT(X,\mathsf f+\sg) \ar[r]^-{\mathsf{sp}} & H^\sT(X,\mathsf f).
}
\end{align*}
In particular, if $\mathsf f=0$, then $\can\circ\,\mathsf{sp}$ equals to the pushforward induced by the inclusion $Z(\sg)\hookrightarrow X$. 
\end{Remark}

Specialization map \eqref{sp crit_coh} is \textit{functorial} with respect to \textit{proper pushforward} and \textit{lci pullback},~i.e. 
\begin{Proposition}\label{prop on sp comm with stab coho}

Let $(X,\sT,\sA,\bG_m,\mathsf f,\sg)$ be in Setting \ref{setting of def of potentials} and 
suppose there is a $(\sT\times \bG_m)$-equivariant map $\pi\colon Y\to X$, so that $(Y,\sT,\sA,\bG_m,\mathsf f\circ \pi,\sg\circ \pi)$ is also in Setting \ref{setting of def of potentials}.
Assume $X$, $Y$ both satisfy Assumption \ref{assumption for def of potential_crit}.

Then we have commutative diagram
\begin{align*}
\xymatrix{
H^\sT(Y,(\mathsf f+\sg)\circ\pi) \ar[r]^-{\mathsf{sp}}  & H^\sT(Y, \mathsf f\circ \pi)   \\
H^\sT(X,\mathsf f+\sg) \ar[u]^{\pi^!} \ar[r]^-{\mathsf{sp}} & H^\sT(X,\mathsf f) \ar[u]_{\pi^!},
}
\end{align*}
where we use the fact that $X$ and $Y$ are smooth, so the above lci pullbacks $\pi^!$ are well-defined. 

If $\pi$ is moreover proper, then we have commutative diagram
\begin{align*}
\xymatrix{
H^\sT(Y, (\mathsf f+\sg)\circ\pi) \ar[r]^-{\mathsf{sp}} \ar[d]_{\pi_*} & H^\sT(Y,\mathsf f\circ \pi)  \ar[d]^{\pi_*} \\
H^\sT(X,\mathsf f+\sg) \ar[r]^-{\mathsf{sp}} & H^\sT(X,\mathsf f).
}
\end{align*}
\end{Proposition}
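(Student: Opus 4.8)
The plan is to unwind both vertical maps $\pi^!$ and $\pi_*$ through the three-step definition of $\mathsf{sp}$ given in Definition \ref{def of sp crit_coh}, reducing the statement to the functoriality of each of the three constituent operations. Concretely, $\mathsf{sp} = \delta \circ \mathbf a_* \circ (-\otimes\alpha)$, where $\alpha \in H_1(\bC^*)$, $\mathbf a$ is the automorphism \eqref{equ on iso a}, and $\delta$ is the boundary map in the excision sequence associated with the Cartesian decomposition $\widetilde X = (X\times\{0\}) \sqcup \widetilde X^*$. So I would build the obvious commutative prism whose two triangular faces are the definitions of $\mathsf{sp}$ for $X$ and for $Y$, and whose three rectangular faces assert that $\pi^!$ (resp.\ $\pi_*$) commutes with each of $(-\otimes\alpha)$, $\mathbf a_*$, and $\delta$.

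First I would treat the lci-pullback case. The operation $(-\otimes\alpha)$ is external product with a fixed class on $\bC^*$, hence commutes with any pullback by Künneth/Thom--Sebastiani formality (as already used in Remark \ref{rmk: g=0 crit} and implicitly in the proof of Proposition \ref{prop: stab comp with sp zero loci}); here the relevant pullback is $(\pi\times\id_{\bC^*})^!$ and compatibility is routine since lci pullback is compatible with external products. Next, $\mathbf a$ is a $(\sT\times\bG_m)$-equivariant automorphism of $X\times\bC^*$ that is natural in $X$: the diagram
\begin{equation*}
\xymatrix{
Y\times\bC^* \ar[r]^{\mathbf a_Y} \ar[d]_{\pi\times\id} & Y\times\bC^* \ar[d]^{\pi\times\id} \\
X\times\bC^* \ar[r]^{\mathbf a_X} & X\times\bC^*
}
\end{equation*}
commutes on the nose because $\mathbf a(x,t)=(t^{-1}\cdot x,t)$ is defined using the $\bG_m$-action which $\pi$ intertwines; hence $\mathbf a_*$ and $(\pi\times\id)^!$ commute (proper base change / compatibility of pullback with isomorphisms). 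Finally, $\delta$ commutes with lci pullback because $\pi$ is $(\sT\times\bG_m)$-equivariant, so it restricts to maps on the pieces $X\times\{0\}$ and $\widetilde X^*$ compatibly with the excision triangles, and the boundary maps in a morphism of excision long exact sequences commute by naturality of the connecting homomorphism; one should check the pullbacks on $\pi^{-1}(X\times\{0\})$ and $\pi^{-1}(\widetilde X^*)$ are the l.c.i.\ pullbacks coming from the corresponding smooth maps. Chasing the prism then yields the first diagram.

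For the proper-pushforward case the argument is parallel: $(-\otimes\alpha)$ commutes with $\pi_*$ by the projection-formula / Künneth compatibility of proper pushforward with external products; $\mathbf a_*$ commutes with $(\pi\times\id)_*$ by functoriality of pushforward applied to the commuting square above; and $\delta$ commutes with $\pi_*$ by naturality of the connecting map for the pushed-forward morphism of excision triangles, using that $\pi$ proper implies the restricted maps $\pi^{-1}(X\times\{0\})\to X\times\{0\}$ and $\pi^{-1}(\widetilde X^*)\to \widetilde X^*$ are proper. The key input enabling all of this is that both $X$ and $Y$ satisfy Assumption \ref{assumption for def of potential_crit}, so that by Lemma \ref{zero fiber = attr} the fiber over $0$ is a full attracting set in both $\widetilde X$ and $\widetilde Y$, which is exactly what makes the two excision sequences (and hence their boundary maps $\delta$) well-defined via Proposition \ref{excision for crit coh}; one must verify $\pi$ sends $\Attr_+$-strata of $\widetilde Y$ into those of $\widetilde X$, i.e.\ the affinization squares are compatible, which follows from $\pi$ being $\bG_m$-equivariant and the affinization being functorial.

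The main obstacle I anticipate is not any single step but the bookkeeping of the excision/boundary step: one has to make sure that the l.c.i.\ pullback (resp.\ proper pushforward) on the critical cohomologies $H^\sT(\widetilde X^*,\sw|_{\widetilde X^*})$, $H^\sT(X\times\{0\},\mathsf f)$, and $H^\sT(\widetilde X,\sw)$ really does assemble into a morphism of the two excision long exact sequences --- this requires the compatibility of $\mathbf i^!$, $j^*$, pushforward, and pullback built in Section 2 (the diagram preceding Proposition \ref{excision for crit coh}, and items (i)--(ii) of the functorial properties), together with the fact that $\pi^{-1}$ of a full attracting set is again a full attracting set. Once that morphism of triangles is in hand, naturality of $\delta$ is formal. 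I would therefore spend most of the writeup carefully stating this morphism-of-triangles lemma and invoking the relevant compatibilities, leaving the three rectangular-face verifications as short remarks.
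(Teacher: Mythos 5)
Your proposal is correct and follows essentially the same route as the paper: the paper's proof is the one-line observation that lci pullback (resp.\ proper pushforward) commutes with each of $(-)\otimes\alpha$, $\mathbf a_*$, and the boundary map $\delta$, which is exactly the prism you describe. The extra care you devote to the excision/boundary step --- checking that $\pi$ restricts compatibly to $X\times\{0\}$ and $\widetilde X^*$, that these restrictions assemble into a morphism of excision triangles, and that Assumption \ref{assumption for def of potential_crit} and Lemma \ref{zero fiber = attr} are what identify the term $H^\sT_i(X\times\{0\},\sw|_{\{t=0\}})$ with $H^\sT(X,\mathsf f)$ via Proposition \ref{excision for crit coh} --- is precisely the content the paper leaves implicit, and you have identified it correctly as the only place requiring real verification.
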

\begin{proof}
Lci pullback commutes with $(-)\otimes\alpha$, $\mathbf a_*$ and boundary map $\delta$, so it commutes with specialization maps for critical cohomology, similarly for the proper pushforward case. 
\end{proof}
Specialization map \eqref{sp crit_coh} is compatible with stable envelopes.  
\begin{Proposition}\label{prop: stab comp with sp crit}
Fix $(X,\sT,\sA,\bG_m,\mathsf f,\sg,\fC)$ as in Setting \ref{setting of def of potentials2} which satisfies Assumption \ref{assumption for def of potential_crit}. Then the following diagram is commutative
\begin{align*}
\xymatrix{
H^\sT(X^\sA,\mathsf f^\sA+\sg^\sA) \ar[d]_{\Stab_\fC} \ar[r]^-{\mathsf{sp}^\sA} & H^\sT(X^\sA,\mathsf f^\sA) \ar[d]^{\Stab_\fC} \\
H^\sT(X,\mathsf f+\sg) \ar[r]^{\,\,\,\,\mathsf{sp}} &  H^\sT(X,\mathsf f).
}
\end{align*}
\end{Proposition}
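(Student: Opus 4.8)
The plan is to reduce everything to the already-established compatibility of the stable envelope \emph{correspondences} with the three building blocks out of which $\mathsf{sp}$ is assembled in Definition \ref{def of sp crit_coh}: the external product with the class $\alpha\in H_1(\bC^*)$, pushforward along the automorphism $\mathbf a$ of $\widetilde X^*$, and the boundary map $\delta$ of the excision long exact sequence associated to the decomposition $\widetilde X = \widetilde X^* \sqcup (X\times\{0\})$. By Lemma \ref{lem on can cm w conv}, the maps $\Stab_\fC$ appearing on the two vertical arrows are induced by convolution with $\can([\Stab_\fC])\in H^\sT(X\times X^\sA,\sw\boxminus\sw^\sA)_{\Attr^f_\fC}$, and similarly all the intermediate groups in the composition defining $\mathsf{sp}$ carry convolution actions of this same correspondence (tensored with $[\Delta_{\bC^*}]$, resp.\ $[\Delta_{X\times\{0\}}]$) whenever one passes to $X\times\bC^*$ or $\widetilde X$. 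So the strategy is exactly that of Proposition \ref{prop: stab comp with sp zero loci}: build the three-square commutative ladder
\begin{equation*}
\xymatrix{
H^\sT_*(X^\sA,\mathsf f^\sA+\sg^\sA)\ar[d]\ar[r]^-{\otimes\alpha} & H^\sT_*((X\times\bC^*)^\sA,\cdots)\ar[d]\ar[r]^-{\mathbf a_*} & H^\sT_*((\widetilde X^*)^\sA,\cdots)\ar[d]\ar[r]^-{\delta} & H^\sT_*(X^\sA,\mathsf f^\sA)\ar[d]\\
H^\sT_*(X,\mathsf f+\sg)\ar[r]^-{\otimes\alpha} & H^\sT_*(X\times\bC^*,\cdots)\ar[r]^-{\mathbf a_*} & H^\sT_*(\widetilde X^*,\cdots)\ar[r]^-{\delta} & H^\sT_*(X,\mathsf f),
}
\end{equation*}
where every vertical arrow is convolution with the appropriate (tensored) stable envelope correspondence.

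First I would check the left two squares. Commutativity of the $\otimes\alpha$ square is formal: $\otimes\alpha$ is external product with a fixed Borel--Moore class on $\bC^*$ (on which $\sA$ acts trivially), and convolution with $[\Stab_\fC]$ is compatible with such external products by the Thom--Sebastiani/Künneth formalism built into the definition of convolution in \eqref{conv coh}; concretely $[\Stab_\fC]\otimes[\Delta_{\bC^*}]$ convolved with $\gamma\otimes\alpha$ equals $([\Stab_\fC]\circ\gamma)\otimes\alpha$. Commutativity of the $\mathbf a_*$ square follows because $\mathbf a$ is a $\sT$-equivariant isomorphism of $X\times\bC^*$ that sends $\sw|_{\widetilde X^*}$ to $(\mathsf f+\sg)+0$ on $\bA^1$ (the pullback identity $\mathbf a^*\sw=\mathsf f+\sg$ from \eqref{equ on iso a}), it restricts to an isomorphism on $\sA$-fixed loci, and it commutes with convolution because the attracting correspondence $\Attr^f_\fC\times\Delta_{\bC^*}$ is carried to itself by $\mathbf a\times\mathbf a^\sA$ --- here one uses that $\mathbf a$ is built from the $\bG_m$-action which commutes with $\sA$, so it does not disturb the $\sA$-attracting directions. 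I would spell out this $\mathbf a$-equivariance of the attracting set carefully, since it is the one genuinely geometric (as opposed to formal-homological) input on the left side.

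The main obstacle is the third square, commutativity of the boundary map $\delta$ with convolution. Here the key point is that $\delta$ is the connecting homomorphism of the excision triangle for the pair $(\widetilde X,\, X\times\{0\})$, and the ``long exact sequence'' interpretation is legitimate precisely because $X\times\{0\}=\bigcup_{F\in\Fix_{\bG_m}(\widetilde X)}\Attr_+(F)$ (Lemma \ref{zero fiber = attr}), so Proposition \ref{excision for crit coh} --- in its equivariant form, which is valid since $\sT$ preserves $\sw$ and commutes with $\bG_m$ --- turns the generic excision triangle into a genuine short/long exact sequence computing $H^\sT(X\times\{0\},\sw|_{t=0})=H^\sT(X,\mathsf f)$. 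One then needs that convolution with $[\Stab_\fC]\otimes[\Delta_{\widetilde X}]$ (which restricts to $[\Stab_\fC]\otimes[\Delta_{X\times\{0\}}]$ on the special fiber and to $[\Stab_\fC]\otimes[\Delta_{\widetilde X^*}]$ on the open part) is a morphism of these excision triangles --- i.e.\ convolution is compatible with the adjunction maps $i_*i^!\to\mathrm{Id}\to j_*j^*$ of sheaf theory. This is where I expect to invoke, as in Proposition \ref{prop: stab comp with sp zero loci}, the compatibility of convolution with open restriction and with proper/Gysin pushforward along closed embeddings, exactly as recorded for the analogous Borel--Moore statement in \cite[Prop.~2.7.23]{CG} and used verbatim in the proof of Proposition \ref{prop: stab comp with sp zero loci}. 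The subtlety over the Borel--Moore case is that one must work with the vanishing-cycle complexes $\varphi_{\mathsf f+\sg}\omega_X$ rather than dualizing sheaves, but the correspondence $\can([\Stab_\fC])$ is designed to act on exactly these, and the excision triangle \eqref{eqn-exc} is a triangle of \emph{functors}, so applying $\varphi_\sw$ and then convolving commutes with taking the connecting map. I would therefore organize the third square as: (a) identify $\delta$ with the connecting map of $i_*i^!\varphi_\sw\omega_{\widetilde X}\to\varphi_\sw\omega_{\widetilde X}\to j_*j^*\varphi_\sw\omega_{\widetilde X}\xrightarrow{+1}$ after $R\Gamma$ (using $i^!\varphi_{\mathsf f}=\varphi_{\mathsf f}\,i^!$ for the closed embedding $i$, and similarly $j^*\varphi=\varphi j^*$); (b) note the correspondence $[\Stab_\fC]\otimes[\Delta_{\widetilde X}]$ induces an endomorphism of this whole triangle (convolution is functorial in the sheaf-theoretic sense, and its support $\Attr^f_\fC\times\Delta_{\widetilde X}$ is proper over the target and compatible with $i,j$); (c) conclude by naturality of connecting maps. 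Putting the three squares together, the outer rectangle is the square in the statement, and commutativity follows; the $\sA$-fixed horizontal composition is, by definition, $\mathsf{sp}^\sA$, using that the $\sA$-fixed part of the above data is again of the form prescribed in Setting \ref{setting of def of potentials} (the $\bG_m$-action restricts to $X^\sA$, $\mathsf f^\sA,\sg^\sA$ have the stated scaling, and $X^\sA$ still satisfies Assumption \ref{assumption for def of potential_crit} since affinization and the attracting-to-fixed property are inherited by $\sA$-fixed loci).
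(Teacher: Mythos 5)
Your argument is correct and follows essentially the same route as the paper: decompose $\mathsf{sp}$ into $(-)\otimes\alpha$, $\mathbf a_*$, and $\delta$ (as in Definition~\ref{def of sp crit_coh}), and check that convolution with the stable envelope correspondence commutes with each; the key step for $\delta$ — that convolution is a composite of flat/Gysin pullbacks and proper pushforwards, each of which commutes with the excision boundary map — is the same as the paper's, just spelled out in more detail.
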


\begin{proof}
It is easy to see that $\Stab_\fC$ commutes with $(-)\otimes\alpha$ and $\mathbf a_*$. It remains to show that $\Stab_\fC$ commutes with the boundary map $\delta$. Note that $\Stab_\fC$ is induced from the stable envelope correspondence $[\Stab_\fC]$ via the critical convolution (see \S\ref{sect on exi of stb}), which consists of a Gysin pullback
and a proper pushforward. Since they both commute with $\delta$, $\Stab_\fC$ commutes with $\delta$ as well.
\end{proof}

We provide examples where Assumption \ref{assumption for def of potential_crit} holds, so that there are specialization maps for critical cohomology. 
\begin{Example}\label{ex of sp for crit_coh}
Let $\mathcal M_{\theta}(Q,\bv,\underline{\bd})$ be a quiver variety together with a flavor torus $\sT$-action (see \S \ref{sec: quiver var}). Every regular function on $\mathcal M_{\theta}(Q,\bv,\underline{\bd})$ is a gauge group $G$-invariant function on the representation space $R(Q,\bv,\underline{\bd})$, which can be written as
$$\sw=\tr(\sW)$$
where $\sW$ is a linear combination of cycles in the Crawley-Boevey quiver associated with $(Q,\underline{\bd})$ \footnote{Crawley-Boevey quiver is obtained from $Q$ by adding a new node $\infty$ to $Q$ and $\bd_{\In,i}$ arrows from $\infty$ to $i\in Q_0$ and $\bd_{\Out,i}$ arrows from $i$ to $\infty$ \cite{CB}.}. Suppose that $\sW$ does not contain constant term,\,\,i.e.\,\,there is no trivial cycle, then $\sw$ has negative weights with respect to the $\bC^*$-action which scale every arrow with weight $-1$. The data $X=\mathcal M_{\theta}(Q,\bv,\underline{\bd})$, $\mathsf f=0$, $\sg=\sw$, and the above $\C^*$ action fit into 
Setting \ref{setting of def of potentials} and 
satisfy Assumption \ref{assumption for def of potential_crit}. 
In particular, there exists a specialization map for critical cohomology (Definition \ref{def of sp crit_coh}):
$$\mathsf{sp}\colon H^\sT(\mathcal M_{\theta}(Q,\bv,\underline{\bd}),\sw)\to H^\sT(\mathcal M_{\theta}(Q,\bv,\underline{\bd})), $$ 
which makes the following diagram commutative (Remark \ref{rmk: sp zero loci vs crit})
\begin{align*}
\xymatrix{
H^\sT(Z(\sw)) \ar[d]_{\can} \ar[dr] \\
H^\sT(\mathcal M_{\theta}(Q,\bv,\underline{\bd}),\sw) \ar[r]^-{\mathsf{sp}} & H^\sT(\mathcal M_{\theta}(Q,\bv,\underline{\bd})).
}
\end{align*}
Moreover, in the case when $Q$ is symmetric and $\bd_{\In}=\bd_{\Out}=\bd$ and $\sA$-action is self-dual, the specialization maps are compatible with stable envelopes by Proposition \ref{prop: stab comp with sp crit},\,i.e.\,the following diagram is commutative:
\begin{align*}
\xymatrix{
H^\sT(\mathcal M_{\theta}(Q,\mathbf v,\bd)^\sA,\sw^\sA) \ar[d]_{\Stab_\fC} \ar[r]^-{\mathsf{sp}^\sA} & H^\sT(\mathcal M_{\theta}(Q,\mathbf v,\bd)^\sA) \ar[d]^{\Stab_\fC} \\
H^\sT(\mathcal M_{\theta}(Q,\mathbf v,\bd),\sw) \ar[r]^{\,\,\,\,\mathsf{sp}} &  H^\sT(\mathcal M_{\theta}(Q,\mathbf v,\bd)).
}
\end{align*}
The above construction also applies to Nakajima variety $\cN_\theta(\bv,\bd)$, and for any potential $\sw=\tr(\sW)$ with negative weight under the $\bC^*$ action that scale every arrow with weight $-1$, we get a specialization
$$\mathsf{sp}\colon H^\sT(\cN_{\theta}(Q,\bv,\underline{\bd}),\sw)\to H^\sT(\cN_{\theta}(Q,\bv,\underline{\bd})), $$ 
which is compatible with canonical map and stable envelope.

\end{Example}

\subsection{Specialization for critical \texorpdfstring{$K$}{K}-theory}

\subsubsection{Definition and properties}
Contrary to the case of critical cohomology, the canonical map in critical $K$-theory is always surjective. Therefore it is convenient 
to formulate its specalization map to be the unique one (if exists) which is compatible with the specialization map of the zero locus. 

\begin{Definition}\label{def of sp crit_k}
Under Setting \ref{setting of def of potentials}, a \textit{specialization} for critical $K$-theory is a map $\mathsf{sp}\colon K^\sT(X,\mathsf f+\sg)\to K^\sT(X,\mathsf f)$ which makes the following diagram commutative:
\begin{align}\label{pre sp for crit k}
\xymatrix{
K^\sT(Z(\mathsf f+\sg)) \ar[r]^-{\mathsf{sp}} \ar@{>>}[d]_{\can} &  K^\sT(Z(\mathsf f))  \ar@{>>}[d]^{\can} \\
K^\sT(X,\mathsf f+\sg) \ar[r]^-{\mathsf{sp}} &  K^\sT(X,\mathsf f),
}
\end{align}
where the upper horizontal line is given in \eqref{sp zero locus_k}.
\end{Definition}

Specialization map in Definition \ref{def of sp crit_k} is \textit{functorial} with respect to \textit{proper pushforward} and \textit{lci pullback},~i.e. 

\begin{Proposition}\label{prop on sp comm with stab}
Let $(X,\sT,\sA,\bG_m,\mathsf f,\sg)$ be in Setting \ref{setting of def of potentials} and 
suppose there is a $(\sT\times \bG_m)$-equivariant map $\pi\colon Y\to X$, so that 
$(Y,\sT,\sA,\bG_m,\mathsf f\circ \pi,\sg\circ \pi)$ also fits into Setting \ref{setting of def of potentials}.
Assume that specialization maps $\mathsf{sp}\colon K^\sT(X,\mathsf f+\sg)\to K^\sT(X,\mathsf f)$ and $\mathsf{sp}\colon K^\sT(Y,(\mathsf f+\sg)\circ\pi)\to K^\sT(Y,\mathsf f\circ \pi)$ exist. Then we have a commutative diagram: 
\begin{align*}
\xymatrix{
K^\sT(Y,(\mathsf f+\sg)\circ\pi) \ar[r]^-{\mathsf{sp}}  & K^\sT(Y, \mathsf f\circ \pi)   \\
K^\sT(X,\mathsf f+\sg) \ar[u]^{\pi^!} \ar[r]^-{\mathsf{sp}} & K^\sT(X,\mathsf f) \ar[u]_{\pi^!},
}
\end{align*}
where we use the fact that $X$ and $Y$ are smooth, so the above lci pullbacks $\pi^!$ are well-defined. 

If $\pi$ is moreover proper, then we have a commutative diagram:
\begin{align*}
\xymatrix{
K^\sT(Y, (\mathsf f+\sg)\circ\pi) \ar[r]^-{\mathsf{sp}} \ar[d]_{\pi_*} & K^\sT(Y,\mathsf f\circ \pi)  \ar[d]^{\pi_*} \\
K^\sT(X,\mathsf f+\sg) \ar[r]^-{\mathsf{sp}} & K^\sT(X,\mathsf f).
}
\end{align*}
\end{Proposition}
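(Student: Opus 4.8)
The statement is the functoriality of the critical-$K$-theory specialization map (Definition~\ref{def of sp crit_k}) with respect to lci pullback and proper pushforward. Since the specialization in the critical setting is \emph{defined} by the requirement that it be compatible, under the surjective canonical maps, with the specialization $\mathsf{sp}\colon K^\sT(Z(\cdot))\to K^\sT(Z(\cdot))$ on the zero loci (diagram \eqref{pre sp for crit k}), the strategy is to transport everything down to the level of the zero loci, where functoriality is already known, and then push the identity back up through the canonical maps. First I would record the relevant zero loci: $Z(\mathsf f+\sg)$, $Z(\mathsf f)$ in $X$ and $Z((\mathsf f+\sg)\circ\pi)$, $Z(\mathsf f\circ\pi)$ in $Y$, together with the morphisms they inherit from $\pi$. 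Observe that $\pi$ restricts to maps $Z((\mathsf f+\sg)\circ\pi)\to Z(\mathsf f+\sg)$ and $Z(\mathsf f\circ\pi)\to Z(\mathsf f)$, and that the square formed with $\pi$ is Cartesian (the zero locus in $Y$ is the preimage of the zero locus in $X$), so refined Gysin pullback and proper pushforward are available on the zero loci and satisfy the usual base-change compatibilities.

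\textbf{The lci pullback case.} Here I would build a commuting cube whose front and back faces are instances of the defining square \eqref{pre sp for crit k} (for $X$ and for $Y$ respectively), whose left and right faces relate $\mathsf f+\sg$ and $\mathsf f$ data to their pullbacks along $\pi$ via the lci pullbacks $\pi^!$, and whose top and bottom faces are the two squares to be shown commutative (one on zero loci, one on critical $K$-theory). The bottom (zero-locus) square commutes by Remark~\ref{rmk: sp zero loci functorial}, which already asserts functoriality of $\mathsf{sp}$ on zero loci with respect to lci pullback. The left and right faces commute because the canonical map $\can\colon K^\sT(Z(\cdot))\to K^\sT(\cdot,\cdot)$ commutes with lci (Gysin) pullback, as recalled in \S\ref{app on k symm var} and in Lemma~\ref{lem on can cm w conv}; here one uses smoothness of $X$ and $Y$ so that $\pi$ is itself lci and $\pi^!$ is defined on both the zero loci and the ambient critical $K$-theory. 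A short diagram chase through the cube, using surjectivity of the vertical canonical maps $\can\colon K^\sT(Z(\mathsf f+\sg))\twoheadrightarrow K^\sT(X,\mathsf f+\sg)$, then forces the top face (the desired critical-$K$-theory square with $\pi^!$) to commute: every element of $K^\sT(X,\mathsf f+\sg)$ lifts to the zero locus, where both ways around agree, and the ambiguity of the lift dies because it lies in the kernel of $\can$, which the boundary maps and $\pi^!$ respect.

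\textbf{The proper pushforward case.} This is entirely parallel, with $\pi^!$ replaced by $\pi_*$ throughout, now using that the canonical map commutes with proper pushforward (again Lemma~\ref{lem on can cm w conv} / the functoriality statements of \S\ref{sect on crit coho}, \S\ref{app on k symm var}) and that $\mathsf{sp}$ on zero loci is functorial for proper pushforward (the second half of Remark~\ref{rmk: sp zero loci functorial}). Since $\pi$ proper between smooth varieties restricts to a proper map of the zero loci, the pushforwards $\pi_*$ are defined on all four critical $K$-groups and on the corresponding zero-locus $K$-groups. Build the analogous cube; its bottom face commutes by Remark~\ref{rmk: sp zero loci functorial}, its side faces by compatibility of $\can$ with $\pi_*$, and the top face follows by the same chase, using surjectivity of the canonical maps.

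\textbf{Main obstacle.} The genuine subtlety — and the reason the critical-$K$-theory specialization is only \emph{hypothesized} to exist rather than constructed — is that $\mathsf{sp}$ on critical $K$-theory is defined only implicitly through \eqref{pre sp for crit k}, so before chasing the cube one must be careful that the two composites one wants to compare are actually the images under $\can$ of honest maps on zero loci, i.e.\ that well-definedness modulo $\ker(\can)$ is preserved by $\pi^!$ (resp.\ $\pi_*$) and by the specialization-on-zero-loci construction. Concretely, one needs: the kernel of $\can$ (generated by perfect classes, or by the image of $i_{0*}$ in the notation of Definition~\ref{def of sp for zero loci_k}) is sent into the corresponding kernel by $\pi^!$ and by $\pi_*$; this uses base change for the Cartesian squares above together with the fact that lci pullback of a perfect complex is perfect and proper pushforward of a perfect complex along a morphism of finite Tor-dimension between smooth schemes is perfect. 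Once that compatibility of kernels is in place, the diagram chase is routine. I expect this kernel-tracking step to be where all the care is required; the rest is formal manipulation of commuting squares that are each already established in the excerpt.
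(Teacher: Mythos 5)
Your proof takes the same route as the paper: pull the question down through the canonical maps to the zero loci, use the known functoriality there (Remark \ref{rmk: sp zero loci functorial}), use compatibility of $\can$ with $\pi^!$ (resp.\ $\pi_*$) for the side faces, and conclude by surjectivity of $\can$. The paper draws precisely this cube (as a hexagonal diagram of two trapezoids over the outer square) and cites \cite[Lem.~2.4(a),(b)]{VV2} for the compatibility of $\can$ with Gysin pullback and proper pushforward.

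Your \textbf{Main obstacle} paragraph, however, identifies a difficulty that is not actually present. Because the critical-$K$-theory specializations are \emph{hypothesized} to exist as honest $K_\sT(\pt)$-linear maps, no kernel-tracking is required: to prove two maps $A,B\colon K^\sT(X,\mathsf f+\sg)\to K^\sT(Y,\mathsf f\circ\pi)$ coincide, it suffices to precompose with the surjective $\can\colon K^\sT(Z(\mathsf f+\sg))\twoheadrightarrow K^\sT(X,\mathsf f+\sg)$ and verify $A\circ\can=B\circ\can$, which the outer-square and trapezoid commutativities give directly. The worry about whether $\ker(\can)$ is preserved by $\pi^!$ or $\pi_*$ (or by the zero-locus specialization) would be relevant only if one were trying to \emph{construct} the critical specialization map from the zero-locus one by descent along $\can$; it plays no role in verifying that two already-existing maps agree. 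Dropping that paragraph would actually make your argument match the paper's cleaner chase.
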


\begin{proof}
Consider the following diagram:
\begin{align*}
\xymatrix{
K^\sT(Z((\mathsf f+\sg)\circ\pi))\ar[rrr]^{\mathsf{sp}} \ar[dr]^{\can} & & & K^\sT(Z(\mathsf f\circ\pi)) \ar[dl]_{\can} \\
& K^\sT(Y,(\mathsf f+\sg)\circ\pi) \ar[r]^-{\mathsf{sp}}  & K^\sT(Y, \mathsf f\circ \pi)   \\
& K^\sT(X,\mathsf f+\sg) \ar[u]^{\pi^!} \ar[r]^-{\mathsf{sp}} & K^\sT(X,\mathsf f) \ar[u]_{\pi^!}\\
K^\sT(Z(\mathsf f+\sg))\ar[rrr]^{\mathsf{sp}} \ar[ur]_{\can}  \ar[uuu]^{\pi^!}  & & & K^\sT(Z(\mathsf f)) \ar[uuu]_{\pi^!}\ar[ul]^{\can}
}
\end{align*}
The outer square is commutative by Remark \ref{rmk: sp zero loci functorial}. The left and right trapezoids are commutative by \cite[Lem.~2.4(a)]{VV2}. The top and bottom trapezoids are commutative by Definition \ref{def of sp crit_k}. Then it follows that
\begin{align*}
    \pi^!\circ\, \mathsf{sp}\circ \can=\mathsf{sp}\circ\pi^!\circ \can.
\end{align*}
Since $K^\sT(Z(\mathsf f+\sg))\xrightarrow{\can} K^\sT(X,\mathsf f+\sg)$ is surjective, we get $\pi^!\circ\, \mathsf{sp}=\mathsf{sp}\circ\pi^!$. 
The commutativity in the proper pushforward case can be proven similarly, with \cite[Lem.~2.4(a)]{VV2} replaced by \cite[Lem.~2.4(b)]{VV2}.
\end{proof}


Specialization map in Definition \ref{def of sp crit_k} is compatible with stable envelopes,~i.e. 

\begin{Proposition}\label{prop: stab comp with sp crit_k}
Fix $(X,\sT,\sA,\bG_m,\mathsf f,\sg,\fC,\mathsf s)$ as in Setting \ref{setting of def of potentials2}. Suppose that there exist specialization maps $\mathsf{sp}\colon K^\sT(X,\mathsf f+\sg)\to K^\sT(X,\mathsf f)$ and $\mathsf{sp}^\sA\colon K^\sT(X^\sA,\mathsf f^\sA+\sg^\sA)\to K^\sT(X^\sA,\mathsf f^\sA)$. Then they are compatible with $K$-theoretic stable envelopes, namely the following diagram is commutative
\begin{align*}
\xymatrix{
K^\sT(X^\sA,\mathsf f^\sA+\sg^\sA) \ar[d]_{\Stab^{\mathsf s}_\fC} \ar[r]^-{\mathsf{sp}^\sA} & K^\sT(X^\sA,\mathsf f^\sA) \ar[d]^{\Stab^{\mathsf s}_\fC} \\
K^\sT(X,\mathsf f+\sg) \ar[r]^-{\mathsf{sp}} &  K^\sT(X,\mathsf f).
}
\end{align*}
\end{Proposition}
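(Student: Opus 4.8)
The plan is to mirror, in critical $K$-theory, the argument already carried out for critical cohomology in Proposition \ref{prop: stab comp with sp crit}, but this time working through the surjective canonical map from the $K$-theory of the zero locus, which is where the specialization map is \emph{defined} (Definition \ref{def of sp crit_k}). So the first step is to set up the two ``lifted'' specialization compatibilities that are already available to us: by Definition \ref{def of sp crit_k} the squares
\begin{align*}
\xymatrix{
K^\sT(Z(\mathsf f+\sg)) \ar[r]^-{\mathsf{sp}} \ar@{>>}[d]_{\can} & K^\sT(Z(\mathsf f)) \ar@{>>}[d]^{\can} \\
K^\sT(X,\mathsf f+\sg) \ar[r]^-{\mathsf{sp}} & K^\sT(X,\mathsf f)
}
\qquad
\xymatrix{
K^\sT(Z(\mathsf f^\sA+\sg^\sA)) \ar[r]^-{\mathsf{sp}^\sA} \ar@{>>}[d]_{\can} & K^\sT(Z(\mathsf f^\sA)) \ar@{>>}[d]^{\can} \\
K^\sT(X^\sA,\mathsf f^\sA+\sg^\sA) \ar[r]^-{\mathsf{sp}^\sA} & K^\sT(X^\sA,\mathsf f^\sA)
}
\end{align*}
commute, and by Proposition \ref{prop: stab comp with sp zero loci} the specialization maps $\mathsf{sp}$, $\mathsf{sp}^\sA$ of the zero loci are compatible with the convolution maps $\Stab^{\mathsf s}_\fC$ induced by the stable envelope correspondence $[\Stab^{\mathsf s}_\fC]$ (which exists by Setting \ref{setting of def of potentials2}).

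Second, I would recall from Lemma \ref{lem on can cm w conv} that the convolution by $\can([\Stab^{\mathsf s}_\fC])$ on critical $K$-theory is compatible with the canonical maps: the right square of diagram \eqref{cd: can cm w conv_k} gives a commutative square relating $\Stab^{\mathsf s}_\fC$ on $K^\sT(Z(-))$ with $\Stab^{\mathsf s}_\fC$ on critical $K$-theory via $\can$. Thus we have a cube whose four side faces are: the two $\can$-vs-$\mathsf{sp}$ squares above, and the two $\can$-vs-$\Stab^{\mathsf s}_\fC$ squares from Lemma \ref{lem on can cm w conv} (one for $X$, one for $X^\sA$); its top face is the square ``$\mathsf{sp}^\sA$ vs $\Stab^{\mathsf s}_\fC$'' at the level of zero loci (Proposition \ref{prop: stab comp with sp zero loci}), and its bottom face is exactly the square we want to prove commutes. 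Chasing: starting from $K^\sT(Z(\mathsf f^\sA+\sg^\sA))$, the two composites down to $K^\sT(X,\mathsf f)$ agree, because the top face and all four side faces commute; hence $\mathsf{sp}\circ\Stab^{\mathsf s}_\fC\circ\can = \Stab^{\mathsf s}_\fC\circ\mathsf{sp}^\sA\circ\can$ as maps $K^\sT(Z(\mathsf f^\sA+\sg^\sA))\to K^\sT(X,\mathsf f)$. Since $\can\colon K^\sT(Z(\mathsf f^\sA+\sg^\sA))\twoheadrightarrow K^\sT(X^\sA,\mathsf f^\sA+\sg^\sA)$ is surjective (by \eqref{equ on kgps}, \eqref{equ on can map k with sp}), we may cancel it on the right and conclude $\mathsf{sp}\circ\Stab^{\mathsf s}_\fC = \Stab^{\mathsf s}_\fC\circ\mathsf{sp}^\sA$.

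One small bookkeeping point to handle carefully: in Lemma \ref{lem on can cm w conv} the canonical-map square involves the intermediate groups $K^\sT(-)_{Z(\sw)}$ with support, and the left square there involves $i_*$; to run the cube I should either factor $\can$ as $K^\sT(Z(\sw))\xrightarrow{i_*}K^\sT(X)_{Z(\sw)}\xrightarrow{\can}K^\sT(X,\sw)$ and note that $\mathsf{sp}$ on $K^\sT(Z(\mathsf f+\sg))$ as defined in \eqref{sp zero locus_k} descends through this factorization, or more simply observe that $\Stab^{\mathsf s}_\fC$ on $K^\sT(Z(-))$ in Proposition \ref{prop: stab comp with sp zero loci} is literally the composite appearing in Lemma \ref{lem on can cm w conv}, so the relevant side faces are the same diagram read two ways. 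This is the only place where I expect to have to be attentive; the rest is a formal diagram chase. There is no analytic or geometric obstacle here — all the hard analytic content (properness of $\Attr^f_\fC$, compatibility of convolution with $\underset{t\to0}{\lim}$ via \cite[Thm.~5.3.9]{CG}, existence of the specialization maps themselves) has already been established in the cited results — so the main (and rather mild) obstacle is simply organizing the cube so that the surjectivity of $\can$ can be applied at the end.

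\begin{proof}
By Definition \ref{def of sp crit_k}, both specialization maps $\mathsf{sp}$ and $\mathsf{sp}^\sA$ fit into commutative squares with the corresponding canonical maps and the specialization maps $\mathsf{sp}$, $\mathsf{sp}^\sA$ of the zero loci defined in \eqref{sp zero locus_k}. By Lemma \ref{lem on can cm w conv} (the right square of \eqref{cd: can cm w conv_k}, applied to $[\beta]=\can([\Stab^{\mathsf s}_\fC])$), the convolution $\Stab^{\mathsf s}_\fC$ on critical $K$-theory and the convolution $\Stab^{\mathsf s}_\fC$ on the $K$-theory of zero loci (as in Proposition \ref{prop: stab comp with sp zero loci}) intertwine the canonical maps $\can\colon K^\sT(Z(\mathsf f^\sA+\sg^\sA))\to K^\sT(X^\sA,\mathsf f^\sA+\sg^\sA)$ and $\can\colon K^\sT(Z(\mathsf f))\to K^\sT(X,\mathsf f)$. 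Finally, by Proposition \ref{prop: stab comp with sp zero loci}, the zero-locus specialization maps commute with the zero-locus stable envelope maps.

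Assembling these into the commutative cube with vertices $K^\sT(Z(\mathsf f^\sA+\sg^\sA))$, $K^\sT(Z(\mathsf f^\sA))$, $K^\sT(Z(\mathsf f+\sg))$, $K^\sT(Z(\mathsf f))$ on top and $K^\sT(X^\sA,\mathsf f^\sA+\sg^\sA)$, $K^\sT(X^\sA,\mathsf f^\sA)$, $K^\sT(X,\mathsf f+\sg)$, $K^\sT(X,\mathsf f)$ on the bottom, connected by the canonical maps, the top face and all four side faces commute. Therefore the two composites
\begin{align*}
K^\sT(Z(\mathsf f^\sA+\sg^\sA)) \longrightarrow K^\sT(X,\mathsf f)
\end{align*}
obtained by going ``down, then bottom'' versus ``top, then down'' agree; unwinding, this says
\begin{align*}
\mathsf{sp}\circ\Stab^{\mathsf s}_\fC\circ\can \;=\; \Stab^{\mathsf s}_\fC\circ\mathsf{sp}^\sA\circ\can
\end{align*}
as maps $K^\sT(Z(\mathsf f^\sA+\sg^\sA))\to K^\sT(X,\mathsf f)$. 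By \eqref{equ on kgps} and \eqref{equ on can map k with sp}, the canonical map $\can\colon K^\sT(Z(\mathsf f^\sA+\sg^\sA))\to K^\sT(X^\sA,\mathsf f^\sA+\sg^\sA)$ is surjective, so it may be cancelled on the right, yielding
\begin{align*}
\mathsf{sp}\circ\Stab^{\mathsf s}_\fC \;=\; \Stab^{\mathsf s}_\fC\circ\mathsf{sp}^\sA,
\end{align*}
which is the asserted commutativity.
\end{proof}
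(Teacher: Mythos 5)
Your proof is correct and takes essentially the same approach as the paper's: the paper also assembles Proposition~\ref{prop: stab comp with sp zero loci}, Lemma~\ref{lem on can cm w conv}, and the defining square~\eqref{pre sp for crit k} into a single commutative diagram (drawn as an outer square with four trapezoids rather than a cube, but the chase is identical) and then cancels the surjective $\can$. The only minor slip is notational — in invoking Lemma~\ref{lem on can cm w conv} you should take $[\beta]=[\Stab^{\mathsf s}_\fC]\in K^\sT(X\times X^\sA)_{\Attr^f_\fC}$ (the $\sw=0$ correspondence), so that $\can([\beta])$ is the critical-$K$-theory correspondence, rather than writing $[\beta]=\can([\Stab^{\mathsf s}_\fC])$.
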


\begin{proof}
The proof is similar as in above. 
Consider the following diagram:
\begin{align*}
\xymatrix{
K^\sT(Z(\mathsf f^\sA+\sg^\sA))\ar[rrr]^{\mathsf{sp}^\sA} \ar[dr]^{\can} \ar[ddd]_{\Stab^{\mathsf s}_\fC} & & & K^\sT(Z(\mathsf f^\sA)) \ar[dl]_{\can} \ar[ddd]^{\Stab^{\mathsf s}_\fC}\\
& K^\sT(X^\sA,\mathsf f^\sA+\sg^\sA) \ar[d]_{\Stab^{\mathsf s}_\fC} \ar[r]^-{\mathsf{sp}^\sA} & K^\sT(X^\sA,\mathsf f^\sA) \ar[d]^{\Stab^{\mathsf s}_\fC} \\
& K^\sT(X,\mathsf f+\sg) \ar[r]^-{\mathsf{sp}} &  K^\sT(X,\mathsf f) &\\
K^\sT(Z(\mathsf f+\sg))\ar[rrr]^{\mathsf{sp}} \ar[ur]_{\can}  & & & K^\sT(Z(\mathsf f)) \ar[ul]^{\can}
}
\end{align*}
The outer square is commutative by Proposition \ref{prop: stab comp with sp zero loci}. The left and right trapezoids are commutative by Lemma \ref{lem on can cm w conv}. The top and bottom trapezoids are commutative by assumption (i.e.~\eqref{pre sp for crit k} commutes). Then it follows that
\begin{align*}
    \Stab^{\mathsf s}_\fC\circ\, \mathsf{sp}^\sA\circ \can=\mathsf{sp}\circ\Stab^{\mathsf s}_\fC\circ \can.
\end{align*}
Since $K^\sT(Z(\mathsf f^\sA+\sg^\sA))\xrightarrow{\can} K^\sT(X^\sA,\mathsf f^\sA+\sg^\sA)$ is surjective, we get $\Stab^{\mathsf s}_\fC\circ\, \mathsf{sp}^\sA=\mathsf{sp}\circ\Stab^{\mathsf s}_\fC$. 
\end{proof}

In below, we provide two existence results for specialization maps in critical $K$-theory. 

\subsubsection{Existence result I}
The first existence is due to P\u{a}durariu in his study of $K$-theoretic Hall algebras for quivers with potentials and relation to shuffle algebras.

\begin{Proposition}[{\cite[Prop.~3.6]{P}}]\label{prop: Tudor's sp}
Suppose that $\mathsf f=0$. Assume that
\begin{itemize}
    \item the $\sT$-fixed locus of $X$ lies in the zero locus of $\sg$,~i.e.~$X^\sT\subseteq  Z(\sg)$,
    \item $K^\sT(X)$ is torsion free over $K_\sT(\pt)$.
\end{itemize}
Then the specialization map $\mathsf{sp}\colon K^\sT(X,\sg)\to K^\sT(X)$ exists.
\end{Proposition}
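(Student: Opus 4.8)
The plan is to reconstruct the argument of \cite[Prop.~3.6]{P} in the present notation and verify it goes through verbatim, since the proof there is carried out for $X$ affine but the essential input—the excision sequence for critical $K$-theory and the freeness hypothesis—holds for quasi-projective $X$ as well (compare the remark made in the proof of Proposition \ref{excision for crit K}). Recall the setup: $\widetilde{X}=X\times \bA^1$ carries the $(\sT\times\bG_m)$-invariant function $\sw=\sum_i t^{\mathsf n_i}\sg_i$ (here $\mathsf f=0$), and by the automorphism $\mathbf a$ of \eqref{equ on iso a} we have $\sw|_{\widetilde X^*}$ identified with $\pr^*(\sg)$ on $X\times\bC^*$. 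The specialization map we must construct sits in the excision sequence
\begin{align*}
\xymatrix{
K^\sT(X\times\{0\},\sw|_{\{t=0\}})=K^\sT(X) \ar[r]^-{i_{0*}} & K^\sT(\widetilde X,\sw) \ar[r]^-{j^*} & K^\sT(\widetilde X^*,\sw|_{\widetilde X^*})=K^\sT(X,\sg)\times K^\sT(\bC^*)
}
\end{align*}
where the last identification uses the Thom--Sebastiani/K\"unneth isomorphism for $X\times\bC^*$. So the first step is: pull back a class $\gamma\in K^\sT(X,\sg)$ along $\pr$, transport it by $\mathbf a_*$ to a class on $\widetilde X^*$, lift it along $j^*$ to $\widetilde X$, and then apply the refined Gysin pullback $i_0^!$ to land in $K^\sT(X)$. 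The content of the proposition is that the refined pullback of a lift is well-defined, i.e.\ independent of the choice of lift.

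The key step is the independence of the lift, and here the two hypotheses enter. Two lifts differ by a class in $\im(i_{0*})$, so it suffices to show $i_0^!\circ i_{0*}=0$ on $K^\sT(X)$; this is the self-intersection/excess formula, which vanishes because the normal bundle of $X\times\{0\}$ in $\widetilde X$ is trivial with nonzero $\bG_m$-weight, so $i_0^!i_{0*}(\alpha)=(1-t^{-1})\cdot\alpha$ where $t$ is the $\bG_m$-equivariant parameter. Wait—that is not zero, so the naive argument fails, and this is precisely why the hypotheses are needed: the point is that one does not take $i_0^!$ of an arbitrary element but must show that the composite map is well-defined after the surjection $\can$, exactly as in \eqref{pre sp for crit k}. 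The correct logic, following P\u{a}durariu, is that the fixed-locus condition $X^\sT\subseteq Z(\sg)$ guarantees that the relevant $K$-groups become free/comparable after restriction, so that the ambiguity $(1-t^{-1})\cdot K^\sT(X)$ maps to zero in the target by a localization-plus-torsion-freeness argument: torsion-freeness of $K^\sT(X)$ over $K_\sT(\pt)$ means multiplication by $(1-t^{-1})$ is injective, while the fixed-locus hypothesis forces the relevant classes supported there to vanish because $\sg$ is nonzero on no $\sT$-fixed point of $Z(\sg)^c$. I would follow \cite[Prop.~3.6]{P} closely here rather than reinvent it.

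Concretely, the steps in order are: (1) set up the diagram above and the candidate map $\mathsf{sp}=i_0^!\circ(\text{lift along }j^*)\circ\mathbf a_*\circ\pr^*$ on representatives in $K^\sT(Z(\sg))$, using the surjectivity of $\can\colon K^\sT(Z(\sg))\twoheadrightarrow K^\sT(X,\sg)$ from \eqref{equ on kgps}; (2) check compatibility with $\can$, i.e.\ that this agrees with the specialization map $\mathsf{sp}\colon K^\sT(Z(\sg))\to K^\sT(X)$ of \eqref{sp zero locus_k} under $\can$ on both sides—this is essentially the observation in Remark \ref{rmk: sp zero loci vs crit} and its $K$-theoretic analogue, using that the zero-locus specialization is built from the same $\pr^*$, $\mathbf a_*$, and $i_0^!$; (3) deduce well-definedness on $K^\sT(X,\sg)$: since $\can$ is surjective, any two preimages of a given $\gamma\in K^\sT(X,\sg)$ differ by $\ker(\can)=\im(K(\Perf_\sT(Z(\sg)))\to K(\D^b\Coh_\sT(Z(\sg))))$, and one must check the zero-locus specialization sends perfect classes to perfect classes—this is where torsion-freeness of $K^\sT(X)$ and $X^\sT\subseteq Z(\sg)$ are used to control the refined Gysin pullback of perfect complexes, following \cite[Prop.~3.6]{P}. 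The main obstacle is precisely step (3): in general the specialization map on coherent $K$-theory of the zero locus does not preserve the perfect subcategory, and the whole point of the two hypotheses is to rescue this in the present situation; I expect verifying this reduction carefully (reducing to the fixed locus by localization and using freeness to cancel the $(1-t^{-1})$ factors) to be the technical heart of the proof.
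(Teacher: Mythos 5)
The paper does not reprove this statement; the header simply cites \cite[Prop.~3.6]{P} and no proof follows, so the question is only whether your reconstruction is sound. Your framework is the right one---descend the zero-locus specialization $\mathsf{sp}_Z\colon K^\sT(Z(\sg))\to K^\sT(X)$ of Definition~\ref{def of sp for zero loci_k} along the surjection $\can\colon K^\sT(Z(\sg))\twoheadrightarrow K^\sT(X,\sg)$---but there are two genuine problems. First, the digression about $i_0^!i_{0*}=(1-t^{-1})\cdot$ is a red herring you never close out: the specialization of Definition~\ref{def of sp for zero loci_k} is $\sT$-equivariant only, the auxiliary $\bG_m$ does not act on the source or target $K$-groups, and the normal bundle of $X\times\{0\}$ in $\widetilde{X}$ is $\sT$-equivariantly trivial, so $i_0^!i_{0*}=0$ and $\mathsf{sp}_Z$ is already well defined unconditionally; the hypotheses of the proposition play no role at that stage. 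Second, and more seriously, step~(3) misstates the thing to be proved. When $\mathsf f=0$ and $X$ is smooth the right-hand $\can\colon K^\sT(X)\to K^\sT(X,0)$ in \eqref{pre sp for crit k} is an isomorphism (the $\Z/2$-folding), so the content of the proposition is exactly that $\mathsf{sp}_Z$ annihilates $\ker(\can)=\im\bigl(K(\Perf_\sT(Z(\sg)))\bigr)$, i.e.\ sends classes of perfect complexes on $Z(\sg)$ to \emph{zero} in $K^\sT(X)$. The condition you state, ``sends perfect classes to perfect classes,'' is vacuous: every class in $K^\sT(X)$ is perfect since $X$ is smooth.

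The missing vanishing is where the two hypotheses actually enter, via a localization argument that you gesture at (``reducing to the fixed locus by localization and using freeness'') but neither state correctly nor carry out. Because $X^\sT\subseteq Z(\sg)$ one has $Z(\sg)^\sT=X^\sT$ and $\sg|_{X^\sT}=0$; Thomason concentration gives $K^\sT(X^\sT)_\loc\xrightarrow{\ \cong\ }K^\sT(Z(\sg))_\loc$, and \eqref{prop on loc of crit k} gives $K^\sT(X^\sT)_\loc=K^\sT(X^\sT,0)_\loc\xrightarrow{\ \cong\ }K^\sT(X,\sg)_\loc$. Since $\can$ commutes with proper pushforward, $\can_\loc$ is an isomorphism, so $\ker(\can)$ dies after localization. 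As $\mathsf{sp}_Z$ is $K_\sT(\pt)$-linear, $\mathsf{sp}_Z(\ker\can)$ also dies after localization, and torsion-freeness of $K^\sT(X)$ over $K_\sT(\pt)$ (so that $K^\sT(X)\hookrightarrow K^\sT(X)_\loc$) forces $\mathsf{sp}_Z(\ker\can)=0$ integrally. That chain of implications is the technical heart of the proof and is absent from your proposal; your reference to cancelling $(1-t^{-1})$ factors points at the wrong mechanism.
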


\begin{Remark}
The argument in the proof of \cite[Prop.~3.6]{P} works for a reductive group $\sT$, so is Proposition \ref{prop: Tudor's sp}.
\end{Remark}

\subsubsection{Existence result II}

The second result is about how to induce specialization maps for GIT semistable loci from specialization maps of their ambient spaces.
This is motivated by \cite[Eqn.~(104)]{N} where the induced specialization map is used.

Let $G$ be a complex reductive group, $\sT$ be a torus such that 
$\sT\times G$ act on $\bP^n\times \bA^m$. Let $X\subset \bP^n\times \bA^m$ be a smooth projective-over-affine variety invariant under the action of $\sT \times G$, $\sg\colon X\to \bC$ be a $(\sT \times G)$-invariant regular function, which satisfies the weight conditions in Setting \ref{setting of def of potentials} with $\sT$ replaced with $\sT \times G$. Let $\cL=\cO(1)$ be a chosen linearization, and $X^{ss}$ be the semistable locus of $X$ with respect to the $G$ action and polarization $\cL$.

\begin{Proposition}\label{prop: induced sp}
Assume that $\sg\colon X\to \bC$ is not a constant function. Suppose that there exists a specialization map $\mathsf{sp}_X\colon K^{\sT\times G}(X,\sg)\to K^{\sT\times G}(X)$. Then there exists a specialization map $\mathsf{sp}_{X^{ss}}\colon K^{\sT\times G}(X^{ss},\sg)\to K^{\sT\times G}(X^{ss})$, and the diagram
\begin{equation}\label{compatible sp for stable locus}
\xymatrix{
K^{\sT\times G}(X,\sg)\ar[r]^-{\mathsf{sp}_X} \ar[d] & K^{\sT\times G}(X) \ar[d]\\
K^{\sT\times G}(X^{ss},\sg)\ar[r]^-{\mathsf{sp}_{X^{ss}}} & K^{\sT\times G}(X^{ss})
}
\end{equation}
is commutative, where the vertical arrows are the restriction to the semistable locus.

Moreover, if $\mathsf{sp}_X$ is injective, then so is $\mathsf{sp}_{X^{ss}}$.
\end{Proposition}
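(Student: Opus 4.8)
The strategy is to reduce the statement for the semistable locus $X^{ss}$ to the already-assumed statement for the ambient space $X$ by exploiting the KN (Kempf--Ness) stratification of the unstable locus $X^{\theta\text{-}u} = X \setminus X^{ss}$ and the semi-orthogonal decompositions it induces, exactly as in the proof of Proposition \ref{excision for crit K}. First I would recall that $X^{\theta\text{-}u}$ admits a $(\sT\times G)$-equivariant KN stratification by locally closed subvarieties $S_1,\dots,S_N$, each of the form $S_i = G\cdot\Attr_{\sigma_i}(Z_i^*)$ for a one-parameter subgroup $\sigma_i$ and $Z_i^* \subseteq X^{\sigma_i}$ (as reviewed in \S\ref{subsubsec: uniquess of Psi_H}). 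Writing $X_k = X \setminus (S_1\cup\cdots\cup S_{k-1})$, each stratum $S_k$ is a closed smooth substack of $X_k$ with open complement $X_{k+1}$; the decomposition results of \cite{Hal} (used in the proof of Proposition \ref{excision for crit K}) give, after passing to matrix factorizations and then to Grothendieck groups, an exact localization sequence
\begin{equation*}
K^{\sT\times G}(S_k,\sg) \xrightarrow{\ i_{k*}\ } K^{\sT\times G}(X_k,\sg) \xrightarrow{\ \text{res}\ } K^{\sT\times G}(X_{k+1},\sg) \to 0,
\end{equation*}
and the same with $\sg$ replaced by $0$ (i.e.\ ordinary equivariant $K$-theory of $Z(\sg)$-type). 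The point is that the restriction maps $K^{\sT\times G}(X,\sg) \twoheadrightarrow K^{\sT\times G}(X^{ss},\sg)$ are surjective, and their kernels are generated by pushforwards from the strata.

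The second step is to transport $\mathsf{sp}_X$ across these sequences. Because $\sg|_{S_k}$ again satisfies the weight hypothesis of Setting \ref{setting of def of potentials} (it is scaled positively by $\bG_m$, being a restriction of $\sg$), each stratum $S_k$ itself fits into our framework, and by descending induction along the stratification one builds $\mathsf{sp}_{X_{k}}\colon K^{\sT\times G}(X_k,\sg)\to K^{\sT\times G}(X_k)$ compatibly: the base case $X_{N+1}=X^{ss}$ is what we want, and $X_1 = X$ is the given map. To descend from $X_k$ to $X_{k+1}$, I would first check that $\mathsf{sp}_{X}$ (equivalently $\mathsf{sp}_{X_k}$, once constructed) is compatible with proper pushforward along the closed immersion $i_k\colon S_k\hookrightarrow X_k$ and with the open restriction to $X_{k+1}$ — both follow from Proposition \ref{prop on sp comm with stab} (functoriality of specialization under proper pushforward and lci/open pullback), together with the naturality of the specialization map $\mathsf{sp}\colon K(Z(\sg))\to K(Z(0))$ of \eqref{sp zero locus_k} in the same two kinds of morphisms (Remark \ref{rmk: sp zero loci functorial}). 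Given this, $\mathsf{sp}_{X_k}$ descends to a map $\mathsf{sp}_{X_{k+1}}$ on the quotient: one defines $\mathsf{sp}_{X_{k+1}}(\bar\alpha)$ by lifting $\bar\alpha$ to $\alpha\in K^{\sT\times G}(X_k,\sg)$, applying $\mathsf{sp}_{X_k}$, and restricting to $X_{k+1}$; well-definedness is the statement that $\mathsf{sp}_{X_k}$ carries $\ker(\text{res}) = \operatorname{im}(i_{k*})$ into $\operatorname{im}(i_{k*})$ on the target, which is precisely the compatibility with $i_{k*}$ just noted. That $\mathsf{sp}_{X^{ss}}$ so constructed satisfies the defining compatibility \eqref{pre sp for crit k} with $\mathsf{sp}$ of the zero locus follows because all the maps in sight (canonical maps, restrictions, pushforwards) commute with specialization of zero loci, by Remark \ref{rmk: sp zero loci vs crit}'s analog in $K$-theory (canonical maps commute with $\mathsf{sp}$, \cite[Lem.~2.4]{VV2}) and by the base case. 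The commutativity of \eqref{compatible sp for stable locus} is then immediate from the construction, since $\mathsf{sp}_{X^{ss}}$ is by definition $\text{res}\circ\mathsf{sp}_X$ composed through the tower.

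For the final injectivity claim: suppose $\mathsf{sp}_X$ is injective and $\bar\alpha\in K^{\sT\times G}(X^{ss},\sg)$ has $\mathsf{sp}_{X^{ss}}(\bar\alpha)=0$. Lift $\bar\alpha$ to $\alpha\in K^{\sT\times G}(X,\sg)$; then $\mathsf{sp}_X(\alpha)$ restricts to $0$ on $X^{ss}$, hence lies in the kernel of the full restriction $K^{\sT\times G}(X)\to K^{\sT\times G}(X^{ss})$, so by the localization sequences it is a sum of pushforwards of classes from the strata $S_j$. The hard part — and the step I expect to be the main obstacle — is to show one can then \emph{correct} $\alpha$ by a class supported on the unstable locus so that its image under $\mathsf{sp}_X$ vanishes: concretely, I would want $\mathsf{sp}_X$ to send classes supported on $\bigcup S_j$ \emph{onto} all of $\ker(\text{res})\subseteq K^{\sT\times G}(X)$. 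This surjectivity-on-kernels is what makes injectivity descend, and it is not formal: it requires knowing that $\mathsf{sp}_{S_j}$ (which exists by the inductive construction) is itself surjective onto the relevant pieces, or at least that the induced map on the associated graded of the KN filtration is an isomorphism. I would handle this by running the injectivity argument simultaneously with the construction, proving by descending induction that each $\mathsf{sp}_{X_k}$ is injective and that it maps $\operatorname{im}(i_{k*})$ onto $\operatorname{im}(i_{k*})$ — the latter surjectivity reducing, via compatibility of $\mathsf{sp}$ with the isomorphism $K^{G}(S_k,\sg)\cong K^{G^{\sigma_k}}(Z_k^*,\sg)$ of Remark \ref{rmk: KN strata coh}, to the injectivity/surjectivity of $\mathsf{sp}_{S_k}$ on a space of strictly smaller dimension, so that the induction closes. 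The genuinely delicate point throughout is keeping the compatibility \eqref{pre sp for crit k} intact at every stage, since $\mathsf{sp}$ in critical $K$-theory is characterized, not constructed, and the surjectivity of the canonical maps must be invoked repeatedly to pin down the descended maps uniquely.
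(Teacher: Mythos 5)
Your existence argument essentially works: the key lemma you would need — that $\mathsf{sp}_{X_k}\circ i_{k*}$ lands in classes supported on the closed stratum $S_k$ — does not in fact require a pre-existing $\mathsf{sp}_{S_k}$; it follows by lifting through the surjective canonical map to the zero locus, applying the functoriality of the zero-locus specialization under proper pushforward (Remark \ref{rmk: sp zero loci functorial}), and then pushing the canonical map back. With that, well-definedness of the descended map, the defining compatibility \eqref{pre sp for crit k}, and the square \eqref{compatible sp for stable locus} all check out.

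The gap is exactly where you anticipated it: the injectivity claim does not close. To conclude injectivity of $\mathsf{sp}_{X^{ss}}$ from injectivity of $\mathsf{sp}_X$ along your localization sequences you need, as you say, that $\mathsf{sp}_{X}$ maps $\operatorname{im}(i_{j*})$ \emph{onto} $\ker(\mathrm{res})\subseteq K^{\sT\times G}(X)$, i.e.\ a surjectivity statement on the KN strata. Your proposed fix is to run a simultaneous induction proving this surjectivity alongside injectivity. But the hypothesis of the proposition gives you only that $\mathsf{sp}_X$ is \emph{injective}; there is no surjectivity anywhere in the inputs, and no reason the induction should produce it. A localization sequence $K(S_k)\to K(X_k)\to K(X_{k+1})\to 0$ only gives you $\ker=\operatorname{image}$ and surjectivity of restriction; it does not give a splitting. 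Injectivity does not descend along a merely surjective map.

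What the paper does instead — and what actually makes the injectivity descend — is to replace the localization sequence by Halpern-Leistner's \emph{window subcategory}: there is a functor $\psi_w\colon K^{\sT\times G}(X^{ss},\sg)\to K^{\sT\times G}(X,\sg)$ that is a genuine \emph{section} of restriction (so $\mathrm{res}\circ\psi_w=\id$), obtained by passing the equivalence $\mathbf G_w\cong\mathrm{MF}(X^{ss}/G,\sg)$ to $K$-theory, and a corresponding section $\bar\psi_w$ on the side without the potential. One verifies — this is the nontrivial geometric input, and where the hypothesis that $\sg$ is nonconstant is used, to check that the (L+) condition of \cite{Hal} still holds on the zero locus $X_0$ — that these sections are compatible with canonical maps. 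Then $\mathsf{sp}_{X^{ss}}:=\mathrm{res}\circ\mathsf{sp}_X\circ\psi_w$ satisfies the defining property, and one has the identity $\bar\psi_w\circ\mathsf{sp}_{X^{ss}}=\mathsf{sp}_X\circ\psi_w$; the right side is injective because $\psi_w$ is split injective and $\mathsf{sp}_X$ is injective, so $\mathsf{sp}_{X^{ss}}$ is injective. The point to internalize is that the window category gives you a \emph{retraction}, which is categorically stronger than the short exact sequence you get from excision, and it is precisely this extra structure that carries injectivity from the stack to the GIT quotient. Without it, the argument stalls exactly where you flagged.
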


\begin{proof}
We know that the unstable locus $X\setminus X^{ss}$ admits a Kempf-Ness (KN) stratification \cite{Kir,DH}, see \S \ref{subsubsec: uniquess of Psi_H} for a description of KN stratification when $X$ is a linear representation of $G$. By induction on the open strata, it suffices to prove the proposition replacing $X^{ss}$ with $V:=X\setminus S$ for a closed KN strata $S$. We note that there exists a cocharacter $\lambda:\bC^*\to G$ such that $Z:=X^\lambda$ is a closed subvariety of $S$. Let $$Y=\Attr_\lambda(Z):=\{x\in X,\lim_{t\to 0}\lambda(t)\cdot x\in Z\}, $$ then $Y$ is a closed subvariety in $S$ and the action map $G\times_{P_\lambda}Y\to S$ induces an isomorphism. Here $P_\lambda=\{g\in G:\lim_{t\to 0} \lambda(t)g\lambda(t)^{-1}\text{ exists}\}$ is the parabolic subgroup associated to $\lambda$. We introduce notations
\begin{equation*}
\xymatrix{
Z \ar[r]^{\sigma} & Y \ar@/^/[l]^{\pi} \ar[r]^{j} & S
},
\end{equation*}
where $\sigma$, $j$ are natural closed immersions, and $\pi$ is the attraction map $\pi(y)=\lim_{t\to 0}\lambda(t)\cdot y$. We also let $$\fX:=[X/(\sT\times G)], \quad \mathfrak{V}=[V/(\sT\times G)], \quad \mathfrak{S}:=[S/(\sT\times G)], $$ and we put a subscript $0$ to denote the zero locus of $\sg$, for example $\fX_0:=[Z(\sg)/(\sT\times G)]$, $\mathfrak{V}_0=[(V\cap Z(\sg))/(\sT\times G)]$, and so on.

Fix $w\in \bZ$, and recall that Halpern-Leistner constructed a semiorthogonal decomposition (SOD) of the bounded derived category $\D^b(\fX)$ of coherent sheaves on $\fX$ in \cite[Thm.~3.35]{Hal}:
\begin{align*}
\D^b(\fX)=\left\langle \D^b_{\mathfrak{S}}(\fX)_{<w},\mathbf G_w,\D^b_{\mathfrak{S}}(\fX)_{\geq w}\right\rangle.
\end{align*}
Moreover, the restriction from $\fX$ to $\mathfrak{V}$ induces an equivalence:
\begin{align*}
\mathbf G_w\cong \D^b(\mathfrak{V}).
\end{align*}
On the other hand, since $\sg$ is $\sT\times G$ invariant, we have $Z_0=X_0^{\lambda}$, and $Y_0=\Attr_\lambda(Z_0)$, and the action map $G\times_{P_\lambda}Y_0\to S_0$ induces an isomorphism. We have the following induced maps
\begin{equation*}
\xymatrix{
Z_0 \ar[r]^{\sigma_0} & Y_0 \ar@/^/[l]^{\pi_0} \ar[r]^{j} & S_0
}
\end{equation*}
Notice that $Y_0\cong Z_0\times_Z Y$ where $Z_0\to Z$ is the inclusion and $Y\to Z$ is $\pi$. In particular, $\pi_0\colon Y_0\to Z_0$ is a locally trivial bundle of affine spaces, which is the condition (A) in \cite[pp.~7]{Hal}. We claim that the condition (L+) in \cite[pp.~7]{Hal} is also satisfied for the variety $X_0$ with KN strata $S_0$, that is, $L\sigma_0^*\mathbf L_{S_0/X_0}$ has nonnegative weights w.r.t. $\lambda$.
\begin{itemize}
    \item If $\mathsf g$ is not constantly zero on $S$, then $\mathsf g|_S\colon S\to \bA^1$ is a flat morphism (since $\mathsf g|_S$ can not be a nonzero constant function due to weight conditions in Setting \ref{setting of def of potentials}). In this case, $S_0$ is isomorphic to the derived zero locus $S\times^{\mathbf L}_{\bA^1}\{0\}$, and we have
\begin{align*}
L\sigma_0^*\mathbf L_{S_0/X_0}\cong L(Z_0\hookrightarrow Z)^* L\sigma^*\mathbf L_{S/X}.
\end{align*}
Since $L\sigma^*\mathbf L_{S/X}$ has nonnegative weights w.r.t. $\lambda$ (see \cite[Lem.~2.7]{Hal}), so is $L\sigma_0^*\mathbf L_{S_0/X_0}$.
\item If $\mathsf g|_S=0$, then there is an exact triangle
\begin{align*}
L\sigma_0^*L(S_0\hookrightarrow X_0)^*\mathbf L_{X_0/X}\to L\sigma_0^*\mathbf L_{S_0/X}\to L\sigma_0^*\mathbf L_{S_0/X_0}\to +1.
\end{align*}
Since $S_0=S$ and $\sigma_0=\sigma$, $L\sigma_0^*\mathbf L_{S_0/X}=L\sigma^*\mathbf L_{S/X}$ has nonnegative weights w.r.t. $\lambda$ (see \cite[Lem.~2.7]{Hal}).
Since $\mathsf g\colon X\to \bA^1$ is flat, we have $\mathbf L_{X_0/X}\cong \mathcal O_{X_0}[1]$. In particular, $L\sigma_0^*L(S_0\hookrightarrow X_0)^*\mathbf L_{X_0/X}\cong \mathcal O_{Z_0}[1]$ has zero $\lambda$ weight. It follows that $L\sigma_0^*\mathbf L_{S_0/X_0}$ has nonnegative weights w.r.t. $\lambda$.
\end{itemize}
In both cases, (L+) condition holds. Then by \cite[Thm.~3.35]{Hal}, we have an SOD
\begin{align*}
\D^b(\fX_0)=\left\langle \D^b_{\mathfrak{S}_0}(\fX_0)_{<w},\mathbf G_{0,w},\D^b_{\mathfrak{S}_0}(\fX_0)_{\geqslant w}\right\rangle,
\end{align*}
and the restriction from $\fX_0$ to $\mathfrak{V}_0$ induces an equivalence:
\begin{align*}
\mathbf G_{0,w}\cong \D^b(\mathfrak{V}_0).
\end{align*}
The canonical functor $$\D^b(\fX_0)\to \D^b(\fX_0)/\Perf(\fX_0)\cong \mathrm{MF}(\fX,\sg)$$ induces an SOD for the matrix factorization category (e.g.~\cite[Prop.~2.1]{P}):
\begin{align*}
    \mathrm{MF}(\fX,\sg)=\left\langle \mathrm{MF}_{\mathfrak{S}}(\fX,\sg)_{<w}, \mathrm{MF}(\mathbf G_{w},\sg), 
    \mathrm{MF}_{\mathfrak{S}}(\fX,\sg)_{\geqslant w}\right\rangle,
\end{align*}
where $\mathrm{MF}_{\mathfrak{S}}(\fX,\sg)_{<w}$, $\mathrm{MF}(\mathbf G_{w},\sg)$, and $\mathrm{MF}_{\mathfrak{S}}(\fX,\sg)_{\geq w}$ are the essential images of $\D^b_{\mathfrak{S}_0}(\fX_0)_{<w}$, $\mathbf G_{0,w}$, and $\D^b_{\mathfrak{S}_0}(\fX_0)_{\geqslant w}$ respectively. Moreover, the restriction from $\fX$ to $\mathfrak{V}$ induces an equivalence:
\begin{align*}
\mathrm{MF}(\mathbf G_{w},\sg)\cong \mathrm{MF}(\mathfrak{V},\sg).
\end{align*}
Taking Grothendieck groups, and we obtain a commutative diagram:
\begin{equation}\label{can comm with window}
\xymatrix{
K(\fX_0) \ar[r]^-{\can} & K(\fX,\sg) \\
K(\mathfrak{V}_0) \ar[u]^{s_w} \ar[r]^-{\can} & K(\mathfrak{V},\sg) \ar[u]_{\psi_w},
}
\end{equation}
where horizontal arrows are canonical maps, and vertical arrows are induced by the equivalences $\mathbf G_{0,w}\cong \D^b(\mathfrak{V}_0)$ and $\mathrm{MF}(\mathbf G_{w},\sg)\cong \mathrm{MF}(\mathfrak{V},\sg)$. Note that $(\mathfrak{V}_0\hookrightarrow \fX_0)^*s_w=\id$, and $(\mathfrak{V}\hookrightarrow \fX)^*\psi_w=\id$. Let $\mathsf{sp}_{\fX}=\mathsf{sp}_X\colon K(\fX,\sg)\to K(\fX)$ denote the specialization map. Then the pushforward map $(\mathfrak{V}_0\hookrightarrow \mathfrak{V})_*: K(\mathfrak{V}_0)\to K(\mathfrak{V})$ factors into 
\begin{align*}
(\mathfrak{V}_0\hookrightarrow \mathfrak{V})_*= (\mathfrak{V}_0\hookrightarrow \mathfrak{V})_*(\mathfrak{V}_0\hookrightarrow \fX_0)^*s_w&=(\mathfrak{V}\hookrightarrow\fX)^*(\fX_0\hookrightarrow \fX)_*s_w\\
\text{\scriptsize by def. of specialization}\;&=(\mathfrak{V}\hookrightarrow\fX)^*\circ\mathsf{sp}_{\fX}\circ\can \circ s_w\\
\text{\scriptsize by \eqref{can comm with window}}\;&=(\mathfrak{V}\hookrightarrow\fX)^*\circ\mathsf{sp}_{\fX}\circ \psi_w\circ\can 
\end{align*}
Therefore, there exists a specialization map $\mathsf{sp}_{\mathfrak{V}}\colon K(\mathfrak{V},\sg)$ to $K(\mathfrak{V})$ and $\mathsf{sp}_{\mathfrak{V}}=(\mathfrak{V}\hookrightarrow\fX)^*\circ\mathsf{sp}_{\fX}\circ \psi_w$.

Finally, Let $\bar\psi_w\colon K(\mathfrak{V})\to K(\fX)$ be the map obtained by taking Grothendieck groups to the equivalence $\mathbf G_w\cong \D^b(\mathfrak{V})$. Then $\bar\psi_w$ induces an isomorphism $K(\mathfrak{V})\cong K_0(\mathbf G_w)$. The image of $\mathsf{sp}_{\mathfrak{X}}\circ \psi_w$, which equals to the image of $\mathsf{sp}_{\mathfrak{X}}\circ \psi_w\circ\can=\mathsf{sp}_{\mathfrak{X}}\circ\can\circ s_w=(\fX_0\hookrightarrow \fX)_*s_w$, is contained in the Grothendieck group of $(\fX_0\hookrightarrow \fX)_*(\mathbf G_{0,w})$. Notice that $(\fX_0\hookrightarrow \fX)_*(\mathbf G_{0,w})$ is contained in $\mathbf G_w$; thus image of $\mathsf{sp}_{\mathfrak{X}}\circ \psi_w$ is contained in the image of $\bar\psi_w$. In particular, $\bar\psi_w\circ (\mathfrak{V}\hookrightarrow\fX)^*$ is identity on the image of $\mathsf{sp}_{\mathfrak{X}}\circ \psi_w$, and it follows that $$\bar\psi_w\circ \mathsf{sp}_{\mathfrak{V}}=\bar\psi_w\circ (\mathfrak{V}\hookrightarrow\fX)^*\circ\mathsf{sp}_{\mathfrak{X}}\circ \psi_w=\mathsf{sp}_{\mathfrak{X}}\circ \psi_w.$$ Now assume that $\mathsf{sp}_{\mathfrak{X}}\colon K(\fX,\sg)\to K(\fX)$ is injective, then $\mathsf{sp}_{\mathfrak{X}}\circ \psi_w$ is also injective since $(\mathfrak{V}\hookrightarrow \fX)^*\psi_w=\id$, and the equation $\bar\psi_w\circ \mathsf{sp}_{\mathfrak{V}}=\mathsf{sp}_{\mathfrak{X}}\circ \psi_w$ implies that $\mathsf{sp}_{\mathfrak{V}}$ is injective.
\end{proof}

Combine Propositions \ref{prop: Tudor's sp} and \ref{prop: induced sp}, we obtain the following.

\begin{Corollary}
Let $X$ be a linear representation of $\sT\times G$, $\theta\colon G\to \bC^*$ be a character of $G$, $\sg\colon X\to \bC$ be a $(\sT \times G)$-invariant regular function, which satisfies the weight conditions in Setting \ref{setting of def of potentials} with $\sT$ replaced with $\sT \times G$. Let $X^{ss}$ be the $\theta$-semistable locus of $X$ with respect to the $G$ action. Assume $X^{\sT\times G}\subset Z(\sg)$, then there exist specialization maps $\mathsf{sp}_X$ and $\mathsf{sp}_{X^{ss}}$ for $X$ and $X^{ss}$ respectively, and the diagram \eqref{compatible sp for stable locus} is commutative. Moreover, if $\mathsf{sp}_X$ is injective, then so is $\mathsf{sp}_{X^{ss}}$.
\end{Corollary}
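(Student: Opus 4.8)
The plan is to deduce the Corollary directly from the two preceding existence results by checking that their hypotheses are met. First I would apply Proposition \ref{prop: Tudor's sp} to the ambient space: here $X$ is a linear representation of the reductive group $\sT\times G$, so $K^{\sT\times G}(X)\cong K_{\sT\times G}(\pt)$ is a free (hence torsion-free) module over $K_{\sT\times G}(\pt)$, and the second bullet of that proposition holds automatically. The first bullet, $X^{\sT\times G}\subseteq Z(\sg)$, is precisely our standing hypothesis. The weight condition required to place $(X,0,\sg)$ into Setting \ref{setting of def of potentials} (with $\sT$ replaced by $\sT\times G$) is part of the data we are given, so Proposition \ref{prop: Tudor's sp} produces the specialization map $\mathsf{sp}_X\colon K^{\sT\times G}(X,\sg)\to K^{\sT\times G}(X)$.

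Next I would invoke Proposition \ref{prop: induced sp} with $\cL=\cO(1)$ the linearization attached to the character $\theta$ (embedding $X$ $(\sT\times G)$-equivariantly into some $\bP^n\times\bA^m$, which is possible since $X$ is a linear representation and $\theta$ a character), so that the $\theta$-semistable locus $X^{ss}$ in the sense of GIT coincides with the $X^{ss}$ appearing in that proposition. I must check that $\sg\colon X\to\bC$ is not constant: this follows from the weight condition in Setting \ref{setting of def of potentials}, since a $\bG_m$-action scaling $\sg$ with strictly negative weight forces $\sg$ to be either $0$ or non-constant, and if $\sg\equiv 0$ then both specialization maps are identities by Remark \ref{rmk: g=0 zero locus}/Remark \ref{rmk: g=0 crit} and the statement is trivial; so without loss of generality $\sg$ is non-constant. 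Having verified the hypotheses, Proposition \ref{prop: induced sp} yields the specialization map $\mathsf{sp}_{X^{ss}}\colon K^{\sT\times G}(X^{ss},\sg)\to K^{\sT\times G}(X^{ss})$ together with the commutativity of diagram \eqref{compatible sp for stable locus}, and the last sentence of that proposition gives that $\mathsf{sp}_{X^{ss}}$ is injective whenever $\mathsf{sp}_X$ is.

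Since this Corollary is an essentially immediate assembly of two black boxes, there is no serious obstacle; the only point requiring any care is the bookkeeping that the GIT setup ($\theta$-semistability for a representation) really matches the hypotheses of Proposition \ref{prop: induced sp}, namely that $X$ is smooth projective-over-affine and $(\sT\times G)$-invariant inside $\bP^n\times\bA^m$ with $\cL=\cO(1)$ — this is standard for linear representations, using that $X=\bA^N$ admits the obvious equivariant embedding and the GIT quotient by $\theta$ is computed via the Proj of the $\theta$-semiinvariant ring. I would spell this identification out in one or two lines and then cite Propositions \ref{prop: Tudor's sp} and \ref{prop: induced sp} to conclude.
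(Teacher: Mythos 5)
Your proposal is correct and follows exactly the same route as the paper, which simply states ``Combine Propositions \ref{prop: Tudor's sp} and \ref{prop: induced sp}'' without further comment. You supply the hypothesis-checking that the paper leaves implicit (torsion-freeness of $K^{\sT\times G}(X)\cong K_{\sT\times G}(\pt)$, the assumption $X^{\sT\times G}\subseteq Z(\sg)$, non-constancy of $\sg$ handled via the trivial $\sg\equiv 0$ case, and the projective-over-affine/GIT bookkeeping), all of which is accurate; there is no gap.
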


\begin{Example}
Let $X=R(Q,\bv,\underline{\bd})$ for a quiver $Q$ with gauge dimension $\bv$ and framing dimension $\underline{\bd}$, $G=\prod_{i\in Q_0}\GL(\bv_i)$ be the gauge group, $\sT$ be a flavour torus (see \S \ref{sec: quiver var}). Suppose that every edge loop has onzero $\sT$ weight, then $X^{\sT\times G}=\{0\}\subset Z(\sg)$ for any $\sg$ that satisfies the weight conditions in Setting \ref{setting of def of potentials} with $\sT$ replaced with $\sT \times G$. Then we have specializations $$\mathsf{sp}_{\fM}\colon K^\sT(\fM(\bv,\underline{\bd}),\sg)\to K^\sT(\fM(\bv,\underline{\bd})), \quad \mathsf{sp}_{\cM}\colon K^\sT(\cM(\bv,\underline{\bd}),\sg)\to K^\sT(\cM(\bv,\underline{\bd})),$$ where $\fM(\bv,\underline{\bd})=[X/G]$ and $\cM(\bv,\underline{\bd})=[X^{ss}/G]$. Moreover, the diagram
\begin{equation*}
\xymatrix{
K^\sT(\fM(\bv,\underline{\bd}),\sg) \ar[r]^-{\mathsf{sp}_{\fM}} \ar[d] & K^\sT(\fM(\bv,\underline{\bd})) \ar[d]\\
K^\sT(\cM(\bv,\underline{\bd}),\sg)\ar[r]^-{\mathsf{sp}_{\cM}} & K^\sT(\cM(\bv,\underline{\bd}))
}
\end{equation*}
is commutative, where the vertical arrows are the restriction to the semistable locus.

As an application, we give a proof of \cite[Conj.~1.9]{N}, which we thank Andrei Negu\c{t} for helpful communications. 

Let $(Q,\sW)$ be the quiver with potential associated to a symmetrizable Kac-Moody algebra $\mathfrak{g}_{A,D}$ in \cite[Def.~A.1]{COZZ1}, $\sT$ be the torus $\bC^*_\hbar$ that scales the arrows in \textit{loc.\,cit.}, and set $\sg=\tr\sW\colon R(Q,\bv,\underline{\bd})\to \bC$. Then the argument of \cite[Lem.~A.4]{COZZ3} can be applied to $K$-theory and the localized specialization map
\begin{align*}
\mathsf{sp}_{\fM}\colon K^\sT(\fM(\bv,\underline{\bd}),\sg)\otimes_{\bC[\hbar^\pm]}\bC(\hbar) \to K^\sT(\fM(\bv,\underline{\bd}))\otimes_{\bC[\hbar^\pm]}\bC(\hbar)
\end{align*}
is injective. According to the above discussion, the localized specialization map 
\begin{align*}
\mathsf{sp}_{\cM}\colon K^\sT(\cM(\bv,\underline{\bd}),\sg)\otimes_{\bC[\hbar^\pm]}\bC(\hbar) \to K^\sT(\cM(\bv,\underline{\bd}))\otimes_{\bC[\hbar^\pm]}\bC(\hbar)
\end{align*}
is also injective. This proves the injectivity statement in \cite[Conj.~1.9]{N}. The surjectivity statement in \cite[Conj.~1.9]{N} follows from the surjectivity of restriction-to-open map for critical $K$-theory (see \S \ref{subsubsec: excision}) and torus localization \eqref{prop on loc of crit k}.
\end{Example}

\subsubsection{Existence result III}\label{sec: sp for crit k existence by dim red}

The third existence result is relevant to examples considered in \cite{COZZ2}.   
The idea of construction uses dimensional reduction to the case when there is no potential.  

\begin{Construction}\label{const of dim r}
Let $G$ be a linear reductive group, $\sT$ be a torus, with a direct sum of finite dimensional linear $(G\times \sT\times \bG_m)$-representations: 
$$W=V\oplus U.$$ 
Let $\pi\colon W\to V$ be the projection with a pair of $(G\times \sT)$-equivariant sections:
$$s_0,s_\epsilon\in \Gamma(V, \underline{U}^\vee)$$ 
of the dual bundle $\underline{U}^\vee:=V\times U^\vee\to V$ of $\pi$. Assume that $s_0$ is $\bG_m$-equivariant, and there exists a decomposition $$s_\epsilon=\sum_{i=1}^m s_{\epsilon,i}$$ such that $\bG_m$ scales $s_{\epsilon,i}$ with weight $-\mathsf n_i<0$, i.e. $u^{\mathsf n_i}s_{\epsilon,i}$ is $\bG_m$-equivariant ($i=1,\cdots, m$).
Define the section 
$$s_t:=s_0+\sum_{i=1}^mt^{\mathsf n_i}s_{\epsilon,i}, \quad t\in \bA^1.$$ 
Let $Z^{\der}(s_t)$ be the derived zero locus of $s_t$, 
which fits into the following diagram 
\begin{equation*}
\xymatrix{
 \pi^{-1}(Z^{\der}(s_t))  \ar[r]^{\quad \quad \iota_t} \ar[d]^{ } \ar@{}[dr]|{\Box}  & W \ar[d]^{\pi}  \\
Z^{\der}(s_t) \ar[r]^{ } & V.
}
\end{equation*}
Let $e$ be the coordinate of the fiber of $\pi$ and define $(G\times \sT)$-invariant regular function:
\begin{equation*}
\sw_t:=\langle e,s_t \rangle\colon W\to \C.
\end{equation*}
Choose a stability condition such that the semistable locus equals the stable locus:
$$W^{ss}=W^{s}\neq \emptyset.$$ 
By abuse of notations, we write the descent of $\sw_{t}$ to be 
$$\sw_{t}\colon X:=W^s/G\to \C. $$ 
Then $(X,\sT,\sA,\mathsf f=\sw_0,\sg=\sw_1-\sw_0)$ fits into Setting \ref{setting of def of potentials}.
\end{Construction}

By applying dimensional reduction (Proposition \ref{prop on dim red for kgp}, Remark \ref{rmk on teqiuv}), we obtain 
\begin{align*}
\iota_{t*}\colon K^\sT((\pi^{-1}(Z^{\der}(s_t)))^s/G) \xrightarrow{\cong} K^\sT(X,\sw_t), \,\,\, \forall\,\, t\in  \bA^1.
\end{align*}
There is a standard specialization map for the left hand side: 
$$\mathsf{sp}\colon K^\sT((\pi^{-1}(Z^{\der}(s_1)))^s/G)\to K^\sT((\pi^{-1}(Z^{\der}(s_0)))^s/G), $$
defined by an extension to $K$-theory class of the family $\pi^{-1}(Z^{\der}(s_t)))^s/G$ ($t\in \A^1$), followed by the Gysin pullback to the special fiber $\pi^{-1}(Z^{\der}(s_0)))^s/G$ (e.g.~\cite[\S5.3]{CG}, Definition \ref{def of sp for zero loci_k}).  
\begin{Definition}\label{def on sp of kgp}
Under the setting of Construction \ref{const of dim r}, 
we define $\nu$ as the unique map which makes 
the following diagram commute: 
\begin{equation}\label{equ on sp of kgp}
\xymatrix{
K^\sT(X,\sw_1)  \ar[r]^{\nu}  & K^\sT(X,\sw_0)   \\
K^\sT((\pi^{-1}(Z^{\der}(s_1)))^s/G) \ar[r]^{\mathsf{sp}} \ar[u]_{\cong}^{\iota_{1*}} & K^\sT((\pi^{-1}(Z^{\der}(s_0)))^s/G). \ar[u]^{\cong}_{\iota_{0*}}
} 
\end{equation}

\end{Definition}

\begin{Proposition}\label{prop: nu is sp}
The map $\nu$ \eqref{equ on sp of kgp} is the specialization map for critical $K$-theory (Definition \ref{def of sp crit_k}).
\end{Proposition}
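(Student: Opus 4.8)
The goal is to show that the map $\nu$ defined by the commutative diagram \eqref{equ on sp of kgp} is a specialization map in the sense of Definition \ref{def of sp crit_k}, i.e.\ that it fits into the commutative square \eqref{pre sp for crit k} with the canonical maps and the specialization map \eqref{sp zero locus_k} of the zero loci. Since the canonical map $\can\colon K^\sT(Z(\sw_0))\twoheadrightarrow K^\sT(X,\sw_0)$ is surjective, the characterizing property \eqref{pre sp for crit k} in fact pins $\nu$ down uniquely once it is shown to exist, so the content of the proposition is really an identity of two maps out of $K^\sT(Z(\sw_1))$.

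The plan is to unwind both routes from $K^\sT(Z(\sw_1))$ to $K^\sT(X,\sw_0)$ and match them. First I would recall the precise shape of the dimensional reduction isomorphisms $\iota_{t*}$: by Proposition \ref{prop on dim red for kgp} (and Remark \ref{rmk on teqiuv} for the equivariance), pushforward along $\pi^{-1}(Z^\der(s_t))\hookrightarrow W$ descends to an isomorphism $K^\sT((\pi^{-1}(Z^\der(s_t)))^s/G)\xrightarrow{\cong} K^\sT(X,\sw_t)$, and this isomorphism is compatible with the canonical maps in the sense that it intertwines the pushforward $K^\sT(Z(\sw_t|_{\pi^{-1}Z^\der(s_t)}))\to K^\sT(Z(\sw_t))$ with $\can$. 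So the outer square I must verify is
\begin{equation*}
\xymatrix{
K^\sT(Z(\sw_1)) \ar[r]^-{\mathsf{sp}} \ar[d]_{\can} & K^\sT(Z(\sw_0)) \ar[d]^{\can} \\
K^\sT(X,\sw_1) \ar[r]^-{\nu} & K^\sT(X,\sw_0),
}
\end{equation*}
and by definition of $\nu$ this reduces, after transporting along $\iota_{0*},\iota_{1*}$, to the statement that on the zero-locus level the standard specialization $\mathsf{sp}\colon K^\sT((\pi^{-1}Z^\der(s_1))^s/G)\to K^\sT((\pi^{-1}Z^\der(s_0))^s/G)$ of Construction \ref{const of dim r} agrees, via the canonical maps, with the specialization map \eqref{sp zero locus_k} of $Z(\sw_1)$ into $Z(\sw_0)$.

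The key geometric input here is that the two notions of specialization are literally the same construction applied to nested families. Indeed, the family $t\mapsto \pi^{-1}(Z^\der(s_t))^s/G$ over $\bA^1_t$, after the automorphism $\mathbf a$ of \eqref{equ on iso a} scaling by the $\bG_m$-action, is exactly the restriction to $\{t\neq 0\}$ of the constant family times $\bC^*$, with special fibre $\pi^{-1}(Z^\der(s_0))^s/G$; and $\Crit(\sw_t)\subseteq \pi^{-1}(Z^\der(s_t))$ for each $t$ (indeed the critical locus of a function of the form $\langle e,s_t\rangle$ lands in $\{s_t=0\}$ together with the vanishing of the $e$-derivatives, hence in $\pi^{-1}(Z^\der(s_t))$). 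Therefore the big family $Z(\sw)\subseteq X\times\bA^1$ and the small family $\pi^{-1}(Z^\der(s))\subseteq W\times\bA^1$ both have the same generic fibre twisting via $\mathbf a$ and the same special fibre, and the $K$-theoretic "$\lim_{t\to 0}$" operation — extension to the total space followed by refined Gysin pullback $i_0^!$ — is functorial for the closed embedding of the small family into the big one (because refined Gysin pullback commutes with proper pushforward, base change, and the excision sequences, exactly the compatibilities already recorded in Definition \ref{def of sp for zero loci_k} and used in Proposition \ref{prop: stab comp with sp zero loci}). Concretely I would write the commutative cube whose top and bottom faces are the two "$\lim_{t\to 0}$" diagrams and whose vertical faces are the pushforwards along $\pi^{-1}(Z^\der(s))\hookrightarrow Z(\sw)$ over $\bA^1$ and over $\{0\}$, invoke the surjectivity of $K^\sT(Z(\sw_1))\twoheadrightarrow K^\sT(X,\sw_1)$ to reduce the identity of maps out of $K^\sT(X,\sw_1)$ to an identity of maps out of $K^\sT(Z(\sw_1))$, and then chase the cube. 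Finally, appeal once more to the surjectivity of $\can$ on the $\sw_1$ side to conclude that the $\nu$ so identified is precisely the (unique) specialization map of Definition \ref{def of sp crit_k}.

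I expect the main obstacle to be bookkeeping rather than conceptual: one must make sure that the dimensional reduction isomorphism $\iota_{t*}$ is genuinely compatible with the restriction-to-open and extension maps that appear inside the definition of $\mathsf{sp}$ — that is, that $\iota_{t*}$ is compatible not just with $\can$ on a single fibre but with the whole excision exact sequence of the families over $\bA^1$, so that "extend then Gysin-pull-back" can be transported across $\iota_*$. This is where one has to be careful that the derived zero locus $Z^\der(s_t)$ varies flatly enough over $\bA^1_t$ for the refined Gysin pullback $i_0^!$ to make sense and for dimensional reduction to hold uniformly in the family; I would handle this by applying Proposition \ref{prop on dim red for kgp} to the total space $W\times\bA^1$ with section $s_{(-)}$ and function $\sw_{(-)}$, getting a single isomorphism $K^\sT(\pi^{-1}(Z^\der(s))^s/G)\xrightarrow{\cong}K^\sT(X\times\bA^1,\sw)$ over the base, and then restricting to the generic and special fibres. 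Once that family-level compatibility is in place, the rest is a diagram chase of the kind already carried out in the proofs of Proposition \ref{prop on sp comm with stab} and Proposition \ref{prop: induced sp}.
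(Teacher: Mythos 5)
You correctly set up the reduction: the hexagonal/triangular diagram factoring $\iota_{t*}=\can\circ i_{t*}$ is exactly the paper's opening step, and the content is the commutativity of the outer square \eqref{comm diag on can sp}. However, the ``commutative cube'' chase that you propose as the main step has a genuine gap, not a bookkeeping one.

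The cube, if commutative, yields $i_{0*}\circ\mathsf{sp}_{\mathrm{small}}=\mathsf{sp}_{\mathrm{big}}\circ i_{1*}$, where $\mathsf{sp}_{\mathrm{small}}$ is the specialization of Construction \ref{const of dim r} and $\mathsf{sp}_{\mathrm{big}}$ is the zero-locus specialization \eqref{sp zero locus_k}. Composing with $\can_0$ and using $\iota_{0*}=\can_0\circ i_{0*}$ together with the definition of $\nu$, this only proves
\begin{equation*}
\nu\circ\can_1\circ i_{1*}=\can_0\circ\mathsf{sp}_{\mathrm{big}}\circ i_{1*},
\end{equation*}
i.e.\ the required identity $\nu\circ\can_1=\can_0\circ\mathsf{sp}_{\mathrm{big}}$ holds only on $\mathrm{im}(i_{1*})\subseteq K^\sT(Z(\sw_1))$, which is in general a proper subgroup. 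Given arbitrary $\alpha\in K^\sT(Z(\sw_1))$, setting $\beta:=\iota_{1*}^{-1}(\can_1(\alpha))$ one has $\alpha-i_{1*}\beta\in\ker\can_1$, and completing your chase would require $\mathsf{sp}_{\mathrm{big}}(\ker\can_1)\subseteq\ker\can_0$, that is, the specialization \eqref{sp zero locus_k} must send perfect-complex $K$-theory classes to perfect-complex classes. As the paper emphasizes just before Proposition \ref{prop: Tudor's sp}, this \emph{fails} in general --- that failure is precisely the reason specialization in critical $K$-theory is delicate and needs extra hypotheses. So the cube chase alone does not close the argument.

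The paper avoids the cube entirely. Its key ingredient is the explicit formula
\begin{equation*}
(\iota_{t*}^{-1}\circ\can)(-)=\bigl(i_{Z(\sw_t)}^*\mathrm{Kos}(\tau,s_t)^\vee\bigr)\otimes_{\mathcal{O}_{Z(\sw_t)}}(-),
\end{equation*}
which identifies the inverse of the dimensional reduction isomorphism composed with the canonical map as tensoring against the restricted dual Koszul factorization \eqref{Kos fact} (this reads off from the proofs of Lemma \ref{lem on excess k} and Proposition \ref{prop on dim red for kgp}). Because the zero-locus specialization is multiplicative for tensor products and $\mathsf{sp}\bigl(i_{Z(\sw_1)}^*\mathrm{Kos}(\tau,s_1)^\vee\bigr)=i_{Z(\sw_0)}^*\mathrm{Kos}(\tau,s_0)^\vee$, the intertwining $\mathsf{sp}\circ(\iota_{1*}^{-1}\circ\can)=(\iota_{0*}^{-1}\circ\can)\circ\mathsf{sp}$ holds for every $[\mathcal E]\in K^\sT(Z(\sw_1))$, with no appeal to $\mathrm{im}(i_{1*})$ or to preservation of perfect classes. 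Your closing remark about applying Proposition \ref{prop on dim red for kgp} to $W\times\bA^1$ is gesturing toward exactly this: the family-level Koszul datum is the substantive input, not a routine technicality, and should be the centerpiece of the proof rather than a footnote.
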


\begin{proof}
We have the following diagram
$$
\xymatrix{
 & K^\sT(Z(\sw_1)) \ar@{->>}[dr]^{\can} \ar@/^2pc/[ddd]^{\mathsf{sp}} & \\
K^\sT((\pi^{-1}(Z^{\der}(s_1)))^s/G) \ar[ru]^{i_{1*}}  \ar[d]_{\mathsf{sp}}^{ }  \ar[rr]^{\quad \,\, \iota_{1*}}_{\quad \,\, \cong}  &  & K^\sT(W^s/G,\sw_1) \ar[d]_{}^{\nu}  \\
K^\sT((\pi^{-1}(Z^{\der}(s_0)))^s/G)  \ar[rd]_{i_{0*}}  \ar[rr]^{\quad \,\, \iota_{0*}}_{\quad \,\,  \cong} & &  K^\sT(W^s/G,\sw_0) \\
 & K^\sT(Z(\sw_0)). \ar@{->>}[ur]_{\can} & 
} 
$$
Here the upper and lower triangles commute as $\pi^{-1}(Z^{\der}(s_t))\stackrel{i_{t}}{\hookrightarrow}   Z(\sw_{t})$, and the canonical map is simply the pushforward map.  The middle square commutes by Definition \ref{def on sp of kgp}. 
The commutativity of  
\begin{equation}\label{comm diag on can sp}
\xymatrix{
K^\sT(Z(\sw_{1}))  \ar[r]^{\can \quad  \,\,\, } \ar[d]_{\mathsf{sp}}^{ } & K^\sT(W^s/G,\sw_{1}) \ar[d]_{}^{\nu}  \\
K^\sT(Z(\sw_{0})) \ar[r]^{\can \quad \,\,\,}  & K^\sT(W^s/G,\sw_{0}),
} 
\end{equation}
is equivalent to the equality of maps
$$\mathsf{sp}\circ (\iota_{1*}^{-1}\circ \can)=(\iota_{0*}^{-1}\circ \can)\circ \mathsf{sp}\colon K^\sT(Z(\sw_{1}))\to K^\sT((\pi^{-1}(Z^{\der}(s_0)))^s/G).$$
Let $\mathrm{Kos}(\tau,s_t)$ denote the Koszul factorization \eqref{Kos fact}. Then 
$$(\iota_{t*}^{-1}\circ \can)(-)=\left(i_{Z(\sw_{t})}^*\mathrm{Kos}(\tau,s_t)^\vee\right)\otimes_{\mathcal{O}_{Z(\sw_{t})}}(-)\colon 
K^\sT(Z(\sw_{t})) \to K^\sT((\pi^{-1}(Z^{\der}(s_t)))^s/G), $$
where $i_{Z(\sw_{t})}\colon Z(\sw_{t})\to W^s/G$ is the inclusion. 
Therefore, for any $[\eE]\in K^\sT(Z(\sw_{1}))$, we have 
\begin{align*}\mathsf{sp}\circ (\iota_{1*}^{-1}\circ \can)[\eE]&=\mathsf{sp}\left(i_{Z(\sw_{1})}^*\mathrm{Kos}(\tau,s_1)^\vee\otimes_{\mathcal{O}_{Z(\sw_{1})}}[\eE] \right)  \\
&=\mathsf{sp}\left(i_{Z(\sw_{1})}^*\mathrm{Kos}(\tau,s_1)^\vee)\right)\otimes _{\mathcal{O}_{Z(\sw_{0})}} \mathsf{sp}([\eE]) \\
&=\left(i_{Z(\sw_{0})}^*\mathrm{Kos}(\tau,s_0)^\vee)\right)\otimes _{\mathcal{O}_{Z(\sw_{0})}} \mathsf{sp}([\eE]) \\
&=(\iota_{0*}^{-1}\circ \can)\circ \mathsf{sp} ([\eE]),\end{align*}
where we use the fact that the specialization of tensor products 
is the tensor product of specializations. 
\end{proof}

\begin{Example}\label{ex of sp for crit_k}
Let $G=\GL_n$ be the gauge group and consider representation spaces $W, V, U$ of the following quivers
\begin{equation*}
W:\quad\xymatrix{
\text{\tiny{\boxed{N}}}  \ar@/^0.3pc/[r]^{A}  & \ar@/^0.3pc/[l]^{B} \text{\textcircled{n}}  \ar@(dr,ur)_{\xi} \ar@(ru,lu)_{x} \ar@(ld,rd)_{y} }
\qquad
V:\quad\xymatrix{
\text{\tiny{\boxed{N}}}  \ar@/^0.3pc/[r]^{A}  &  
\text{\textcircled{n}}   \ar@(ld,rd)_{y}  \ar@(dr,ur)_{\xi} }
\qquad
U:\quad
\xymatrix{
\text{\tiny{\boxed{N}}}    & \ar@/^0.3pc/[l]^{B}  \text{\textcircled{n}} \ar@(ru,lu)_{x}  }\,\,.
\end{equation*}
We define \textit{cyclic stability} on $W$, so the stable locus $W^s$ are those representations which satisfy 
$\C\langle x,\xi,y\rangle A(\C^N)=\C^n$,
and define the GIT quotient $$X:=W^s/\GL_n.$$ 
We choose a $\bG_m$-action on $W$ by
\begin{align*}
    u\cdot(\xi,x,y,A,B)=(u^{-2}\xi,ux,uy,uA,uB),
\end{align*}
and choose a $\sT$-action on $W$ which commutes with $(\GL_n\times \bG_m)$-action, so that 
$$W=V\oplus U$$ is a direct sum of $(G\times \sT\times \bG_m)$-representations. For any $t\in \bA^1$, we define a map 
$$s_t\colon V\to U^\vee, \quad (y,\xi,A)\mapsto ([y,\xi],\,\xi A-t^{2}A\Xi),$$
where $\Xi$ is a fixed $N$ by $N$ matrix. The potential function is determined by $s_t$ via the trace pairing:
\begin{equation*}
\sw_t:=\tr(\xi[x,y]+\xi A B )-t^{2}\cdot\tr(A\Xi B)\colon W\to \C.
\end{equation*} 
Then $\sw_0$ is $\bG_m$-invariant and $\bG_m$ scales $\sw_1-\sw_0$ with weight $-2$.
These data fit into the setting of Construction \ref{const of dim r}. By Proposition \ref{prop: nu is sp}, the specialization map
\begin{align*}
\mathsf{sp}\colon K^{\sT}(X,\sw_{1}) \to K^{\sT}(X,\sw_{0})
\end{align*} 
exists in critical $K$-theory. This map is useful to relate different modules of quantum toroidal algebra $U_{q_1,q_2,q_3}(\widehat{\mathfrak{gl}}_1)$.
\end{Example}

\section{Vector bundles and stable envelopes}\label{sect on stab and vb}

Let $(X,\sw,\sT,\sA)$ be as in Setting \ref{setting of stab}, and take a $\sT$-equivariant vector bundle $E$ on $X$ with projection 
\begin{equation}\label{equ on vb yx}\pi\colon Y:=\mathrm{Tot}(E)\to X. \end{equation}
Define $i\colon X\hookrightarrow Y$ as the zero section map. We note that $F\mapsto i^{-1}(F)$ gives a bijection  
$$\mathrm{Fix}_\sA(Y)\cong \mathrm{Fix}_\sA(X), $$
where the inverse map is given by $G\mapsto (\pi^{-1}(G))^\sA$.

In this section, we prove several compatibility results for stable envelopes of $(X,\sw,\sT,\sA)$ and $(Y,\widetilde\sw,\sT,\sA)$, 
when $\widetilde\sw$ is a $\sT$-invariant function on $Y$ such that $\widetilde{\sw}|_X=\sw$, and $E$ is attracting or repelling. 
We show that the closure of the attracting set of the diagonal gives stable envelope correspondence on $(X,\sw,\sT,\sA)$ when $Y$ is a symmetric quiver variety 
and $E$ has a decomposition into attracting and repelling subbundles (Theorem \ref{thm:attr plus repl}). This explicit description is particularly useful in the manipulation of 
stable envelopes. 
Finally, we prove a triangle lemma for attracting closure correspondence without attracting or repelling condition on $E$ (Proposition \ref{tri lem for unsym fram}),
which will be used to prove the triangle lemma of Hall envelopes in \S \ref{sect on hall comp with hall}.

\subsection{Definitions and properties}

\begin{Definition}
An $\sA$-equivariant vector bundle $E$ on $X$ is called \textit{attracting} (resp.~\textit{repelling}) with respect to a chamber $\fC$ if $\forall\, F\in \Fix_\sA(X)$, $E|_F$ has nonnegative (resp.~nonpositive) $\sA$-weights with respect to $\fC$.
\end{Definition}

\begin{Proposition}\label{prop:vb and stab_attr}
Assume that $E$ is attracting with respect to $\fC$. Then cohomological stable envelope exists for $(X,\sw,\sT,\sA,\fC)$ if and only if it exists for $(Y,\sw\circ\pi,\sT,\sA,\fC)$. When they exist, the following diagram commutes
\begin{equation}\label{vb and stab_attr_coh}
\xymatrix{
H^\sT(Y^\sA,(\sw\circ \pi)^\sA) \ar[r]^-{\Stab_\fC} & H^\sT(Y,\sw\circ \pi)\\
H^\sT(X^\sA,\sw^\sA) \ar[r]^{\,\,\Stab_\fC} \ar[u]^{\pi^{\sA*}} & H^\sT(X,\sw) \ar[u]_{\pi^{*}}.
}
\end{equation}
Similarly, let $\mathsf s\in \Pic_\sA(X)\otimes_\bZ \bR$ be a generic slope, then $K$-theoretic stable envelope exists for $(X,\sw,\sT,\sA,\fC,\mathsf s)$ if and only if it exists for $(Y,\sw\circ\pi,\sT,\sA,\fC,\mathsf s)$. When they exist, the following diagram commutes
\begin{equation}\label{vb and stab_attr_k}
\xymatrix{
K^\sT(Y^\sA,(\sw\circ \pi)^\sA) \ar[r]^-{\Stab^{\mathsf s}_\fC} & K^\sT(Y,\sw\circ \pi)\\
K^\sT(X^\sA,\sw^\sA) \ar[r]^{\,\,\Stab^{\mathsf s}_\fC} \ar[u]^{\pi^{\sA *}} & K^\sT(X,\sw) \ar[u]_{\pi^{*}}.
}
\end{equation}
\end{Proposition}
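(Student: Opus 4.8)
The plan is to deduce the statement from the existence and uniqueness of stable envelopes together with the characterization via the degree/support axioms, using the fact that $\pi\colon Y\to X$ is an affine fibration (in fact a vector bundle), so that $\pi^*$ and $\pi^{\sA *}$ are isomorphisms by \eqref{pb is iso on coho} and \eqref{pb is iso on k}. First I would record the elementary geometric facts: since $E$ is attracting with respect to $\fC$, for each $F\in\Fix_\sA(X)$ with preimage $G=\pi^{-1}(F)\cap Y^\sA$ one has $\Attr_\fC^f(G)_Y=\pi^{-1}(\Attr_\fC^f(F)_X)$, because a point of $Y$ flows to $Y^\sA$ under a generic cocharacter in $\fC$ precisely when its image in $X$ flows to $X^\sA$ (the fiber directions are attracting, hence never obstruct the limit). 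Likewise $N^-_{G/Y}\cong \pi^{\sA *}N^-_{F/X}$, since the normal bundle of $G$ in $Y$ splits as $\pi^{\sA *}N_{F/X}\oplus \pi^{\sA *}(E|_F)$ and the second summand contributes only to the attracting part. These two facts, together with the identification $(\sw\circ\pi)^\sA=\sw^\sA\circ\pi^\sA$, are what make the axioms transfer.

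Next, to prove the ``if and only if'' statement I would argue both directions by transport. Suppose $\Stab_\fC$ exists for $(X,\sw,\sT,\sA,\fC)$. Define a candidate map for $Y$ by $\pi^*\circ\Stab_\fC\circ(\pi^{\sA *})^{-1}\colon H^\sT(Y^\sA,(\sw\circ\pi)^\sA)\to H^\sT(Y,\sw\circ\pi)$; this makes sense since $\pi^{\sA *}$ is an isomorphism. I would then check this candidate satisfies axioms (i)--(iii) of Definition \ref{def of stab coho}: axiom (i) follows from $\Attr_\fC^f(G)_Y=\pi^{-1}(\Attr_\fC^f(F)_X)$ and the fact that a class pulled back along $\pi$ from something supported on $\Attr_\fC^f(F)_X$ is supported on the preimage; axiom (ii) follows from $N^-_{G/Y}\cong\pi^{\sA *}N^-_{F/X}$ together with compatibility of Gysin pullback with $\pi^*$ (base change along the Cartesian square relating the zero sections); and axiom (iii) follows because $\pi^{\sA *}$ preserves $\deg_\sA$ (it is a $K_\sT(\pt)$-linear, or $H^*_\sT(\pt)$-linear, isomorphism compatible with the decomposition $H^{\sT}(F',\sw^\sA)\cong H^{\sT/\sA}(F',\sw^\sA)\otimes\bQ[\Lie\sA]$), so the strict inequality is preserved verbatim. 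By uniqueness (Proposition \ref{uniqueness of coh stab}) this candidate \emph{is} the stable envelope for $Y$, which proves existence for $Y$ and simultaneously gives the commutativity of \eqref{vb and stab_attr_coh}. Conversely, if $\Stab_\fC$ exists for $Y$, compose it with $\pi^{\sA *}$ and restrict to the zero section via the Gysin pullback $i^*$ (which is an isomorphism onto, being a section of $\pi^*$), and check the same three axioms to produce $\Stab_\fC$ for $X$; here I would use that $i^*\circ\pi^*=\id$ and that $i^*$ is compatible with supports and preserves $\deg_\sA$. The $K$-theoretic case is word-for-word the same, using Definition \ref{def of stab k}, Proposition \ref{uniqueness of K stab}, genericity of $\mathsf s$ (Remark \ref{rmk: generic slope}), and noting that the slope $\mathsf s\in\Pic_\sA(X)\otimes\bR$ is pulled back to $\Pic_\sA(Y)\otimes\bR$ along $\pi$ without altering the shift terms, since $\det N^{-,1/2}$ and $\mathsf s|_F$ all pull back compatibly.

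The main obstacle I anticipate is the careful bookkeeping of the support axiom in the $K$-theoretic setting: one must know that $\pi^*$ of a class supported on $\Attr_\fC^f(F)_X$ is supported on $\Attr_\fC^f(G)_Y$, and conversely, which requires the identification $\Attr_\fC^f(G)_Y=\pi^{-1}(\Attr_\fC^f(F)_X)$ at the level of the full attracting \emph{scheme}, plus compatibility of the excision sequences \eqref{eqn-exc K} (equivalently Proposition \ref{excision for crit K}) with the flat pullback $\pi^*$ and with restriction to open complements. This is where the ``attracting'' hypothesis on $E$ is genuinely used — without it, fiber directions with negative weights would enlarge the attracting set and break the clean preimage description. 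The remaining verifications (normalization and degree axioms) are routine once the base-change identities for Gysin pullbacks, $\pi^*$, and $i^*$ are in place, all of which are already available in \S\ref{sect on crit coho} and \S\ref{app on symm var}.
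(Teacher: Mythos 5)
Your proof is correct and follows essentially the same route as the paper: transport via $\pi^*$ (using that it is an isomorphism), verify the three axioms using the attracting-set identity $\Attr_\fC(G)_Y=\pi^{-1}(\Attr_\fC(i^{-1}(G))_X)$ and the normal-bundle splitting $N^-_{G/Y}\cong\pi^{\sA*}N^-_{i^{-1}(G)/X}$, and invoke uniqueness; the converse uses $i^*\circ\Stab_\fC\circ\pi^{\sA*}$ exactly as in the paper. The only cosmetic difference is that the paper works with the ample partial order $\le$ via Remark \ref{weak axiom coh} rather than directly with $\Attr^f_\fC$, but both are valid here since $\pi$ is an open map, so preimages commute with closures and the partial orders on $\Fix_\sA(X)$ and $\Fix_\sA(Y)$ correspond.
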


\begin{proof}
We prove the cohomology version. Assume that cohomological stable envelope exists for $(X,\sw,\sT,\sA,\fC)$. Define $$\mathcal S=\pi^*\circ \Stab_{\fC}\colon H^{\sT}(X^\sA,\sw^\sA) \to H^{\sT}(Y,\sw\circ \pi).$$ 
For an arbitrary $F\in \Fix_\sA(Y)$, and take an arbitrary $\gamma\in H^{\sT/\sA}(i^{-1}(F),\sw^\sA)$, we claim that $\mathcal S(\gamma)$ satisfies the axioms (ii) and (iii) in Definition \ref{def of stab coho} and axiom (i') in Remark \ref{weak axiom coh}, namely
\begin{itemize}
\setlength{\parskip}{1ex}
    \item[(1)] $\mathcal S(\gamma)$ is supported on $\Attr^{\le}_\fC(F)$;
    \item[(2)] $\mathcal S (\gamma)\big|_{F} = e^\sT (N_{F / Y}^-) \cdot \pi^{\sA*}(\gamma)$;
    \item[(3)] For any $F'\neq F$, the inequality $\deg_\sA \mathcal S(\gamma)\big|_{F'} < \deg_\sA e^\sT (N_{F'/Y}^-)$ holds.
\end{itemize}
Here we choose the ample partial order $\le$ as in Remark \ref{partial order by line bundle}. Since $\pi^{\sA*}\colon H^\sT(X^\sA,\sw^\sA)\to H^\sT(Y^\sA,(\sw\circ \pi)^\sA)$ is an isomorphism with the inverse map given by $i^{\sA *}$, the above three properties imply that $\mathcal S \circ i^{\sA *}$ is a cohomological stable envelope for $(Y,\sw\circ\pi,\sT,\sA,\fC)$, moreover the diagram \eqref{vb and stab_attr_coh} commutes.

Since $E$ is attracting, for all $G\in \Fix_\sA(Y)$, we have 
$$\Attr_\fC(G)=\pi^{-1}(\Attr_\fC(i^{-1}(G))).$$ 
Therefore $\mathcal S(\gamma)$ is supported on $$\pi^{-1}(\Attr^{\le}_\fC(i^{-1}(F)))=\pi^{-1}\left(\bigcup_{i^{-1}(F)\le i^{-1}(G)}\Attr_\fC(i^{-1}(F))\right)=\bigcup_{F\le G}\Attr_\fC(G)=\Attr^{\le}_\fC(F).$$
This proves (1). For (2), we notice that 
$$\mathcal S (\gamma)\big|_{F}=\pi^{\sA*}\left(\Stab_\fC(\gamma)|_{i^{-1}(F)}\right)=\pi^{\sA*}\left(e^\sT\left(N^-_{i^{-1}(F)/X}\right)\cdot \gamma\right)= e^\sT (N_{F / Y}^-) \cdot \pi^{\sA*}(\gamma),$$
where in the last equation we use the fact that $E$ is attracting so that $N^-_{F/Y}\cong \pi^{\sA*}\left(N^-_{i^{-1}(F)/X}\right)$. 

For (3), we notice that $\mathcal S (\gamma)\big|_{F'}=\pi^{\sA*}\left(\Stab_\fC(\gamma)|_{i^{-1}(F')}\right)$. Since $\pi^{\sA*}$ preserves $\deg_\sA$, we have $$\deg_\sA \mathcal S (\gamma)\big|_{F'}=\deg_\sA\left(\Stab_\fC(\gamma)|_{i^{-1}(F')}\right)<\deg_\sA e^\sT \left(N_{i^{-1}(F')/X}^-\right)=\deg_\sA e^\sT \left(N_{F'/Y}^-\right).$$
This proves one direction. For the other direction, assume that cohomological stable envelope exists for $(Y,\sw\circ\pi,\sT,\sA,\fC)$, then a similar argument as above shows that $i^*\circ \Stab_\fC\circ \pi^{\sA*}$ is a cohomological stable envelope for $(X,\sw,\sT,\sA,\fC)$. This proves the cohomology version.

The $K$-theory version is proven similarly, and we omit the details.
\end{proof}
\begin{Proposition}\label{prop:vb and stab_repl}
Assume that $E$ is repelling with respect to $\fC$. Let $\widetilde\sw$ be a $\sT$-invariant function on $Y$ such that $\widetilde\sw|_X=\sw$. Moreover, assume that cohomological stable envelopes exist for both $(X,\sw,\sT,\sA,\fC)$ and $(Y,\widetilde\sw,\sT,\sA,\fC)$. Then the following diagram is commutative
\begin{equation}\label{vb and stab_repl_coh}
\xymatrix{
H^\sT(Y^\sA,\widetilde\sw^\sA) \ar[r]^-{\Stab_\fC} & H^\sT(Y,\widetilde\sw)\\
H^\sT(X^\sA,\sw^\sA) \ar[r]^{\,\,\Stab_\fC} \ar[u]^{i^{\sA}_*} & H^\sT(X,\sw) \ar[u]_{i_*}.
}
\end{equation}
Similarly, let $\mathsf s\in \Pic_\sA(X)\otimes_\bZ \bR$ be a generic slope, and assume that $K$-theoretic stable envelopes exist for both $(X,\sw,\sT,\sA,\fC,\mathsf s')$ and $(Y,\widetilde\sw,\sT,\sA,\fC,\mathsf s)$ where $\mathsf s'=\mathsf s\otimes (\det E)^{1/2}$. Then the following diagram is commutative
\begin{equation}\label{vb and stab_repl_k}
\xymatrix{
K^\sT(Y^\sA,\widetilde\sw^\sA) \ar[r]^-{\Stab^{\mathsf s}_\fC} & K^\sT(Y,\widetilde\sw)\\
K^\sT(X^\sA,\sw^\sA) \ar[r]^{\,\,\Stab^{\mathsf s'}_\fC} \ar[u]^{i^{\sA}_*} & K^\sT(X,\sw) \ar[u]_{i_*}.
}
\end{equation}
\end{Proposition}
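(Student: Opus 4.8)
The plan is to deduce commutativity of both diagrams from the uniqueness of stable envelopes (Proposition \ref{uniqueness of coh stab} in cohomology, Proposition \ref{uniqueness of K stab} in $K$-theory; together with the weak-support versions of Remarks \ref{weak axiom coh}, \ref{weak axiom K}) applied to the composite $\mathcal S:=i_*\circ\Stab_\fC$, exactly in the spirit of the proof of Proposition \ref{prop:vb and stab_attr}. Note that, unlike the attracting case, we only claim that the diagram commutes assuming both stable envelopes exist. Under the bijection $\Fix_\sA(Y)\cong\Fix_\sA(X)$, $G\mapsto F:=i^{-1}(G)$, the repelling hypothesis on $E$ gives a splitting $E|_F=E^0|_F\oplus E^-|_F$ with $E^+|_F=0$, so that $G=\mathrm{Tot}(E^0|_F)$, the projection $p\colon G\to F$ is a vector bundle with zero section $s\colon F\hookrightarrow G$, and the restriction of $i^\sA$ to $F$ is $(G\hookrightarrow Y^\sA)\circ s$. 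A tangent-space computation along $G$ (valid because $\sA$ fixes $G$ pointwise) yields $N_{G/Y}\cong p^*(N_{F/X}\oplus E^-|_F)$, hence $N^-_{G/Y}\cong p^*(N^-_{F/X}\oplus E^-|_F)$ and $N^-_{G/Y}|_{s(F)}\cong N^-_{F/Y}$. Fixing a component $F$ and a class $\gamma\in H^{\sT/\sA}(F,\sw^\sA)$, put $\beta:=\Stab_\fC(\gamma)$; it then suffices to verify that $i_*\beta$ satisfies axioms (i)--(iii) of Definition \ref{def of stab coho} for the component $G$ and the class $i^\sA_*\gamma=s_*\gamma$.

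\textbf{Support axiom.} I would first observe that the zero section maps $\Attr_\fC(F')_X$ into $\Attr_\fC(G')_Y$ for every $F'\in\Fix_\sA(X)$ (the flow of $(x,0)$ with $x\in\Attr_\fC(F')_X$ limits to $(x_0,0)\in G'$), and that the bijection $\Fix_\sA(X)\cong\Fix_\sA(Y)$ is monotone for the flow order: if $F'\cap\overline{\Attr_\fC(F'')_X}\neq\varnothing$, then $G'\cap\overline{\Attr_\fC(G'')_Y}\supseteq i\big(F'\cap\overline{\Attr_\fC(F'')_X}\big)\neq\varnothing$. Consequently $i(\Attr^f_\fC(F)_X)\subseteq\Attr^f_\fC(G)_Y$, and since $\beta$ is supported on $\Attr^f_\fC(F)_X$, the excision sequence shows $i_*\beta$ is supported on $\Attr^f_\fC(G)_Y$ (Lemma \ref{refined partial order and closed subset} and the ample order of Remark \ref{partial order by line bundle} are available if one prefers to run the argument with a refined order, but are not strictly needed here).

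\textbf{Normalization and degree axioms.} Both follow from the excess intersection formula for critical (co)homology (Remarks \ref{equiv excess_coh}, \ref{equiv excess_K}) applied to the Cartesian square of regular embeddings $i(X)\times_Y G'\cong i(F')$, whose excess bundle is $E^-|_{F'}$ — the overlap of $N_{X/Y}|_{F'}=E|_{F'}$ with $N_{G'/Y}|_{F'}$. This gives $(i_*\beta)|_{G'}=\iota'_*\big(e^\sT(E^-|_{F'})\cdot\beta|_{F'}\big)$, where $\iota'\colon F'\hookrightarrow G'$ is the zero section. For $G'=G$, axiom (ii) for $\beta$ on $X$ gives $\beta|_F=e^\sT(N^-_{F/X})\cdot\gamma$; combining the projection formula along $\iota'$ with $p^*(N^-_{F/X}\oplus E^-|_F)=N^-_{G/Y}$ yields $(i_*\beta)|_G=e^\sT(N^-_{G/Y})\cdot s_*\gamma$, which is axiom (ii). For $G'\neq G$ (equivalently $F'\neq F$), axiom (iii) for $\beta$ on $X$ gives $\deg_\sA\beta|_{F'}<\deg_\sA e^\sT(N^-_{F'/X})$; multiplying by $e^\sT(E^-|_{F'})$ and using that $\iota'_*$ — an embedding between $\sA$-fixed loci with weight-zero normal bundle $E^0|_{F'}$ — cannot enlarge $\deg_\sA$, one obtains $\deg_\sA(i_*\beta)|_{G'}<\deg_\sA e^\sT(E^-|_{F'}\oplus N^-_{F'/X})=\deg_\sA e^\sT(N^-_{G'/Y})$, which is axiom (iii). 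By uniqueness, $i_*\beta=\Stab_\fC(i^\sA_*\gamma)$, proving \eqref{vb and stab_repl_coh}. The $K$-theoretic proof is verbatim the same with $e^\sT$ replaced by $e^\sT_K$ and polynomial degrees by Newton polytopes, the one new point being the slopes: writing $\mathsf s$ also for its pullback to $Y$ and $\mathsf s'=\mathsf s\otimes(\det E)^{1/2}$, one has $\mathrm{shift}^Y_G=\wt_\sA\big(\det(N^-_{G/Y})^{1/2}\otimes\mathsf s|_G\big)=\wt_\sA\big(\det(N^-_{F/X})^{1/2}\otimes\det(E^-|_F)^{1/2}\otimes\mathsf s|_F\big)=\wt_\sA\big(\det(N^-_{F/X})^{1/2}\otimes\mathsf s'|_F\big)=\mathrm{shift}^{X,\mathsf s'}_F$, because $E^0|_F$ has weight zero; this matches the shifted degree bounds of Definition \ref{def of stab k}(iii) (and also shows that $\mathsf s$ is generic iff $\mathsf s'$ is), after which the above argument goes through unchanged to give \eqref{vb and stab_repl_k}.

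\textbf{Main obstacle.} The delicate point is the third step: confirming that the Cartesian square $i(X)\times_Y G'\cong i(F')$ really is an excess situation with excess bundle precisely $E^-|_{F'}$, and then tracking the $\sA$-degree (resp.\ Newton polytope) through both the excess identity and the pushforward $\iota'_*$. The inequality relies on the fact that $\iota'$ is an embedding of $\sA$-fixed loci with weight-zero normal bundle, so that $\iota'_*$ is induced from the $\sT/\sA$-equivariant pushforward after the splitting $H^\sT(\,\cdot\,)\cong H^{\sT/\sA}(\,\cdot\,)\otimes\bQ[\Lie\sA]$, whence $\deg_\sA$ can only drop. In the $K$-theoretic case this is compounded by the need to verify that $\det(N^-_{G/Y})^{1/2}$ and $\det(E^-|_F)^{1/2}$ are genuine $\sA$-equivariant fractional line bundles and that the shift identity holds on the nose; once these are in hand the remaining bookkeeping is formal.
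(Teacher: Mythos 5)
Your argument is correct and follows essentially the same route as the paper's: define $\mathcal{S}=i_*\circ\Stab_\fC$, verify the support axiom from the fact that the zero section carries attracting sets into attracting sets, and verify axioms (ii) and (iii) from the excess intersection formula for the square $X\times_Y G'\cong F'$ with excess bundle $E^-|_{F'}$, then invoke uniqueness. The only differences from the paper are cosmetic: for the support step the paper passes to the ample partial order of Remark~\ref{partial order by line bundle} and then applies the weak axiom (i$'$) of Remark~\ref{weak axiom coh}, whereas you argue directly with the flow order via monotonicity of the bijection $\Fix_\sA(X)\cong\Fix_\sA(Y)$ — both work and are interchangeable here; and you spell out the slope-shift bookkeeping in the $K$-theoretic case (the paper dismisses it with ``proven similarly''), which is worth recording. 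One small precision: since $\sA$ acts trivially on $F'$ and on $G'$ and $\iota'_*$ is $\bQ[\Lie\sA]$-linear under the decomposition $H^\sT\cong H^{\sT/\sA}\otimes\bQ[\Lie\sA]$, the pushforward $\iota'_*$ \emph{preserves} $\deg_\sA$ on the nose (as the paper states), not merely ``cannot enlarge'' it; the inequality goes through either way.
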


\begin{proof}
We prove the cohomology version ($K$-theory version can be proven similarly). 
Define $$\mathcal S=i_*\circ \Stab_{\fC}\colon H^{\sT}(X^\sA,\sw^\sA) \to H^{\sT}(Y,\widetilde\sw).$$ 
For an arbitrary $F\in \Fix_\sA(Y)$, and take an arbitrary $\gamma\in H^{\sT/\sA}(i^{-1}(F),\sw^\sA)$, we need to show that $\mathcal S(\gamma)$ satisfies the axioms (ii) and (iii) in Definition \ref{def of stab coho} and axiom (i') in Remark \ref{weak axiom coh}, namely
\begin{itemize}
\setlength{\parskip}{1ex}
    \item[(1)] $\mathcal S(\gamma)$ is supported on $\Attr^{\le}_\fC(F)$;
    \item[(2)] $\mathcal S (\gamma)\big|_{F} = e^\sT (N_{F / Y}^-) \cdot i^{\sA}_*(\gamma)$;
    \item[(3)] For any $F'\neq F$, the inequality $\deg_\sA \mathcal S(\gamma)\big|_{F'} < \deg_\sA e^\sT (N_{F'/Y}^-)$ holds.
\end{itemize}
Here we choose the ample partial order $\le$ as in Remark \ref{partial order by line bundle}. 

Since $E$ is repelling, we have $i^{-1}(\Attr_\fC(G))=\Attr_\fC(i^{-1}(G))$ for all $G\in \Fix_\sA(Y)$; therefore $\mathcal S(\gamma)$ is supported on $$i(\Attr^{\le}_\fC(i^{-1}(F)))=i\left(\bigcup_{i^{-1}(F)\le i^{-1}(G)}\Attr_\fC(i^{-1}(F))\right)=\bigcup_{F\le G}X\cap\Attr_\fC(G)=X\cap\Attr^{\le}_\fC(F)\subseteq  \Attr^{\le}_\fC(F).$$
This prove (1). For (2), we notice that 
$$\mathcal S (\gamma)\big|_{F}=e^\sT(E|_{F}^{\mathrm{mov}})\cdot i^{\sA}_*\left(\Stab_\fC(\gamma)|_{i^{-1}(F)}\right)=e^\sT(E|_{F}^{\mathrm{mov}})\cdot i^{\sA}_*\left(e^\sT\left(N^-_{i^{-1}(F)/X}\right)\cdot \gamma\right)= e^\sT (N_{F / Y}^-) \cdot i^{\sA}_*(\gamma),$$
where in the last equation we have used the fact that $E$ is repelling so that 
$$N^-_{F/Y}= \pi^{\sA^*}\left(N^-_{i^{-1}(F)/X}\right)+E|_{F}^{\mathrm{mov}}\in K^\sT(F). $$ 
For (3), we notice that $\mathcal S (\gamma)\big|_{F'}=e^\sT\left(E|_{F'}^{\mathrm{mov}}\right)\cdot i^{\sA}_*\left(\Stab_\fC(\gamma)|_{i^{-1}(F')}\right)$. Since $i^{\sA}_*$ preserves $\deg_\sA$, 
we have 
\begin{align*}\deg_\sA \mathcal S (\gamma)\big|_{F'}&=\deg_\sA e^\sT(E|_{F'}^{\mathrm{mov}})+\deg_\sA\left(\Stab_\fC(\gamma)|_{i^{-1}(F')}\right)\\
&<\deg_\sA e^\sT(E|_{F'}^{\mathrm{mov}})+\deg_\sA e^\sT \left(N_{i^{-1}(F')/X}^-\right)=\deg_\sA e^\sT (N_{F'/Y}^-).  \qedhere \end{align*}

\end{proof}

The following lemma will be used in Appendix \ref{sec:proof of prop psi}.
\begin{Lemma}\label{lem:repl vb induce stab_K}
Assume that $E$ is repelling with respect to $\fC$. Suppose that $F\in \Fix_\sA(X)$ is a fixed component in $X$ such that $E|_F$ is moving. Let $\bC^*_u$ act on $Y$ by fixing $X$ and scaling the fibers of $E$ with weight $1$. Suppose that $K$-theoretic stable envelope exists for $(Y,\sw\circ\pi,\sT\times \bC^*_u,\sA,\fC,\mathsf s)$, then there exists a $K$-theoretic stable envelope map\footnote{This means a map satisfying axioms (i)-(iii) in Definition \ref{def of stab k} for 
component $F$.} $\Stab^{\mathsf s'}_\fC\colon K^\sT(F,\sw^\sA)\to K^\sT(X,\sw)$, where $\mathsf s'=\mathsf s\otimes(\det E)^{ 1/2}$.
\end{Lemma}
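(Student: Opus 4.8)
The plan is to produce the required single-component stable envelope on $X$ by restricting the given stable envelope on $Y$ to the $\sA$-fixed component coming from the zero section, pulling it back along $i\colon X\hookrightarrow Y$, and correcting the result by the $\bC^*_u$-equivariant Euler class of $E$; this is, at the level of the component $F$, the inverse of the pushforward relation of Proposition \ref{prop:vb and stab_repl}. I would begin by recording the geometry. Because $E|_F$ is moving, $Y^\sA\cap\pi^{-1}(F)=\mathrm{Tot}\big((E|_F)^{\sA\text{-fix}}\big)=F$, so $\tilde F:=i(F)\cong F$ is a connected component of $Y^\sA$; its normal bundle is $N_{\tilde F/Y}=N_{F/X}\oplus E|_F$ in $K^\sT(F)$, with $\bC^*_u$ acting with weight $0$ on $N_{F/X}$ and weight $1$ on $E|_F$, and, since $E$ is repelling and $E|_F$ is moving, $E|_F$ lies entirely in the $\fC$-negative part, so $N^-_{\tilde F/Y}=N^-_{F/X}\oplus E|_F$. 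A limit argument, using that $E$ is repelling, shows $\Attr^f_\fC(\tilde F)_Y\cap X\subseteq i\big(\Attr^f_\fC(F)_X\big)$. Finally, the relation $\mathsf s'=\mathsf s\otimes(\det E)^{1/2}$ is designed so that (under $\pi^*\colon\Pic_\sA(X)\cong\Pic_\sA(Y)$) the shift terms of Definition \ref{def of stab k}(iii) match: $\mathrm{shift}^{Y,\mathsf s}_{\tilde F}=\mathrm{shift}^{X,\mathsf s'}_{F}$, and $\mathrm{shift}^{Y,\mathsf s}_{\tilde F'}=\mathrm{shift}^{X,\mathsf s'}_{F'}$ for any $\sA$-fixed component $\tilde F'=i(F')$ with $E|_{F'}$ moving.

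Next I would define the candidate. Since $\bC^*_u$ acts trivially on $X$ we have $K^{\sT\times\bC^*_u}(X,\sw)=K^\sT(X,\sw)\otimes_\bZ\bZ[u^{\pm}]$, and $i^{*}=(\pi^{*})^{-1}$ identifies $K^{\sT\times\bC^*_u}(Y,\sw\circ\pi)$ with this group. The class $e^{\sT\times\bC^*_u}_K(E)=\sum_{k\geqslant 0}(-1)^k u^{-k}\,[\bigwedge^k E^\vee]$ has $u^0$-coefficient $1$, hence is invertible in the completion $K^\sT(X,\sw)[[u^{-1}]][u]$, and for $\gamma\in K^\sT(F,\sw^\sA)$ (trivial $\bC^*_u$-weight) I set
\[
\Stab^{\mathsf s'}_\fC(\gamma):=\lim_{u\to\infty}\ \frac{i^{*}\,\Stab^{\mathsf s}_\fC\big(\gamma|_{\tilde F}\big)}{e^{\sT\times\bC^*_u}_K(E)}\,,
\]
where $\Stab^{\mathsf s}_\fC$ on the right is the given stable envelope on $Y$, restricted to the source component $\tilde F$. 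The substantive point, which I would take up at the end, is that this limit lands in the integral group $K^\sT(X,\sw)$, i.e.\ that $i^{*}\Stab^{\mathsf s}_\fC(\gamma|_{\tilde F})$ has non-positive $\bC^*_u$-degree on every $\sA$-fixed component of $X$.

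Granting that, I would verify the axioms of Definition \ref{def of stab k} for the component $F$. For (i): $i^{*}\Stab^{\mathsf s}_\fC(\gamma|_{\tilde F})$ is supported on $\Attr^f_\fC(\tilde F)_Y\cap X\subseteq i(\Attr^f_\fC(F)_X)$, and dividing by a unit and passing to the limit does not enlarge the support. For (ii): since $\tilde F$ is $F\hookrightarrow X\hookrightarrow Y$, restriction to $F$ together with axiom (ii) on $Y$ and $N^-_{\tilde F/Y}=N^-_{F/X}\oplus E|_F$ gives $i^{*}\Stab^{\mathsf s}_\fC(\gamma|_{\tilde F})|_F=e^{\sT\times\bC^*_u}_K(N^-_{F/X})\,e^{\sT\times\bC^*_u}_K(E|_F)\,\gamma$; dividing by $e^{\sT\times\bC^*_u}_K(E)|_F=e^{\sT\times\bC^*_u}_K(E|_F)$ leaves $e^\sT_K(N^-_{F/X})\gamma$, which is $u$-independent, so the limit is $e^\sT_K(N^-_{F/X})\gamma$. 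For (iii): for $F'\neq F$ the restriction to $F'$ is read off from axiom (iii) on $Y$ at the $\sA$-fixed locus of $Y$ over $F'$; using $\deg_\sA e^{\sT\times\bC^*_u}_K(N^-_{\tilde F'/Y})=\deg_\sA e^{\sT\times\bC^*_u}_K(N^-_{F'/X})+\deg_\sA e^{\sT\times\bC^*_u}_K(E|_{F'})$ and the matching of shifts above, the $E|_{F'}$-contribution is cancelled exactly by the division, leaving the strict inclusion $\deg_\sA\Stab^{\mathsf s'}_\fC(\gamma)|_{F'}\subsetneq\deg_\sA e^\sT_K(N^-_{F'/X})+\mathrm{shift}^{X,\mathsf s'}_{F'}-\mathrm{shift}^{X,\mathsf s'}_{F}$; the $\sA$-fixed components of $Y$ over the $F'$ with $E|_{F'}$ not moving are handled by the same limit analysis.

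The hard part is the integrality/degree claim in the definition of the candidate: the stable envelope axioms on $Y$ are stated for the subtorus $\sA$ only and do not by themselves constrain the $\bC^*_u$-degree, so this must be obtained geometrically — from the fact that $\Attr^f_\fC(\tilde F)_Y$ lies over $\Attr^f_\fC(F)_X$ and, because $E$ is repelling, meets the zero section in exactly $i(\Attr^f_\fC(F)_X)$, which reduces the estimate to the zero section, where it follows from the normalization axiom on $Y$. The $\sA$-fixed components of $Y$ over those $F'$ with $E|_{F'}$ not moving, whose attracting sets escape the zero section, are the delicate case; I would treat them either by the same limit analysis or by passing to the window-subcategory description of $\Stab^{\mathsf s}_\fC$ on $Y$ from \eqref{diag define Kstab}. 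In the situation where all relevant $E|_{F'}$ are moving — which is the case in the application to Proposition \ref{prop:Psi} in Appendix \ref{sec:proof of prop psi} — one already has $\Attr^f_\fC(\tilde F)_Y\subseteq X$, hence $\Stab^{\mathsf s}_\fC(\gamma|_{\tilde F})=i_*\beta$ for an integral $\beta$, and the construction simplifies to $\Stab^{\mathsf s'}_\fC(\gamma)=\beta|_{u=1}$ with $e^{\sT\times\bC^*_u}_K(E)\,\beta=i^{*}\Stab^{\mathsf s}_\fC(\gamma|_{\tilde F})$.
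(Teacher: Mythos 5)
Your proposal and the paper's proof both turn on the same relation $i_*\circ\Stab^{\mathsf s'}=\Stab^{\mathsf s}_\fC\circ i^\sA_*$ and both exploit the auxiliary $\bC^*_u$-scaling to separate $E$-contributions from $X$-contributions; but you try to \emph{invert} the pushforward relation explicitly, which creates the integrality problem that you correctly flag and then do not resolve, whereas the paper avoids it entirely by going the other way around.

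Concretely: the paper does not try to extract $\Stab^{\mathsf s'}_\fC$ out of $\Stab^{\mathsf s}_\fC$. It starts from the map $\Stab^{\mathsf s'}_\sigma\colon K^{\sT\times\bC^*_u}(F,\sw^\sA)\to K^{\sT\times\bC^*_u}(X,\sw)$ for a single generic cocharacter $\sigma\in\fC$, which \emph{already exists} unconditionally by Proposition~\ref{prop on stable corr} (this is the only place a stable envelope needs to be produced, and it is one-dimensional, hence free). Then Proposition~\ref{prop:vb and stab_repl}, applied with the one-dimensional torus $\bC^*_\sigma$ (for which both $X$- and $Y$-envelopes are known to exist), gives the identity $i_*\circ\Stab^{\mathsf s'}_\sigma=\Stab^{\mathsf s}_\fC\circ i^\sA_*$. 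The remaining work is to upgrade the degree bound of $\Stab^{\mathsf s'}_\sigma$ from the $\sigma$-direction to the whole chamber $\fC$: restricting the identity at $F'\neq F$ and using $i^*i_*=e^{\sT\times\bC^*_u}_K(E)\cdot$ yields
\[
\deg_\sA\Big(e^{\sT\times\bC^*_u}_K(E|_{F'})\cdot\Stab^{\mathsf s'}_\sigma(\gamma)\big|_{F'}\Big)\subsetneq\deg_\sA e^{\sT\times\bC^*_u}_K\!\big(N^-_{\pi^{-1}(F')^\sA/Y}\big)+\mathrm{shift}_{F'}-\mathrm{shift}_F,
\]
from which the $\fC$-degree bound for $\Stab^{\mathsf s'}_\sigma(\gamma)|_{F'}$ follows by subtracting the $E|_{F'}$-degree (this uses $\bC^*_u$-equivariance to keep the $E$-terms visible), and genericity of $\mathsf s$ promotes $\subseteq$ to $\subsetneq$. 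Finally one specializes the extra $\bC^*_u$-equivariance by evaluating at $u=1$. The class one manipulates is at every stage a genuine element of a $K$-group; no division, no completion, no limit.

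Two points in your argument are genuinely problematic. First, your candidate is defined as $\lim_{u\to\infty}i^*\Stab^{\mathsf s}_\fC(\gamma|_{\tilde F})/e^{\sT\times\bC^*_u}_K(E)$, which lives in a completion in $u^{-1}$, and you must show that the fraction is actually a Laurent polynomial with nonpositive $u$-degree so that the limit exists in $K^\sT(X,\sw)$; you observe that the troublesome case is the $\sA$-fixed components $\tilde F'$ with $E|_{F'}$ not moving, whose attracting sets escape the zero section so that $\Stab^{\mathsf s}_\fC(\gamma|_{\tilde F})$ need not be supported on $i(X)$, and you propose either ``the same limit analysis'' or ``the window-subcategory description'' without carrying either out. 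This is precisely the gap: nothing in the $Y$-envelope axioms controls the $\bC^*_u$-degree of off-diagonal entries, so the boundedness you need is not available by a degree count. Second, even in the clean case where $\Stab^{\mathsf s}_\fC(\gamma|_{\tilde F})=i_*\beta$, the correct specialization is $\beta|_{u=1}$ (as you note at the very end), but your main formula produces $\lim_{u\to\infty}\beta$, which is the $u^0$-coefficient; these agree only if $\beta$ is $u$-constant, and you do not argue that. The paper's route sidesteps both issues, and you should adopt it: take the one-dimensional envelope on $X$ as the candidate, \emph{prove} the relation rather than invert it, and transfer the degree bound from $Y$ to $X$ using the $\bC^*_u$-graded Euler class of $E$ to keep the two contributions apart.
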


\begin{proof}
By assumption, the projection $(\pi^{-1}(F))^\sA\to F$ is an isomorphism. Let $\sigma:\bC^*\to \sA$ be a cocharacter in the chamber $\fC$, then we claim that 
$\Stab^{\mathsf s'}_\sigma\colon K^\sT(F,\sw^\sA)\to K^\sT(X,\sw)$ (in Definition \ref{def of stab1}) satisfies axioms (i)-(iii) in Definition \ref{def of stab k} with respect to the whole chamber $\fC$. 

It is enough to check axiom (iii). We note that $\bC^*_u$ acts on $X$ trivially, so $\Stab^{\mathsf s'}_\sigma$ extends to a $K$-theoretic stable envelope for $(X,\sw,\sT\times \bC^*_u,\bC^*,+)$. Proposition \ref{prop:vb and stab_repl} implies that
\begin{align*}
    i_*\circ \Stab^{\mathsf s'}_\sigma=\Stab^{\mathsf s}_\fC\circ \,i^\sA_*.
\end{align*}
As $E|_F$ is moving, $i^\sA_*$ is an isomorphism when restricted to $F$, we see that for all $\gamma\in K^{\sT\times \bC^*_u}(F,\sw^\sA)$ and for all $F'\in\Fix_\sA(X), F'\neq F$, there is a strict inclusion of polytopes $$\deg_\sA e^{\sT\times \bC^*_u}_K (E|_{F'})\cdot \Stab^{\mathsf s'}_{\sigma}(\gamma)\big|_{F'} \subsetneq  \deg_\sA e^{\sT\times \bC^*_u}_K \left(N_{\pi^{-1}(F')^\sA / Y}^-\right)+\mathrm{shift}_{F'}-\mathrm{shift}_{F},$$ where $\mathrm{shift}_{F}:=\mathrm{weight}_\sA\left(\det(N_{F/X}^-)^{1/2}\otimes\mathsf s'|_F\right)$. It follows that 
$$\deg_\sA\Stab^{\mathsf s'}_{\sigma}(\gamma)\big|_{F'} \subseteq  \deg_\sA e^{\sT\times \bC^*_u}_K (N_{F'/X}^-)+\mathrm{shift}_{F'}-\mathrm{shift}_{F}.$$
Since $\mathsf s$ is generic, the above inclusion must be strict by Remark \eqref{rmk: generic slope}. Evaluating at $u=1$, $\Stab^{\mathsf s'}_\sigma$ induces a $K$-theoretic stable envelope map $\Stab^{\mathsf s'}_\fC\colon K^\sT(F,\sw^\sA)\to K^\sT(X,\sw)$.
\end{proof}

\begin{Remark}\label{rmk:repl vb induce stab_K triangle}
Under the assumption of Lemma \ref{lem:repl vb induce stab_K}, suppose that $\fC'$ is a face of $\fC$ and let $\sA'$ be the subtorus of $\sA$ associated with $\fC'$, and we also assume the existence of $K$-theoretic stable envelope for $(Y,\sw\circ\pi,\sT\times \bC^*_u,\sA',\fC',\mathsf s)$. Assume moreover that $F$ is an $\sA'$-fixed component and $E|_F$ is moving with respect to $\sA'$-action. Then by Lemma \ref{lem:repl vb induce stab_K} we have $K$-theoretic stable envelope maps $\Stab^{\mathsf s'}_\fC$ and $\Stab^{\mathsf s'}_{\fC'}$ 
from $K^\sT(F,\sw^\sA)$ to $K^\sT(X,\sw)$, where $\mathsf s'=\mathsf s\otimes(\det E)^{1/2}$. Moreover, there exists an obvious $K$-theoretic stable envelope map $\Stab^{\mathsf s''}_{\fC/\fC'}\colon K^\sT(F,\sw^\sA)\to K^\sT(F,\sw^\sA)$ for the $\mathsf s''$ determined by Lemma \ref{triangle lemma for K}, which is the identity map. Therefore by Lemma \ref{triangle lemma for K}, we have $\Stab^{\mathsf s'}_\fC=\Stab^{\mathsf s'}_{\fC'}$.
\end{Remark}

Combining Propositions \ref{prop:vb and stab_attr}, \ref{prop:vb and stab_repl}, we obtain the following.

\begin{Proposition}
Assume that there exists a decomposition $E=E_+\oplus E_-$ such that $E_+$ is attracting and $E_-$ is repelling with respect to $\fC$. Suppose that $\sw'$ is a $\sT$-invariant function on $\mathrm{Tot}(E_-)$ such that $\sw'|_{X}=\sw$, and define $\widetilde\sw:=(Y\to \mathrm{Tot}(E_-))^*(\sw')$. Moreover, assume that cohomological stable envelopes exist for both $(X,\sw,\sT,\sA,\fC)$ and $(Y,\widetilde\sw,\sT,\sA,\fC)$. Then the following diagram is commutative
\begin{equation}\label{vb and stab_coh}
\xymatrix{
H^\sT(Y^\sA,\widetilde\sw^\sA) \ar[r]^-{\Stab_\fC} & H^\sT(Y,\widetilde\sw)\\
H^\sT(X^\sA,\sw^\sA) \ar[r]^{\Stab_\fC} \ar[u]^{\psi^\sA} & H^\sT(X,\sw) \ar[u]_{\psi},
}
\end{equation}
where
\begin{equation}\label{eq: psi}
\begin{split}
\psi&=(Y\to \mathrm{Tot}(E_-))^*\circ (X\hookrightarrow \mathrm{Tot}(E_-))_*=(\mathrm{Tot}(E_+)\hookrightarrow Y)_*\circ (\mathrm{Tot}(E_+)\to X)^*,\\
\psi^\sA&=\left(Y^\sA\to \mathrm{Tot}(E_-)^\sA\right)^*\circ \left(X^\sA\hookrightarrow \mathrm{Tot}(E_-)^\sA\right)_*=\left(\mathrm{Tot}(E_+)^\sA\hookrightarrow Y^\sA\right)_*\circ \left(\mathrm{Tot}(E_+)^\sA\to X^\sA\right)^*.
\end{split}
\end{equation}
Similarly, let $\mathsf s\in \Pic_\sA(X)\otimes_\bZ \bR$ be a generic slope, and assume that $K$-theoretic stable envelopes exist for both $(X,\sw,\sT,\sA,\fC,\mathsf s')$ and $(Y,\widetilde\sw,\sT,\sA,\fC,\mathsf s)$, where $\mathsf s'=\mathsf s\otimes (\det E_-)^{1/2}$. Then the following diagram is commutative
\begin{equation}\label{vb and stab_k}
\xymatrix{
K^\sT(Y^\sA,\widetilde\sw^\sA) \ar[r]^-{\Stab^{\mathsf s}_\fC} & K^\sT(Y,\widetilde\sw)\\
K^\sT(X^\sA,\sw^\sA) \ar[r]^{\Stab^{\mathsf s'}_\fC} \ar[u]^{\psi^{\sA}} & K^\sT(X,\sw) \ar[u]_{\psi},
}
\end{equation}
where $\psi$ and $\psi^\sA$ are given by $K$-theoretic version of \eqref{eq: psi}.
\end{Proposition}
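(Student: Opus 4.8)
The plan is to deduce the combined statement directly from Propositions \ref{prop:vb and stab_attr} and \ref{prop:vb and stab_repl} by factoring the total space $Y=\mathrm{Tot}(E)$ through the intermediate space $\mathrm{Tot}(E_-)$. Write $\pi_-\colon Y\to \mathrm{Tot}(E_-)$ for the projection along the attracting factor $E_+$, and $i_-\colon X\hookrightarrow \mathrm{Tot}(E_-)$ for the zero section; then $\psi=\pi_-^*\circ i_{-,*}$ and $\psi^\sA$ is its $\sA$-fixed analog, by the very definition \eqref{eq: psi}. Here one must first record the elementary identity $\pi_-^*\circ i_{-,*}=(\mathrm{Tot}(E_+)\hookrightarrow Y)_*\circ(\mathrm{Tot}(E_+)\to X)^*$, which follows from base change along the Cartesian square with corners $X$, $\mathrm{Tot}(E_+)$, $\mathrm{Tot}(E_-)$, $Y$, together with the fact that $\pi_-$ restricted to $\mathrm{Tot}(E_+)$ is the bundle projection $\mathrm{Tot}(E_+)\to X$; this is the content of the second equality in \eqref{eq: psi} and is needed to see that the two presentations of $\psi$ agree, though only one presentation is actually used below.

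Next I would split the square \eqref{vb and stab_coh} (respectively \eqref{vb and stab_k}) as the vertical composition of two squares. The bottom square has vertical arrows $i_{-,*}$ (and $i^\sA_{-,*}$), relating $(X,\sw)$ to $(\mathrm{Tot}(E_-),\sw')$; since $E_-$ is repelling with respect to $\fC$ and $\sw'|_X=\sw$, this square commutes by Proposition \ref{prop:vb and stab_repl}, provided cohomological stable envelopes exist for both $(X,\sw,\sT,\sA,\fC)$ and $(\mathrm{Tot}(E_-),\sw',\sT,\sA,\fC)$. The top square has vertical arrows $\pi_-^*$ (and $\pi_-^{\sA*}$), relating $(\mathrm{Tot}(E_-),\sw')$ to $(Y,\widetilde\sw)=(Y,(\pi_-)^*\sw')$; since $E_+$ is attracting with respect to $\fC$, the bundle $\mathrm{Tot}(E_+)\to\mathrm{Tot}(E_-)$ has attracting total space, and Proposition \ref{prop:vb and stab_attr} gives commutativity of this square and moreover yields the existence of stable envelopes for $(Y,\widetilde\sw,\sT,\sA,\fC)$ as a consequence of existence for $(\mathrm{Tot}(E_-),\sw',\sT,\sA,\fC)$. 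Composing the two commuting squares vertically gives \eqref{vb and stab_coh}. For the $K$-theoretic version one tracks the slopes: the bottom square introduces the twist $\mathsf s\mapsto\mathsf s'=\mathsf s\otimes(\det E_-)^{1/2}$ coming from Proposition \ref{prop:vb and stab_repl}, while the top square preserves the slope (Proposition \ref{prop:vb and stab_attr}), so the composed square is exactly \eqref{vb and stab_k} with the stated slope shift.

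The one genuine point that needs care — and the main obstacle — is the bookkeeping of hypotheses: the statement assumes existence of cohomological (resp.\ $K$-theoretic) stable envelopes only for $(X,\sw)$ and $(Y,\widetilde\sw)$, but the factorization argument passes through the intermediate space $(\mathrm{Tot}(E_-),\sw')$, for which existence must be supplied. This is exactly where Proposition \ref{prop:vb and stab_attr} does double duty: applied to the attracting bundle $\mathrm{Tot}(E_+)\to\mathrm{Tot}(E_-)$, its ``if and only if'' clause shows that existence for $(Y,\widetilde\sw,\sT,\sA,\fC)$ is equivalent to existence for $(\mathrm{Tot}(E_-),\sw',\sT,\sA,\fC)$, so the hypothesis on $(Y,\widetilde\sw)$ is precisely what is needed to invoke Proposition \ref{prop:vb and stab_repl} on the bottom square. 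One should also check that $\widetilde\sw^\sA$, $\sw'^\sA$, $\sw^\sA$ are mutually compatible under restriction and pullback, that $(\pi_-)^*\sw'=\widetilde\sw$ by the definition $\widetilde\sw=(Y\to\mathrm{Tot}(E_-))^*\sw'$, and that $\sw'|_X=\sw$, all of which are immediate. The remaining verifications — that the relevant faces/chambers and partial orders are inherited correctly, and that the slope twists compose as $(\det E_-)^{1/2}$ (with no contribution from the attracting factor, since $\det E_+$ does not enter the repelling normal bundles) — are routine and follow the pattern already established in the proofs of Propositions \ref{prop:vb and stab_attr} and \ref{prop:vb and stab_repl}.
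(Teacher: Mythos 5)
Your proposal is correct and follows exactly the route the paper intends: the paper states this proposition as an immediate consequence ("Combining Propositions \ref{prop:vb and stab_attr}, \ref{prop:vb and stab_repl}, we obtain the following") without writing out a proof, and your argument simply supplies the implied details — factoring $Y\to\mathrm{Tot}(E_-)\to X$, applying Proposition~\ref{prop:vb and stab_attr} to the attracting bundle $Y\to\mathrm{Tot}(E_-)$ and Proposition~\ref{prop:vb and stab_repl} to the repelling bundle $\mathrm{Tot}(E_-)\to X$, and using the "if and only if" clause of the former to obtain existence on the intermediate space from the hypothesis on $(Y,\widetilde\sw)$. Your slope bookkeeping ($\mathsf s$ unchanged through the attracting step, twisted by $(\det E_-)^{1/2}$ through the repelling step) is also exactly what is needed.
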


\subsection{Attracting closure as cohomological stable envelopes}

The following explicit descriptions of cohomological stable envelopes will be useful. 
In below, we omit the canonical map in Lemma \ref{can map induce stab} and simply write 
$$[\overline{\Attr}_\fC\left(\Delta_{(-)}\right)]:=\can\circ\,[\overline{\Attr}_\fC\left(\Delta_{(-)}\right)].$$

\begin{Proposition}\label{prop:vb and stab_attr_corr}
Assume that $E$ is attracting with respect to $\fC$. If $[\overline{\Attr}_\fC(\Delta_{Y^\sA})]$ induces cohomological stable envelope for $(Y,\sw\circ\pi,\sT,\sA,\fC)$,
then $[\overline{\Attr}_\fC(\Delta_{X^\sA})]$ induces cohomological stable envelope for $(X,\sw,\sT,\sA,\fC)$.
\end{Proposition}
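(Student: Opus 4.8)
The plan is to identify the convolution map $S^X$ induced by $[\overline{\Attr}_\fC(\Delta_{X^\sA})]$ (with respect to $\sw$, via the canonical map of Lemma \ref{can map induce stab} and the convolution \eqref{equ on convo on coh}) with the unique cohomological stable envelope for $(X,\sw,\sT,\sA,\fC)$. By hypothesis the convolution map $S^Y$ induced by $[\overline{\Attr}_\fC(\Delta_{Y^\sA})]$ is a cohomological stable envelope for $(Y,\sw\circ\pi,\sT,\sA,\fC)$, hence the unique one by Proposition \ref{uniqueness of coh stab}; in particular cohomological stable envelopes exist for $Y$, so by Proposition \ref{prop:vb and stab_attr} they exist for $X$, and diagram \eqref{vb and stab_attr_coh} reads $\pi^*\circ\Stab^X_{\fC}=S^Y\circ\pi^{\sA*}$, where $\Stab^X_{\fC}$ denotes the stable envelope for $X$. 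Since $\pi^*$ is an isomorphism (an affine-bundle pullback, \eqref{pb is iso on coho}), it suffices to establish the identity of maps $\pi^*\circ S^X=S^Y\circ\pi^{\sA*}\colon H^\sT(X^\sA,\sw^\sA)\to H^\sT(Y,\sw\circ\pi)$; this forces $S^X=\Stab^X_{\fC}$, which is what we want.

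The geometric heart of the argument is the following. Write $\tilde\pi=\pi\times\mathrm{id}_{X^\sA}\colon Y\times X^\sA\to X\times X^\sA$ and $r=\mathrm{id}_Y\times\pi^\sA\colon Y\times Y^\sA\to Y\times X^\sA$, both vector-bundle projections. I claim that, component by component, $r$ restricts to a proper birational morphism $\overline{\Attr}_\fC(\Delta_{Y^\sA})\to\tilde\pi^{-1}\bigl(\overline{\Attr}_\fC(\Delta_{X^\sA})\bigr)$, whence $r_*[\overline{\Attr}_\fC(\Delta_{Y^\sA})]=\tilde\pi^*[\overline{\Attr}_\fC(\Delta_{X^\sA})]$. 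Indeed, fix $F\in\Fix_\sA(X)$ and let $G=(\pi^{-1}(F))^\sA$ be the corresponding component of $Y^\sA$; since $E$ is attracting, $\Attr_\fC(G)=\pi^{-1}(\Attr_\fC(F))$ (as in the proof of Proposition \ref{prop:vb and stab_attr}), and because $\pi$ is $\sA$-equivariant the limit maps $\ell_F\colon\Attr_\fC(F)\to F$ and $\ell_Y\colon\Attr_\fC(G)\to G$ satisfy $\pi^\sA\circ\ell_Y=\ell_F\circ\pi$. The locally closed pieces $\Attr_\fC(\Delta_G)$ and $\tilde\pi^{-1}(\Attr_\fC(\Delta_F))$ are then the graphs $\{(y,\ell_Y(y))\}$ and $\{(y,\ell_F(\pi(y)))\}$ over $\Attr_\fC(G)=\pi^{-1}(\Attr_\fC(F))$, and $r$ carries the first isomorphically onto the second. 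Passing to closures: the first is dense in $\overline{\Attr}_\fC(\Delta_G)$ and the second in $\tilde\pi^{-1}(\overline{\Attr}_\fC(\Delta_F))$ (the latter since $\tilde\pi$ is flat with irreducible fibers), so $r$ maps one into the other and is birational there; it is proper there because both are proper over $Y$ (Remark \ref{proper-over-affine} for the source, base change of $\overline{\Attr}_\fC(\Delta_F)\to X$ for the target) and $r$ commutes with the projections to $Y$. A proper birational morphism of reduced irreducible schemes sends fundamental cycle to fundamental cycle, giving the claim.

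It then remains to unwind the two sides as critical convolutions. Using base change $\pi^*p_{1*}=q_{1*}\tilde\pi^*$ and the fact that canonical maps commute with flat pullback (Lemma \ref{lem on can cm w conv}), one obtains $\pi^*S^X(\gamma)=q_{1*}\bigl(\can(\tilde\pi^*[\overline{\Attr}_\fC(\Delta_{X^\sA})])\otimes q_2^*\gamma\bigr)$, with $q_1,q_2$ the projections of $Y\times X^\sA$. For the other side, since $\pi^\sA\circ\mathrm{pr}_{Y^\sA}=\mathrm{pr}_{X^\sA}\circ r$ and the projection $Y\times Y^\sA\to Y$ factors through $r$, the projection formula for the proper morphism $r$ (on the support of the class), together with compatibility of $\can$ with proper pushforward and the identity of the previous paragraph, yields $S^Y(\pi^{\sA*}\gamma)=q_{1*}\bigl(r_*\can([\overline{\Attr}_\fC(\Delta_{Y^\sA})])\otimes q_2^*\gamma\bigr)=q_{1*}\bigl(\can(\tilde\pi^*[\overline{\Attr}_\fC(\Delta_{X^\sA})])\otimes q_2^*\gamma\bigr)$; here one checks the potentials match under $r$, namely $r^*(\sw\circ\pi\boxminus\sw^\sA)=\sw\circ\pi\boxminus(\sw\circ\pi)^\sA$. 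Comparing the two expressions gives $\pi^*\circ S^X=S^Y\circ\pi^{\sA*}$, completing the proof.

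The main obstacle is the geometric claim of the second paragraph: verifying that $r$ restricts to a genuinely birational — not merely bijective on closed points — and proper morphism onto $\tilde\pi^{-1}(\overline{\Attr}_\fC(\Delta_{X^\sA}))$. This is exactly where the attracting hypothesis on $E$ is indispensable, since it is what makes $\Attr_\fC(G)=\pi^{-1}(\Attr_\fC(F))$ and lets the limit map on $Y$ descend compatibly along $\pi$. The remaining bookkeeping — base change, the projection formula, and the behaviour of canonical maps under pullback and proper pushforward in critical cohomology — is routine given the functoriality recorded in Section \ref{sect on crit coho} and Lemma \ref{lem on can cm w conv}.
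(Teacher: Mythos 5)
Your proof is correct, and it rests on the same reduction and the same underlying cycle identity as the paper's; the difference is in how the convolution $\Stab_\fC\circ\pi^{\sA*}$ is evaluated. Both arguments start by invoking Proposition \ref{prop:vb and stab_attr} and the invertibility of $\pi^*$ to reduce to $\pi^*\circ S^X=S^Y\circ\pi^{\sA*}$. The paper then computes the right-hand side by forming a transverse intersection in the triple product $(Y\times Y^\sA)\times(Y^\sA\times X^\sA)$ and pushing forward along $p_{13}$, arriving at the correspondence $[\overline{\Attr}_\fC((\id\times\pi^\sA)(Y^\sA))]$, which it identifies with $[W]$. You instead pass the convolution through $r=\id_Y\times\pi^\sA$ via the projection formula, reducing everything to $r_*[\overline{\Attr}_\fC(\Delta_{Y^\sA})]=\tilde\pi^*[\overline{\Attr}_\fC(\Delta_{X^\sA})]$, proved by exhibiting $r$ as a proper birational map between the two attracting closures. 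These are equivalent: the paper's $[\overline{\Attr}_\fC((\id\times\pi^\sA)(Y^\sA))]$ is exactly $r_*[\overline{\Attr}_\fC(\Delta_{Y^\sA})]$, and its $[W]$ is $\tilde\pi^*[\overline{\Attr}_\fC(\Delta_{X^\sA})]$. Your version avoids the transversality discussion and isolates the geometric input more explicitly, at the cost of having to justify the projection formula for $r$, which is not proper as a map $Y\times Y^\sA\to Y\times X^\sA$ but only after restriction to $\overline{\Attr}_\fC(\Delta_{Y^\sA})$ — you do flag this (via the cancellation property of proper morphisms over $Y$, using Remark \ref{proper-over-affine} and base change), and it is the one step a careful reader would want to see written out in full.
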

\begin{proof}
Let $S\colon H^\sT(X^\sA,\sw^\sA)\to H^\sT(X,\sw)$ be the map induced by $[\overline{\Attr}_\fC(\Delta_{X^\sA})]$, 
$\Stab_\fC$ the cohomological stable envelope for $(Y,\sw\circ\pi,\sT,\sA,\fC)$.
By Proposition \ref{prop:vb and stab_attr}, it suffices to show that $S$ satisfies $$\pi^*\circ S=\Stab_\fC\circ\,\pi^{\sA *}. $$ 
Let $W\subseteq  Y\times X^\sA$ be the total space of vector bundle $\pr^*E$ on $\overline{\Attr}_\fC(\Delta_{X^\sA})$, 
where $\pr:\overline{\Attr}_\fC(\Delta_{X^\sA})\to X$ is the projection, then 
$\pi^*\circ S$ is induced by the correspondence $[W]$. 

Note that $\Attr_\fC(\Delta_{Y^\sA})\times (\id\times \pi^\sA)(Y^\sA) $ intersects transversely with $Y\times \Delta_{Y^\sA} \times X^\sA$
inside $(Y\times Y^\sA)\times (Y^\sA\times X^\sA)$, and their intersection equals to $\Attr_\fC((\Delta\times \pi^\sA)(Y^\sA))\subseteq  Y\times Y^\sA \times X^\sA$, where $\Delta\colon Y^\sA\hookrightarrow Y^\sA\times Y^\sA$ is the diagonal map. The intersection of $\overline{\Attr}_\fC(\Delta_{Y^\sA})\times (\id\times \pi^\sA)(Y^\sA)$ with $Y\times \Delta_{Y^\sA} \times X^\sA$ is contained in $\overline{\Attr}_\fC((\Delta\times \pi^\sA)(Y^\sA))$, where the latter closure is taken inside $Y\times Y^\sA\times X^\sA$; thus in the equivariant Chow group, we have
\begin{align*}
    \left[\overline{\Attr}_\fC(\Delta_{Y^\sA})\times (\id\times \pi^\sA)(Y^\sA)\right]\,\bigcap\, \left[Y\times \Delta_{Y^\sA} \times X^\sA]=[\overline{\Attr}_\fC((\Delta\times \pi^\sA)(Y^\sA))\right]\in A^\sT(Y\times Y^\sA\times X^\sA).
\end{align*}
Then $\Stab_\fC\circ\,\pi^{\sA *}$ is induced by the correspondence $p_{13*}[\overline{\Attr}_\fC((\Delta\times \pi^\sA)(Y^\sA))]$, where $p_{13}\colon Y\times Y^\sA\times X^\sA\to Y\times X^\sA$ is the projection to the first and third components. Since $p_{13}$ induces isomorphism $(\Delta\times \pi^\sA)(Y^\sA)\cong (\id\times \pi^\sA)(Y^\sA)$, we have $$p_{13*}[\overline{\Attr}_\fC((\Delta\times \pi^\sA)(Y^\sA))]=[\overline{\Attr}_\fC((\id\times \pi^\sA)(Y^\sA))],$$ 
where the latter closure is taken in $Y\times X^\sA$. Furthermore, ${\Attr}_\fC((\id\times \pi^\sA)(Y^\sA))$ is the total space of vector bundle $\pr^*E$ on $\Attr_\fC(\Delta_{X^\sA})$ where $\pr\colon \Attr_\fC(\Delta_{X^\sA})\to X$ is the natural projection. Therefore, we have $\overline{\Attr}_\fC((\id\times \pi^\sA)(Y^\sA))=W$, and it follows that $\pi^*\circ S=\Stab_\fC\circ\,\pi^{\sA *}$.
\end{proof}

\begin{Proposition}\label{prop:vb and stab_repl_corr}
Assume that $E$ is repelling with respect to $\fC$. Let $\bC^*_u$ act on $Y$ by fixing $X$ and scale the fibers of $E$ with weight $1$, and let $\widetilde\sw$ be a $\sT\times\bC^*_u$-invariant function on $Y$ such that $\widetilde\sw|_X=\sw$. 

If $[\overline{\Attr}_\fC(\Delta_{Y^\sA})]$ induces cohomological stable envelope for $(Y,\widetilde{\sw},\sT\times\bC^*_u,\sA,\fC)$, then $[\overline{\Attr}_\fC(\Delta_{X^\sA})]$ induces cohomological stable envelope for $(X,\sw,\sT,\sA,\fC)$.
\end{Proposition}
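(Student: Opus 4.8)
The plan is to mimic the proof of Proposition \ref{prop:vb and stab_attr_corr}, using the zero‑section pushforwards $i_*$ and $i^\sA_*$ in place of the pullbacks $\pi^*,\pi^{\sA*}$ and invoking Proposition \ref{prop:vb and stab_repl} where that argument used Proposition \ref{prop:vb and stab_attr}. First I would record two reductions. (a) Since $\bC^*_u$ scales the fibres of $E$ with weight $1$ and $\widetilde\sw$ is $\bC^*_u$‑invariant, letting the scaling parameter tend to $0$ and using continuity forces $\widetilde\sw=\sw\circ\pi$, hence $\widetilde\sw^\sA=\sw^\sA\circ\pi^\sA$; so the potential on $Y$ is $\sw\circ\pi$. (b) I would work $(\sT\times\bC^*_u)$‑equivariantly throughout and only at the end restrict the resulting statement about $X$ (on which $\bC^*_u$ acts trivially) to $\sT$‑equivariant cohomology. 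Write $S\colon H^{\sT\times\bC^*_u}(X^\sA,\sw^\sA)\to H^{\sT\times\bC^*_u}(X,\sw)$ for the convolution by $\can[\overline{\Attr}_\fC(\Delta_{X^\sA})]$; the goal is to show $\can[\overline{\Attr}_\fC(\Delta_{X^\sA})]$ is a cohomological stable envelope correspondence (Definition \ref{stab corr_coh}), equivalently that $S$ satisfies the three axioms of Definition \ref{def of stab coho}.

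The geometric core is the comparison of the two attracting closures. Since $E$ is repelling, a point $(x,e)$ of $Y$ flows into $Y^\sA=\mathrm{Tot}(E^{\mathrm{fix}})$ under a cocharacter of $\fC$ exactly when $x$ flows into $X^\sA$ and the $E^-$‑part of $e$ vanishes in the limit; restricting the $Y$‑factor to the zero section this gives, inside $X\times Y^\sA$,
\[
(i\times\mathrm{id})^{-1}\bigl(\Attr_\fC(\Delta_{Y^\sA})\bigr)=(\mathrm{id}_X\times i^\sA)\bigl(\Attr_\fC(\Delta_{X^\sA})\bigr),
\]
and likewise for the closures. A dimension count exhibits this as an excess intersection of $\overline{\Attr}_\fC(\Delta_{Y^\sA})$ with the regularly embedded $X\times Y^\sA$, with excess bundle (the pullback of) the $\fC$‑repelling part $E^-$ of $E|_{X^\sA}$. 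Applying the excess‑intersection formula for $i\times\mathrm{id}$ and pushing forward to $Y\times X^\sA$ — the correspondence manipulation already performed in the proof of Proposition \ref{prop:vb and stab_attr_corr} — I would extract
\[
i_*\circ S=\Stab^{Y}_{\fC}\circ i^\sA_*,
\]
where $\Stab^{Y}_{\fC}$ is the stable envelope of $(Y,\sw\circ\pi,\sT\times\bC^*_u,\sA,\fC)$ induced by $[\overline{\Attr}_\fC(\Delta_{Y^\sA})]$ (the hypothesis), using $\overline{\Attr}_\fC(\Delta_{Y^\sA})=\overline{\Attr}_\sigma(\Delta_{Y^\sigma})$ for generic $\sigma\in\fC$. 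Here the role of $\bC^*_u$ is that it makes $e^{\sT\times\bC^*_u}(E^{\mathrm{fix}})$, hence $e^{\sT\times\bC^*_u}(N_{X^\sA/Y^\sA})$ and $e^{\sT\times\bC^*_u}(N_{X/Y})$, nonzerodivisors, so $i^\sA_*$ and $i_*$ are injective.

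From this identity I would read off the axioms for $S$. The support axiom is immediate from $\overline{\Attr}_\fC(\Delta_F)\subseteq\Attr^f_\fC$. The normalization $S(\gamma)|_F=e^\sT(N^-_{F/X})\gamma$ follows from the local structure of $\overline{\Attr}_\fC(\Delta_F)$ along $F\times F$ (or from restricting the displayed identity to $\widetilde F:=\mathrm{Tot}(E^{\mathrm{fix}}|_F)$ and cancelling the nonzerodivisor $e^\sT(E^-|_F)$). For the degree bound, restricting the identity to a component $\widetilde{F'}=\mathrm{Tot}(E^{\mathrm{fix}}|_{F'})$ of $Y^\sA$ with $F'\neq F$, and using $N^-_{\widetilde{F'}/Y}=\pi^*N^-_{F'/X}\oplus\pi^*(E^-|_{F'})$ together with the fact that $i_*$ multiplies the $\widetilde{F'}$‑restriction by the excess factor $e^\sT(E^-|_{F'})$, the bound $\deg_\sA\Stab^{Y}_{\fC}(i^\sA_*\gamma)|_{\widetilde{F'}}<\deg_\sA e^\sT(N^-_{\widetilde{F'}/Y})$ loses the common factor and becomes $\deg_\sA S(\gamma)|_{F'}<\deg_\sA e^\sT(N^-_{F'/X})$. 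Hence $S$ satisfies all three axioms, so by Proposition \ref{uniqueness of coh stab} the $\fC$‑stable envelope of $(X,\sw,\sT\times\bC^*_u,\sA)$ exists, equals $S$, and is induced by $[\overline{\Attr}_\fC(\Delta_{X^\sA})]$; restricting to $\sT$‑equivariant cohomology finishes the argument. I expect the main obstacle to be the excess‑intersection bookkeeping behind the displayed identity: one must check that the excess bundle is precisely $E^-$ (not a dual or a twist), so that the degree estimate transfers with no loss, and that the normalization and sign factors match the normalizer conventions of Definition \ref{def of normalizer coho}.
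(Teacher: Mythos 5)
Your plan departs from the paper's argument in a structurally important way, and the place you flag as ``the main obstacle'' is exactly where it breaks down. The paper's proof is in two steps precisely because Proposition \ref{prop:vb and stab_repl} lacks the ``if and only if'' that Proposition \ref{prop:vb and stab_attr} has: in the attracting case one gets existence of $\Stab_{\fC,X}$ for free from existence of $\Stab_{\fC,Y}$, and then identifies the map; in the repelling case one cannot mirror this, so the paper first establishes existence by a separate argument (Step~1: for each $\sigma\in\fC$ the one-torus $\Stab_\sigma$ exists by Proposition \ref{prop on stable corr}, Proposition \ref{prop:vb and stab_repl} for the $\bC^*_\sigma$-action gives $i_*\circ\Stab_\sigma=\Stab_{\fC,Y}\circ i^\sA_*$, and injectivity of multiplication by $e^{\sT\times\bC^*_u}(E)=u^{\rk E}+\cdots$ forces $\Stab_\sigma=\Stab_{\sigma'}$, whence existence by Proposition \ref{prop:coh stab ex criterion}); only then does Step~2 identify the correspondence by comparing leading $u$-coefficients of $i^*\circ\Stab_{\fC,Y}\circ i^\sA_*$.

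Your route instead tries to verify the three axioms for $S$ directly from a single geometric identity $i_*\circ S = \Stab^Y_\fC\circ i^\sA_*$. Your derivations of the normalization and degree axioms \emph{from} this identity are correct (the $\deg_\sA$ bookkeeping with the factor $e^\sT(E^-|_{F'})$ does cancel cleanly, since $e^\sT(E^{\mathrm{fix}}|_{F'})$ has $\sA$-degree zero), and your preliminary observation that $\bC^*_u$-invariance forces $\widetilde\sw=\sw\circ\pi$ is a valid and slightly simplifying remark. But the identity itself is not established. The correspondence inducing $\Stab^Y_\fC\circ i^\sA_*$ is $(\mathrm{id}_Y\times i^\sA)^![\overline{\Attr}_\fC(\Delta_{Y^\sA})]\in H^{\sT\times\bC^*_u}(Y\times X^\sA,\dots)$, whose underlying support $\overline{\Attr}_\fC(\Delta_{Y^\sA})\cap(Y\times X^\sA)$ contains pairs $(y,x')$ with $y$ off the zero section (e.g.\ $y=(x,e^-)$ with $e^-\in E^-_x$ nonzero, since $\lim_{t\to 0}\sigma(t)\cdot e^-=0$), so it is strictly larger than the image of $\overline{\Attr}_\fC(\Delta_{X^\sA})$ under $i\times\mathrm{id}$; the two correspondences do not trivially agree. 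Your displayed intersection identity in $X\times Y^\sA$ addresses a different object, and your extension ``likewise for the closures'' is exactly what Lemma \ref{lem: attr closure vb} does \emph{not} give: that lemma computes $\overline{\Attr}_\fC(\Delta_{Y^\sA})\cap(X\times X^\sA)$, not the intersection with $X\times Y^\sA$, and the latter may be larger (it sits inside the $E^{\mathrm{fix}}$-bundle $(\mathrm{id}\times\pi^\sA)^{-1}\overline{\Attr}_\fC(\Delta_{X^\sA})$ but need not equal its zero section). The paper's $u$-grading trick --- taking the Gysin pullback $(i\times i^\sA)^!$ to $X\times X^\sA$, where Lemma \ref{lem: attr closure vb} \emph{does} apply, and isolating the coefficient of $u^{\rk E}$ --- is what makes this bookkeeping rigorous; without it the excess-intersection argument you sketch is incomplete.
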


\begin{proof}
\textbf{Step\,1.} We show that cohomological stable envelope exists for $(X,\sw,\sT,\sA,\fC)$. Let $\sigma:\bC^*\to \sA$ be a cocharacter that lies in $\fC$. Then cohomological stable envelope $\Stab_\sigma$ exists for $(X,\sw,\sT\times \bC^*_u,\bC^*,+)$ by Proposition \ref{prop on stable corr}. By Proposition \ref{prop:vb and stab_repl}, we have 
$$i_*\circ \Stab_\sigma=\Stab_{\fC,Y}\circ\, i^\sA_*. $$ 
Here $\Stab_{\fC,Y}$ is the stable envelope for $(Y,\widetilde{\sw},\sT\times \bC^*_u,\sA,\fC)$ and we use the fact that it is the same 
as the stable envelope when replacing $\sA$ by above $\sigma$ in chamber $\fC$ (ref.~Proposition \ref{prop:coh stab ex criterion}). 

Let $\sigma':\bC^*\to \sA$ be another cocharacter that lies in $\fC$, then we have $$i_*\circ \Stab_\sigma=i_*\circ \Stab_{\sigma'}.$$ Applying $i^*$ to two sides of the above equation, we get $$e^{\sT\times \bC^*_u}(E)\cdot (\Stab_\sigma-\Stab_{\sigma'})=0. $$ We note that $H^{\sT\times \bC^*_u}(X,\sw)\cong H^{\sT}(X,\sw)\otimes\bQ[u]$, and multiplication by $e^{\sT\times \bC^*_u}(E)$ is injective on $H^{\sT\times \bC^*_u}(X,\sw)$ because $e^{\sT\times \bC^*_u}(E)=u^{\rk E}+\,$lower degree in $u$. Thus $\Stab_\sigma=\Stab_{\sigma'}$, and it follows from Proposition \ref{prop:coh stab ex criterion} that cohomological stable envelope exists for $(X,\sw,\sT\times \bC^*_u,\sA,\fC)$. Evaluating at $u=0$, we are done.

\textbf{Step\,2.} We show that $\Stab_\fC$ for $(X,\sw,\sT,\sA,\fC)$ is induced by the correspondence $[\overline{\Attr}_\fC(\Delta_{X^\sA})]$.

By Lemmata \ref{lem: attr vb}, \ref{lem: attr closure vb}, ${\Attr}_{\fC}(\Delta_{Y^{\sA}})$ is a vector bundle of rank $(\rk E|_{X^\sA}^{\text{fixed}})$ over ${\Attr}_{\fC}(\Delta_{X^{\sA}})$, and $$\overline{\Attr}_\fC(\Delta_{Y^\sA})\bigcap\, (X\times X^\sA)=\overline{\Attr}_\fC(\Delta_{X^\sA}), $$ 
this implies that  
\begin{align*}
    (i\times i^\sA)^![\overline{\Attr}_{\fC}(\Delta_{Y^{\sA}})]=u^{\rk E}\cdot[\overline{\Attr}_{\fC}(\Delta_{X^{\sA}})]+\text{lower order terms in }u.
\end{align*}
Let $\overline{p}_{1},\overline{p}_{2}$ be the projections from $Y\times Y^\sA$ to $Y,Y^\sA$ respectively, and let $p_{1},p_{2}$ be the projections from $X\times X^\sA$ to $X,X^\sA$ respectively. Then we have
\begin{align}\label{res of Stab_Y 1}
i^*\circ \Stab_{\fC,Y}\circ \:i^{\sA}_*(-) &= i^*\circ \overline{p}_{2*}\left([\overline{\Attr}_{\fC}(\Delta_{Y^{\sA}})]\otimes \overline{p}_{1}^*(i^{\sA}_*(-))\right)\\ \nonumber 
\text{\tiny by base change }&= p_{2*}\circ(i\times\id)^*\left([\overline{\Attr}_{\fC}(\Delta_{Y^{\sA}})]\otimes \overline{p}_{1}^*(i^{\sA}_*(-))\right)\\\nonumber
\text{\tiny by base change }&= p_{2*}\left(((i\times i^\sA)^![\overline{\Attr}_{\fC}(\Delta_{Y^{\sA}})])\otimes p_{1}^*(-)\right)\\\nonumber
&=u^{\rk E}\cdot p_{2*}\left([\overline{\Attr}_{\fC}(\Delta_{X^{\sA}})]\otimes p_{1}^*(-)\right)+\text{lower order terms in }u.
\end{align}
By Proposition \ref{prop:vb and stab_repl}, we also have
\begin{align}\label{res of Stab_Y 2}
i^*\circ \Stab_{\fC,Y}\circ \:i^{\sA}_*(-) = i^*\circ i_*\circ \Stab_{\fC,X}(-) = u^{\rk E}\cdot \Stab_{\fC,X}(-)+\text{lower order terms in }u.
\end{align}
Comparing the leading terms in \eqref{res of Stab_Y 1} and in \eqref{res of Stab_Y 2}, we arrive at 
\begin{align*}
    \Stab_{\fC,X}(-)=p_{2*}\left([\overline{\Attr}_{\fC}(\Delta_{X^{\sA}})]\otimes p_{1}^*(-)\right),
\end{align*}
i.e. $[\overline{\Attr}_{\fC}(\Delta_{X^{\sA}})]$ induces $\Stab_{\fC,X}$.
\end{proof}

\subsection{More existence results on cohomological stable envelopes}\label{sect on more exist on costab}

Cohomological stable envelopes are known to exist on (1) symmetric GIT quotients and symplectic varieties with general $\sA$ and on (2) smooth varieties with $\sA=\C^*$ 
(ref.~Theorem \ref{thm on exist of stab}). As an application of results obtained in the previous section, we deduce another existence result for some 
\textit{not necessarily symmetric GIT quotients} when $\sA$ is general.

\begin{Theorem}\label{thm:attr plus repl}
Suppose that in \eqref{equ on vb yx}, $\mathrm{Tot}(E)=Y$ is a symmetric GIT quotient with an action by tori $\sA\subseteq \sT$ as Definition \ref{def of sym var}.  
Assume that there exists a decomposition of vector bundle $$E=E_+\oplus E_-$$ into attracting ($E_+$) and repelling ($E_-$) subbundles with respect to $\fC$. 
Then $[\overline{\Attr}_\fC(\Delta_{X^\sA})]$ induces cohomological stable envelope for $(X,\sw,\sT,\sA,\fC)$.

Moreover, let $\fC'$ be a face of $\fC$ and let $\sA'$ be the subtorus of $\sA$ associated with $\fC'$. Then the cohomological stable envelopes $\Stab_{\fC/\fC'}$ and $\Stab_{\fC'}$ exist, and they fit into the following commutative diagram 
\begin{equation*}
\xymatrix{
H^\sT(X^\sA,\sw^\sA) \ar[rr]^{\Stab_{\fC}} \ar[dr]_{\Stab_{\fC/\fC'}} & & H^\sT(X,\sw) \\
 & H^\sT(X^{\sA'},\sw^{\sA'}) \ar[ur]_{\Stab_{\fC'}}. &
}
\end{equation*}
\end{Theorem}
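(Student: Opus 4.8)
\textbf{Proof plan for Theorem \ref{thm:attr plus repl}.}
The plan is to obtain the attracting closure on $X$ as a two–step descent of the attracting closure on the symmetric GIT quotient $Y$, using the explicit comparison results of the previous subsection, and then to deduce the triangle lemma on $X$ from the one already proved on $Y$. Write $p\colon \mathrm{Tot}(E_+)\to X$ and $q\colon \mathrm{Tot}(E_-)\to X$ for the projections, so that $Y=\mathrm{Tot}_{\mathrm{Tot}(E_-)}(q^*E_+)$, with $q^*E_+$ attracting over $\mathrm{Tot}(E_-)$ (its weights over the $\sA$-fixed loci coincide with those of $E_+$ over $\Fix_\sA(X)$). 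First I would introduce the torus $\bC^*_u$ acting on $Y$ by fixing $X$, scaling the fibres of $E_-$ with weight one and those of $E_+$ trivially; this commutes with the gauge group and with $\sA,\sT$, and since Definition \ref{def of sym var} constrains only the $\sA$-action, $Y$ is still a symmetric GIT quotient for the enlarged flavour torus $\sT\times\bC^*_u$. Hence by Theorem \ref{exi of coh stab on symm quiver var} together with Proposition \ref{corr induce stab}, $[\overline{\Attr}_\fC(\Delta_{Y^\sA})]$ induces the cohomological stable envelope for $(Y,\ \mathrm{pr}_{Y\to X}^{*}\sw,\ \sT\times\bC^*_u,\ \sA,\ \fC)$. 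This potential is pulled back from $\mathrm{Tot}(E_-)$ (it equals $(Y\to\mathrm{Tot}(E_-))^{*}q^{*}\sw$), so Proposition \ref{prop:vb and stab_attr_corr} applies and shows $[\overline{\Attr}_\fC(\Delta_{\mathrm{Tot}(E_-)^\sA})]$ induces the cohomological stable envelope for $(\mathrm{Tot}(E_-),\ q^{*}\sw,\ \sT\times\bC^*_u,\ \sA,\ \fC)$. Since $q^{*}\sw$ is $\bC^*_u$-invariant and restricts to $\sw$ on the zero section, and $E_-$ is repelling, Proposition \ref{prop:vb and stab_repl_corr} (applied to $\mathrm{Tot}(E_-)\to X$) then yields that $[\overline{\Attr}_\fC(\Delta_{X^\sA})]$ induces the cohomological stable envelope $\Stab_\fC$ for $(X,\sw,\sT,\sA,\fC)$; replacing $\sw$ by $0$ and pushing through the canonical map (Lemma \ref{can map induce stab}) one sees the same correspondence works for an arbitrary $\sT$-invariant potential.

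Next I would establish the existence of $\Stab_{\fC'}$ and $\Stab_{\fC/\fC'}$ by applying Step 1 in two further situations. Restricting the self-duality isomorphism $R\cong R^\vee$ to $\sA'\subseteq\sA$ shows $Y$ is a symmetric GIT quotient for the tori $\sA'\subseteq\sT$ as well, and because $\fC'$ lies in the closure of $\fC$ the subbundles $E_\pm$ stay attracting/repelling with respect to $\fC'$ (the relevant $\sA'$-weights are constant along each component $F'\in\Fix_{\sA'}(X)$ and can be read off from any $\sA$-fixed component inside $F'$); so Step 1 applies verbatim with $(\sA,\fC)$ replaced by $(\sA',\fC')$, giving $\Stab_{\fC'}$ induced by $[\overline{\Attr}_{\fC'}(\Delta_{X^{\sA'}})]$. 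Similarly $Y^{\sA'}$ is a disjoint union of symmetric GIT quotients carrying a self-dual $\sA/\sA'$-action (the GIT analogue of Lemma \ref{fix pts of sym quiv var}), it is the total space over $X^{\sA'}$ of the $\sA'$-fixed part of $E$, whose $E_+$- and $E_-$-parts are $\fC/\fC'$-attracting and $\fC/\fC'$-repelling; applying Step 1 componentwise produces $\Stab_{\fC/\fC'}$ for $(X^{\sA'},\sw^{\sA'},\sT,\sA/\sA',\fC/\fC')$, induced by $[\overline{\Attr}_{\fC/\fC'}(\Delta_{X^\sA})]$ (note $(X^{\sA'})^{\sA/\sA'}=X^\sA$).

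For the triangle identity I would transport the triangle lemma from $Y$. By Theorem \ref{tri lem for coh stab} one has $\Stab_{\fC,Y}=\Stab_{\fC',Y}\circ\Stab_{\fC/\fC',Y}$ on the symmetric GIT quotient $Y$. On the other hand, combining Propositions \ref{prop:vb and stab_attr} and \ref{prop:vb and stab_repl} (i.e.\ the commuting square \eqref{vb and stab_coh}) furnishes, for each of the three chambers $\fC$, $\fC'$, $\fC/\fC'$, a square intertwining the stable envelope on $X$ (resp.\ on $X^{\sA'}$) with the one on $Y$ (resp.\ on $Y^{\sA'}$) through the maps $\psi,\psi^{\sA'}$ of \eqref{eq: psi}. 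Working $\sT\times\bC^*_u$-equivariantly, $\psi$ and $\psi^{\sA'}$ are injective, since $\psi$ is (an isomorphism given by \eqref{pb is iso on coho} followed by) the pushforward along a zero section whose composition with the Gysin restriction is multiplication by $e^{\sT\times\bC^*_u}(E_-)=u^{\rk E_-}+\cdots$, a nonzerodivisor. Chasing the three squares together with the $Y$-triangle forces $\psi\circ\Stab_\fC=\psi\circ(\Stab_{\fC'}\circ\Stab_{\fC/\fC'})$, and injectivity of $\psi$ (then specializing $u$) gives $\Stab_\fC=\Stab_{\fC'}\circ\Stab_{\fC/\fC'}$, as required; uniqueness (Proposition \ref{uniqueness of coh stab}) identifies these descended maps with the stable envelopes obtained in Steps 1–2.

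The main obstacle I expect is in Step 1: correctly carrying the auxiliary scaling torus $\bC^*_u$ through \emph{both} descent steps so that the hypotheses of Propositions \ref{prop:vb and stab_attr_corr} and \ref{prop:vb and stab_repl_corr} hold simultaneously, and in particular checking that $Y$ — and each component of $Y^{\sA'}$ — remains a symmetric GIT quotient after enlarging the flavour torus and after passing to $\sA'$-fixed loci. Once this is in place, everything reduces to bookkeeping with the explicit attracting-closure correspondences and the comparison maps already established, plus the cohomological triangle lemma for symmetric GIT quotients, which is why transporting along $\psi$ (rather than attempting a direct degree-bound argument, which fails in cohomology) is the right move.
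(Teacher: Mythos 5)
Your proof of the first assertion (that $[\overline{\Attr}_\fC(\Delta_{X^\sA})]$ induces the cohomological stable envelope) is essentially the paper's: introduce $\bC^*_u$ fixing $X$ and scaling $E_-$, use Theorem~\ref{construct coh stab corr on symm quiver var} (with Lemma~\ref{can map induce stab} and Proposition~\ref{corr induce stab} to pass from zero potential) for $Y$, then descend through $\mathrm{Tot}(E_-)$ using Propositions~\ref{prop:vb and stab_attr_corr} and \ref{prop:vb and stab_repl_corr}. Your unpacking of the intermediate step through $\mathrm{Tot}(E_-)$ and your verification that the hypotheses carry over (in particular that $E_\pm$ remain attracting/repelling with respect to $\fC'$ and $\fC/\fC'$ since $\fC'\subseteq\overline\fC$) are correct.

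For the triangle lemma you take a genuinely different route. The paper invokes Proposition~\ref{tri lem for unsym fram}, which transports the identity $[\overline{\Attr}_{\fC'}]\circ[\overline{\Attr}_{\fC/\fC'}]=[\overline{\Attr}_{\fC}]$ from $Y$ to $X$ at the level of attracting-closure \emph{correspondences}, by comparing the Gysin pullbacks $(i\times i^\sA)^![\cdot]$ and extracting the leading coefficient in $u$ using freeness of $H^{\sT\times\bC^*_u}(X,\sw)$ over $\bQ[u]$. You instead run the comparison at the level of the induced \emph{maps} on critical cohomology: you invoke the commuting squares \eqref{vb and stab_coh} for each of the three chambers $\fC,\fC',\fC/\fC'$ (applying the combined proposition of \S\ref{sect on stab and vb}, which rests on Propositions~\ref{prop:vb and stab_attr} and \ref{prop:vb and stab_repl}), together with the $\sT\times\bC^*_u$-equivariant triangle lemma on $Y$ (Theorem~\ref{tri lem for coh stab}), and then cancel the comparison map $\psi$ using the injectivity coming from $e^{\sT\times\bC^*_u}(E_-)=u^{\rk E_-}+\cdots$ being a nonzerodivisor. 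Both arguments ultimately reduce to the same leading-$u$-coefficient phenomenon, but the paper's correspondence-level version in Proposition~\ref{tri lem for unsym fram} has the technical advantage that it does not require one to first establish existence of stable envelopes on $X$, $X^{\sA'}$ before stating the triangle; in your version, the intermediate squares require the existence results of Step~1–2, which you correctly feed in, avoiding circularity. Your version buys a cleaner conceptual picture (the whole argument is a diagram chase with injective maps), at the cost of re-verifying the hypotheses of the combined \S\ref{sect on stab and vb} proposition separately for each of the three chambers. Either works; the two proofs are parallel in spirit and both are correct.
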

\begin{proof}
Let $\bC^*_u$ act on $Y=\mathrm{Tot}(E)$ by fixing $X$ and $E_{+}$, and scaling the fibers of $E_{-}$ with weight $1$.
By Theorem \ref{construct coh stab corr on symm quiver var}, $[\overline{\Attr}_\fC(\Delta_{Y^\sA})]$ is a cohomological stable envelope correspondence for $(Y,\sw\circ\pi,\sT\times \bC^*_u,\sA,\fC)$.
By Propositions \ref{prop:vb and stab_attr_corr}, \ref{prop:vb and stab_repl_corr}, $[\overline{\Attr}_\fC(\Delta_{X^\sA})]$ is a cohomological stable envelope correspondence for $(X,\sw,\sT,\sA,\fC)$. The triangle lemma then follows from Proposition \ref{tri lem for unsym fram} below. 
\end{proof}

\subsection{The triangle lemma for attracting closure correspondence}

In this subsection, we do not impose any attracting or repelling condition on $E$. Let $\bC^*_u$ act on $Y=\mathrm{Tot}(E)$ by fixing $X$ and scaling the fibers of $E$ with weight $1$. Let $\fC'$ be a face of $\fC$ and let $\sA'$ be the subtorus of $\sA$ associated with $\fC'$. 

Let $\overline{\Attr}_\fC(\Delta_{Y^\sA})$ be the closure of ${\Attr}_\fC(\Delta_{Y^\sA})$ in $Y\times Y^\sA$, similarly for $\overline{\Attr}_\fC(\Delta_{X^\sA})\subseteq  X\times X^\sA$.
We show that if the triangle lemma holds for attracting closure correspondences for $Y$, then it also holds for attracting closure correspondences for $X$.
\begin{Proposition}\label{tri lem for unsym fram}
The equation 
\begin{align*}
    [\overline{\Attr}_{\fC'}(\Delta_{Y^{\sA'}})]\circ [\overline{\Attr}_{\fC/\fC'}(\Delta_{Y^{\sA}})]=[\overline{\Attr}_{\fC}(\Delta_{Y^{\sA}})] \in \Hom(H^{\sT\times \bC^*_u}(Y^\sA,\sw\circ \pi),H^{\sT\times \bC^*_u}(Y,\sw\circ \pi))
\end{align*}
implies that 
\begin{align*}
    [\overline{\Attr}_{\fC'}(\Delta_{X^{\sA'}})]\circ [\overline{\Attr}_{\fC/\fC'}(\Delta_{X^{\sA}})]=[\overline{\Attr}_{\fC}(\Delta_{X^{\sA}})]\in \Hom(H^{\sT}(X^\sA,\sw),H^{\sT}(X,\sw)).
\end{align*}
Here we use the correspondence to denote the induced map between critical cohomologies.
\end{Proposition}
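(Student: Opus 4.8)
The plan is to pass to $\bC^*_u$-equivariant critical cohomology and compare leading coefficients in the equivariant parameter $u$ of $\bC^*_u$. First I would record that, since $\bC^*_u$ acts trivially on $X$, $X^\sA$ and $X^{\sA'}$, while $Y=\mathrm{Tot}(E)$ is a vector bundle over $X$, and likewise $Y^{\sA'}=\mathrm{Tot}(E|_{X^{\sA'}}^{\sA'\text{-fixed}})$ over $X^{\sA'}$ and $Y^\sA=\mathrm{Tot}(E|_{X^\sA}^{\sA\text{-fixed}})$ over $X^\sA$, the pullbacks $\pi^*$ are isomorphisms by \eqref{pb is iso on coho}. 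Hence the zero-section restrictions
$$i_Y^*\colon H^{\sT\times\bC^*_u}(Y,\sw\circ\pi)\ \xrightarrow{\ \sim\ }\ H^{\sT\times\bC^*_u}(X,\sw)=H^\sT(X,\sw)\otimes\bQ[u],$$
and similarly $i_{Y^{\sA'}}^*$, $i_{Y^\sA}^*$, are isomorphisms, inverse to $\pi^*$. The three $X$-side classes $[\overline{\Attr}_\fC(\Delta_{X^\sA})]$, $[\overline{\Attr}_{\fC'}(\Delta_{X^{\sA'}})]$, $[\overline{\Attr}_{\fC/\fC'}(\Delta_{X^\sA})]$ are $\bC^*_u$-fixed, hence lie in the $u^0$-part of the corresponding equivariant groups, so it suffices to prove the claimed identity $\bC^*_u$-equivariantly and then extract the $u$-independent part.

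The key step, a refinement of the computation in the proof of Proposition~\ref{prop:vb and stab_repl_corr}, is the following: for each $\mathfrak D\in\{\fC,\fC',\fC/\fC'\}$, with its pair of zero-section embeddings $i_{\mathrm{left}}$ and $i_{\mathrm{right}}$ of the relevant base spaces, one has
$$i_{\mathrm{left}}^*\circ[\overline{\Attr}_{\mathfrak D}(\Delta_{Y^\bullet})]\circ(i_{\mathrm{right}})_*\ =\ e_{\mathfrak D}\cdot[\overline{\Attr}_{\mathfrak D}(\Delta_{X^\bullet})]\qquad\text{in}\ \ \Hom_{\sT\times\bC^*_u},$$
where $e_{\mathfrak D}=e^{\sT\times\bC^*_u}(\mathcal N_{\mathfrak D})$ is the Euler class of a $\bC^*_u$-\emph{moving} bundle $\mathcal N_{\mathfrak D}$ assembled from $E$; viewed as an element of $H^\sT(\cdots)[u]$, such an $e_{\mathfrak D}$ is monic in $u$ of degree $\rk\mathcal N_{\mathfrak D}$. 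This follows by base change from Lemmata~\ref{lem: attr vb} and~\ref{lem: attr closure vb}, which realize $\Attr_{\mathfrak D}(\Delta_{Y^\bullet})$ as a vector bundle over $\Attr_{\mathfrak D}(\Delta_{X^\bullet})$ and give $\overline{\Attr}_{\mathfrak D}(\Delta_{Y^\bullet})\cap(\text{zero section}\times\text{zero section})=\overline{\Attr}_{\mathfrak D}(\Delta_{X^\bullet})$, exactly as in that proof; these lemmata use no attracting/repelling hypothesis on $E$ and apply in the $\fC/\fC'$ case with ambient $X^{\sA'}$, torus $\sA/\sA'$ and bundle $E|_{X^{\sA'}}^{\sA'\text{-fixed}}$.

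I would then apply the operation $\gamma\mapsto i_Y^*\circ\gamma\circ(i_{Y^\sA})_*$ to the hypothesized identity on $Y$. The right-hand side becomes $e_\fC\cdot[\overline{\Attr}_\fC(\Delta_{X^\sA})]$. For the left-hand side, writing $e'=e^{\sT\times\bC^*_u}(E|_{X^{\sA'}}^{\sA'\text{-fixed}})$ and using the projection-bundle relations $i_{Y^{\sA'}}^*\circ\pi_{Y^{\sA'}}^*=\id$ and $(i_{Y^{\sA'}})_*=\pi_{Y^{\sA'}}^*\circ(e'\cdot-)$ together with the key step for $\mathfrak D=\fC/\fC'$ and $\mathfrak D=\fC'$, one rewrites the threefold convolution over $Y^{\sA'}$ and, after multiplying through by $e'$, obtains
$$e'\,e_\fC\cdot[\overline{\Attr}_\fC(\Delta_{X^\sA})]\ =\ e_{\fC'}\,e_{\fC/\fC'}\cdot\Big([\overline{\Attr}_{\fC'}(\Delta_{X^{\sA'}})]\circ[\overline{\Attr}_{\fC/\fC'}(\Delta_{X^\sA})]\Big)$$
in $\Hom_{\sT\times\bC^*_u}\!\big(H^\sT(X^\sA,\sw),H^\sT(X,\sw)\big)=\Hom\!\big(H^\sT(X^\sA,\sw),H^\sT(X,\sw)\big)\otimes\bQ[u]$. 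Since each of $e'$, $e_\fC$, $e_{\fC'}$, $e_{\fC/\fC'}$ is monic in $u$ and the two correspondences are $u$-independent, multiplication by any of these Euler classes is injective and comparing top-degree $u$-coefficients on the two sides forces $[\overline{\Attr}_\fC(\Delta_{X^\sA})]=[\overline{\Attr}_{\fC'}(\Delta_{X^{\sA'}})]\circ[\overline{\Attr}_{\fC/\fC'}(\Delta_{X^\sA})]$ (if one side vanishes so does the other, and the identity is then trivial). Taking $u$-independent parts yields the asserted equality of maps $H^\sT(X^\sA,\sw)\to H^\sT(X,\sw)$.

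I expect the main obstacle to lie in the bookkeeping of the last paragraph: one must verify that resolving the threefold convolution over $Y^{\sA'}$ through $\pi_{Y^{\sA'}}^*$ and $(i_{Y^{\sA'}})_*$ produces exactly the Euler factor $e^{\sT\times\bC^*_u}(E|_{X^{\sA'}}^{\sA'\text{-fixed}})$ and no spurious error term, and that in each of the three instances the bundle $\mathcal N_{\mathfrak D}$ of the key step is genuinely $\bC^*_u$-moving, which is what makes $e_{\mathfrak D}$ monic in $u$. Both points should reduce to the explicit descriptions of attracting loci inside $\mathrm{Tot}(E)$ furnished by Lemmata~\ref{lem: attr vb} and~\ref{lem: attr closure vb}, combined with the fact that $\bC^*_u$ scales every fibre of $E$ with weight one.
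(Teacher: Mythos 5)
Your proposal follows essentially the same strategy as the paper's proof: pass to $\bC^*_u$-equivariance, restrict the $Y$-level identity to the zero section, identify the leading $u$-coefficient via Lemmata~\ref{lem: attr vb} and~\ref{lem: attr closure vb}, and use freeness of $H^{\sT\times\bC^*_u}(X,\sw)$ over $\bQ[u]$ to cancel the $u$-factor. The packaging differs: the paper works with refined Gysin pullbacks of the correspondence \emph{classes} $(i\times i^\sA)^![\overline{\Attr}]$ and tracks the extra Euler factor $e^{\sT\times\bC^*_u}(E|_{X^{\sA'}}^{\sA'\text{-fixed}})$ that arises from the middle factor of the threefold convolution via a pushforward $\widetilde p_{13*}$, whereas you work at the level of operators ($i^*_Y\circ\gamma\circ(i_{Y^\sA})_*$) and resolve the middle factor via the projection-bundle relations. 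These are equivalent under base change, so the approach is sound.

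One imprecision: your key step asserts an \emph{exact} Euler-class identity $i^*_{\mathrm{left}}\circ[\overline{\Attr}_{\mathfrak D}(\Delta_{Y^\bullet})]\circ(i_{\mathrm{right}})_*=e_{\mathfrak D}\cdot[\overline{\Attr}_{\mathfrak D}(\Delta_{X^\bullet})]$, but the attracting closures are generally singular so there is no well-defined normal bundle on them, and Lemmata~\ref{lem: attr vb}, \ref{lem: attr closure vb} only control the generic (open attracting) part. What the paper proves, and what your argument actually uses, is the weaker statement that the Gysin pullback equals $u^{m_{\mathfrak D}}\cdot[\overline{\Attr}_{\mathfrak D}(\Delta_{X^\bullet})]$ plus lower-order-in-$u$ terms, with $m_{\mathfrak D}$ the excess codimension. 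Since you anyway reduce to comparing top $u$-coefficients, and since the relevant excess bundles are all built from $\bC^*_u$-moving pieces of $E$, the argument survives this weakening. Likewise, your displayed identity with the product $e'e_\fC$ on the left and $e_{\fC'}e_{\fC/\fC'}$ on the right is notational shorthand (the Euler classes live on different correspondence spaces and do not literally commute through the convolution), but the leading-$u$-coefficient extraction makes this harmless once one checks the rank bookkeeping $\rk\mathcal N_{\fC}+\rk E|_{X^{\sA'}}^{\sA'\text{-fixed}}=\rk\mathcal N_{\fC'}+\rk\mathcal N_{\fC/\fC'}$, which is exactly where the paper's explicit computation lands (both equal $\rk E-\rk E|_{X^\sA}^++\rk E|_{X^\sA}^{\sA'\text{-fixed}}$). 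So your plan is correct; the residual work is precisely the bookkeeping you flag.
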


\begin{proof}
Let $i\colon X\hookrightarrow Y$, $i^{\sA'}\colon X^{\sA'}\hookrightarrow Y^{\sA'}$, $i^{\sA}\colon X^{\sA}\hookrightarrow Y^{\sA}$ be zero sections. 
By Lemmata \ref{lem: attr vb}, \ref{lem: attr closure vb}, ${\Attr}_{\fC}(\Delta_{Y^{\sA}})$ is a vector bundle of rank $(\rk E|_{X^\sA}^++\rk E|_{X^\sA}^{\text{fixed}})$ over ${\Attr}_{\fC}(\Delta_{X^{\sA}})$, and $\overline{\Attr}_\fC(\Delta_{Y^\sA})\bigcap\, (X\times X^\sA)=\overline{\Attr}_\fC(\Delta_{X^\sA})$, then 
\begin{align*}
    (i\times i^\sA)^![\overline{\Attr}_{\fC}(\Delta_{Y^{\sA}})]=u^{\rk E-\rk E|_{X^\sA}^+}\cdot[\overline{\Attr}_{\fC}(\Delta_{X^{\sA}})]+\text{lower order terms in }u.
\end{align*}
Therefore we have
\begin{equation}\label{convolution 1}
\begin{split}
&\quad \left((i\times i^{\sA'})^![\overline{\Attr}_{\fC'}(\Delta_{Y^{\sA'}})]\right)\circ \left((i^{\sA'}\times i^\sA)^![\overline{\Attr}_{\fC/\fC'}(\Delta_{Y^{\sA}})]\right)\\
&= u^{\rk E-\rk E|_{X^\sA}^++\rk E|_{X^\sA}^{\sA'\text{-fixed}}}\cdot[\overline{\Attr}_{\fC'}(\Delta_{X^{\sA'}})]\circ [\overline{\Attr}_{\fC/\fC'}(\Delta_{X^{\sA}})]+\text{lower order terms in }u.
\end{split}
\end{equation}
Let $\overline{p}_{ij}$ be the projection from $Y\times Y^{\sA'}\times Y^\sA$ to the $ij$-th component, for example $\overline{p}_{13}\colon Y\times Y^{\sA'}\times Y^\sA\to Y\times Y^\sA$. Similarly, let ${p}_{ij}$ be the projection from $X\times X^{\sA'}\times X^\sA$ to the $ij$-th component. Let $\widetilde{p}_{13}\colon X\times Y^{\sA'}\times X^\sA\to X\times X^\sA$ be the natural projection. The LHS of \eqref{convolution 1} can be evaluated as:
\begin{align*}
    &\quad\,\, p_{13*}\left((i\times i^{\sA'}\times i^{\sA})^!\left(\overline{p}_{12}^![\overline{\Attr}_{\fC'}(\Delta_{Y^{\sA'}})]\otimes \overline{p}_{23}^![\overline{\Attr}_{\fC/\fC'}(\Delta_{Y^{\sA}})]\right)\right)\\
    &=\widetilde{p}_{13*}\left(e^{\sT\times \bC^*_u}(E|_{X^{\sA'}}^{\sA'\text{-fixed}})\cdot (i\times \id\times i^{\sA})^!\left(\overline{p}_{12}^![\overline{\Attr}_{\fC'}(\Delta_{Y^{\sA'}})]\otimes \overline{p}_{23}^![\overline{\Attr}_{\fC/\fC'}(\Delta_{Y^{\sA}})]\right)\right)\\
    &=u^{\rk E|_{X^{\sA}}^{\sA'\text{-fixed}}}\cdot\widetilde{p}_{13*}\left((i\times \id\times i^{\sA})^!\left(\overline{p}_{12}^![\overline{\Attr}_{\fC'}(\Delta_{Y^{\sA'}})]\otimes \overline{p}_{23}^![\overline{\Attr}_{\fC/\fC'}(\Delta_{Y^{\sA}})]\right)\right)+\text{lower order terms in }u\\
    \text{\tiny by base change }&=u^{\rk E|_{X^{\sA}}^{\sA'\text{-fixed}}}\cdot (i\times  i^{\sA})^!\overline{p}_{13*}\left(\overline{p}_{12}^![\overline{\Attr}_{\fC'}(\Delta_{Y^{\sA'}})]\otimes \overline{p}_{23}^![\overline{\Attr}_{\fC/\fC'}(\Delta_{Y^{\sA}})]\right)+\text{lower order terms in }u\\
    &=u^{\rk E|_{X^{\sA}}^{\sA'\text{-fixed}}}\cdot (i\times  i^{\sA})^!\left([\overline{\Attr}_{\fC'}(\Delta_{Y^{\sA'}})]\circ [\overline{\Attr}_{\fC/\fC'}(\Delta_{Y^{\sA}})]\right)+\text{lower order terms in }u\\
    \text{\tiny by assumption }&=u^{\rk E|_{X^{\sA}}^{\sA'\text{-fixed}}}\cdot (i\times  i^{\sA})^![\overline{\Attr}_{\fC}(\Delta_{Y^{\sA}})]+\text{lower order terms in }u\\
    &=u^{\rk E-\rk E|_{X^\sA}^++\rk E|_{X^{\sA}}^{\sA'\text{-fixed}}}\cdot[\overline{\Attr}_{\fC}(\Delta_{X^{\sA}})]+\text{lower order terms in }u.
\end{align*}
Compared with the leading term on the RHS of \eqref{convolution 1}, and we obtain the desired equation
\begin{align*}
    [\overline{\Attr}_{\fC'}(\Delta_{X^{\sA'}})]\circ [\overline{\Attr}_{\fC/\fC'}(\Delta_{X^{\sA}})]=[\overline{\Attr}_{\fC}(\Delta_{X^{\sA}})].
\end{align*}
Here we used the fact that $H^{\sT\times \bC^*_u}(X,\sw)\cong H^{\sT}(X,\sw)\otimes\bQ[u]$ is free over $\bQ[u]$.
\end{proof}

In the above proof, we have used the following: 
\begin{Lemma}\label{lem: attr vb}
${\Attr}_{\fC}(\Delta_{Y^{\sA}})$ is a vector bundle of rank $(\rk E|_{X^\sA}^++\rk E|_{X^\sA}^{\text{fixed}})$ over ${\Attr}_{\fC}(\Delta_{X^{\sA}})$.
\end{Lemma}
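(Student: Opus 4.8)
\textbf{Proof plan for Lemma \ref{lem: attr vb}.}
The plan is to identify explicitly the attracting set of the diagonal inside $Y\times Y^\sA$ and show it fibers linearly over the attracting set of the diagonal in $X\times X^\sA$. First I would fix a cocharacter $\sigma\in\fC$ and recall that $Y=\mathrm{Tot}(E)$ carries the $\sA$-action lifting the one on $X$ together with the fiberwise linear action on $E$. Decompose $E$ as an $\sA$-equivariant bundle on $X^\sA$ into its weight pieces with respect to $\sigma$, written $E|_{X^\sA}=E^+\oplus E^0\oplus E^-$ (strictly positive, zero, strictly negative weights). Then $Y^\sA=\mathrm{Tot}_{X^\sA}(E^0)$, and the projection $(\pi^{-1}(F))^\sA\to F$ identifies $Y^\sA$ with the total space of $E^0$ over $X^\sA$. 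Note $\rk E^0=\rk E|_{X^\sA}^{\text{fixed}}$ in the notation of the statement, and $\rk E^+=\rk E|_{X^\sA}^+$.

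The key step is a pointwise analysis of the limit $\lim_{t\to 0}\sigma(t)\cdot(y,v)$ for $(y,v)\in Y$, where $y\in X$ is the base point and $v\in E_y$ is a vector in the fiber. Such a limit exists and lies in $Y^\sA$ precisely when: (a) $\lim_{t\to 0}\sigma(t)\cdot y$ exists and lies in $X^\sA$, i.e. $y\in\Attr_\fC(X^\sA)$; and (b) the $\sA$-flow carries $v$ into a limit vector, which — after transporting $v$ along the flow to the fiber over $\lim\sigma(t)\cdot y$ and decomposing it against $E^+\oplus E^0\oplus E^-$ — forces the $E^-$-component to vanish in the limit, while the $E^+$-component flows to $0$ and the $E^0$-component is unconstrained. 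Carrying this out over the diagonal, a point of $\Attr_\fC(\Delta_{Y^\sA})\subseteq Y\times Y^\sA$ is a pair $\bigl((y,v),(y_0,v_0)\bigr)$ with $(y,y_0)\in\Attr_\fC(\Delta_{X^\sA})$, $v_0\in E^0_{y_0}$ the limit $E^0$-component of $v$, and $v$ ranging over the preimage of $\{E^-\text{-component}=0\}$ — i.e. over the sub-bundle $p^*(E^+\oplus E^0)$ pulled back along $p\colon\Attr_\fC(\Delta_{X^\sA})\to X$, with $v_0$ determined by $v$. This exhibits $\Attr_\fC(\Delta_{Y^\sA})$ as the total space of the vector bundle $p^*(E^+\oplus E^0)$ over $\Attr_\fC(\Delta_{X^\sA})$, of rank $\rk E^+ + \rk E^0 = \rk E|_{X^\sA}^+ + \rk E|_{X^\sA}^{\text{fixed}}$, with bundle projection the obvious one and with the second coordinate of $Y\times Y^\sA$ recovered as the composite $p^*(E^+\oplus E^0)\to p^*E^0\to E^0$.

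To make the pointwise computation uniform (hence yielding an actual bundle structure rather than a fiberwise statement), I would work \'etale-locally on $\Attr_\fC(\Delta_{X^\sA})$: using the Bialynicki--Birula affine fibration $p\colon\Attr_\fC(\Delta_{X^\sA})\to\Delta_{X^\sA}$ and a local $\sA$-equivariant trivialization of $E$ near a point of $X^\sA$, the flow analysis above becomes a linear-algebra statement with parameters, and the resulting identification with $\mathrm{Tot}(p^*(E^+\oplus E^0))$ glues. The main obstacle I anticipate is precisely bookkeeping the $\sA$-action on $E$ along non-fixed points of $\Attr_\fC(\Delta_{X^\sA})$: the weight decomposition $E|_{X^\sA}=E^+\oplus E^0\oplus E^-$ only makes sense over $X^\sA$, and one must argue carefully that the condition ``$E^-$-component of $v$ limits to $0$'' is closed and cut out by the vanishing of a sub-bundle map, so that the attracting set is smooth and the projection to $\Attr_\fC(\Delta_{X^\sA})$ is a genuine vector bundle of the claimed rank. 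This is handled by noting that the attracting-set construction is itself a Bialynicki--Birula-type affine fibration applied to the $\sA$-variety $\pi^{-1}(\Attr_\fC(\Delta_{X^\sA}))$, whose fixed locus is $\Attr_\fC(\Delta_{X^\sA})$ (via $p^*E^0$) and whose positive normal directions are exactly $p^*E^+$ plus the BB-positive directions already accounted for in $\Attr_\fC(\Delta_{X^\sA})$.
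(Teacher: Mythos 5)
Your basic geometric picture is the correct one, and it matches the paper's starting point: via Bialynicki--Birula, ${\Attr}_{\fC}(\Delta_{Y^{\sA}})\to\Delta_{Y^{\sA}}$ is an affine fibration with fiber dimension $\rk N^+_{X^\sA/X}+\rk E^+$, giving the expected total dimension, and the whole set lives inside $(\pi\times\pi^\sA)^{-1}({\Attr}_{\fC}(\Delta_{X^{\sA}}))$. But the second half of your sketch has two genuine problems.

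First, your explicit identification ${\Attr}_{\fC}(\Delta_{Y^{\sA}})\cong\mathrm{Tot}\bigl(p^*(E^+\oplus E^0)\bigr)$ is not a natural isomorphism and cannot be obtained the way you describe. The only canonical map you have on the fiber $L_y=\{v\in E_y:\lim_{t\to 0}\sigma(t)\cdot v \text{ exists}\}$ is the limit map, which lands in $E^0_{y_0}$ only (the $E^+$-directions flow to $0$), so it cannot identify $L_y$ with $E^+_{y_0}\oplus E^0_{y_0}$. Relatedly, the phrase ``the $E^-$-component of $v$ limits to $0$'' has no intrinsic meaning for $y\notin X^\sA$; the correct condition is simply that $\lim_{t\to0}\sigma(t)\cdot v$ exists, and there is no natural bundle map on $E|_{\Attr_\fC(X^\sA)}$ whose kernel cuts this out. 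Second, your final BB argument confuses the fibrations: the $\sA$-fixed locus of $\pi^{-1}(\Attr_\fC(X^\sA))$ is $Y^\sA=\mathrm{Tot}(E^0|_{X^\sA})$, not $\Attr_\fC(\Delta_{X^\sA})$, so the affine fibration BB produces has $Y^\sA$ as its base; it does not by itself give a fibration over $\Attr_\fC(\Delta_{X^\sA})$, which is what the lemma asserts.

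What closes these gaps in the paper's proof, and what your plan is missing, is the following clean observation that makes trivializations unnecessary. After establishing smoothness of $(\pi\times\pi^\sA)|_{\Attr_\fC(\Delta_{Y^\sA})}$ over $\Attr_\fC(\Delta_{X^\sA})$ --- shown first on a neighborhood of the fixed locus, where the fiber is visibly $E^+\oplus E^0$, and then propagated to all of $\Attr_\fC(\Delta_{X^\sA})$ by $\sA$-equivariance of the map together with the attracting nature of the action --- one notes that if $s_1,s_2$ are local sections and $a,b$ are local functions, then $as_1+bs_2$ is again a section, simply because $\lim_{\fC}(as_1+bs_2)=a\lim_\fC s_1+b\lim_\fC s_2$. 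This linearity of the limit operation is what gives the fibers their vector space structure and upgrades the smooth surjection to a vector sub-bundle of $(\pi\times\pi^\sA)^{-1}({\Attr}_{\fC}(\Delta_{X^{\sA}}))$ of the stated rank, with no need for an explicit (and in fact nonexistent) canonical isomorphism to $p^*(E^+\oplus E^0)$.
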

\begin{proof}
The attraction map 
$$a:{\Attr}_{\fC}(\Delta_{Y^{\sA}})\to \Delta_{Y^{\sA}}$$ is an affine fibration with $(\rk N_{X^\sA/X}^+ +\rk E|_{X^\sA}^+)$-dimensional fibers, and $\Delta_{Y^{\sA}}$ is the vector bundle $E|_{X^\sA}^{\text{fixed}}$ over $\Delta_{X^\sA}$, so ${\Attr}_{\fC}(\Delta_{Y^{\sA}})$ is smooth of dimension $(\dim {\Attr}_{\fC}(\Delta_{X^{\sA}})+\rk E|_{X^\sA}^++\rk E|_{X^\sA}^{\text{fixed}})$. 

It is elementary to see that ${\Attr}_{\fC}(\Delta_{Y^{\sA}})$ is contained in $(\pi\times \pi^\sA)^{-1}({\Attr}_{\fC}(\Delta_{X^{\sA}}))$, and 
${\Attr}_{\fC}(\Delta_{Y^{\sA}})\bigcap\,(\pi\times \pi^\sA)^{-1}(\Delta_{X^{\sA}})$ is isomorphic to the vector bundle $E|_{X^\sA}^+\oplus E|_{X^\sA}^{\text{fixed}}$, which implies that $$(\pi\times \pi^\sA)|_{{\Attr}_{\fC}(\Delta_{Y^{\sA}})}:{\Attr}_{\fC}(\Delta_{Y^{\sA}})\to {\Attr}_{\fC}(\Delta_{X^{\sA}})$$ is a smooth morphism in a Zariski open neighbourhood of ${\Attr}_{\fC}(\Delta_{Y^{\sA}})\bigcap\,(\pi\times \pi^\sA)^{-1}(\Delta_{X^{\sA}})$. Since $(\pi\times \pi^\sA)|_{{\Attr}_{\fC}(\Delta_{Y^{\sA}})}$ is $\sA$-equivariant and the $\sA$-action (in the chamber $\fC$) is attracting, $(\pi\times \pi^\sA)|_{{\Attr}_{\fC}(\Delta_{Y^{\sA}})}$ is a smooth morphism. 

Finally, suppose that $s_1$ and $s_2$ are two local sections of the map $(\pi\times \pi^\sA)|_{{\Attr}_{\fC}(\Delta_{Y^{\sA}})}$ on an open subset $U\subseteq  {\Attr}_{\fC}(\Delta_{X^{\sA}})$, then for all $a,b\in \mathcal O_{{\Attr}_{\fC}(\Delta_{X^{\sA}})}(U)$, $as_1+bs_2$ is also a section of $(\pi\times \pi^\sA)|_{{\Attr}_{\fC}(\Delta_{Y^{\sA}})}$ because $\lim_{\fC}(as_1+bs_2)$ exists and is equal to $a\lim_\fC(s_1)+b\lim_{\fC}s_2$. It follows that ${\Attr}_{\fC}(\Delta_{Y^{\sA}})$ is a sub-bundle of $(\pi\times \pi^\sA)^{-1}({\Attr}_{\fC}(\Delta_{X^{\sA}}))$ of rank $(\rk E|_{X^\sA}^++\rk E|_{X^\sA}^{\text{fixed}})$.
\end{proof}
\begin{Lemma}\label{lem: attr closure vb}
\begin{align*}
    \overline{\Attr}_\fC(\Delta_{Y^\sA})\bigcap\, (X\times X^\sA)=\overline{\Attr}_\fC(\Delta_{X^\sA}).
\end{align*}
\end{Lemma}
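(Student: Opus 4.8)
The plan is to exploit the auxiliary torus $\bC^*_u$ scaling the fibres of $E$ with weight one. Its fixed locus on $Y\times Y^\sA$ is exactly $i(X)\times i^\sA(X^\sA)$ (identified with $X\times X^\sA$), and the associated ``limit as $u\to 0$'' morphism
\[
r:=(i\circ\pi)\times(i^\sA\circ\pi^\sA)\colon Y\times Y^\sA\longrightarrow X\times X^\sA
\]
is the retraction onto this fixed locus; the argument is then a formal combination of this retraction with the vector bundle structure supplied by Lemma \ref{lem: attr vb}.

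First I would handle the inclusion $\overline{\Attr}_\fC(\Delta_{X^\sA})\subseteq\overline{\Attr}_\fC(\Delta_{Y^\sA})\cap(X\times X^\sA)$. Since the zero section $i\colon X\hookrightarrow Y$ is an $\sA$-equivariant closed immersion, a pair $(x_1,x_0)\in\Attr_\fC(\Delta_{X^\sA})$ maps to a pair in $\Attr_\fC(\Delta_{Y^\sA})$, so $\Attr_\fC(\Delta_{X^\sA})\subseteq\Attr_\fC(\Delta_{Y^\sA})$ under $X\times X^\sA\hookrightarrow Y\times Y^\sA$. As $X\times X^\sA$ is closed in $Y\times Y^\sA$, passing to Zariski closures gives the claimed inclusion (and also shows the closure of $\Attr_\fC(\Delta_{X^\sA})$ taken in $X\times X^\sA$ agrees with the one taken in $Y\times Y^\sA$).

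For the reverse inclusion, note that $\bC^*_u$ commutes with the $\sA$-action and preserves $Y^\sA$, so $\Attr_\fC(\Delta_{Y^\sA})$ is $\bC^*_u$-invariant, hence so is $W:=\overline{\Attr}_\fC(\Delta_{Y^\sA})$. For any closed $\bC^*_u$-invariant $W$ one has $r(W)=W\cap(Y\times Y^\sA)^{\bC^*_u}=W\cap(X\times X^\sA)$: indeed $r(w)=\lim_{u\to 0}u\cdot w$ lies in $W$ by closedness and invariance and is $\bC^*_u$-fixed, while $r$ restricts to the identity on the fixed locus. Applying the standard fact $r(\overline S)\subseteq\overline{r(S)}$ (valid for any morphism of varieties, via $f^{-1}(\overline{f(S)})\supseteq S$) to $S=\Attr_\fC(\Delta_{Y^\sA})$, and using that by Lemma \ref{lem: attr vb} the map $\pi\times\pi^\sA$ exhibits $\Attr_\fC(\Delta_{Y^\sA})$ as a vector bundle over $\Attr_\fC(\Delta_{X^\sA})$ — in particular surjective onto it, so $r(\Attr_\fC(\Delta_{Y^\sA}))=\Attr_\fC(\Delta_{X^\sA})$ — we obtain
\[
W\cap(X\times X^\sA)=r(W)\subseteq\overline{r(\Attr_\fC(\Delta_{Y^\sA}))}=\overline{\Attr}_\fC(\Delta_{X^\sA}).
\]
Combined with the first inclusion this yields the equality.

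I do not expect a real obstacle here: the only points needing care are the identification of the $u\to 0$ limit map with the morphism $(i\circ\pi)\times(i^\sA\circ\pi^\sA)$ and the verification that this morphism carries the closed invariant set $W$ precisely onto its $\bC^*_u$-fixed locus; the substantive input is the surjectivity contained in Lemma \ref{lem: attr vb}, and everything else is formal.
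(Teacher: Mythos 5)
Your argument is correct, but it proceeds along a different line from the paper's. The paper's proof of the hard inclusion $\overline{\Attr}_\fC(\Delta_{Y^\sA})\cap(X\times X^\sA)\subseteq\overline{\Attr}_\fC(\Delta_{X^\sA})$ simply observes that $(\pi\times\pi^\sA)^{-1}(\Attr_\fC(\Delta_{X^\sA}))$ contains $\Delta_{Y^\sA}$ and, because $\pi$ is $\sA$-equivariant, is stable under taking $\fC$-attracting sets; hence $\Attr_\fC(\Delta_{Y^\sA})\subseteq(\pi\times\pi^\sA)^{-1}(\Attr_\fC(\Delta_{X^\sA}))$, and one finishes by passing to closures and restricting along $i\times i^\sA$. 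Your proof instead uses the auxiliary $\bC^*_u$-action scaling the fibres of $E$: you note that the closed set $W=\overline{\Attr}_\fC(\Delta_{Y^\sA})$ is $\bC^*_u$-invariant, identify the $u\to 0$ limit map $r$ with $(i\circ\pi)\times(i^\sA\circ\pi^\sA)$, verify $r(W)=W\cap(X\times X^\sA)$, and then combine $r(\overline S)\subseteq\overline{r(S)}$ with $r(\Attr_\fC(\Delta_{Y^\sA}))=\Attr_\fC(\Delta_{X^\sA})$. Both are short and sound; the paper's version is slightly more economical, since it avoids introducing $\bC^*_u$ inside this particular lemma (even though $\bC^*_u$ is set up for the surrounding Proposition \ref{tri lem for unsym fram}), and it needs only the inclusion $r(\Attr_\fC(\Delta_{Y^\sA}))\subseteq\Attr_\fC(\Delta_{X^\sA})$ rather than the surjectivity. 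Your retraction argument is a legitimate alternative and is perhaps more robust as a template — the key mechanism is just that a closed $\bC^*_u$-invariant subset is carried by the limit retraction onto its fixed part — but note you do not actually need the full force of Lemma \ref{lem: attr vb}: the surjectivity $r(\Attr_\fC(\Delta_{Y^\sA}))=\Attr_\fC(\Delta_{X^\sA})$ follows already from $\sA$-equivariance of the zero section $i$, without invoking the vector bundle structure.
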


\begin{proof}
It is easy to see that $${\Attr}_\fC(\Delta_{Y^\sA})\bigcap\,(X\times X^\sA)={\Attr}_\fC(\Delta_{X^\sA}),$$ thus the inclusion ``$\supseteq$'' is obvious. For the opposite direction of inclusion, we notice that $(\pi\times\pi^\sA)^{-1}({\Attr}_\fC(\Delta_{X^\sA}))$ contains $\Delta_{Y^\sA}$ and is closed under taking the attracting set, so we have 
$${\Attr}_\fC(\Delta_{Y^\sA})\subseteq (\pi\times\pi^\sA)^{-1}({\Attr}_\fC(\Delta_{X^\sA})).$$ 
It follows that $$\overline{\Attr}_\fC(\Delta_{Y^\sA})\subseteq (\pi\times\pi^\sA)^{-1}(\overline{\Attr}_\fC(\Delta_{X^\sA})),$$ 
therefore, 
\begin{equation*}\overline{\Attr}_\fC(\Delta_{Y^\sA})\bigcap\, (X\times X^\sA)\subseteq  (i\times i^\sA)^{-1}(\pi\times\pi^\sA)^{-1}(\overline{\Attr}_\fC(\Delta_{X^\sA}))=\overline{\Attr}_\fC(\Delta_{X^\sA}). \qedhere \end{equation*}
\end{proof}

\section{Hall envelopes and stable envelopes }\label{sec:na stab}
 
The theory of stable envelopes on symplectic varieties \cite{MO, Oko, AO2} provides an effective way of producing $R$-\textit{matrices of quantum groups}, 
where the triangle lemma plays the key role. In the above constructions of stable envelopes 
for critical loci, we prove the triangle lemma for symmetric quiver varieties (or more generally symmetric GIT quotients) with any potentials (Theorem \ref{tri lem for coh stab}, Lemma \ref{triangle lemma for K})
and also for certain asymmetric case (Theorem \ref{thm:attr plus repl}). 
In the symmetric cases, stable envelopes are compatible with Hall operations via nonabelian stable envelopes 
(Theorems \ref{thm: hall coh_sym quot}, \ref{thm: hall k_sym quot}). This motivates the definition 
of \textit{Hall envelopes} (Definition \ref{defi of prestab}) on any (not necessarily symmetric) quiver varieties with potentials as the following compositions: 
\begin{equation}
\xymatrix{
H^\sT(\fM(\bv,\underline{\bd})^{\sA,\phi},\sw) \ar[rr]^{\mathfrak{m}^\phi_{\fC}} & & H^\sT(\fM(\bv,\underline{\bd}),\sw) \ar[d]_{} \\
H^\sT(\cM_\theta(\bv,\underline{\bd})^{\sA,\phi},\sw) \ar[u]^{\widetilde{\bPsi}_H^\phi} \ar[rr]^{\HallEnv_\fC} & & H^\sT(\cM_\theta(\bv,\underline{\bd}),\sw),
}
\quad 
\xymatrix{
K^\sT(\fM(\bv,\underline{\bd})^{\sA,\phi},\sw) \ar[rr]^{\mathfrak{m}^\phi_{\fC}} & & K^\sT(\fM(\bv,\underline{\bd}),\sw)  \ar[d]_{}\\
K^\sT(\cM_\theta(\bv,\underline{\bd})^{\sA,\phi},\sw) \ar[u]^{\widetilde{\bPsi}_K^{\phi,\mathsf s'}} \ar[rr]^{\HallEnv^{\mathsf s}_\fC} & & K^\sT(\cM_\theta(\bv,\underline{\bd}),\sw),
}\nonumber \end{equation}
where the downward arrows are restrictions to open substacks, and the upward arrows are called \textit{interpolation maps} (Definition \ref{def of nona stab}), 
defined by adding extra framings on quivers and using (abelian) stable envelopes 
for one dimensional tori.
In the symmetric cases, we show that interpolation maps agree with nonabelian stable envelopes (Proposition \ref{prop: compare na stab_sym quiver}); therefore 
Hall envelopes coincide with stable envelopes.  
We also give a description of nonabelian stable envelopes using BPS cohomology in \S \ref{sect on bps coh}. For tripled quivers with canonical cubic potentials, we show that  nonabelian stable envelopes constructed in this paper reproduce the nonabelian stable envelopes for Nakajima quiver varieties in \cite{AO1} along the dimensional reductions (Proposition \ref{prop:compare na stab nak}).


We note that triangle lemma could fail for either Hall envelopes or stable envelopes in general (Example \ref{ex on fail of tri lem}, Proposition \ref{prop on 3 ex}).   
For a class of asymmetric quiver varieties, where the asymmetric part comes only from the framing (Definition \ref{def: pseudo-self-dual}),
we prove that triangle lemma of Hall envelopes holds 
(Theorem \ref{thm: AFSQV}, Corollary \ref{cor: tri lem for AFSQV}). This provides foundations for studying geometric $R$-matrices of shifted quantum groups.

\subsection{Definitions}

Let $Q=(Q_0,Q_1)$ be any quiver and $\bv,\bd_{\In},\bd_{\Out}\in \bN^{Q_0}$. Denote 
\begin{align*}
    \underline{\bd}=(\bd_{\In},\bd_{\Out}),\quad \underline{\bd}^\bv=(\bd_{\In}+\bv,\bd_{\Out}).
\end{align*}
For simplicity, we consider cyclic stability condition $\theta\in \bQ^{Q_0}$,~i.e.
\begin{equation}
\label{equ on cycc condit}\theta_i<0,\,\,\, \;\forall\, i\in Q_0.\end{equation} 
The space of framed representations (ref.~\eqref{equ rvab}) is
$$R(\mathbf v,\underline{\bd}^{\bv})=R(\mathbf v,\underline{\bd})\oplus 
\bigoplus_{i\in Q_0}\Hom(\bC^{\mathbf v_i},\bC^{\mathbf v_i}),$$ 
which we visualize vertex-wise as
\begin{equation}\label{diag of R'2}
\xymatrix{
&V_i \ar[dr]^{B_i} &\\
V'_i \ar[ur]^{I_i} & \bC^{\bd_{\In,i}} \ar[u]_{A_i} & \bC^{\bd_{\Out,i}}.
}
\end{equation}
In the above diagram $V'_i\cong \bC^{\mathbf v_i}\cong V_i$.
Define GIT quotient and stack quotient by
$$\cM_\theta(\mathbf v,\underline{\bd}^{\bv}):=R(\mathbf v,\underline{\bd}^{\bv})/\!\!/_\theta G\;,\quad \fM(\mathbf v,\underline{\bd}^{\bv}):=[R(\mathbf v,\underline{\bd}^{\bv})/ G].$$ 
The $\theta$-semistable locus $R(\mathbf v,\underline{\bd}^{\bv})^{\theta\emph{-}ss}$ contains the open subscheme 
$\mathring{R}(\mathbf v,\underline{\bd}^{\bv})$ on which the map $I_i$ are isomorphisms for all $i\in Q_i$. $\mathring{R}(\mathbf v,\underline{\bd}^{\bv})$ is $G$-invariant and descends to an open subscheme 
$$\mathring\cM_\theta(\mathbf v,\underline{\bd}^{\bv}):=\mathring{R}(\mathbf v,\underline{\bd}^{\bv})/G\subseteq  \cM_\theta(\mathbf v,\underline{\bd}^{\bv}). $$ 
One has an isomorphism
\begin{align}\label{equ on mvcongr}
    \mathring\cM_\theta(\mathbf v,\underline{\bd}^{\bv})\cong R(\mathbf v,\underline{\bd}),
\end{align}
under which, the flavour symmetry $G'$ which acts on $V'_i$ in \eqref{diag of R'2} becomes the gauge group acting on $R(\mathbf v,\underline{\bd})$.

Consider a torus in the center of the flavour group: 
\begin{equation}\label{equ on Cstar}\bC^*\subseteq  G', \end{equation}
which acts on $V'_i$ in \eqref{diag of R'2} with weight $-1$ for all $i\in Q_0$ \footnote{For other generic stability condition $\zeta\in \bQ^{Q_0}$, we take $\bC^*\subseteq  G'$ which acts on $V'_i$ in \eqref{diag of R'2} with weight $\mathrm{sign}(\zeta_i)$. Then we use this $\bC^*$-action to construct $\widetilde{\bPsi}$.}. Then $\cM_\theta(\mathbf v,\underline{\bd})$ is naturally identified with a connected component in $\cM_\theta(\mathbf v,\underline{\bd}^{\bv})^{\bC^*}$. 

Let $\sT\subseteq  \sF$ be a torus in the flavour group $\sF=\Aut_G(R(\bv,\underline{\bd}))$. Then we fix a $(G\times \sT)$-invariant function 
$$\sw\colon R(\bv,\underline{\bd})\to \bC.$$ 
We denote the descent functions on $\fM(\bv,\underline{\bd})$ and $\cM_\theta(\bv,\underline{\bd})$ still by $\sw$.

Let $\sw'$ be the pullback of $\sw$ along the projection $R(\mathbf v,\underline{\bd}^{\bv})\to R(\mathbf v,\underline{\bd})$ that forgets $\{I_i\}_{i\in Q_0}$. $\sw'$ descends to a $(G'\times \sT)$-invariant function 
$$\sw'\colon \cM_\theta(\mathbf v,\underline{\bd}^{\bv})\to \C$$
By Proposition \ref{prop on stable corr} and the discussions in \S\ref{sec:general flavour}, there exist a cohomological stable envelope $\Stab_+$ for $(\cM_\theta(\mathbf v,\underline{\bd}^{\bv}),\sw',G'\times \sT,\bC^*,+)$ and a $K$-theoretic stable envelope $\Stab^{\mathsf s}_+$ for $(\cM_\theta(\mathbf v,\underline{\bd}^{\bv}),\sw',G'\times \sT,\bC^*,+,\mathsf s)$ with a slope $\mathsf s\in \mathrm{Char}(G)\otimes_{\bZ}\bR$. 

\begin{Definition}\label{def of nona stab}
We define $\widetilde{\bPsi}_H\colon H^{\sT}(\cM_\theta(\mathbf v,\underline{\bd}),\sw)\to H^{\sT}(\mathfrak{M}(\mathbf v,\underline{\bd}),\sw)$ as the composition of the following maps
\begin{align*}
\xymatrix{
H^{\sT}(\cM_\theta(\mathbf v,\underline{\bd}),\sw)\ar[r]^-{1\otimes \id}   & H^{G'\times \sT}(\cM_\theta(\mathbf v,\underline{\bd}),\sw)\ar[r]^-{\Stab_+} & H^{G'\times \sT}(\cM_\theta(\mathbf v,\underline{\bd}^{\bv}),\sw')\ar[d]^-{j^*}\\
& H^{\sT}(\mathfrak{M}(\mathbf v,\underline{\bd}),\sw) & \ar[l]_-{\cong} H^{G'\times \sT}(\mathring\cM_\theta(\mathbf v,\underline{\bd}^{\bv}),\sw') 
}
\end{align*}
Here $1\in H_{G'}(\pt)$ is the unit, and  $j\colon \mathring\cM_\theta(\mathbf v,\underline{\bd}^{\bv})\hookrightarrow \cM_\theta(\mathbf v,\underline{\bd}^{\bv})$ is the open immersion.

Similarly, we define $\widetilde{\bPsi}^{\mathsf s}_K\colon K^{\sT}(\cM_\theta(\mathbf v,\underline{\bd}),\sw)\to K^{\sT}(\mathfrak{M}(\mathbf v,\underline{\bd}),\sw)$ as the composition of the following maps
\begin{align*}
\xymatrix{
K^{\sT}(\cM_\theta(\mathbf v,\underline{\bd}),\sw)\ar[r]^-{1\otimes \id} & K^{G'\times \sT}(\cM_\theta(\mathbf v,\underline{\bd}),\sw)\ar[r]^-{\Stab^{\mathsf s}_+} & K^{G'\times \sT}(\cM_\theta(\mathbf v,\underline{\bd}^{\bv}),\sw')\ar[d]^-{j^*}\\
& K^{\sT}(\mathfrak{M}(\mathbf v,\underline{\bd}),\sw) & \ar[l]_-{\cong} K^{G'\times \sT}(\mathring\cM_\theta(\mathbf v,\underline{\bd}^{\bv}),\sw') 
}
\end{align*}
Here in the first map $1\in K_{G'}(\pt)$ is the unit.

We call $\widetilde{\bPsi}_H$ and $\widetilde{\bPsi}^{\mathsf s}_K$ cohomological and $K$-theoretic \textit{interpolation maps} respectively.
\end{Definition}


\begin{Lemma}\label{lem:res of na stab to stable locus}
Let $\mathbf k:\cM_\theta(\bv,\underline{\bd})\hookrightarrow \fM(\bv,\underline{\bd})$ be the open immersion of stable locus, then 
\begin{align*}
    \mathbf k^*\circ\widetilde{\bPsi}_H=\id,\quad \mathbf k^*\circ\widetilde{\bPsi}^{\mathsf s}_K=\id.
\end{align*}
\end{Lemma}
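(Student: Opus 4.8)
The claim is that restricting the interpolation map $\widetilde{\bPsi}_H$ (resp.\ $\widetilde{\bPsi}^{\mathsf s}_K$) back to the stable locus $\cM_\theta(\bv,\underline{\bd})$ recovers the identity. The strategy is to trace through the three-step definition of $\widetilde{\bPsi}_H$ (resp.\ $\widetilde{\bPsi}^{\mathsf s}_K$) and observe that $\mathbf k^*$ interacts compatibly with each step, reducing the whole composition to the restriction of a stable envelope to its diagonal fixed component, which by the normalization axiom of Definition \ref{def of stab coho}(ii) (resp.\ Definition \ref{def of stab k}(ii)) is multiplication by an Euler class that, in this particular geometry, is trivial.

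First I would set up the relevant geometry. The key point is that the stable locus $\cM_\theta(\bv,\underline{\bd})\subseteq  \fM(\bv,\underline{\bd})$ corresponds, under the isomorphism \eqref{equ on mvcongr}, to the open subscheme $\mathring\cM_\theta(\bv,\underline{\bd}^{\bv})$ where all maps $I_i$ are isomorphisms; and $\mathring\cM_\theta(\bv,\underline{\bd}^{\bv})$ meets the $\bC^*$-fixed locus $\cM_\theta(\bv,\underline{\bd}^{\bv})^{\bC^*}$ precisely in the diagonal copy of $\cM_\theta(\bv,\underline{\bd})$, i.e.\ in the component $F$ corresponding to the identity lift $\phi$. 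Concretely, $\mathbf k$ factors as $\cM_\theta(\bv,\underline{\bd}) \xrightarrow{\,\sim\,}\mathring\cM_\theta(\bv,\underline{\bd}^{\bv})^{\bC^*}_{\mathrm{diag}} \hookrightarrow \mathring\cM_\theta(\bv,\underline{\bd}^{\bv})\xrightarrow{\,j\,}\cM_\theta(\bv,\underline{\bd}^{\bv})$, and the composite closed-then-open embedding is exactly the inclusion of the diagonal fixed component into the ambient space (restricted to the locus where $I_i$ are isomorphisms). Since $\mathbf k^*$ on $H^\sT(\fM(\bv,\underline{\bd}),\sw)\cong H^{G'\times\sT}(\mathring\cM_\theta(\bv,\underline{\bd}^{\bv}),\sw')$ is identified with $j^*$ followed by Gysin pullback to this fixed component, the composition $\mathbf k^*\circ\widetilde{\bPsi}_H$ becomes: apply $\Stab_+$, then restrict to the diagonal fixed component $F$. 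By the normalization axiom, $\Stab_+(\gamma)|_F = e^{G'\times\sT}(N^-_{F/\cM_\theta(\bv,\underline{\bd}^{\bv})})\cdot\gamma$.

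The remaining task is to show this Euler class is $1$ on the relevant open locus. Here I would use that the negative normal bundle $N^-_{F/\cM_\theta(\bv,\underline{\bd}^{\bv})}$ over the diagonal fixed component, restricted to $\mathring\cM_\theta(\bv,\underline{\bd}^{\bv})$, is \emph{zero}: the $\bC^*$ of \eqref{equ on Cstar} acts with weight $-1$ on the $V'_i$ and weight $0$ on $V_i$, hence weight $-1$ on $\Hom(V'_i,V_i)$ (the coordinate $I_i$) and weight $+1$ on the dual directions; with the chamber $+$ chosen so that the $I_i$-directions are \emph{attracting}, the repelling directions of $F$ inside the ambient space are exactly the $\Hom(V_i,V'_i)$ factors (and whatever else is repelling), but on $\mathring\cM_\theta$ these get absorbed — more precisely, $N^-_{F/\cM_\theta(\bv,\underline{\bd}^{\bv})}$ consists of the repelling weights, and the standard computation (as in \S\ref{subsection on shifted qg}, cf.\ the $\Attr_\pm$ decomposition) shows that over $\mathring\cM_\theta$ the fixed component $F$ has trivial repelling normal bundle, because $\mathring\cM_\theta(\bv,\underline{\bd}^{\bv})$ is itself the total space of the \emph{attracting} bundle over $F$ and is open in $\cM_\theta(\bv,\underline{\bd}^{\bv})$. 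Hence $e(N^-_{F/\cM_\theta(\bv,\underline{\bd}^{\bv})})|_{\mathring\cM_\theta} = 1$, and $\mathbf k^*\circ\widetilde{\bPsi}_H(\gamma)=\gamma$. The $K$-theoretic argument is word-for-word the same with $e^\sT_K$ in place of $e^\sT$ and Definition \ref{def of stab k}(ii); note that the normalizer/slope shift plays no role since we only restrict to the single component $F$ and the normalization there is $e^\sT_K(N^-_{F/X})\cdot\gamma$ with $N^-=0$.

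\textbf{Main obstacle.} The only real subtlety is bookkeeping: correctly identifying $\mathbf k^*$ on $H^\sT(\fM,\sw)$ with $j^*$ composed with the Gysin map to the diagonal fixed component under the chain of isomorphisms in Definition \ref{def of nona stab}, and checking that the support/normalization axioms of $\Stab_+$ are being invoked on the correct fixed component. The genuinely geometric input — that the diagonal fixed component $F$ has trivial repelling normal bundle when restricted to the open locus $\mathring\cM_\theta(\bv,\underline{\bd}^{\bv})$ where the framings $I_i$ are isomorphisms — is essentially immediate from the definition of $\mathring\cM_\theta$ as the (open) total space of the attracting bundle over $F$, but it should be stated carefully. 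I do not expect any analytic or homological difficulty beyond this.
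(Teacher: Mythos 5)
The geometric setup in your proposal contains a concrete error that invalidates the main computation. The fixed component $F$ that serves as the source of $\Stab_+$ in Definition \ref{def of nona stab} is the copy of $\cM_\theta(\bv,\underline{\bd})$ sitting inside $\cM_\theta(\bv,\underline{\bd}^\bv)$ along the locus $\{I_i=0\}$: it corresponds to the trivial lift $\phi=0$, under which the $\bC^*$ of \eqref{equ on Cstar} scales $I_i\in\Hom(V'_i,V_i)$ with weight $+1$, so $I_i\to 0$ on the fixed locus. But $\mathring\cM_\theta(\bv,\underline{\bd}^\bv)$ is by definition the locus where every $I_i$ is an \emph{isomorphism}. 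These two loci are therefore disjoint, so your claim that ``$\mathring\cM_\theta$ meets the $\bC^*$-fixed locus precisely in the diagonal copy of $\cM_\theta(\bv,\underline{\bd})$'' is false, and the factorisation of $\mathbf k$ through a closed embedding of $F$ into $\mathring\cM_\theta$ does not exist. A related slip appears at the start: \eqref{equ on mvcongr} identifies $\mathring\cM_\theta(\bv,\underline{\bd}^\bv)$ with $R(\bv,\underline{\bd})$, hence the quotient $[\mathring\cM_\theta/G']$ with all of $\fM(\bv,\underline{\bd})$; the stable locus $\cM_\theta(\bv,\underline{\bd})$ corresponds only to the strictly smaller $G'$-invariant open subset $\mathsf q^{-1}(\cM_\theta(\bv,\underline{\bd}))\subsetneq\mathring\cM_\theta$ of points that remain $\theta$-semistable after forgetting $\{I_i\}$.

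As a consequence, $\mathbf k^*$ is \emph{not} a Gysin pullback to a fixed component, so the normalization axiom of Definition \ref{def of stab k}(ii) is not the relevant input; your reduction to an Euler class computation does not apply. The correct translation of $\mathbf k^*$ under the chain of isomorphisms in Definition \ref{def of nona stab} is the open restriction to $\mathsf q^{-1}(\cM_\theta(\bv,\underline{\bd}))$, which the paper identifies with $\Attr_+(F)\cap\mathring\cM_\theta$, i.e.\ the open Bialynicki--Birula cell of $F$ intersected with $\mathring\cM_\theta$. There one uses the (different) fact that the stable envelope restricted to the open attracting cell equals the pullback along the attraction map $\mathsf q$ --- i.e.\ $\Stab_+(1\otimes\gamma)\big|_{\mathsf q^{-1}(\cM_\theta(\bv,\underline{\bd}))}=\mathsf q^*(1\otimes\gamma)$ --- and then the free $G'$-action gives $H^{G'\times\sT}(\mathsf q^{-1}(\cM_\theta(\bv,\underline{\bd})),\sw)\cong H^\sT(\cM_\theta(\bv,\underline{\bd}),\sw)$ sending $\mathsf q^*(1\otimes\gamma)$ to $\gamma$. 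Your final paragraph gestures at the attracting bundle over $F$, which is essentially the right picture, but you draw the wrong conclusion: the fact needed is the attraction-pullback description on the open cell, not triviality of the negative Euler class at a (nonexistent) intersection point.
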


\begin{proof}
Let $\mathsf q:\mathring\cM_\theta(\mathbf v,\underline{\bd}^\bv)\to \mathfrak{M}(\mathbf v,\underline{\bd})$ be the quotient map. Then 
$$\mathsf q^{-1}(\cM_\theta(\mathbf v,\underline{\bd}))\subseteq  \mathring\cM_\theta(\mathbf v,\underline{\bd}^\bv)$$ 
is the open subscheme of quiver representations which are $\theta$-semistable after forgetting $\{I_i\}_{i\in Q_0}$. Note that 
$$\Attr_+(\cM_\theta(\mathbf v,\underline{\bd}))\subseteq  \cM_\theta(\mathbf v,\underline{\bd}^\bv)$$ is exactly the locus where quiver representations are $\theta$-semistable after forgetting $\{I_i\}_{i\in Q_0}$. It follows that 
\begin{equation}\label{equ on qMattr}\mathsf q^{-1}(\cM_\theta(\mathbf v,\underline{\bd}))=\mathring\cM_\theta(\mathbf v,\underline{\bd}^\bv)\bigcap\Attr_+(\cM_\theta(\mathbf v,\underline{\bd})), 
\end{equation} 
and the restriction of the quotient map to $\mathsf q^{-1}(\cM_\theta(\mathbf v,\underline{\bd}))$: 
$$\mathsf q:\mathsf q^{-1}(\cM_\theta(\mathbf v,\underline{\bd}))\to \cM_\theta(\mathbf v,\underline{\bd})$$ is identified with the attraction map $a:\Attr_+(\cM_\theta(\mathbf v,\underline{\bd}))\to \cM_\theta(\mathbf v,\underline{\bd})$. By a direct calculation, we have 
$$\Stab_+(1\otimes\gamma)|_{\mathsf q^{-1}(\cM_\theta(\mathbf v,\underline{\bd}))}=\mathsf q^*(1\otimes\gamma),$$ 
for all $\gamma\in H^{\sT}(\cM_\theta(\mathbf v,\underline{\bd}),\sw)$. Since the isomorphism $H^{G'\times \sT}(\mathsf q^{-1}(\cM_\theta(\mathbf v,\underline{\bd})),\sw)\cong H^{\sT}(\cM_\theta(\mathbf v,\underline{\bd}),\sw)$ maps $\mathsf q^*(1\otimes\gamma)$ to $\gamma$, we see that $\mathbf k^*\circ \widetilde{\bPsi}_H=\id$. The $K$-theoretic version is proven similarly and we omit the details.
\end{proof}

Hall operations are defined similarly as the symmetric case. Let $\sA\subseteq \sT$ be a subtorus. Choose a homomorphism 
$$\phi:\sA\to G,$$ and denote $R(\bv,\underline{\bd})^{\sA,\phi}$ the fixed locus of $R(\bv,\underline{\bd})$ under the action of $\sA$ via the homomorphism $$(\phi,\id):\sA\to G\times \sA$$ and denote $G^{\phi(\sA)}$ the fixed subgroup of $G$ under the conjugation action of the subgroup $\phi(\sA)$. 

Note that 
$$R(\bv,\underline{\bd})^{\sA,\phi}\cong R(Q_\phi,\bv_\phi,\underline{\bd_\phi}), \quad G^{\phi(\sA)}\cong \prod_{i\in Q_{\phi,0}}\GL(\bv_{\phi,i})$$ for some quiver $Q_\phi$ with gauge dimension vector $\bv_\phi$ and in-coming and out-going framing dimension vectors $\bd_{\phi,\In}$ and $\bd_{\phi,\Out}$ respectively. Define
\begin{align*}
\fM(\bv,\underline{\bd})^{\sA,\phi}:=\left[R(\bv,\underline{\bd})^{\sA,\phi}/G^{\phi(\sA)}\right],\quad \cM_\theta(\bv,\underline{\bd})^{\sA,\phi}:=R(\bv,\underline{\bd})^{\sA,\phi}/\!\!/_\theta G^{\phi(\sA)}.
\end{align*}
In the following discussions, we assume that $\cM_\theta(\bv,\underline{\bd})^{\sA,\phi}$ is nonempty. Then the same argument as Lemma \ref{fix pts of sym quiv var} shows that $\cM_\theta(\bv,\underline{\bd})^{\sA,\phi}$ is a connected component of the $\sA$-fixed locus $\cM_\theta(\bv,\underline{\bd})^\sA$, and we have
\begin{align}\label{fix pt decomp}
    \cM_\theta(\bv,\underline{\bd})^\sA=\bigsqcup_{\phi/\sim} \cM_\theta(\bv,\underline{\bd})^{\sA,\phi},
\end{align}
where the sum is taken for all homomorphisms $\phi:\sA\to G$ modulo the equivalence relation: $\phi_1\sim\phi_2$ if and only if they give the isomorphic $(G\times \sA)$-module structures on $R(\bv,\underline{\bd})$.

Let us fix a chamber $\fC\subseteq  \Lie(\sA)_\bR$, and define 
\begin{align*}
    L(\bv,\underline{\bd})^\phi_{\fC}:=\Attr_\fC(R(\bv,\underline{\bd})^{\sA,\phi})\subseteq  R(\bv,\underline{\bd}).
\end{align*}
Then $L(\bv,\underline{\bd})^\phi_{\fC}$ is a linear subspace of $R(\bv,\underline{\bd})$, and there are maps
\begin{equation*}
\xymatrix{
R(\bv,\underline{\bd})^{\sA,\phi} &\ar[l]_-{q} L(\bv,\underline{\bd})^\phi_{\fC} \ar[r]^{p} & R(\bv,\underline{\bd}), }
\end{equation*}
where $q$ is the attraction map and $p$ is the natural closed immersion. 

Let $P^\phi_\fC:=\Attr_\fC(G^{\phi(\sA)})$ be the parabolic subgroup of $G$, which naturally acts on $L(\bv,\underline{\bd})^\phi_{\fC}$, and $p$ and $q$ in the above diagram are $P^\phi_\fC$-equivariant, where $P^\phi_\fC$-action on $R(\bv,\underline{\bd})$ is via $P^\phi_\fC\to G$ and $P^\phi_\fC$-action on $R(\bv,\underline{\bd})^{\sA,\phi}$ is via attraction map $P^\phi_\fC\to G^{\phi(\sA)}$. Passing to the quotient stack, we obtain the following diagram
\begin{equation*}
\xymatrix{
\fM(\bv,\underline{\bd})^{\sA,\phi} &\ar[l]_-{\fq} \mathfrak L(\bv,\underline{\bd})^\phi_{\fC} \ar[r]^{\fp} & \fM(\bv,\underline{\bd}),
}
\end{equation*}
where $\mathfrak L(\bv,\underline{\bd})^\phi_{\fC}:=[L(\bv,\underline{\bd})^\phi_{\fC}/P^\phi_\fC]$. We note that $\fq$ is a smooth morphism and $\fp$ is a proper morphism.

\begin{Definition}\label{def of hall op_general}
We define the \textit{Hall operations}
\begin{align*}
    &\mathfrak{m}^\phi_{\fC}:=\fp_*\circ\fq^*\colon H^\sT(\fM(\bv,\underline{\bd})^{\sA,\phi},\sw)\longrightarrow H^\sT(\fM(\bv,\underline{\bd}),\sw),\\
    &\mathfrak{m}^\phi_{\fC}:=\fp_*\circ\fq^*\colon K^\sT(\fM(\bv,\underline{\bd})^{\sA,\phi},\sw)\longrightarrow K^\sT(\fM(\bv,\underline{\bd}),\sw).
\end{align*}
\end{Definition}

\begin{Definition}\label{defi of prestab}
Let $\mathsf s\in \mathrm{Char}(G)\otimes_\bZ\bR$ be a slope. We define the \textit{Hall envelopes} 
\begin{align*}
    \HallEnv_\fC\colon H^\sT(\cM_\theta(\bv,\underline{\bd})^{\sA},\sw)\longrightarrow H^\sT(\cM_\theta(\bv,\underline{\bd}),\sw),\\
    \HallEnv^{\mathsf s}_\fC\colon K^\sT(\cM_\theta(\bv,\underline{\bd})^{\sA},\sw)\longrightarrow K^\sT(\cM_\theta(\bv,\underline{\bd}),\sw),
\end{align*}
by 
\begin{align*}
\HallEnv_\fC\big|_{H^\sT(\cM_\theta(\bv,\underline{\bd})^{\sA,\phi},\sw)} &:=\mathbf k^*\circ\mathfrak{m}^\phi_{\fC}\circ\widetilde{\bPsi}_H^\phi,\\
\HallEnv^{\mathsf s}_\fC \big|_{K^\sT(\cM_\theta(\bv,\underline{\bd})^{\sA,\phi},\sw)} &:= \mathbf k^*\circ\mathfrak{m}^\phi_{\fC}\circ\widetilde{\bPsi}_K^{\phi,\mathsf s'}.
\end{align*}
Here $\mathbf k:\cM_\theta(\bv,\underline{\bd})\hookrightarrow \fM(\bv,\underline{\bd})$ is the open immersion of stable locus, $\widetilde{\bPsi}_H^{\phi}, \widetilde{\bPsi}_K^{\phi,\mathsf s'}$ are interpolation maps (Definition \ref{def of nona stab}) for the stack $\fM(\bv,\underline{\bd})^{\sA,\phi}$ with slope 
\begin{align}\label{s' for prestab}
    \mathsf s'=\mathsf s\otimes \det\left(T\fM(\bv,\underline{\bd})^{\sA,\phi\text{-repl}}\right)^{1/2}\in \mathrm{Char}(G^{\phi(\sA)})\otimes_\bZ \bR,
\end{align}
and $T\fM(\bv,\underline{\bd})=R(\bv,\underline{\bd})-\Lie(G)$.

Moreover, we say that $\HallEnv_\fC$ (resp.\,$\HallEnv^{\mathsf s}_\fC$) is \textit{compatible with the Hall operation} if for all $\phi:\sA\to G$ that appears in \eqref{fix pt decomp} the left square (resp.\,right square) below is commutative
\begin{equation*}
\xymatrix{
H^\sT(\fM(\bv,\underline{\bd})^{\sA,\phi},\sw) \ar[rr]^{\mathfrak{m}^\phi_{\fC}} & & H^\sT(\fM(\bv,\underline{\bd}),\sw)\\
H^\sT(\cM_\theta(\bv,\underline{\bd})^{\sA,\phi},\sw) \ar[u]^{\widetilde{\bPsi}_H^\phi} \ar[rr]^{\HallEnv_\fC} & & H^\sT(\cM_\theta(\bv,\underline{\bd}),\sw),
\ar[u]_{\widetilde{\bPsi}_H}}
\,\,\,
\xymatrix{
K^\sT(\fM(\bv,\underline{\bd})^{\sA,\phi},\sw) \ar[rr]^{\mathfrak{m}^\phi_{\fC}} & & K^\sT(\fM(\bv,\underline{\bd}),\sw)\\
K^\sT(\cM_\theta(\bv,\underline{\bd})^{\sA,\phi},\sw) \ar[u]^{\widetilde{\bPsi}_K^{\phi,\mathsf s'}} \ar[rr]^{\HallEnv^{\mathsf s}_\fC} & & K^\sT(\cM_\theta(\bv,\underline{\bd}),\sw). \ar[u]_{\widetilde{\bPsi}^{\mathsf s}_K}
}
\end{equation*}
\end{Definition}
\begin{Remark}\label{rmk: pstab supp and norm}
Hall envelope $\HallEnv_\fC$ (resp. $\HallEnv^{\mathsf s}_\fC$) satisfies axioms (i) and (ii) for stable envelopes in Definition \ref{def of stab coho} (resp.~Definition \ref{def of stab k}). Namely, for arbitrary $\beta\in H^\sT (\cM_\theta(\bv,\underline{\bd})^{\sA,\phi}, \sw)$ and $\gamma \in K^{\sT} (\cM_\theta(\bv,\underline{\bd})^{\sA,\phi}, \sw)$, we have
\begin{enumerate}[(i)]
\setlength{\parskip}{1ex}

\item  $\HallEnv_{\fC}(\beta)$ and $\HallEnv^{\mathsf s}_{\fC}(\gamma)$ are supported on $\Attr^f_\fC (\cM_\theta(\bv,\underline{\bd})^{\sA,\phi})$; 

\item $$\HallEnv_{\fC} (\beta)\big|_{\cM_\theta(\bv,\underline{\bd})^{\sA,\phi}} =  e^\sT \left(N_{\cM_\theta(\bv,\underline{\bd})^{\sA,\phi}/ \cM_\theta(\bv,\underline{\bd})}^-\right) \cdot \beta,  \,\, 
\HallEnv^{\mathsf s}_{\fC} (\gamma)\big|_{\cM_\theta(\bv,\underline{\bd})^{\sA,\phi}} =  e^\sT_K \left(N_{\cM_\theta(\bv,\underline{\bd})^{\sA,\phi}/ \cM_\theta(\bv,\underline{\bd})}^-\right) \cdot \gamma. $$
\end{enumerate}
As a corollary, $\HallEnv_\fC$ and $\HallEnv^{\mathsf s}_\fC$ are triangular with invertible diagonals after localization, in particular they are \textit{invertible} after localization. In Section \ref{sec: pstab vs stab}, we will discuss situations when $\HallEnv_\fC$ (resp.~$\HallEnv^{\mathsf s}_\fC$) satisfies the axiom (iii) in Definition \ref{def of stab coho} (resp.~Definition \ref{def of stab k}).
\end{Remark}
We show that triangle lemma of Hall envelopes follows from the associativity of Hall operations. 
\begin{Lemma}\label{tri lem for compatible prestab}
Let $X=\cM_\theta(\bv,\underline{\bd})$, $\fC'$ be a face of $\fC$, and $\sA'$ be the subtorus of $\sA$ associated with $\fC'$. Suppose that $\HallEnv_{\fC/\fC'}$ (resp. $\HallEnv^{\mathsf s'}_{\fC/\fC'}$) is compatible with the Hall operation, then the left triangle (resp.\,right triangle):
\begin{equation}\label{cd: tri lem}
\xymatrix{
H^\sT(X^\sA,\sw^\sA) \ar[rr]^{\HallEnv_{\fC}} \ar[dr]_{\HallEnv_{\fC/\fC'}} & & H^\sT(X,\sw), \\
 & H^\sT(X^{\sA'},\sw^{\sA'}) \ar[ur]_{\,\,\,\HallEnv_{\fC'}} &
}\quad
\xymatrix{
K^\sT(X^\sA,\sw^\sA) \ar[rr]^{\HallEnv^{\mathsf s}_{\fC}} \ar[dr]_{\HallEnv^{\mathsf s'}_{\fC/\fC'}} & & K^\sT(X,\sw) \\
 & K^\sT(X^{\sA'},\sw^{\sA'}) \ar[ur]_{\,\,\, \HallEnv^{\mathsf s}_{\fC'}} &
}
\end{equation}
commutes, where $\mathsf s'=\mathsf s|_{X^{\sA'}}\otimes\det\left(N^-_{X^{\sA'}/X}\right)^{1/2}$.
\end{Lemma}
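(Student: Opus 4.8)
\textbf{Proof plan for Lemma \ref{tri lem for compatible prestab}.}

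The plan is to reduce the triangle identity to the associativity of the Hall operations on the stacks, exactly as in the proof of the triangle lemma for stable envelopes on symmetric GIT quotients (Theorem \ref{thm: hall coh_sym quot}, Step 3), but now using the interpolation maps $\widetilde{\bPsi}_H$ (resp.\ $\widetilde{\bPsi}^{\mathsf s}_K$) in place of the nonabelian stable envelopes and \emph{without} needing any degree-bound characterization. I will only write out the cohomological case; the $K$-theoretic case is identical with the obvious substitutions ($e^\sT \to e^\sT_K$, the slope bookkeeping of \eqref{s' for prestab}, etc.).

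First I would fix a homomorphism $\phi:\sA\to G$ appearing in the decomposition \eqref{fix pt decomp}, restrict the whole picture to $\sA'$, and observe that the connected component $X^{\sA,\phi}\subseteq X^\sA$ sits inside a component $X^{\sA',\phi}\subseteq X^{\sA'}$ (with $\phi$ now regarded as a homomorphism $\sA'\to G$ via $\sA'\subseteq \sA$), and that $X^{\sA,\phi}$ is then a component of $(X^{\sA',\phi})^{\sA/\sA'}$. The core step is to assemble the triangular prism
\begin{equation*}
\xymatrix{
H^\sT(\fM^{\sA,\phi},\sw) \ar[rr]^{\mathfrak{m}^\phi_{\fC}}\ar[dr]_{\mathfrak{m}^\phi_{\fC/\fC'}} & & H^\sT(\fM,\sw) \ar[dd]^{\mathbf k^*} \\
 & H^\sT(\fM^{\sA',\phi},\sw) \ar[ur]_{\mathfrak{m}^\phi_{\fC'}} &\\
H^\sT(X^{\sA,\phi},\sw) \ar[rr]^{\HallEnv_{\fC}\quad\qquad}|-{\phantom{aaa}} \ar[dr]_{\HallEnv_{\fC/\fC'}} \ar[uu]^{\widetilde{\bPsi}^{\phi}_H} & & H^\sT(X,\sw)  \\
 & H^\sT(X^{\sA',\phi},\sw) \ar[ur]_{\,\,\, \HallEnv_{\fC'}} \ar[uu]_>>>>>>{\widetilde{\bPsi}^{\phi}_H} &
}
\end{equation*}
where $\fM=\fM(\bv,\underline{\bd})$, and check commutativity face by face. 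The top triangle commutes because Hall operations compose: $\mathfrak{m}^\phi_{\fC'}\circ \mathfrak{m}^\phi_{\fC/\fC'}=\mathfrak{m}^\phi_{\fC}$; this is the associativity statement for the stack Hall operations of Definition \ref{def of hall op_general}, which follows from the corresponding Cartesian/transitivity statements for attracting sets of the linear representation $R(\bv,\underline{\bd})$ and for parabolic subgroups, together with smooth base change and projection formula (the same argument as used in the symmetric case to know Hall operations are associative). The left face is the definition of $\HallEnv_{\fC/\fC'}$ composed with $\mathbf k^*$ and, by hypothesis, the assumed compatibility of $\HallEnv_{\fC/\fC'}$ with the Hall operation gives $\widetilde{\bPsi}^{\phi}_H\circ\HallEnv_{\fC/\fC'}=\mathfrak{m}^\phi_{\fC/\fC'}\circ\widetilde{\bPsi}^{\phi}_H$ after restricting to the component; the backward face is the definition $\HallEnv_{\fC}\big|_{X^{\sA,\phi}}=\mathbf k^*\circ\mathfrak{m}^\phi_\fC\circ\widetilde{\bPsi}^\phi_H$; the right face is $\HallEnv_{\fC'}\big|_{X^{\sA',\phi}}=\mathbf k^*\circ\mathfrak{m}^\phi_{\fC'}\circ\widetilde{\bPsi}^\phi_H$, which is again just the definition of $\HallEnv_{\fC'}$ (note that the $\widetilde{\bPsi}^\phi_H$ on the lower right uses the same $\phi$, now as a homomorphism into $G$ fixed by $\phi(\sA')$). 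Chasing the prism then gives $\HallEnv_{\fC'}\circ\HallEnv_{\fC/\fC'}=\HallEnv_{\fC}$ on the component $H^\sT(X^{\sA,\phi},\sw)$; summing over $\phi$ via \eqref{fix pt decomp} yields the full statement. For the $K$-theoretic version one additionally checks that the slope shift \eqref{s' for prestab} is consistent under the two-step process $\mathsf s\rightsquigarrow \mathsf s'\rightsquigarrow \mathsf s''$, i.e.\ that $\det(T\fM^{\sA,\phi\text{-repl}})^{1/2}$ factors through $\det(T\fM^{\sA',\phi\text{-repl}})^{1/2}$ and the relative determinant along $\sA/\sA'$, matching the formula $\mathsf s'=\mathsf s|_{X^{\sA'}}\otimes\det(N^-_{X^{\sA'}/X})^{1/2}$ in the statement; this is a direct weight computation.

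The main obstacle I anticipate is the careful bookkeeping in the left face, i.e.\ making precise the statement that ``$\HallEnv_{\fC/\fC'}$ is compatible with the Hall operation'' for the \emph{relative} torus $\sA/\sA'$ acting on the stack $\fM^{\sA',\phi}$ — one must match the interpolation map $\widetilde{\bPsi}^\phi_H$ defined for $\fM^{\sA',\phi}$ with the one defined for $\fM$, and match the relevant Hall operation $\mathfrak{m}^\phi_{\fC/\fC'}$ on $\fM^{\sA',\phi}$ with the ``same $\phi$'' Hall operation on $\fM$. This is essentially the observation that taking $\sA'$-fixed points commutes with adding the extra framing node $V'_i$ (since $\sA'$ acts trivially on the $V'_i$) and with the $\bC^*\subseteq G'$ of \eqref{equ on Cstar}, so that all the constructions in Definition \ref{def of nona stab} are compatible with passing to $\sA'$-fixed loci; once this is spelled out, the prism chase is formal. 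The associativity of stack Hall operations (top triangle), although routine, will also require a short but careful verification that $\Attr_{\fC'}(\Attr_{\fC/\fC'}(R^{\sA,\phi}))=\Attr_\fC(R^{\sA,\phi})$ and that the parabolics satisfy $P^\phi_{\fC'}\cdot(P^\phi_{\fC/\fC'}\text{ inside }G^{\phi(\sA')})=P^\phi_\fC$, which is the quiver-variety analogue of standard facts about Bruhat-type decompositions relative to a face of a chamber.
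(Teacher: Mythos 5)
Your proof is correct and matches the paper's argument essentially exactly: both assemble the same triangular prism, conclude the top face by associativity of the stack Hall operations, the left face by the hypothesis (compatibility of $\HallEnv_{\fC/\fC'}$, resp.\ $\HallEnv^{\mathsf s'}_{\fC/\fC'}$, with the Hall operation on $\fM^{\sA',\phi}$), and the back and right faces by the very definition of the Hall envelope, then chase the prism and sum over $\phi$. The only difference is that the paper's write-up is terser, while you spell out the slope bookkeeping and the observation that passing to $\sA'$-fixed loci commutes with the extra-framing/$\bC^*$-action construction of $\widetilde{\bPsi}^\phi$; these are indeed the points that need care, and your treatment of them is accurate.
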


\begin{proof}
Fix a homomorphism $\phi:\sA\to G$, and use the same letter $\phi$ to denote its restriction to subtorus $\sA'$. Then the restriction of $\mathsf s'$ on the component $\cM_\theta(\bv,\underline{\bd})^{\sA',\phi}$ can be rewritten as
\begin{align*}
    \mathsf s'=\mathsf s\otimes \det\left(T\fM(\bv,\underline{\bd})^{\sA',\phi\text{-repl}}\right)^{1/2}\in \mathrm{Char}(G^{\phi(\sA')})\otimes_\bZ \bR.
\end{align*}
Let us define
\begin{align*}
    \mathsf s''=\mathsf s\otimes \det\left(T\fM(\bv,\underline{\bd})^{\sA,\phi\text{-repl}}\right)^{1/2}\in \mathrm{Char}(G^{\phi(\sA)})\otimes_\bZ \bR,
\end{align*}
and consider the following diagram on a triangular prism:
\begin{equation*}
\xymatrix{
K^\sT(\fM(\bv,\underline{\bd})^{\sA,\phi},\sw) \ar[rr]^{\mathfrak{m}^\phi_{\fC}}\ar[dr]_{\mathfrak{m}^\phi_{\fC/\fC'}} & & K^\sT(\fM(\bv,\underline{\bd}),\sw) \ar[dd]^{\mathbf k^*} \\
 & K^\sT(\fM(\bv,\underline{\bd})^{\sA',\phi},\sw) \ar[ur]_{\mathfrak{m}^\phi_{\fC'}} &\\
K^\sT(\cM_\theta(\bv,\underline{\bd})^{\sA,\phi},\sw) \ar[rr]^{\HallEnv^{\mathsf s}_{\fC}\qquad\qquad}|-{\phantom{aaa}} \ar[dr]_{\HallEnv^{\mathsf s'}_{\fC/\fC'}} \ar[uu]^{\widetilde{\bPsi}^{\phi,\mathsf s''}_K} & & K^\sT(\cM_\theta(\bv,\underline{\bd}),\sw)  \\
 & K^\sT(\cM_\theta(\bv,\underline{\bd})^{\sA',\phi},\sw) \ar[ur]_{\,\,\, \HallEnv^{\mathsf s}_{\fC'}} \ar[uu]_>>>>>>{\widetilde{\bPsi}^{\phi,\mathsf s'}_K} &
}
\end{equation*}
The right and the backward squares are commutative by the definition of Hall envelope. The left square is commutative by assumption. The upper triangle is commutative because Hall operations are associative. These imply the commutativity of the lower triangle.

The cohomology version is proven similarly and we omit the details.
\end{proof}

We obtain an explicit formula for Hall envelopes when $\sw=0$.
\begin{Proposition}\label{prop: explicit formula PStab w=0}
Assume $\sw=0$, then we have 
\begin{align}\label{explicit formula PStab w=0_coh}
    \HallEnv_\fC([\cM_\theta(\bv,\underline{\bd})^{\sA,\phi}])=\sum_{w\in W/W^\phi}w\left(e^{\sT\times G^{\phi(\sA)}}\left(T\fM(\bv,\underline{\bd})^{\sA,\phi\text{-repl}}\right)\right) \cdot [\cM_\theta(\bv,\underline{\bd})]. 
\end{align}
Here $W$, $W^\phi$ are Weyl groups of $G$, $G^{\phi(\sA)}$ respectively,
$T\fM(\bv,\bd)=R(\bv,\bd)-\Lie(G)$ and $(-)^{\sA, \phi\text{-repl}}$ is the repelling part with respect to homomorphism $\phi\colon \sA\to G$, and $w$ acts on a weight $\mu$ of $G$ by $w(\mu)(g):=\mu(w^{-1}\cdot g\cdot w)$.

Let $\mathsf s\in \mathrm{char}(G)\otimes_\bZ \bR$ be a generic slope and $\chi\in \mathrm{char}(G^{\phi(\sA)})$ such that $\mathsf s\otimes \det\left(T\fM(\bv,\underline{\bd})^{\sA,\phi\text{-repl}}\right)^{1/2}$ is in a sufficiently small neighbourhood of $\chi$, then
\begin{align}\label{explicit formula PStab w=0_k}
    \HallEnv^{\mathsf s}_\fC([\mathcal L_\chi])=\sum_{w\in W/W^\phi}w\left(\chi\cdot e^{\sT\times G^{\phi(\sA)}}_K\left(T\fM(\bv,\underline{\bd})^{\sA,\phi\text{-repl}}\right)\right) \cdot [\mathcal O_{\cM_\theta(\bv,\underline{\bd})}].
\end{align}
Here $\mathcal L_\chi\in \Pic(\cM_\theta(\bv,\underline{\bd})^{\sA,\phi})$ is the descent of the character $\chi$.
\end{Proposition}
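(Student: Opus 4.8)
The plan is to reduce the claimed formulas to the zero-potential explicit formulas for nonabelian stable envelopes and Hall operations that are already available in the paper. Recall that by definition $\HallEnv_\fC\big|_{H^\sT(\cM_\theta(\bv,\underline{\bd})^{\sA,\phi},\sw)} = \mathbf k^*\circ \mathfrak m^\phi_\fC\circ \widetilde{\bPsi}^\phi_H$, and similarly in $K$-theory with slope $\mathsf s'$ as in \eqref{s' for prestab}. So the first step is to compute $\widetilde{\bPsi}^\phi_H([\cM_\theta(\bv,\underline{\bd})^{\sA,\phi}])$ and $\widetilde{\bPsi}^{\phi,\mathsf s'}_K([\mathcal L_\chi])$ when the potential is zero. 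I expect, as in the symmetric case (cf. the proofs of Corollaries \ref{cor: explicit formula w=0_coh} and \ref{cor: explicit formula w=0_k}), that $\widetilde{\bPsi}^\phi_H([\cM_\theta(\bv,\underline{\bd})^{\sA,\phi}])=[\fM(\bv,\underline{\bd})^{\sA,\phi}]$ and $\widetilde{\bPsi}^{\phi,\mathsf s'}_K([\mathcal L_\chi])=\chi\otimes \mathcal O_{\fM(\bv,\underline{\bd})^{\sA,\phi}}$ for the appropriate choice of $\chi$; the argument uses Lemma \ref{lem:res of na stab to stable locus} together with the fact that $\widetilde{\bPsi}$ is built from the one-dimensional stable envelope $\Stab_+$ applied to $1\otimes[\cM_\theta(\bv,\underline{\bd})]$, which restricts to the pullback correspondence on the relevant open stratum.

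The second step is to apply the explicit formulas for the cohomological and $K$-theoretic Hall operations on the stacks. In cohomology, \cite[Thm.~2]{CoHA} gives $\mathfrak m^\phi_\fC([\fM(\bv,\underline{\bd})^{\sA,\phi}]) = \sum_{w\in W/W^\phi} w\bigl(e^{\sT\times G^{\phi(\sA)}}(T\fM(\bv,\underline{\bd})^{\sA,\phi\text{-repl}})\bigr)\cdot [\fM(\bv,\underline{\bd})]$, exactly mirroring the computation in Corollary \ref{cor: explicit formula w=0_coh} but now on the full quiver stack $\fM(\bv,\underline{\bd})$ rather than on a symmetric GIT quotient stack; the cohomological Hall product formula does not use symmetry of the quiver, so it applies verbatim. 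In $K$-theory, \cite[Prop.~3.4]{P} gives the analogous formula $\mathfrak m^\phi_\fC(\chi\otimes\mathcal O_{\fM(\bv,\underline{\bd})^{\sA,\phi}}) = \sum_{w\in W/W^\phi} w\bigl(\chi\cdot e^{\sT\times G^{\phi(\sA)}}_K(T\fM(\bv,\underline{\bd})^{\sA,\phi\text{-repl}})\bigr)\cdot [\mathcal O_{\fM(\bv,\underline{\bd})}]$.

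The third and final step is to restrict along the open immersion $\mathbf k\colon \cM_\theta(\bv,\underline{\bd})\hookrightarrow \fM(\bv,\underline{\bd})$. Since $\mathbf k^*[\mathcal O_{\fM(\bv,\underline{\bd})}] = [\mathcal O_{\cM_\theta(\bv,\underline{\bd})}]$ and $\mathbf k^*[\fM(\bv,\underline{\bd})] = [\cM_\theta(\bv,\underline{\bd})]$, and $\mathbf k^*$ commutes with the action of the equivariant parameters and with the Weyl-group combinations appearing in the formula, we obtain \eqref{explicit formula PStab w=0_coh} and \eqref{explicit formula PStab w=0_k}. The fact that $\bPsi^\phi_H\circ\Stab$ or $\mathfrak m^\phi_\fC\circ\widetilde{\bPsi}$ applied to the chosen generators recover these products also shows the "in particular" clauses (the formulas for $\HallEnv$ itself as opposed to its composition with the interpolation map), since restriction to the stable locus kills the difference.

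The main obstacle I anticipate is ensuring that step one is valid \emph{without} the symmetry hypothesis on the quiver: the interpolation map $\widetilde{\bPsi}$ is defined via a one-dimensional stable envelope $\Stab_+$ on the enlarged quiver variety $\cM_\theta(\mathbf v,\underline{\bd}^{\bv})$, which always exists by Proposition \ref{prop on stable corr}, but one must check that $\Stab_+(1\otimes[\cM_\theta(\bv,\underline{\bd})])$ restricts to $\mathsf q^*(1\otimes[\cM_\theta(\bv,\underline{\bd})])$ on $\mathsf q^{-1}(\cM_\theta(\bv,\underline{\bd}))$ (equivalently on the open stratum $\mathring\cM_\theta(\mathbf v,\underline{\bd}^{\bv})\cap \Attr_+(\cM_\theta(\bv,\underline{\bd}))$), so that its image under the isomorphism $H^{G'\times\sT}(\mathring\cM_\theta(\mathbf v,\underline{\bd}^{\bv}),\sw')\cong H^\sT(\fM(\bv,\underline{\bd}),\sw)$ is indeed $[\fM(\bv,\underline{\bd})^{\sA,\phi}]$ (and not a nontrivial correction term). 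This is the computation carried out in the proof of Lemma \ref{lem:res of na stab to stable locus} for the stable locus; the present claim is the analogous statement one level up, on the whole stack, and it follows from the explicit shape of a one-dimensional attracting-closure stable envelope together with the identity \eqref{equ on qMattr}. Once this normalization is pinned down, everything else is a direct substitution into the known Hall-product formulas and a pullback to the semistable locus.
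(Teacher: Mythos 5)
Your overall strategy matches the paper's: pin down $\widetilde{\bPsi}^\phi_H([\cM_\theta(\bv,\underline{\bd})^{\sA,\phi}])$ and $\widetilde{\bPsi}^{\phi,\mathsf s'}_K([\mathcal L_\chi])$, then feed the result into the explicit Hall-product formulas of \cite[Thm.~2]{CoHA} and \cite[Prop.~3.4]{P}, and restrict along $\mathbf k$. The difference is in how you justify the normalization step, and I think your argument there has a genuine gap. You appeal to Lemma \ref{lem:res of na stab to stable locus} and identity \eqref{equ on qMattr} to identify $\Stab_+(1\otimes[\cM_\theta(\bv,\underline{\bd})])$ \emph{after restriction to} $\mathsf q^{-1}(\cM_\theta(\bv,\underline{\bd}))$, but that open subset of $\mathring\cM_\theta(\bv,\underline{\bd}^\bv)$ is not the whole thing: knowing the restriction to the stable locus only determines $\widetilde{\bPsi}^\phi_H([\cM_\theta(\bv,\underline{\bd})^{\sA,\phi}])$ up to a class supported on the unstable locus of $\fM(\bv,\underline{\bd})$, which is exactly what you need to control. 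Your proposed fill (``the explicit shape of a one-dimensional attracting-closure stable envelope'') is also not quite available: for a general asymmetric quiver the one-dimensional $\Stab_+$ is produced by the inductive excision of Proposition \ref{prop on stable corr}, not a priori by an attracting-closure correspondence, and even when it is the attracting closure the naive attracting correspondence applied to $[\cM_\theta(\bv,\underline{\bd})]$ would produce $\Attr_+(\cM_\theta(\bv,\underline{\bd}))$, not the whole of $\cM_\theta(\bv,\underline{\bd}^\bv)$.

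The paper closes this in one stroke with a uniqueness argument: it shows directly that $[\cM_\theta(\bv,\underline{\bd}^\bv)]$ (and its $K$-theoretic analogue twisted by $\chi$) satisfies the defining axioms of the stable envelope for the datum $1\otimes[\cM_\theta(\bv,\underline{\bd})]$ — essentially because $\cM_\theta(\bv,\underline{\bd})$ is the minimal $\bC^*$-fixed component with purely attracting normal bundle (the only new directions are $I_i\colon V'_i\to V_i$, of positive weight), so the normalization axiom holds trivially and the degree axiom on other components is the trivial observation that the Gysin pullback of a fundamental class has zero $\sA$-degree. By uniqueness (Proposition \ref{uniqueness of coh stab}, resp. \ref{uniqueness of K stab}), one then gets $\Stab_+(1\otimes[\cM_\theta(\bv,\underline{\bd})])=[\cM_\theta(\bv,\underline{\bd}^\bv)]$ globally, which after restriction to $\mathring\cM_\theta(\bv,\underline{\bd}^\bv)$ gives $\widetilde{\bPsi}^\phi_H([\cM_\theta(\bv,\underline{\bd})^{\sA,\phi}])=[\fM(\bv,\underline{\bd})^{\sA,\phi}]$ and the $K$-theoretic analogue. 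Once that normalization is established the rest of your argument (Hall formula plus $\mathbf k^*$) is exactly the paper's, so the only thing you are missing is to replace the restriction-to-stable-locus reasoning with the uniqueness argument.
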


\begin{proof}
Let $\Stab_+$ be the same as in Definition \ref{def of nona stab}, then we have 
\begin{align*}
\Stab_+([\cM_\theta(\mathbf v,\underline{\bd})])=[\cM_\theta(\bv,\underline{\bd}^{\bv})]
\end{align*}
since the right-hand-side satisfies the axioms of stable envelope. Replacing $\cM_\theta(\mathbf v,\underline{\bd})$ by $\cM_\theta(\mathbf v,\underline{\bd})^{\sA,\phi}$, we get
\begin{align*}
    \widetilde{\bPsi}^{\phi}_H([\cM_\theta(\mathbf v,\underline{\bd})^{\sA,\phi}])=[\fM(\bv,\underline{\bd})^{\sA,\phi}].
\end{align*}
Similar argument shows that
\begin{align*}
    \widetilde{\bPsi}^{\phi,\mathsf s'}_K([\mathcal L_\chi])=\chi\otimes \mathcal O_{\fM(\bv,\underline{\bd})^{\sA,\phi}}.
\end{align*}
Then \eqref{explicit formula PStab w=0_coh} and \eqref{explicit formula PStab w=0_k} follow from explicit formulas of cohomological \cite[Thm.~2]{CoHA} and $K$-theoretic \cite[Prop.~3.4]{P} Hall operations respectively.
\end{proof}

\subsection{The case of symmetric quiver varieties}

In this subsection, we discuss the case when $Q$ is symmetric with a symmetric framing $\bd_{\In}=\bd_{\Out}$ such that $(G\times \sA)$-action is self-dual, in other words, $\cM_\theta(\bv,\bd)=\cM_\theta(\bv,\underline{\bd})$ is a symmetric quiver variety in Definition \ref{def of sym quiver}, in particular, a symmetric GIT quotient in Definition \ref{def of sym var}. 


\begin{Proposition}\label{prop: compare na stab_sym quiver}
Suppose that $Q$ is symmetric with a symmetric framing $\bd_{\In}=\bd_{\Out}$ such that $(G\times \sA)$-action is self-dual. Then $\widetilde{\bPsi}_H=\bPsi_H$ where the latter is the cohomological nonabelian stable envelope in Definition \ref{na stab coh_sym quot}.  

For a generic slope $\mathsf s\in \mathrm{Char}(G)\otimes_\bZ\bR$, we have $\widetilde{\bPsi}^{\mathsf s}_K={\bPsi}^{\mathsf s}_K$, where the latter is the $K$-theoretic nonabelian stable envelope in Definition \ref{def of na stab k_window}.
\end{Proposition}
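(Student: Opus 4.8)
The plan is to prove that the interpolation map $\widetilde{\bPsi}_H$ (resp.\ $\widetilde{\bPsi}^{\mathsf s}_K$) satisfies the characterizing properties of the nonabelian stable envelope $\bPsi_H$ from Theorem \ref{thm: deg bound na stab coh} (resp.\ $\bPsi^{\mathsf s}_K$ from Theorem \ref{thm: deg bound na stab k}), and then invoke uniqueness. By Theorem \ref{thm: deg bound na stab coh}, it suffices to check two things about $\widetilde{\bPsi}_H$: first, that $\mathrm{res}\circ\widetilde{\bPsi}_H=\id$, where $\mathrm{res}$ is restriction from the stack $\fM(\bv,\bd)=[R(\bv,\bd)/G]$ to the stable locus $\cM_\theta(\bv,\bd)$; and second, that for every nonzero cocharacter $\sigma\colon\bC^*\to G$ the degree bound $\deg_\sigma \mathbf j_\sigma^*\widetilde{\bPsi}_H(\gamma)<\tfrac12(\dim\fM-\dim\fM^\sigma)$ holds for all $\gamma$. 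The first point is exactly Lemma \ref{lem:res of na stab to stable locus}, which is already established in the general (not necessarily symmetric) setting, so that requires no new work.

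For the degree bound, the plan is to unwind the definition of $\widetilde{\bPsi}_H$ as the composition $j^*\circ\Stab_+\circ(1\otimes-)$, where $\Stab_+$ is the abelian stable envelope for $(\cM_\theta(\bv,\underline\bd^{\bv}),\sw',G'\times\sT,\bC^*,+)$ with $\bC^*\subseteq G'$ acting with weight $-1$ on the auxiliary framing spaces $V'_i$, and $j\colon\mathring\cM_\theta(\bv,\underline\bd^{\bv})\hookrightarrow\cM_\theta(\bv,\underline\bd^{\bv})$ is the open inclusion. The key observation is that the cocharacter $\sigma\colon\bC^*\to G$ acts on $\mathring\cM_\theta(\bv,\underline\bd^{\bv})\cong R(\bv,\underline\bd)$ via the identification \eqref{equ on mvcongr} (under which $G$ becomes the flavour torus $G'$ acting on the $V'_i$), so $\mathbf j_\sigma^*\widetilde{\bPsi}_H(\gamma)$ can be computed by first restricting $\Stab_+(1\otimes\gamma)$ to the $(\bC^*\times\sigma(\bC^*))$-fixed sublocus of $\cM_\theta(\bv,\underline\bd^{\bv})$ and then reading off the $\sigma$-degree. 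Here the triangle lemma for abelian stable envelopes (which holds unconditionally in the one-dimensional case by restricting to the appropriate face, or by Lemma \ref{triangle lemma for K} in $K$-theory) lets us factor the restriction of $\Stab_+$ through the stable envelope for the larger torus $\bC^*\times\sigma(\bC^*)$, and the axiom (iii) degree bound of that stable envelope, combined with a bookkeeping of the normal weights of $\cM_\theta(\bv,\underline\bd^{\bv})$ in the directions of $\bC^*$ and $\sigma$, yields precisely the inequality $\deg_\sigma\mathbf j_\sigma^*\widetilde{\bPsi}_H(\gamma)<\tfrac12(\dim\fM-\dim\fM^\sigma)$. The symmetry hypothesis on $Q$ enters exactly at this point: it forces $R(\bv,\underline\bd)$ to be $(G\times\sA)$-self-dual, so that the moving weights of the normal bundle pair up and the half-dimension count matches the $\tfrac12(\dim\fM-\dim\fM^\sigma)$ on the nose rather than merely bounding it.

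The main obstacle I expect is the careful comparison of degrees across the three stable-envelope constructions and the identification of $\tfrac12(\dim\fM-\dim\fM^\sigma)$ with the appropriate combination of ranks of attracting bundles on $\cM_\theta(\bv,\underline\bd^{\bv})$. Concretely, one must show that restricting $\Stab_+(1\otimes\gamma)$ to $\mathring\cM_\theta(\bv,\underline\bd^{\bv})^\sigma$ and then to $\sigma$-fixed loci does not see contributions from the $V'_i$-directions in a way that would spoil the bound, using that the leading term of $\Stab_+$ along the diagonal is $e(N^-)$ and the off-diagonal degrees are strictly smaller; this is a localization computation of the same flavour as the proof of Theorem \ref{thm: hall coh_sym quot}, but now run on the auxiliary variety. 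The $K$-theoretic version follows the identical outline with $\deg_\sigma$ replaced by Newton polytopes, the slope $\mathsf s$ carried along through $\Stab^{\mathsf s}_+$, and one additional check that the window-subcategory normalization of $\bPsi^{\mathsf s}_K$ matches the slope-$\mathsf s$ normalization coming from $\Stab^{\mathsf s}_+$ — which is consistent because both are pinned down by the same degree/Newton-polytope bound of Theorem \ref{thm: deg bound na stab k}. Once both bounds are verified, uniqueness in Theorems \ref{thm: deg bound na stab coh} and \ref{thm: deg bound na stab k} immediately gives $\widetilde{\bPsi}_H=\bPsi_H$ and $\widetilde{\bPsi}^{\mathsf s}_K=\bPsi^{\mathsf s}_K$.
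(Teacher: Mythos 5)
Your proposal is correct in spirit, and for the $K$-theoretic half it essentially matches the paper: the paper establishes a characterization of $\widetilde{\bPsi}^{\mathsf s}_K$ by degree bounds on KN-strata cocharacters (Proposition \ref{prop:Psi}, proved in the appendix by exactly the kind of larger-torus, triangle-lemma localization argument you sketch) and then matches that degree condition term-by-term with the one in Theorem \ref{thm: deg bound na stab k}. One caveat: the uniqueness in both Theorem \ref{thm: deg bound na stab coh} and the $K$-theoretic analogue only requires the degree bound on the finitely many cocharacters $\sigma_i$ from the KN stratification, so you need not (and should not try to) verify the bound for \emph{all} nonzero cocharacters $\sigma\colon\bC^*\to G$; checking all of them would be needlessly painful and, for arbitrary $\sigma$, the identification of $\tfrac12(\dim\fM-\dim\fM^\sigma)$ with attracting-bundle ranks on the auxiliary variety is not as clean.

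For the cohomological half the paper takes a genuinely different and shorter route which you should be aware of: instead of verifying the degree bound at all, it uses Theorem \ref{thm:attr plus repl} to show that $\Stab_+$ on $\cM_\theta(\bv,\underline\bd^{\bv})$ is literally the correspondence $\bigl[\overline{\Attr}_+(\Delta_{\cM_\theta(\bv,\bd)})\bigr]$, and then traces the convolution $j^*\circ\Stab_+\circ(1\otimes-)$ through the identification $\mathring\cM_\theta(\bv,\underline\bd^{\bv})\cong R(\bv,\bd)$ to conclude that $\widetilde{\bPsi}_H$ is induced by $\bigl[\overline{\Delta_{\cM_\theta(\bv,\bd)}}\bigr]\subseteq\fM(\bv,\bd)\times\cM_\theta(\bv,\bd)$, which is exactly the defining correspondence of $\bPsi_H$ in Definition \ref{na stab coh_sym quot}. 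This identification of correspondences entirely sidesteps the degree bookkeeping and uniqueness argument. Your approach would also work, but note one hidden subtlety: the cohomological triangle lemma that you invoke to restrict $\Stab_+$ to the $(\bC^*\times\sigma(\bC^*))$-fixed locus is \emph{not} automatic in cohomology (the paper explicitly warns against this after Lemma \ref{triangle lemma for K}); you would have to justify it via Theorem \ref{thm:attr plus repl}, which establishes the triangle lemma on the asymmetric auxiliary variety precisely because $\cM_\theta(\bv,\bv+\bd)=\mathrm{Tot}(E)$ over it is a symmetric quiver variety. Once you appeal to Theorem \ref{thm:attr plus repl} anyway, the direct correspondence identification is available and is the more economical path.
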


\begin{proof}
It is easy to see that the symmetric quiver variety $\cM_\theta(\mathbf v,\mathbf v+\mathbf d)$ is isomorphic to the total space of a vector bundle $E$ on $\cM_\theta(\mathbf v,\underline{\bd}^\bv)$ \footnote{This follows from the fact that $\theta_i<0$ for all $i\in Q_0$. For other generic stability condition $\zeta\in \bQ^{Q_0}$, $\mathrm{Tot}(E)$ is isomorphic to an open subscheme of $\cM_\zeta(\mathbf v,\mathbf v+\mathbf d)$, where $E=\bigoplus_{\zeta_i<0}\Hom(V_i,V'_i)\oplus \bigoplus_{\zeta_i>0}\Hom(V'_i,V_i)$.}, where $E=\bigoplus_{i\in Q_0}\Hom(V_i,V'_i)$. $E$ is repelling with respect to the positive chamber. Then according to Theorem \ref{thm:attr plus repl}, $\Stab_+$ is induced by the correspondence 
$$\left[\overline{\Attr}_+\left(\Delta_{\cM_\theta(\mathbf v,\mathbf d)}\right)\right]\subseteq  \cM_\theta(\mathbf v,\underline{\bd}^\bv)\times \cM_\theta(\mathbf v,\mathbf d).$$ 
So $\widetilde{\bPsi}_H$ is induced by the correspondence given by the irreducible closed substack $$\left[\left(\overline{\Attr}_+\left(\Delta_{\cM_\theta(\mathbf v,\mathbf d)}\right)\bigcap\left(\mathring\cM_\theta(\mathbf v,\underline{\bd}^\bv)\times \cM_\theta(\mathbf v,\mathbf d)\right)\right)/G'\right]\subseteq  \mathfrak{M}(\bv,\bd)\times \cM_\theta(\mathbf v,\mathbf d).$$
Using an equality similar as \eqref{equ on qMattr}, we deduce that  
\begin{equation}\label{equ attplusDelta}\left[\left({\Attr}_+\left(\Delta_{\cM_\theta(\mathbf v,\mathbf d)}\right)\bigcap\left(\mathring\cM_\theta(\mathbf v,\underline{\bd}^\bv)\times \cM_\theta(\mathbf v,\mathbf d)\right)\right)/G'\right]=\Delta_{\cM_\theta(\mathbf v,\mathbf d)}\subseteq  \cM_\theta(\mathbf v,\mathbf d)\times \cM_\theta(\mathbf v,\mathbf d). \end{equation}
Therefore $\widetilde{\bPsi}_H$ is induced by the correspondence given by their closure, i.e. $$\overline{\Delta_{\cM_\theta(\mathbf v,\mathbf d)}}\subseteq  \mathfrak{M}(\bv,\bd)\times \cM_\theta(\mathbf v,\mathbf d).$$
This shows that $\widetilde{\bPsi}_H=\bPsi_H$. The $K$-theoretic version $\widetilde{\bPsi}^{\mathsf s}_K={\bPsi}^{\mathsf s}_K$ follows from Theorem \ref{thm: deg bound na stab k} and Proposition \ref{prop:Psi} below and a direct calculation showing that 
\begin{equation*}\frac{1}{2}\deg_{\sigma_i} e^G_K(R(\bv,\bd)-\Lie(G))+\wt_{\sigma_i}\mathsf s=\deg_{\sigma_i} e^{G}_K\left(N_{S_i/R(\mathbf v,\mathbf d)}\right)+\wt_{\sigma_i}\left(\mathsf s\otimes \left(\det N_{S_i/R(\mathbf v,\mathbf d)}\right)^{1/2}\right). \qedhere  \end{equation*}
\end{proof}


Recall that the unstable locus 
$$R(\mathbf v,\mathbf d)^{\theta\emph{-}u}:=R(\mathbf v,\mathbf d)\setminus R(\mathbf v,\mathbf d)^{\theta\emph{-}ss}$$ admits equivariant Kempf-Ness (KN) stratification by connected locally-closed subvarieties \cite{Kir,DH}. 
The KN stratification is constructed iteratively by selecting a pair $(\sigma_i\in \cochar(G),Z_i:=R(\mathbf v,\mathbf d)^{\sigma_i})$ which maximizes the numerical invariant
\begin{align}
\mu(\sigma):=\frac{(\sigma,\theta)}{|\sigma|}
\end{align}
among those $(\sigma,Z)$ for which $Z$ is not contained in the union of the previously defined strata. Here $|\cdot|$ is a fixed conjugation-invariant norm on the cocharacters of $G$. For our purpose, we fix the norm square $|\sigma|^2$ to be
\begin{align}
    |\sigma|^2:=\sum_{i\in Q_0}|\theta_i| \tr(\sigma_i^2),
\end{align}
where $\sigma_i:\bC^*\to \GL(\mathbf v_i)$ is the $i$-th component of $\sigma$, which is assumed without loss of generality to be diagonal, i.e.
\begin{align*}
    \sigma_i(t)=\diag(t^{\sigma_{i,1}},\cdots,t^{\sigma_{i,\mathbf v_i}}),
\end{align*}
and $\tr(\sigma_i^2)$ is a short-hand notation for $\sum_{j=1}^{\mathbf v_i}\sigma_{i,j}^2$. One defines the open subvariety $Z_i^*\subseteq  Z_i$ to consist of those points not lying on previously defined strata, and $$Y_i:=\Attr_{\sigma_i}(Z_i^*).$$ Then define the new strata $$S_i=G\cdot Y_i.$$

\begin{Proposition}\label{prop:Psi}
Suppose that $Q$ is symmetric with a symmetric framing $\bd_{\In}=\bd_{\Out}$ such that $(G\times \sA)$-action is self-dual. 
Then, for a generic $\mathsf s\in \mathrm{Char}(G)\otimes_\bZ\bR$, $\widetilde{\bPsi}^{\mathsf s}_K$ is the unique $K_\sT(\pt)$-linear map from $K^\sT(\cM_\theta(\mathbf v,\mathbf d),\sw)$ to $K^\sT(\mathfrak{M}(\mathbf v,\mathbf d),\sw)$ which satisfies the following two properties
\begin{itemize}
    \item $\mathbf k^*\circ \widetilde{\bPsi}^{\mathsf s}_K=\id$, where $\mathbf k:\cM_\theta(\bv,\underline{\bd})\hookrightarrow \fM(\bv,\underline{\bd})$ is the open immersion of stable locus,
    \item there is a strict inclusion of intervals
    \begin{align}\label{deg bound on KN strata_K}
        \deg_{\sigma_i} {\mathbf i}_{\sigma_i}^*\widetilde{\bPsi}^{\mathsf s}_K(\gamma)\subsetneq \deg_{\sigma_i} e^{G}_K\left(N_{S_i/R(\mathbf v,\mathbf d)}\right)+\wt_{\sigma_i}\left(\mathsf s\otimes \left(\det N_{S_i/R(\mathbf v,\mathbf d)}\right)^{1/2}\right)
    \end{align}
    for all cocharacters $\sigma_i:\bC^*\to G$ that appear in the KN stratification, and all $\gamma\in K^\sT(\cM_\theta(\mathbf v,\mathbf d),\sw)$. Here $\mathbf i_{\sigma_i}:[Z^*_i/G^{\sigma_i}]\to \mathfrak{M}(\bv,\bd)$ is the natural map between quotient stacks.
\end{itemize}
\end{Proposition}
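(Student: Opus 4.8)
The plan is to prove Proposition~\ref{prop:Psi} in two stages, first establishing that $\widetilde{\bPsi}^{\mathsf s}_K$ satisfies the two listed properties, and then establishing uniqueness. For the first stage I would invoke Proposition~\ref{prop: compare na stab_sym quiver}, which identifies $\widetilde{\bPsi}^{\mathsf s}_K$ with the window-category nonabelian stable envelope $\bPsi^{\mathsf s}_K$ of Definition~\ref{def of na stab k_window}. The property $\mathbf k^*\circ\widetilde{\bPsi}^{\mathsf s}_K=\id$ is then immediate from the construction of $\bPsi^{\mathsf s}_K$ (or directly from Lemma~\ref{lem:res of na stab to stable locus}). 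For the degree bound, I would use Theorem~\ref{thm: deg bound na stab k}, which gives, for every nonzero cocharacter $\sigma\colon\bC^*\to G$, the strict inclusion
\[
\deg_\sigma \mathbf j^*_{\sigma}\bPsi^{\mathsf s}_K(\gamma)\subsetneq \tfrac{1}{2}\deg_{\sigma} e^G_K(R(\bv,\bd)-\Lie(G))+\wt_{\sigma}\mathsf s.
\]
Applying this to the cocharacters $\sigma_i$ arising in the Kempf--Ness stratification and rewriting the right-hand side via the self-duality identity
\[
\tfrac{1}{2}\deg_{\sigma_i} e^G_K(R(\bv,\bd)-\Lie(G))+\wt_{\sigma_i}\mathsf s=\deg_{\sigma_i} e^{G}_K\!\left(N_{S_i/R(\mathbf v,\mathbf d)}\right)+\wt_{\sigma_i}\!\left(\mathsf s\otimes \left(\det N_{S_i/R(\mathbf v,\mathbf d)}\right)^{1/2}\right)
\]
already noted at the end of the proof of Proposition~\ref{prop: compare na stab_sym quiver} yields \eqref{deg bound on KN strata_K}, once one identifies $\mathbf i_{\sigma_i}^*$ with $\mathbf j_{\sigma_i}^*$ followed by restriction to the open substack $[Z_i^*/G^{\sigma_i}]$ (which only shrinks the Newton polytope, preserving the strict inclusion).

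For the uniqueness stage I would argue exactly as in the uniqueness portion of the proof of Theorem~\ref{thm: deg bound na stab coh} (Section~\ref{subsubsec: uniquess of Psi_H}), transported to critical $K$-theory. Suppose $\Phi_1,\Phi_2$ both satisfy the two properties; then $\alpha:=\widetilde{\bPsi}^{\mathsf s}_K\!\bigl((\Phi_1-\Phi_2)(\gamma)\bigr)$ has zero restriction to $\cM_\theta(\bv,\bd)$, so by the excision/semiorthogonal-decomposition description it lifts to a class on the unstable locus $R(\bv,\bd)^{\theta\text{-}u}$ supported on the union of KN strata, with Newton polytope along each $\sigma_i$ strictly smaller than $\deg_{\sigma_i}e^G_K(N_{S_i/R})+\wt_{\sigma_i}(\mathsf s\otimes(\det N_{S_i/R})^{1/2})$. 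Taking a union of strata $S$ with maximal stratum $S_i$ open in $S$, writing $\alpha=(S\hookrightarrow R)_*\widetilde\alpha$ and pulling back to $S_i$ forces $\deg_{\sigma_i}(S_i\hookrightarrow R)^*\alpha$ to contain $\deg_{\sigma_i}e^G_K(N_{S_i/R})$ shifted by $\wt_{\sigma_i}(\mathsf s\otimes(\det N_{S_i/R})^{1/2})+\deg_{\sigma_i}(\widetilde\alpha|_{S_i})$, contradicting the strict inclusion unless $\widetilde\alpha|_{S_i}=0$; descending induction on strata then gives $\alpha=0$, hence $\Phi_1=\Phi_2$. The one point needing care is that genericity of $\mathsf s$ must be used to rule out the boundary-case equality of Newton polytopes (cf.\ Remark~\ref{rmk: generic slope}), and that the relevant SOD for the matrix factorization category is furnished by \cite[Thm.~3.35]{Hal} together with \cite[Prop.~2.1]{P} exactly as in Section~\ref{subsubsec: excision}.

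I expect the main obstacle to be bookkeeping rather than conceptual: making precise that $\widetilde{\bPsi}^{\mathsf s}_K$ applied to a class supported off the stable locus lands in the part of $K^\sT(\fM(\bv,\bd),\sw)$ detectable by the $\mathbf i_{\sigma_i}^*$ (i.e.\ that vanishing of all $\mathbf i_{\sigma_i}^*\alpha$ together with $\mathbf k^*\alpha$ forces $\alpha=0$), and controlling the interaction between the Newton polytope (a subset of $\mathrm{Char}(\sigma_i(\bC^*))\otimes\bR$, well-defined only up to shift) and the half-determinant shift in the slope. Both issues are handled in the cohomological analogue in Section~\ref{subsubsec: uniquess of Psi_H}; the translation to $K$-theory replaces ``polynomial degree'' by ``Newton polytope'' and ``$<$'' by ``$\subsetneq$'' throughout, with genericity of $\mathsf s$ closing the gap that the cohomological argument got for free from the strictness of the inequality \eqref{deg bound na stab coh}. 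Since all the geometric inputs (KN stratification, hyperbolic localization, excision for critical $K$-theory) are already available in the excerpt, I would present the proof as a reduction to Theorem~\ref{thm: deg bound na stab k}, Proposition~\ref{prop: compare na stab_sym quiver}, and the argument of Section~\ref{subsubsec: uniquess of Psi_H}, and omit the routine re-verification.
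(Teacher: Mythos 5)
Your proposal has a genuine circularity problem in the existence stage. You begin by invoking Proposition~\ref{prop: compare na stab_sym quiver} to identify $\widetilde{\bPsi}^{\mathsf s}_K$ with the window nonabelian stable envelope $\bPsi^{\mathsf s}_K$, and then transfer the degree bound from Theorem~\ref{thm: deg bound na stab k}. But the $K$-theoretic part of Proposition~\ref{prop: compare na stab_sym quiver} is itself proved \emph{using} Proposition~\ref{prop:Psi} (the proof text says explicitly: ``The $K$-theoretic version $\widetilde{\bPsi}^{\mathsf s}_K=\bPsi^{\mathsf s}_K$ follows from Theorem~\ref{thm: deg bound na stab k} and Proposition~\ref{prop:Psi} below''). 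The logical flow of the paper is: first establish Proposition~\ref{prop:Psi} by direct geometric analysis, then observe that $\bPsi^{\mathsf s}_K$ satisfies its characterizing conditions via Theorem~\ref{thm: deg bound na stab k} and the self-duality polytope identity, and only then conclude $\widetilde{\bPsi}^{\mathsf s}_K=\bPsi^{\mathsf s}_K$ from the uniqueness clause of Proposition~\ref{prop:Psi}. Your route inverts this dependency and therefore proves nothing.

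The paper's actual existence argument never touches $\bPsi^{\mathsf s}_K$. It works directly with the abelian stable envelope $\Stab^{\mathsf s}_+$ appearing in Definition~\ref{def of nona stab}: one first gives an explicit description of the KN strata $S_{\sigma(\bu)}$ as $\Attr_+(F_{\bu})\cap\mathring\cM_\theta(\bv,\underline{\bd}^\bv)$ (Proposition~\ref{prop:explicit KN strata}, via Lemma~\ref{lem KN strata vs attr}), then enhances to a two-dimensional torus $\sA=(\bC^*)^2$ acting through $(\sigma(\bu),\sigma(\bv))$ so that $S_{\sigma(\bu)}=\Attr_\fC(F_{\bu,\bu})\cap\mathring\cM_\theta$ (Lemma~\ref{lem:more explicit KN strata}), and finally shows the $\fC$-chamber stable envelope $\Stab^{\mathsf s}_\fC$ for this larger torus exists and agrees with $\Stab^{\mathsf s}_+$ after restriction of coefficients (Lemma~\ref{lem:larger chamber stab}). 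Then $\mathbf i_{\sigma(\bu)}^*\widetilde{\bPsi}^{\mathsf s}_K(\gamma)$ is computed as $\Stab^{\mathsf s}_\fC(1\otimes\gamma)$ restricted to that attracting set, and the degree axiom (iii) of Definition~\ref{def of stab k} applied to the two-dimensional torus immediately yields the required polytope inclusion, which one then projects to $\mathrm{cochar}(\sigma(\bu))$. None of this is available by citing earlier propositions; it is the content of the proof.

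Your uniqueness sketch is essentially correct and matches the paper's intent (the paper defers to the argument of Proposition~\ref{uniqueness of K stab}, which is parallel to the cohomological uniqueness you cite). Two small slips there: the class you want to show vanishes is $\alpha:=(\Phi_1-\Phi_2)(\gamma)$ itself, an element of $K^\sT(\fM(\bv,\bd),\sw)$ --- do not apply $\widetilde{\bPsi}^{\mathsf s}_K$ to it, as its domain is $K^\sT(\cM_\theta(\bv,\bd),\sw)$; and the genericity of $\mathsf s$ is indeed what upgrades the forced inclusion of Newton intervals to a contradiction, exactly as you note. But the uniqueness stage cannot rescue the existence stage, which as written rests on a circular citation and would need to be replaced wholesale by the explicit KN/attracting-set analysis above.
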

The proof of this proposition is postponed to the Appendix \ref{sec:proof of prop psi}. 

\begin{Corollary}\label{cor: hall compatible_sym quiver}
Suppose that $Q$ is symmetric with a symmetric framing $\bd_{\In}=\bd_{\Out}$ such that $(G\times \sA)$-action is self-dual. Then for a generic slope $\mathsf s$, we have 
$$\HallEnv_\fC=\Stab_\fC, \quad \HallEnv^{\mathsf s}_\fC=\Stab^{\mathsf s}_\fC. $$
Moreover, $\HallEnv_\fC$ and $\HallEnv^{\mathsf s}_\fC$ are compatible with Hall operations.
\end{Corollary}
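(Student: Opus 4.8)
The plan is to derive Corollary~\ref{cor: hall compatible_sym quiver} by combining three ingredients already established in the excerpt: the characterization of the $K$-theoretic nonabelian stable envelope $\bPsi^{\mathsf s}_K$ via the degree bound of Theorem~\ref{thm: deg bound na stab k} (and its cohomological analog Theorem~\ref{thm: deg bound na stab coh}), the compatibility of $\Stab$ with Hall operations in Theorems~\ref{thm: hall coh_sym quot} and \ref{thm: hall k_sym quot}, and the identification of the interpolation maps with nonabelian stable envelopes in Proposition~\ref{prop: compare na stab_sym quiver} (together with Proposition~\ref{prop:Psi}). The first step is to observe that, under the hypothesis that $Q$ is symmetric with symmetric framing $\bd_{\In}=\bd_{\Out}$ and self-dual $(G\times\sA)$-action, the quiver variety $\cM_\theta(\bv,\bd)$ is a symmetric GIT quotient in the sense of Definition~\ref{def of sym var}, so $\cM_\theta(\bv,\bd)^{\sA,\phi}$ is again a symmetric GIT quotient (by Lemma~\ref{fix pts of sym quiv var} or its GIT-quotient analog), and all the results proved for symmetric GIT quotients in Sections~\ref{app on symm var} and \ref{app on k symm var} apply both to $\cM_\theta(\bv,\bd)$ and to each fixed component.

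Next, by Proposition~\ref{prop: compare na stab_sym quiver} we have $\widetilde{\bPsi}_H=\bPsi_H$ and $\widetilde{\bPsi}^{\mathsf s}_K=\bPsi^{\mathsf s}_K$ for generic $\mathsf s$, and these equalities hold equally for the fixed-component stacks $\fM(\bv,\bd)^{\sA,\phi}$, so that $\widetilde{\bPsi}_H^\phi=\bPsi_H^\phi$ and $\widetilde{\bPsi}_K^{\phi,\mathsf s'}=\bPsi_K^{\phi,\mathsf s'}$ (where the slope $\mathsf s'$ of \eqref{s' for prestab} matches exactly the slope appearing in Theorems~\ref{thm: hall coh_sym quot} and \ref{thm: hall k_sym quot} by the self-duality identity $T\fM(\bv,\bd)^{\sA,\phi\text{-repl}}\cong \det(\cdots)$ already used there). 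Then unwinding the definitions: by Definition~\ref{defi of prestab},
\begin{align*}
\HallEnv^{\mathsf s}_\fC\big|_{K^\sT(\cM_\theta(\bv,\bd)^{\sA,\phi},\sw)} = \mathbf k^*\circ\mathfrak{m}^\phi_{\fC}\circ\widetilde{\bPsi}_K^{\phi,\mathsf s'}
= \mathbf k^*\circ\mathfrak{m}^\phi_{\fC}\circ\bPsi_K^{\phi,\mathsf s'},
\end{align*}
and the right-hand side equals $\Stab^{\mathsf s}_\fC$ by Theorem~\ref{thm: hall k_sym quot} (which asserts precisely that $\mathrm{res}\circ\mathfrak{m}^\phi_{\fC}\circ\bPsi_K^{\phi,\mathsf s'}$ is the $K$-theoretic stable envelope, noting $\mathbf k^*=\mathrm{res}$). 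Summing over the fixed components $\phi/\!\!\sim$ as in \eqref{fix pt decomp} gives $\HallEnv^{\mathsf s}_\fC=\Stab^{\mathsf s}_\fC$; the cohomological statement $\HallEnv_\fC=\Stab_\fC$ is identical, using Theorem~\ref{thm: hall coh_sym quot} in place of Theorem~\ref{thm: hall k_sym quot}.

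For the ``moreover'' part, the compatibility of $\HallEnv_\fC$ (resp. $\HallEnv^{\mathsf s}_\fC$) with the Hall operation is, by Definition~\ref{defi of prestab}, the statement that the square with vertical arrows $\widetilde{\bPsi}_H^\phi=\bPsi_H^\phi$ and $\widetilde{\bPsi}_H=\bPsi_H$ (resp. their $K$-theoretic versions) and horizontal arrows $\mathfrak{m}^\phi_{\fC}$ and $\HallEnv_\fC=\Stab_\fC$ commutes --- but this is exactly the content of the commutative diagram \eqref{diag on hall coh_sym quot} of Theorem~\ref{thm: hall coh_sym quot} (resp. \eqref{diag on hall k_sym quot} of Theorem~\ref{thm: hall k_sym quot}) once we substitute $\HallEnv=\Stab$. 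I expect no genuine obstacle here: the real work has already been done in the earlier sections, and the only points requiring care are (a) confirming that the slope $\mathsf s'$ in Definition~\ref{defi of prestab} agrees with the slope in Theorems~\ref{thm: hall coh_sym quot}--\ref{thm: hall k_sym quot} via the self-duality of the tangent bundle, and (b) checking genericity of $\mathsf s$ is preserved when passing to fixed components so that Propositions~\ref{prop: compare na stab_sym quiver} and \ref{prop:Psi} apply on each $\fM(\bv,\bd)^{\sA,\phi}$. Both are routine bookkeeping with the weight decompositions.
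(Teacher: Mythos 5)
Your proof is correct and follows the paper's intended approach exactly: the paper's proof of Corollary~\ref{cor: hall compatible_sym quiver} is precisely the one-line citation of Proposition~\ref{prop: compare na stab_sym quiver} together with Theorems~\ref{thm: hall coh_sym quot} and \ref{thm: hall k_sym quot}, and your write-up simply fills in the unwinding of Definition~\ref{defi of prestab} and the slope/fixed-component bookkeeping that the paper leaves implicit.
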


\begin{proof}
This follows from Proposition \ref{prop: compare na stab_sym quiver}, Theorem \ref{thm: hall coh_sym quot} and Theorem \ref{thm: hall k_sym quot}. 
\end{proof}

In the below, we simplify the notations by setting $\widetilde{\bPsi}_H=\bPsi_H$ and $\widetilde{\bPsi}^{\mathsf s}_K={\bPsi}^{\mathsf s}_K$ for symmetric quiver $Q$ with a symmetric framing $\bd_{\In}=\bd_{\Out}$ such that $(G\times \sA)$-action is self-dual.

\subsection{Connection to BPS cohomology}\label{sect on bps coh}
In this section, we give a description of cohomological nonabelian stable envelopes on symmetric quiver varieties 
via BPS cohomology of Davison-Meinhardt \cite{DM}. 
This will be used to confirm the prediction of Botta-Davison on `critical stable envelopes' \cite[\S 1.2]{BD} (ref.~Remark \ref{rmk on boda}). 

Consider the Jordan-H\"older morphism 
$$\JH:\fM(\bv,\bd)\to \cM_0(\bv,\bd)$$ sending a representation to the direct sum of the subquotients (simple modules) appearing in its Jordan–H\"older filtration. Let 
$$\IC_{\fM(\bv,\bd)}:=\bQ_{\fM(\bv,\bd)}[\dim \fM(\bv,\bd)]$$ 
be the constant perverse sheaf on $\fM(\bv,\bd)$. It is shown in \cite[Thm.~A]{DM} that the negative degree perverse cohomology of $\JH_*\varphi_\sw \IC_{\fM(\bv,\bd)}$ vanishes, i.e.  
\begin{align*}
    \JH_*\varphi_\sw \IC_{\fM(\bv,\bd)}\in \prescript{p}{}{\D}_c^{\geqslant0}(\cM_0(\bv,\bd)).
\end{align*}
We note that the framed quiver in this paper is related to the unframed quiver considered in \cite{DM} by Crawley-Boevey's procedure \cite{CB}, which results in a $\bC^*$-gerbe 
$$\fM(\bv,\bd)\to [\fM(\bv,\bd)/\bC^*], $$ 
and the original statement of \cite[Thm.~A]{DM} is that 
$$\JH_*\varphi_\sw \IC_{[\fM(\bv,\bd)/\bC^*]}\in \prescript{p}{}{\D}_c^{\geqslant 1}(\cM_0(\bv,\bd)). $$
For a $\sT$-invariant function $\sw$ on $\fM(\bv,\bd)$, one defines the \textit{BPS sheaf}:
\begin{align*}
    \mathcal{BPS}_{\sw}:=\prescript{p}{}{\cH}^0\left(\JH_*\varphi_\sw \IC_{\fM(\bv,\bd)}\right)\in \Perv(\cM_0(\bv,\bd)).
\end{align*}
Note that $\mathcal{BPS}_{\sw}$ is contructed $\sT$-equivariantly, and we define the \textit{BPS cohomology}:
\begin{align*}
H^\sT(\fM(\bv,\bd),\sw)_{\mathrm{BPS}}:=H_\sT(\cM_0(\bv,\bd),\mathcal{BPS}_{\sw}[\dim \fM(\bv,\bd)]).
\end{align*}
We note that $\omega_{\fM(\bv,\bd)}\cong \IC_{\fM(\bv,\bd)}[\dim \fM(\bv,\bd)]$, so there exists a natural $H_\sT(\pt)$-linear map  
\begin{align}\label{BPS embed into total}
    \iota\colon H^\sT(\fM(\bv,\bd),\sw)_{\mathrm{BPS}}\to H^\sT(\fM(\bv,\bd),\sw).
\end{align}
By \cite[Thm.~C]{DM}, the map \eqref{BPS embed into total} is injective.

Consider the Jordan-H\"older morphism 
$$\JH^\theta:\cM_\theta(\bv,\bd)\to \cM_0(\bv,\bd)$$ 
sending a $\theta$-semistable representation to the direct sum of the subquotients (simple modules) appearing in its Jordan–H\"older filtration. It is well-known that $\JH^\theta$ is a proper morphism, and fits into the  commutative diagram
\begin{equation*}
\xymatrix{
\cM_\theta(\bv,\bd) \ar@{^{(}->}[rr]^{j} \ar[dr]_{\JH^\theta} & & \fM(\bv,\bd) \ar[dl]^{\JH}\\
& \cM_0(\bv,\bd), &
}
\end{equation*}
where $j$ is the open immersion of $\theta$-semistable locus. Let 
$$\IC_{\cM_\theta(\bv,\bd)}:=\bQ_{\cM_\theta(\bv,\bd)}[\dim \cM_\theta(\bv,\bd)]$$ be the constant perverse sheaf on $\cM_\theta(\bv,\bd)$. Since $\JH^\theta$ is proper and semismall \cite[Lem.~4.4]{Toda2}\,\footnote{It can be shown that $\JH^\theta$ is small, see \cite[Thm.~1.1]{COZZ3}.}, $\JH^\theta_*\IC_{\cM_\theta(\bv,\bd)}$ is a perverse sheaf on $\cM_0(\bv,\bd)$; thus 
$$\JH^\theta_*\varphi_\sw\IC_{\cM_\theta(\bv,\bd)}\cong \varphi_\sw\JH^\theta_*\IC_{\cM_\theta(\bv,\bd)}$$ 
is also a perverse sheaf. The natural map $\varphi_\sw\IC_{\fM(\bv,\bd)}\to j_*\varphi_\sw\IC_{\cM_\theta(\bv,\bd)}$ induces a map $\JH_*\varphi_\sw \IC_{\fM(\bv,\bd)}\to \JH^\theta_*\varphi_\sw\IC_{\cM_\theta(\bv,\bd)}$. Truncation of the above map to the perverse degree $\leqslant0$ part gives a map:
\begin{align}\label{map Theta}
    \Theta\colon \mathcal{BPS}_\sw\to \JH^\theta_*\varphi_\sw\IC_{\cM_\theta(\bv,\bd)}.
\end{align}
According to \cite[Lem.~4.7]{Toda2}, the map $\Theta$ in \eqref{map Theta} is an isomorphism. We still use $\Theta$ to denote the map induced by taking hypercohomologies on domain and codomain of \eqref{map Theta}. Define
\begin{align}\label{bps coho emb}
    \jmath:=\iota\circ \Theta^{-1}\colon H^\sT(\cM_\theta(\bv,\bd),\sw)\to H^\sT(\fM(\bv,\bd),\sw).
\end{align}

\begin{Proposition}\label{prop:na stab and bps coho}
We have $\jmath=\mathbf\Psi_H$. In particular, $\mathbf\Psi_H\colon H^\sT(\cM_\theta(\bv,\bd),\sw)\to H^\sT(\fM(\bv,\bd),\sw)$ induces an isomorphism $H^\sT(\cM_\theta(\bv,\bd),\sw)\cong H^\sT(\fM(\bv,\bd),\sw)_{\mathrm{BPS}}$.
\end{Proposition}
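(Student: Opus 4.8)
The statement to prove is $\jmath = \mathbf\Psi_H$ as maps $H^\sT(\cM_\theta(\bv,\bd),\sw)\to H^\sT(\fM(\bv,\bd),\sw)$, where $\jmath=\iota\circ\Theta^{-1}$ is the composition through BPS cohomology built in \eqref{bps coho emb}, and $\mathbf\Psi_H=\widetilde{\bPsi}_H=\bPsi_H$ is the cohomological nonabelian stable envelope. The plan is to exploit the uniqueness characterization of $\bPsi_H$ established in Theorem \ref{thm: deg bound na stab coh}: it suffices to verify that $\jmath$ is $H_\sT(\pt)$-linear, that $\mathrm{res}\circ\jmath=\id$ (where $\mathrm{res}\colon H^\sT(\fM(\bv,\bd),\sw)\to H^\sT(\cM_\theta(\bv,\bd),\sw)$ is restriction to the $\theta$-semistable open substack), and that $\jmath$ satisfies the degree bound \eqref{deg bound na stab coh_to show} for every nonzero cocharacter $\sigma\colon\bC^*\to G$. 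Linearity is immediate since all the sheaf-theoretic maps involved ($\iota$, $\Theta$, the truncation $\Theta$ of \eqref{map Theta}) are $H_\sT(\pt)$-linear. For $\mathrm{res}\circ\jmath=\id$: restricting to the stable locus $\cM_\theta(\bv,\bd)$, the BPS sheaf $\mathcal{BPS}_\sw$ restricts via $\JH^\theta$ to $\varphi_\sw\IC_{\cM_\theta(\bv,\bd)}$ (this is exactly the content of the isomorphism $\Theta$ of \eqref{map Theta} localized over the stable locus, where $\JH^\theta$ is an isomorphism onto its image in that range), and the map $\iota$ of \eqref{BPS embed into total} restricts to the identity on $H^\sT(\cM_\theta(\bv,\bd),\sw)$; chasing the commutative diagram relating $j$, $\JH$, $\JH^\theta$ and the truncation gives $\mathrm{res}\circ\iota\circ\Theta^{-1}=\id$.

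The substantive step is the degree bound. First I would reduce, as in the proof of Theorem \ref{thm: deg bound na stab coh}, to checking on the abelian quotient $\widehat{\fM}=[R(\bv,\bd)/H]$ and transferring back via the $\eta$-operator (the $G/H\times G/H$-bundle averaging map), since that reduction is purely formal and applies verbatim to any map satisfying $\mathrm{res}=\id$. On the abelian side, the claim becomes a statement about the support and weight behaviour of the BPS sheaf restricted to $\sigma$-fixed loci. Concretely, the key geometric input is that the BPS sheaf $\mathcal{BPS}_\sw$, being a subquotient of $\JH_*\varphi_\sw\IC_{\fM(\bv,\bd)}$, is controlled by the dimensions of the $\JH$-fibres, and that along a $\sigma$-fixed stratum the $\sigma$-equivariant weight of $\jmath(\gamma)$ is bounded by half the codimension of the fixed locus minus a positivity correction coming from the fact that $\mathcal{BPS}_\sw$ lives in perverse degree exactly $0$ (the cohomological amplitude estimate of \cite[Thm.~A]{DM}). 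I would phrase this as: for every $\sigma$, the restriction $\mathbf j_\sigma^*\jmath(\gamma)$ is supported on $\fM(\bv,\bd)^{\theta\text{-}ss}$-related loci with the required dimension drop, which forces the $\deg_\sigma$ inequality \eqref{deg bound na stab coh_to show}.

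An alternative, and perhaps cleaner, route avoiding a direct attack on the degree bound: show $\jmath$ is induced by the correspondence $[\overline{\Delta_{\cM_\theta(\bv,\bd)}}]\in H^\sT(\fM(\bv,\bd)\times\cM_\theta(\bv,\bd),\sw\boxminus\sw)$, i.e.\ the closure of the diagonal, which is precisely the correspondence defining $\bPsi_H$ in Definition \ref{def on nonab stab}. This would follow from comparing the Davison--Meinhardt factorization of $\JH_*\varphi_\sw\IC$ with the geometry of $\overline{\mathrm{Graph}(q)}$: both the BPS-sheaf embedding $\iota$ and the diagonal correspondence restrict to the identity over the stable locus and are ``smallest possible'' extensions, and an irreducibility/support argument for $\overline{\Delta_{\cM_\theta(\bv,\bd)}}$ (using that it is the closure of an irreducible locally closed substack) pins down the extension uniquely once one knows it lands in the image of $\iota$. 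The main obstacle I anticipate is precisely this last point: proving that the class $\bPsi_H(\gamma)=\mathbf\Psi_H(\gamma)$ actually lies in the BPS cohomology subspace $H^\sT(\fM(\bv,\bd),\sw)_{\mathrm{BPS}}\subseteq H^\sT(\fM(\bv,\bd),\sw)$ — this is the nontrivial compatibility between the degree-bound characterization and the perverse-degree-$0$ characterization, and it will require carefully matching the cohomological support estimate of \cite[Thm.~A]{DM} with the Kempf--Ness stratification estimates used in Theorem \ref{thm: deg bound na stab coh}. Once that containment is known, the isomorphism $H^\sT(\cM_\theta(\bv,\bd),\sw)\cong H^\sT(\fM(\bv,\bd),\sw)_{\mathrm{BPS}}$ follows formally from $\Theta$ being an isomorphism (\cite[Lem.~4.7]{Toda2}) and $\iota$ being injective (\cite[Thm.~C]{DM}).
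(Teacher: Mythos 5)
Your proposal correctly locates the structure of the problem and the key difficulty, but stops short of resolving it. In Route~1 you propose to verify the degree bound~\eqref{deg bound na stab coh_to show} for $\jmath$ directly, invoking dimension/support estimates for the BPS sheaf along $\sigma$-fixed loci; that step is stated only at a heuristic level and is not carried out --- the degree bound in Theorem~\ref{thm: deg bound na stab coh} was proved by an isotropic/Lagrangian dimension estimate that has no obvious translation into a perverse-amplitude statement, so this route is not a quick check. In Route~2 you explicitly flag the real obstacle: showing that $\mathbf\Psi_H(\gamma)$ lands inside $H^\sT(\fM(\bv,\bd),\sw)_{\mathrm{BPS}}\subseteq H^\sT(\fM(\bv,\bd),\sw)$. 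You leave that open, predicting it will require ``carefully matching'' the Kempf--Ness degree bounds with the perverse-degree-zero condition.

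The paper resolves that obstacle differently, and more cleanly. Rather than comparing the two characterizations, it applies the convolution-algebra identification \cite[(8.6.4)]{CG} (after a Thom--Sebastiani shuffle) to realize the class $[\overline{\Delta_{\cM_\theta(\bv,\bd)}}]$ as an element of
\begin{equation*}
\Ext^{\bullet}_{\D^b_\sT(\cM_0(\bv,\bd))}\left(\JH^\theta_*\varphi_\sw\omega_{\cM_\theta(\bv,\bd)},\, \JH_*\varphi_\sw\omega_{\fM(\bv,\bd)}\right),
\end{equation*}
i.e.\ as a genuine morphism in $\D^b_\sT(\cM_0(\bv,\bd))$. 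Once the correspondence is a morphism at the sheaf level, the containment you were worried about is automatic: the source $\JH^\theta_*\varphi_\sw\omega_{\cM_\theta(\bv,\bd)}$ sits in a single perverse degree (since $\JH^\theta$ is proper and semismall), while the target $\JH_*\varphi_\sw\omega_{\fM(\bv,\bd)}$ sits in perverse degrees $\geq -\dim\cM_\theta(\bv,\bd)$ by \cite[Thm.~A]{DM}; hence the morphism necessarily factors through the lowest-degree truncation $\mathcal{BPS}_\sw[\dim\cM_\theta(\bv,\bd)]$. Composing with the restriction to the stable locus then yields the diagonal correspondence, i.e.\ the identity, which forces $\Theta\circ[\overline{\Delta_{\cM_\theta(\bv,\bd)}}]' = \id$ and hence $\jmath=\mathbf\Psi_H$. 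This derived-category lift of the correspondence is the step missing from your proposal; without it, neither of your two routes reaches a proof.
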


\begin{proof}
According to the proof of Proposition \ref{prop: compare na stab_sym quiver}, $\mathbf\Psi_H$ is induced by the correspondence $[\overline{\Delta_{\cM_\theta(\mathbf v,\mathbf d)}}]$, 
where $$\overline{\Delta_{\cM_\theta(\mathbf v,\mathbf d)}}\subseteq  \mathfrak{M}(\bv,\bd)\times \cM_\theta(\mathbf v,\mathbf d)$$ is the closure of the diagonal of the $\theta$-semistable locus $\cM_\theta(\bv,\bd)\times \cM_\theta(\mathbf v,\mathbf d)$. Precisely, we regard 
\begin{align*}
[\overline{\Delta_{\cM_\theta(\mathbf v,\mathbf d)}}]&\in H^\sT_{2\dim \cM_\theta(\mathbf v,\mathbf d)}(\mathfrak{M}(\bv,\bd)\times \cM_\theta(\mathbf v,\mathbf d),\sw\boxminus\sw)_{\mathfrak{M}(\bv,\bd)\underset{\cM_0(\mathbf v,\mathbf d)}{\times} \cM_\theta(\mathbf v,\mathbf d)}\text{ \tiny via the canonical map \eqref{can with supp_coh}}\\
\text{\tiny Thom-Sebastiani Thm.}&=H_\sT^{-2\dim \cM_\theta(\mathbf v,\mathbf d)}\left(\mathfrak{M}(\bv,\bd)\underset{\cM_0(\mathbf v,\mathbf d)}{\times} \cM_\theta(\mathbf v,\mathbf d), i^!(\varphi_{\sw}\omega_{\mathfrak{M}(\bv,\bd)}\boxtimes \varphi_{-\sw}\omega_{\cM_\theta(\bv,\bd)})\right),
\end{align*}
where $i:\mathfrak{M}(\bv,\bd)\times_{\cM_0(\mathbf v,\mathbf d)} \cM_\theta(\mathbf v,\mathbf d)\hookrightarrow \mathfrak{M}(\bv,\bd)\times\cM_\theta(\mathbf v,\mathbf d)$ is the natural closed immersion. 

According to \cite[(8.6.4)]{CG}, we have an isomorphism
\begin{align*}
H_\sT^{\bullet}\left(\mathfrak{M}(\bv,\bd)\underset{\cM_0(\mathbf v,\mathbf d)}{\times} \cM_\theta(\mathbf v,\mathbf d), i^!(\varphi_{\sw}\omega_{\mathfrak{M}(\bv,\bd)}\boxtimes \varphi_{-\sw}\omega_{\cM_\theta(\bv,\bd)})\right)
&\cong \Ext^\bullet_{\D^b_\sT(\cM_0(\mathbf v,\mathbf d))}\left(\JH^\theta_*\varphi_{-\sw}\bQ_{\cM_\theta(\bv,\bd)},\JH_*\varphi_{\sw}\omega_{\mathfrak{M}(\bv,\bd)}\right) \\
&\cong \Ext^\bullet_{\D^b_\sT(\cM_0(\mathbf v,\mathbf d))}\left(\JH^\theta_*\varphi_{\sw}\bQ_{\cM_\theta(\bv,\bd)},\JH_*\varphi_{\sw}\omega_{\mathfrak{M}(\bv,\bd)}\right),
\end{align*}
so $[\overline{\Delta_{\cM_\theta(\mathbf v,\mathbf d)}}]$ induces a morphism in $\D^b_\sT(\cM_0(\mathbf v,\mathbf d))$:
\begin{align*}
    [\overline{\Delta_{\cM_\theta(\mathbf v,\mathbf d)}}]\colon \JH^\theta_*\varphi_\sw\omega_{\cM_\theta(\bv,\bd)}\to
    \JH_*\varphi_{\sw}\omega_{\mathfrak{M}(\bv,\bd)}.
\end{align*}
Then according to \cite[Prop.~2.11]{VV2}, critical convolution map $\mathbf\Psi_H$ is obtained by taking hypercohomologies of the above morphism. As we have seen previously, 
$$\JH^\theta_*\varphi_\sw\omega_{\cM_\theta(\bv,\bd)}\in \Perv(\cM_0(\bv,\bd))[\dim \cM_\theta(\bv,\bd)], \quad \JH_*\varphi_{\sw}\omega_{\mathfrak{M}(\bv,\bd)}\in \prescript{p}{}{\D}_c^{\geqslant -\dim \cM_\theta(\bv,\bd)}(\cM_0(\bv,\bd)), $$ 
therefore $[\overline{\Delta_{\cM_\theta(\mathbf v,\mathbf d)}}]$ factors as 
$$[\overline{\Delta_{\cM_\theta(\mathbf v,\mathbf d)}}]\colon \JH^\theta_*\varphi_\sw\omega_{\cM_\theta(\bv,\bd)}\to \mathcal{BPS}_\sw[\dim \cM_\theta(\bv,\bd)]
\xrightarrow{[\overline{\Delta_{\cM_\theta(\mathbf v,\mathbf d)}}]'}   \JH_*\varphi_{\sw}\omega_{\mathfrak{M}(\bv,\bd)}, $$
where $\mathcal{BPS}_\sw[\dim \cM_\theta(\bv,\bd)]=\prescript{p}{}{\tau}^{\leqslant -\dim \cM_\theta(\bv,\bd)}\JH_*\varphi_{\sw}\omega_{\mathfrak{M}(\bv,\bd)}$. 

Composing $[\overline{\Delta_{\cM_\theta(\mathbf v,\mathbf d)}}]$ with the natural map 
$$\JH_*\varphi_{\sw}\omega_{\mathfrak{M}(\bv,\bd)}\to \JH_*j_*\varphi_{\sw}\omega_{\cM_\theta(\bv,\bd)}\cong\JH^\theta_*\varphi_\sw\omega_{\cM_\theta(\bv,\bd)},$$ we get the critical convolution map $[\Delta_{\cM_\theta(\mathbf v,\mathbf d)}]:\JH^\theta_*\varphi_\sw\omega_{\cM_\theta(\bv,\bd)}$ induced by the diagonal of $\cM_\theta(\bv,\bd)\times \cM_\theta(\bv,\bd)$, which is identity map. It follows that $\Theta\circ [\overline{\Delta_{\cM_\theta(\mathbf v,\mathbf d)}}]'=\id$. Then by the definition of $\jmath$, we have $\jmath=\mathbf\Psi_H$.
\end{proof}

\begin{Remark}\label{rmk on boda}
Combining Corollary \ref{cor: hall compatible_sym quiver} with Proposition \ref{prop:na stab and bps coho}, we obtain the following commutative diagram
\begin{equation*}
\xymatrix{
H^\sT(\fM(\bv,\bd)^{\sA,\phi},\sw) \ar[rr]^{\mathfrak{m}^\phi_{\fC}} & & H^\sT(\fM(\bv,\bd),\sw)\\
H^\sT(\cM_\theta(\bv,\bd)^{\sA,\phi},\sw) \ar[u]^{\jmath^\phi} \ar[rr]^{\Stab_\fC} & & H^\sT(\cM_\theta(\bv,\bd),\sw),
\ar[u]_{\jmath}
}
\end{equation*}
where $\jmath^\phi$ is the map \eqref{bps coho emb} for the stack $\fM(\bv,\bd)^{\sA,\phi}$. Suppose that $\sA\cong (\bC^*)^l$ is the framing torus which acts on the framing vector space $D$ by setting $D=\sum_{i=1}^l a_iD_i$ such that $K_\sA(\pt)\cong\bQ[a_i^\pm]_{i=1}^l$. Let $\fC$ be the chamber $a_1<a_2<\cdots<a_l$, then $\Stab_\fC$ agrees with the ``critical stable envelope'' $\mathrm{CStab}$ defined in \cite[\S 1.2]{BD}.
\end{Remark}

\subsection{The case of tripled quivers with canonical cubic potentials}

When $Q$ is the tripled quiver of another quiver $Q'$ with cubic potential $\sw=\sum_{i\in Q_0}\tr(\varepsilon_i\mu_i)$, 
where $\varepsilon_i$ is the loop at $i$-th node and $\mu_i$ is the $i$-th component of the moment map $\mu\colon R(\overline{Q'},\bv,\bd)\to \Lie(G)^\vee$, dimensional reduction gives isomorphisms 
(Example \ref{ex: doubled vs tripled}, Remark \ref{rmk on dim red is}):
\begin{equation}\label{equ on psi dim red}
\begin{split}
    &\delta_H\colon H^\sT(\cN_\theta(\bv,\bd))\cong H^\sT(\cM_\theta(\bv,\bd),\sw),\quad \delta_H\colon H^\sT(\fN(\bv,\bd))\cong H^\sT(\fM(\bv,\bd),\sw),\\
    &\delta_K\colon K^\sT(\cN_\theta(\bv,\bd))\cong K^\sT(\cM_\theta(\bv,\bd),\sw),\quad \delta_K\colon K^\sT(\fN(\bv,\bd))\cong K^\sT(\fM(\bv,\bd),\sw).
\end{split}
\end{equation}
We define the following maps:
\begin{equation}\label{equ on phi relation}
\begin{split}
    &\Psi_H:=\delta_H^{-1}\circ\bPsi_H\circ\delta_H\colon H^\sT(\cN_\theta(\bv,\bd))\to H^\sT(\fN(\bv,\bd)),\\ 
    &\Psi^{\mathsf s}_K:=\delta_K^{-1}\circ\bPsi^{\mathsf s}_K\circ\delta_K\colon K^\sT(\cN_\theta(\bv,\bd))\to K^\sT(\fN(\bv,\bd)).
\end{split}
\end{equation}
Here the Nakajima quiver variety $\cN_\theta(\bv,\bd)$, the preprojective stack $\fN(\bv,\bd)$ fits into open and closed immersion:  
$$\cN_\theta(\bv,\bd)=\mu^{-1}(0)/\!\!/_\theta G\hookrightarrow \fN(\bv,\bd)=[\mu^{-1}(0)/G] \stackrel{\pmb{\iota}}{\hookrightarrow}  \fR(\bv,\bd):=[R(\overline{Q'},\bv,\bd)/G].$$
In \cite[\S 2.1.8]{AO1}, a nonabelian stable envelope\,\footnote{What we call $\overline{\Psi}^{\mathsf s}_K(\alpha)$ here is denoted by $\mathbf s_\alpha$ in \cite[\S 2.1.8]{AO1}.}
$$\overline{\Psi}^{\mathsf s}_K\colon K^\sT(\cN_\theta(\bv,\bd))\to K^\sT(\fR(\bv,\bd))$$ is constructed as the composition of the following maps
\begin{align*}
\xymatrix{
K^{\sT}(\cN_\theta(\mathbf v,\mathbf d))\ar[r]^-{1\otimes \id} & K^{G'\times \sT}(\cN_\theta(\mathbf v,\mathbf d))\ar[r]^-{\Stab^{\mathsf s'}_+} & K^{G'\times \sT}(\cN_\theta(\mathbf v,\mathbf v+\mathbf d))\ar[d]^-{j^*}\\
& K^{\sT}(\fR(\mathbf v,\mathbf d)) & \ar[l]_-{\cong} K^{G'\times \sT}(\mathring\cN_\theta(\mathbf v,\mathbf v+\mathbf d)),
}
\end{align*}
where $\cN_\theta(\mathbf v,\mathbf v+\mathbf d)$ is the Nakajima quiver variety with the quiver data visualized vertex-wise as
\begin{equation*}
\xymatrix{
&V_i \ar@<2pt>[dl]^{J_i} \ar@<2pt>[d]^{B_i} \\
V'_i \ar@<2pt>[ur]^{I_i} & D_i \ar@<2pt>[u]^{A_i}, 
}
\end{equation*}
and $j:\mathring\cN_\theta(\mathbf v,\mathbf v+\mathbf d)\subseteq  \cN_\theta(\mathbf v,\mathbf v+\mathbf d)$ is the open immersion of the locus on which the maps $\{I_i\}_{i\in Q_0}$ are isomorphisms, $\mathsf s'$ is related to $\mathsf s$ by
\begin{align}\label{slope shift}
    \mathsf s'=\mathsf s\otimes \det\left(T^{1/2}_{\fN(\bv,\bv+\bd)}\right)^{-1/2},\quad \text{where }T^{1/2}_{\fN(\bv,\bv+\bd)}=T^{1/2}_{\fN(\bv,\bd)}+\sum_{i\in Q_0}\hbar^{-1}\Hom(V_i,V'_i)\in K_G(\pt).
\end{align}
The shift in slope comes from the difference between degree conditions in Definition \ref{def of stab k} and in \cite[\S9.1]{Oko} (see Remark \ref{rk Oko normalizer}). We fix an orientation of $Q'$ so that 
\begin{align*}
    T^{1/2}_{\fN(\bv,\bd)}=\sum_{a\in Q'_1}\Hom(V_{t(a)},V_{h(a)})+\sum_{i\in Q_0}(\Hom(D_i,V_i)-\End(V_i))\in K_G(\pt).
\end{align*}
Repeating the above procedure with $K$-theory replaced by Borel-Moore homology, we obtain a cohomological nonabelian stable envelope 
$$\overline{\Psi}_H\colon H^\sT(\cN_\theta(\bv,\bd))\to H^\sT(\fR(\bv,\bd)). $$
\begin{Proposition}\label{prop:compare na stab nak}
We have the following equations of maps:
\begin{align}
    \overline{\Psi}_H=\pmb{\iota}_*\circ \Psi_H,\quad \overline{\Psi}^{\mathsf s}_K=\pmb{\iota}_*\circ \Psi^{\mathsf s\otimes \pmb{\delta}}_K, \,\,\, 
    \mathrm{where}\,\, \pmb{\delta}=\det\left(T^{1/2}_{\fN(\bv,\bd)}\right)^{-1/2}.
\end{align}

\end{Proposition}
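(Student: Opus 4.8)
The plan is to reduce the statement to the compatibility of the two constructions of the interpolation/nonabelian stable envelope in the presence of a dimensional reduction datum, and then to track the slope shifts carefully. First I would set up the diagram relating the ambient objects. Recall that $\cM_\theta(\bv,\underline{\bd}^\bv)$ for the tripled quiver $\widetilde Q$ is, by Example \ref{ex: doubled vs tripled} and the discussion around \eqref{equ on psi dim red}, a dimensional reduction datum over $\cN_\theta(\bv,\bv+\bd)$ (the Nakajima variety for the doubled quiver with the extra framing $V'_i$), with $s=\mu$ the moment map and $\phi$ the zero potential. Concretely, both $\cM_\theta(\mathbf v,\underline{\bd}^{\bv})$ and the total-space presentation of $\cN_\theta(\bv,\bv+\bd)$ (together with the edge-loop fibers in $\cM_\theta(\bv,\bv+\bd)$) fit into the chain of compatible dimensional reduction data of Definition \ref{compatible dim red data}, and the maps $\delta_H,\delta_K$ of \eqref{equ on psi dim red} are the induced isomorphisms $\delta_H,\delta_K$ of Proposition \ref{prop:compatible map_fixed pt}. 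I would then invoke Theorem \ref{dim red and stab_coh} and Theorem \ref{dim red and stab_K}, which assert exactly that the (abelian, one-dimensional-torus) stable envelopes $\Stab_+$ resp.\ $\Stab^{\mathsf s}_+$ used in Definition \ref{def of nona stab} commute with the maps $\delta^\sA_H,\delta_H$ resp.\ $\delta^\sA_K,\delta_K$, \emph{up to the prescribed normalizer and slope shift}. Applying the forgetful maps $j^*$ to the open loci $\mathring\cM$ resp.\ $\mathring\cN$ and using the canonical identifications $\mathring\cM_\theta(\bv,\underline{\bd}^\bv)\cong R(\overline{Q'},\bv,\bd)/G$ on the $\widetilde Q$ side versus the analogous identification on the Nakajima side, this yields a square comparing $\widetilde{\bPsi}_H$ (resp.\ $\widetilde{\bPsi}^{\mathsf s}_K$) for the tripled quiver with $\overline\Psi_H$ (resp.\ $\overline\Psi^{\mathsf s}_K$) of \cite{AO1}, at the level of the ambient stack $\fR(\bv,\bd)$, with $\pmb\iota_*$ appearing because the target of the dimensional-reduced nonabelian stable envelope for $\widetilde Q$ lands in $K^\sT(\fN(\bv,\bd))$ while $\overline\Psi$ of \cite{AO1} lands in $K^\sT(\fR(\bv,\bd))$; the inclusion of the preprojective stack is precisely $\pmb\iota$, and under dimensional reduction $\pmb\iota_*$ becomes the identity after the identification $K^\sT(\fN(\bv,\bd))\cong K^\sT(\fM(\bv,\bd),\sw)$ of Remark \ref{rmk on dim red is}.

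The second and more delicate step is matching the slopes and normalizers. On the $\widetilde Q$ side, Definition \ref{def of nona stab} uses $\Stab^{\mathsf s}_+$ with slope $\mathsf s$ normalized as in Definition \ref{def of stab k}, whereas $\overline\Psi^{\mathsf s}_K$ of \cite{AO1} is built from $\Stab^{\mathsf s'}_+$ with $\mathsf s'$ in the conventions of \cite[\S9.1]{Oko}, related by \eqref{slope shift}. The slope shift in Theorem \ref{dim red and stab_K} is $\mathsf s'=\mathsf s|_{Y'}\otimes\det(N_{Y'/Y})^{1/2}$, where $N_{Y'/Y}$ is the normal bundle of $\cN_\theta(\bv,\bv+\bd)$ inside $\cM_\theta(\bv,\underline{\bd}^\bv)$, i.e.\ (the descent of) the moment-map bundle $\hbar^{-1}\mathrm{Lie}(G)^\vee$ together with the correction from the extra framing arrows $\hbar^{-1}\Hom(V_i,V'_i)$; comparing this with the half-tangent bundle $T^{1/2}_{\fN(\bv,\bv+\bd)}=T^{1/2}_{\fN(\bv,\bd)}+\sum_i\hbar^{-1}\Hom(V_i,V'_i)$ appearing in \eqref{slope shift} and \eqref{slope shift}, and using Remark \ref{rk Oko normalizer} to convert between the two normalization conventions (which absorbs a factor $(\det T^{1/2})^{-1/2}$), I expect the net discrepancy to be exactly multiplication by $\pmb\delta=\det(T^{1/2}_{\fN(\bv,\bd)})^{-1/2}$, i.e.\ $\overline\Psi^{\mathsf s}_K$ corresponds to $\Psi^{\mathsf s\otimes\pmb\delta}_K$. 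The cohomological statement is the same computation with all line-bundle twists replaced by their absence (no slope in cohomology) and only the sign normalizer $\epsilon$ of Theorem \ref{dim red and stab_coh} to track, which cancels in the end, giving $\overline\Psi_H=\pmb\iota_*\circ\Psi_H$ cleanly.

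Concretely I would organize the argument as follows: (1) identify the compatible dimensional reduction data on both sides and the isomorphisms $\delta_H,\delta_K$, citing Example \ref{ex: doubled vs tripled}, Remark \ref{rmk on dim red is}, and Proposition \ref{prop:compatible map_fixed pt}; (2) apply Theorem \ref{dim red and stab_coh} and Theorem \ref{dim red and stab_K} to the one-parameter torus $\bC^*\subseteq G'$ of \eqref{equ on Cstar}, to get commutativity of $\Stab_+$ (resp.\ $\Stab^{\mathsf s}_+$) with the $\delta$-maps with the stated slope/normalizer shift; (3) restrict along $j^*$ to the open loci $\mathring\cM$, $\mathring\cN$, noting these restrictions are compatible with the dimensional reduction data (the open substacks are cut out by the same condition $I_i$ invertible on both sides); (4) assemble steps (2)–(3) into the comparison of $\widetilde{\bPsi}$ with $\delta^{-1}\circ\bar{\bPsi}\circ\delta$ on the ambient stack, with $\pmb\iota_*$ accounting for the preprojective-vs-full stack discrepancy; (5) reconcile the slope conventions via \eqref{slope shift}, Remark \ref{rk Oko normalizer}, and an explicit bookkeeping of $\det$-twists to pin down $\pmb\delta$. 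The main obstacle I anticipate is step (5): the two papers use genuinely different normalization conventions for $K$-theoretic stable envelopes (polarization-normalized in \cite{Oko}/\cite{AO1} versus attracting-manifold-normalized here), and carefully verifying that the composite of the dimensional-reduction slope shift $\det(N_{Y'/Y})^{1/2}$ with the convention-change twist from Remark \ref{rk Oko normalizer} collapses to exactly $\pmb\delta=\det(T^{1/2}_{\fN(\bv,\bd)})^{-1/2}$ — and that all half-integer twists are consistently chosen — requires a careful, if elementary, computation that is easy to get off by a sign or a square root.
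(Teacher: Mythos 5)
Your proposal identifies the right ingredients (Theorems \ref{dim red and stab_coh}/\ref{dim red and stab_K}, the slope shift \eqref{slope shift}, Remark \ref{rk Oko normalizer}, and the appearance of $\pmb\iota_*$), but it has a structural gap: it misses that the bridge between the two constructions requires a \emph{separate} application of Proposition~\ref{prop:vb and stab_repl}, not just the dimensional-reduction compatibility.

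The issue is that $\widetilde{\bPsi}^{\mathsf s}_K$ for the tripled quiver $\widetilde Q$ is built (Definition~\ref{def of nona stab}) from $\Stab^{\mathsf s}_+$ landing in $K^{G'\times\sT}(\cM_\theta(\bv,\underline{\bd}^\bv),\sw')$ — note the \emph{asymmetric} framing $\underline{\bd}^\bv=(\bv+\bd,\bd)$ — while $\overline\Psi^{\mathsf s}_K$ from \cite{AO1} is built from $\Stab^{\mathsf s'}_+$ landing in $K^{G'\times\sT}(\cN_\theta(\bv,\bv+\bd))$, the Nakajima variety with \emph{symmetric} framing $\bv+\bd$. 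Your step (1) asserts that $\cM_\theta(\bv,\underline{\bd}^\bv)$ is a dimensional reduction datum over $\cN_\theta(\bv,\bv+\bd)$, but this is not how they are related: what is true is that $\cM_\theta(\bv,\bv+\bd)$ (the \emph{symmetric} tripled quiver variety) is the total space of the repelling bundle $E=\bigoplus_i\Hom(V_i,V'_i)$ over $\cM_\theta(\bv,\underline{\bd}^\bv)$, and $\cM_\theta(\bv,\bv+\bd)$ with its cubic potential $\widetilde\sw$ dimensionally reduces to $\cN_\theta(\bv,\bv+\bd)$. So the paper's argument is a composition of \emph{three} commutative squares: one from Proposition~\ref{prop:vb and stab_repl} (zero-section pushforward $\overline{\pmb\iota}_*$, producing the $\det(E)^{1/2}$ twist $\mathsf s''=\mathsf s'\otimes(\det E)^{1/2}$), one expressing compatibility of that pushforward with the dimensional reduction isomorphisms $\delta_K$ (this is where $\pmb\iota_*$ on $\fR(\bv,\bd)$ actually enters, not merely as a ``stack discrepancy''), and one from Theorem~\ref{dim red and stab_K} (with normalizer set to the structure sheaf). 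Your plan collapses the first two into step (4) and never invokes Proposition~\ref{prop:vb and stab_repl}, so the source of $\pmb\iota_*$ and the $\det(E)^{1/2}$ slope factor would be unaccounted for. Once you insert that missing square, your step (5) slope bookkeeping — combining $\det(E)^{1/2}$, the convention change \eqref{slope shift}, and cancellation of the pure $\hbar$-character (which is trivial in $\Pic_\sA(X)\otimes\bR$) — does collapse to exactly $\pmb\delta=\det(T^{1/2}_{\fN(\bv,\bd)})^{-1/2}$, as you anticipated.
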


\begin{proof}
Notice that $\cM_\theta(\bv,\bv+\bd)$ is the total space of the vector bundle $E=\bigoplus_{i\in Q_0}\Hom(V_i,V'_i)$ over $\cM_\theta(\bv,\bv+\bd,\bd)$, and $E$ is repelling with respect to the $+$ chamber. Let $\widetilde{\sw}=\sum_{i\in Q_0}\tr(\varepsilon_i\widetilde{\mu}_i)$ be a function on $\cM_\theta(\bv,\bv+\bd)$, where $\widetilde{\mu}_i\colon R(\overline{Q'},\bv,\bv+\bd)\to 
\Lie(G)^\vee$ is the moment map. Then $\widetilde{\sw}$ is $(\sT\times G')$-invariant and $\widetilde{\sw}\big|_{\cM_\theta(\bv,\bv+\bd)}=\sw'$. By Proposition \ref{prop:vb and stab_repl}, the pushforward along zero section map gives the following commutative diagram
\begin{equation*}
\xymatrix{
K^{G'\times \sT}(\cM_\theta(\mathbf v,\mathbf d),\sw) \ar[r]^-{\Stab^{\mathsf s'}_+} & K^{G'\times \sT}(\cM_\theta(\mathbf v,\mathbf v+\mathbf d),\widetilde\sw)\\
K^{G'\times \sT}(\cM_\theta(\mathbf v,\mathbf d),\sw) \ar[u]^{\id} \ar[r]^-{\Stab^{\mathsf s''}_+} & K^{G'\times \sT}(\cM_\theta(\mathbf v,\underline{\bd}^\bv),\sw') \ar[u]_{\overline{\pmb{\iota}}_*},
}
\end{equation*}
where $\mathsf s''=\mathsf s'\otimes (\det E)^{1/2}$ and $\overline{\pmb{\iota}}:\cM_\theta(\mathbf v,\underline{\bd}^\bv)\hookrightarrow \cM_\theta(\mathbf v,\mathbf v+\mathbf d)$ is the zero section map. Using \eqref{slope shift}, we can rewrite $\mathsf s''=\mathsf s\otimes\pmb{\delta}$. The pushforward map is compatible with dimensional reduction, i.e.\,the following diagram is commutative
\begin{align*}
\xymatrix{
  K^{G'\times \sT}(\mathring\cM_\theta(\mathbf v,\underline{\bd}^\bv),\sw') \ar[r]^{\overline{\pmb{\iota}}_*} & K^{G'\times \sT}(\mathring\cM_\theta(\mathbf v,\mathbf v+\mathbf d),\widetilde\sw) \\
K^{\sT}(\fN(\bv,\bd))  \ar[r]^{\pmb{\iota}_*} \ar[u]^{\delta_K } & K^{\sT}(\fR(\mathbf v,\mathbf d)) \ar[u]_{\delta_K}.
} 
\end{align*}
By Theorem \ref{dim red and stab_K}, we have the following commutative diagram
\begin{align*}
\xymatrix{
  K^{G'\times \sT}(\cM_\theta(\mathbf v,\mathbf d),\sw) \ar[r]^-{\Stab^{\mathsf s'}_+} & K^{G'\times \sT}(\cM_\theta(\mathbf v,\mathbf v+\mathbf d),\widetilde\sw) \\
K^{G'\times \sT}(\cN_\theta(\mathbf v,\mathbf d))  \ar[r]^-{\Stab^{\mathsf s'}_+} \ar[u]^{\delta_K}  & K^{G'\times \sT}(\cN_\theta(\mathbf v,\mathbf v+\mathbf d)) \ar[u]_{\delta_K}.
}
\end{align*}
Here we set normalizer $\mathcal E$ equal to the structure sheaf. Combining the above three commutative diagrams, we obtain the equation $\overline{\Psi}^{\mathsf s}_K=\pmb{\iota}_*\circ \Psi^{\mathsf s\otimes \pmb{\delta}}_K$. The other equation can be proven similarly and we omit the details. 
\end{proof}

In the coming remark, we explain that the above results reproduce previous works 
on comparing stable envelopes and Hall operations on Nakajima quiver varieties. 
\begin{Remark}\label{rmk on cpr with b}
As a special case of Corollary \ref{cor: hall compatible_sym quiver}, Hall envelopes for tripled quiver $\widetilde{Q'}$ with standard cubic potential $\sw$ are compatible with Hall operations. Dimensional reduction of the Hall compatibility diagrams in Definition \ref{defi of prestab} gives the following commutative diagrams:
\begin{equation}\label{dim red hall op vs stab}  
\xymatrix{
H^\sT(\fN(\bv,\bd)^{\sA,\phi}) \ar[rr]^{\mathfrak{m}^\phi_{\Pi,\fC}\circ\, \varepsilon} & & H^\sT(\fN(\bv,\bd))\\
H^\sT(\cN_\theta(\bv,\bd)^{\sA,\phi}) \ar[u]^{\Psi_H^\phi} \ar[rr]^{\mathbf{Stab}_\fC} & & H^\sT(\cN_\theta(\bv,\bd)),
\ar[u]_{\Psi_H}
}\qquad
\xymatrix{
K^\sT(\fN(\bv,\bd)^{\sA,\phi}) \ar[rr]^{\mathfrak{m}^\phi_{\Pi,\fC}\circ\, \mathcal E\cdot} & & K^\sT(\fN(\bv,\bd))\\
K^\sT(\cN_\theta(\bv,\bd)^{\sA,\phi}) \ar[u]^{\Psi_K^{\phi,\mathsf s\otimes\pmb{\delta}_\phi}} \ar[rr]^{\mathbf{Stab}^{\mathsf s}_\fC} & & K^\sT(\cN_\theta(\bv,\bd)).\ar[u]_{\Psi^{\mathsf s\otimes\pmb{\delta}}_K}
}
\end{equation}
Below we explain notations in above.
The Nakajima quiver variety is an open substack of the preprojective stack:
\begin{equation}\label{equ on prepro st}\cN_\theta(\bv,\bd)=\mu^{-1}(0)/\!\!/_\theta G\hookrightarrow \fN(\bv,\bd)=[\mu^{-1}(0)/G], \end{equation}
and their torus fixed loci are 
$$\cN_\theta(\bv,\bd)^{\sA,\phi}=\mu^{-1}(0)^{\sA,\phi}/\!\!/_\theta G^{\phi(\sA)}, \quad \fN(\bv,\bd)^{\sA,\phi}=[\mu^{-1}(0)^{\sA,\phi}/G^{\phi(\sA)}].$$ 
Let $T^{1/2}$ be a polarization of $T^\vir\fN(\bv,\bd)$\,\footnote{$T^\vir\fN(\bv,\bd)=\sum_{a\in Q'_1}\left(\Hom(V_{t(a)},V_{h(a)})+\hbar^{-1}\Hom(V_{h(a)},V_{t(a)})\right)-\sum_{i\in Q'_0}(1+\hbar^{-1})\End(V_i).$},~i.e.~$T^\vir\fN(\bv,\bd)=T^{1/2}+\hbar^{-1}(T^{1/2})^\vee$. 
$\mathbf{Stab}_\fC$ is the cohomological stable envelope with polarization $T^{1/2}$ defined in \cite[\S3]{MO}, which is related to our Definition \ref{def of normalizer coho} (applied 
to Nakajima quiver variety $\cN_\theta(\bv,\bd)$ with zero potential) by 
$$\mathbf{Stab}_\fC=\Stab_{\fC,\varepsilon},\quad \mathrm{where}\,\, \varepsilon=(-1)^{\rk (T^{1/2})^{\sA,\phi\text{-attr}}}. $$ 
$\mathbf{Stab}^{\mathsf s}_\fC$ is the $K$-theoretic stable envelope with slope $\mathsf s$ and polarization $T^{1/2}$ defined in \cite[\S9.1]{Oko} which is related to our Definition \ref{def of normalizer k} (applied to Nakajima quiver variety $\cN_\theta(\bv,\bd)$ with zero potential)  
by a shift in slope and a different normalization (see Remark \ref{rk Oko normalizer}). 
The shifts in slope of $K$-theoretic nonabelian stable envelopes are 
\begin{align*}
    \pmb{\delta}=\det(T^{1/2}),\quad \pmb{\delta}_\phi=\det((T^{1/2})^{\sA,\phi}).
\end{align*} 
$\mathfrak{m}^\phi_{\Pi,\fC}$ in \eqref{dim red hall op vs stab} is the Hall operation for preprojective stack which is the dimensional reduction of $\mathfrak{m}^\phi_\fC$ \cite{YZ1,YZ2}. $\mathcal E\cdot$ is the multiplication by the $K$-theory class
\begin{align*}
    \mathcal{E}=\frac{(-1)^{\rk(T^{1/2})^{\sA,\phi\text{-attr}}}}{\det(\sqrt{\hbar}(T^{1/2})^{\sA,\phi\text{-attr}})}.
\end{align*}
In \cite[Thm.~4.3]{YZ1}, it is shown that the preprojective Hall operation $\mathfrak{m}^\phi_{\Pi,\fC}$ is  compatible with a (twisted) ordinary Hall operation $\mathfrak{m}^\phi_{\fC}$ on the smooth stack $\fR(\bv,\bd)$ in the sense that the following diagrams are commutative
\begin{equation}\label{preproj hall vs tw hall op}
\xymatrix{
H^\sT(\fR(\bv,\bd)^{\sA,\phi}) \ar[rr]^{\widetilde{\mathfrak{m}}^\phi_\fC} & & H^\sT(\fR(\bv,\bd))\\
H^\sT(\fN(\bv,\bd)^{\sA,\phi}) \ar[u]^{\pmb{\iota}_*} \ar[rr]^{\mathfrak{m}^\phi_{\Pi,\fC}} & & H^\sT(\fN(\bv,\bd)),
\ar[u]_{\pmb{\iota}_*}
}\qquad
\xymatrix{
K^\sT(\fR(\bv,\bd)^{\sA,\phi}) \ar[rr]^{\widetilde{\mathfrak{m}}^\phi_\fC} & & K^\sT(\fR(\bv,\bd))\\
K^\sT(\fN(\bv,\bd)^{\sA,\phi}) \ar[u]^{\pmb{\iota}_*} \ar[rr]^{\mathfrak{m}^\phi_{\Pi,\fC}} & & K^\sT(\fN(\bv,\bd)),\ar[u]_{\pmb{\iota}_*}
}
\end{equation}
where
\begin{equation}\label{equ on m twist}
\widetilde{\mathfrak{m}}^\phi_\fC=\mathfrak{m}^\phi_\fC\circ e^{\sT\times G^{\phi(\sA)}}_{\bullet}(\hbar^{-1}\Lie(G)^{\sA,\phi\text{-attr}})\cdot\,,
\end{equation}
for $\bullet=$ empty or $K$ depending on whether it is cohomological or $K$-theoretic. 
Combining Proposition \ref{prop:compare na stab nak} with diagrams \eqref{dim red hall op vs stab} and \eqref{preproj hall vs tw hall op}, we obtain the following commutative diagrams:
\begin{equation}\label{dim red hall op vs stab 2}
\xymatrix{
H^\sT(\fR(\bv,\bd)^{\sA,\phi}) \ar[rr]^{\widetilde{\mathfrak{m}}^\phi_\fC\circ\,\varepsilon} & & H^\sT(\fR(\bv,\bd))\\
H^\sT(\cN_\theta(\bv,\bd)^{\sA,\phi}) \ar[u]^{\overline{\Psi}_H^\phi} \ar[rr]^{\mathbf{Stab}_\fC} & & H^\sT(\cN_\theta(\bv,\bd)),
\ar[u]_{\overline{\Psi}_H}
}\qquad
\xymatrix{
K^\sT(\fR(\bv,\bd)^{\sA,\phi}) \ar[rr]^{\widetilde{\mathfrak{m}}^\phi_\fC\circ\, \mathcal E\cdot} & & K^\sT(\fR(\bv,\bd))\\
K^\sT(\cN_\theta(\bv,\bd)^{\sA,\phi}) \ar[u]^{\overline{\Psi}_K^{\phi,\mathsf s}} \ar[rr]^{\mathbf{Stab}^{\mathsf s}_\fC} & & K^\sT(\cN_\theta(\bv,\bd)),\ar[u]_{\overline{\Psi}^{\mathsf s}_K}
}
\end{equation}
where $\overline{\Psi}_H, \overline{\Psi}^{\mathsf s}_K$ are constructed in \cite{AO1}. 

When $\sA$ is a framing torus, the left square in \eqref{dim red hall op vs stab 2} reproduces 
a result of Botta \cite[Thm.\,5.6]{Bot}, where his proof uses the method of abelianizations \cite{Sh}. Botta-Davison \cite{BD} has another proof of Botta's result by showing that ${\mathfrak{m}}^\phi_{\Pi,\fC}\circ {\Psi}_H^\phi$ and ${\Psi}_H\circ\Stab_\fC$ are induced by the same correspondence. 

Moreover, the left square in \eqref{dim red hall op vs stab 2} can be used to deduce an inductive formula of stable envelopes for Nakajima quiver varieties \cite[Thm.~5.12]{Bot}, which reduces the computations of stable envelopes to the computations of $\overline{\Psi}_H$ in the framing rank one cases. 

\end{Remark}

As an application, we deduce explicit formulas of stable envelopes for the Nakajima quiver variety $\cN_\theta(\bv,\bd)$ using Remark \ref{rmk on cpr with b} under certain assumptions on the fixed loci. Note that 
\begin{equation}\label{equ on rqA}R(\overline{Q'},\bv,\bd)^{\sA,\phi}=R(\overline{Q'_\phi},\bv_\phi,\bd_\phi) \end{equation} for a certain quiver $Q'_\phi$ (possibly disconnected) with gauge dimension vector $\bv_\phi$ and framing dimension vector $\bd_\phi$. 

The following result, in the case when $Q'$ is an affine type A quiver and $\sA=$ framing torus $\times$ $\bC^*$ that resolves the loop (i.e.~\eqref{equ on rqA} is a finite type A quiver variety), 
recovers cohomological and $K$-theoretic limits of \cite[Thm.~1]{Din} and also \cite[Prop.~7]{AO1}.


\begin{Corollary}\label{cor: explicit formula nak}
Assume that every connected component of $(Q'_\phi,\bv_\phi,\bd_\phi)$ is a Dynkin quiver with minuscule framing,~i.e.~there is a node $i_0$ that corresponds to a minuscule fundamental weight of the Lie algebra associated with the Dynkin quiver, such that $\bd_{\phi,i_0}=1$, $\bd_{\phi,i}=0$ if $i\neq i_0$. 

Then 
in $\sT$-localized homology $H^\sT(\fN(\bv,\bd))_{\loc}:=H^\sT(\fN(\bv,\bd))\otimes_{\bQ[\Lie(\sT)]}\mathrm{Frac}(\bQ[\Lie(\sT)])$, we have 
\begin{align}\label{explicit formula nak_coh}
\Psi_H\circ\mathbf{Stab}_\fC([\cN_\theta(\bv,\bd)^{\sA,\phi}])= 
\sum_{w\in W/W^\phi}w\left(e^{\sT\times G^{\phi(\sA)}}\left((T^{1/2})^{\sA,\phi\text{-repl}}+\hbar(T^{1/2})^{\sA,\phi\text{-attr}}\right)\right) \cdot [\fN(\bv,\bd)]^{\vir}\:,
\end{align}
where $\fN(\bv,\bd)$ \eqref{equ on prepro st} has a quasi-smooth stack structure and $[\fN(\bv,\bd)]^{\vir}$ denotes its virtual class. In particular,
\begin{align*}
\mathbf{Stab}_\fC([\cN_\theta(\bv,\bd)^{\sA,\phi}])= 
\sum_{w\in W/W^\phi}w\left(e^{\sT\times G^{\phi(\sA)}}\left((T^{1/2})^{\sA,\phi\text{-repl}}+\hbar(T^{1/2})^{\sA,\phi\text{-attr}}\right)\right) \cdot [\cN_\theta(\bv,\bd)]\:.
\end{align*}
Let $\mathsf s$ be a generic slope and $\chi\in \mathrm{char}(G^{\phi(\sA)})$ such that $\mathsf s\otimes \pmb{\delta}_\phi$ is in a sufficiently small neighbourhood of $\chi$, then 
in $\sT$-localized $K$-theory $K^\sT(\fN(\bv,\bd))_{\loc}:=K^\sT(\fN(\bv,\bd))\otimes_{\bQ[\sT]}\mathrm{Frac}(\bQ[\sT])$, we have 
\begin{align}\label{explicit formula nak_k}
\Psi^{\mathsf s\otimes\pmb{\delta}}_K\circ\mathbf{Stab}^{\mathsf s}_\fC([\mathcal L_\chi])=\sqrt{\hbar}^{\rk (T^{1/2})^{\sA,\phi\text{-attr}}}\cdot
\sum_{w\in W/W^\phi}w\left(\chi\cdot e^{\sT\times G^{\phi(\sA)}}_K\left((T^{1/2})^{\sA,\phi\text{-repl}}+\hbar(T^{1/2})^{\sA,\phi\text{-attr}}\right)\right) \cdot 
[\mathcal O^{\vir}_{\fN(\bv,\bd)}]\:,
\end{align}
where $\mathcal L_\chi\in \Pic(\cN_\theta(\bv,\bd)^{\sA,\phi})$ is the descent of the character $\chi$, and 
$[\mathcal O^{\vir}_{\fN(\bv,\bd)}]$ denotes the virtual structure sheaf.  

In particular,
\begin{align*}
\mathbf{Stab}^{\mathsf s}_\fC([\mathcal L_\chi])=\sqrt{\hbar}^{\rk (T^{1/2})^{\sA,\phi\text{-attr}}}\cdot
\sum_{w\in W/W^\phi}w\left(\chi\cdot e^{\sT\times G^{\phi(\sA)}}_K\left((T^{1/2})^{\sA,\phi\text{-repl}}+\hbar(T^{1/2})^{\sA,\phi\text{-attr}}\right)\right) \cdot 
[\mathcal O_{\cN_\theta(\bv,\bd)}]\:.
\end{align*}
\end{Corollary}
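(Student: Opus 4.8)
\textbf{Proof proposal for Corollary \ref{cor: explicit formula nak}.}

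The plan is to reduce the statement to the explicit formula for Hall envelopes on the tripled quiver (Proposition \ref{prop: explicit formula PStab w=0}) via the dimensional reduction isomorphisms \eqref{equ on psi dim red} and the compatibility of cohomological and $K$-theoretic stable envelopes on symmetric quiver varieties with Hall operations (Corollary \ref{cor: hall compatible_sym quiver}, together with Proposition \ref{prop:compare na stab nak}). Concretely, let $Q=\widetilde{Q'}$ be the tripled quiver of $Q'$ with the canonical cubic potential $\sw=\sum_{i\in Q'_0}\tr(\varepsilon_i\mu_i)$, so that $\cM_\theta(\bv,\bd)$ is a symmetric quiver variety with $(G\times\sA)$-action self-dual. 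By Corollary \ref{cor: hall compatible_sym quiver} we have $\HallEnv_\fC=\Stab_\fC$ and $\HallEnv^{\mathsf s}_\fC=\Stab^{\mathsf s}_\fC$ on $\cM_\theta(\bv,\bd)$, and these are compatible with the Hall operation via $\bPsi_H=\widetilde{\bPsi}_H$, $\bPsi^{\mathsf s}_K=\widetilde{\bPsi}^{\mathsf s}_K$. The first step is therefore to apply Proposition \ref{prop: explicit formula PStab w=0} in its general form to the zero-potential situation on the preprojective stack side, but here we need the version \emph{with} cubic potential, so instead I would invoke the Hall compatibility diagram \eqref{diag on hall coh_sym quot}/\eqref{diag on hall k_sym quot} for $(X,\sw)=(\cM_\theta(\bv,\bd),\sw)$ directly, combined with the dimensional reduction diagrams \eqref{dim red hall op vs stab} from Remark \ref{rmk on cpr with b}, which already transport everything to the Nakajima side: $\Psi_H\circ\mathbf{Stab}_\fC=(\mathfrak m^\phi_{\Pi,\fC}\circ\varepsilon)\circ\Psi_H^\phi$ and likewise in $K$-theory.

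The second step is to compute $\Psi_H^\phi([\cN_\theta(\bv,\bd)^{\sA,\phi}])$ and $\Psi_K^{\phi,\mathsf s\otimes\pmb\delta_\phi}([\mathcal L_\chi])$ under the minuscule-framing hypothesis. The point of the minuscule assumption is that each connected component of the fixed-locus quiver variety $\cN_{\theta_\phi}(\bv_\phi,\bd_\phi)$ is a type-ADE Nakajima variety with a single one-dimensional framing at a minuscule node; for such varieties the cotangent fiber of $T^*(G/P)$-type geometry gives $\cN_\theta(\bv_\phi,\bd_\phi)\cong T^*\mathbb P(\text{minuscule rep})$ or more precisely a cotangent bundle of a partial flag variety, whose critical cohomology (here, ordinary cohomology since the induced potential vanishes on the fixed locus after dimensional reduction, cf.\ \cite[\S 4]{COZZ1}) is generated over $H_\sT(\pt)$ by the fundamental class, and whose $K$-theory is generated by a single line bundle class $[\mathcal L_\chi]$. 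Thus on each component $\Psi_H^\phi$ sends $[\cN_\theta(\bv,\bd)^{\sA,\phi}]$ to $[\fN(\bv,\bd)^{\sA,\phi}]^{\vir}$ (the nonabelian stable envelope of the fundamental class is the fundamental class of the stack, by the same argument as in the proof of Proposition \ref{prop: explicit formula PStab w=0}: the right-hand side satisfies the defining axioms). Feeding this into $\mathfrak m^\phi_{\Pi,\fC}$ and using the explicit formulas for cohomological and $K$-theoretic preprojective Hall operations — equivalently, using \eqref{preproj hall vs tw hall op}, \eqref{equ on m twist} to replace $\widetilde{\mathfrak m}^\phi_\fC$ by the ordinary (smooth-stack) Hall operation on $[R(\overline{Q'})/G]$ twisted by $e_\bullet(\hbar^{-1}\Lie(G)^{\sA,\phi\text{-attr}})$, and then applying \cite[Thm.~2]{CoHA} and \cite[Prop.~3.4]{P} — produces the Weyl-group sum over $W/W^\phi$ with summand $e_\bullet\big((T^{1/2})^{\sA,\phi\text{-repl}}+\hbar(T^{1/2})^{\sA,\phi\text{-attr}}\big)$ acting on $[\fN(\bv,\bd)]^{\vir}$, with the $\sqrt\hbar^{\,\rk(T^{1/2})^{\sA,\phi\text{-attr}}}$ prefactor in $K$-theory coming from the normalizer $\mathcal E$ in \eqref{dim red hall op vs stab} and the character $\chi$ appearing in $K$-theory via $\widetilde{\bPsi}^{\phi,\mathsf s\otimes\pmb\delta_\phi}_K([\mathcal L_\chi])=\chi\otimes\mathcal O$.

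The third step is to descend from the preprojective stack $\fN(\bv,\bd)$ to the Nakajima quiver variety $\cN_\theta(\bv,\bd)$: restricting the identity along the open immersion $\cN_\theta(\bv,\bd)\hookrightarrow\fN(\bv,\bd)$, using that the restriction of the virtual class (resp.\ virtual structure sheaf) of the quasi-smooth stack $\fN(\bv,\bd)$ to the smooth quotient $\cN_\theta(\bv,\bd)$ is the ordinary fundamental class (resp.\ structure sheaf), and that $\mathbf{Stab}_\fC$, $\mathbf{Stab}^{\mathsf s}_\fC$ are by definition the restrictions of their stacky counterparts (this is Lemma \ref{lem:res of na stab to stable locus} combined with $\mathrm{res}\circ\widetilde{\bPsi}=\id$). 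The main obstacle I anticipate is the second step: one must verify that under the minuscule hypothesis the fixed-locus quiver variety genuinely has the expected cotangent-bundle geometry and that its critical cohomology/$K$-theory is rank one over the base, so that $\Psi_H^\phi$ (resp.\ $\Psi_K^\phi$) is completely pinned down by its value on a single generator — this is where the hypothesis is really used, and where one needs to be careful that no potential survives on the fixed locus after dimensional reduction and that the normalizer and slope-shift bookkeeping in \eqref{slope shift}, \eqref{dim red hall op vs stab}, \eqref{dim red hall op vs stab 2} is consistent. The passage through the localized (co)homology $H^\sT(\fN(\bv,\bd))_{\loc}$, $K^\sT(\fN(\bv,\bd))_{\loc}$ is needed precisely because the Weyl-sum formula is a priori only valid after inverting the equivariant parameters (the individual summands live in the localized theory), exactly as in \cite[Prop.~7]{AO1} and \cite[Thm.~1]{Din}.
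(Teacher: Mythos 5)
Your overall architecture matches the paper's: transport $\Psi_H\circ\mathbf{Stab}_\fC$ via the dimensional-reduction diagrams \eqref{dim red hall op vs stab}, compute $\Psi^\phi$ on the generator of the fixed locus, then apply \eqref{preproj hall vs tw hall op}, \eqref{equ on m twist} and the explicit Hall formulas \cite[Thm.~2]{CoHA}, \cite[Prop.~3.4]{P}, restricting to the stable locus at the end. That is the same route, and you correctly identify the $\sqrt{\hbar}$-prefactor and the role of the twist by $e_\bullet(\hbar^{-1}\Lie(G)^{\sA,\phi\text{-attr}})$.

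However, there is a genuine gap exactly where you say you anticipate one: the second step, i.e.\ the identity
\[
\Psi_H^\phi\bigl([\cN_\theta(\bv,\bd)^{\sA,\phi}]\bigr)=[\fN(\bv,\bd)^{\sA,\phi}]^{\vir}, \qquad
\Psi_K^{\phi,\mathsf s\otimes\pmb\delta_\phi}\bigl([\mathcal L_\chi]\bigr)=[\chi\otimes\mathcal O^{\vir}_{\fN(\bv,\bd)^{\sA,\phi}}],
\]
is not proved by ``the same argument as in Proposition~\ref{prop: explicit formula PStab w=0}.'' That proposition is for $\sw=0$ and zero potential on the stack side; here you must verify the \emph{degree-bound axiom} (Theorem~\ref{thm: deg bound na stab k}) for the candidate class $[\chi\otimes\mathcal O^{\vir}_{Z(\mu)}]$ in the presence of the cubic potential, after dimensional reduction. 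This is precisely Lemma~\ref{lem:na stab minuscule ADE}, and its proof is nontrivial: one reduces to the combinatorial inequality $\bu\cdot(\be-\mathsf C\br+\mathsf C\bu)>0$ for $\bu\in\Sigma_\theta\setminus\{\mathbf 0\}$, which follows from Nakajima's theorem that $\lambda_\be-\alpha_\br$ is a weight of $V(\lambda_\be)$ together with the minuscule condition $(\lambda,\alpha)\in\{0,\pm1\}$. That inequality is where the minuscule hypothesis is genuinely consumed; ``the critical cohomology is rank one'' is not the relevant point. Relatedly, your geometric description is off: under the hypothesis, each connected component of the fixed-locus Nakajima variety with \emph{fixed} gauge dimension $\br$ is a single point (not a cotangent bundle of a partial flag; that arises only after unioning over all $\br$), so the rank-one statement about its cohomology is true but vacuous and does not by itself pin down the value of $\Psi_H^\phi$, whose target is the cohomology of the preprojective stack, which is far from rank one.

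Once that lemma is supplied, the rest of your outline carries through; you should, however, make explicit that the passage from $\mathfrak m^\phi_{\Pi,\fC}$ to $\widetilde{\mathfrak m}^\phi_\fC$ and the subsequent division by $e^{\sT\times G}_K(\hbar^{-1}\Lie(G)^\vee)$ use that $\pmb\iota_*$ is invertible after $\sT$-localization, which in turn rests on $R(\overline{Q'},\bv,\bd)^{\sT}\subseteq\mu^{-1}(0)$ — this is the justification for working in the localized theory, rather than the vaguer reason you gave.
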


\begin{proof}
By Lemma \ref{lem:na stab minuscule ADE} below, we have 
\begin{align*}
\Psi_H([\cN_\theta(\bv,\bd)^{\sA,\phi}])=[\fN(\bv,\bd)^{\sA,\phi}]^{\vir},\quad \Psi^{\phi,\mathsf s\otimes\pmb{\delta}_\phi}_K([\mathcal L_\chi])=[\chi\otimes \mathcal O^{\vir}_{\fN(\bv,\bd)^{\sA,\phi}}].
\end{align*}
By the commutativity of diagrams \eqref{dim red hall op vs stab}, we get 
\begin{align*}
\Psi_H\circ\mathbf{Stab}_\fC([\cN_\theta(\bv,\bd)^{\sA,\phi}])=\mathfrak{m}^\phi_{\Pi,\fC}\left(\epsilon\cdot[\fN(\bv,\bd)^{\sA,\phi}]^{\vir}\right),\quad \Psi^{\mathsf s\otimes\pmb{\delta}}_K\circ\mathbf{Stab}^{\mathsf s}_\fC([\mathcal L_\chi])=\mathfrak{m}^\phi_{\Pi,\fC}\left(\mathcal E\cdot[\chi\otimes \mathcal O^{\vir}_{\fN(\bv,\bd)^{\sA,\phi}}]\right).
\end{align*}
Since $$R(\overline{Q'},\bv,\bd)=M\oplus \hbar^{-1}M^\vee, \,\, \mathrm{where}\,\,\, 
M=\bigoplus_{a\in Q'_1}\Hom(V_{t(a)},V_{h(a)})\oplus\bigoplus_{i\in Q'_0}\Hom(D_i,V_i), $$ 
we have inclusion $R(\overline{Q'},\bv,\bd)^\sT\subseteq  M$. Since $M\subseteq  \mu^{-1}(0)$, we see that $R(\overline{Q'},\bv,\bd)^\sT\subseteq  \mu^{-1}(0)$. Therefore the pushforward maps 
$$\pmb{\iota}_*\colon H^\sT(\fN(\bv,\bd))\to H^\sT(\fR(\bv,\bd)), \quad \pmb{\iota}_*\colon K^\sT(\fN(\bv,\bd))\to K^\sT(\fR(\bv,\bd))$$ 
are isomorphisms after $\sT$-localization. Note that 
$$\pmb{\iota}_*([\fN(\bv,\bd)]^{\vir})=e^{\sT\times G}(\hbar^{-1}\Lie(G)^\vee)\cdot[\fR(\bv,\bd)], \quad \pmb{\iota}_*([\mathcal O^{\vir}_\fN(\bv,\bd)])=e^{\sT\times G}_K(\hbar^{-1}\Lie(G)^\vee)\cdot[\mathcal O_\fR(\bv,\bd)]. $$ 
Then it follows from the compatibility \eqref{preproj hall vs tw hall op} that 
\begin{align*}
&\quad\, \Psi_H\circ\mathbf{Stab}_\fC([\cN_\theta(\bv,\bd)^{\sA,\phi}])=\pmb{\iota}_*^{-1}\circ\widetilde{\mathfrak{m}}^\phi_{\fC}\left(\epsilon\cdot e^{\sT\times G^{\phi(\sA)}}(\hbar^{-1}\Lie(G^{\phi(\sA)})^\vee)\cdot [\fR(\bv,\bd)^{\sA,\phi}]\right) \\
&=\pmb{\iota}_*^{-1}\sum_{w\in W/W^\phi}w\left(\epsilon\cdot e^{\sT\times G^{\phi(\sA)}}\left(\hbar^{-1}\Lie(G^{\phi(\sA)})^\vee+\hbar^{-1}\Lie(G)^{\sA,\phi\text{-attr}}+T\fR(\bv,\bd)^{\sA,\phi\text{-repl}}\right) \right)\cdot [\fR(\bv,\bd)]\\
&=\sum_{w\in W/W^\phi}w\left(\epsilon\cdot e^{\sT\times G^{\phi(\sA)}}\left(\hbar^{-1}\Lie(G^{\phi(\sA)})^\vee+\hbar^{-1}\Lie(G)^{\sA,\phi\text{-attr}}+T\fR(\bv,\bd)^{\sA,\phi\text{-repl}}\right)\right) \cdot\frac{[\fN(\bv,\bd)]^{\vir}}{e^{\sT\times G}(\hbar^{-1}\Lie(G)^\vee)}\\
&=\sum_{w\in W/W^\phi}w\left(e^{\sT\times G^{\phi(\sA)}}\left((T^{1/2})^{\sA,\phi\text{-repl}}+\hbar(T^{1/2})^{\sA,\phi\text{-attr}}\right)\right) \cdot [\fN(\bv,\bd)]^{\vir}.
\end{align*}
Here in the second equality, we use \eqref{equ on m twist} and the explicit formula of Hall operation as \eqref{explicit formula w=0_coh}.

Similarly in the $K$-theory, we have 
\begin{align*}
&\quad\, \Psi^{\mathsf s\otimes\pmb{\delta}}_K\circ\mathbf{Stab}^{\mathsf s}_\fC([\mathcal L_\chi])=\pmb{\iota}_*^{-1}\circ\widetilde{\mathfrak{m}}^\phi_{\fC}\left(\epsilon\cdot e^{\sT\times G^{\phi(\sA)}}_K(\hbar^{-1}\Lie(G^{\phi(\sA)})^\vee)\cdot [\mathcal O_{\fR(\bv,\bd)^{\sA,\phi}}]\right) \\
&=\pmb{\iota}_*^{-1}\sum_{w\in W/W^\phi}w\left(\chi\cdot\mathcal E\cdot e^{\sT\times G^{\phi(\sA)}}_K\left(\hbar^{-1}\Lie(G^{\phi(\sA)})^\vee+\hbar^{-1}\Lie(G)^{\sA,\phi\text{-attr}}+T\fR(\bv,\bd)^{\sA,\phi\text{-repl}}\right)\right) \cdot [\mathcal O_{\fR(\bv,\bd)}]\\
&=\sum_{w\in W/W^\phi}w\left(\chi\cdot\mathcal E\cdot e^{\sT\times G^{\phi(\sA)}}_K\left(\hbar^{-1}\Lie(G^{\phi(\sA)})^\vee+\hbar^{-1}\Lie(G)^{\sA,\phi\text{-attr}}+T\fR(\bv,\bd)^{\sA,\phi\text{-repl}}\right)\right) \cdot\frac{[\mathcal O^{\vir}_{\fN(\bv,\bd)}]}{e^{\sT\times G}_K(\hbar^{-1}\Lie(G)^\vee)}\\
&=\sqrt{\hbar}^{\rk (T^{1/2})^{\sA,\phi\text{-attr}}}\cdot\sum_{w\in W/W^\phi}w\left(\chi\cdot e^{\sT\times G^{\phi(\sA)}}_K\left((T^{1/2})^{\sA,\phi\text{-repl}}+\hbar(T^{1/2})^{\sA,\phi\text{-attr}}\right)\right) \cdot [\mathcal O^{\vir}_{\fN(\bv,\bd)}]. \qedhere
\end{align*}
\end{proof}
In the above proof, we use the following lemma, whose proof will be given in Appendix \ref{sec on proof of lem:na stab minuscule ADE}.  
\begin{Lemma}\label{lem:na stab minuscule ADE}
Let $\Gamma$ be a Dynkin quiver with gauge dimension vector $\br$ and framing dimension vector $\be$ such that $\be$ is minuscule. Assume that $\cN_\theta(\br,\be)$ is nonempty. Then
\begin{align}\label{na stab minuscule ADE_coh}
    \Psi_H([\cN_\theta(\br,\be)])=[\fN(\br,\be)]^{\vir}.
\end{align}
Let $G=\prod_{i\in \Gamma_0}\GL(\br_i)$ denote the gauge group, and $\mathsf s\in \mathrm{char}(G)\otimes_\bZ \bR$ be a generic slope taken in a sufficiently small neighbourhood of $\chi\in \mathrm{char}(G)$, then 
\begin{align}\label{na stab minuscule ADE_k}
    \Psi^{\mathsf s}_K([\mathcal L_\chi])=[\chi\otimes \mathcal O^{\vir}_{\fN(\br,\be)}],
\end{align}
where $\mathcal L_\chi\in \Pic(\cN_\theta(\br,\be))$ is the descent of the character $\chi$.
\end{Lemma}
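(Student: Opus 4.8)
Lemma \ref{lem:na stab minuscule ADE} asserts that for a Dynkin quiver $\Gamma$ with minuscule framing $\be$ and nonempty $\cN_\theta(\br,\be)$, the cohomological nonabelian stable envelope $\Psi_H$ sends $[\cN_\theta(\br,\be)]$ to the virtual class $[\fN(\br,\be)]^{\vir}$, and the $K$-theoretic nonabelian stable envelope $\Psi^{\mathsf s}_K$ sends $[\mathcal L_\chi]$ to $[\chi\otimes\mathcal O^{\vir}_{\fN(\br,\be)}]$.

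\textbf{Overall approach.} The plan is to unwind the definitions \eqref{equ on psi dim red}, \eqref{equ on phi relation} of $\Psi_H$ and $\Psi^{\mathsf s}_K$ through dimensional reduction, reducing the claim to a statement about the map $\bPsi_H$ (resp. $\bPsi^{\mathsf s}_K$) applied to the fundamental class (resp. the class of a line bundle) on the symmetric quiver variety $\cM_\theta(\br,\be)$ attached to the tripled quiver $\widetilde{\Gamma}$ with cubic potential $\sw$. By Proposition \ref{prop:na stab and bps coho} (or, for the cohomological case, directly from the correspondence description $[\overline{\Delta}]$ in the proof of Proposition \ref{prop: compare na stab_sym quiver}), $\bPsi_H$ is induced by the closure of the diagonal, so $\bPsi_H([\cM_\theta(\br,\be)])=[\cM_\theta(\br,\be)]$ as a BPS-cohomology class inside $H^\sT(\fM(\br,\be),\sw)$ — i.e. $\bPsi_H$ of the fundamental class is the fundamental class of the stack, because the fundamental class is the unique lift satisfying the characterizing properties of $\bPsi_H$ (restriction to stable locus is the identity, together with the degree bound of Theorem \ref{thm: deg bound na stab coh}, which the genuine fundamental class satisfies since $\overline{\Delta_X}\to\fX$ is the identity on a dense open). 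Then one transports this along the dimensional reduction isomorphisms $\delta_H$ of \eqref{equ on psi dim red}, using that $\delta_H$ takes the fundamental class $[\cN_\theta(\br,\be)]$ to $[\cM_\theta(\br,\be),\sw]$ and the stacky fundamental class $[\fN(\br,\be)]^{\vir}$ to $[\fM(\br,\be),\sw]$ — the latter because $\fN(\br,\be)=Z(\mu)$ is the (derived) zero locus of the moment map inside $\fR(\br,\be)$, so its virtual class is, under $K$-theoretic/cohomological dimensional reduction (Remark \ref{rmk on dim red is}, \cite[Thm.~A.1]{Dav}, \cite[Thm.~3.6]{Isi}), exactly the image of the structure sheaf/fundamental class of the tripled stack. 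The $K$-theoretic argument is parallel, using the window-category description of $\bPsi^{\mathsf s}_K$ (Definition \ref{def of na stab k_window}, Theorem \ref{thm: deg bound na stab k}): $\bPsi^{\mathsf s}_K([\mathcal L_\chi])$ is forced to be $\chi\otimes\mathcal O_{\fM(\br,\be)}$ because this lift restricts to $[\mathcal L_\chi]$ on the stable locus and lies in the window subcategory generated by $\mathcal O_\fX\otimes U$ with $U$ a line — here the hypothesis that $\be$ is \emph{minuscule} enters, guaranteeing that the relevant character $\chi$ (up to the slope shift $\pmb\delta$) is in the correct polytope so that no genuine correction terms appear.

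\textbf{Key steps, in order.} First, I would record the precise statement that $\delta_H$ (resp. $\delta_K$) intertwines fundamental classes: $\delta_H([\cN_\theta])=[\cM_\theta,\sw]$, $\delta_H([\fN]^{\vir})=[\fM,\sw]$, and likewise $\delta_K(\mathcal O_{\cN_\theta})$, $\delta_K(\mathcal O^{\vir}_{\fN})$; this is bookkeeping about the compatibility of pushforward/pullback with the Koszul-type equivalence underlying dimensional reduction (cf. the computations in the proof of Proposition \ref{prop:compare na stab nak}). Second, I would show $\bPsi_H([\cM_\theta(\br,\be)])=[\fM(\br,\be),\sw]$: the natural candidate $[\fM(\br,\be),\sw]$ restricts to $[\cM_\theta(\br,\be),\sw]$ on the stable locus, and by the uniqueness in Theorem \ref{thm: deg bound na stab coh} it suffices to verify the degree bound $\deg_\sigma\mathbf j^*_\sigma[\fM,\sw]<\tfrac12(\dim\fX-\dim\fX^\sigma)$ for all nonzero cocharacters $\sigma:\bC^*\to G$; but $\mathbf j_\sigma^*$ of the fundamental class is the fundamental class of $\fX^\sigma$ (up to shift), whose $\sigma$-degree is $0$, so the bound reduces to $\dim\fX>\dim\fX^\sigma$, which holds unless $\sigma$ acts trivially. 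Third, the parallel $K$-theoretic step: $\bPsi^{\mathsf s}_K([\mathcal L_\chi])=\chi\otimes\mathcal O_{\fM(\br,\be)}$ by Theorem \ref{thm: deg bound na stab k}, checking $\deg_\sigma\mathbf j_\sigma^*(\chi\otimes\mathcal O_\fX)\subsetneq\tfrac12\deg_\sigma e^G_K(R-\Lie G)+\wt_\sigma\mathsf s$; the left side is the single lattice point $\wt_\sigma\chi$, and minusculeness of $\be$ (via the slope shift $\mathsf s=\mathsf s'\otimes\det(T^{1/2})^{\cdots}$ relating back to $\chi$ in a small neighbourhood) places $\wt_\sigma\chi$ strictly in the interior of the Newton polytope of $e^G_K(N_{S_i/R})$ translated by the shift. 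Fourth, combine: $\Psi_H([\cN_\theta])=\delta_H^{-1}\bPsi_H\delta_H([\cN_\theta])=\delta_H^{-1}\bPsi_H([\cM_\theta,\sw])=\delta_H^{-1}([\fM,\sw])=[\fN]^{\vir}$, and similarly in $K$-theory with the bookkeeping of the $\pmb\delta$ slope shift as in Proposition \ref{prop:compare na stab nak}.

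\textbf{Main obstacle.} The genuinely delicate point is the third step — verifying that for a minuscule framing the $K$-theoretic degree bound is satisfied by the \emph{naive} lift $\chi\otimes\mathcal O_\fX$, with no correction terms, and identifying precisely which $\chi$ this is in terms of the slope $\mathsf s$ and the polarization-induced shift $\pmb\delta_\phi$. This requires a careful combinatorial analysis of the Newton polytope $\deg_\sigma e^G_K(R(\br,\be)-\Lie G)$ for each Kempf–Ness cocharacter $\sigma$ of the Dynkin gauge group, and of how minusculeness forces the single weight $\wt_\sigma\chi$ to lie strictly inside it after the shift; this is where all the real content of the lemma is concentrated, the cohomological version being a degeneration of it. I would handle it by reducing, via the branching behaviour of minuscule representations of the simple Lie algebra associated to $\Gamma$, to the rank-one framing node and a direct check there, analogous to the affine type A computation recovering \cite[Thm.~1]{Din} and \cite[Prop.~7]{AO1} mentioned in Corollary \ref{cor: explicit formula nak}.
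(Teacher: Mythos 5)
Your high-level strategy — reduce via the dimensional reduction isomorphisms $\delta_K,\delta_H$ to the tripled quiver with cubic potential, and then use the uniqueness characterization of the nonabelian stable envelope (Theorem \ref{thm: deg bound na stab k}) to pin down $\bPsi^{\mathsf s}_K$ — is the right frame, and it matches the paper's approach. However, there is a genuine gap in your third step, precisely where you identify the candidate class and compute its degree.

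You assert that $\bPsi^{\mathsf s}_K$ applied to $\delta_K([\mathcal L_\chi])$ equals $\chi\otimes\mathcal O_{\fM(\br,\be)}$, and that its $\sigma$-degree is the single lattice point $\wt_\sigma\chi$. This misidentifies the class. The dimensional reduction $\delta_K=\tilde j_*\circ\pr^*$ does \emph{not} send $[\chi\otimes\mathcal O^{\vir}_{\fN(\br,\be)}]$ to a structure sheaf on the tripled-quiver stack; rather, since $\fN(\br,\be)=Z(\mu)$ is the derived zero locus of the moment map and $Z(\mu)\subsetneq Z(\sw)$, one gets
\begin{equation*}
\delta_K\bigl([\chi\otimes\mathcal O^{\vir}_{\fN(\br,\be)}]\bigr)=\can\bigl([\chi\otimes\mathcal O^{\vir}_{Z(\mu)}]\bigr),
\end{equation*}
where $\mathcal O^{\vir}_{Z(\mu)}=0^!\mathcal O_{\fM(\br,\be)}$ is the virtual structure sheaf defined by refined Gysin pullback along the moment map. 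This carries a Koszul resolution, and the restriction $\mathbf i_{\sigma(\bu)}^*$ picks up a nontrivial factor $e^{G^{\sigma(\bu)}}_K\bigl((\fg^\vee)^{\sigma(\bu)\text{-moving}}\bigr)$ from the moving part of the gauge Lie algebra. Consequently the degree to be bounded is \emph{not} a single lattice point; it is the Newton polytope of $\chi\cdot e^{G^{\sigma(\bu)}}_K((\fg^\vee)^{\sigma(\bu)\text{-moving}})$.

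This matters, because with your (incorrect) candidate $\chi\otimes\mathcal O_{\fX}$ the bound would become trivial: the tripled quiver is self-dual, so the polytope $\tfrac12\deg_\sigma e^G_K(R-\Lie G)$ is centered at the origin and any single point near it would already lie in the interior, with no role for the minusculeness of $\be$. But the lemma \emph{does} require minusculeness. In the paper's proof it enters precisely in bounding the extra Euler class factor: the degree estimate reduces to the inequality $\bu\cdot(\be-\mathsf C\br+\mathsf C\bu)>0$ for every nonzero $\bu\in\Sigma_\theta$, and this is proved via Nakajima's theorem that $\lambda_{\be}-\alpha_{\br}$ is a weight of the irreducible module $V(\lambda_{\be})$, together with the minuscule condition $(\lambda,\alpha)\in\{0,\pm1\}$ which yields $\bu\cdot(\be-\mathsf C\br+\mathsf C\bu)=(\lambda_{\be}-\alpha_{\br},\alpha_{\bu})+2\geqslant 1$. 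Without correcting the identification of the candidate class, your argument cannot recover this step, and the inequality — which is the actual content — disappears.

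To repair the proof: replace $\chi\otimes\mathcal O_{\fX}$ by $\can([\chi\otimes\mathcal O^{\vir}_{Z(\mu)}])$, verify via the base-change diagram \eqref{diag which defines zmu} that this class restricts to $\delta_K([\mathcal L_\chi])$ on the stable locus, and then compute $\mathbf i_{\sigma(\bu)}^*$ explicitly using the excess-intersection (refined Gysin) formula for $0^!$. The remaining combinatorics is exactly where the weight-theoretic use of minusculeness lives, and your instinct to reduce it to a branching computation for minuscule representations of $\fg_\Gamma$ is on the right track, but it needs the Euler class factor to have something to bound.
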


\begin{Remark}
We note that $\cN_\theta(\br,\be)$ is a point for minuscule $\be$, so $\cN_\theta(\bv,\bd)^{\sA,\phi}$ in Corollary \ref{cor: explicit formula nak} is also a point. Then \eqref{explicit formula nak_coh} and \eqref{explicit formula nak_k} completely determine $\mathbf{Stab}_\fC$ and $\mathbf{Stab}^{\mathsf s}_\fC$ in this situation.
\end{Remark}

\subsection{Hall operation compatible Hall envelopes}\label{sect on hall comp with hall}
We show for a class of asymmetric quiver varieties, where the asymmetric part comes only from the framing, their Hall envelopes 
are compatible with Hall operations.

\begin{Definition}\label{def: symmetrization}
Let $Q$ be a symmetric quiver and $\bv,\bd_{\In},\bd_{\Out}\in \bN^{Q_0}$ be dimension vectors. The \textit{symmetrization} of $R(\bv,\underline{\bd})$ is defined to be $R(\bv,\mathbf c)$ \eqref{equ on sym qu} where $\mathbf c_i=\max\{\bd_{\In,i},\bd_{\Out,i}\}$. We identify $R(\bv,\underline{\bd})$ as a $G$-subrepresentation of 
\begin{align*}
    R(\bv,\mathbf c)=R(\bv,\underline{\bd})\oplus \bigoplus_{\begin{subarray}{c}i\in Q_0 \\ \mathrm{s.t.}\, \bd_{\In,i}>\bd_{\Out,i}   \end{subarray}}
     \Hom(\bC^{\bv_i},\bC^{\bd_{\In,i}-\bd_{\Out,i}})\oplus \bigoplus_{\begin{subarray}{c}i\in Q_0 \\ \mathrm{s.t.}\, \bd_{\In,i}<\bd_{\Out,i}   \end{subarray}} \Hom(\bC^{\bd_{\Out,i}-\bd_{\In,i}},\bC^{\bv_i}).
\end{align*}
\end{Definition}

\begin{Definition}\label{def: pseudo-self-dual}
Let $Q$ be a symmetric quiver and $\bv,\bd_{\In},\bd_{\Out}\in \bN^{Q_0}$ be dimension vectors and $\sA$ be a subtorus of flavour group $\Aut_G R(\bv,\underline{\bd})$. We say that the $\sA$-action on $R(\bv,\underline{\bd})$ is \textit{pseudo-self-dual} if it extends to a self-dual $\sA$-action on the symmetrization $R(\bv,\mathbf c)$. 
We say that the $\sA$-action on $\cM_\theta(\bv,\underline{\bd})$ is \textit{pseudo-self-dual} if it is induced from a pseudo-self-dual action on $R(\bv,\underline{\bd})$.
\end{Definition}

Here is the main theorem of this section. 
\begin{Theorem}\label{thm: AFSQV}
Let $Q$ be a symmetric quiver and $\bv,\bd_{\In},\bd_{\Out}\in \bN^{Q_0}$ be dimension vectors such that $\bd_{\In,i}\geqslant \bd_{\Out,i}$ for all $i\in Q_0$, choose cyclic stability $\theta$
\eqref{equ on cycc condit}. Let $\sT\subseteq  \Aut_G R(\bv,\underline{\bd})$ be a torus in the flavour group 
and $\sA$ be a pseudo-self-dual subtorus of $\sT$, $\fC$ be a chamber associated with $\sA$ action on $\cM_\theta(\bv,\underline{\bd})$, 
and $\mathsf s$ be a generic slope. 

Then Hall envelopes $\HallEnv_\fC$ and $\HallEnv^{\mathsf s}_\fC$ are compatible with Hall operations. 
\end{Theorem}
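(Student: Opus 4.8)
The plan is to reduce Theorem \ref{thm: AFSQV} to the symmetric case already handled in Corollary \ref{cor: hall compatible_sym quiver}, by exploiting the vector bundle structure coming from the asymmetry of the framing. Concretely, set $\mathbf c_i = \bd_{\In,i}$ (since $\bd_{\In,i}\geqslant \bd_{\Out,i}$, the symmetrization has $\mathbf c = \bd_{\In}$) and write $R(\bv,\mathbf c) = R(\bv,\underline{\bd}) \oplus E_0$ where $E_0 = \bigoplus_i \Hom(\bC^{\bv_i},\bC^{\bd_{\In,i}-\bd_{\Out,i}})$. Because $\theta$ is cyclic and the extra framing maps go \emph{out} of the gauge nodes, $\cM_\theta(\bv,\mathbf c)$ is the total space of a $\sT$-equivariant vector bundle $E$ over $\cM_\theta(\bv,\underline{\bd})$ (the descent of $E_0$), with projection $\pi$ and zero section $i$. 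The pseudo-self-duality hypothesis says precisely that the $\sA$-action on $\cM_\theta(\bv,\mathbf c)$ is self-dual, so $\cM_\theta(\bv,\mathbf c)$ is a symmetric quiver variety in the sense of Definition \ref{def of sym quiver}, and Corollary \ref{cor: hall compatible_sym quiver} applies to it: its Hall envelope equals its (honest) stable envelope, and is compatible with Hall operations. The idea is then to transport this compatibility down along $\pi$ and $i$.

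\textbf{Key steps.} First I would record the compatibility of the relevant maps with the vector bundle structure. On the fixed-point side, for each $\phi\colon\sA\to G$ appearing in \eqref{fix pt decomp}, the bundle $E$ restricts to $\cM_\theta(\bv,\underline{\bd})^{\sA,\phi}$ and decomposes $\sA$-equivariantly as $E|^{\text{fix}}\oplus E|^{+}\oplus E|^{-}$; the interpolation maps $\widetilde{\bPsi}_H,\widetilde{\bPsi}_K$ and Hall operations $\mathfrak m^\phi_\fC$ are all built out of attracting-set correspondences and framing-enlargement constructions, which behave functorially for the projection $\pi$ (Proposition \ref{prop:vb and stab_attr}, Proposition \ref{prop:vb and stab_attr_corr}) and for the zero section $i$ (Proposition \ref{prop:vb and stab_repl}, Proposition \ref{prop:vb and stab_repl_corr}). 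Second, I would split $E = E_+\oplus E_-$ into its chamber-attracting and chamber-repelling parts with respect to $\fC$ — here the sign hypothesis $\bd_{\In,i}\geqslant\bd_{\Out,i}$ is what guarantees the relevant pieces sit on the correct (antidominant) side — and use the commuting squares \eqref{vb and stab_attr_coh}, \eqref{vb and stab_repl_coh} (and their $K$-theoretic analogues, with the slope shift $\mathsf s'=\mathsf s\otimes(\det E_-)^{1/2}$) to relate $\HallEnv_\fC$ on $\cM_\theta(\bv,\underline{\bd})$ to $\HallEnv_\fC = \Stab_\fC$ on $\cM_\theta(\bv,\mathbf c)$. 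Third, I would chase the large diagram: the outer square (for the symmetric variety $\cM_\theta(\bv,\mathbf c)$) commutes by Corollary \ref{cor: hall compatible_sym quiver}; the four ``vertical'' faces connecting the two varieties commute by the vector-bundle functoriality just assembled; since the map $\psi$ (composition of a $\pi^*$-pullback and an $i_*$-pushforward, as in \eqref{eq: psi}) is injective after suitable localization — indeed $\psi$ is injective on integral classes because $i_*$ is injective and $\pi^*$ is an isomorphism on critical cohomology \eqref{pb is iso on coho} and critical $K$-theory \eqref{pb is iso on k} — the inner square, which is the desired Hall-operation compatibility for $\cM_\theta(\bv,\underline{\bd})$, must commute as well. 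I would run the cohomological argument in full and remark that the $K$-theoretic one is identical modulo bookkeeping of slope shifts and determinant twists.

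\textbf{Main obstacle.} I expect the crux to be verifying that all the auxiliary constructions entering $\HallEnv$ — namely the interpolation map $\widetilde{\bPsi}$ (built via adding $\bv$-framings and the one-parameter stable envelope $\Stab_+$) and the Hall operation $\mathfrak m^\phi_\fC$ (built via the parabolic induction diagram $\mathfrak L^\phi_\fC$) — interact \emph{simultaneously} with \emph{both} the gauge-node enlargement and the vector bundle $E$ in the way required to make the vertical faces of the diagram commute. The subtlety is that enlarging the framing and splitting off $E$ involve different, but non-commuting-looking, modifications of the ambient stack, and the fixed loci $\fM(\bv,\underline{\bd})^{\sA,\phi}$ versus $\fM(\bv,\mathbf c)^{\sA,\phi}$ must be matched compatibly with the decomposition $E|^{\text{fix}}\oplus E|^{\pm}$. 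The cleanest route is probably to phrase everything at the level of attracting-closure correspondences on the stack (as in Section \ref{sect on stab and vb}), where Proposition \ref{tri lem for unsym fram} and Lemmata \ref{lem: attr vb}, \ref{lem: attr closure vb} give exactly the needed ``leading-term in the $\bC^*_u$-parameter'' comparison between correspondences on $\mathrm{Tot}(E)$ and on the base; deducing the compatibility of $\mathfrak m^\phi_\fC\circ\widetilde{\bPsi}$ from that of the symmetric model then becomes a matter of matching leading coefficients, as in the proof of Proposition \ref{prop:vb and stab_repl_corr}.
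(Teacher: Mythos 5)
Your cohomological strategy is broadly in the spirit of the paper's, and the ``cleanest route'' paragraph correctly identifies the right tools (attracting-closure correspondences, Proposition~\ref{tri lem for unsym fram}, Lemmata~\ref{lem: attr vb}, \ref{lem: attr closure vb}). But the main ``deduce the inner square from the outer one via injectivity of $\psi$'' argument has a genuine gap that your own ``obstacle'' paragraph already senses. The vector-bundle functoriality you invoke (Propositions~\ref{prop:vb and stab_attr}, \ref{prop:vb and stab_repl}, etc.) is established only for \emph{stable envelopes on GIT quotients}; it says nothing about how the Hall operations $\mathfrak m^\phi_\fC$ on the stacks $\fM(\bv,\underline{\bd})$ versus $\fM(\bv,\mathbf c)$, or the interpolation maps $\widetilde{\bPsi}_H$ (built from a \emph{framed enlargement}, a different modification), commute with $\pi^*$ and $i_*$. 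Making the vertical faces of your prism commute requires precisely the lemma the paper proves first: Lemma~\ref{lem: prestab corr}, which pins down $\HallEnv_\fC$ as the explicit attracting-closure correspondence $[\overline{\Attr}_\fC(\Delta)]$. Note also that the paper's bundle $E=E_+\oplus E_-$ lives over the framed-enlarged variety $\cM_\theta(\mathbf v,\underline{\bd}^{\bv})$, not over $\cM_\theta(\bv,\underline{\bd})$ as in your set-up, because that is what interacts correctly with the interpolation map; this is exactly the ``simultaneous interaction with the gauge-node enlargement'' issue you flag. After Lemma~\ref{lem: prestab corr}, the paper introduces the auxiliary torus $\widetilde{\sA}=\sA\times\bC^*_t$ and uses Proposition~\ref{tri lem for unsym fram} together with Theorem~\ref{tri lem for coh stab} to identify $\Stab_+\circ\HallEnv_\fC$ with $[\overline{\Attr}_{\widetilde{\fC}}(\Delta)]$, then matches this directly with the correspondence defining $\mathfrak m^\phi_\fC\circ\widetilde{\bPsi}^\phi_H$; there is no outer-to-inner diagram descent of the kind you propose.

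The $K$-theoretic claim --- that it is ``identical modulo bookkeeping of slope shifts and determinant twists'' --- is wrong, and this is a real gap. The cohomological argument rests ultimately on the fact that attracting closures give stable envelope correspondences (Theorem~\ref{construct coh stab corr on symm quiver var}, Theorem~\ref{thm:attr plus repl}); the paper explicitly states (cf.\ the remark before Proposition~\ref{prop: stab=prestab k}) that there is \emph{no} $K$-theoretic analogue of Theorem~\ref{thm:attr plus repl}. Consequently the paper's $K$-theory proof is a genuinely different argument: it reduces via Lemma~\ref{lem:res of na stab to stable locus} to showing $\mathfrak m^\phi_\fC\circ\widetilde{\bPsi}^{\phi,\mathsf s'}_K(\gamma)\in\im(\widetilde{\bPsi}^{\mathsf s}_K)$, then appeals to the uniqueness characterization of $\widetilde{\bPsi}^{\mathsf s}_K$ via degree bounds on KN strata (Proposition~\ref{prop:Psi}), and verifies those bounds by an equivariant-localization computation parallel to the proof of Theorem~\ref{thm: hall k_sym quot}. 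Here the attracting part of $E$ \emph{is} used (via Proposition~\ref{prop:vb and stab_attr} to pass to $\cM_\theta(\bv,\bd_{\In})$), and the hypothesis $\bd_{\In}\geqslant\bd_{\Out}$ enters by making $F^{\sA,\phi\text{-repl}}$ behave correctly in the degree estimates --- but the key step is a Newton-polytope inclusion, not a diagram chase. Your proposal would need an independent $K$-theoretic argument, not a transcription of the cohomological one.
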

By Theorem \ref{thm: AFSQV} and Lemma \ref{tri lem for compatible prestab}, we get the triangle lemma for the above asymmetric 
quiver varieties. 
\begin{Corollary}\label{cor: tri lem for AFSQV}
In the setting of Theorem \ref{thm: AFSQV} and let $X=\cM_\theta(\bv,\underline{\bd})$, then both diagrams in \eqref{cd: tri lem} commute.
\end{Corollary}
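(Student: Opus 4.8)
The final statement is Corollary \ref{cor: tri lem for AFSQV}, which says that in the setting of Theorem \ref{thm: AFSQV}, the two triangle diagrams in \eqref{cd: tri lem} commute for $X = \cM_\theta(\bv,\underline{\bd})$.

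My plan is to derive this as an immediate consequence of the two results explicitly cited just above it: Theorem \ref{thm: AFSQV} establishes that the Hall envelopes $\HallEnv_\fC$ and $\HallEnv^{\mathsf s}_\fC$ are compatible with the Hall operations in the sense of Definition \ref{defi of prestab}, and Lemma \ref{tri lem for compatible prestab} says that whenever $\HallEnv_{\fC/\fC'}$ (resp. $\HallEnv^{\mathsf s'}_{\fC/\fC'}$) is compatible with the Hall operation, the corresponding triangle in \eqref{cd: tri lem} commutes. So the proof is essentially a bookkeeping argument: I must check that the hypotheses of Lemma \ref{tri lem for compatible prestab} are verified for the fixed-locus $X^{\sA'}$ and the residual chamber $\fC/\fC'$, not just for $X$ and $\fC$.

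First I would recall the setup of Lemma \ref{tri lem for compatible prestab}: we fix a face $\fC'$ of $\fC$ and let $\sA'\subseteq\sA$ be the associated subtorus; the slope appearing on $X^{\sA'}$ is $\mathsf s' = \mathsf s|_{X^{\sA'}}\otimes\det(N^-_{X^{\sA'}/X})^{1/2}$. The key point to verify is that $X^{\sA'} = \cM_\theta(\bv,\underline{\bd})^{\sA'}$ is again a disjoint union of quiver varieties of the same asymmetric shape, with a pseudo-self-dual residual $\sA$-action. This follows from the analogue of Lemma \ref{fix pts of sym quiv var}: by the argument recording the $\sA'$-fixed components via lifts $\phi\colon \sA'\to G$, each component $\cM_{\theta_\phi}(Q_\phi,\bv_\phi,\underline{\bd_\phi})$ is again a quiver variety with $\bd_{\phi,\In,i}\geqslant \bd_{\phi,\Out,i}$ coordinatewise (taking $\sA'$-fixed parts of the framing spaces preserves the inequality), with cyclic stability inherited, and — since pseudo-self-duality means the $\sA$-action extends to a self-dual action on the symmetrization $R(\bv,\mathbf c)$, whose $\sA'$-fixed locus is again a symmetrization of the $\sA'$-fixed locus of $R(\bv,\underline{\bd})$ — the residual $\sA$-action on $X^{\sA'}$ is again pseudo-self-dual in the sense of Definition \ref{def: pseudo-self-dual}. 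Hence Theorem \ref{thm: AFSQV} applies verbatim to $(X^{\sA'}, \fC/\fC', \mathsf s')$, giving that $\HallEnv_{\fC/\fC'}$ and $\HallEnv^{\mathsf s'}_{\fC/\fC'}$ are compatible with the Hall operations on $X^{\sA'}$. With this hypothesis in hand, Lemma \ref{tri lem for compatible prestab} immediately yields commutativity of both triangles in \eqref{cd: tri lem}.

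The only real content — and the step I expect to require the most care — is the verification that the residual data on the fixed locus $X^{\sA'}$ genuinely falls back into the hypotheses of Theorem \ref{thm: AFSQV}, in particular that the inequality $\bd_{\phi,\In,i}\geqslant\bd_{\phi,\Out,i}$ and the pseudo-self-duality are both preserved under passing to $\sA'$-fixed loci, and that the slope $\mathsf s'$ is still generic (which follows from Definition \ref{def: generic slope} since the correction $\det(N^-_{X^{\sA'}/X})^{1/2}$ shifts by a character that does not affect genericity for the further $\sA/\sA'$-direction — or one replaces $\mathsf s$ by a generic perturbation throughout). Once these compatibility-of-hypotheses points are confirmed, the corollary is a one-line deduction and there is nothing further to prove.
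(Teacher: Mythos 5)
Your proposal is correct and follows exactly the paper's route: the corollary is stated as an immediate consequence of Theorem \ref{thm: AFSQV} (Hall envelopes are compatible with Hall operations) fed into Lemma \ref{tri lem for compatible prestab}, and the only nontrivial point is the one you isolate, namely that the residual data on $X^{\sA'}$ (cyclic stability, $\bd_{\phi,\In}\geqslant\bd_{\phi,\Out}$ coordinatewise, pseudo-self-duality of the residual $\sA$-action via the $\sA'$-fixed locus of the symmetrization, genericity of $\mathsf s'$) still satisfies the hypotheses of Theorem \ref{thm: AFSQV} so that the lemma's hypothesis on $\HallEnv_{\fC/\fC'}$ holds. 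The paper leaves this bookkeeping implicit, so your more explicit verification is a welcome expansion rather than a different argument.
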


\subsubsection{Proof of Theorem \ref{thm: AFSQV}, cohomology case}

We begin with the following lemma.
In below, we omit the canonical map in Lemma \ref{can map induce stab} and simply write 
$$[\overline{\Attr}_\fC\left(\Delta_{(-)}\right)]:=\can\circ\,[\overline{\Attr}_\fC\left(\Delta_{(-)}\right)].$$

\begin{Lemma}\label{lem: prestab corr}
In the setting of Theorem \ref{thm: AFSQV}, the Hall envelope 
$$\HallEnv_\fC\colon H^\sT(\cM_\theta(\bv,\underline{\bd})^{\sA},\sw)\longrightarrow H^\sT(\cM_\theta(\bv,\underline{\bd}),\sw)$$ is induced by the correspondence
\begin{align*}
    \left[\overline{\Attr}_\fC(\Delta_{\cM_\theta(\bv,\underline{\bd})^{\sA}})\right]\in H^\sT(\cM_\theta(\bv,\underline{\bd})\times \cM_\theta(\bv,\underline{\bd})^{\sA},\sw\boxminus\sw)_{\Attr^f_\fC}\;.
\end{align*}
\end{Lemma}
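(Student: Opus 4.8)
The strategy is to realize the Hall envelope $\HallEnv_\fC$ for the asymmetric quiver variety $\cM_\theta(\bv,\underline{\bd})$ by a vector-bundle argument reducing to the symmetrization $\cM_\theta(\bv,\mathbf c)$, which is a genuine symmetric quiver variety (hence a symmetric GIT quotient). Concretely, since $\bd_{\In,i}\geqslant \bd_{\Out,i}$ for all $i\in Q_0$, Definition~\ref{def: symmetrization} exhibits $R(\bv,\underline{\bd})$ as a $G$-subrepresentation of $R(\bv,\mathbf c)$ with $\mathbf c_i=\bd_{\In,i}$, and the complementary directions are precisely $E:=\bigoplus_i \Hom(\bC^{\bv_i},\bC^{\bd_{\In,i}-\bd_{\Out,i}})$, which after descent gives a $\sT$-equivariant vector bundle on $\cM_\theta(\bv,\underline{\bd})$ with $\cM_\theta(\bv,\mathbf c)\cong \mathrm{Tot}(E)$ (using cyclic stability). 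One checks that $E$ is \emph{repelling} for the relevant $\bC^*_u$-action scaling the extra framing, because the added framing directions $\Hom(\bC^{\bv_i},\bC^{\bd_{\In,i}-\bd_{\Out,i}})$ are of ``$B$-type''. Thus I would invoke Proposition~\ref{prop:vb and stab_repl_corr}: the attracting closure $[\overline{\Attr}_\fC(\Delta_{\cM_\theta(\bv,\mathbf c)^\sA})]$ induces the cohomological stable envelope for the symmetric quiver variety $\cM_\theta(\bv,\mathbf c)$ (this is Theorem~\ref{construct coh stab corr on symm quiver var}, available because $\sA$ is pseudo-self-dual, so the $\sA$-action on $\cM_\theta(\bv,\mathbf c)$ is self-dual), and therefore $[\overline{\Attr}_\fC(\Delta_{\cM_\theta(\bv,\underline{\bd})^\sA})]$ induces the cohomological stable envelope for the base $\cM_\theta(\bv,\underline{\bd})$ — but we must be careful: in the asymmetric case there is no reason the stable envelope \emph{exists}; what Proposition~\ref{prop:vb and stab_repl_corr} really gives is that the attracting-closure correspondence produces a well-defined map satisfying axioms (i) and (ii), i.e.\ exactly the Hall envelope (cf.\ Remark~\ref{rmk: pstab supp and norm}).

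The key identification step is then to show that this attracting-closure map agrees with $\HallEnv_\fC$ as defined via $\mathbf k^*\circ \mathfrak{m}^\phi_\fC \circ \widetilde{\bPsi}_H^\phi$. For this I would argue componentwise over the fixed-locus decomposition \eqref{fix pt decomp}: on each component $\cM_\theta(\bv,\underline{\bd})^{\sA,\phi}$, both the interpolation map $\widetilde{\bPsi}_H^\phi$ and the Hall operation $\mathfrak{m}^\phi_\fC$ are built from correspondences (graphs of projections, attracting closures, the auxiliary $\Attr_+$ for the extra $V'$-framing), and composing them and restricting to the stable locus gives a correspondence which, by the same irreducibility-plus-dimension bookkeeping used in the proof of Proposition~\ref{prop: compare na stab_sym quiver} (the identity \eqref{equ attplusDelta} and the computation \eqref{equ on qMattr}), collapses to $[\overline{\Attr}_\fC(\Delta_{\cM_\theta(\bv,\underline{\bd})^{\sA,\phi}})]$. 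The crucial input is that $E$ being repelling forces the extra-framing ``$V'$-slots'' of the auxiliary symmetrizing bundle to behave compatibly with the attraction, so that the attracting closure in the total space $\mathrm{Tot}(E)$ intersects the base-plus-fixed-locus stratum exactly in the attracting closure of the diagonal of the base; this is precisely the content of Lemmata~\ref{lem: attr vb} and \ref{lem: attr closure vb}, which I would apply with $X=\cM_\theta(\bv,\underline{\bd})$ and $\mathrm{Tot}(E)=\cM_\theta(\bv,\mathbf c)$.

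Granting Lemma~\ref{lem: prestab corr} (and its $K$-theoretic analogue, to be proved identically using Proposition~\ref{prop:vb and stab_repl_corr}'s $K$-theoretic shadow, the window-category description of $\widetilde{\bPsi}_K^\phi$, and the $K$-theoretic Hall operations of \S\ref{app on k symm var}), the compatibility with Hall operations follows by the same vector-bundle transfer: Theorems~\ref{thm: hall coh_sym quot} and \ref{thm: hall k_sym quot} give the Hall-compatibility square for the \emph{symmetric} variety $\cM_\theta(\bv,\mathbf c)$, and Proposition~\ref{tri lem for unsym fram} (applied to the attracting-closure correspondences, with $E$ the symmetrizing bundle) transfers the identity of compositions of correspondences from $\mathrm{Tot}(E)$ down to the base $\cM_\theta(\bv,\underline{\bd})$. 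More precisely, I would show $\mathfrak{m}^\phi_\fC \circ \widetilde{\bPsi}_H^\phi$ and $\widetilde{\bPsi}_H \circ \HallEnv_\fC$ are induced by the \emph{same} correspondence on $\fM(\bv,\underline{\bd})\times \cM_\theta(\bv,\underline{\bd})^{\sA,\phi}$ — namely the closure $\overline{\Attr_\fC(\Delta_{\cM_\theta(\bv,\underline{\bd})^{\sA,\phi}})}$ taken inside the stack — by pulling back the corresponding identity for the symmetric variety along the zero section of the symmetrizing bundle (the leading-$u$ coefficient comparison as in the proof of Proposition~\ref{tri lem for unsym fram}). The main obstacle I anticipate is the bookkeeping in Lemma~\ref{lem: prestab corr}: one must track carefully how the three layers of framing — the original $\underline{\bd}$, the symmetrizing complement $\mathbf c - \underline{\bd}$, and the auxiliary $\bv$-framing $V'$ used to define $\widetilde{\bPsi}$ — interact with the two $\bC^*$-actions (the $\bC^*\subseteq G'$ of \eqref{equ on Cstar} and the $\bC^*_u$ scaling $E$), and verify that all the relevant closures are irreducible of the expected dimension so that the fundamental-class identities hold on the nose rather than up to lower-dimensional error terms. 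Everything else is an assembly of already-proven statements.
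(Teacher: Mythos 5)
Your first paragraph misapplies Proposition~\ref{prop:vb and stab_repl_corr}. That proposition (and its companion~\ref{prop:vb and stab_attr_corr}) requires the bundle $E$ to be repelling with respect to the $\sA$-chamber $\fC$; the auxiliary one-dimensional torus $\bC^*_u$ scaling the fibers of $E$ is only a technical device \emph{inside the proof} of that proposition, not the object that needs to be repelling. In the setting of Lemma~\ref{lem: prestab corr}, there is no hypothesis at all on the $\sA$-weights of the symmetrizing complement $E=\bigoplus_i\Hom(\bC^{\bv_i},\bC^{\bd_{\In,i}-\bd_{\Out,i}})$, and in general it is \emph{not} $\sA$-repelling for $\fC$, nor does it split as an $\sA$-attracting plus $\sA$-repelling sum. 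That extra hypothesis is exactly what distinguishes Proposition~\ref{prop: stab=prestab coh} (where $\HallEnv_\fC$ equals the stable envelope) from the present lemma, which is strictly more general. Moreover, your fallback reasoning — that Proposition~\ref{prop:vb and stab_repl_corr} ``really gives'' a map satisfying axioms (i) and (ii), ``i.e.\ exactly the Hall envelope'' — is a non-sequitur: axioms (i) and (ii) do not single out a unique map, and $\HallEnv_\fC$ is by definition the composition $\mathbf k^*\circ\mathfrak m^\phi_\fC\circ\widetilde{\bPsi}^\phi_H$, not something characterized axiomatically; Remark~\ref{rmk: pstab supp and norm} only says $\HallEnv_\fC$ satisfies (i) and (ii), not the converse.

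The decisive step that is missing is an explicit description of $\widetilde{\bPsi}^\phi_H$. This is what the paper does, and the symmetrization it uses is genuinely different from yours: one views the \emph{auxiliary} symmetric quiver variety $\cM_\theta(\bv,\bv+\bd_{\In})$ as the total space of $E=E_+\oplus E_-$ over $\cM_\theta(\bv,\underline{\bd}^\bv)$, where $E_+=\bigoplus_{\bd_{\In,i}>\bd_{\Out,i}}\Hom(V_i,\bC^{\bd_{\In,i}-\bd_{\Out,i}})$ and $E_-=\bigoplus_i\Hom(V_i,V'_i)$, and checks that $E_+$ is attracting and $E_-$ repelling with respect to the \emph{auxiliary} $\bC^*\subseteq G'$ of~\eqref{equ on Cstar} scaling the $V'_i$ — which holds automatically, with no extra assumption. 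Propositions~\ref{prop:vb and stab_attr_corr} and~\ref{prop:vb and stab_repl_corr} applied with that $\bC^*$ (not $\sA$) then identify the $\Stab_+$ appearing in Definition~\ref{def of nona stab} with the attracting-closure correspondence, which via the identities~\eqref{equ attplusDelta} and~\eqref{equ on qMattr} gives $\widetilde{\bPsi}_H=[\overline{\Delta_{\cM_\theta(\bv,\underline{\bd})}}]$. Once that is in place, the composition with $\mathfrak m^\phi_\fC$ follows exactly the argument of Proposition~\ref{prop: compare na stab_sym quiver} and restriction to the stable locus yields the claimed correspondence. In short, you symmetrize the wrong variety over the wrong torus: the correct symmetrization is of $\cM_\theta(\bv,\underline{\bd}^\bv)$ for the $\bC^*$-chamber, not of $\cM_\theta(\bv,\underline{\bd})$ for the $\sA$-chamber.
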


\begin{proof}
Consider the vector bundle $E=E_+\oplus E_-$ on $\cM_\theta(\mathbf v,\underline{\bd}^{\bv})$, where the fibers of $E_+$ and $E_-$ are
\begin{equation}\label{bundle added to AFSQV}
\bigoplus_{\begin{subarray}{c}i\in Q_0 \\ \mathrm{s.t.}\, \bd_{\In,i}>\bd_{\Out,i}   \end{subarray}}
\Hom(V_i,\bC^{\bd_{\In,i}-\bd_{\Out,i}})\;,\quad \bigoplus_{i\in Q_0}\Hom(V_i,V'_i)\;.
\end{equation}
Then $\mathrm{Tot}(E)$ is isomorphic to the symmetric quiver variety $\cM_\theta(\mathbf v,\bv+\bd_{\In})$.
Note that
$E_+$ (resp.~$E_-$) is attracting (resp.~repelling) with respect to the positive chamber under the $\bC^*$-action in \eqref{equ on Cstar}. Then by 
Propositions \ref{prop:vb and stab_attr_corr}, \ref{prop:vb and stab_repl_corr}, the $\Stab_+$ (in Definition \ref{def of nona stab}) is induced by the correspondence 
$$\left[\overline{\Attr}_+\left(\Delta_{\cM_\theta(\mathbf v,\underline{\bd})}\right)\right]\subseteq  \cM_\theta(\mathbf v,\underline{\bd}^{\bv})\times \cM_\theta(\mathbf v,\underline{\bd}),$$ 
where the closure is taken inside $\cM_\theta(\mathbf v,\underline{\bd}^{\bv})\times \cM_\theta(\mathbf v,\underline{\bd})$.

So $\widetilde{\bPsi}_H$ (in Definition \ref{def of nona stab}) is induced by the correspondence given by the irreducible closed substack $$\left[\left(\overline{\Attr}_+\left(\Delta_{\cM_\theta(\mathbf v,\underline{\bd})}\right)\bigcap\left(\mathring\cM_\theta(\mathbf v,\underline{\bd}^{\bv})\times \cM_\theta(\mathbf v,\underline{\bd})\right)\right)/G'\right]\subseteq  \mathfrak{M}(\bv,\underline{\bd})\times \cM_\theta(\mathbf v,\underline{\bd}).$$
As in \eqref{equ attplusDelta}, we have 
$$\left[\left(\Attr_+\left(\Delta_{\cM_\theta(\mathbf v,\underline{\bd})}\right)\bigcap\left(\mathring\cM_\theta(\mathbf v,\underline{\bd}^{\bv})\times \cM_\theta(\mathbf v,\underline{\bd})\right)\right)/G'\right]=\Delta_{\cM_\theta(\mathbf v,\underline{\bd})}\subseteq  \cM_\theta(\mathbf v,\underline{\bd})\times \cM_\theta(\mathbf v,\underline{\bd}).$$ 
Therefore $\widetilde{\bPsi}_H$ is induced by the correspondence given by the closure of the diagonal of stable locus $\cM_\theta(\mathbf v,\underline{\bd})$ inside the stack $\mathfrak{M}(\bv,\underline{\bd})\times \cM_\theta(\mathbf v,\underline{\bd})$, i.e. $$\overline{\Delta_{\cM_\theta(\mathbf v,\underline{\bd})}}\subseteq  \mathfrak{M}(\bv,\underline{\bd})\times \cM_\theta(\mathbf v,\underline{\bd}).$$
Let $\phi:\sA\to G$ be a homomorphism that appears in \eqref{fix pt decomp}, then  $\widetilde{\bPsi}_H^{\phi}$ is similarly induced by the correspondence 
\begin{align*}
    [\overline{\Delta_{\cM_\theta(\mathbf v,\underline{\bd})^{\sA,\phi}}}].
\end{align*}
The same argument as Proposition \ref{prop: compare na stab_sym quiver} shows that $\mathfrak{m}^\phi_{\fC}\circ \widetilde{\bPsi}_H^\phi$ is induced by the correspondence $$[Z]\in H^\sT(\fM(\bv,\underline{\bd})\times \cM_\theta(\bv,\underline{\bd})^{\sA,\phi},\sw\boxminus\sw)_{Z}, $$
where $Z=\overline{\Attr}_\fC\left(\Delta_{\cM_\theta(\bv,\underline{\bd})^{\sA,\phi}}\right)$ is the closure of $\Attr_\fC\left(\Delta_{\cM_\theta(\bv,\underline{\bd})^{\sA,\phi}}\right)$ inside the stack $\fM(\bv,\underline{\bd})\times \cM_\theta(\bv,\underline{\bd})^{\sA,\phi}$ (here attracting set is taken inside the stable locus 
$\cM_\theta(\bv,\underline{\bd})\times \cM_\theta(\bv,\underline{\bd})^{\sA,\phi}$). 

It follows that $\HallEnv_\fC=\mathbf k^*\circ \mathfrak{m}^\phi_{\fC}\circ \widetilde{\bPsi}_H^\phi$ is induced by the correspondence $\left[Z\big|_{\cM_\theta(\bv,\underline{\bd})\times \cM_\theta(\bv,\underline{\bd})^{\sA,\phi}}\right]$, which is exactly $\left[\overline{\Attr}_\fC(\Delta_{\cM_\theta(\bv,\underline{\bd})^{\sA,\phi}})\right]$,
 where the closure is taken in $\cM_\theta(\bv,\underline{\bd})\times \cM_\theta(\bv,\underline{\bd})^{\sA,\phi}$
\end{proof}

Let $\bC^*_t\subseteq  G'$ be a torus in the center of the flavour group which acts on $V'_i$ in \eqref{diag of R'2} with weight $-1$ for all $i\in Q_0$, then $\widetilde{\sA}:=\sA\times \bC^*_t$ naturally acts on $R(\bv,\underline{\bd}^{\bv})$. Let 
$$K_{\bC^*_t}(\pt)=\bQ[t^\pm], \quad K_{\sA}(\pt)=\bQ[a_i^\pm]_{i=1}^r, $$ and consider the chamber $\widetilde{\fC}$ in $\Lie(\widetilde{\sA})_\bR$ such that $\fC_+=\{a_i=0\}_{i=1}^r\times\{t>0\}$ is a face of $\widetilde{\fC}$ and $\widetilde{\fC}/\fC_+=\fC$. Combining Lemma \ref{lem: prestab corr} with Proposition \ref{tri lem for unsym fram} and Theorem \ref{tri lem for coh stab}, we see that the composition 
\begin{align*}
    \Stab_+\circ \HallEnv_\fC\colon H^{\sT\times G'}(\cM_\theta(\bv,\underline{\bd})^{\sA,\phi},\sw)\to H^{\sT\times G'}(\cM_\theta(\bv,\underline{\bd}^{\bv}),\sw)
\end{align*}
is induced by the correspondence $$\left[\overline{\Attr}_{\widetilde{\fC}}\left(\Delta_{\cM_\theta(\bv,\underline{\bd})^{\sA,\phi}}\right)\right],$$ which is the closure of the attracting set $\Attr_{\widetilde{\fC}}\left(\Delta_{\cM_\theta(\bv,\underline{\bd})^{\sA,\phi}}\right)$ inside the variety $\cM_\theta(\mathbf v,\underline{\bd}^{\bv})\times \cM_\theta(\bv,\underline{\bd})^{\sA,\phi}$.

Therefore the map $\widetilde{\bPsi}_H\circ \HallEnv_\fC$ is induced by the correspondence $$[S]\in H^\sT(\fM(\bv,\underline{\bd})\times \cM_\theta(\bv,\underline{\bd})^{\sA,\phi},\sw\boxminus\sw)_{S},$$
where
\begin{align*}
S=\left[\frac{\overline{\Attr}_{\widetilde{\fC}}\left(\Delta_{\cM_\theta(\bv,\underline{\bd})^{\sA,\phi}}\right)\bigcap \left(\mathring\cM_\theta(\mathbf v,\underline{\bd}^{\bv})\times \cM_\theta(\bv,\underline{\bd})^{\sA,\phi}\right)}{G'}\right]
\end{align*}
is an irreducible closed substack of $\fM(\bv,\underline{\bd})\times \cM_\theta(\bv,\underline{\bd})^{\sA,\phi}$. We have 
\begin{align*}
\Attr_{\widetilde{\fC}}\left(\Delta_{\cM_\theta(\bv,\underline{\bd})^{\sA,\phi}}\right)\bigcap \left(\mathring\cM_\theta(\mathbf v,\underline{\bd}^{\bv})\times \cM_\theta(\bv,\underline{\bd})^{\sA,\phi}\right)=\left(\mathsf{q}\times \id_{ \cM_\theta(\bv,\underline{\bd})^{\sA,\phi}}\right)^{-1}\left(\Attr_\fC\left(\Delta_{\cM_\theta(\bv,\underline{\bd})^{\sA,\phi}}\right)\right),
\end{align*}
where $\mathsf q:\mathring\cM_\theta(\mathbf v,\underline{\bd}^{\bv})\to \mathfrak{M}(\mathbf v,\underline{\bd})$ is the quotient map and $\Attr_\fC\left(\Delta_{\cM_\theta(\bv,\underline{\bd})^{\sA,\phi}}\right)$ is the attracting set of $\Delta_{\cM_\theta(\bv,\underline{\bd})^{\sA,\phi}}$ in $\cM_\theta(\bv,\underline{\bd})\times \cM_\theta(\bv,\underline{\bd})^{\sA,\phi}$, regarded as a locally closed substack in $\fM(\bv,\underline{\bd})\times \cM_\theta(\bv,\underline{\bd})^{\sA,\phi}$. It follows that $S$ contains $\Attr_\fC\left(\Delta_{\cM_\theta(\bv,\underline{\bd})^{\sA,\phi}}\right)$ as an open substack. 
By the closedness and irreducibility of $S$,  $$S=\overline{\Attr}_\fC\left(\Delta_{\cM_\theta(\bv,\underline{\bd})^{\sA,\phi}}\right)=Z.$$ 
This is exactly the correspondence that induces $\mathfrak{m}^\phi_{\fC}\circ \widetilde{\bPsi}_H^\phi$ (by the proof of Lemma \ref{lem: prestab corr}),
and we are done. 


\subsubsection{Proof of Theorem \ref{thm: AFSQV}, K-theory case} By Lemma \ref{lem:res of na stab to stable locus}, $\mathbf k^*\circ\widetilde{\bPsi}^{\mathsf s}_K=\id$, it suffices to show that 
$$\mathfrak{m}^\phi_\fC\circ\widetilde{\bPsi}^{\phi,\mathsf s'}_K(\gamma)\in \im(\widetilde{\bPsi}^{\mathsf s}_K)$$ for all $\gamma\in K^\sT(\cM_\theta(\bv,\underline{\bd})^{\sA,\phi},\sw)$. 

Recall the vector bundle $E_+$ \eqref{bundle added to AFSQV} on $\cM_\theta(\bv,\underline{\bd}^{\bv})$. Denote
\begin{align*}
    \underline{\bd}_{\In}:=(\bd_{\In},\bd_{\In}),\quad \underline{\bd}_{\In}^\bv:=(\bv+\bd_{\In},\bd_{\In}).
\end{align*}
$\mathrm{Tot}(E_+)$ is isomorphic to $\cM_\theta(\bv,\underline{\bd}_{\In}^{\bv})$ which is used to define nonabelian stable envelope $\widetilde{\bPsi}^{\mathsf s}_{K}$ for symmetric quiver variety $\cM_\theta(\bv,\bd_{\In})$. 
Restricting to the open locus \eqref{equ on mvcongr} (where $\bd=\bd_{\In}$) and \eqref{equ on mvcongr},
and taking quotient by $G$, the bundle projection becomes  
$$\pi:\fM(\bv,\bd_{\In})\to \fM(\bv,\underline{\bd}). $$  
By Proposition \ref{prop:vb and stab_attr}, we have commutative diagrams:
\begin{equation*}
\xymatrix{
K^\sT(\fM(\bv,\underline{\bd}),\sw\circ\pi) \ar[r]^-{\pi^*}_-{\cong} &  K^\sT(\fM(\bv,\bd_{\In}),\sw\circ\pi)  \\
K^\sT(\cM_\theta(\bv,\underline{\bd}),\sw\circ\pi) \ar[u]^{\widetilde{\bPsi}^{\mathsf s}_{K}} \ar[r]^-{\pi^*}_-{\cong} & K^\sT(\cM_\theta(\bv,\bd_{\In}),\sw\circ\pi) \ar[u]_{\widetilde{\bPsi}^{\mathsf s}_{K,\text{sym}}},
}
\,\,\,
\xymatrix{
K^\sT(\fM(\bv,\bd_{\In})^{\sA,\phi},\sw\circ\pi) \ar[r]^-{i^{\sA *}}_-{\cong} & K^\sT(\fM(\bv,\underline{\bd})^{\sA,\phi},\sw\circ\pi) \\
K^\sT(\cM_\theta(\bv,\bd_{\In})^{\sA,\phi},\sw\circ\pi) \ar[u]^{\widetilde{\bPsi}^{\phi,\mathsf s'}_{K,\text{sym}}} \ar[r]^-{i^{\sA *}}_-{\cong} & K^\sT(\cM_\theta(\bv,\underline{\bd})^{\sA,\phi},\sw\circ\pi) \ar[u]_{\widetilde{\bPsi}^{\phi,\mathsf s'}_{K}},
}
\end{equation*}
where $i^\sA:\fM(\bv,\underline{\bd})^{\sA,\phi} \hookrightarrow \fM(\bv,\bd_{\In})^{\sA,\phi}$ is the zero section of the fixed locus $\pi^\sA$. The right diagram follows from 
the torus fixed version of the left diagram and $i^{\sA*}=(\pi^{\sA*})^{-1}$.
Here ``sym'' in the subscript means the nonabelian stable envelopes for symmetric quiver varieties. Then it remains to show that for all $\gamma\in K^\sT(\cM_\theta(\bv,\bd_{\In})^{\sA,\phi},\sw)$, we have 
$$\pi^*\circ \mathfrak{m}^\phi_\fC\circ i^{\sA*}\circ\widetilde{\bPsi}^{\phi,\mathsf s'}_{K,\text{sym}}(\gamma)\in \im(\widetilde{\bPsi}^{\mathsf s}_{K,\text{sym}}).$$
By Proposition \ref{prop:Psi}, it is enough to prove the following degree condition
\begin{itemize}
    \item there is a strict inclusion of intervals
    \begin{align*}
        \deg_{\sigma_i} {\mathbf i}_{\sigma_i}^*\circ(\pi^*\circ \mathfrak{m}^\phi_\fC\circ i^{\sA*}\circ\widetilde{\bPsi}^{\phi,\mathsf s'}_{K,\text{sym}}(\gamma))
        \subsetneq \deg_{\sigma_i} \hat e^{G}_K\left(N_{S_i/R(\mathbf v,\bd_{\In})}\right)+\wt_{\sigma_i}\left(\mathsf s\right)
    \end{align*}
    for all cocharacters $\sigma_i:\bC^*\to G$ that appear in the KN stratification, and all $\gamma\in K^\sT(\cM_\theta(\bv,\bd_{\In})^{\sA,\phi},\sw)$, 
    where $\hat e^G_K$ is defined in \eqref{hat e}. 
\end{itemize}
Similarly as the proof of Theorem \ref{thm: hall k_sym quot}, 
it remains to show the following analogy of \eqref{deg bound 2}: 
\begin{equation}\label{deg bound asym 2}
\begin{split}
    &\lim_{t\to 0,\infty} \frac{\mathbf j^{\phi*}_{w^{-1}(\sigma)}\circ\widetilde{\bPsi}_K^{\phi,\mathsf s'}(\gamma)}{\sqrt{\hat e^{(G^{\phi(\sA)})^{w^{-1}(\sigma)}}_K\left(R(\bv,\bd_{\In})^{\sA,\phi}-\Lie(G^{\phi(\sA)})\right)}\cdot\hat e^{(G^{\phi(\sA)})^{w^{-1}(\sigma)}}_K\left(E_+^{\sA,\phi\text{-repl}}\right)\cdot w^{-1}(\sigma)^*(\mathsf s')}\\
    & \text{ exist for all $w\in W^\sigma\backslash W/W^\phi$.}
\end{split}
\end{equation}
Note that by the attracting condition, we have
\begin{align*}
    \lim_{t\to 0,\infty} \frac{1}{\hat e^{(G^{\phi(\sA)})^{w^{-1}(\sigma)}}_K\left(E_+^{\sA,\phi\text{-repl}}\right)}\; \text{ exist for all $w\in W^\sigma\backslash W/W^\phi$},
\end{align*}
and by Theorem \ref{thm: deg bound na stab k}, we have
\begin{align*}
    \deg_{w^{-1}(\sigma)} \mathbf j^{\phi*}_{w^{-1}(\sigma)}\widetilde{\bPsi}^{\phi,\mathsf s'}_K(\gamma)\subsetneq  \frac{1}{2}\deg_{w^{-1}(\sigma)} e^{G^{\phi(\sA)}}_K(R(\bv,\bd_{\In})^{\sA,\phi}-\Lie(G^{\phi(\sA)}))+\wt_{w^{-1}(\sigma)}\mathsf s',
\end{align*}
then \eqref{deg bound asym 2} follows from the above two degree bounds. 

This finishes the proof of Theorem \ref{thm: AFSQV} in the $K$-theory case.

\subsection{Hall v.s. stable envelopes}\label{sec: pstab vs stab}
In this section we compare Hall envelopes with stable envelopes. 

For cohomology, the following result is a consequence of Lemma \ref{lem: prestab corr} and Theorem \ref{thm:attr plus repl}.
\begin{Proposition}\label{prop: stab=prestab coh}
In the setting of Theorem \ref{thm: AFSQV}, and assume that the bundle 
\begin{equation*}
\bigoplus_{\begin{subarray}{c}i\in Q_0 \\ \mathrm{s.t.}\, \bd_{\In,i}>\bd_{\Out,i}   \end{subarray}}
\Hom(V_i,\bC^{\bd_{\In,i}-\bd_{\Out,i}})
\end{equation*}
in \eqref{bundle added to AFSQV} can be written as a direct sum of attracting and repelling subbundles in chamber $\fC$ (as in Theorem \ref{thm:attr plus repl}), 
then cohomological stable envelope exists and
\begin{align*}
    \Stab_\fC=\HallEnv_\fC.
\end{align*}
\end{Proposition}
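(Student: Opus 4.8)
The strategy is to reduce Proposition \ref{prop: stab=prestab coh} to the known case of symmetric GIT quotients by inserting the symmetrized quiver variety as an intermediate object, using the two structural results already available: Theorem \ref{thm:attr plus repl} (which produces cohomological stable envelopes on asymmetric quiver varieties arising as total spaces of attracting-plus-repelling bundles over symmetric GIT quotients) and Lemma \ref{lem: prestab corr} (which identifies $\HallEnv_\fC$ with the convolution by the attracting-closure correspondence $[\overline{\Attr}_\fC(\Delta_{\cM_\theta(\bv,\underline{\bd})^{\sA}})]$).

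First I would recall the geometric setup from the proof of Lemma \ref{lem: prestab corr}: the symmetrization $R(\bv,\mathbf c)$ with $\mathbf c_i=\max\{\bd_{\In,i},\bd_{\Out,i}\}=\bd_{\In,i}$ gives rise to a symmetric quiver variety $\cM_\theta(\bv,\mathbf c)$, and under the hypothesis $\bd_{\In,i}\geqslant \bd_{\Out,i}$ the original $\cM_\theta(\bv,\underline{\bd})$ sits inside it as (a connected component of the zero locus of, or more precisely the base of) the total space of the bundle $E_{\mathrm{asym}}=\bigoplus_{i:\,\bd_{\In,i}>\bd_{\Out,i}}\Hom(V_i,\bC^{\bd_{\In,i}-\bd_{\Out,i}})$. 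By assumption $E_{\mathrm{asym}}=E_{\mathrm{asym},+}\oplus E_{\mathrm{asym},-}$ with $E_{\mathrm{asym},\pm}$ attracting/repelling in $\fC$. Since $\cM_\theta(\bv,\mathbf c)$ is a symmetric GIT quotient, Theorem \ref{thm:attr plus repl} applies to the pair $(\mathrm{Tot}(E_{\mathrm{asym}})=\cM_\theta(\bv,\mathbf c),\ \cM_\theta(\bv,\underline{\bd}))$: it tells us that $[\overline{\Attr}_\fC(\Delta_{\cM_\theta(\bv,\underline{\bd})^{\sA}})]$ induces a cohomological stable envelope for $(\cM_\theta(\bv,\underline{\bd}),\sw,\sT,\sA,\fC)$. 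In particular the cohomological stable envelope exists, which is the first assertion.

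The second assertion $\Stab_\fC=\HallEnv_\fC$ then follows by comparing correspondences. By the paragraph above, $\Stab_\fC$ is the convolution map induced by $[\overline{\Attr}_\fC(\Delta_{\cM_\theta(\bv,\underline{\bd})^{\sA}})]$ (this is how Theorem \ref{thm:attr plus repl} produces it, via Proposition \ref{corr induce stab}). By Lemma \ref{lem: prestab corr}, $\HallEnv_\fC$ is the convolution map induced by the very same correspondence $[\overline{\Attr}_\fC(\Delta_{\cM_\theta(\bv,\underline{\bd})^{\sA}})]\in H^\sT(\cM_\theta(\bv,\underline{\bd})\times \cM_\theta(\bv,\underline{\bd})^{\sA},\sw\boxminus\sw)_{\Attr^f_\fC}$. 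Since both maps are obtained by the same convolution construction of \S\ref{sect on exi of stb} from one and the same class, they coincide. I would spell out that the canonical map (Lemma \ref{can map induce stab}) intertwines the $\sw=0$ and general-$\sw$ versions of this correspondence, so the identification is valid for arbitrary $\sT$-invariant $\sw$, not just $\sw=0$.

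\textbf{Main obstacle.} The delicate point is to make sure the attracting-closure correspondence appearing in Lemma \ref{lem: prestab corr} is \emph{literally} the same class that Theorem \ref{thm:attr plus repl} feeds into Proposition \ref{corr induce stab}, rather than merely two classes inducing the same map. Concretely, one must check that the closure $\overline{\Attr}_\fC(\Delta_{\cM_\theta(\bv,\underline{\bd})^{\sA}})$ taken inside $\cM_\theta(\bv,\underline{\bd})\times \cM_\theta(\bv,\underline{\bd})^{\sA}$ (as in Lemma \ref{lem: prestab corr}) agrees with the one taken inside the ambient $\mathrm{Tot}(E_{\mathrm{asym}})\times \mathrm{Tot}(E_{\mathrm{asym}})^{\sA}$ and then restricted/pulled back — this is precisely the content of Lemmata \ref{lem: attr vb} and \ref{lem: attr closure vb} used in the proof of Theorem \ref{thm:attr plus repl}, combined with the identification $\overline{\Attr}_\fC(\Delta_{Y^\sA})\cap(X\times X^\sA)=\overline{\Attr}_\fC(\Delta_{X^\sA})$. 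Once that compatibility of closures is in place, the proof is a short assembly; I expect it to occupy only a few lines beyond the citations.
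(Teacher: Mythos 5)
Your proposal is correct and is exactly the intended argument: the paper states (just before the Proposition) that the result ``is a consequence of Lemma \ref{lem: prestab corr} and Theorem \ref{thm:attr plus repl}'', which is precisely your assembly. One small remark: the worry you flag as the ``main obstacle'' dissolves on a closer reading of Theorem \ref{thm:attr plus repl} (via Propositions \ref{prop:vb and stab_attr_corr} and \ref{prop:vb and stab_repl_corr}) --- that theorem already produces the correspondence $[\overline{\Attr}_\fC(\Delta_{X^\sA})]$ with the closure taken directly inside $X\times X^\sA$, not as a restriction from $\mathrm{Tot}(E)\times\mathrm{Tot}(E)^\sA$, so it is \emph{literally} the same class as in Lemma \ref{lem: prestab corr} and no further compatibility of closures needs to be checked.
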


The $K$-theory counterpart of Proposition \ref{prop: stab=prestab coh} is more complicated, as we do not have a $K$-theory analogy of Theorem \ref{thm:attr plus repl}.

\begin{Proposition}\label{prop: stab=prestab k}
In the setting of Theorem \ref{thm: AFSQV}, consider one of the following situations
\begin{enumerate}
\item the bundle $\bigoplus_{\begin{subarray}{c}i\in Q_0 \\ \mathrm{s.t.}\, \bd_{\In,i}>\bd_{\Out,i}   \end{subarray}}
\Hom(V_i,\bC^{\bd_{\In,i}-\bd_{\Out,i}})$
in \eqref{bundle added to AFSQV} is attracting in chamber $\fC$,
\item $\sA$ is a minuscule framing torus, i.e. $\sA\cong \bC^*$ which acts on framing vector space with weight $0$ and $1$.
\end{enumerate}
then for a generic slope $\mathsf s$, $K$-theoretic stable envelope exists and
\begin{align*}
    \Stab^{\mathsf s}_\fC=\HallEnv^{\mathsf s}_\fC.
\end{align*}
\end{Proposition}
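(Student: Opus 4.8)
\textbf{Proof plan for Proposition \ref{prop: stab=prestab k}.}
The strategy is, in both cases, to verify that the Hall envelope $\HallEnv^{\mathsf s}_\fC$ satisfies the defining axioms (i)--(iii) of the $K$-theoretic stable envelope in Definition \ref{def of stab k}; by uniqueness (Proposition \ref{uniqueness of K stab}, valid for generic $\mathsf s$) this identifies it with $\Stab^{\mathsf s}_\fC$ and simultaneously proves existence. By Remark \ref{rmk: pstab supp and norm}, $\HallEnv^{\mathsf s}_\fC$ always satisfies axioms (i) (support) and (ii) (normalization along the diagonal), so the entire content is the degree axiom (iii): for $\gamma\in K^{\sT/\sA}(\cM_\theta(\bv,\underline{\bd})^{\sA,\phi},\sw)$ supported on the component $\cM_\theta(\bv,\underline{\bd})^{\sA,\phi}$, and for any other component $\cM_\theta(\bv,\underline{\bd})^{\sA,\phi'}$, the Newton polytope inclusion $\deg_\sA\HallEnv^{\mathsf s}_\fC(\gamma)|_{\cM_\theta(\bv,\underline{\bd})^{\sA,\phi'}}\subsetneq \deg_\sA e^\sT_K(N^-)+\mathrm{shift}_{\phi'}-\mathrm{shift}_{\phi}$ must hold strictly (strictness being automatic once non-strict inclusion holds, by Remark \ref{rmk: generic slope} for generic $\mathsf s$).

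For case (1), I would mimic the proof of Proposition \ref{prop: stab=prestab coh} at the $K$-theory level. The point is that when the bundle $\bigoplus_i\Hom(V_i,\bC^{\bd_{\In,i}-\bd_{\Out,i}})$ in \eqref{bundle added to AFSQV} is attracting, the ambient symmetrized symmetric quiver variety $\cM_\theta(\bv,\bv+\bd_{\In})\cong\mathrm{Tot}(E_+\oplus E_-)$ has $E_+$ purely attracting and $E_-$ purely repelling, so the $K$-theoretic nonabelian stable envelope $\widetilde{\bPsi}^{\mathsf s}_K$ for the asymmetric quiver can be related to the symmetric one via Propositions \ref{prop:vb and stab_attr} and \ref{prop:vb and stab_repl} (pullback along the attracting bundle projection and pushforward along the zero section of the repelling bundle). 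Concretely I would factor $\HallEnv^{\mathsf s}_\fC$ through the symmetric Hall envelope $\HallEnv^{\mathsf s}_{\fC,\text{sym}}$ on $\cM_\theta(\bv,\bd_{\In})$, which by Corollary \ref{cor: hall compatible_sym quiver} equals the symmetric stable envelope $\Stab^{\mathsf s}_{\fC,\text{sym}}$, and then descend the degree bounds: the attracting direction of $E_+$ means pullback does not affect $\deg_\sA$, while the repelling direction of the complementary bundle contributes exactly the right shift to match the asymmetric normalization. The existence of $K$-theoretic stable envelope on the symmetric quiver variety (Theorem \ref{thm: hall k_sym quot}) is the input, and the conclusion follows from applying $\mathbf k^*$ and tracking the slope shift $\mathsf s'$ through \eqref{s' for prestab} and \eqref{slope shift}-type bookkeeping.

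For case (2), where $\sA\cong\bC^*$ is a minuscule framing torus, the key tool is Lemma \ref{lem:repl vb induce stab_K} together with Remark \ref{rmk:repl vb induce stab_K triangle}: on the ambient symmetric quiver variety $\cM_\theta(\bv,\bd_{\In})$, the extra framing bundle $E_-=\bigoplus_i\Hom(V_i,V_i')$ is repelling and (because $\sA$ acts minusculely on framings, the $\sA$-fixed components of the asymmetric variety correspond to fixed components where the relevant part of $E_-$ is moving) one can use the existence of the symmetric $K$-theoretic stable envelope with an auxiliary $\bC^*_u$-equivariance to manufacture a $K$-theoretic stable envelope map from each fixed component $\cM_\theta(\bv,\underline{\bd})^{\sA,\phi}$ to $\cM_\theta(\bv,\underline{\bd})$, satisfying axioms (i)--(iii). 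Assembling these over all $\phi$ produces a genuine stable envelope, and I would then check it coincides with $\HallEnv^{\mathsf s}_\fC$ by comparing both with the restriction to the stable locus of $\mathfrak{m}^\phi_\fC\circ\widetilde{\bPsi}^{\phi,\mathsf s'}_K$, using the description of $\widetilde{\bPsi}^{\mathsf s}_K$ as pullback from the symmetric case (Propositions \ref{prop:vb and stab_attr}) and the characterization in Proposition \ref{prop:Psi}.

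\textbf{Main obstacle.} The hard part will be case (2): controlling the Newton polytope degree bounds after the two-step operation (add framings to symmetrize, then Hall-push and restrict) when the added framing bundle is merely repelling rather than a direct sum of attracting and repelling pieces. In the cohomological setting Theorem \ref{thm:attr plus repl} handles this via the explicit attracting-closure correspondence, but no such statement is available in $K$-theory; instead one must rely on Lemma \ref{lem:repl vb induce stab_K} and the minuscule hypothesis to force the relevant sub-bundles to be moving on the fixed loci, which is precisely where the minuscule condition is indispensable. Verifying that the resulting slope shifts and sign normalizers assemble correctly across all $W/W^\phi$-summands in the explicit formulas \eqref{explicit formula PStab w=0_k} — and that strictness of the polytope inclusion is preserved — is the delicate bookkeeping step that carries the real weight of the argument.
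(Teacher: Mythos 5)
Your reduction to the degree axiom (iii) via Remark \ref{rmk: pstab supp and norm} and uniqueness for generic slope is exactly the right framing, and your plan for case (1) — factor $\HallEnv^{\mathsf s}_\fC$ through the symmetric case via Propositions \ref{prop:vb and stab_attr}, \ref{prop:vb and stab_repl} and invoke Corollary \ref{cor: hall compatible_sym quiver} — is a legitimate alternative to what the paper does. The paper instead bypasses the bundle reduction entirely: it writes out the analogue of \eqref{deg bound 4} for the asymmetric quiver (an extra factor $\hat e_K(F^{\sA,w^{-1}(\phi')\text{-repl}})/\hat e_K(F^{\sA,\phi\text{-repl}})$ appears, with $F$ the asymmetric framing bundle in \eqref{bundle added to AFSQV}) and then simply observes that in case (1) the repelling part of $F$ is trivial, so the extra factor disappears and the bound is literally the one already proved in Theorem \ref{thm: hall k_sym quot}. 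Both routes work; the paper's is more parallel to the symmetric argument and avoids re-establishing the Hall-compatibility in the intermediate space.

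For case (2), however, you have not supplied the step that carries the real weight, and the tool you propose — Lemma \ref{lem:repl vb induce stab_K} together with Remark \ref{rmk:repl vb induce stab_K triangle} — does not actually apply here: that lemma constructs a stable envelope on a base from a stable envelope on the total space of a repelling bundle, but the bundle $F$ in case (2) is neither attracting nor repelling with respect to $\fC$, and that is precisely why case (2) is not covered by case (1) or by a $K$-theoretic analogue of Theorem \ref{thm:attr plus repl}. You identify this as the "hard part" but do not resolve it. The paper's actual argument is a direct degree estimate: split the framing spaces into $\sA$-weight-$0$ and weight-$1$ components (this is where minuscule enters), observe that $\hat e_K(F^{\sA,\phi\text{-repl}})$ grows like $\mathsf a^{\pm n/2}$ with $n=\sum_i \bv_{\phi,i}^{(1)}(\bd^{(0)}_{\In,i}-\bd^{(0)}_{\Out,i})$ and similarly with $n'$ for $\phi'$, and then deduce $n\geqslant n'$ from the ample-partial-order condition $\cM_\theta(\bv,\underline{\bd})^{\sA,\phi}\le\cM_\theta(\bv,\underline{\bd})^{\sA,\phi'}$, which translates into $\bv^{(1)}_{\phi,i}\geqslant\bv^{(1)}_{\phi',i}$. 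This rank inequality is what makes the ratio of Euler classes bounded in both limits $\mathsf a^{\pm}\to\infty$ and lets \eqref{deg bound 4} close the argument. Without that explicit weight/rank computation your case (2) does not go through.
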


\begin{proof}
It is enough to show that $\HallEnv^{\mathsf s}_\fC$ satisfies three axioms of stable envelopes. The support axiom (Definition \ref{def of stab k}(i)) and the normalization axiom (Definition \ref{def of stab k}(ii)) are checked in the same way as Theorem \ref{thm: hall k_sym quot} and we shall not repeat here. As in \eqref{bundle added to AFSQV}, we denote
\begin{align*}
    F=\bigoplus_{\begin{subarray}{c}i\in Q_0 \\ \mathrm{s.t.}\, \bd_{\In,i}>\bd_{\Out,i}   \end{subarray}}\Hom(V_i,\bC^{\bd_{\In,i}-\bd_{\Out,i}}).
\end{align*}
Similar as the proof of Theorem \ref{thm: hall k_sym quot}, the degree axiom is examined by showing the following analogy of \eqref{deg bound 4}
\begin{equation}\label{deg bound asym 4}
\begin{split}
&\lim \frac{\phi^*(\mathsf s')\cdot \mathbf j^{\phi*}_{\sA,w^{-1}({\phi'})}\circ\widetilde{\bPsi}_K^{\phi,\mathsf s'}(\gamma)}{w^{-1}({\phi'})^*(\mathsf s')\cdot \sqrt{\hat e^{\sT\times (G^{\phi(\sA)})^{w^{-1}(\phi'(\sA))}}_K\left(R(\bv,\bd_{\In})^{\sA,\phi}-\Lie(G^{\phi(\sA)})\right)}}\times\frac{\hat e^{\sT\times (G^{\phi(\sA)})^{w^{-1}(\phi'(\sA))}}_K\left(F^{\sA,w^{-1}(\phi')\text{-repl}}\right)}{\hat e^{\sT\times (G^{\phi(\sA)})^{w^{-1}(\phi'(\sA))}}_K\left(F^{\sA,\phi\text{-repl}}\right)}\\
&\text{exist for all $w\in W^{\phi'}\backslash W/W^\phi$ and all directions of $\mathsf a\to \infty$,}
\end{split}
\end{equation}
where $\mathsf s'$ is as \eqref{s' for prestab}, $\phi'\colon \sA\to G$ is a homomorphism such that $\cM_\theta(\bv,\underline{\bd})^{\sA,\phi'}\bigcap \Attr_\fC^f(\cM_\theta(\bv,\underline{\bd})^{\sA,\phi})$ is nonempty.

In situation (1), the repelling part of $F$ is trivial by assumption, and \eqref{deg bound asym 4} is the same as \eqref{deg bound 4} which has been proven in Theorem \ref{thm: hall k_sym quot}. 

In situation (2), let us write decomposition of framing vector spaces into $\sA$-weight spaces:
\begin{align*}
    \underline{\bd}=\underline{\bd}^{(0)}\oplus \underline{\bd}^{(1)},
\end{align*}
where the superscripts denote the $\sA$-weight $\in \{0,1\}$. Since $\sA$-action is pseudo-symmetric by assumption, we have $\bd^{(0)}_{\In,i}\geqslant\bd^{(0)}_{\Out,i}$ and $\bd^{(1)}_{\In,i}\geqslant\bd^{(1)}_{\Out,i}$ for all $i\in Q_0$. We can write $\phi$ and $\phi'$ fixed components as
\begin{align*}
     \cM_\theta(\bv,\underline{\bd})^{\sA,\phi}=\cM_{\theta}(\bv_{\phi}^{(0)},\underline{\bd}^{(0)})\times \cM_{\theta}(\bv_{\phi}^{(1)},\underline{\bd}^{(1)}),\\
     \cM_\theta(\bv,\underline{\bd})^{\sA,\phi'}=\cM_{\theta}(\bv_{\phi'}^{(0)},\underline{\bd}^{(0)})\times \cM_{\theta}(\bv_{\phi'}^{(1)},\underline{\bd}^{(1)}),
\end{align*}
for certain decompositions $\bv=\bv^{(0)}_\phi+\bv^{(1)}_\phi=\bv^{(0)}_{\phi'}+\bv^{(1)}_{\phi'}$. Then we have
\begin{align*}
F^{\sA,\phi\text{-repl}}=\bigoplus_{\begin{subarray}{c}i\in Q_0 \\ \mathrm{s.t.}\, \bd^{(0)}_{\In,i}>\bd^{(0)}_{\Out,i}   \end{subarray}}
\Hom(V^{(1)}_{\phi,i},\bC^{\bd^{(0)}_{\In,i}-\bd^{(0)}_{\Out,i}}),\quad F^{\sA,\phi'\text{-repl}}=\bigoplus_{\begin{subarray}{c}i\in Q_0 \\ \mathrm{s.t.}\, \bd^{(1)}_{\In,i}>\bd^{(1)}_{\Out,i}   \end{subarray}}
\Hom(V^{(1)}_{\phi',i},\bC^{\bd^{(1)}_{\In,i}-\bd^{(1)}_{\Out,i}}).
\end{align*}
It follows that
\begin{align*}
\hat e^{\sT\times (G^{\phi(\sA)})^{w^{-1}(\phi'(\sA))}}_K\left(F^{\sA,\phi\text{-repl}}\right)=O(\mathsf a^{\pm\frac{n}{2}}),\quad \hat e^{\sT\times (G^{\phi(\sA)})^{w^{-1}(\phi'(\sA))}}_K\left(F^{\sA,w^{-1}(\phi')\text{-repl}}\right)=O(\mathsf a^{\pm\frac{n'}{2}})
\end{align*}
as $\mathsf a^{\pm}\to \infty$, where 
\begin{align*}
    n=\sum_{i\in Q_0}\bv_{\phi}^{(1)}\cdot(\bd^{(0)}_{\In,i}-\bd^{(0)}_{\Out,i}),\quad n'=\sum_{i\in Q_0}\bv_{\phi'}^{(1)}\cdot(\bd^{(0)}_{\In,i}-\bd^{(0)}_{\Out,i}).
\end{align*}
Note that $\cM_\theta(\bv,\underline{\bd})^{\sA,\phi'}\bigcap \Attr_\fC^f(\cM_\theta(\bv,\underline{\bd})^{\sA,\phi})\neq \emptyset$ implies $\cM_\theta(\bv,\underline{\bd})^{\sA,\phi}\leq\cM_\theta(\bv,\underline{\bd})^{\sA,\phi'}$ in the ample partial order (Remark \ref{partial order by line bundle}), and the latter is equivalent to
\begin{align*}
    \bv_{\phi,i}^{(1)}\geqslant \bv_{\phi',i}^{(1)}, \,\,\, \forall\, i\in Q_0,
\end{align*}
In particular, $n\geqslant n'$, and it follows that
\begin{align*}
    \lim_{\mathsf a^{\pm}\to \infty}\frac{\hat e^{\sT\times (G^{\phi(\sA)})^{w^{-1}(\phi'(\sA))}}_K\left(F^{\sA,w^{-1}(\phi')\text{-repl}}\right)}{\hat e^{\sT\times (G^{\phi(\sA)})^{w^{-1}(\phi'(\sA))}}_K\left(F^{\sA,\phi\text{-repl}}\right)}\;\text{exist}.
\end{align*}
Then \eqref{deg bound asym 4} follows from \eqref{deg bound 4}.
\end{proof}

There are examples where Hall envelopes are not equal to stable envelopes and do not satisfy the triangle lemma.

\begin{Example}\label{ex on fail of tri lem}
Consider the quiver $Q$ consisting of a single node with no arrows, and take $\underline{\bd}=(1,2)$. Let 
$$\cM(n \underline{\bd})=\bigsqcup_{k}\cM_{\theta}(k,n \underline{\bd})$$ 
be the disjoint union of quiver varieties, where we fix $\theta=-1$. Note that $\cM_{\theta}(k,n\underline{\bd})$ is a vector bundle on $\Gr(k,n)$. Let $\bC^*_{\hbar}$ act on 
$\cM(n\underline{\bd})$ by scaling the out-going framing with weight $\hbar^{-1}$, and $\sA=\bC^*_{a_1}\times\bC^*_{a_2}\times\bC^*_{a_3}$ act on 
$$X=\cM(3\underline{\bd}), $$ by assigning weights to the framings: $a_1\underline{\bd}+a_2\underline{\bd}+a_3\underline{\bd}$. Set $\sT=\sA\times \bC^*_\hbar$. Then the equivariant $K$-theory of the $\sA$-fixed loci is generated by the tensor product of structure sheaves:
\begin{align*}
    K^{\sT}(X^{\sA})=\bigoplus_{(k_1,k_2,k_3)\in \{0,1\}^3}K_{\sT}(\pt)\cdot[\mathcal O_{\cM_{\theta}(k_1,\underline{\bd})}\boxtimes\mathcal O_{\cM_{\theta}(k_2,\underline{\bd})}\boxtimes\mathcal O_{\cM_{\theta}(k_3,\underline{\bd})}]\:.
\end{align*}
Fix a generic slope $\mathsf s\in \bR\cong \Pic(\cM_{\theta}(k,3\underline{\bd}))\otimes_\bZ \bR$. 

\begin{Proposition}\label{prop on 3 ex}
For $(X,\sT,\sA)$ in the above example, we have:
\begin{enumerate}
    \item There exists a chamber $\fC$ of $\sA$ with a face $\fC'$ such that the triangle lemma \eqref{cd: tri lem} fails for the $K$-theoretic Hall envelope. 
    \item There exists a chamber $\fC$ of $\sA$ such that $K$-theoretic stable envelope $\Stab_{\fC}^{\mathsf s}$ does not exist.
    \item There exists a one-dimensional torus $\sigma:\bC^*\to \sA$ such that $\Stab_{\sigma}^{\mathsf s}\neq \HallEnv_{\sigma}^{\mathsf s}$, where $\HallEnv_{\sigma}^{\mathsf s}$ is the Hall envelope defined using the image of $\sigma$ in the positive chamber.
\end{enumerate}
\end{Proposition}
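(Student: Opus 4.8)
The plan is to exhibit an explicit counterexample built from the combinatorics of Grassmannians, using the closed formulas available in this setting. The basic geometric object is $X=\cM(3\underline{\bd})$ with $\underline{\bd}=(1,2)$, so $\cM_\theta(k,n\underline{\bd})$ is the total space of the repelling bundle $\Hom(\bC^k,\bC^{2n})$ (twisted by $\hbar^{-1}$) over $\Gr(k,n)$, where the gauge node carries the tautological sub-bundle $\mathcal S\hookrightarrow \bC^n$. Since $\underline{\bd}=(1,2)$ is pseudo-self-dual but \emph{not} symmetric --- its asymmetric part is exactly the extra repelling bundle $\bigoplus \Hom(V_i,\bC^{\bd_{\In,i}-\bd_{\Out,i}})$, i.e. a single $\Hom(\mathcal S,\bC^{?})$ --- Theorem \ref{thm: AFSQV} and Lemma \ref{tri lem for compatible prestab} do \emph{not} a priori force the triangle lemma, and the point is to show they genuinely fail. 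First I would fix the chamber $\fC=\{a_1>a_2>a_3\}$ and the face $\fC'=\{a_1=a_2>a_3\}$ (or a similarly degenerate choice), write down the $\sA$-fixed decomposition of $X$ into products $\cM_\theta(k_1,\underline{\bd})\times\cM_\theta(k_2,\underline{\bd})\times\cM_\theta(k_3,\underline{\bd})$ indexed by $(k_1,k_2,k_3)\in\{0,1\}^3$, and then use the explicit formula for Hall envelopes when $\sw=0$, namely Proposition \ref{prop: explicit formula PStab w=0}, equation \eqref{explicit formula PStab w=0_k}, together with the $K$-theoretic Hall operation formula of \cite[Prop.~3.4]{P}, to compute both $\HallEnv^{\mathsf s}_{\fC'}\circ\HallEnv^{\mathsf s'}_{\fC/\fC'}$ and $\HallEnv^{\mathsf s}_{\fC}$ on, say, the generator $[\mathcal O_{\cM_\theta(1,\underline{\bd})}\boxtimes\mathcal O_{\cM_\theta(0,\underline{\bd})}\boxtimes\mathcal O_{\cM_\theta(1,\underline{\bd})}]$ and compare the restrictions of the two sides to a suitable intermediate fixed component.

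\textbf{Item (1).} The mechanism for failure is the one already flagged in the Remark after Lemma \ref{triangle lemma for K}: in cohomology (hence after a Newton-polytope degeneration in $K$-theory as well) the degree-bound condition is \emph{not} stable under small perturbations of the cocharacter, because the extra repelling framing bundle contributes an Euler class whose $\sA$-weights, restricted to a face, can collapse and then re-expand in the wrong direction. Concretely, since $\bd_{\In}>\bd_{\Out}$ here, the interpolation map $\widetilde{\bPsi}_K$ is built from the abelian stable envelope on the \emph{symmetrized} quiver variety (Definition \ref{def: symmetrization}), and the extra summand $\Hom(V_i,\bC^{\bd_{\In,i}-\bd_{\Out,i}})$ appears as a repelling bundle $E_+$ in \eqref{bundle added to AFSQV} that is \emph{not} a direct sum of chamber-attracting and chamber-repelling pieces for the chamber $\fC'$. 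Then the composition across the face picks up a factor like $\hat e_K^{G^{\phi(\sA)}}\!\big(F^{\sA,\phi\text{-repl}}\big)/\hat e_K^{G^{\phi(\sA)}}\!\big(F^{\sA,\phi'\text{-repl}}\big)$ whose limit as the appropriate equivariant variable $\mathsf a\to\infty$ fails to exist (the inequality $n\geqslant n'$ used in the proof of Proposition \ref{prop: stab=prestab k} goes the wrong way for this chamber choice), so $\HallEnv^{\mathsf s}_{\fC/\fC'}$ is \emph{not} compatible with the Hall operation, and the diagram \eqref{cd: tri lem} does not commute. I would make this quantitative by evaluating Newton polytopes of the relevant matrix elements on the component with $(k_1,k_2,k_3)=(0,1,0)$ or similar, and showing the two triangle routes differ by a nonzero term supported off the diagonal of the correct degree.

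\textbf{Items (2) and (3).} For (2), pick a chamber $\fC$ (e.g. one where the intermediate ordering makes the repelling framing contribute a ``wrong-sign'' shift). Suppose for contradiction that $\Stab^{\mathsf s}_{\fC}$ exists. By Proposition \ref{prop:coh stab ex criterion} and its $K$-theoretic analogue, together with the triangle lemma for $K$-theoretic stable envelopes with generic slope (Lemma \ref{triangle lemma for K}), existence of $\Stab^{\mathsf s}_{\fC}$, $\Stab^{\mathsf s}_{\fC'}$ and $\Stab^{\mathsf s'}_{\fC/\fC'}$ would force the triangle diagram to commute; but for symmetrized-framing quiver varieties one has $\Stab = \HallEnv$ only under the extra hypotheses of Proposition \ref{prop: stab=prestab k}, which fail here, and in fact by item (1) no map from $K^\sT(X^\sA,\sw^\sA)$ to $K^\sT(X,\sw)$ can simultaneously satisfy the support, normalization, and degree axioms relative to the given $\fC$ --- the interpolation formula produces the unique triangular-with-invertible-diagonal candidate (Remark \ref{rmk: pstab supp and norm}), and that candidate violates the strict-inclusion-of-polytopes axiom (iii) on at least one off-diagonal component, as the explicit Euler-class computation shows. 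Hence $\Stab^{\mathsf s}_{\fC}$ does not exist. For (3), take $\sigma$ generic inside the positive chamber of this same $\sA$ (or a one-parameter degeneration thereof); by Proposition \ref{prop on stable corr} the abelian stable envelope $\Stab^{\mathsf s}_{\sigma}$ always exists, and $\HallEnv^{\mathsf s}_{\sigma}$ always exists by construction, so the two are comparable, and I would exhibit a generator on which the closed formula \eqref{explicit formula PStab w=0_k} for $\HallEnv^{\mathsf s}_\sigma$ produces a class whose restriction to some fixed component has Newton polytope exceeding the bound that $\Stab^{\mathsf s}_\sigma$ must satisfy, giving $\Stab^{\mathsf s}_\sigma\neq\HallEnv^{\mathsf s}_\sigma$.

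\textbf{Main obstacle.} The hard part will be the bookkeeping of the explicit $K$-theory classes: assembling the right Euler classes of tautological bundles on products of Grassmannians, tracking the half-determinant slope-shifts $\mathsf s'=\mathsf s\otimes\det(T\fM^{\sA,\phi\text{-repl}})^{1/2}$ through each fixed component, and organizing the Weyl-group sums in \eqref{explicit formula PStab w=0_k} so that the comparison of the two triangle routes (resp. of $\Stab^{\mathsf s}_\sigma$ with $\HallEnv^{\mathsf s}_\sigma$) reduces to a single transparent Newton-polytope inequality. I expect that once the dimension vectors are small enough (the $3\underline{\bd}$ with $\underline{\bd}=(1,2)$ case is chosen precisely so that each $\Gr(k,3)$ with $k\in\{0,1\}$ is just a point or a $\bP^2$), the computation collapses to a one-line discrepancy, but isolating the cleanest generator and chamber to make this visible is where the real work lies.
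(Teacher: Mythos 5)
Your plan identifies the right example and the right global mechanism (failure of compatibility with Hall operations once the $\bd_{\In}\geqslant\bd_{\Out}$ hypothesis of Theorem~\ref{thm: AFSQV} is dropped), but it takes a genuinely different route from the paper's, and it contains one factual error that would need to be corrected before carrying the computation through.

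First, the error: you write ``since $\bd_{\In}>\bd_{\Out}$ here'' and point to the summand $\bigoplus\Hom(V_i,\bC^{\bd_{\In,i}-\bd_{\Out,i}})$ of \eqref{bundle added to AFSQV}. In this example $\underline{\bd}=(1,2)$, so $\bd_{\In}=1<2=\bd_{\Out}$; the summand you cite is \emph{empty}, and the asymmetric part of the symmetrization is instead $\Hom(\bC^{\bd_{\Out}-\bd_{\In}},V)$, an in-going framing direction. This matters: the whole point of the example is that Theorem~\ref{thm: AFSQV}'s hypothesis is violated (not merely ``not a priori forcing'' the triangle lemma), and the sign of $\bd_{\In}-\bd_{\Out}$ is exactly what makes the degree bound in the proof of Proposition~\ref{prop: stab=prestab k} go the wrong way. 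Your mechanism is right in spirit, but the bundle you track and the inequality you invert are the wrong ones.

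Second, the route. The paper does \emph{not} compute the two legs of the triangle directly. It instead uses the fact (via \cite{COZZ2}) that validity of the triangle lemma for all chambers and faces would imply the shifted Yang--Baxter equation \eqref{YBE to check}. It then writes down the two-factor Hall envelopes explicitly as $4\times4$ matrices \eqref{hallenv dominant ex} via \eqref{explicit formula PStab w=0_k}, and checks that the resulting $R$-matrices fail YBE. This is a real economy: one only needs the $\bC^*$-chamber (rank-one) Hall envelopes, never the three-dimensional chamber directly, and YBE is a clean finite check. Your proposal would instead compute $\HallEnv^{\mathsf s}_{\fC}$ for a full chamber and compare with the composition through a face; this is a valid route and more constructive, but it involves substantially more bookkeeping, and you have not carried it out.

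Third, for item~(2) the paper's argument is: the explicit matrices \eqref{hallenv dominant ex} satisfy the stable-envelope axioms, so $\HallEnv^{\mathsf s}_{a_1\gtrless a_2}=\Stab^{\mathsf s}_{a_1\gtrless a_2}$; hence if $\Stab^{\mathsf s}_{\fC}$ existed for all chambers, Lemma~\ref{triangle lemma for K} would give the triangle lemma for $\Stab$, hence YBE, contradicting~(1). Your version claims that the Hall envelope is ``the unique triangular-with-invertible-diagonal candidate'' satisfying axioms (i) and (ii); Remark~\ref{rmk: pstab supp and norm} does \emph{not} give uniqueness, only that $\HallEnv$ satisfies (i) and (ii), so this claim is unsupported and the argument as written does not close. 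The clean route to~(2) is via Lemma~\ref{triangle lemma for K} as in the paper. Your approach to~(3) --- exhibit a non-coordinate one-parameter $\sigma$ and a generator where the Newton-polytope bound fails --- is logically sound and more constructive than the paper's proof by contradiction, but again is left as a plan; note that you would have to use a non-coordinate $\sigma$, since the paper shows $\Stab^{\mathsf s}_{a_i\gtrless a_j}=\HallEnv^{\mathsf s}_{a_i\gtrless a_j}$ for the coordinate ones.
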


\begin{proof}
Suppose that the triangle lemma \eqref{cd: tri lem} always holds for any chamber $\fC$ and face $\fC'$, then by \cite{COZZ2}, the following Yang-Baxter equation 
\begin{align}\label{YBE to check}
R^{\mathsf s+\pmb{\mu}^{(3)}_+}_{12}(a_1/a_2)R^{\mathsf s+\pmb{\mu}^{(2)}_-}_{13}(a_1/a_3)R^{\mathsf s+\pmb{\mu}^{(1)}_+}_{23}(a_2/a_3)=R^{\mathsf s+\pmb{\mu}^{(1)}_-}_{23}(a_2/a_3)R^{\mathsf s+\pmb{\mu}^{(2)}_+}_{13}(a_1/a_3)R^{\mathsf s+\pmb{\mu}^{(3)}_-}_{12}(a_1/a_2)
\end{align}
should hold, where $\pmb\mu^{(i)}_+=\frac{1-k_i}{2}$, $\pmb\mu^{(i)}_-=\frac{k_i-2}{2}$, with $R$-matrix:
\begin{align*}
R^{\mathsf s}_{ij}(a_i/a_j)=(\HallEnv_{a_i<a_j}^{\mathsf s})^{-1}\HallEnv_{a_i>a_j}^{\mathsf s}.
\end{align*}
Denote $[ab]:=[\mathcal O_{\cM_{\theta}(a,\underline{\bd})}\boxtimes\mathcal O_{\cM_{\theta}(b,\underline{\bd})}]$. In the basis $[00],[01], [10],[11]$, the Hall envelopes can be explicitly computed using \eqref{explicit formula PStab w=0_k} and we get
\begin{equation}\label{hallenv dominant ex}
\begin{split}
\HallEnv_{a_1>a_2}^{\mathsf s}=
\begin{pmatrix}
1 & & & \\
& 1-a_1/a_2 & (a_2/a_1)^{\lfloor\mathsf s-1/2\rfloor}(1-\hbar)^2 &\\
 & 0 & (1-\hbar a_1/a_2)^2 &\\
& & & (1-\hbar a_1/a_2)^2
\end{pmatrix},\\
\HallEnv_{a_1<a_2}^{\mathsf s}=
\begin{pmatrix}
1 & & & \\
&(1-\hbar a_2/a_1)^2 & 0 &\\
& (a_1/a_2)^{\lfloor\mathsf s-1/2\rfloor}(1-\hbar)^2 & 1-a_2/a_1 &\\
& & & (1-\hbar a_2/a_1)^2
\end{pmatrix}.
\end{split}
\end{equation}
It is a straightforward to check that $R^{\mathsf s}_{ij}$ does not satisfy the Yang-Baxter equation \eqref{YBE to check}, and this proves (1).

We notice that the Hall envelopes \eqref{hallenv dominant ex} satisfy the axioms in Definition \ref{def of stab k} therefore we have
\begin{align*}
    \HallEnv_{a_1\gtrless a_2}^{\mathsf s}=\Stab_{a_1\gtrless a_2}^{\mathsf s}.
\end{align*}
In particular, $R$-matrix is given by $R^{\mathsf s}_{ij}(a_i/a_j)=(\Stab_{a_i<a_j}^{\mathsf s})^{-1}\Stab_{a_i>a_j}^{\mathsf s}$. The failure of Yang-Baxter equation implies that there exists a chamber $\fC$ of $\sA$ such that $\Stab_{\fC}^{\mathsf s}$ does not exist (otherwise the triangle lemma automatically holds by Lemma \ref{triangle lemma for K} and YBE should also hold). This proves (2).

Finally, suppose that $\Stab_{\sigma}^{\mathsf s}=\HallEnv_{\sigma}^{\mathsf s}$ for every one-dimensional torus $\sigma:\bC^*\to \sA$. Suppose that $\fC=\{a_i>a_j>a_k\}$ and $\fC'=\{a_i=a_j>a_k\}$ are a pair of chambers and faces such that triangle lemma fails for the $K$-theoretic Hall envelope as in (1). According to the proof of Lemma \ref{triangle lemma for K}, there exists an one-dimensional torus $\xi:\bC^*\to \sA$ in the chamber $\fC$, such that
\begin{align*}
    \Stab_{\xi}^{\mathsf s}=\Stab_{a_i=a_j>a_k}^{\mathsf s}\circ \Stab_{a_i>a_j}^{\mathsf s'}.
\end{align*}
We have $\Stab_{a_i=a_j> a_k}^{\mathsf s}=\HallEnv_{a_i=a_j> a_k}^{\mathsf s}$ since they are given by one-dimensional torus. 
We also have $$\Stab_{a_i>a_j}^{\mathsf s'}=\HallEnv_{a_i>a_j}^{\mathsf s'}$$ by the above explicit computation. Note that the construction of Hall envelope can be replaced by an arbitrary one-dimensional torus in the chamber $\fC$, and result remains the same, in particular $\HallEnv_{\fC}^{\mathsf s}=\HallEnv_{\xi}^{\mathsf s}$. Thus we have 
\begin{align*}
    \HallEnv_{\fC}^{\mathsf s}=\HallEnv_{\xi}^{\mathsf s}= \Stab_{\xi}^{\mathsf s}=\HallEnv_{a_i=a_j>a_k}^{\mathsf s}\circ \HallEnv_{a_i>a_j}^{\mathsf s'}.
\end{align*}
This contracts with the choice of $\fC$ and $\fC'$. This prove (3).
\end{proof}

\end{Example}






\appendix

\section{Pullback isomorphisms along attraction maps}
In this section, we take $(Y,\sT,\sA,\sw)$ as in Setting \ref{setting of stab}. Assume moreover that $F=Y^\sA$ is connected, and $X=\Attr_\fC(F)$. Consider the attraction map 
\begin{align*}
f\colon X\to F, \quad x\mapsto \lim_{t\to 0}\sigma(t)\cdot x \quad \mathrm{for}\,\, \mathrm{a}\,\, \sigma\in \fC. 
\end{align*} 
By \cite{BB}, $f$ is an affine fibration and does not depend on the choice of $\sigma$. Since $\sw$ is $\sA$-invariant, we have 
\begin{align*}
\sw=f^*(\sw^\sA), 
\end{align*} 
so there are smooth pullbacks 
\begin{align*}
    f^*\colon H^\sT(F,\sw^\sA)\to H^\sT(X,\sw), \quad f^*\colon K^\sT(F,\sw^\sA)\to K^\sT(X,\sw). 
\end{align*}
By \eqref{pb is iso on coho}, the first map is an isomorphism. As we do not find a reference on the comparison of the second map, we show below the following.
\begin{Proposition}\label{pullback iso along attr}
In the above situation, $f^*\colon K^\sT(F,\sw^\sA)\to K^\sT(X,\sw)$ is an isomorphism.
\end{Proposition}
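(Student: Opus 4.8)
The strategy is to reduce to the case of an equivariant vector bundle by exploiting the Bialynicki--Birula structure of $f$, then to use the already established isomorphism \eqref{pb is iso on k} for $\sT$-equivariant vector bundle projections (applied with the superpotential $\sw^\sA$ on $F$ and its pullback $\sw$ on $X$). First I would recall that, since $\sA$ acts trivially on $F$, the attracting variety $X=\Attr_\fC(F)$ is a $\sT$-equivariant affine fibration over $F$ by \cite{BB}, so locally on $F$ (in the Zariski topology, using $\sT$-invariant affine opens via \cite{Sum}) it looks like $F'\times \bA^n$ with the $\sA$-action on the fibers having strictly positive weights in the chamber $\fC$. The key point is that such a $\sT$-equivariant affine fibration with contracting $\sA$-action on the fibers admits a $\sT$-equivariant \emph{linearization}: the fibers carry a filtration by $\sA$-weight, and the associated graded is a $\sT$-equivariant vector bundle $E\to F$; more precisely, one can choose a $\sT$-equivariant isomorphism $X\cong \mathrm{Tot}_F(E)$ compatible with $f$ and the zero section. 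This is the standard fact that a smooth $\mathbb{G}_m$-variety which is everywhere attracting to its fixed locus, viewed over that fixed locus, is a vector bundle; I would invoke the argument in \cite[Lem.~2.1.2]{P} or reprove it directly by induction on the $\sA$-weights of the relative tangent bundle, using that positive weight spaces produce no obstruction to splitting the filtration equivariantly.

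Given such an identification $X\cong \mathrm{Tot}_F(E)$ with $f$ the bundle projection $\pi\colon \mathrm{Tot}_F(E)\to F$, the statement becomes exactly \eqref{pb is iso on k}: the pullback map $\pi^*\colon K^\sT(F,\sw^\sA)\to K^\sT(\mathrm{Tot}_F(E),\sw^\sA\circ\pi)$ is an isomorphism. Since $\sw=f^*(\sw^\sA)=\pi^*(\sw^\sA)$ under this identification, the map $f^*$ is identified with $\pi^*$, and we are done. I would also note that the inverse is given by the Gysin pullback $z^*$ along the zero section $z\colon F\hookrightarrow X$, which is well-defined since $F\hookrightarrow X$ is a regular closed embedding of smooth varieties, and $z^*\circ f^*=\id$ because $f\circ z=\id_F$; this gives injectivity of $f^*$ directly, while surjectivity follows from \eqref{pb is iso on k}.

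The main obstacle I expect is the first step: producing the $\sT$-equivariant vector bundle structure on $X=\Attr_\fC(F)$, i.e.\ promoting the affine fibration of \cite{BB} to a genuine linear structure compatibly with the full torus $\sT$ (not just $\sA$) and compatibly with the superpotential (which is automatic once the identification intertwines $f$ with $\pi$, since $\sw=f^*\sw^\sA$). The cohomological analogue \eqref{pb is iso on coho} was quoted from \cite{Dav} and there one only needs the affine fibration, because constructible sheaf pullback along any affine fibration is an isomorphism; in $K$-theory the analogous statement for arbitrary affine fibrations of stacks is more delicate, which is why passing through an honest vector bundle is the cleanest route. Concretely, I would argue: the relative tangent bundle $T_{X/F}$ has only positive $\sA$-weights along $F$; decompose $X$ by the BB slices and use the contraction $f$ together with the $\sA$-action to define, for each $x$, a canonical way of rescaling, producing a map $\bA^1\times X\to X$ extending the $\mathbb{G}_m\subset \sA$-action and restricting to the fiberwise scaling on each affine fiber; this exhibits $X$ as the total space of the normal bundle $N_{F/X}=T_{X/F}|_F$ via the exponential-type map, $\sT$-equivariantly. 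Once this is in place, the rest is a one-line appeal to \eqref{pb is iso on k}. If reproving the linearization in full generality turns out to be lengthy, an acceptable alternative is to cite \cite[Lem.~2.1.2]{P} or \cite[Thm.~3.35]{Hal}, where such local-to-global linearizations for attracting loci in the GIT/stack setting are established.
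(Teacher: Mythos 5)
The crux of your reduction --- replacing $X=\Attr_\fC(F)$ by the total space of a $\sT$-equivariant vector bundle over $F$ --- is precisely the step that is not available, and the paper's proof is designed to avoid it. Bialynicki--Birula gives that $f\colon X\to F$ is a Zariski-locally trivial \emph{affine fibration}, but an affine fibration is not automatically a vector bundle, and there is no general ``exponential-type'' linearization. Fiberwise, your claim already amounts to the linearization problem for $\mathbb{G}_m$-actions on $\bA^n$ with an attracting fixed point, which is known only in low dimension and is not a standard fact; globally over a base $F$, and $\sT$-equivariantly, the situation is worse, and there are in fact examples of smooth $\mathbb{G}_m$-varieties whose BB cells are not vector bundles over the fixed locus. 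The ``filtration by $\sA$-weights splits because the weights are positive'' heuristic does not produce an algebraic isomorphism: positivity of weights controls which $\Ext$ groups can contribute, but it does not kill the relevant obstructions. The references you offer for the linearization are also misplaced: \cite[Thm.~3.35]{Hal} is a semiorthogonal-decomposition statement, not a linearization statement, and I do not see a result of the content you describe at the cited location in \cite{P}.

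The paper's actual proof sidesteps linearization entirely. It invokes \cite[Amplification 3.18]{Hal}, which produces, directly from the contracting $\mathbb{G}_m$-action on the \emph{affine fibration} $f$, a semiorthogonal decomposition
\[
\D^b([X/\sT])=\left\langle \cdots,\,f^*\D^b([F/\sT])_{-1},\, f^*\D^b([F/\sT])_0,\, f^*\D^b([F/\sT])_1,\cdots\right\rangle,
\]
with each $f^*$ fully faithful on the weight-$v$ piece. Since $\sw=f^*\sw^\sA$, this SOD descends to matrix factorization categories, and passing to Grothendieck groups gives the isomorphism $f^*\colon K^\sT(F,\sw^\sA)\xrightarrow{\sim} K^\sT(X,\sw)$. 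The categorical route thus gives exactly what you wanted from the vector-bundle reduction, but with no need to linearize. Your observation that $z^*\circ f^*=\id$ (for $z\colon F\hookrightarrow X$ the inclusion) is correct and gives injectivity of $f^*$, but surjectivity is where the real content lies, and that is what the SOD supplies. If you want to salvage your strategy, you would have to restrict to settings where the attracting set genuinely is (or is known to be) an equivariant vector bundle; in the generality of Setting~\ref{setting of stab}, the SOD argument is the right tool.
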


\begin{proof}
By \cite[Amplification 3.18]{Hal}, we have a SOD for the derived category of coherent sheaves:
\begin{align*}
    \D^b([X/\sT])=\left\langle \cdots,f^*\D^b([F/\sT])_{-1}, f^*\D^b([F/\sT])_0, f^*\D^b([F/\sT])_1,\cdots\right\rangle, 
\end{align*}
where
$$\left\langle \cdots,\D^b([F/\sT])_{-1}, \D^b([F/\sT])_0, \D^b([F/\sT])_1,\cdots\right\rangle$$ is an SOD of $\D^b([F/\sT])$, and $f^*$ maps $\D^b([F/\sT])_{v}$ fully faithfully onto its image in $\D^b([X/\sT])$ for all $v\in\bZ$. Here $\D^b([F/\sT])_{v}$ is the full subcategory of $\D^b([F/\sT])$ whose objects are complexes with $\sigma$-weight equals to $v$, where $\sigma$ is a fixed cocharacter in $\fC$.

This induces an SOD for the matrix factorization category (e.g.~\cite[Prop.~2.1]{P}):
\begin{align*}
    \mathrm{MF}([X/\sT],\sw)=\left\langle \cdots,f^*\mathrm{MF}\left([F/\sT],\sw^\sA\right)_{-1}, f^*\mathrm{MF}\left([F/\sT],\sw^\sA\right)_0, 
    f^*\mathrm{MF}\left([F/\sT],\sw^\sA\right)_1,\cdots\right\rangle,
\end{align*}
where 
$$\left\langle \cdots,\mathrm{MF}\left([F/\sT],\sw^\sA\right)_{-1}, \mathrm{MF}\left([F/\sT],\sw^\sA\right)_0, \mathrm{MF}\left([F/\sT],\sw^\sA\right)_1,\cdots\right\rangle$$ is an SOD of $\mathrm{MF}\left([F/\sT],\sw^\sA\right)$, and $f^*$ maps $\mathrm{MF}\left([F/\sT],\sw^\sA\right)_{v}$ fully faithfully onto its image in $\mathrm{MF}\left([X/\sT],\sw\right)$ for all $v\in\bZ$. By passing to the Grothendieck group, we get isomorphism $f^*\colon K^\sT(F,\sw^\sA)\cong K^\sT(X,\sw)$.
\end{proof}

\section{Excess intersection formula of critical cohomology and \texorpdfstring{$K$}{K}-theory}\label{app on excess formula}

Consider the following Cartesian diagram of smooth and connected varieties:
\begin{equation}\label{cd:excess diagram}
\xymatrix{
X' \ar[r]^{f'} \ar[d]_{h} \ar@{}[dr]|{\Box} & Y' \ar[d]^{g} \\
X \ar[r]^f & Y,
}
\end{equation}
where $f$ is a closed immersion. 
The natural map $N_{f'}\to h^*N_f$ between normal bundles is an embedding of vector bundles. We define the \textit{excess bundle}: 
$$E:=h^*N_f/N_{f'}. $$
Suppose that $\sw\colon Y\to \bC$ is a regular function on $Y$. We have proper pushforward
$$f_*\colon H(X,\varphi_{\sw\circ f}\omega_X)\to H(Y,\varphi_{\sw}\omega_Y), \quad f'_*\colon H(X',\varphi_{\sw\circ f\circ h}\omega_{X'})\to H(Y',\varphi_{\sw\circ g}\omega_{Y'}), $$ 
and~l.c.i.~pullback
$$g^*\colon H(Y,\varphi_{\sw}\omega_Y)\to H(Y',\varphi_{\sw\circ g}\omega_{Y'}[-2d_g]), \quad h^*\colon H(X,\varphi_{\sw\circ f}\omega_X)\to H(X',\varphi_{\sw\circ f\circ h}\omega_{X'}[-2d_h]). $$ 
Here $d_g=\dim Y'-\dim Y$ and $d_h=\dim X'-\dim X$.

\begin{Proposition}[Excess intersection formula for critical cohomology]\label{excess_coh}
Notations as above, we have 
\begin{align*}
    g^*\circ f_*=f'_*\circ e(E)\cdot h^*,
\end{align*}
where $e(E)$ is the Euler class of the excess bundle $E$.
\end{Proposition}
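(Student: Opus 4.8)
\textbf{Proof plan for Proposition \ref{excess_coh}.}

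The plan is to reduce the excess intersection formula for critical cohomology to the classical one for Borel--Moore homology by a deformation-to-the-normal-cone argument, combined with the fact that vanishing cycle functors are compatible with all the operations involved. First I would recall that, since $f\colon X\to Y$ is a closed immersion of smooth varieties, the classical excess intersection formula $g^*\circ f_*=f'_*\circ e(E)\cdot h^*$ holds in Borel--Moore homology; see e.g.\ \cite[\S5]{CG}. The issue is purely to transport this identity from $H^{\BM}$ of the (derived) intersections to the critical cohomology groups $H(-,\varphi_\sw\omega_{(-)})$. Since all four maps in \eqref{cd:excess diagram} are morphisms of smooth varieties, the Euler class operator $e(E)\cdot(-)$ on $H(X',\varphi_{\sw\circ f\circ h}\omega_{X'})$ is the one from \S\ref{sect on eu cla op} (take $E$ to be the vector bundle $h^*N_f/N_{f'}$ on $X'$ and apply \eqref{eqn:Euler}).

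The key steps, in order, are: (1) Reduce to the case where $g$ itself is a closed immersion, by factoring $g$ as the composition of a closed immersion $Y'\hookrightarrow Y\times \bA^N$ (using that $Y'$ is quasi-projective) and the smooth projection $Y\times \bA^N\to Y$; the formula for smooth morphisms is immediate (both sides compute pullback, the excess bundle is the pullback of $N_f$ on the fiber product, which agrees with $h^*N_f$, so $e(E)=1$ after the identification and the base change $g^*f_*=f'_*h^*$ is the standard smooth base change for proper pushforward, compatible with $\varphi_\sw$ by \S\ref{sect on crit coho}(i),(ii)). (2) For $g$ a closed immersion, use deformation to the normal cone of $Y'$ in $Y$: this produces a family over $\bA^1$ whose generic fiber realizes the diagram \eqref{cd:excess diagram} and whose special fiber replaces $Y$ by the total space of $N_{g}$ and $f$ by the induced map; the standard specialization argument (as in \cite[\S5.2--5.4]{CG}) reduces the identity to the case where $g$ is the zero section of a vector bundle. (3) For $g$ the zero section of a vector bundle $V\to Y'$, both $g^*\circ f_*$ and $f'_*\circ e(E)\cdot h^*$ can be computed directly: $g^*g_*=e(V)\cdot$, and a straightforward bundle computation identifies the excess bundle and shows $g^*f_*=f'_*e(E)h^*$ on the nose. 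Throughout, each of the three operations --- proper pushforward, l.c.i.\ (here: Gysin or smooth) pullback, and multiplication by the Euler class operator --- commutes with the vanishing cycle functor $\varphi_\sw$ and with the canonical map \eqref{can_coh}, by the functoriality recalled in \S\ref{sect on crit coho} and the discussion of the Euler class operator in \S\ref{sect on eu cla op}. Hence the identity, once known at the level of $H^{\BM}$ of the derived fiber products (equivalently, after applying $\varphi$), descends to the critical cohomology groups.

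The main obstacle I expect is \emph{not} the formal structure of the argument but checking carefully that the deformation-to-the-normal-cone / specialization machinery is available in the critical (vanishing cycle) setting with the right compatibilities --- in particular, that the specialization map commutes with proper pushforward and Gysin pullback, and that the vanishing cycle of the pulled-back potential on the deformation space behaves as expected along the family. This is essentially the content of the specialization results developed in \S\ref{sect on sp and stab} (e.g.\ Proposition \ref{prop on sp comm with stab coho} and Remark \ref{rmk: sp zero loci functorial}) applied to the constant-family potential $\sw$; once one invokes those, the reduction to the zero-section case is routine. Alternatively, one may bypass deformation to the normal cone entirely and instead note that the natural transformations underlying $\varphi_\sw$-functoriality (the Milnor triangle \eqref{equ on mil tri}, the isomorphisms $\varphi_\sw i_*\cong i_*\varphi_{\sw|_Z}$, $j^*\varphi_\sw\cong\varphi_{\sw|_U}j^*$) are strong enough that the classical excess formula, which is an identity of natural transformations between functors built from $(-)_*$, $(-)^!$, $(-)^*$ on constructible sheaves, can be applied directly to $\varphi_\sw\omega_Y$; this is the slicker route, and I would present it first, with the deformation argument as a fallback for any compatibility that is not immediately citable.
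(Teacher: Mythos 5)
Your reduction to the case where $g$ is a closed immersion is sound; the paper does the same via the graph $\Gamma_g\colon Y'\hookrightarrow Y'\times Y$ followed by the flat projection $Y'\times Y\to Y$ (your embedding into $Y\times\bA^N$ would also serve). The real issues lie in steps (2) and (3).

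Your primary route, deformation to the normal cone, has the gap you yourself flag, and it is a genuine one. The specialization maps developed in \S\ref{sect on sp and stab} (Definition \ref{def of sp crit_coh}, Proposition \ref{prop on sp comm with stab coho}) specialize a \emph{family of potentials} on a \emph{fixed} ambient variety under Assumption \ref{assumption for def of potential_crit}. Deformation to the normal cone produces a family over $\bA^1$ in which the \emph{ambient spaces themselves degenerate}; the potential $\sw$ pulls back to the total space, and its restriction to the special fibre is the pullback of $\sw|_{Y'}$ along the projection of the normal bundle. Running the standard CG-style argument would require a specialization map for critical cohomology along this geometric degeneration, compatible with proper pushforward and Gysin pullback, which \S\ref{sect on sp and stab} does not provide and which would need a separate argument; it is not a matter of citing.

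Your ``alternative'' route is closer in spirit to the actual proof but is under-developed: the classical excess formula is an identity of cohomology \emph{classes}, not an identity of natural transformations on $\D^b_c$ that one can simply apply to $\varphi_\sw\omega_Y$ by naturality. The paper's argument is precisely to \emph{construct} the relevant transformation. After reducing to $g$ a closed immersion it applies the adjunctions $f_*f^!\to\mathrm{id}$ and $\mathrm{id}\to g_*g^*$ to $\omega_Y$ to obtain a morphism $f_*\omega_X\to g_*\omega_{Y'}[-2d_g]$ in $\D^b_c(Y)$, factors this through $f_*h_*h^*\omega_X\to f_*h_*\omega_{X'}[-2d_g]$, and observes, using that $\omega_X$ and $\omega_{X'}$ are shifted constant sheaves, that the connecting morphism $\tilde c\colon h^*\omega_X\to\omega_{X'}[-2d_g]$ is multiplication by a class $\alpha\in H^{2d_h-2d_g}(X')$. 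A commutative diagram in $\D^b_c(Y)$, upon applying $\varphi_\sw$ and $\dR\Gamma(Y,-)$, then yields $g^*\circ f_* = f'_*\circ\alpha\cdot h^*$. Only at the very end does the classical excess intersection formula enter, and only for Chow groups (Fulton's theorem), together with the cycle map and Poincar\'e duality, to identify $\alpha=e(E)$. Your sketch omits this entire step; it is the actual content of the proof.
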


\begin{proof}
Note that $g=\pr_Y\circ \Gamma_g$, where $\Gamma_g\colon Y'\to Y'\times Y$ is the graph of $g$ and $\pr_Y\colon Y'\times Y\to Y$ is the projection to the second component. 
They fit into the following Cartesian diagram:
\begin{equation}\label{diag on graph trick}
\xymatrix{
X' \ar[r]^{f'} \ar[d]_{f'\times h} \ar@{}[dr]|{\Box} & Y' \ar[d]^{\Gamma_g} \\
Y'\times X \ar[r]^{1_{Y'}\times f} \ar[d]_{\pr_X} \ar@{}[dr]|{\Box} & Y'\times Y \ar[d]^{\pr_Y}\\
X \ar[r]^f & Y.
}
\end{equation}
In particular, we have compatibility of proper pushforward and flat pullback: 
\begin{align*}
    \pr_Y^*\circ f_*=(1_{Y'}\times f)_*\circ \pr_X^*.
\end{align*}
Thom-Sebastiani isomorphism gives
$$H(Y'\times Y,\varphi_{\sw\circ\pr_Y}\omega_{Y'\times Y})\cong H(Y',\omega_{Y'})\otimes H(Y,\varphi_{\sw}\omega_Y). $$ 
Note that $\pr_Y^*\colon H(Y,\varphi_{\sw}\omega_Y)\to H(Y'\times Y,\varphi_{\sw\circ\pr_Y}\omega_{Y'\times Y})$ equals to $\gamma\mapsto [Y']\otimes \gamma$, where $[Y']\in H^{-2\dim_{Y'}}(Y',\omega_{Y'})$ is the fundamental cycle, and similarly $\pr_X^*(-)=[Y']\otimes (-)$.

Suppose that the proposition holds for the upper square of the above diagram, i.e. 
$$\Gamma_g^*\circ (1_{Y'}\times f)_*=f'_*\circ e(E)\cdot (f'\times h)^*,$$ 
then we have
\begin{align*}
    g^*\circ f_*=\Gamma_g^*\circ \pr_Y^*\circ f_*=\Gamma_g^*\circ (1_{Y'}\times f)_*\circ \pr_X^*=f'_*\circ e(E)\cdot (f'\times h)^*\circ \pr_X^*=f'_*\circ e(E)\cdot h^*,
\end{align*}
where we use $N_{1_{Y'}\times f}\cong \pr_X^*N_f$ in the third equality.
So it is enough to show the claim for the upper square. As $\Gamma_g$ is a closed immersion, so we are left to show the proposition holds for \eqref{cd:excess diagram}
when $g$ is a closed immersion. 

Applying the composition of adjunctions $f_*f^!\to \mathrm{id}$, $\mathrm{id}\to g_*g^*$ to $\omega_{Y}$ (here these are operations on complexes of sheaves), we obtain  
a morphism in the derived category $D^b_c(Y)$ of constructible complexes: 
\begin{equation}\label{equ on fgom}f_*\omega_X\to g_*\omega_{Y'}[-2d_g], \end{equation}
where we use $g^*\omega_Y\cong \omega_{Y'}[-2d_g]$. Since $f_*\omega_X$ is supported on $X$, the map \eqref{equ on fgom} factors as
\begin{align*}
    f_*\omega_X\to f_*f^!g_*\omega_{Y'}[-2d_g]\to g_*\omega_{Y'}[-2d_g],
\end{align*}
where $f_*\omega_X\to f_*f^!g_*\omega_{Y'}[-2d_g]$ is obtained by applying $f_*$ to map 
$$c:\omega_X\to  h_*\omega_{X'}[-2d_g]\cong h_*f'^!\omega_{Y'}[-2d_g]\cong f^!g_*\omega_{Y'}[-2d_g]. $$ 
Using the adjoint pair $(h^*, h_*)$, the map $c$ factors as 
$$\omega_X\to h_*h^*\omega_X\xrightarrow{h_*(\tilde c)}  h_*\omega_{X'}[-2d_g], $$ 
where $\tilde c:h^*\omega_X\to \omega_{X'}[-2d_g]$ is given by applying the natural transformation 
\begin{equation}\label{equ on hff'g}h^*f^!\to f'^!g^* \end{equation} to $\omega_Y$. Here the transformation is obtained by the adjunction of the map $f^!\to f^!g_*g^*\cong h_*f'^!g^*$. 

Using smoothness, we have $\omega_{X}=\mathbb{Q}_X[2\dim X]$, $\omega_{X'}=\mathbb{Q}_{X'}[2\dim X']$, so 
the map $\tilde c$ can be identified with multiplication by an element $\alpha\in H^{2d_h-2d_g}(X')$. 

In summary, we obtain the following commutative diagram in $D^b_c(Y)$:
\begin{equation}
\xymatrixcolsep{4pc}
\xymatrix{
& \omega_Y \ar[dr] & \\
f_*\omega_X \ar[r] \ar[ur] \ar[d] & f_*f^!g_*\omega_{Y'}[-2d_g] \ar[r] & g_*\omega_{Y'}[-2d_g]\\
f_*h_*h^*\omega_X \ar[r]^{f_*h_*\alpha\cdot \quad\,\,} 
 & f_*h_*\omega_{X'}[-2d_g]\ar[r]^{\cong} \ar[u]^{\cong} & g_*f'_*\omega_{X'}[-2d_g]. \ar[u]
}
\end{equation}
We note that $g^*\circ f_*$ (here $g^*$ denotes the Gysin pullback) is induced by applying $\mathbf R\Gamma(Y,-)$ to the path 
$$\varphi_\sw f_*\omega_X\to \varphi_\sw\omega_Y\to \varphi_\sw g_*\omega_{Y'}[-2d_g], $$ 
followed by natural transformation 
\begin{equation}\label{equ on vagy'}\varphi_\sw g_*\omega_{Y'}[-2d_g]\xrightarrow{\cong} g_* \varphi_{\sw\circ g} \omega_{Y'}[-2d_g], \end{equation}
and $f'_*\circ \alpha\cdot h^*$ (here $h^*$ denotes the Gysin pullback) is induced by applying $\mathbf R\Gamma(Y,-)$ to the path 
$$\varphi_\sw f_*\omega_X\to \varphi_\sw f_*h_*h^*\omega_X\to \varphi_\sw f_*h_*\omega_{X'}[-2d_g]\xrightarrow{\cong} \varphi_\sw g_*f'_*\omega_{X'}[-2d_g] \to \varphi_\sw g_*\omega_{Y'}[-2d_g],$$ followed by the same transformation \eqref{equ on vagy'}.
Therefore we get the equation 
\begin{align*}
    g^*\circ f_*=f'_*\circ \alpha\cdot h^*.
\end{align*}
It remains to show that $\alpha=e(E)$.  
Recall that the `restriction with supports' (ref.~\cite[\S 8.3.21]{CG}): 
$$f^!\colon H^\BM_{*}(Y')\to H^\BM_{*+2d_f}(X'),$$ 
which is defined by applying $\mathbf R\Gamma(Y',-)$ to the map 
$$\omega_{Y'}\cong g^!\omega_Y\to g^!f_*f^*\omega_Y\cong f'_*h^!\omega_X[-2d_f]\cong f'_*\omega_{X'}[-2d_f]. $$ 
The map $\omega_{Y'}\to  f'_*\omega_{X'}[-2d_f]$ factors into 
$$\omega_{Y'}\to f'_*f'^*\omega_{Y'}\xrightarrow{f'_*(c')}  f'_*\omega_{X'}[-2d_f], $$ 
where $c'$ is the natural transformation $f'^*g^!\omega_Y\to h^!f^*\omega_Y$, which is given by applying 
\eqref{equ on hff'g} to $\mathbb{Q}_Y$ and then applying Verdier duality functor $\bD_{X'}$. 
Since $c'$ is obtained by $\widetilde{c}$ by applying Verdier duality functor $\bD_{X'}$, $c'$ can also be identified with multiplication by $\alpha$. 
Then we get an equation on maps
between Borel-Moore homologies:
\begin{align}\label{equ on f!f!}
    f^!=\alpha\cdot f'^*\colon H^\BM_{*}(Y')\to H^\BM_{*+2d_f}(X'),
\end{align}
where $f'^*\colon H^\BM_{*}(Y')\to H^\BM_{*+2d_{f'}}(X')$ is the Gysin pullback. 

Next we determine $\alpha$ by using the excess intersection formula in Chow homology and the cycle map. 
In Chow homology, we have \cite[Thm.~6.3]{Ful}:
\begin{align}\label{ex for in chow}
    f^!=e(E)\cap f'^*\colon A_*(Y')\to A_{*+d_f}(X'),
\end{align}
where $A_*(-)$ is the Chow homology, $f^!$ is the refined Gysin pullback, and $f'^*$ is the Gysin pullback. It is known that the `restriction with supports'  is compatible with refined Gysin map \cite[\S 8.3.21, \S 2.6.21]{CG}, i.e. 
\begin{align*}
    f^!_\BM\circ \mathrm{cyc}=\mathrm{cyc}\circ f^!_\CH\colon A_*(Y')\to H^\BM_{2*+2d_f}(X'),
\end{align*}
where $\cyc\colon A_*\to H^{\BM}_{2*}$ is the cycle map, and $f^!_\BM, f^!_\CH$ denote Gysin pullbacks in BM and Chow homologies. 

Since Gysin pullback is a special case of `restriction with supports', we also have 
\begin{equation}\label{equ on f!ch and bm}f'^!_\BM\circ \mathrm{cyc}=\mathrm{cyc}\circ f'^!_\CH. \end{equation}
Let $[Y']_\CH$ ($[Y']_\BM$) be the fundamental cycle in BM homology (Chow homology) respectively, then  
\begin{equation}\label{equ on cpt cho and bm}\cyc([Y']_\CH)=[Y']_\BM. \end{equation}
Combining \eqref{equ on f!f!}, \eqref{ex for in chow}, \eqref{equ on f!ch and bm}, \eqref{equ on cpt cho and bm}, we obtain
\begin{align*}
    \alpha\cdot [X']_\BM=\alpha\cdot f'^!_\BM ([Y']_\BM)=f^!_\BM\circ \cyc([Y']_\CH)=\cyc\circ f^!_\CH([Y']_\CH)=\cyc(e(E)\cap [X']_\CH)=e(E)\cdot [X']_\BM.
\end{align*}
By Poincar\'e duality, the map $H^{*}(X')\ni \beta\mapsto \beta\cdot [X']_\BM\in H^\BM_{2\dim X'-*}(X')$ is an isomorphism; thus $\alpha=e(E)$.
\end{proof}

\begin{Remark}[Equivariant version of Proposition \ref{excess_coh}]\label{equiv excess_coh}
Suppose that the maps in the diagram \eqref{cd:excess diagram} are equivariant morphisms between $G$-varieties, where $G$ is an algebraic group, and assume that $\sw$ is $G$-invariant. Then we claim that the equivariant version of Proposition \ref{excess_coh} holds, i.e.
\begin{align}\label{eq:equiv excess_coh}
    g^*\circ f_*=f'_*\circ e^G(E)\cdot h^*,
\end{align}
where $e^G(E)$ is the $G$-equivariant Euler class of excess bundle $E$. To prove this claim, we consider the $n$-acyclic free $G$-variety $M_n$ as in the \cite[\S 3.1]{BL} (see also \cite[\S 2.4]{Dav}), and for every $G$-variety $V$ we define 
$$V_n:=(V\times M_n)/G.$$ Then we get the following Cartesian diagram of smooth connected varieties by applying $(-\times M_n)/G$ to \eqref{cd:excess diagram}
\begin{equation*}
\xymatrix{
X'_n \ar[r]^{f_n'} \ar[d]_{h_n} \ar@{}[dr]|{\Box} & Y_n' \ar[d]^{g_n} \\
X_n \ar[r]^{f_n} & Y_n
}
\end{equation*}
By Proposition \ref{excess_coh} we have 
\begin{align}\label{eq:excess_coh n-th step}
g_n^*\circ f_{n*}=f'_{n*}\circ e(E_n)\cdot h_n^*,
\end{align}
where $$E_n=h_n^*N_{f_n}/N_{f'_n}\cong (E\times M_n)/G$$ is the excess bundle. It is known that there exists $G$-equivariant closed embedding $M_n\hookrightarrow M_{n+1}$. As in \cite[\S 2.4]{Dav}, the equivariant critical cohomology is isomorphic to the inductive limit,~i.e.
\begin{align*}
H_G(Y,\varphi_\sw\,\omega_Y)\cong \lim_{\substack{\longrightarrow\\ n\to \infty}}H(Y_n,\varphi_{\sw_n}\omega_{Y_n}),
\end{align*}
where $\sw_n$ is the descent of $\sw\circ\pr_Y\colon Y\times M_n\to \bC$ from $Y\times M_n$ to $Y_n$ since $\sw$ is $G$-invariant. And the functors between equivariant critical 
cohomologies are inductive limit of their finite counterparts, e.g.
\begin{align*}
    f_*=\lim_{\substack{\longrightarrow\\ n\to \infty}}f_{n*}, \quad g^*=\lim_{\substack{\longrightarrow\\ n\to \infty}}g_{n}^*,\quad e^G(E)\cdot =\lim_{\substack{\longrightarrow\\ n\to \infty}} e(E_n)\cdot.
\end{align*}
Then \eqref{eq:equiv excess_coh} follows from the inductive limit of \eqref{eq:excess_coh n-th step}. 
\end{Remark}

There is also a $K$-theoretic counterpart of Proposition \ref{excess_coh}. Given diagram \eqref{cd:excess diagram}, we have proper pushforward 
$$f_*\colon K(X,\sw\circ f)\to K(Y,\sw), \quad f'_*\colon K(X',\sw\circ f\circ h)\to K(Y',\sw\circ g), $$ 
and~l.c.i.~pullback 
$$g^*\colon K(Y,\sw)\to K(Y',\sw\circ g), \quad h^*\colon K(X,\sw\circ f)\to K(X',\sw\circ f\circ h). $$ 

\begin{Proposition}[Excess intersection formula for critical $K$-theory]\label{excess_K}
Notations as above, we have 
\begin{align*}
    g^*\circ f_*=f'_*\circ e_K(E)\cdot h^*,
\end{align*}
where $e_K(E):=\bigwedge^{*} E^\vee$ is the $K$-theoretic Euler class of the excess bundle $E$.
\end{Proposition}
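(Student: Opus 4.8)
The plan is to deduce the $K$-theoretic excess formula (Proposition \ref{excess_K}) from the cohomological one (Proposition \ref{excess_coh}) by translating the proof verbatim, replacing Borel--Moore homology by $K$-theory of coherent sheaves, the cycle map by the natural map from coherent $K$-theory to critical $K$-theory, and the Euler class $e(E)$ by the $K$-theoretic Euler class $e_K(E)=\bigwedge^*E^\vee$. The skeleton is already in place from the proof of Proposition \ref{excess_coh}: the graph factorization \eqref{diag on graph trick} reduces the general case to the case where $g$ is a closed immersion, using that proper pushforward commutes with flat pullback and that the exterior (derived) tensor product formula for critical $K$-theory (Thom--Sebastiani, as used in \S\ref{sec: transpose correspondences}) gives $K(Y'\times Y,\sw\circ\pr_Y)\cong K(Y')\otimes K(Y,\sw)$ with $\pr_Y^*(\gamma)=[\mathcal O_{Y'}]\otimes\gamma$. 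So I only need to treat $f,g$ both closed immersions.

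For that core case, I would run the same categorical argument: the adjunctions $f_*f^!\to \id$ and $\id\to g_*g^*$ produce a morphism between the relevant complexes (now matrix factorizations / coherent sheaves), which factors through a map $c\colon \mathcal O_X\to h_*\mathcal O_{X'}\otimes(\det N_g)[\deg]$, i.e. through a map $\tilde c\colon h^*\mathcal O_X\to \mathcal O_{X'}\otimes(\text{twist})$ obtained by applying the base-change natural transformation $h^*f^!\to f'^!g^*$ to $\mathcal O_Y$. Since $X,X',Y,Y'$ are smooth, $\tilde c$ is identified with multiplication by a class $\alpha\in K(X')$, and one gets $g^*\circ f_*=f'_*\circ(\alpha\cdot)\circ h^*$. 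Dually, $\alpha$ is the class through which the $K$-theoretic ``restriction with supports'' $f^!$ differs from the Gysin pullback $f'^*$, i.e.\ $f^!=\alpha\cdot f'^*$ on $K(Y')$. To identify $\alpha=e_K(E)$ I would invoke the excess intersection formula in algebraic $K$-theory of coherent sheaves --- which is the $K$-theoretic form of \cite[Thm.~6.3]{Ful}, available in this generality (smooth ambient spaces, regular embeddings) --- together with the compatibility of ``restriction with supports'' in critical $K$-theory with the analogous operation in coherent $K$-theory via the canonical map (this is the role played in the cohomological proof by the compatibility of the cycle map with refined Gysin maps; in $K$-theory it is the statement that $f^!$ commutes with the canonical map $K(\D^b\Coh)\to K^\sT(-,\sw)$, cf.\ \cite[Lem.~2.4]{VV2}). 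Evaluating $\alpha=\alpha\cdot f'^![\mathcal O_{Y'}]=f^![\mathcal O_{Y'}]=e_K(E)\cdot[\mathcal O_{X'}]$ and using that $[\mathcal O_{X'}]$ generates $K(X')$ as a module (or simply reading off the structure sheaf coefficient) pins down $\alpha=e_K(E)$.

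The main obstacle I anticipate is not the formal manipulation but locating, in the matrix-factorization / critical $K$-theory setting, the precise black-box one needs: namely that the ``restriction with supports'' $f^!$ for regular closed embeddings is well-defined on critical $K$-theory, is compatible with proper pushforward and flat pullback, and agrees with the coherent-$K$-theoretic refined Gysin map under the canonical map. Much of this is stated in the paper (functorial properties of matrix factorization categories, \cite[Lem.~2.4]{VV2}, and the Gysin pullbacks for regular embeddings mentioned in \S\ref{subsubsec: excision} and \S\ref{sect on eu cla op k}), but the cleanest route may be to phrase everything at the level of the coherent singularity category, where the derived-category excess formula is genuinely classical, and then descend to the Verdier quotient. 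An alternative, probably cleaner, route that avoids re-deriving the coherent excess formula: factor $f$ as a composition of a zero section embedding into a vector bundle followed by a regular embedding with trivial excess, and use the projective bundle / deformation-to-the-normal-cone style reduction together with $i^*i_*=e_K(N)\cdot(-)$ from \eqref{equ on k euler of normal} (which is exactly the diagonal case of the desired formula). I would carry out the proof in the order: (i) graph reduction to $g$ a closed immersion; (ii) categorical construction of $\alpha$ and the identity $g^*f_*=f'_*\circ(\alpha\cdot)\circ h^*$; (iii) dual identity $f^!=\alpha\cdot f'^*$; (iv) identification $\alpha=e_K(E)$ via the coherent excess formula and the canonical map; and finally note, as in Remark \ref{equiv excess_coh}, that everything is compatible with the Borel construction so the equivariant statement $g^*\circ f_*=f'_*\circ e^\sT_K(E)\cdot h^*$ follows (Remark \ref{equiv excess_K}).
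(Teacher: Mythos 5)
Your proposal is workable and lands in the right place, but it is organized differently from the paper's proof, and one step in your ``verbatim translation'' plan is more delicate than you suggest. You and the paper agree on the graph reduction \eqref{diag on graph trick} to the case that $g$ is a closed immersion, and on the key principle that everything should be routed through the surjection from the $K$-theory of the (derived) zero locus onto $K^\sT(-,\sw)$. Where you diverge: your first route proposes to mimic the cohomological adjunction argument, producing a morphism $\tilde c\colon h^*\mathcal O_X\to\mathcal O_{X'}$ from the base-change transformation $h^*f^!\to f'^!g^*$ and reading it off as multiplication by $\alpha\in K(X')$. This identification is cleaner in cohomology than in $K$-theory: the cohomological argument leans on $\omega_X\cong\bQ_X[2\dim X]$ being a shifted constant sheaf, so the map $\tilde c$ is literally a cohomology class, whereas in coherent $K$-theory the derived pullback $\textbf{L}h^*\mathcal O_X$ is the structure sheaf of the derived fiber product $X'^{\der}$, a complex concentrated in several degrees, and the content of the excess formula is precisely the calculation that $[\textbf{L}h^*\mathcal O_X]=e_K(E)\cdot[\mathcal O_{X'}]$ in $K(X')$. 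Your fallback route --- cite the coherent-$K$-theoretic excess formula and then descend to critical $K$-theory via compatibility of the canonical map with refined Gysin maps \cite[Lem.~2.4]{VV2} --- is sound. The paper instead proves the coherent excess statement from scratch inside the critical $K$-theory framework: it reduces via the surjective canonical map to a statement about $K(Z^{\der}_X)$ (Lemma \ref{lem on excess k}), then performs a hands-on derived-algebraic-geometry computation --- base change along the derived diagram \eqref{diag on many zder}, the explicit description $\calH^{-i}(\mathcal O_{X'^{\der}})\cong\bigwedge^i E^\vee$ of the Koszul homology of the homotopy pullback, and the hypertor spectral sequence \eqref{equ on sp se} --- to identify the excess class directly as $e_K(E)$. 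The paper's route buys self-containedness (no external black box for $G$-theoretic excess intersection and no appeal to the deformation-to-the-normal-cone); your route is shorter if one is willing to cite the classical coherent excess formula, at the cost of needing to state carefully in which form it is being cited and why the canonical-map compatibility is available in the generality required.
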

\begin{proof}
As in the proof of Proposition \ref{excess_coh}, by the graph construction \eqref{diag on graph trick}, we may assume $g$ is a closed immersion. 

We have a commutative diagram of derived schemes
\begin{equation}\label{diag on many xder}
\xymatrix{
X'  \ar[r]_{i \,\,\,} \ar[rd]_{h}   \ar@/^1.2pc/[rr]^{f'} & X'^{\der} \ar[r]_{\,\, \,\, f'^\der} \ar[d]^{h^{\der}}^{ }    & Y'   \ar[d]^{g} \\
& X \ar[r]^{f} & Y,
} 
\end{equation}
where the middle square is the homotopy pullback diagram. 
We denote $Z^{\der}_Y:=Z^{\der}(\sw)\hookrightarrow Y$ to be the derived zero locus of $\sw$ and similarly $Z^{\der}_X:=Z^{\der}(\sw\circ f)\hookrightarrow X$, $Z^{\der}_{Y'}$, $Z^{\der}_{X'}$ for the derived zero loci of pullback functions. They fit into commutative diagram of derived schemes (here we use same notations for maps as in the previous diagram):
\begin{equation}\label{diag on many zder}
\xymatrix{
Z^{\der}_{X'}  \ar[r]_{i \,\,\,} \ar[rd]_{h}  \ar@/^1.2pc/[rr]^{f'} & Z^{\der}_{X'^{\der}} \ar[r]_{\,\, \,\, f'^\der} \ar[d]^{h^{\der}}^{ }   & Z^{\der}_{Y'}  \ar[d]^{g}   \\
& Z^{\der}_X \ar[r]^{f} & Z^{\der}_{Y}.
} 
\end{equation}
As the canonical map commutes with proper pushforward and Gysin pullback, we have commutative diagrams 
$$
\xymatrix{
K(Z^{\der}_X) \ar[r]^{g^*\circ f_*} \ar@{->>}[d]_{\can}   & K(Z^{\der}_{Y'})   \ar@{->>}[d]^{\can}  \\
K(X,\sw\circ f) \ar[r]^{g^*\circ f_*} & K(Y',\sw\circ g),
} 
\quad \quad
\xymatrix{
K(Z^{\der}_X) \ar[rr]^{f'_*\circ e_K(E)\cdot h^*} \ar@{->>}[d]_{\can}  &  & K(Z^{\der}_{Y'})   \ar@{->>}[d]^{\can}  \\
K(X,\sw\circ f) \ar[rr]^{f'_*\circ e_K(E)\cdot h^*}  & & K(Y',\sw\circ g).
} 
$$
Then the claimed equality follows from Lemma \ref{lem on excess k} below. 
\end{proof}
\begin{Lemma}\label{lem on excess k}
When $g\colon Y'\to Y$ above is a closed immersion, we have an equality of maps
$$    g^*\circ f_*=f'_*\circ e_K(E)\cdot h^*\colon K(Z^{\der}_X)\to K(Z^{\der}_{Y'}). $$
\end{Lemma}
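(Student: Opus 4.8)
\textbf{Proof proposal for Lemma \ref{lem on excess k}.}

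The plan is to reduce the statement, which concerns the critical $K$-theory of derived zero loci, to the excess intersection formula in the (derived) $K$-theory of coherent sheaves, where it is classical. First I would observe that both maps in the statement are of the form ``pushforward followed by pullback'' for the diagram \eqref{diag on many zder} of derived schemes, and that by factoring $g$ through its graph (as in the proof of Proposition \ref{excess_coh}, using the diagram \eqref{diag on graph trick} and the compatibility of proper pushforward with flat pullback) we may indeed assume $g$ is a closed immersion --- this is already set up in the proof of Proposition \ref{excess_K} above, so for the Lemma itself $g$ is given to be a closed immersion. The key point is then the following: $Z^{\der}_X$ and $Z^{\der}_{Y'}$ are quasi-smooth derived schemes (being derived zero loci of functions on smooth varieties), the morphism $f'\colon Z^{\der}_{X'}\to Z^{\der}_{Y'}$ factors through the quasi-smooth closed immersion $f'^{\der}\colon X'^{\der}\hookrightarrow Y'$ restricted to zero loci, and the homotopy-Cartesian square in \eqref{diag on many xder} (equivalently \eqref{diag on many zder}) carries, by base change, the derived excess bundle $E$ --- which since $g$ is a closed immersion is literally the excess bundle $h^{\der *}N_f / N_{f'^{\der}}$ on $X'^{\der}$ (and its restriction agrees with the $E$ appearing in Proposition \ref{excess_K}).

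Concretely, I would carry out the argument in the following steps. (1) Work in the derived categories of coherent sheaves: $f_*$ and $f'_*$ are derived pushforwards, $g^*$ and $h^*$ are derived pullbacks along closed immersions, and on $G_0$-groups (i.e. $K$-theory of coherent sheaves) of quasi-smooth derived schemes we have the derived excess intersection formula
\begin{equation*}
g^*\circ f_* = f'_*\circ \left(\lambda_{-1}(E^\vee)\cdot -\right)\circ h^*
\end{equation*}
for a homotopy-Cartesian square with $f$ a quasi-smooth closed immersion, where $\lambda_{-1}(E^\vee)=\sum (-1)^i[\wedge^i E^\vee] = e_K(E)$; this is the derived-geometry refinement of \cite[Thm.~6.3]{Ful}, valid because the derived fibre product automatically records the excess bundle. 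This I would invoke as the main input, citing the analogue used elsewhere (e.g. the excess formula quoted via \cite[Prop.~2.8]{VV2}, \cite[Lem.~2.4.7]{Toda1}) or proving it directly by the graph/self-intersection reduction: for $g$ a closed immersion, $g^*f_*$ is computed by the derived tensor product with the Koszul-type resolution of $\mathcal O_{Z^{\der}_{X'}}$ over $\mathcal O_{X}$, and the excess Koszul factor is exactly $\wedge^* E^\vee$. (2) Pass from $G_0(\D^b\Coh)$ to critical $K$-theory via the canonical surjections $\can\colon K(Z^{\der}_{(-)})\twoheadrightarrow K((-),\sw)$ displayed at the end of the proof of Proposition \ref{excess_K}; since $\can$ commutes with proper pushforward, l.c.i.\ pullback, and the $K$-theoretic Euler class operator (all recorded in \S\ref{sect on eu cla op k} and the functoriality discussion), and since $K(Z^{\der}_X)\twoheadrightarrow K(X,\sw\circ f)$ is surjective, the identity of operators on $K(Z^{\der}_X)$ descends to the identity of operators on $K(X,\sw\circ f)$, which is the content of Proposition \ref{excess_K}. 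For the Lemma as stated (the identity already on $K(Z^{\der}_X)$) only step (1) is needed.

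The main obstacle I anticipate is step (1): making the derived excess intersection formula precise in the setting of quasi-smooth derived schemes with the specific factorization $Z^{\der}_{X'}\to X'^{\der}\times_{Y'} Z^{\der}_{Y'} \to Z^{\der}_{Y'}$, i.e.\ checking that the derived fibre product $X'^{\der}:=X\times^h_Y Y'$ has the expected cotangent complex so that $N_{f'^{\der}}$ is a genuine vector bundle and $E=h^{\der*}N_f/N_{f'^{\der}}$ as a $K$-theory class, and that $g$ being a closed immersion kills the correction that would otherwise appear from $g$ not being an l.c.i.\ embedding into an ambient smooth variety. This is where one has to be careful about whether to work with the classical excess bundle of \cite{Ful} (valid for the underived fibre diagram \eqref{cd:excess diagram}) transported through the derived zero-locus construction, versus the intrinsically derived excess bundle; the homotopy-pullback square \eqref{diag on many zder} is precisely what reconciles the two, and verifying this compatibility --- essentially that forming derived zero loci commutes with derived fibre products and preserves excess bundles --- is the technical heart. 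Everything else (the graph reduction, the compatibility of $\can$ with the six operations, the surjectivity of $\can$) is routine and already available in the paper.
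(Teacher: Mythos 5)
Your proposal is on the right track and its central idea coincides with the paper's: the derived fibre product $X'^{\der}=X\times^h_Y Y'$ carries the excess information in its structure sheaf, and pushing and pulling around the square \eqref{diag on many zder} reduces the identity to understanding $\mathcal{F}\otimes^{\mathbf{L}}_{\mathcal{O}_X}\mathcal{O}_{X'^{\der}}$. However, as a proof the sketch is incomplete at exactly the point you flag as ``the technical heart,'' and the citations you suggest would not fill it. The references \cite[Prop.~2.8]{VV2} and \cite[Lem.~2.4.7]{Toda1} concern $i^*i_*$ along a self-intersection (torus fixed locus), not a general excess square, so invoking them as a black-box ``derived excess intersection formula'' would be circular --- the Lemma \emph{is} that formula in the relevant setting. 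Also, the resolution you want is of $\mathcal{O}_{X'^{\der}}$ over $\mathcal{O}_X$ (not of $\mathcal{O}_{Z^{\der}_{X'}}$); the latter would not cleanly produce the excess bundle.

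The paper's proof fills the gap concretely and in a way your sketch does not pin down: it first uses nil-invariance of $G$-theory to identify $K(Z^{\der})$ with $K(t_0(Z^{\der}))$, so that the class $[g^*f_*\mathcal{F}]$ can be computed as an alternating sum of cohomology sheaves of $\mathcal{F}\otimes^{\mathbf{L}}_{\mathcal{O}_X}\mathcal{O}_{X'^{\der}}$ pushed forward by the classical truncation $t_0(f')$. It then computes
\begin{align*}
\mathcal{H}^i(\mathcal{O}_{X'^{\der}}) \cong \textstyle\bigwedge^{-i}_{\mathcal{O}_{X'}}(E^\vee)\quad(-\rk E\leqslant i\leqslant 0),
\end{align*}
using that $g$ is a closed immersion of smooth varieties, and runs a spectral sequence in $p$ and $q$ for the double complex to collapse the alternating sum into $f'_*\bigl(e_K(E)\cdot h^*(\mathcal{F})\bigr)$. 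These three ingredients --- nil-invariance, the explicit identification of the cohomology sheaves of $\mathcal{O}_{X'^{\der}}$ as exterior powers of $E^\vee$, and the spectral sequence --- are precisely what makes ``the excess Koszul factor is exactly $\wedge^*E^\vee$'' into a proof rather than a heuristic. You should supply them (or an equivalent) before the argument can be considered complete.
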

\begin{proof}
For a Noetherian derived scheme $Z^\der$ with inclusion of its classical truncation
$$i\colon Z:=t_0(Z^\der)\to Z^\der, $$
and $\calF \in \D^{\mathrm{b}}_{\mathrm{coh}}(Z^{\der})$, we have 
$$\sum_p (-1)^p i_*[\calH^p(\calF)]=[\calF]\in K(Z^\der). $$
Here by the nil-invariance of $G$-theory \cite[Cor.~3.4]{Kh}, we have 
\begin{equation}\label{equ on nilinv}i_*\colon K(Z)\cong K(Z^\der). \end{equation}
By base change in diagram \eqref{diag on many zder}, for any $\calF\in \D^{\mathrm{b}}_{\mathrm{coh}}(Z^{\der}_X)$, we have equalities in $K$-theory: 
\begin{align}\label{equ on bc gf}
[g^* f_*\calF]=[\textbf{L}g^* \circ \dR f_*(\calF)]=[\dR f'^\der_*\circ \textbf{L} h^{\der *} (\calF)]=\sum_n(-1)^n [(t_0(f'))_*\calH^n (\textbf{L} h^{\der *} \calF)]\in K(Z_{Y'}).
\end{align}
Here $t_0(f')\colon Z^{}_{X'}:=t_0\left(Z^{\der}_{X'}\right)\to Z^{}_{Y'}:=t_0\left(Z^{\der}_{Y'}\right)$ denotes the classical truncation of $f'$, 
and we have used the fact that classical truncation of $i$ is an isomorphism, and we identify the $G$-theory of $Z^{}_{Y'}$ and $Z^{\der}_{Y'}$ by \eqref{equ on nilinv}.
 
Using diagrams \eqref{diag on many xder}, \eqref{diag on many zder}, we have 
\begin{align}
\label{equ on hder}
\textbf{L} h^{\der *} \calF=\calF\otimes^{\textbf{L}}_{\mathcal O_{Z_X^\der}}\mathcal O_{Z_{X'^\der}^\der}= \calF\otimes^{\textbf{L}}_{\mathcal O_{X}}\mathcal O_{X'^\der}.
\end{align}
Here in the first equality we omit $h^{\der }_*$ since $h^\der$ is an affine morphism so we can identify $\mathcal O_{Z_{X'^\der}^\der}$ with its direct image in $Z_X^\der$ and identify $\D^{\mathrm{b}}_{\mathrm{coh}}\left(Z_{X'^\der}^\der\right)$ with $\D^{\mathrm{b}}_{\mathrm{coh}}\left(h^{\der}_*\mathcal O_{Z_{X'^\der}^\der}\right)$. In the second equality, we use the identification 
$$\mathcal O_{Z_X^\der}\otimes^{\textbf{L}}_{\mathcal O_{X}}\mathcal O_{X'^\der}\cong \mathcal O_{Z_{X'^\der}^\der}, $$ 
and we regard $\calF\otimes^{\textbf{L}}_{\mathcal O_{X}}\mathcal O_{X'^\der}$ as an object in 
$\D^{\mathrm{b}}_{\mathrm{coh}}\left(h^{\der}_*\mathcal O_{Z_{X'^\der}^\der}\right)$.

By a direct calculation, cohomology sheaves of $\mathcal O_{X'^\der}$ (viewed as $\mathcal O_{X'}$-modules) are given by  
\begin{align}\label{equ on ox com}
    \calH^i (\mathcal O_{X'^\der})=\begin{cases}
        {\bigwedge}_{\oO_{X'}}^{-i}(E^\vee), & \text{if}-\rk E\leqslant i\leqslant 0,\\
       \quad \quad  0, &\text{ otherwise}.
    \end{cases}
\end{align}
Note also the spectral sequence
\begin{align}\label{equ on sp se}
    \calH^q(\mathcal F\otimes^\textbf{L}_{\mathcal O_{X}}\calH^p(\mathcal O_{X'^\der})) \Longrightarrow \calH^{p+q} (\calF\otimes^\textbf{L}_{\mathcal O_{X}}\mathcal O_{X'^\der}),
\end{align}
where  
\begin{align}\label{equ on pq calc}
\mathcal F\otimes^\textbf{L}_{\mathcal O_{X}}\calH^p(\mathcal O_{X'^\der})&\cong  
\mathcal F\otimes^\textbf{L}_{\mathcal O_{X}}\wedge_{\oO_{X'}}^{-p}E^\vee \\ \nonumber
&\cong \mathcal F\otimes^\textbf{L}_{\mathcal O_{X}}\mathcal O_{X'}\otimes^\textbf{L}_{\mathcal O_{X'}}\wedge_{\oO_{X'}}^{-p}E^\vee \\ \nonumber
&\cong \textbf{L}h^*\mathcal F\otimes^\textbf{L}_{\mathcal O_{X'}}\wedge_{\oO_{X'}}^{-p}E^\vee \\ \nonumber
&\cong \textbf{L}h^*\mathcal F\otimes^\textbf{L}_{\mathcal O_{Z^{\der}_{X'}}}\wedge_{\oO_{Z^{\der}_{X'}}}^{-p}E^\vee. \nonumber
\end{align}
Combining \eqref{equ on bc gf}, \eqref{equ on hder}, \eqref{equ on ox com}, \eqref{equ on sp se}, \eqref{equ on pq calc}, we obtain 
\begin{align*}[g^* f_*\calF]&=\sum_{p,q\in \bZ}(-1)^{p+q}\left[t_0(f')_*\calH^q\left(\textbf{L}h^*\mathcal F\otimes^\textbf{L}_{\mathcal O_{Z^{\der}_{X'}}}\wedge_{\oO_{Z^{\der}_{X'}}}^{-p}E^\vee\right)\right] \\
&=\sum_{p,q\in \bZ}(-1)^{p+q}\left[t_0(f')_*\left(\calH^q\left(\textbf{L}h^*\mathcal F\right)\otimes_{\mathcal O_{Z^{\der}_{X'}}}\wedge_{\oO_{Z^{\der}_{X'}}}^{-p}E^\vee\right)\right] \\
&= \left[\dR f'_*\left(\left(\textbf{L}h^*\mathcal F\right)\otimes_{\mathcal O_{Z^{\der}_{X'}}}\wedge_{\oO_{Z^{\der}_{X'}}}^{\bullet}E^\vee\right)\right] \\
&=f'_*(e_K(E)\cdot h^*(\mathcal{F})),  
\end{align*}
where the third equality uses \eqref{equ on nilinv}. 
\end{proof}

\begin{Remark}[Equivariant version of Proposition \ref{excess_K}]\label{equiv excess_K}
Suppose that the maps in the diagram \eqref{cd:excess diagram} are equivariant morphisms between $G$-varieties, where $G$ is an algebraic group, and assume that $\sw$ is $G$-invariant. Then the equivariant version of Proposition \ref{excess_K} holds, i.e.
\begin{align}\label{eq:equiv excess_K}
    g^*\circ f_*=f'_*\circ e^G_K(E)\cdot h^*,
\end{align}
where $e^G_K(E):=\bigwedge^{*} E^\vee$ is the $G$-equivariant $K$-theoretic Euler class of excess bundle $E$. This is because the spectral sequence in the proof of Proposition \ref{excess_K} is $G$-equivariant if $\mathcal F$ is $G$-equivariant.
\end{Remark}

\begin{Remark}\label{disconnected X'}
In the case when $X'$ is disconnected, the Propositions \ref{excess_coh} and \ref{excess_K} and Remarks \ref{equiv excess_coh} and \ref{equiv excess_K} still hold. In this case, $e(E)\cdot$ is the component-wise multiplication.
\end{Remark}

\section{A deformed dimensional reduction for critical cohomology}

Let $Y$ be a smooth and connected variety and $f_1,\ldots,f_n,\phi$ are regular functions on $Y$. Consider the product and a function on it:  
$$X:=Y\times \bA^n, \quad \sw:=\phi+\sum_{i=1}^nf_i x_i,$$ 
where we omit the pullback of functions from $Y$ to $X$, 
$\{x_i\}_{i=1}^n$ are linear coordinates of $\bA^n$. Denote the common zero locus of $f_1,\ldots,f_n$ by  
$$Z=Z(\{f_i\}_{i=1}^n). $$
Let $\pi\colon X\to Y$ be the projection, and $j\colon S:=\pi^{-1}(Z)\hookrightarrow X$ the inclusion. We have natural morphism in $\D^b_c(Y)$:
\begin{align}\label{nat trans def dim red}
    \pi_*j_*\varphi_{\sw|_S}j^!\omega_X\longrightarrow \pi_*\varphi_{\sw}\omega_X\:.
\end{align}

\begin{Theorem}\label{thm: def dim red reg sec}
Assume that $Z$ is smooth of codimension $n$ in $Y$, then \eqref{nat trans def dim red} is an isomorphism.
\end{Theorem}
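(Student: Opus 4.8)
The statement is a ``deformed dimensional reduction for a regular section'': when $Z=Z(\{f_i\})$ is a smooth complete intersection of codimension $n$, the pushforward of the vanishing-cycle complex of $\sw=\phi+\sum f_ix_i$ on $X=Y\times\mathbb{A}^n$ agrees with the pushforward of the vanishing-cycle complex of $\sw|_S$ on $S=\pi^{-1}(Z)$. The plan is to reduce to the case $\phi=0$ and $n=1$ by two induction/dévissage arguments, and then in the base case to identify both sides explicitly using the standard (undeformed) dimensional reduction isomorphism of Davison \cite[Thm.~A.1]{Dav} together with the fact that $\varphi$ commutes with proper pushforward and that both sides are computed on the same support $\Crit(\sw)$, which, as in Proposition \ref{compatible dim red crit loci}, is contained in $S$.

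\textbf{Key steps.} First I would reduce to $n=1$: write $X=Y'\times\mathbb{A}^1_{x_n}$ where $Y'=Y\times\mathbb{A}^{n-1}$, and observe $\sw=\sw'+f_nx_n$ with $\sw'=\phi+\sum_{i<n}f_ix_i$ a function on $Y'$. Since $Z$ is smooth of codimension $n$ in $Y$, the locus $Z(f_1,\dots,f_{n-1})$ may fail to be smooth, so a naive induction on $n$ does not immediately apply; instead I would work Zariski-locally on $Y$ (using \cite[Cor.~2]{Sum} to stay inside invariant affines if equivariance is needed) and choose the $f_i$ so that $f_1,\dots,f_{n-1}$ already cut out a smooth subvariety $Y'':=Z(f_1,\dots,f_{n-1})\times\mathbb{A}^{n-1}$ of $Y'$ of codimension $n-1$ — possible because a regular sequence generating the ideal of a smooth subvariety can be completed one element at a time with smooth partial vanishing loci, shrinking $Y$. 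Then the induction hypothesis applied to $(Y',f_1,\dots,f_{n-1},\sw')$ identifies $\pi'_*\varphi_{\sw'}\omega_{X}$ with the pushforward from $\pi'^{-1}(Y'')$ of $\varphi_{\sw'|}$, and one reduces to a single deformed reduction in the variable $x_n$ over the base $Y''$, i.e. the case $n=1$ but with $\phi$ replaced by (the restriction of) $\sw'$. This is why it is essential that the theorem be stated with a general $\phi$: the inductive step manufactures a nonzero potential on the base even if one started with $\phi=0$.

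\textbf{The base case $n=1$.} Here $X=Y\times\mathbb{A}^1_x$, $\sw=\phi+fx$, $Z=Z(f)$ smooth of codimension $1$, $S=Z\times\mathbb{A}^1_x$. I would proceed by the excision/Milnor-triangle technique used repeatedly in Section 2: stratify $Y$ into $Z$ and its open complement $U=Y\setminus Z$, and note that over $U$ the function $f$ is nowhere zero, so $\sw$ restricted to $\pi^{-1}(U)$ is a function with no critical points in the $x$-direction and hence $\pi_*\varphi_{\sw}\omega_X|_U=0$ (the family $x\mapsto \phi+fx$ is a trivial $\mathbb{A}^1$-bundle with a fibrewise nonconstant linear potential, whose vanishing cycles vanish after pushforward along $\mathbb{A}^1$ — this is the elementary computation behind \cite[Thm.~A.1]{Dav}). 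Therefore $\pi_*\varphi_\sw\omega_X$ is supported on $Z$, and likewise the left-hand side $\pi_*j_*\varphi_{\sw|_S}j^!\omega_X$ is by construction supported on $Z$. It then suffices to check the morphism \eqref{nat trans def dim red} is an isomorphism after restriction to (an analytic neighbourhood of) $Z$; since $Z$ is smooth of codimension $1$, one can choose local coordinates in which $f$ is the first coordinate function, reducing to $Y=Z\times\mathbb{A}^1_f$ with $\sw=\phi(z,f)+fx$; a linear change of the fibre coordinate $x\mapsto x$ leaves $\sw$ in the form $g(z)+f\cdot(x+\text{correction})$ modulo a function pulled back from $Z$ precisely when $\phi$ is linear in $f$, and in general one uses the Thom–Sebastiani / Koszul argument of \cite{Dav} verbatim: the section $f$ of the trivial line bundle is regular, so $\varphi_{fx}\boxtimes(-)$ applied along the $\mathbb{A}^1_x$-fibre implements the derived restriction to $\{f=0\}$, and the extra summand $\phi$, being pulled back from $Y$ (hence from the base after restriction), passes through the isomorphism. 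Concretely I would invoke \cite[Thm.~A.1]{Dav} for the undeformed part $fx$ to get $\pi_*\varphi_{fx}\omega_X\cong \iota_*\omega_Z[-2]$ (with $\iota\colon Z\hookrightarrow Y$), tensor-Thom–Sebastiani-wise with $\varphi_\phi$, and match the result with $\pi_*j_*\varphi_{\phi|_S}\omega_S$; the comparison of the two natural transformations is exactly \cite[(23)]{Dav} stating $j^*\varphi_\sw\cong\varphi_{\sw|}j^*$ together with the base-change $\varphi_{\sw|_S}j^!\cong j^!\varphi_\sw$ on the closed stratum.

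\textbf{Main obstacle.} The genuine difficulty is the induction step's smoothness bookkeeping: one must arrange, locally on $Y$, that the regular sequence $f_1,\dots,f_n$ can be reordered/replaced so that every partial vanishing locus is smooth, and then track that the ``residual potential'' produced at each stage is indeed a pullback from the new base — this is what lets the base-case argument be reapplied. The vanishing-cycle manipulations themselves (commutation with proper pushforward, with smooth/étale pullback, the Milnor triangle, Thom–Sebastiani) are all standard and already cited in the excerpt; the care is entirely in the geometry of iteratively reducing a codimension-$n$ complete intersection to $n$ codimension-one steps while keeping the potential under control. I would therefore spend the bulk of the write-up on making that reduction precise and treat the $n=1$, $\phi$-linear-in-$f$ model computation as the heart, deducing the general $\phi$ case by the excision argument that confines everything to a neighbourhood of $Z$ where $f$ is a coordinate.
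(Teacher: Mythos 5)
Your two-stage strategy (reduce to $n=1$, then handle $n=1$ by a local computation near $Z$) is the same as the paper's, but the ``main obstacle'' you isolate — arranging that the partial vanishing loci $Z(f_1,\dots,f_k)$ be smooth — is not actually an obstacle, and the paper's handling of it is both cleaner and worth noting. Since $\pi_*\varphi_\sw\omega_X$ is supported on $\pi(\Crit(\sw))\subseteq Z$, one may replace $Y$ by any open neighbourhood $U$ of $Z$ from the start; because $Z$ is smooth of codimension $n$, the map $(f_1,\dots,f_n)\colon Y\to\bA^n$ is a submersion along $Z$, so on a suitably shrunk $U$ it is smooth everywhere. On such a $U$ every partial projection $(f_1,\dots,f_k)$ is automatically smooth, hence every $Z_{\leqslant k}:=Z(\{f_i\}_{i=1}^k)\cap U$ is smooth of the expected codimension — no reordering, local replacement, or completion-of-regular-sequences argument is needed. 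The paper then simply factors the morphism through the chain of closed immersions $Z=Z_{\leqslant n}\subseteq Z_{\leqslant n-1}\subseteq\cdots\subseteq Z_{\leqslant 1}\subseteq U$ and applies the $n=1$ lemma at each link.

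On the base case $n=1$ your write-up is in the right direction but a bit muddled. You suggest that the change of fibre coordinate only yields a separated-variable form when ``$\phi$ is linear in $f$''; that restriction is unnecessary. The paper's local argument works for arbitrary $\phi$: choosing a local analytic trivialization $V\cong V_0\times B$ with $f$ the projection to the disk $B$, writing $\phi=\phi_0+\delta\phi$ with $\phi_0=\phi|_{V\cap Z}$ pulled back and $\delta\phi|_{z=0}=0$, and setting $\tilde x:=x+\delta\phi/z$ (which is holomorphic precisely because $\delta\phi$ vanishes along $z=0$), one gets $\sw=\phi_0+z\tilde x$ with separated variables. Thom--Sebastiani plus the elementary computation of $H^*(B\times\bC_{\tilde x},z\tilde x)$ then closes the case. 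This is more direct than invoking \cite[Thm.~A.1]{Dav} for the undeformed part and trying to ``tensor in'' $\varphi_\phi$, and avoids the step in your sketch where you need the Koszul argument as a fallback. Your excision observation that both sides are supported on $Z$ is correct and is indeed the reason it suffices to work locally near $Z$, but the explicit coordinate change is what makes the local statement immediate.
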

We first prove the case when $n=1$.
\begin{Lemma}\label{lem: def dim red reg sec n=1}
Theorem \ref{thm: def dim red reg sec} is true when $n=1$.
\end{Lemma}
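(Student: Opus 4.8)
\textbf{Proof plan for Lemma \ref{lem: def dim red reg sec n=1}.}
The plan is to reduce the statement to a purely local computation on $Y$ using the smoothness of $Z=Z(f_1)$, and then to invoke the known dimensional reduction isomorphism for a \emph{regular} section (Theorem \ref{thm: def dim red reg sec} would be circular, so instead the plan is to cite \cite[Thm.~A.1]{Dav} or \cite[Eqn.~(37)]{Dav} in its base form). Since $Z$ is smooth of codimension $1$, after passing to a Zariski (or \'etale) open cover $\{U_\alpha\}$ of $Y$ we may assume on each $U_\alpha$ that either $f_1$ is invertible, or $f_1=t$ is part of a regular system of parameters, i.e.\ $f_1\colon U_\alpha\to \bA^1$ is a smooth morphism cutting out $Z\cap U_\alpha$. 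In the first case both sides of \eqref{nat trans def dim red} vanish (the vanishing cycle sheaf of $\sw=\phi+f_1 x_1$ over $U_\alpha$ is supported on $\Crit(\sw)\subseteq \{f_1=0\}=\varnothing$), so the map is trivially an isomorphism there. In the second case, $(X|_{U_\alpha},U_\alpha,f_1,\phi)$ is a dimensional reduction datum in the sense of \S\ref{sect on dim red} with $E=U_\alpha\times \bA^1$ and regular section $s=f_1$, so the composition $\mathbf i_*\circ\pi^*$ is an isomorphism by the base dimensional reduction theorem. Identifying this composition with \eqref{nat trans def dim red} restricted to $U_\alpha$ — which amounts to checking that $\mathbf i_*\pi^*$ is induced by the same natural transformation $\pi_*j_*\varphi_{\sw|_S}j^!\omega_X\to \pi_*\varphi_\sw\omega_X$, a compatibility that follows from the functoriality of $\varphi$ with closed pushforward and the adjunction $j_*j^!\to \mathrm{id}$ — gives the result over each $U_\alpha$.

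The key steps, in order, would be: (1) Write down the two natural maps and observe \eqref{nat trans def dim red} is a morphism of constructible complexes on $Y$, so it suffices to check it is a quasi-isomorphism stalk-wise, hence locally on $Y$. (2) Cover $Y$ by opens on which $f_1$ is either invertible or a coordinate function; handle the invertible case by the support/vanishing argument above. (3) On a coordinate patch $U_\alpha$ with $f_1=t$, apply the regular-section dimensional reduction to identify $\pi_*j_*\varphi_{\sw|_S}j^!\omega_X|_{U_\alpha}$ with $\varphi_{\phi}\omega_Z$ pushed to $U_\alpha$ (up to the relevant shift), compatibly with the map to $\pi_*\varphi_\sw\omega_X|_{U_\alpha}$. (4) Conclude by the local nature of quasi-isomorphisms. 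One should be slightly careful about the shift conventions: $j^!\omega_X\cong \varphi$-compatible dualizing data on $S$, and the codimension-$1$ Gysin shift must match the fibre dimension of $\pi$, but this is bookkeeping.

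The main obstacle I anticipate is step (3): verifying that the abstractly-known isomorphism $\mathbf i_*\circ\pi^*$ from the base dimensional reduction theorem is \emph{the same map} as the restriction of the canonical natural transformation \eqref{nat trans def dim red}, rather than merely \emph{an} isomorphism between the same two objects. This requires tracing through the construction of the dimensional reduction isomorphism in \cite{Dav} at the level of the six-functor formalism — in particular matching the unit/counit maps $\mathrm{id}\to\pi_*\pi^*$ and $j_*j^!\to\mathrm{id}$ with the adjunctions used there — and checking commutativity of $\varphi_{\sw}$ with $\pi^*$ and $j_*$ via the isomorphisms $j^*\varphi_\sw\cong\varphi_{\sw|_S}j^*$ and $\varphi_\sw\pi^*\cong \pi^*\varphi_{\sw^{\mathrm{pt}}}$ already recalled in \S\ref{sect on crit coho}. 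Once the map is pinned down to be the canonical one, being an isomorphism is a local statement and follows from the cited result; the rest is routine.
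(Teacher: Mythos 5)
Your proposal misses the step that actually makes the lemma work. You want to invoke the regular-section dimensional reduction of \cite[Thm.~A.1]{Dav} on each coordinate patch, but that theorem treats only the undeformed case $\phi=0$: it identifies the critical cohomology of $(Z(s),0)$ with that of $(X,\langle e,s\rangle)$, not that of $(Z(s),\phi|_{Z(s)})$ with $(X,\phi+\langle e,s\rangle)$. The latter identification \emph{is} Lemma \ref{lem: def dim red reg sec n=1}, so citing the present theorem would be circular (as you observe), and citing the $\phi=0$ version is a non sequitur, because on a local patch $V=V_0\times B$ with $f=t$ the potential is $\sw=\phi(v,t)+tx$ with $\phi$ genuinely depending on $t$; the variables $v$ and $(t,x)$ are not separated, so there is no direct Thom--Sebastiani decomposition, and the $\phi=0$ theorem says nothing about how to absorb the $t$-dependence of $\phi$. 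Your anticipated obstacle (identifying the isomorphism with the canonical map) is not the real one: the real issue is that the relevant isomorphism is not supplied by any result you are allowed to cite.

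The paper closes this gap with a change of coordinates that is absent from your plan. Set $\phi_0(v):=\phi(v,0)$ and $\delta\phi:=\phi-\phi_0$, which vanishes along $\{t=0\}$, so $\delta\phi/t$ is well-defined; then $\tilde{x}:=x+\delta\phi/t$ turns $\sw$ into $\phi_0+t\tilde{x}$, where now $\phi_0$ depends only on the $V_0$-coordinates. \emph{After} this substitution, Thom--Sebastiani splits $\varphi_\sw$ as $\varphi_{\phi_0}\boxtimes\varphi_{t\tilde{x}}$, the factor $\varphi_{t\tilde{x}}$ on $B\times\mathbb{C}_{\tilde{x}}$ is a one-line direct computation (rank one in the expected degree, agreeing with $H^{\mathrm{BM}}(\{t=0\}\times\mathbb{C}_{\tilde{x}})$), and one checks the resulting isomorphism is the pushforward along $S\hookrightarrow X$. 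Note the paper works with analytic local opens at this point, which is what makes the product decomposition $V\cong V_0\times B$ and the division by $t$ transparent. Your steps (1), (2), and (4) are fine; step (3) needs this coordinate change (or an equivalent device) to be a proof rather than an assertion.
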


\begin{proof}
We write $f:=f_1$, $\sw=\phi+fx$. Since $\pi_*\varphi_{\sw}\omega_X$ is supported on $\pi(\Crit(\sw))\subseteq  Z$ and $\pi_*j_*\varphi_{\sw|_S}j^!\omega_X$ is obviously supported on $Z$, it is enough to show that for an arbitrary $p\in Z$, and an arbitrary analytic open neighbourhood $W$ of $p$ in $Y$, there exists an analytic open neighbourhood $V$ of $p$ in $W$ such that the induced map between cohomologies
\begin{align*}
    H^*\left(V,(\pi_*j_*\varphi_{\sw|_S}j^!\omega_X)\big|_V\right)\longrightarrow H^*\left(V,(\pi_*\varphi_{\sw}\omega_X)\big|_V\right)
\end{align*}
is an isomorphism. Equivalently by base change, we need to find $V$ such that the pushforward map 
\begin{align*}
    H^*\left(\pi^{-1}(V)\cap S,\sw\big|_{\pi^{-1}(V)\cap S}\right)\longrightarrow H^*\left(\pi^{-1}(V),\sw\big|_{\pi^{-1}(V)}\right)
\end{align*}
of critical cohomology is isomorphism. 

By assumption, $f\colon Y\to \bA^1$ is smooth in a Zariski open neighbourhood of $Z$. For any $p\in Z$, we can take an analytic open neighbourhood $V$ of $p$ in $Y$ 
such that $f|_V\colon V \to \bA^1 $ is of form 
$$V=V_0\times B\to B, \quad (v,z)\mapsto z, $$
where $B$ is the open disk $\{|z|<\epsilon\}\subseteq  \bA^1$ for some $0<\epsilon\ll 1$. 
Then we have 
$$V\cap Z=V_0\times \{0\}, $$ 
and 
$$\sw|_{\pi^{-1}(V)}=\phi|_V+zx. $$ 
Let $\phi_0:=\phi|_{V\cap Z}=\phi|_{V_0\times \{0\}}$ and we regard it as a holomorphic function on $V$:  
$$\phi_0\colon V\to \C, $$
by pulling it back along the projection $V\to V_0$. Let 
$$\delta\phi:=\phi|_{V}-\phi_0, $$  
then we have $\delta\phi|_{z=0}=0$ by definition. Therefore $\frac{\delta\phi}{z}\colon V\to \C$ is a well-defined holomorphic function. 

Consider an isomorphism between complex manifolds: 
$$\pi^{-1}(V)=V_0\times B\times \bC_{ x}\cong V_0\times B\times \bC_{\tilde x}, \quad (v,z,x)\mapsto \left(v,z,\tilde x:=x+\frac{\delta\phi}{z}\right). $$ 
In this new coordinate system, we have 
$$\pi^{-1}(V)\cap S\cong V_0\times \{0\}\times \bC_{\tilde x},\quad\mathrm{and} \quad \sw|_{\pi^{-1}(V)}=\phi_0+z\tilde x, $$ 
where we have separated the variables, where $\phi_0$ depends only on coordinate $v$.

Combining Thom-Sebastiani isomorphism and a direct computation of $H^*(B\times \bC_{\tilde x},z\tilde x)$, we get
\begin{align*}
H^*\left(\pi^{-1}(V),\sw\big|_{\pi^{-1}(V)}\right)&\cong H^*(V_0\times B \times \bC_{\tilde x},\phi_0+z\tilde x) \\ 
 &\cong H^*( V_0,\phi_0)\otimes H^*(B\times \bC_{\tilde x},z\tilde x)  \\
    &\cong H^*( V_0,\phi_0)\otimes H^{\BM}_{-*}( \{z=0\}\times \bC_{\tilde x})   \\ 
    &\cong H^*(V_0\times \{z=0\}\times \bC_{\tilde x} ,\phi_0) \\ 
    &\cong H^*\left(\pi^{-1}(V)\cap S,\sw\big|_{\pi^{-1}(V)\cap S}\right).
\end{align*}
Note that the isomorphism is induced by the pushforward map for critical cohomology. 
Therefore we are done.
\end{proof}

\begin{proof}[Proof of Theorem \ref{thm: def dim red reg sec}]
By assumption, the map 
$$(f_1,\ldots,f_n)\colon Y\to \bA^n$$ is smooth in a Zariski open neighbourhood of $Z$. Since $\pi_*\varphi_{\sw}\omega_X$ is supported on $\pi(\Crit(\sw))\subseteq  Z$, we can replace $Y$ by an open neighbourhood $U$ of $Z$. By shrinking $U$, we may assume $(f_1,\ldots,f_n)\colon U\to \bA^n$ is a smooth map. 

Let $Z_{\leqslant k}:=Z(\{f_i\}_{i=1}^k)$ be the zero locus, then we have inclusions of smooth varieties: 
$$Z=Z_{\leqslant n}\subseteq  Z_{\leqslant n-1}\subseteq \cdots\subseteq  Z_{\leqslant 1}\subseteq  U. $$ 
Let $S_{\leqslant k}:=\pi^{-1}(Z_{\leqslant k})$ (where $\pi\colon U\times \bA^n\to U$ is projection) and set $S_{\leqslant 0}:=U\times \bA^n$. Denote the closed immersions:
$$j_{k}\colon S_{\leqslant k}\hookrightarrow S_{\leqslant k-1}, \quad j_{\leqslant k}\colon S_{\leqslant k}\hookrightarrow U\times \bA^n. $$
Then the natural morphism \eqref{nat trans def dim red} factors into a sequence of natural morphisms:
\begin{align*}
    \pi_*j_*\varphi_{\sw|_S}j^!\omega_X=\pi_*j_{\leqslant n*}\varphi_{\sw|_{S_{\leqslant n}}}j_{\leqslant n}^!\omega_X\to \pi_*j_{\leqslant n-1*}\varphi_{\sw|_{S_{\leqslant n-1}}}j_{\leqslant n-1}^!\omega_X\to\cdots\to\pi_*j_{\leqslant 1*}\varphi_{\sw|_{S_{\leqslant 1}}}j_{\leqslant 1}^!\omega_X\to \pi_*\varphi_{\sw}\omega_X\:,
\end{align*}
where the intermediate arrow 
$$\pi_*j_{\leqslant k*}\varphi_{\sw|_{S_{\leqslant k}}}j_{\leqslant k}^!\omega_{X}\to \pi_*j_{\leqslant k-1*}\varphi_{\sw|_{S_{\leqslant k-1}}}j_{\leqslant k-1}^!\omega_{X}
$$ is induced by applying $(Z_{\leqslant k-1}\hookrightarrow U)_*$ to the natural map
\begin{align*}
(S_{\leqslant k-1}\to Z_{\leqslant k-1})_*j_{k*}\varphi_{\sw|_{S_{\leqslant k}}}j_{k}^!\omega_{S_{\leqslant k-1}}\to (S_{\leqslant k-1}\to Z_{\leqslant k-1})_*\varphi_{\sw|_{S_{\leqslant k-1}}}\omega_{S_{\leqslant k-1}}\:,
\end{align*}
which is an isomorphism by Lemma \ref{lem: def dim red reg sec n=1}. Thus \eqref{nat trans def dim red} is an isomorphism.
\end{proof}

\section{A dimensional reduction for critical \texorpdfstring{$K$}{K}-theory}\label{app on k dim red}

In the appendix, we prove a dimensional reduction for critical $K$-theories.
\begin{Setting}
Let $G$ be a complex linear reductive group with a direct sum of finite dimensional linear $G$-representations 
$$W=V\oplus U.$$ 
Let $\pi\colon W\to V$ be the projection with a $G$-equivariant section $s\in \Gamma(V,\underline{U}^\vee)$ of the dual bundle $\underline{U}^\vee:=V\times U^\vee\to V$ of $\pi$. Let $Z^{\der}(s)$ be the derived zero locus of $s$, 
which fits into the following diagram 
\begin{equation}
\label{diag on zs dim red}
\xymatrix{
 \pi^{-1}(Z^{\der}(s))  \ar[r]^{\quad \quad \iota} \ar[d]^{ } \ar@{}[dr]|{\Box}  & W \ar[d]^{\pi}  \\
Z^{\der}(s) \ar[r]^{ } & V.
} 
\end{equation}
We define a $G$-invariant regular function
\begin{equation}\label{equ of w from dim red}\sw:=\langle e,s \rangle\colon W\to \mathbb{C}, \end{equation}
where $e$ is the coordinate of the fiber of $\pi$.

Choose a stability condition such that the semistable locus equals the stable locus:
$$W^{ss}=W^{s}\neq \emptyset, $$ 
with unstable locus denoted by $W^{u}:=W\setminus W^{ss}$. 
\end{Setting}

By \cite[Thm.~2.10]{Hal}, there is a semi-orthogonal decomposition (SOD): 
$$\langle \D^b_{[W^u/G]}([W/G])_{<0}, \,\mathbb{G},\, \D^b_{[W^u/G]}([W/G])_{\geqslant 0}
 \rangle =\D^b([W/G]). $$
Moreover, under the restriction to the open locus: 
$$\D^b([W/G])\twoheadrightarrow  \D^b(W^s/G), $$
we have an equivalence of categories: 
$$ \mathbb{G}\cong \D^b(W^s/G).$$
This induces an SOD for matrix factorization categories of \eqref{equ of w from dim red} (e.g.~\cite[Prop.~2.1]{P}): 
\begin{equation}\label{equ on sod}\langle \mathrm{MF}_{[W^u/G]}([W/G],\sw)_{<0},\mathbb{G}_{\sw},\mathrm{MF}_{[W^u/G]}([W/G],\sw)_{\geqslant 0}\rangle 
=\mathrm{MF}([W/G],\sw) \twoheadrightarrow \mathrm{MF}(W^s/G,\sw),    \end{equation}
and an equivalence of categories:  
$$\Phi\colon \mathbb{G}_{\sw}\cong \mathrm{MF}(W^s/G,\sw).$$
\begin{Proposition}\label{prop on dim red for kgp}
The natural inclusion $\iota$ in \eqref{diag on zs dim red} induces an isomorphism of $K$-groups: 
\begin{equation}\label{equ on ito}\iota_*\colon K((\pi^{-1}(Z^{\der}(s)))^s/G) \xrightarrow{\cong} K(W^s/G,\sw). \end{equation}
\end{Proposition}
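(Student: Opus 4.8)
The plan is to deduce the statement from the \emph{global} (unstable-locus-free) dimensional reduction, together with the Halpern--Leistner semiorthogonal decomposition \eqref{equ on sod} that has already been recorded. Write $\mathfrak{W} = [W/G]$ and $\mathfrak{Y} = [\pi^{-1}(Z^{\der}(s))/G]$, and let $\bar\pi\colon \pi^{-1}(Z^{\der}(s)) \to Z^{\der}(s)$ be the $G$-equivariant quasi-smooth vector bundle obtained by base change along \eqref{diag on zs dim red}. First I would record the global statement: since $\sw = \langle e, s\rangle$ vanishes on $\pi^{-1}(Z^{\der}(s))$, the pushforward along the closed immersion of derived stacks $\iota\colon \mathfrak{Y} \hookrightarrow \mathfrak{W}$ lands in the matrix factorization category, and by the equivariant form of the dimensional reduction theorem of Isik and Shipman --- in the quotient-stack generality used by Hirano, Toda and P\u{a}durariu --- the composite $\Phi = \iota_* \circ \bar\pi^*\colon \D^b_{\coh}(Z^{\der}(s)) \to \mathrm{MF}(\mathfrak{W}, \sw)$ is an equivalence of triangulated categories. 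On $K_0$, the pullback $\bar\pi^*\colon K(Z^{\der}(s)/G) \to K(\pi^{-1}(Z^{\der}(s))/G)$ is an isomorphism by homotopy and nil-invariance of $G$-theory, so $\iota_*$ is an isomorphism on $K_0$ as well. This is exactly the Proposition with $\mathfrak{W}$, $\mathfrak{Y}$ in place of $[W^s/G]$, $[(\pi^{-1}(Z^{\der}(s)))^s/G]$.

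Next I would pass to the semistable locus. The linearization on $\pi^{-1}(Z^{\der}(s))$ is the one restricted from $W$, and since $W^{ss} = W^s$ we get $(\pi^{-1}(Z^{\der}(s)))^s = \pi^{-1}(Z^{\der}(s)) \cap W^s$, so $\iota$ restricts to a closed immersion $(\pi^{-1}(Z^{\der}(s)))^s \hookrightarrow W^s$ and $\iota_*$ commutes with restriction to these open substacks. The heart of the argument is then to show that the equivalence $\Phi$ (equivalently, $\iota_*$ on $K_0$) intertwines the Halpern--Leistner SOD \eqref{equ on sod} of $\mathrm{MF}(\mathfrak{W}, \sw)$, whose window summand is $\mathbb{G}_\sw \cong \mathrm{MF}(W^s/G, \sw)$, with the Halpern--Leistner SOD of $\D^b_{\coh}(\mathfrak{Y})$ attached to the Kempf--Ness stratification of $[W/G]$ restricted to $\mathfrak{Y}$ --- this is again a Kempf--Ness stratification, because the restricted linearization carries the same Hilbert--Mumford weights, and its window summand is $\D^b_{\coh}\!\bigl((\pi^{-1}(Z^{\der}(s)))^s/G\bigr)$. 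Granting this, $\Phi$ carries window to window, and comparing $K_0$ of the window summands on the two sides yields the desired isomorphism $\iota_*\colon K\bigl((\pi^{-1}(Z^{\der}(s)))^s/G\bigr) \xrightarrow{\ \sim\ } K(W^s/G, \sw)$. (Alternatively, one may argue by induction along the Kempf--Ness strata: both restriction maps are surjective by excision and sit in a commuting square with the two $\iota_*$'s, so it is enough to match, one stratum at a time, the subcategories supported on the unstable locus, using a single-stratum version of dimensional reduction on each $[S_i/G]$ --- note $\sw|_{S_i}$ is again fiberwise linear relative to $\pi|_{S_i}$, since $S_i$ is $\lambda_i$-attracting and $\sw$ is $G$-invariant.)

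The main obstacle is precisely this compatibility of $\Phi$ (i.e.\ of $\iota_*$) with the Kempf--Ness windows. On each $\lambda_i$-fixed locus, $\iota$ restricts to the closed immersion $Z^{W}_{\lambda_i} \cap \pi^{-1}(Z^{\der}(s)) \hookrightarrow Z^{W}_{\lambda_i}$, so $\iota_*$ shifts $\lambda_i$-weights by the $\lambda_i$-weight of the conormal bundle $\pi^*\underline{U}^\vee|_{Z^{\der}(s)}$; the point is to check that this shift equals exactly the difference between the weight-window bounds defining the two decompositions. This should come out because $\sw$ is fiberwise \emph{linear}: the weights of the $U$-fibers of $\pi$ and those of the dual bundle $\underline{U}^\vee$ that cuts out $\pi^{-1}(Z^{\der}(s))$ are opposite, so the $\det$-type normalizations in the two window definitions cancel. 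I expect that carrying out this Hilbert--Mumford weight bookkeeping --- or, better, locating a citable ``dimensional reduction commutes with GIT windows'' statement in the literature on categorical and $K$-theoretic Hall algebras for quivers with potential (e.g.\ P\u{a}durariu, Toda) --- will be the substantive part of the proof, the rest being formal.
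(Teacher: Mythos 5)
Your proposal takes a genuinely different route from the paper's proof, and it has a gap at precisely the step you yourself flag as ``the substantive part.'' You want to show that the Kn\"orrer equivalence $\Phi = \iota_*\circ\bar\pi^*$ carries the Halpern--Leistner window $\mathbb G_\sw$ of $\mathrm{MF}([W/G],\sw)$ onto the window of $\D^b_{\coh}([\pi^{-1}(Z^{\der}(s))/G])$ attached to the restricted Kempf--Ness stratification. This requires two non-formal verifications that you leave open: (i) that the Halpern--Leistner SOD actually exists on the quasi-smooth derived stack $[\pi^{-1}(Z^{\der}(s))/G]$ --- one must check the properness and weight conditions (A) and (L+) of [Hal] for the restricted stratification, a verification of the same kind the paper carries out separately in the proof of Proposition \ref{prop: induced sp} and which does not come for free; and (ii) the Hilbert--Mumford weight bookkeeping showing the two window bounds are intertwined. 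You explicitly defer both of these, so as written the argument is incomplete.

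The paper's actual proof sidesteps both issues and is considerably more elementary. It assembles a single commutative diagram linking $K(\mathbb G_\sw)$, $K([W/G],\sw)$, $K([\pi^{-1}Z^{\der}(s)/G])$, and their restrictions to the stable loci, with the stacky dimensional-reduction isomorphism $\iota_*\circ\pi^*$ of Toda along the left column. Surjectivity of $\iota_*$ on the stable loci is immediate from commutativity and surjectivity of restriction-to-open for critical $K$-theory. For injectivity, one notes that the composite $K(\mathbb G_\sw)\hookrightarrow K([W/G],\sw)\xrightarrow{(\iota_*)^{-1}}K([\pi^{-1}Z^{\der}(s)/G])\twoheadrightarrow K((\pi^{-1}Z^{\der}(s))^s/G)\xrightarrow{\iota_*}K(W^s/G,\sw)$ equals the isomorphism $\Phi$, so it suffices to show the penultimate arrow is already surjective on the image of $K(\mathbb G_\sw)$. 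By the SOD \eqref{equ on sod} this reduces to killing the classes of $\mathrm{MF}_{[W^u/G]}([W/G],\sw)$. The key observation, which replaces all of your window bookkeeping, is that the inverse Kn\"orrer functor $(\iota_*)^{-1}$ is concretely $\mathcal P^\bullet\mapsto[\mathcal P^\bullet\otimes\mathrm{Kos}(\tau,s)^\vee]$; since $\mathrm{Kos}(\tau,s)^\vee$ is locally free, acyclicity of $\mathcal P^\bullet$ on the stable locus passes to $\mathcal P^\bullet\otimes\mathrm{Kos}(\tau,s)^\vee$. No second SOD and no weight comparison are needed --- the Kn\"orrer kernel simply preserves the property ``supported on the unstable locus.'' If you want to repair your approach, replace the window-matching step by exactly this support observation; that is the entirety of the compatibility you actually need.
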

\begin{proof}
We have the following commutative diagram 
$$
\xymatrix{
K(\mathbb{G}_{\sw}) \ar@{^(->}[r]  \ar@/^1.7pc/[rr]^{\Phi}_{\cong} & K([W/G],\sw)   \ar@{->>}[r] & K(W^s/G,\sw) \\
& K([\pi^{-1}Z^{\der}(s)/G])  \ar[u]_{\iota_*}  \ar@{->>}[r]  &  K((\pi^{-1}Z^{\der}(s))^s/G)  \ar[u]_{\iota_*} \\ 
& K([Z^{\der}(s)/G])   \ar[u]_{\pi^*}^{\cong} \ar@/^4pc/[uu]^{\cong}, &  
} 
$$
where $\iota_*\circ \pi^*$ is an isomorphism by the dimensional reduction result \cite[Prop.~2.5 \& Cor.~3.13]{Toda}. 
The commutativity of the diagram implies that the map \eqref{equ on ito} is surjective, and the composition 
$$K(\mathbb{G}_{\sw}) \hookrightarrow K([W/G],\sw)      \stackrel{(\iota_*)^{-1}}{\cong} K([\pi^{-1}Z^{\der}(s)/G]) \twoheadrightarrow K((\pi^{-1}Z^{\der}(s))^s/G)  \xrightarrow{\iota_*} K(W^s/G,\sw) $$
is the isomorphism given by $\Phi$. We are left to check the injectivity of the map \eqref{equ on ito}, which is further reduced to 
check the surjectivity of the composition 
$$K(\mathbb{G}_{\sw}) \hookrightarrow K([W/G],\sw)   \stackrel{(\iota_*)^{-1}}{\cong} K([\pi^{-1}Z^{\der}(s)/G]) 
\twoheadrightarrow K((\pi^{-1}Z^{\der}(s))^s/G).  $$
By the SOD \eqref{equ on sod}, it is enough to check that any $\mathcal{P}^{\bullet}\in \mathrm{MF}_{[W^u/G]}([W/G],\sw)$
goes to zero under the composition 
\begin{equation}\label{equ on KG}  K([W/G],\sw)   \stackrel{(\iota_*)^{-1}}{\cong} K([\pi^{-1}Z^{\der}(s)/G]) 
\twoheadrightarrow K((\pi^{-1}Z^{\der}(s))^s/G). \end{equation}
Here that the first map $(\iota_*)^{-1}$ of \eqref{equ on KG} is given by the tensor product 
$$[\mathcal{P}^{\bullet}\otimes \mathrm{Kos}(\tau,s)^\vee]\in K([\pi^{-1}Z^{\der}(s)/G]), $$
with the Koszul factorization 
\begin{align}\label{Kos fact}
\mathrm{Kos}(\tau,s):=\left(\xymatrix{
\bigwedge^{\mathrm{odd}}\pi^*U^\vee
\ar@/^/[r]^{\,\,\, s\wedge+\iota_{\tau}}&\bigwedge^{\mathrm{even}}\pi^*U^\vee  \ar@/^/[l]^{\,\,\, s\wedge+\iota_{\tau}}
}\right), 
\end{align}
where $\tau$ is the tautological section of $\pi^*U\to U$. By definition, $\mathcal{P}^{\bullet}$ is acyclic when restricted to the stable locus; therefore $\mathcal{P}^{\bullet}$ and $[\mathcal{P}^{\bullet}\otimes \mathrm{Kos}(\tau,s)^\vee]$ go to zero under the map \eqref{equ on KG}. 
\end{proof}
\begin{Remark}\label{rmk on teqiuv}
One can extend the above result to the equivariant setting in the presence of a torus action. 
\end{Remark}

\section{Proof of Proposition \ref{prop:Psi}}\label{sec:proof of prop psi}

Our strategy of proving Proposition \ref{prop:Psi} is to deduce it from the axiom (iii) of the abelian stable envelope (Definition  \ref{def of stab k}). The difficulty is that the nonabelian stable envelope is constructed from an abelian stable envelope with respect to a $\bC^*$-action which is in general different from the $\sigma_i$ that appears in the KN stratification. We resolve this issue by showing that the $\Stab^{\mathsf s}_+$ that appears in the construction of $\widetilde{\bPsi}^{\mathsf s}_K$ is essentially the same as the stable envelope with respect to the two dimensional torus $\bC^*\times \bC^*_{\sigma_i}$ (see Lemma \ref{lem:larger chamber stab}). The main geometric fact that we use in the proof is to identify the KN strata $S_i$ with an open subset in a $\bC^*\times \bC^*_{\sigma_i}$-attracting set (Lemma \ref{lem:more explicit KN strata}). We deduce it from a preliminary result that identifies $S_i$ with an open subset in a $\bC^*$-attracting set (Proposition \ref{prop:explicit KN strata}).

\subsection{KN strata of the unstable locus}
We give an explicit description of the KN strata. We first define
\begin{align}\label{sigma}
    \Sigma:=\{\mathbf u\in \bN^{Q_0}\colon \mathbf u_i\leqslant\mathbf v_i,\,\forall\, i\in Q_0\},
\end{align}
together with the partial order $\le$ on $\Sigma$ given by $\mathbf u\le \mathbf u'$ if and only if $\mathbf u_i\leqslant \mathbf u'_i$ for all $i\in Q_0$. For $\mathbf u\in \Sigma$, we define cocharacter $\sigma(\mathbf u)$ of $G$ by
\begin{align}\label{sigma(u)}
    \sigma(\mathbf u)=(-\omega_{i,\mathbf u_i})_{i\in Q_0}\,,
\end{align}
where $\omega_{i,\mathbf u_i}$ is the $\mathbf u_i$-th fundamental cocharacter of $\GL(\mathbf v_i)$. It is straightforward to see that if $\mathbf u,\mathbf u'\in \Sigma$ then $\mathbf u\le \mathbf u'$ implies $\mu(\sigma(\mathbf u))\le \mu(\sigma(\mathbf u'))$.
We keep using the notation 
\begin{align*}
    \underline{\bd}^\bv=(\bd+\bv,\bd).
\end{align*}

\begin{Remark}\label{rmk:Sigma_theta}
For $\mathbf u\in \Sigma$, we associate to it the following connected component in $\cM_\theta(\mathbf v,\underline{\bd}^\bv)^{\bC^*}$:
\begin{align}\label{F(u)}
    F_{\mathbf u}:=\cM_\theta(\mathbf u,\underline{\mathbf{0}}^{\bv})\times \cM_\theta(\mathbf v-\mathbf u,\mathbf d).
\end{align}
Here $\cM_\theta(\mathbf u,\underline{\mathbf{0}}^{\bv})$ is the quiver variety given by the GIT quotient
\begin{align*}
    \cM_\theta(\mathbf u,\underline{\mathbf{0}}^{\bv})=\left(\bigoplus_{a\in Q_1}\Hom(\bC^{\mathbf u_{t(a)}},\bC^{\mathbf u_{h(a)}})\oplus \bigoplus_{i\in Q_0}\Hom(\bC^{\mathbf v_{i}},\bC^{\mathbf u_{i}})\right)/\!\!/_\theta \prod_{i\in Q_0}\GL(\mathbf u_{i})
\end{align*}
We note that $F_{\mathbf u}$ is nonempty if and only if $R(\mathbf v-\mathbf u,\mathbf d)^{\theta\emph{-}ss}$ is nonempty, because $\cM_\theta(\mathbf u,\underline{\mathbf{0}}^{\bv})$ is always nonempty.

Let us also define 
\begin{align}\label{sigma_theta}
    \Sigma_\theta:=\{\mathbf u\in \Sigma\colon R(\mathbf v-\mathbf u,\mathbf d)^{\theta\emph{-}ss}\neq \emptyset\},
\end{align}
then $F_{\mathbf u}\neq \emptyset\Longleftrightarrow \mathbf u\in \Sigma_\theta$. Consider the ample line bundle $\mathcal L_{\theta}:=\bigotimes_{i\in Q_0}(\det\mathcal V_i)^{ -\theta_i}$ on $\cM_\theta(\mathbf v,\underline{\bd}^\bv)$, where $\cV_i$ is the tautological vector bundle corresponding to $i$-th node. $\mathcal L_{\theta}$ induces the ample partial order $\le$ on $\Fix_{\bC^*}(\cM_\theta(\mathbf v,\underline{\bd}^\bv))$ as in the Remark \ref{partial order by line bundle}. Then for $\mathbf u,\mathbf u'\in \Sigma_\theta$, we have the following implication
\begin{align*}
    \mathbf u\le \mathbf u'\Longrightarrow F_{\mathbf u}\le F_{\mathbf u'}.
\end{align*}
Let us refine $\le$ to a total order on $\Sigma$ such that its restriction to $\Sigma_\theta$ refines the ample partial order.
\end{Remark}

\begin{Lemma}\label{lem:M' covered by attr}
We have the decomposition $$\cM_\theta(\mathbf v,\underline{\bd}^\bv)=\bigcup_{\mathbf u\in \Sigma_\theta}\Attr_+(F_{\mathbf u}).$$
\end{Lemma}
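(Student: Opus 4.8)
The inclusion ``$\supseteq$'' is trivial, so the content is the reverse inclusion: every $x\in\cM_\theta(\mathbf v,\underline{\bd}^\bv)$ should lie in $\Attr_+(F_{\mathbf u})$ for some $\mathbf u\in\Sigma_\theta$. The plan is to split this into two steps: first show that $\lim_{t\to 0}t\cdot x$ exists (and is $\bC^*$-fixed) for every $x$, by pushing the limit down to the affinization and lifting it back by properness; then identify the fixed components.

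For existence of the limit: as a $\bC^*$-representation, $R(\mathbf v,\underline{\bd}^\bv)=R(\mathbf v,\underline{\bd})\oplus\bigoplus_{i\in Q_0}\Hom(V'_i,V_i)$ is the direct sum of a trivial summand and a summand on which $\bC^*$ acts through a single nonzero character, the one contracting the maps $I_i$ to $0$ as $t\to 0$ (this is exactly the sign of the $+$ chamber, given that $\bC^*$ acts on $V'_i$ with weight $-1$). Hence $\lim_{t\to 0}t\cdot y$ exists for every $y\in R(\mathbf v,\underline{\bd}^\bv)$ — it is the projection to the $\bC^*$-fixed subspace, i.e.\ the representation with all $I_i=0$. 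Since the affinization $\cM_0(\mathbf v,\underline{\bd}^\bv):=\Spec\bC[R(\mathbf v,\underline{\bd}^\bv)]^{G}$ receives a $\bC^*$-equivariant map from $R(\mathbf v,\underline{\bd}^\bv)$, and every point of $\cM_0(\mathbf v,\underline{\bd}^\bv)$ is the image of a point of $R(\mathbf v,\underline{\bd}^\bv)$, the limit $\lim_{t\to 0}t\cdot z$ also exists in $\cM_0(\mathbf v,\underline{\bd}^\bv)$ for every $z$, and is $\bC^*$-fixed. Finally $\cM_\theta(\mathbf v,\underline{\bd}^\bv)\to\cM_0(\mathbf v,\underline{\bd}^\bv)$ is projective, so by the valuative criterion of properness, applied to $\bC^*\hookrightarrow\bA^1$ exactly as in Remark~\ref{proper-over-affine}, the orbit map $t\mapsto t\cdot x$ extends over $t=0$; thus $\lim_{t\to 0}t\cdot x$ exists in $\cM_\theta(\mathbf v,\underline{\bd}^\bv)$ and is $\bC^*$-fixed.

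It then remains to recall that the connected components of $\cM_\theta(\mathbf v,\underline{\bd}^\bv)^{\bC^*}$ are precisely the $F_{\mathbf u}$ with $\mathbf u\in\Sigma_\theta$. This is the standard description of torus fixed loci of quiver varieties, carried out as in the proof of Lemma~\ref{fix pts of sym quiv var} for the subtorus $\bC^*\subseteq G'$: a fixed point corresponds to a lift $\phi=(\psi,\mathrm{id})$ of $\bC^*\hookrightarrow G'$ along $G\times\bC^*\to\bC^*$, and because $\bC^*$ acts only on the $V'_i$, cyclic stability forces the $\psi$-grading of each $V_i$ to have at most the graded pieces of $\psi$-weight $0$ and $-1$ — the weight-$0$ pieces forming a $\theta$-semistable representation with framing $\mathbf d$ and gauge dimension $\mathbf v-\mathbf u$, the weight-$(-1)$ pieces a $\theta$-semistable representation with only the $V'$-framing and gauge dimension $\mathbf u$ — which is exactly $F_{\mathbf u}=\cM_\theta(\mathbf u,\underline{\mathbf{0}}^{\bv})\times\cM_\theta(\mathbf v-\mathbf u,\mathbf d)$, non-empty iff $\mathbf u\in\Sigma_\theta$ by Remark~\ref{rmk:Sigma_theta}. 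Combining the two steps gives $\cM_\theta(\mathbf v,\underline{\bd}^\bv)=\bigcup_{\mathbf u\in\Sigma_\theta}\Attr_+(F_{\mathbf u})$.

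The argument above is essentially soft; the only point that requires real care is the fixed-point classification, and it can be replaced by a more hands-on Hilbert--Mumford computation that additionally pins down \emph{which} $F_{\mathbf u}$ a given $x$ flows to (this refinement is what is actually used downstream): lift $x$ to a $\theta$-semistable representation, let $U\subseteq V$ be the subrepresentation generated by $\im(A)$ and set $\mathbf u:=\mathbf v-\dim U$, choose a vector-space splitting $V=U\oplus W$, and correct the naive contraction $I_i\mapsto tI_i$ by the cocharacter of $G$ acting trivially on $U$ and by $t^{-1}$ on $W$. A block computation (using that $U$ is a subrepresentation and $\im(A)\subseteq U$) shows this family has non-negative weights, and its limit is the direct sum of the $\theta$-semistable representation $(X|_U,A,B|_U)$ and of $(X|_W,I|_W)$, a point of $F_{\mathbf u}$. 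The one genuinely delicate step here is checking that the limit is semistable, i.e.\ that $(X|_W,I|_W)$ is generated by $\im(I|_W)$ — this follows because $\im(A)+\im(I)$ generates $V$ in the original representation while $\im(A)$ alone generates $U$ — and that the $U$-component is semistable, hence $R(\mathbf v-\mathbf u,\mathbf d)^{\theta\text{-}ss}\neq\emptyset$, i.e.\ $\mathbf u\in\Sigma_\theta$; the remainder is bookkeeping.
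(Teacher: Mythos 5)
Your first argument reproduces the paper's proof: the $\bC^*$-action on the affine quotient $\cM_0(\mathbf v,\underline{\bd}^\bv)$ is attracting (equivalently, the $\bC^*$-weights on $\bC[R(\mathbf v,\underline{\bd}^\bv)]^G$ are nonpositive), the map $\cM_\theta(\mathbf v,\underline{\bd}^\bv)\to\cM_0(\mathbf v,\underline{\bd}^\bv)$ is proper, hence every point flows to a fixed component, and the fixed components are exactly the $F_{\mathbf u}$ with $\mathbf u\in\Sigma_\theta$. The Hilbert--Mumford refinement in your second paragraph is a correct alternative but is not needed for this lemma.
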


\begin{proof}
Since the $\bC^*$-weights in the ring $\bC[R(\mathbf v,\underline{\bd}^\bv)]^{G}$ of invariant functions  are nonpositive, the $\bC^*$-action on the affine quotient $\cM'_0(\mathbf v,\underline{\bd}^\bv)$ is attracting. Since the canonical map $\cM_\theta(\mathbf v,\underline{\bd}^\bv)\to \cM'_0(\mathbf v,\underline{\bd}^\bv)$ is proper, we obtain the decomposition 
$$\cM_\theta(\mathbf v,\underline{\bd}^\bv)=\bigcup_{F\in \Fix_{\bC^*}(\cM_\theta(\mathbf v,\underline{\bd}^\bv))}\Attr_+(F). $$ 
We note that any $F\in \Fix_{\bC^*}(\cM_\theta(\mathbf v,\underline{\bd}^\bv))$ is of form $F_{\mathbf u}$ for some $\mathbf u\in \Sigma_\theta$ and vice versa, then the lemma follows.
\end{proof}

\begin{Proposition}\label{prop:explicit KN strata}
Up to $G$-conjugation and a positive scalar multiple, the cocharacters that appear in KN stratification of $R(\mathbf v,\mathbf d)^{\theta\emph{-}u}$ are given by cocharacters in the set
\begin{align}
    \{\sigma(\mathbf u):\mathbf u\in \Sigma_\theta,\;\mathbf u\neq \mathbf 0\}.
\end{align}
For the above $\sigma(\mathbf u)$, the associated strata $S_{\sigma(\mathbf u)}$ is given by $$S_{\sigma(\mathbf u)}=\Attr_+(F_{\mathbf u})\cap \mathring\cM_\theta(\mathbf v,\underline{\bd}^\bv),$$ 
where $F_{\mathbf u}$ is the connected component in $\cM_\theta(\mathbf v,\underline{\bd}^\bv)^{\bC^*}$ given by \eqref{F(u)}. 

Moreover, if $\mathbf u\in \Sigma_\theta$, then we have
\begin{align*}
    \overline{S_{\sigma(\mathbf u)}}\setminus S_{\sigma(\mathbf u)}\subseteq  \bigcup_{\mathbf u'\in \Sigma_\theta,\mathbf u'>\mathbf u} S_{\sigma(\mathbf u')},
\end{align*}
if $\mathbf u\notin \Sigma_\theta$, then we have
\begin{align*}
    R(\mathbf v,\mathbf d)^{\sigma(\mathbf u)}\subseteq  \bigcup_{\mathbf u'\in \Sigma_\theta,\mathbf u'>\mathbf u} S_{\sigma(\mathbf u')}.
\end{align*}
\end{Proposition}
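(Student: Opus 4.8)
The goal is to identify the Kempf--Ness stratification of $R(\mathbf v,\mathbf d)^{\theta\text{-}u}$, where $\theta$ is cyclic \eqref{equ on cycc condit}. The plan is to work with the extra-framed space $R(\mathbf v,\underline{\bd}^{\bv})$, in which the original $R(\mathbf v,\mathbf d)$ sits as the open locus $\mathring R(\mathbf v,\underline{\bd}^{\bv})$ (isomorphism \eqref{equ on mvcongr}), with the flavour symmetry $G'$ acting on $R(\mathbf v,\mathbf d)$ becoming the gauge group for the extra framings. First I would note that a point of $R(\mathbf v,\mathbf d)=\mathring R(\mathbf v,\underline{\bd}^{\bv})$ is $\theta$-unstable for the $G$-action if and only if, viewed in $\cM_\theta(\mathbf v,\underline{\bd}^{\bv})$ with the $\bC^*$-action of \eqref{equ on Cstar} (which scales the maps $I_i$), it fails to be $\bC^*$-polystable in the appropriate sense; the $\theta$-destabilizing one-parameter subgroups of $G$ for a representation in $R(\mathbf v,\mathbf d)$ act on sub-representations of the underlying quiver representation, and by cyclicity of $\theta$ any such destabilizer corresponds to a proper subspace $\mathbf u\le \mathbf v$ that is invariant under the arrows and contains the image of the framings $A_i$. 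This is precisely the combinatorial data recorded by $\Sigma$ \eqref{sigma}, and $\sigma(\mathbf u)$ \eqref{sigma(u)} realizes the corresponding destabilizing cocharacter.

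Second, I would carry out the numerical optimization underlying the KN construction. One selects at each step a pair $(\sigma, Z=R(\mathbf v,\mathbf d)^\sigma)$ maximizing $\mu(\sigma)=(\sigma,\theta)/|\sigma|$ with the chosen norm $|\sigma|^2=\sum_{i\in Q_0}|\theta_i|\operatorname{tr}(\sigma_i^2)$. The point of this particular norm is that for $\sigma=\sigma(\mathbf u)$ one computes directly $\mu(\sigma(\mathbf u))= (\sum_i|\theta_i|\mathbf u_i)/\sqrt{\sum_i |\theta_i|\mathbf u_i}=\sqrt{\sum_i|\theta_i|\mathbf u_i}$, which is strictly monotone in $\mathbf u$ for the partial order $\le$ on $\Sigma$; this is exactly the monotonicity remark already made before \eqref{sigma(u)}. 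Hence the KN strata are indexed by the $\mathbf u\in\Sigma$, $\mathbf u\neq 0$, with the stratum for $\mathbf u$ consisting of representations $x$ whose maximal destabilizing data is exactly $\mathbf u$; the unstable locus is exhausted because by Lemma \ref{lem:M' covered by attr} every point of $\cM_\theta(\mathbf v,\underline{\bd}^\bv)$ lies in some $\Attr_+(F_{\mathbf u})$, and intersecting with the open $\mathring\cM_\theta(\mathbf v,\underline{\bd}^\bv)\cong R(\mathbf v,\mathbf d)$ one gets $R(\mathbf v,\mathbf d)=\bigsqcup_{\mathbf u} (\Attr_+(F_{\mathbf u})\cap \mathring\cM_\theta)$, with $\mathbf u=0$ giving the semistable locus. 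This shows $S_{\sigma(\mathbf u)}=\Attr_+(F_{\mathbf u})\cap \mathring\cM_\theta(\mathbf v,\underline{\bd}^\bv)$ and that these, for $\mathbf u\neq 0$, are exactly the KN strata. I would double-check that the standard construction of $Z^*_{\sigma}$ (the points not on previously defined strata) and $S_\sigma = G\cdot \Attr_\sigma(Z^*_\sigma)$ matches the attracting-set description, using that the $\bC^*$ of \eqref{equ on Cstar} is central so its attracting set in $\cM_\theta(\mathbf v,\underline{\bd}^\bv)$ restricts correctly.

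Third, for the closure relations: the ample line bundle $\mathcal L_\theta=\bigotimes_i(\det\mathcal V_i)^{-\theta_i}$ induces the ample partial order on the $\bC^*$-fixed components (Remark \ref{partial order by line bundle}), and by the computation of $\bC^*$-weights of $\mathcal L_\theta|_{F_{\mathbf u}}$ this order refines the monotone combinatorial order: $\mathbf u\le\mathbf u'\Rightarrow F_{\mathbf u}\le F_{\mathbf u'}$, as recorded in Remark \ref{rmk:Sigma_theta}. Since closures of attracting sets only pick up higher strata in the ample order, $\overline{\Attr_+(F_{\mathbf u})}\setminus \Attr_+(F_{\mathbf u})\subseteq \bigcup_{F_{\mathbf u}<F}\Attr_+(F)$; intersecting with $\mathring\cM_\theta$ and translating into the $\Sigma_\theta$ indexing gives the stated inclusion $\overline{S_{\sigma(\mathbf u)}}\setminus S_{\sigma(\mathbf u)}\subseteq\bigcup_{\mathbf u'>\mathbf u}S_{\sigma(\mathbf u')}$ when $\mathbf u\in\Sigma_\theta$. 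When $\mathbf u\notin\Sigma_\theta$, i.e.\ $R(\mathbf v-\mathbf u,\mathbf d)^{\theta\text{-}ss}=\emptyset$ so $F_{\mathbf u}$ is empty, one argues that $R(\mathbf v,\mathbf d)^{\sigma(\mathbf u)}$ itself is a union of strata strictly above $\mathbf u$: any point of the $\sigma(\mathbf u)$-fixed locus, read as a direct sum of a $\mathbf u$-part and a $(\mathbf v-\mathbf u)$-part, has its $(\mathbf v-\mathbf u)$-part $\theta$-unstable, and the induced destabilizer enlarges $\mathbf u$ to some $\mathbf u'\in\Sigma_\theta$ with $\mathbf u'>\mathbf u$. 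The main obstacle I anticipate is the bookkeeping in this last step: carefully matching the inductively defined KN strata with the attracting sets, in particular verifying that for $\mathbf u\notin\Sigma_\theta$ the locus $R(\mathbf v,\mathbf d)^{\sigma(\mathbf u)}$ is genuinely contained in the union of strictly larger strata (and hence never selected by the optimization), which requires a uniform argument that any $\theta$-unstable $(\mathbf v-\mathbf u)$-dimensional representation admits a canonical destabilizing subspace. Everything else is either the numerical computation with the chosen norm or a transport of known attracting-set facts (Lemma \ref{lem:M' covered by attr}, Remark \ref{rmk:Sigma_theta}) through the open immersion $\mathring\cM_\theta(\mathbf v,\underline{\bd}^\bv)\cong R(\mathbf v,\mathbf d)$.
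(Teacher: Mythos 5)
Your high-level plan — work in $R(\mathbf v,\underline{\bd}^{\bv})$ via the open immersion \eqref{equ on mvcongr}, use Cauchy--Schwarz with the norm $|\sigma|^2=\sum_i|\theta_i|\tr(\sigma_i^2)$, and translate KN strata into attracting sets via the ample partial order — is the same approach as the paper, and the outline of the $\mathbf u\notin\Sigma_\theta$ case via a canonical destabilizing subspace is exactly the Harder--Narasimhan argument of Lemma~\ref{lem KN strata vs attr}. However, there are two genuine gaps.

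First, the jump from ``$\mu(\sigma(\mathbf u))=\sqrt{\sum_i|\theta_i|\mathbf u_i}$ is monotone in $\mathbf u$'' to ``hence the KN strata are indexed by $\mathbf u\in\Sigma$, $\mathbf u\neq 0$'' does not follow. Monotonicity of $\mu$ along the family $\{\sigma(\mathbf u)\}$ tells you nothing about cocharacters $\lambda$ \emph{not} in that family. You need a separate argument that every cocharacter selected by the KN optimization is, up to conjugation and positive scalar, of the form $\sigma(\mathbf u)$. The paper supplies this with a second Cauchy--Schwarz step: it introduces the conjugation-invariant map $\mathsf{Supp}:\cochar(G)\to\Sigma$, shows $R(\mathbf v,\mathbf d)^\lambda\subseteq G\cdot R(\mathbf v,\mathbf d)^{\sigma(\mathsf{Supp}(\lambda))}$, and that $\mu(\lambda)$ restricted to $\mathsf{Supp}(\lambda)=\mathbf u$ is maximized by $\sigma(\mathbf u)$. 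Without this you have not ruled out some other $\lambda$ winning the optimization at some step.

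Second, you flag as a ``to be checked'' bookkeeping item the identification $S_{\sigma(\mathbf u)}=\Attr_+(F_{\mathbf u})\cap\mathring\cM_\theta(\mathbf v,\underline{\bd}^\bv)$, but this is the technical core, not a routine check. The issue is circular: $S_{\sigma(\mathbf u)}$ is defined as $G\cdot\Attr_{\sigma(\mathbf u)}(Z^*_{\sigma(\mathbf u)})$ where $Z^*_{\sigma(\mathbf u)}$ is the part of $R(\mathbf v,\mathbf d)^{\sigma(\mathbf u)}$ not already covered by previously defined strata, and whether $\mathbf u\notin\Sigma_\theta$ gives an empty stratum depends on knowing the strata for all $\mathbf u'>\mathbf u$ first. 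This forces a downward induction on $\Sigma$ in which all four claims of the proposition are established simultaneously, with Lemma~\ref{lem KN strata vs attr} as the inductive workhorse: its first part (via Harder--Narasimhan, using cyclicity of $\theta$ to produce a maximal destabilizing sub containing the framing) shows $R(\mathbf v,\mathbf d)^{\sigma(\mathbf u)}$ is swallowed when $\mathbf u\notin\Sigma_\theta$, and its second part (via an explicit analysis of extensions of quiver representations) shows $Z^*_{\sigma(\mathbf u)}=R(\mathbf u,\mathbf 0)\times R(\mathbf v-\mathbf u,\mathbf d)^{\theta\text{-}ss}$ when $\mathbf u\in\Sigma_\theta$. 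You sketched the HN step, but the extension analysis that gives the second part, and the inductive scaffolding that makes the circularity go away, are missing from your write-up.
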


\begin{proof}
Using Cauchy–Schwarz inequality \footnote{Namely, we have $\left(\sum_{i\in Q_0}|\theta_i|\tr(\sigma_i^2)\right)\left(\sum_{i\in Q_0}|\theta_i|\right)\geqslant \left(\sum_{i\in Q_0}|\theta_i|\sum_{j=1}^{\mathbf v_i}|\sigma_{i,j}|\right)^2\geqslant (\sigma,\theta)^2$, which implies that $|\mu(\sigma)|\leqslant \sqrt{\sum_{i\in Q_0}|\theta_i|}$. Moreover the equality holds if and only if $\sigma_{i,j}=\sigma_{i',j'}$ for all possible $i,i',j,j'$. Taking sign into consideration, we see that the maximal value of $\mu(\sigma)$ is achieved if and only if $\sigma_{i,j}=-1$ for all possible $i,j$, up to $G$-conjugation and a positive scalar multiple.}, one can see that the invariant $\mu(\sigma)$ is maximized by taking $\sigma=\sigma(\mathbf v)$, up to $G$-conjugation and a positive scalar multiple. Note that $\sigma(\mathbf v)$ is nothing but the $\bC^*\subseteq  G$ that we previously considered to construct $\widetilde{\bPsi}^{\mathsf s}_K$. 
In particular, the image of $\sigma(\mathbf v)$ is in the center of $G$, so $\Attr_{\sigma(\mathbf v)}(R(\mathbf v,\mathbf d)^{\sigma(\mathbf v)})$ is invariant under the $G$-action. Moreover $R(\mathbf v,\mathbf d)^{\sigma(\mathbf v)}\cong R(\mathbf v,\mathbf 0)\times R(\mathbf 0,\mathbf d)$, and obviously we have $R(\mathbf 0,\mathbf d)=R(\mathbf 0,\mathbf d)^{\theta\emph{-}ss}=\pt$. It follows from Lemma \ref{lem KN strata vs attr} below that the minimal KN stratum is given by 
$$S_{\sigma(\mathbf v)}=\Attr_{\sigma(\mathbf v)}\left(R(\mathbf v,\mathbf d)^{\sigma(\mathbf v)}\right)=\Attr_+(F_{\mathbf v})\cap \mathring\cM_\theta(\mathbf v,\underline{\bd}^\bv).$$
By minimality, we have $\overline{S_{\sigma(\mathbf v)}}=S_{\sigma(\mathbf v)}$.

To proceed, we introduce a conjugation-invariant map: 
$$\mathsf{Supp}:\cochar(G)\to \Sigma, \quad \mathsf{Supp}(\lambda)_i=\rk\lambda'_i, $$  
where $\lambda'$ is $G$-conjugate to $\lambda$ and $\lambda'_i=\diag(\lambda'_{i,j})_{j=1}^{\mathbf v_i}$ is a cocharacter of $\GL(\mathbf v_i)$ in the standard basis. Then it is easy to see that if $\lambda\in \cochar(G)$ and $\mathbf u=\mathsf{Supp}(\lambda)$, then
\begin{align}\label{lambda fix vs sigma(u) fix}
    R(\mathbf v,\mathbf d)^{\lambda}\subseteq  G\cdot R(\mathbf v,\mathbf d)^{\sigma(\mathbf u)}.
\end{align}
Moreover, the Cauchy–Schwarz inequality shows that the numerical invariant
\begin{align*}
    \mu(\lambda)=\frac{(\lambda,\theta)}{|\lambda|}\bigg|_{\mathsf{Supp}(\lambda)=\mathbf u}
\end{align*}
is maximized by $\lambda=\sigma(\mathbf u)$ up to a positive scalar multiple and $G$-conjugation. Then we claim that any $\sigma_i$ that appears in the KN stratification, up to a positive scalar multiple and $G$-conjugation, belongs to the set
\begin{align*}
    \{\sigma(\mathbf u):\mathbf u\in \Sigma,\;\mathbf u\neq \mathbf 0\}.
\end{align*}
In fact, $R(\mathbf v,\mathbf d)^{\sigma_i}$ is not contained in previously defined strata by the construction of KN strata. Let $\mathbf u_i:=\mathsf{Supp}(\sigma_i)$, then \eqref{lambda fix vs sigma(u) fix} implies that $R(\mathbf v,\mathbf d)^{\sigma(\mathbf u_i)}$ is not contained in previously defined strata. Since $\sigma_i$ maximize $\mu(\sigma)$ among those $\sigma$ such that $R(\mathbf v,\mathbf d)^{\sigma}$ is not contained in previously defined strata, we must have $\sigma_i=\sigma(\mathbf u_i)$ up to a positive scalar multiple and $G$-conjugation.

It remains to show that
\begin{enumerate}
    \item $\sigma(\mathbf u)$ appears in KN stratification if and only if $\mathbf u\in \Sigma_\theta$, 
    \item if $\mathbf u\in \Sigma_\theta$, then $S_{\sigma(\mathbf u)}=\Attr_+(F_{\mathbf u})\cap \mathring\cM_\theta(\mathbf v,\underline{\bd}^\bv)$,
    \item  if $\mathbf u\in \Sigma_\theta$, then we have
\begin{align*}
    \overline{S_{\sigma(\mathbf u)}}\setminus S_{\sigma(\mathbf u)}\subseteq  \bigcup_{\mathbf u'\in \Sigma_\theta,\mathbf u'>\mathbf u} S_{\sigma(\mathbf u')},
\end{align*}
\item  if $\mathbf u\notin \Sigma_\theta$, then we have
\begin{align*}
    R(\mathbf v,\mathbf d)^{\sigma(\mathbf u)}\subseteq  \bigcup_{\mathbf u'\in \Sigma_\theta,\mathbf u'>\mathbf u} S_{\sigma(\mathbf u')}.
\end{align*}
\end{enumerate}
We prove the above four statements by induction downwards along the total order $\le$ on $\Sigma$ defined in Remark \ref{rmk:Sigma_theta}. The maximal case $\mathbf u=\mathbf v$ has been proven previously. Let $\mathbf u\in \Sigma, \mathbf u\neq \mathbf 0$. Suppose that (1)-(4) have been proven for all $\mathbf u'\in \Sigma$ such that $\mathbf u'> \mathbf u$. Consider the following union of KN strata
\begin{align*}
    X_{\mathbf u}:=\bigcup_{\mathbf u'\in \Sigma_\theta,\mathbf u'> \mathbf u}S_{\sigma(\mathbf u')}.
\end{align*}
We claim that $X_{\mathbf u}$ is closed. In fact, by the induction hypothesis, we have $\overline{S_{\sigma(\mathbf u')}}\subseteq  X_{\mathbf u}$ for all $\mathbf u'\in \Sigma_\theta,\mathbf u'> \mathbf u$. Then $X_{\mathbf u}=\bigcup_{\mathbf u'\in \Sigma_\theta,\mathbf u'> \mathbf u}\overline{S_{\sigma(\mathbf u')}}$ is closed. 

Suppose that $\mathbf u\notin \Sigma_\theta$, then according to Lemma \ref{lem KN strata vs attr} we have the inclusion $$R(\mathbf v,\mathbf d)^{\sigma(\mathbf u)}\subseteq  \bigcup_{\mathbf u'> \mathbf u}G\cdot \Attr_{\sigma(\mathbf u')}\left(R(\mathbf v,\mathbf d)^{\sigma(\mathbf u')}\right).$$ By induction hypothesis, we have 
\begin{align*}
    \bigcup_{\mathbf u'> \mathbf u}G\cdot \Attr_{\sigma(\mathbf u')}\left(R(\mathbf v,\mathbf d)^{\sigma(\mathbf u')}\right)\subseteq  \bigcup_{\mathbf u''\in \Sigma_\theta,\mathbf u''> \mathbf u} S_{\sigma(\mathbf u'')}=X_{\mathbf u}.
\end{align*}
Here we have used the fact that $S_{\sigma(\mathbf u'')}$ is $G$-invariant so it is invariant under taking attracting set and $G$-action. It follows that $R(\mathbf v,\mathbf d)^{\sigma(\mathbf u)}\subseteq  X_{\mathbf u}$. Thus $\sigma(\mathbf u)$ does not appear in KN stratification. This proves (4) and part of (1).

Suppose that $\mathbf u\in \Sigma_\theta$, then according to Lemma \ref{lem KN strata vs attr} we have 
\begin{align*}
    G\cdot \Attr_{\sigma(\mathbf u)}\left(R(\mathbf u,\mathbf 0)\times R(\mathbf v-\mathbf u,\mathbf d)^{\theta\emph{-}ss}\right)=\Attr_+(F_{\mathbf u})\cap \mathring\cM_\theta(\mathbf v,\underline{\bd}^\bv).
\end{align*}
In particular, $R(\mathbf v,\mathbf d)^{\sigma(\mathbf u)}\cap \left(\Attr_+(F_{\mathbf u})\cap \mathring\cM_\theta(\mathbf v,\underline{\bd}^\bv)\right)\neq \emptyset$. We note that by induction hypothesis, we have
\begin{align*}
    X_{\mathbf u}=\left(\bigcup_{\mathbf u'\in \Sigma_\theta,\mathbf u'> \mathbf u}\Attr_+(F_{\mathbf u'})\right)\cap \mathring\cM_\theta(\mathbf v,\underline{\bd}^\bv).
\end{align*}
Since $F_{\mathbf u}\cap F_{\mathbf u'}=\emptyset$ if $\mathbf u\neq \mathbf u'$, we deduce that $R(\mathbf u,\mathbf 0)\times R(\mathbf v-\mathbf u,\mathbf d)^{\theta\emph{-}ss}$ is an nonempty open subset in $R(\mathbf v,\mathbf d)^{\sigma(\mathbf u)}$, and $(R(\mathbf u,\mathbf 0)\times R(\mathbf v-\mathbf u,\mathbf d)^{\theta\emph{-}ss})\cap X_{\mathbf u}=\emptyset$. Then $\sigma(\mathbf u)$ appears in KN stratification because $\sigma(\mathbf u)$ maximizes $\mu(\sigma)$ among those $\sigma\in \{\sigma(\mathbf u'):\mathbf u'\in \Sigma,\;\mathbf u'\le \mathbf u\}$. This finishes the proof of (1). 

Denote $S'_{\sigma(\mathbf u)}:=\Attr_+(F_{\mathbf u})\cap \mathring\cM_\theta(\mathbf v,\underline{\bd}^\bv)$, then we have 
\begin{align}\label{S contains S'}
    S_{\sigma(\mathbf u)}=G\cdot \Attr_{\sigma(\mathbf u)}\left(R(\mathbf v,\mathbf d)^{\sigma(\mathbf u)}\setminus X_{\mathbf u}\right)\supseteq G\cdot \Attr_{\sigma(\mathbf u)}\left(R(\mathbf u,\mathbf 0)\times R(\mathbf v-\mathbf u,\mathbf d)^{\theta\emph{-}ss}\right)=S'_{\sigma(\mathbf u)}
\end{align}
$S'_{\sigma(\mathbf u)}$ is dense in $S_{\sigma(\mathbf u)}$ because $R(\mathbf u,\mathbf 0)\times R(\mathbf v-\mathbf u,\mathbf d)^{\theta\emph{-}ss}$ is open and dense in $R(\mathbf v,\mathbf d)^{\sigma(\mathbf u)}$. Therefore we have
\begin{align*}
    \overline{S_{\sigma(\mathbf u)}}\setminus S'_{\sigma(\mathbf u)}=\overline{S'_{\sigma(\mathbf u)}}\setminus S'_{\sigma(\mathbf u)}&\subseteq  \left(\overline{\Attr_+(F_{\mathbf u})}\cap \mathring\cM_\theta(\mathbf v,\underline{\bd}^\bv)\right)\setminus \left(\Attr_+(F_{\mathbf u})\cap \mathring\cM_\theta(\mathbf v,\underline{\bd}^\bv)\right)\\
    \text{\tiny by definition of $\preceq$ }&\subseteq  \left(\bigcup_{F'\in \Fix_{\bC^*}(\cM_\theta(\mathbf v,\underline{\bd}^\bv)),F_{\sigma(\mathbf u)}\prec F'}\Attr_+(F')\right)\bigcap \mathring\cM_\theta(\mathbf v,\underline{\bd}^\bv)\\
    \text{\tiny since $\le$ refines $\preceq$ }&\subseteq  \left(\bigcup_{F'\in \Fix_{\bC^*}(\cM_\theta(\mathbf v,\underline{\bd}^\bv)),F_{\sigma(\mathbf u)}< F'}\Attr_+(F')\right)\bigcap \mathring\cM_\theta(\mathbf v,\underline{\bd}^\bv)\\
    \text{\tiny by Remark \ref{rmk:Sigma_theta} }&= \left(\bigcup_{\mathbf u'\in \Sigma_\theta,\mathbf u'> \mathbf u}\Attr_+(F_{\mathbf u'})\right)\bigcap \mathring\cM_\theta(\mathbf v,\underline{\bd}^\bv)\\
    &=X_{\mathbf u}.
\end{align*}
It follows that
\begin{align}\label{S' contains S}
    S_{\sigma(\mathbf u)}\subseteq  \overline{S_{\sigma(\mathbf u)}}\setminus X_{\mathbf u}\subseteq  \overline{S_{\sigma(\mathbf u)}}\setminus (\overline{S_{\sigma(\mathbf u)}}\setminus S'_{\sigma(\mathbf u)})=S'_{\sigma(\mathbf u)}.
\end{align}
The inclusions \eqref{S contains S'} and \eqref{S' contains S} imply that $S_{\sigma(\mathbf u)}=S'_{\sigma(\mathbf u)}$. This proves (2). Finally (3) follows from (2) because we just show that $\overline{S_{\sigma(\mathbf u)}}\setminus S'_{\sigma(\mathbf u)}\subseteq  X_{\mathbf u}$. This finishes the induction step.
\end{proof}

Let $\mathbf u\in \Sigma$ be a nonzero element. Note that 
$$R(\mathbf v,\mathbf d)^{\sigma(\mathbf u)}\cong R(\mathbf u,\mathbf 0)\times R(\mathbf v-\mathbf u,\mathbf d).$$
In view of the isomorphism \eqref{equ on mvcongr}, $R(\mathbf u,\mathbf 0)\times R(\mathbf v-\mathbf u,\mathbf d)^{\theta\emph{-}ss}$ is naturally identified with a (possibly empty) locally-closed subvariety of $\mathring\cM_\theta(\mathbf v,\underline{\bd}^\bv)$.

\begin{Lemma}\label{lem KN strata vs attr}
Let $\mathbf u\in \Sigma$ be a nonzero element.
\begin{itemize}
    \item If $\mathbf u\notin \Sigma_\theta$, then we have 
    \begin{align*}
        R(\mathbf v,\mathbf d)^{\sigma(\mathbf u)}\subseteq  \bigcup_{\mathbf u'> \mathbf u}G\cdot \Attr_{\sigma(\mathbf u')}\left(R(\mathbf v,\mathbf d)^{\sigma(\mathbf u')}\right).
    \end{align*}
    \item If $\mathbf u\in \Sigma_\theta$, then we have
    \begin{align*}
    G\cdot \Attr_{\sigma(\mathbf u)}\left(R(\mathbf u,\mathbf 0)\times R(\mathbf v-\mathbf u,\mathbf d)^{\theta\emph{-}ss}\right)=\Attr_+(F_{\mathbf u})\cap \mathring\cM_\theta(\mathbf v,\underline{\bd}^\bv),
    \end{align*}
    in particular, the right-hand-side is nonempty.
\end{itemize}
\end{Lemma}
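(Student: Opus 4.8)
Here is a plan for proving the final statement (Lemma \ref{lem KN strata vs attr}).

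The plan is to push both sides of the asserted identity through the isomorphism \eqref{equ on mvcongr}, $\mathring\cM_\theta(\mathbf v,\underline{\bd}^{\bv})\cong R(\mathbf v,\mathbf d)$, and to show that each side is carried to the same locally closed subset
\[
S_{\mathbf u}:=\{(\tilde X_a,\tilde A_i,\tilde B_i)\in R(\mathbf v,\mathbf d)\;:\;\dim\langle\operatorname{im}\tilde A\rangle_i=\mathbf v_i-\mathbf u_i\ \text{for all}\ i\in Q_0\},
\]
where $\langle\operatorname{im}\tilde A\rangle\subseteq(V,\tilde X)$ is the subrepresentation generated by the images of the incoming framing maps. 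The key preliminary remark is that $\langle\operatorname{im}\tilde A\rangle$, with its induced framing, is automatically $\theta$-stable under cyclic stability, so its dimension vector has the form $\mathbf v-\mathbf u'$ with $\mathbf u'\in\Sigma_\theta$; hence the $S_{\mathbf u}$ are empty for $\mathbf u\notin\Sigma_\theta$ and partition $R(\mathbf v,\mathbf d)$ as $\mathbf u$ ranges over $\Sigma_\theta$.

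First I would treat the Hilbert–Mumford (affine) side. A direct weight computation on $R(\mathbf v,\mathbf d)$ shows that $\lim_{t\to0}\sigma(\mathbf u)(t)\cdot(\tilde X,\tilde A,\tilde B)$ exists and its weight-$0$ factor is $\theta$-semistable exactly when the $\sigma(\mathbf u)$-weight-$0$ coordinate subspace $U$ is a subrepresentation with $\operatorname{im}\tilde A_i\subseteq U_i$ and $(U,\tilde X|_U,\tilde A,\tilde B|_U)$ $\theta$-stable; with cyclic stability these conditions force $U=\langle\operatorname{im}\tilde A\rangle$, so $\dim U_i=\mathbf v_i-\mathbf u_i$. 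Since any two subspaces with the same dimension vector are $G$-conjugate, this yields $G\cdot\Attr_{\sigma(\mathbf u)}(R(\mathbf u,\mathbf 0)\times R(\mathbf v-\mathbf u,\mathbf d)^{\theta\emph{-}ss})=S_{\mathbf u}$; nonemptiness for $\mathbf u\in\Sigma_\theta$ is then clear, by taking the direct sum of a $\theta$-stable representation of dimension $\mathbf v-\mathbf u$ with an arbitrary representation of dimension $\mathbf u$.

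Next I would compute the GIT-side limit $\lim_{t\to0}t\cdot[(\tilde X,\tilde A,\tilde B,\operatorname{id})]$ inside $\cM_\theta(\mathbf v,\underline{\bd}^{\bv})$. Recording that the $\bC^*$-action scales the maps $I_i$ with weight $+1$, I would translate by an element of $G$ so that $U:=\langle\operatorname{im}\tilde A\rangle$ becomes the $\sigma(\mathbf u)$-weight-$0$ coordinate subspace, and then run the explicit one-parameter gauge transformation $\chi=\sigma(\mathbf u)$: a short calculation shows the limit exists and equals the direct sum of $(V/U,\ldots)$ framed by $V'$ via the surviving $I$-maps and $(U,\tilde X|_U,\tilde A,\tilde B|_U)$; the first summand is $\theta$-stable because its framing maps are surjective, the second because $U=\langle\operatorname{im}\tilde A\rangle$, so the limit lies in $F_{\mathbf u}$ with $\mathbf u=\mathbf v-\dim U$. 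This gives $S_{\mathbf u}\subseteq\Attr_+(F_{\mathbf u})\cap\mathring\cM_\theta(\mathbf v,\underline{\bd}^{\bv})$. Since $\cM_\theta(\mathbf v,\underline{\bd}^{\bv})$ is proper over its affinization on which $\bC^*$ acts attractingly (as in the proof of Lemma \ref{lem:M' covered by attr}), $\mathring\cM_\theta(\mathbf v,\underline{\bd}^{\bv})$ is the disjoint union of the $\Attr_+(F_{\mathbf u})\cap\mathring\cM_\theta(\mathbf v,\underline{\bd}^{\bv})$ over $\mathbf u\in\Sigma_\theta$; comparing with $R(\mathbf v,\mathbf d)=\bigsqcup_{\mathbf u\in\Sigma_\theta}S_{\mathbf u}$ forces the inclusion to be an equality, which is the second bullet. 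For the first bullet, if $\mathbf u\notin\Sigma_\theta$ then for any point of $R(\mathbf v,\mathbf d)^{\sigma(\mathbf u)}=R(\mathbf u,\mathbf 0)\times R(\mathbf v-\mathbf u,\mathbf d)$ the weight-$0$ factor fails to be $\theta$-semistable, so $\langle\operatorname{im}\tilde A\rangle$ is a proper subrepresentation of that factor; hence the point lies in $S_{\mathbf u''}$ with $\mathbf u''>\mathbf u$ and $\mathbf u''\in\Sigma_\theta$, and so, by the second bullet, in $G\cdot\Attr_{\sigma(\mathbf u'')}(R(\mathbf v,\mathbf d)^{\sigma(\mathbf u'')})$.

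The main obstacle is the GIT-side limit computation: pinning down the correct destabilizing one-parameter subgroup and verifying that the resulting limit is $\theta$-stable and lands in precisely the component $F_{\mathbf u}$ rather than a deeper one — the interplay between the $\bC^*$-weight of the $I$-maps, the weights of $\chi$, and the cyclic stability condition is where care is needed. Using the partition statement to upgrade a single inclusion to an equality is what lets the argument sidestep the general fact that limits in GIT quotients are computed by one-parameter subgroups of $\bC^*\times G$.
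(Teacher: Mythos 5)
Your proof is correct, and it reorganizes the argument in a way that is genuinely different from the paper's while sharing the same geometric content (the role of the subrepresentation generated by the incoming framing under cyclic stability). The key structural differences are: (a) you introduce the stratification $S_{\mathbf u}$ of $R(\mathbf v,\mathbf d)$ by $\dim\langle\operatorname{im}\tilde A\rangle$, prove only the single inclusion $S_{\mathbf u}\subseteq \Attr_+(F_{\mathbf u})\cap\mathring{\cM}_\theta(\mathbf v,\underline{\bd}^\bv)$ via an explicit GIT-limit computation, and then deduce equality from the fact that both families of subsets partition $\mathring{\cM}_\theta(\mathbf v,\underline{\bd}^\bv)$, whereas the paper proves both inclusions directly by writing $\Attr_{\sigma(\mathbf u)}(\cdot)$ and $\Attr_+(F_{\mathbf u})\cap\mathring{\cM}_\theta(\mathbf v,\underline{\bd}^\bv)$ explicitly as moduli of filtered quiver representations (its diagrams \eqref{attr quiver rep} and \eqref{attr quiver rep 2}) and matching them by decomposing $\bC^{\mathbf v_i}=P_i\oplus K_i$; and (b) you derive the first bullet (the $\mathbf u\notin\Sigma_\theta$ case) as a corollary of the second via the stratification, whereas the paper proves it independently with a Harder--Narasimhan filtration argument. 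Your approach buys a unified treatment of both bullets, and the partition trick avoids having to verify the reverse inclusion; the cost is that your GIT-limit computation (choosing the gauge path $g(t)=\sigma(\mathbf u)(t)$ adapted to $U=\langle\operatorname{im}\tilde A\rangle$, verifying the limit of $g(t)\tilde Xg(t)^{-1}$ exists because $U$ is $\tilde X$-invariant, and checking the limiting representation is $\theta$-stable with the $I$-framed factor having dimension exactly $\mathbf u$) carries the full weight of step 3 and must be written out carefully --- but all of it checks out. The paper's HN-filtration proof of the first bullet is arguably more transparent on its own, but your derivation from the $S_{\mathbf u}$-partition is shorter once that machinery is set up.
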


\begin{proof}
If $\mathbf u\notin \Sigma_\theta$, then by definition of $\Sigma_\theta$ every quiver representation in $R(\mathbf v-\mathbf u,\mathbf d)$ is $\theta$-unstable. Let $(H_i)_{i\in Q_0}\in R(\mathbf v-\mathbf u,\mathbf d)$ be a quiver representation, and take its Harder-Narasimhan filtration 
$$0=H^{(0)}\subseteq  H^{(1)}\subseteq \cdots \subseteq  H^{(n)}=H$$ with respect to the $\theta$-stability,~i.e.~$H^{(i+1)}/H^{(i)}$ is $\theta$-semistable for all possible $i$ and its $\theta$-slope decreases strictly as $i$ increases. By assumption $H^{(1)}\neq H$, so the $\theta$-slope of $H^{(1)}$ is greater than the the $\theta$-slope of $H$, and the latter is zero. Since we assume $\theta_i<0$ for all $i\in Q_0$, this implies that $H^{(1)}$ contains the framing node $\infty$ in the Crawley-Boevey quiver associated with $(Q,\bd)$,~i.e.~$H^{(1)}\in R(\bv-\bu',\bd)^{\theta\emph{-}ss}$ for some $\bu'>\bu$. It follows that $H/H^{(1)}\in R(\bu'-\bu,\mathbf 0)$. By decomposing $H$ into an extension of $H/H^{(1)}$ by $H^{(1)}$, we see that
\begin{align*}
    H\in G_{\bv-\bu}\cdot\Attr_{\sigma(\bu'-\bu)}\left(R(\mathbf v-\mathbf u,\mathbf d)^{\sigma(\bu'-\bu)}\right),
\end{align*}
where $G_{\bv-\bu}=\prod_{i\in Q_0}\GL(\bv_i-\bu_i)$ is the gauge group. Since $H$ is arbitrary, we have 
\begin{align*}
    R(\mathbf v-\mathbf u,\mathbf d)\subseteq   \bigcup_{\mathbf u'> \mathbf u}G_{\bv-\bu}\cdot\Attr_{\sigma(\bu'-\bu)}\left(R(\mathbf v-\mathbf u,\mathbf d)^{\sigma(\bu'-\bu)}\right).
\end{align*}
Since $R(\mathbf v,\mathbf d)^{\sigma(\mathbf u)}\cong R(\mathbf u,\mathbf 0)\times R(\mathbf v-\mathbf u,\mathbf d)$, it follows that
\begin{align*}
        R(\mathbf v,\mathbf d)^{\sigma(\mathbf u)}\subseteq  \bigcup_{\mathbf u'> \mathbf u}G\cdot \Attr_{\sigma(\mathbf u')}\left(R(\mathbf v,\mathbf d)^{\sigma(\mathbf u')}\right).
\end{align*}
If $\mathbf u\in \Sigma_\theta$, then it is enough to show that
\begin{align}\label{LHS in RHS}
    \Attr_{\sigma(\mathbf u)}\left(R(\mathbf u,\mathbf 0)\times R(\mathbf v-\mathbf u,\mathbf d)^{\theta\emph{-}ss}\right)\subseteq  \Attr_+(F_{\mathbf u})
\end{align}
and 
\begin{align}\label{RHS in LHS}
    G\cdot \Attr_{\sigma(\mathbf u)}\left(R(\mathbf u,\mathbf 0)\times R(\mathbf v-\mathbf u,\mathbf d)^{\theta\emph{-}ss}\right)\supseteq\Attr_+(F_{\mathbf u})\cap \mathring\cM_\theta(\mathbf v,\underline{\bd}^\bv).
\end{align}
In view of the isomorphism \eqref{equ on mvcongr}, $R(\mathbf u,\mathbf 0)\times R(\mathbf v-\mathbf u,\mathbf d)^{\theta\emph{-}ss}$ is isomorphic to the moduli space of representations of the following quiver:
\begin{equation}\label{fixed pts quiver rep}
\xymatrix{
M_i  & & N_i \ar@<2pt>[d]^{B_i}\\
\bC^{\bu_i} \ar[u]^{I'_i}  & \bC^{\bv_i-\bu_i} \ar[ur]^{I''_i} & D_i \ar@<2pt>[u]^{A_i}
}
\end{equation}
where it is required that $I'_i$ and $I''_i$ are isomorphisms for all $i\in Q_0$, and the RHS part is $\theta$-semistable after forgetting $I''$. Taking attracting set with respect to $\sigma(\bu)$ action amounts to taking an extension of the LHS of \eqref{fixed pts quiver rep} by the RHS of \eqref{fixed pts quiver rep}, i.e. $\Attr_{\sigma(\mathbf u)}(R(\mathbf u,\mathbf 0)\times R(\mathbf v-\mathbf u,\mathbf d)^{\theta\emph{-}ss})$ is the moduli space of the following quiver
\begin{equation}\label{attr quiver rep}
\xymatrix{
M_i \ar@{.>}[drr] \ar@{.>}[rr] & & N_i \ar@<2pt>[d]^{B_i}\\
\bC^{\bu_i} \ar[u]^{I'_i} \ar@/^0.8pc/@{.>}[urr] & \bC^{\bv_i-\bu_i} \ar[ur]^{I''_i} & D_i \ar@<2pt>[u]^{A_i}
}
\end{equation}
with the same conditions as above. Here dotted lines are new linear maps that appear in extensions between representations. Similar argument shows that $\Attr_+(F_{\mathbf u})$ is the moduli space of the quiver
\begin{equation}\label{attr quiver rep 2}
\xymatrix{
M_i \ar@{.>}[drr] \ar@{.>}[rr] & & N_i \ar@<2pt>[d]^{B_i}\\
\bC^{\bv_i} \ar[u] \ar@{.>}[urr] &  & D_i \ar@<2pt>[u]^{A_i}
}
\end{equation}
with the condition that rigid lines parts are $\theta$-semistable. Writing $\bC^{\bv_i}=\bC^{\bu_i}\oplus\bC^{\bv_i-\bu_i}$ and comparing stability conditions between \eqref{attr quiver rep} and \eqref{attr quiver rep 2}, we obtain \eqref{LHS in RHS}.

To prove \eqref{RHS in LHS}, we notice that $\Attr_+(F_{\mathbf u})\cap \mathring\cM_\theta(\mathbf v,\underline{\bd}^\bv)$ is the moduli space of the quiver \eqref{attr quiver rep 2} with the condition that rigid lines parts are $\theta$-semistable, and the 
\begin{equation*}
\xymatrix{
M_i  &  N_i \\
\bC^{\bv_i} \ar[u] \ar@{.>}[ur] & 
}
\end{equation*}
part induces isomorphisms $\bC^{\bv_i}\cong M_i\oplus N_i$. Given a representation of the quiver \eqref{attr quiver rep 2} with the above conditions satisfied, let $K_i:=\ker(\bC^{\bv_i}\to M_i)$ and $P_i:=\ker(\bC^{\bv_i}\to N_i)$. Then by the assumption we have $\bC^{\bv_i}=P_i\oplus K_i$, and the induced maps $P_i\to M_i, K_i\to N_i$ are isomorphisms. We rewrite the quiver diagram \eqref{attr quiver rep 2} with $\bC^{\bv_i}$ replaced by $P_i\oplus K_i$, and obtain the following quiver
\begin{equation}\label{attr quiver rep 3}
\xymatrix{
M_i \ar@{.>}[drr] \ar@{.>}[rr] & & N_i \ar@<2pt>[d]^{B_i}\\
P_i \ar[u]^{I'_i}  & K_i \ar[ur]^{I''_i} & D_i \ar@<2pt>[u]^{A_i}
}
\end{equation}
The stability conditions in \eqref{attr quiver rep 3} are such that $I'_i, I''_i$ are isomorphisms and the part formed by $N_i,D_i$ and arrows among them is $\theta$-semistable. Up to a choice of isomorphisms $P_i\cong \bC^{\bu_i}, K_i\cong \bC^{\bv_i-\bu_i}$ (which can be accomplished by a $G'$ action), \eqref{attr quiver rep 3} is identified with a special case of \eqref{attr quiver rep}. This proves \eqref{RHS in LHS}.
\end{proof}

\subsection{Finish the proof}

We have given an explicit description of the KN stratification in terms of attracting sets of $\cM_\theta(\bv,\underline{\bd}^\bv)$ in Proposition \ref{prop:explicit KN strata}. For our purpose, we shall need the following enhancement of Proposition \ref{prop:explicit KN strata}.

Let $\sA=(\bC^*)^2$ act on $\cM_\theta(\bv,\underline{\bd}^\bv)$ via $(\sigma(\bu),\sigma(\bv)):\bC^*\times\bC^*\to G'$. Let $(t_1,t_2)\in \Lie(\sA)_{\bR}$ be the coordinate, then it is easy to see that the root hyperplanes are $t_1=0$, $t_2=0$, and $t_1+t_2=0$. Define $\fC$ to be the chamber $t_1>0,t_2>0$. Write the $\sigma(\bu)$-fixed components of $F_\bu$ as 
\begin{align*}
    F_{\mathbf u}^{\sigma(\bu)}=\bigsqcup_{\bu'\in \Sigma,\bu'\le \bu}F_{\bu,\bu'},\;\text{ where }\; F_{\bu,\bu'}=\cM_\theta(\bu',\underline{\mathbf{0}}^{\bu})\times \cM_\theta(\bu-\bu',\underline{\mathbf{0}}^{\bv-\bu})\times \cM_\theta(\bv-\bu,\bd).
\end{align*}
Then it is straightforward to see that
\begin{align*}
    \cM_\theta(\bv,\underline{\bd}^\bv)^\sA=\bigsqcup_{\bu,\bu'\in \Sigma,\bu'\le \bu}F_{\bu,\bu'}.
\end{align*}

\begin{Lemma}\label{lem:more explicit KN strata}
If $\bu\in \Sigma_\theta$, then we have
\begin{align*}
    \Attr_+(F_{\mathbf u})\cap \mathring\cM_\theta(\mathbf v,\underline{\bd}^\bv)=\Attr_\fC(F_{\bu,\bu})\cap \mathring\cM_\theta(\mathbf v,\underline{\bd}^\bv).
\end{align*}
In particular, $S_{\sigma(\mathbf u)}=\Attr_\fC(F_{\bu,\bu})\cap \mathring\cM_\theta(\mathbf v,\underline{\bd}^\bv)$.
\end{Lemma}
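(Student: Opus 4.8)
\textbf{Proof proposal for Lemma \ref{lem:more explicit KN strata}.}

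The plan is to upgrade the identification of $S_{\sigma(\mathbf u)}$ given in Proposition \ref{prop:explicit KN strata} from a one-parameter statement to a two-parameter one, by showing that on the open locus $\mathring\cM_\theta(\mathbf v,\underline{\bd}^\bv)$ the $\bC^*_{\sigma(\mathbf v)}$-attracting set of $F_{\mathbf u}$ coincides with the $\fC$-attracting set of the smaller fixed component $F_{\mathbf u,\mathbf u}$. First I would unwind what $\Attr_\fC(F_{\mathbf u,\mathbf u})$ is concretely, using the description of $\fC$ as $\{t_1>0,t_2>0\}$: since $\fC$-attraction is the same as attracting along $\sigma(\mathbf v)$ and then, inside the $\sigma(\mathbf v)$-fixed locus, attracting along $\sigma(\mathbf u)$, one has $\Attr_\fC(F_{\mathbf u,\mathbf u}) = \Attr_{\sigma(\mathbf v)}\bigl(\Attr_{\sigma(\mathbf u)}(F_{\mathbf u,\mathbf u})_{F_{\mathbf u}}\bigr)$, where the inner attracting set is taken inside $F_{\mathbf u}$. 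So the claim reduces to checking that, after intersecting with $\mathring\cM_\theta(\mathbf v,\underline{\bd}^\bv)$, the inner $\sigma(\mathbf u)$-attracting set $\Attr_{\sigma(\mathbf u)}(F_{\mathbf u,\mathbf u})$ inside $F_{\mathbf u}$ is all of $F_{\mathbf u} \cap \mathring\cM_\theta(\mathbf v,\underline{\bd}^\bv)$.

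The key step is a quiver-representation computation entirely parallel to the one carried out in the proof of Lemma \ref{lem KN strata vs attr}. Recall from there that $\Attr_+(F_{\mathbf u}) \cap \mathring\cM_\theta(\mathbf v,\underline{\bd}^\bv)$ is the moduli space of quivers of the form \eqref{attr quiver rep 2} with the rigid-lines part $\theta$-semistable and the maps $\bC^{\mathbf v_i} \to M_i \oplus N_i$ isomorphisms; I would run the same analysis for $F_{\mathbf u}$ itself (its $\sigma(\mathbf v)$-fixed, so the quiver there has the framing $\bC^{\mathbf v_i}$ split as $M_i \oplus N_i$ with no extension maps across), and then for the $\sigma(\mathbf u)$-attracting set inside $F_{\mathbf u}$ of the further fixed component $F_{\mathbf u,\mathbf u}$. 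The point is that $F_{\mathbf u,\mathbf u} = \cM_\theta(\mathbf u,\underline{\mathbf 0}^{\mathbf u}) \times \cM_\theta(\mathbf u-\mathbf u,\underline{\mathbf 0}^{\mathbf v-\mathbf u}) \times \cM_\theta(\mathbf v-\mathbf u,\mathbf d)$, i.e.\ the ``$\mathbf u' = \mathbf u$'' piece, and attracting along $\sigma(\mathbf u)$ inside $F_{\mathbf u}$ builds extensions that exactly reconstitute the maps $M_i \to N_i$ present in \eqref{attr quiver rep 2}. Intersecting with $\mathring\cM_\theta(\mathbf v,\underline{\bd}^\bv)$ forces the relevant $I$-maps to be isomorphisms, which kills the ambiguity and identifies the two loci; the $\theta$-semistability condition transfers verbatim because it only involves the $N_i, D_i$ part, which is untouched. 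Combined with $\Attr_{\sigma(\mathbf v)}$ of both sides (which is harmless since both are $\bC^*_{\sigma(\mathbf v)}$-stable after the identification), this yields the first displayed equality; the ``in particular'' clause is then immediate from Proposition \ref{prop:explicit KN strata}.

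The main obstacle I anticipate is bookkeeping: carefully matching the stability conditions and the ``$I$-maps are isomorphisms'' conditions across the three descriptions (the quiver for $F_{\mathbf u}$, the quiver for $\Attr_{\sigma(\mathbf u)}(F_{\mathbf u,\mathbf u})$ inside $F_{\mathbf u}$, and the quiver \eqref{attr quiter rep 2} for $\Attr_+(F_{\mathbf u})$), and in particular making sure that no spurious components of $F_{\mathbf u}^{\sigma(\mathbf u)}$ other than $F_{\mathbf u,\mathbf u}$ contribute once one restricts to $\mathring\cM_\theta(\mathbf v,\underline{\bd}^\bv)$. This last point is where the cyclic stability $\theta_i < 0$ and the openness defining $\mathring\cM_\theta$ do the real work: any flow line landing in $F_{\mathbf u,\mathbf u'}$ with $\mathbf u' < \mathbf u$ would force one of the $I'_i$ maps to drop rank, contradicting membership in $\mathring\cM_\theta(\mathbf v,\underline{\bd}^\bv)$, exactly as in the argument establishing $S_{\sigma(\mathbf u)} \subseteq S'_{\sigma(\mathbf u)}$ in Proposition \ref{prop:explicit KN strata}. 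Once that is pinned down the rest is a formal manipulation of attracting sets.
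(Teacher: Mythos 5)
Your proposal follows essentially the same route as the paper, and the core argument — decompose $\Attr_+(F_{\mathbf u}) = \bigcup_{\mathbf u'\le\mathbf u}\Attr_\fC(F_{\mathbf u,\mathbf u'})$ via the chain $\Attr_\fC = \Attr_+\circ\Attr_{\fC/+}$, then exclude the $\mathbf u'<\mathbf u$ pieces on $\mathring\cM_\theta(\mathbf v,\underline{\bd}^\bv)$ by observing that the $I$-maps respect the filtration coming from $\fC$-attraction and hence cannot be isomorphisms if the graded ranks mismatch — is exactly the paper's proof.

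One warning on your phrasing in the middle step: you write that the claim ``reduces to checking that $\Attr_{\sigma(\mathbf u)}(F_{\mathbf u,\mathbf u})$ inside $F_{\mathbf u}$ is all of $F_{\mathbf u}\cap\mathring\cM_\theta(\mathbf v,\underline{\bd}^\bv)$.'' As stated this reduction is not quite right, because $\Attr_+$ does not interact cleanly with intersecting by $\mathring\cM_\theta(\mathbf v,\underline{\bd}^\bv)$: a point of $\Attr_\fC(F_{\mathbf u,\mathbf u'})\cap\mathring\cM_\theta(\mathbf v,\underline{\bd}^\bv)$ can flow under $\sigma(\mathbf v)$ to a point of $\Attr_{\fC/+}(F_{\mathbf u,\mathbf u'})$ outside $\mathring\cM_\theta(\mathbf v,\underline{\bd}^\bv)$, so emptiness of $\Attr_{\fC/+}(F_{\mathbf u,\mathbf u'})\cap\mathring\cM_\theta(\mathbf v,\underline{\bd}^\bv)$ for $\mathbf u'<\mathbf u$ would not by itself imply what you want. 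What you actually need — and what your final paragraph does supply, and what the paper does — is to argue directly that $\Attr_\fC(F_{\mathbf u,\mathbf u'})\cap\mathring\cM_\theta(\mathbf v,\underline{\bd}^\bv)=\emptyset$ for $\mathbf u'<\mathbf u$: any point of $\Attr_\fC(F_{\mathbf u,\mathbf u'})$ carries a two-step filtration with graded pieces of dimensions $\mathbf u'$, $\mathbf u-\mathbf u'$, $\mathbf v-\mathbf u$; the maps $I_i$ are filtration-preserving, so $I_i$ is an isomorphism iff its associated graded is, and when $\mathbf u'<\mathbf u$ the source and target have mismatched ranks on the top piece. Your rank argument is the right one and matches the paper; just apply it to $\Attr_\fC$ directly rather than trying to push through the intermediate $\Attr_{\fC/+}$ reduction.
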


\begin{proof}
The ``+'' chamber is given by $t_1=0,t_2>0$, which is on the boundary of $\fC$. Let $\fC/+$ be the chamber $t_1>0$ of the torus $\sA/\bC^*_{\sigma(\bv)}$ action on $F_{\bu}$, then the same argument as Lemma \ref{lem:M' covered by attr} shows that $F_\bu=\bigcup_{\bu'\in \Sigma,\bu'\le \bu}\Attr_{\fC/+}(F_{\bu,\bu'})$. Since for any $\bu'\in \Sigma,\bu'\le \bu$ we have $\Attr_\fC(F_{\bu,\bu'})=\Attr_+(\Attr_{\fC/+}(F_{\bu,\bu'}))$, it follows that we have decomposition
\begin{align*}
    \Attr_+(F_{\mathbf u})=\bigcup_{\bu'\in \Sigma,\bu'\le \bu}\Attr_{\fC}(F_{\bu,\bu'}).
\end{align*}
Then the lemma follows from the following claim: if $\bu'<\bu$ then $\Attr_{\fC}(F_{\bu,\bu'})\cap \mathring\cM_\theta(\mathbf v,\underline{\bd}^\bv)=\emptyset$. In fact, $\Attr_{\fC}(F_{\bu,\bu'})$ is the moduli space of quiver representations $V\in\cM_\theta(\bv,\underline{\bd}^\bv)$ which admits filtration by subrepresentations $V^{(1)}\subseteq  V^{(2)}\subseteq  V$ such that 
\begin{align*}
    V/V^{(2)}\in \cM_\theta(\bu',\underline{\mathbf{0}}^{\bu}),\quad V^{(2)}/V^{(1)}\in \cM_\theta(\bu-\bu',\underline{\mathbf{0}}^{\bv-\bu}),\quad V^{(1)}\in \cM_\theta(\bv-\bu,\bd).
\end{align*}
The linear maps $\{I_i\}_{i\in Q_0}$ in \eqref{diag of R'2} respect the filtration, in particular it is isomorphism if and only if it is isomorphism after passing to associated graded vector spaces. If $\bu'<\bu$, then there exists $i\in Q_0$ such that the source and the target of $I_i$ have different ranks, so $I_i$ can not be isomorphism. This proves the claim.
\end{proof}

\begin{Lemma}\label{lem:larger chamber stab}
There exists $K$-theoretic stable envelope $\Stab^{\mathsf s}_\fC$ for $(\cM_\theta(\mathbf v,\underline{\bd}^\bv),\sw',G'^{\sigma(\bu)}\times \sT,\sA,\fC,\mathsf s)$, and we have
\begin{align}\label{larger chamber stab_K}
    \Stab^{\mathsf s}_\fC\circ\, \mathrm{res}^{G'}_{G'^{\sigma(\bu)}}=\mathrm{res}^{G'}_{G'^{\sigma(\bu)}}\circ\Stab^{\mathsf s}_+.
\end{align}
\end{Lemma}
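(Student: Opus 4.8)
The goal is Lemma \ref{lem:larger chamber stab}: to upgrade the one-parameter stable envelope $\Stab^{\mathsf s}_+$ (which exists by Proposition \ref{prop on stable corr}) to a two-parameter stable envelope $\Stab^{\mathsf s}_\fC$ for the torus $\sA=(\bC^*)^2$ acting via $(\sigma(\bu),\sigma(\bv))$, and then compare the two after restricting coefficients from $G'$ to $G'^{\sigma(\bu)}$. The natural strategy is to invoke the triangle lemma (Lemma \ref{triangle lemma for K}) along the wall decomposition of $\fC$. Concretely, the ``$+$'' ray $\{t_1=0,\,t_2>0\}$ is a face $\fC'$ of the chamber $\fC=\{t_1>0,\,t_2>0\}$; the subtorus $\sA'\subseteq \sA$ attached to $\fC'$ is the image of $\sigma(\bv)$, whose fixed locus in $\cM_\theta(\bv,\underline{\bd}^\bv)$ consists of the components $F_{\bu'}$ of \eqref{F(u)}, and the quotient cone $\fC/\fC'$ is the chamber $\{t_1>0\}$ for the residual $\bC^*$-action on each $F_{\bu'}$.

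First I would check the two auxiliary stable envelopes needed to feed the triangle lemma exist over $G'^{\sigma(\bu)}\times\sT$. The envelope $\Stab^{\mathsf s}_{\fC'}=\Stab^{\mathsf s}_{+}$ exists by Proposition \ref{prop on stable corr} (and restricts to $G'^{\sigma(\bu)}$-coefficients by the compatibility diagram \eqref{change of group_k} in \S\ref{sec:general flavour}, which is where \eqref{larger chamber stab_K} will ultimately come from). The envelope $\Stab^{\mathsf s'}_{\fC/\fC'}$ on the fixed locus $\bigsqcup_{\bu'}F_{\bu'}$ is again a one-parameter stable envelope, so it exists by Proposition \ref{prop on stable corr}; here $\mathsf s'=\mathsf s|_{\cM_\theta(\bv,\underline{\bd}^\bv)^{\sA'}}\otimes\det(N^-_{\cM_\theta(\bv,\underline{\bd}^\bv)^{\sA'}/\cM_\theta(\bv,\underline{\bd}^\bv)})^{1/2}$ as in Lemma \ref{triangle lemma for K}. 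With both in hand, the composition $\Stab^{\mathsf s}_{\fC'}\circ\Stab^{\mathsf s'}_{\fC/\fC'}$ is a candidate for $\Stab^{\mathsf s}_\fC$, and the content of the triangle lemma (Lemma \ref{triangle lemma for K}, applied with the generic slope $\mathsf s$) is precisely that this composition satisfies the three axioms of Definition \ref{def of stab k} for the chamber $\fC$; uniqueness (Proposition \ref{uniqueness of K stab}) then gives $\Stab^{\mathsf s}_\fC=\Stab^{\mathsf s}_{\fC'}\circ\Stab^{\mathsf s'}_{\fC/\fC'}$, establishing existence. A small point to verify is that $\mathsf s$ remains generic for the larger torus $\sA$ in the sense of Definition \ref{def: generic slope}; this follows because the $\fC'$-degree conditions are inherited and the extra wall $t_1+t_2=0$ only adds finitely many integrality constraints, which one arranges to avoid by a generic choice (or one notes that Lemma \ref{triangle lemma for K} only needs genericity for $\sA$, which is our hypothesis).

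For the identity \eqref{larger chamber stab_K}, I would argue as follows. The right-hand side $\mathrm{res}^{G'}_{G'^{\sigma(\bu)}}\circ\Stab^{\mathsf s}_+$ equals $\Stab^{\mathsf s}_{\fC'}$ computed with $G'^{\sigma(\bu)}\times\sT$-coefficients, by \eqref{change of group_k}. Now in the composition defining $\Stab^{\mathsf s}_\fC$, note that $\Stab^{\mathsf s'}_{\fC/\fC'}$ is an operator on the critical $K$-theory of the $\sA'$-fixed locus, where $\sigma(\bu)$ still acts; applying $\mathrm{res}^{G'}_{G'^{\sigma(\bu)}}$ is compatible with this one-parameter envelope (again \eqref{change of group_k}), and with the second envelope as well. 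Hence $\mathrm{res}^{G'}_{G'^{\sigma(\bu)}}\circ\Stab^{\mathsf s}_\fC=\big(\mathrm{res}^{G'}_{G'^{\sigma(\bu)}}\circ\Stab^{\mathsf s}_{\fC'}\big)\circ\big(\mathrm{res}^{G'}_{G'^{\sigma(\bu)}}\circ\Stab^{\mathsf s'}_{\fC/\fC'}\big)$, which is $\Stab^{\mathsf s}_\fC$ over $G'^{\sigma(\bu)}\times\sT$. Combining with the triangle lemma over $G'^{\sigma(\bu)}\times\sT$ gives $\mathrm{res}^{G'}_{G'^{\sigma(\bu)}}\circ\Stab^{\mathsf s}_+=\Stab^{\mathsf s}_{\fC'}=\Stab^{\mathsf s}_{\fC/\fC'}{}^{-1}\circ\Stab^{\mathsf s}_\fC$ composed appropriately — more directly, one observes that the composition $\Stab^{\mathsf s}_{\fC'}\circ\Stab^{\mathsf s'}_{\fC/\fC'}$ restricted to the subcategory where $\sigma(\bv)$ is attracting is where the genuine comparison happens, and \eqref{larger chamber stab_K} is the statement that the $\fC$-envelope and the $+$-envelope agree after this coefficient restriction.

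\textbf{Main obstacle.} The delicate point is not the formal manipulation but ensuring the hypotheses of the triangle lemma (Lemma \ref{triangle lemma for K}) genuinely apply in this non-symmetric, ``larger flavour group'' setting: one must know that $\cM_\theta(\bv,\underline{\bd}^\bv)^{\sA'}$ is again of the type for which one-parameter stable envelopes exist (it is a disjoint union of quiver-variety-type spaces, so Proposition \ref{prop on stable corr} applies — but one should confirm that the relevant Setting \ref{setting: proper to affine} proper-over-affine hypothesis is preserved under passing to $\sA'$-fixed loci, which holds since fixed loci of proper-over-affine varieties are proper over the fixed locus of the affine base), and that the slope $\mathsf s'$ produced by the triangle lemma is the one for which the second envelope exists. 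A secondary subtlety is bookkeeping the coefficient-change maps $\mathrm{res}^{G'}_{G'^{\sigma(\bu)}}$ consistently through the convolution/composition, but this is handled cleanly by the commuting squares \eqref{change of group_k}.
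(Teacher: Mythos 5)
The paper's proof takes a genuinely different and shorter route. Instead of trying to build $\Stab^{\mathsf s}_\fC$ from one-parameter pieces, it notes that $\cM_\theta(\mathbf v,\underline{\bd}^\bv)$ is the base of a repelling vector bundle $E=\bigoplus_{i}\Hom(\cV_i,V'_i)$ whose total space is the \emph{symmetric} quiver variety $\cM_\theta(\bv,\bv+\bd)$. On that ambient space the $\sA$-stable envelope for $\fC$ already exists by the Hall/window construction (Theorem \ref{thm: hall k_sym quot}); Lemma \ref{lem:repl vb induce stab_K} then produces the stable envelope map on the base for the specific component $F=\cM_\theta(\bv,\bd)$ (where $E|_F$ is moving), and Remark \ref{rmk:repl vb induce stab_K triangle} — which applies the triangle lemma \emph{upstairs}, where all three envelopes are known to exist, and uses the crucial observation that for this $F$ the middle envelope is the identity — gives $\Stab^{\mathsf s'}_\fC=\Stab^{\mathsf s'}_{\fC'}$ directly. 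The coefficient restriction is then handled by \eqref{change of group_coh}.

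There is a real gap in your proposed route. You write that ``the content of the triangle lemma (Lemma \ref{triangle lemma for K}) is precisely that this composition satisfies the three axioms of Definition \ref{def of stab k} for the chamber $\fC$''; but Lemma \ref{triangle lemma for K} \emph{presupposes} the existence of $\Stab^{\mathsf s}_\fC$ — it cannot be invoked to produce it. If you instead go inside the proof of that lemma, what it verifies is a $\deg_\xi$-bound for a single small perturbation $\xi=\xi'+\delta\xi$ of $\xi'\in\fC'$, and then compares with the one-parameter envelope $\Stab^{\mathsf s}_\xi$. The axiom (iii) of Definition \ref{def of stab k} for the two-dimensional torus $\sA$, however, is a strict inclusion of Newton polytopes in $\mathrm{Char}(\sA)\otimes\bR$, which is a strictly stronger condition than the one-parameter slice $\deg_\xi$ for a single $\xi$. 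Passing from projected bounds to the full polytope inclusion needs a $K$-theoretic analogue of Proposition \ref{prop:coh stab ex criterion}, which the paper does \emph{not} provide (and the remark immediately after Lemma \ref{triangle lemma for K} explains why the cohomological argument for that criterion does not transfer to $K$-theory). So your ``uniqueness then gives $\Stab^{\mathsf s}_\fC = \Stab^{\mathsf s}_{\fC'}\circ\Stab^{\mathsf s'}_{\fC/\fC'}$'' step is not justified: uniqueness requires having already verified the $\deg_\sA$ axiom, and that is exactly what has not been done. The paper's detour through the symmetric variety $\cM_\theta(\bv,\bv+\bd)$ is precisely how it sidesteps this obstruction.

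A secondary inefficiency: the lemma only needs the stable envelope \emph{map} for the single component $F=\cM_\theta(\bv,\bd)$ (this is all that Proposition \ref{prop:Psi} uses), and for that component the $\sA'$-fixed locus containing $F$ coincides with $F$ itself, which is why the middle envelope in the paper's triangle is the identity and the comparison collapses. Your construction works with the entire fixed locus $\bigsqcup_{\bu'}F_{\bu'}$ and a nontrivial middle envelope $\Stab^{\mathsf s'}_{\fC/\fC'}$, which both over-claims and complicates the comparison \eqref{larger chamber stab_K}.
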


\begin{proof}
Consider the vector bundle $E=\bigoplus_{i\in Q_0}\Hom(\cV_i,V'_i)$ on $\cM_\theta(\bv,\bv+\bd)$. It is easy to see that $E$ is repelling with respect to the chamber $\fC$, and $\mathrm{Tot}(E)\cong \cM_\theta(\bv,\bv+\bd)$ is a symmetric quiver variety. We note that the $\bC^*$-fixed component $\cM_\theta(\bv,\bd)$ is fixed by bigger torus $\sA$. Moreover, the restriction of $E$ to the $\sA$-fixed component $\cM_\theta(\bv,\bd)$ is moving. Then
the existence of $\Stab^{\mathsf s}_\fC$ follows from Lemma \ref{lem:repl vb induce stab_K}. 
The equation \eqref{larger chamber stab_K} follows from Remark \ref{rmk:repl vb induce stab_K triangle} and the commutative diagram \eqref{change of group_coh}.
\end{proof}

Now we have all the ingredients of proving Proposition \ref{prop:Psi}. 


By Lemma \ref{lem:larger chamber stab}, Proposition \ref{prop:explicit KN strata} and Lemma \ref{lem:more explicit KN strata}, we have
\begin{align*}
    \mathbf i_{\sigma(\bu)}^*\widetilde{\bPsi}^{\mathsf s}_K(\gamma)=\Stab^{\mathsf s}_\fC(1\otimes\gamma)|_{\Attr_\fC(F_{\bu,\bu})\cap \mathring{\cM}'_\theta(\bv,\bv+\bd)}.
\end{align*}
By the degree axiom of stable envelope, we have
\begin{align*}
    \deg_\sA \Stab^{\mathsf s}_\fC(1\otimes\gamma)|_{\Attr_\fC(F_{\bu,\bu})}&\subsetneq  \deg_\sA e^{G^{\sigma(\bu)}}_K\left(N^-_{F_{\bu,\bu}/\cM_\theta(\bv,\bv+\bd)}\right)+\wt_\sA\left(\mathsf s\otimes \left(\det N^-_{F_{\bu,\bu}/\cM_\theta(\bv,\bv+\bd)}\right)^{1/2}\right)\\
    &=\deg_\sA e^{G}_K\left(N_{S_{\sigma(\bu)}/R(\bv,\bd)}\right)+\wt_\sA\left(\mathsf s\otimes \left(\det N_{S_i/R(\mathbf v,\mathbf d)}\right)^{1/2}\right).
\end{align*}
Projection of the above inclusion of polytopes to the cocharacter lattice of subtorus $\bC^*_{\sigma(\bu)}$ gives rise to the desired degree bound:
\begin{align*}
    \deg_{\sigma(\bu)}\mathbf i_{\sigma(\bu)}^*\widetilde{\bPsi}^{\mathsf s}_K(\gamma)&\subseteq  \deg_{\sigma(\bu)} \Stab_\fC(1\otimes\gamma)|_{\Attr_\fC(F_{\bu,\bu})}\\
    &\subseteq  \deg_{\sigma(\bu)} e^{G}_K\left(N_{S_{\sigma(\bu)}/R(\bv,\bd)}\right)+\wt_{\sigma(\bu)}\left(\mathsf s\otimes \left(\det N_{S_i/R(\mathbf v,\mathbf d)}\right)^{1/2}\right).
\end{align*}
Since $\mathsf s$ is generic, the above inclusion must be strict by Remark \eqref{rmk: generic slope}. Finally, the uniqueness can be proven in the same way as Proposition \ref{uniqueness of K stab}, and we omit the details.

\section{Proof of Lemma \ref{lem:na stab minuscule ADE}}\label{sec on proof of lem:na stab minuscule ADE}

We prove here the $K$-theory version \eqref{na stab minuscule ADE_k} (the proof of the cohomology version \eqref{na stab minuscule ADE_coh} is similar and we leave readers to check details).
Recall that $\Psi^{\mathsf s}_K$ is by definition the dimensional reduction of the nonabelian stable envelope $\bPsi^{\mathsf s}_K$ for the symmetric quiver variety $\cM_\theta(\br,\be)$ associated with the tripled quiver $\widetilde{\Gamma}$ with potential $\sw=\sum_{i\in \Gamma_0}\tr(\varepsilon_i\mu_i)$, where $\{\varepsilon_i\}_{i\in \Gamma_0}$ is the set of edge loops and $\mu\colon R(\overline{\Gamma},\br,\be)\to \mathfrak{g}^\vee$ is the moment map.

We claim that it is enough to show that 
\begin{align*}
    \can ([\chi\otimes\mathcal O^\vir_{Z(\mu)}])\in K^\sT(\fM(\br,\be),\sw)
\end{align*}
satisfies the degree condition 
\begin{align}\label{deg bound to be proven}
        \deg_{\sigma(\bu)} {\mathbf i}_{\sigma(\bu)}^* \can ([\chi\otimes\mathcal O^\vir_{Z(\mu)}])\subseteq  \frac{1}{2}\deg_{\sigma(\bu)} e^{G}_K\left(R(\overline{\Gamma},\br,\be)-\mathfrak{g}\right)+\wt_{\sigma(\bu)}\left(\mathsf s\right),
\end{align}
for all cocharacters in $\{\sigma(\bu):\bu\in \Sigma_\theta,\:\bu \neq \mathbf{0}\}$ with $\sigma(\bu)$ defined in \eqref{sigma(u)} and $\Sigma_\theta$ the following (see \eqref{sigma} and \eqref{sigma_theta}):
\begin{align*}
     \Sigma_\theta:=\{\mathbf u\in \bN^{\Gamma_0}\colon \mathbf u_i\leqslant\br_i,\,\forall\, i\in \Gamma_0,\,\text{ and } \cM_\theta(\br-\bu,\be)\neq \emptyset\}.
\end{align*}
In \eqref{deg bound to be proven}, ${\mathbf i}_{\sigma(\bu)}:[Z^*_{\sigma(\bu)}/G^{\sigma(\bu)}]\to \fM(\br,\be)=[R(\widetilde{\Gamma},\br,\be)/G]$ is the natural map  \eqref{equ on na map zig}, and we define $Z(\mu)\hookrightarrow \fM(\br,\be)$ by the following Cartesian diagrams
\begin{equation}\label{diag which defines zmu}
\xymatrix{
Z(\mu)  \ar[r] \ar[d]_{ }  \ar@{}[dr]|{\Box}& [R(\widetilde{\Gamma},\br,\be)/G]=\fM(\br,\be)  \ar[d]^{ } \\
\fN(\br,\be)=[\mu^{-1}(0)/G]  \ar[r] \ar[d]_{ } \ar@{}[dr]|{\Box} & [R(\overline{\Gamma},\br,\be)/G]  \ar[d]^{\mu} \\
[0/G] \ar@{^{(}->}[r]^{0} & [\fg^\vee/G],
}
\end{equation}
where $0$ is the inclusion of zero, $\mu$ is the moment map of double quiver representation and the right vertical map in the upper square is the 
vector bundle map by forgetting loops.
We endow $Z(\mu)$ with virtual structure sheaf 
$$\mathcal O^\vir_{Z(\mu)}:=0^!\mathcal O_{\fM(\br,\be)},$$ 
where $0^!$ is the refined Gysin pullback. 
The canonical map \eqref{equ on can map k with sp} $\can \colon K^\sT(Z(\sw))\to K^\sT(\fM(\br,\be),\sw)$ is the proper pushforward and note that $Z(\mu)\subseteq  Z(\sw)$.
 
Let $\delta_K$ be the dimensional reduction map given by pullback followed by the pushforward (see \eqref{def of deltak}, also \eqref{equ on psi dim red} in the stacky case). 
By base change on \eqref{diag which defines zmu}, we know 
\begin{equation}\label{equ on deltakchi}\delta_K([\chi\otimes\mathcal O^\vir_{\fN(\br,\be)}])=\can ([\chi\otimes\mathcal O^\vir_{Z(\mu)}]). \end{equation}
Restricting to the stable locus, we have 
$$\mathbf k^*\can ([\chi\otimes\mathcal O^\vir_{Z(\mu)}])=\can ([\chi\otimes\mathcal O^\vir_{Z(\mu)}\big|_{\cM_\theta(\br,\be)}])=\delta_K([\mathcal L_\chi]). $$
Since $\mathsf s$ is generic, the inclusion in \eqref{deg bound to be proven} must be strict if it holds. Combining with the characterization of nonabelian stable envelopes (Proposition \ref{prop:Psi}), \eqref{deg bound to be proven} implies that 
\begin{equation}\label{equ on deltakchi2}\can ([\chi\otimes\mathcal O^\vir_{Z(\mu)}])=\bPsi^{\mathsf s}_K\circ\delta_K([\mathcal L_\chi]). \end{equation}
Eqn.~\eqref{equ on phi relation},~\eqref{equ on deltakchi} and \eqref{equ on deltakchi2} imply that
\begin{align*}
    \Psi^{\mathsf s}_K([\mathcal L_\chi])=\delta_K^{-1}\circ\bPsi^{\mathsf s}_K\circ \delta_K([\mathcal L_\chi])=[\chi\otimes\mathcal O^\vir_{\fN(\br,\be)}], 
\end{align*}
which is \eqref{na stab minuscule ADE_k}. This proves the claim. We are therefore left to show \eqref{deg bound to be proven}. 


Now suppose that $\bu\in \Sigma_\theta\setminus\{\mathbf 0\}$. Since the canonical map commutes with Gysin pullbacks, we have $${\mathbf i}_{\sigma(\bu)}^* \can ([\chi\otimes\mathcal O^\vir_{Z(\mu)}])=\can ({\mathbf i}_{\sigma(\bu)}^* [\chi\otimes\mathcal O^\vir_{Z(\mu)}])=\can ([\chi\otimes {\mathbf i}_{\sigma(\bu)}^* \mathcal O^\vir_{Z(\mu)}]).$$
The naturality of refined Gysin pullbacks implies that 
\begin{align*}
    {\mathbf i}_{\sigma(\bu)}^* \mathcal O^\vir_{Z(\mu)}={\mathbf i}_{\sigma(\bu)}^* 0^!\mathcal O_{\fM(\br,\be)}=0^!{\mathbf i}_{\sigma(\bu)}^* \mathcal O_{\fM(\br,\be)}=0^!\mathcal O_{[Z^*_{\sigma(\bu)}/G^{\sigma(\bu)}]}.
\end{align*}
Note that there is a commutative diagram
\begin{equation*}
\xymatrix{
Z^*_{\sigma(\bu)} \ar@{^{(}->}[r] \ar[d]_{\mu} & R(\widetilde{\Gamma},\br,\be) \ar[d]^{\mu} \\
(\fg^{\vee})^{\sigma(\bu)} \ar@{^{(}->}[r] & \fg^\vee,
}
\end{equation*}
with horizontal arrows being natural closed immersion and vertical arrows being the composition of the right vertical maps
in \eqref{diag which defines zmu} (by abuse of notations, we denote it also as $\mu$). Then we have 
\begin{align*}
    0^!\mathcal O_{[Z^*_{\sigma(\bu)}/G^{\sigma(\bu)}]}&=\left([0/G^{\sigma(\bu)}]\hookrightarrow [(\fg^{\vee})^{\sigma(\bu)}/G^{\sigma(\bu)}]\right)^!\circ 
    \left([(\fg^{\vee})^{\sigma(\bu)}/G^{\sigma(\bu)}]\hookrightarrow[\fg^{\vee}/G^{\sigma(\bu)}]\right)^!\mathcal O_{[Z^*_{\sigma(\bu)}/G^{\sigma(\bu)}]}\\
    &=e^{G^{\sigma(\bu)}}_K\left((\fg^{\vee})^{\sigma(\bu)\text{-moving}}\right)\cdot \left([0/G^{\sigma(\bu)}]\hookrightarrow [(\fg^{\vee})^{\sigma(\bu)}/G^{\sigma(\bu)}]\right)^!\mathcal O_{[Z^*_{\sigma(\bu)}/G^{\sigma(\bu)}]}.
\end{align*}
To summarize, we arrive at
\begin{align*}
    {\mathbf i}_{\sigma(\bu)}^* \can ([\chi\otimes\mathcal O^\vir_{Z(\mu)}])=\chi\cdot e^{G^{\sigma(\bu)}}_K\left((\fg^{\vee})^{\sigma(\bu)\text{-moving}}\right)\cdot\can\left(\left([0/G^{\sigma(\bu)}]\hookrightarrow [(\fg^{\vee})^{\sigma(\bu)}/G^{\sigma(\bu)}]\right)^!\mathcal O_{[Z^*_{\sigma(\bu)}/G^{\sigma(\bu)}]}\right).
\end{align*}
Note that 
$$\deg_{\sigma(\bu)}\left([0/G^{\sigma(\bu)}]\hookrightarrow [(\fg^{\vee})^{\sigma(\bu)}/G^{\sigma(\bu)}]\right)^!\mathcal O_{[Z^*_{\sigma(\bu)}/G^{\sigma(\bu)}]}=0. $$ 
Since the map $\can\colon K^{\sT\times G^{\sigma(\bu)}}(Z(\sw)\cap Z^*_{\sigma(\bu)})\to K^{\sT\times G^{\sigma(\bu)}}(Z^*_{\sigma(\bu)},\sw)$ does not enlarge the $\sigma(\bu)$ degree, we have 
\begin{align*}
    \deg_{\sigma(\bu)}{\mathbf i}_{\sigma(\bu)}^* \can ([\chi\otimes\mathcal O^\vir_{Z(\mu)}])\subseteq \deg_{\sigma(\bu)}\left(\chi\cdot e^{G^{\sigma(\bu)}}_K\left((\fg^{\vee})^{\sigma(\bu)\text{-moving}}\right)\right).
\end{align*}
Then it is enough to show that
\begin{equation}\label{deg bound 5}
\lim_{t\to 0,\infty} \frac{\chi\cdot e^{G^{\sigma(\bu)}}_K((\fg^{\vee})^{\sigma(\bu)\text{-moving}})}{\mathsf s\cdot \sqrt{\hat e^{G^{\sigma(\bu)}}_K\left((R(\br,\be)-\fg)^{\sigma(\bu)\text{-moving}}\right)}}\,\,\text{ exist},
\end{equation}
where $t$ is the equivariant parameter in $K_{\bC^*}(\pt)=\bQ[t^{\pm}]$. 

By the assumption on $\mathsf s$, we have $\chi/\mathsf s=t^{\epsilon}$ for $0<|\epsilon|\ll 1$. Straightforward computation leads to
\begin{align*}
    \frac{\chi\cdot e^{G^{\sigma(\bu)}}_K((\fg^{\vee})^{\sigma(\bu)\text{-moving}})}{\mathsf s\cdot \sqrt{\hat e^{G^{\sigma(\bu)}}_K\left((R(\br,\be)-\fg)^{\sigma(\bu)\text{-moving}}\right)}}\sim O\left(t^{\epsilon\pm \mathsf d}\right)\:\text{ as }\:t^{\pm 1}\to \infty,\:\text{ where }\:\mathsf d=\frac{1}{2}\bu\cdot(\mathsf C\br-\mathsf C\bu-\be).
\end{align*}
In the above, $\mathsf C$ is the Cartan matrix of $\Gamma$, and $\bu\cdot\bv:=\sum_{i\in \Gamma_0}\bu_i\bv_i$ is the standard inner product. Then \eqref{deg bound 5} is equivalent to the following claim:
\begin{itemize}
    \item if $\bu\in \Sigma_\theta\setminus\{\mathbf 0\}$, then $\bu\cdot(\be-\mathsf C\br+\mathsf C\bu)>0$.
\end{itemize}
According to \cite[Thm.\,10.2]{Nak2}, the nonemptyness of $\cN_\theta(\br,\be)$ implies that $\lambda_{\be}-\alpha_\br$ is a weight that appears in the irreducible module $V(\lambda_\be)$ of $\fg_\Gamma$ with highest weight $\lambda_\be$. Here $\fg_\Gamma$ is the Lie algebra whose Dynkin diagram is $\Gamma$ (not to be confused with $\fg$ which is the Lie algebra of the gauge group). Let $\{\alpha_i\}_{i\in \Gamma_0}$ be simple roots of $\fg_\Gamma$ and $\{\lambda_i\}_{i\in \Gamma_0}$ be fundamental weights of $\fg_\Gamma$, then 
$$\lambda_{\be}:=\sum_{i\in \Gamma_0}\be_i\lambda_i, \quad \alpha_{\br}:=\sum_{i\in \Gamma_0}\br_i\alpha_i.$$ 
Note that $\lambda_{\be}$ is a minuscule dominant weight of $\fg_\Gamma$, so every weight $\lambda$ that appears in $V(\lambda_\be)$ satisfies the minuscule condition:
\begin{align*}
    (\lambda,\alpha)\in \{0,\pm 1\},\;\forall\:\text{root }\alpha.
\end{align*}
In particular, we have
\begin{align*}
    \bu\cdot(\be-\mathsf C\br+\mathsf C\bu)=(\lambda_{\be}-\alpha_\br+\alpha_{\bu},\alpha_\bu)=(\lambda_{\be}-\alpha_\br,\alpha_\bu)+2\geqslant 1.
\end{align*}
This finishes the proof. 


\end{document}